\tikzset{cross/.style={cross out, draw, 
         minimum size=2*(#1-\pgflinewidth), 
         inner sep=0pt, outer sep=0pt}}
\newtheorem{thm}{Théorème}[chapter]
\newtheorem{prop}[thm]{Proposition}
\newtheorem{cor}[thm]{Corollaire}
\newtheorem{theorem}{Théorème}[section]
\newtheorem{proposition}[theorem]{Proposition}
\newtheorem{lemma}[theorem]{Lemme}
\newtheorem{corollary}[theorem]{Corollaire}
\theoremstyle{definition}
\newtheorem{definition}[theorem]{Définition}
\newtheorem*{example}{Exemple}
\newtheorem*{remark}{Remarque}
\newtheorem{exercise}{Exercice}
\newcommand{\comm}[1]{}
\newcommand{\details}[1]{#1}
\newcommand{\note}[1]{}
\newcommand{\Q}{\mathbb{Q}}
\newcommand{\QQ}{\overline{\mathbb{Q}}}
\newcommand{\Z}{\mathbb{Z}}
\newcommand{\R}{\mathbb{R}}
\newcommand{\C}{\mathbb{C}}
\newcommand{\N}{\mathbb{N}}
\newcommand{\PP}{\mathbb{P}}
\newcommand{\BS}{\mathbb{S}}
\newcommand{\ka}{\mathfrak{a}}
\newcommand{\kb}{\mathfrak{b}}
\newcommand{\kz}{\mathfrak{z}}
\newcommand{\g}{\mathfrak{g}}
\newcommand{\p}{\mathfrak{p}}
\newcommand{\uu}{\mathfrak{u}}
\newcommand{\FS}{\mathfrak{S}}
\newcommand{\cC}{\mathcal{C}}
\newcommand{\cF}{\mathcal{F}}
\newcommand{\cH}{\mathcal{H}}
\newcommand{\cS}{\mathcal{S}}
\newcommand{\cA}{\mathcal{A}}
\newcommand{\bv}{\mathbf{v}}
\newcommand{\bp}{\mathbf{p}}
\newcommand{\bu}{\mathbf{u}}
\newcommand{\bw}{\mathbf{w}}
\newcommand{\bcs}{\backslash}
\newcommand{\GL}{\mathrm{GL}}
\newcommand{\SL}{\mathrm{SL}}
\newcommand{\SO}{\mathrm{SO}}
\newcommand{\BA}{\mathrm{BA}}
\newcommand{\gl}{\mathfrak{gl}}
\newcommand{\bracket}[1]{\langle#1\rangle}
\newcommand{\abs}[1]{\lvert#1\rvert}
\newcommand{\Abs}[1]{\left\lvert#1\right\rvert}
\newcommand{\norm}[1]{\lVert#1\rVert}
\newcommand{\dd}{\mathrm{d}}
\newcommand{\tx}{\widetilde{X}}
\newcommand{\tphi}{\widetilde{\phi}}
\newcommand{\tpsi}{\widetilde{\psi}}
\newcommand{\tOmega}{\widetilde{\Omega}}
\newcommand{\eps}{\varepsilon}
\DeclareMathOperator{\Supp}{Supp}
\DeclareMathOperator{\rang}{rang}
\DeclareMathOperator{\End}{End}
\DeclareMathOperator{\diag}{diag}
\DeclareMathOperator{\Grass}{Grass}
\DeclareMathOperator{\card}{card}
\DeclareMathOperator{\Stab}{Stab}
\DeclareMathOperator{\Ad}{Ad}
\DeclareMathOperator{\ad}{ad}
\DeclareMathOperator{\Vect}{Vect}
\DeclareMathOperator{\covol}{covol}
\title{Groupes arithmétiques et approximation diophantienne}
\author{Nicolas de Saxcé}
\begin{document}

\selectlanguage{french}

\newgeometry{left=2.5cm,right=2.5cm}
\thispagestyle{empty}

\begin{flushleft}
\begin{tabular}{l}
\textsc{Habilitation à diriger des recherches}\\
\textsc{CNRS -- Université Sorbonne Paris Nord}\\
\textsc{Institut Galilée}
\end{tabular}
\end{flushleft}

\vspace{4.5cm}

\begin{center}
\rule{\textwidth}{.5pt}

\Huge
Groupes arithmétiques et approximation diophantienne

\rule{\textwidth}{.5pt}

\normalsize
\vspace{2cm}
\textbf{Nicolas de Saxcé}

\end{center}

\vfill
Soutenue le 11 juin 2021 devant le jury composé de\\

\begin{tabular}{l}
M. \textbf{Julien Barral}, Professeur à l'université Sorbonne Paris Nord,\\
M. \textbf{Yves Benoist}, Directeur de recherches CNRS à l'université Paris-Saclay,\\
M. \textbf{Emmanuel Breuillard}, Professeur à l'université de Cambridge,\\
M. \textbf{Frédéric Paulin}, Professeur à l'université Paris-Saclay,\\
M. \textbf{Jean-François Quint}, Directeur de recherches CNRS à l'université de Bordeaux,\\
M. \textbf{Olivier Wittenberg}, Directeur de recherches CNRS à l'université Sorbonne Paris Nord,\\
\end{tabular}
\vspace{.5cm}

\noindent\emph{au vu des rapports de}
MM. \textbf{Julien Barral}, \textbf{Jean-François Quint} et \textbf{Manfred Einsiedler}, professeur à l'ETH Zürich.

\vspace{1cm}


\newpage
\newgeometry{left=3.7cm, right=4.7cm}

\begin{center}
\textsc{Groupes arithmétiques et approximation diophantienne}
\\[1ex]
{\bfseries \abstractname}
\end{center}
\par Nous développons une théorie de l'approximation diophantienne dans les variétés de drapeaux, obtenues comme quotient d'un groupe de Lie semi-simple défini sur $\Q$ par un sous-groupe parabolique.
En nous appuyant sur des résultats de la théorie des groupes arithmétiques, dûs entre autres à Borel et Harish-Chandra, et à Margulis et ses collaborateurs, nous démontrons dans ce cadre des généralisations des théorèmes classiques de l'approximation diophantienne.
\\[\bigskipamount]
\textbf{Mots-clefs} : groupes algébriques, dynamique homogène, approximation diophantienne
\par\noindent\hspace*{0.35\textwidth}\hrulefill\hspace*{0.35\textwidth}
\\[-\bigskipamount]

\selectlanguage{english}

\begin{center}
\textsc{Arithmetic groups and diophantine approximation}
\\[1ex]
{\bfseries \abstractname}
\end{center}
\par We develop a theory of diophantine approximation on generalized flag varieties, varieties that can be obtained as a quotient of a semisimple algebraic $\Q$-group by a parabolic $\Q$-subgroup.
Using methods from the theory of arithmetic groups, due in particular to Borel and Harish-Chandra, and to Margulis and his collaborators, we prove in this setting analogs of the classical theorems of diophantine approximation.
\\[\bigskipamount]
\textbf{Keywords} : algebraic groups, homogeneous dynamics, diophantine approximation

\selectlanguage{french}

\newpage
\thispagestyle{empty}

\section*{Remerciements}


C'est il y a à peu près cinq ans, au cours de la rédaction d'un article sur l'approximation diophantienne dans les matrices et les groupes nilpotents, avec Menny Aka, Emmanuel Breuillard et Lior Rosenzweig, que m'est venue l'idée d'une théorie de l'approximation diophantienne pour les variétés de drapeaux, dont traite le présent mémoire.
Je suis reconnaissant à mes collaborateurs de m'avoir donné l'occasion de me plonger dans ce domaine de la théorie des nombres et de la dynamique homogène.
Par ailleurs, depuis le début de ce projet, j'ai pu faire part des problèmes qui m'intéressaient à plusieurs collègues, qui m'ont tous généreusement partagé leurs idées et leurs connaissances.
Je les en remercie vivement.
Ces discussions m'ont été une aide précieuse en de nombreux points, et je tiens à mentionner ici les plus importants.

\bigskip

Tout d'abord, des discussions avec Roland Casalis sur les fibrés en droites et l'homologie m'ont fait comprendre qu'il était plus naturel de ne pas privilégier de hauteur sur une variété de drapeaux, mais plutôt d'en fixer une par le choix d'une représentation irréductible arbitraire. Cela m'a guidé vers l'énoncé de la généralisation de la correspondance de Dani présentée au chapitre~\ref{chap:correspondance}.
Ensuite, pour l'énoncé du théorème de non divergence dans un groupe semi-simple arbitraire, je me suis beaucoup appuyé sur des échanges avec Barak Weiss qui m'a notamment expliqué la représentation d'une filtration de Harder-Narasimhan d'un réseau dans $\R^d$ à l'aide d'une fonction convexe sur le segment d'entiers $[0,d]$.
Ses explications m'ont donné une meilleure compréhension du phénomène de non divergence pour $\SL_d$, indispensable avant l'étude d'un cadre plus général.
Les commentaires d'Uri Shapira sur une démonstration partielle que je lui ai présentée lors d'un séjour au Technion m'ont aussi aidé à poursuivre mon travail dans cette direction.
Enfin, alors que ce mémoire était encore en cours de rédaction, Elon Lindenstrauss m'a signalé sa démonstration, avec Margulis, Mohammadi et Shah \cite{lmms}, d'un énoncé de non divergence quantitative voisin de celui qui m'intéressait.
Ses remarques judicieuses m'ont permis d'améliorer la présentation du chapitre~\ref{chap:nondivergence}, et d'en préciser certains énoncés.
Vers la fin de ce travail, j'ai été invité par Manfred Einsiedler à participer au trimestre \enquote{Dynamics: Topology and Numbers}, organisé avec Anke Pohl et Martin Möller à l'institut Hausdorff, à Bonn.
Ce séjour m'a fourni un cadre idéal pour achever la rédaction de ce mémoire.
J'y ai notamment profité des explications de Manfred Einsiedler sur le théorème de décroissance des coefficients dans $L^2(G/\Gamma)$ présenté dans son article \cite{emmv} en commun avec Margulis, Mohammadi et Venkatesh, et nécessaire à la démonstration du théorème de Khintchine, qui fait l'objet du chapitre~\ref{chap:khintchine}.
De loin en loin, les quelques fois où j'ai eu la chance de croiser Yves Benoist sur mon chemin m'ont aussi toujours fait avancer un peu plus rapidement. Je lui dois plusieurs éclaircissements sur la théorie des groupes algébriques et arithmétiques, et en particulier la démonstration de la proposition~\ref{strat}, utile à la définition de la distance de Carnot-Carathéodory sur une variété de drapeaux.
Je tiens aussi à remercier vivement Jean-François Quint pour sa relecture attentive d'une version préliminaire de ce mémoire et pour ses commentaires détaillés, qui m'ont permis de corriger plusieurs démonstrations dont la présentation laissait à désirer.
Enfin et surtout, je dois beaucoup à Emmanuel Breuillard:
 sans ses conseils et sans les nombreuses discussions nous avons eues depuis cinq ans au cours de notre collaboration, ce texte n'aurait pas vu le jour.

\bigskip

Pour conclure, je dois encore remercier Julien Barral, Manfred Einsiedler et Jean-François Quint pour leurs rapports sur ce mémoire, ainsi qu'Yves Benoist, Emmanuel Breuillard, Frédéric Paulin et Olivier Wittenberg qui ont bien voulu faire partie du jury de soutenance.



\chapter*{Avant-propos}

Ce mémoire sur l'approximation diophantienne a été rédigé pour soutenir une habilitation à diriger des recherches.
Contrairement à l'usage le plus répandu pour ce type de texte, la plupart des résultats présentés sont nouveaux; nous expliquons brièvement ici les raisons à cela.

\bigskip

Depuis mon entrée au CNRS en 2014, ma recherche s'est divisée en deux parties.
La première concerne les propriétés d'expansion dans les groupes de Lie simples.
Mes travaux sur ce sujet prolongent ce que j'avais commencé dans ma thèse de doctorat \emph{Sous-groupes boréliens des groupes de Lie}, sous la direction d'Emmanuel Breuillard, puis en post-doctorat avec Elon Lindenstrauss.
Je poursuis encore mon travail de recherche autour de ces questions, en particulier grâce à une collaboration avec Weikun He sur l'équidistribution des marches aléatoires linéaires sur le tore et ses applications au problème du trou spectral.
J'espère d'ailleurs donner l'année prochaine un cours sur ces questions, et rédiger à cette occasion un texte d'introduction au domaine.
Cela explique en partie le choix de ne pas décrire ces travaux dans ce mémoire, pour le consacrer entièrement à mon deuxième sujet d'étude, l'approximation diophantienne.

\bigskip

Mes premiers travaux sur le sujet, en collaboration avec Menny Aka, Emmanuel Breuillard et Lior Rosenzweig, datent de mon post-doctorat à l'université Hébraïque de Jérusalem, et portaient sur les propriétés diophantiennes des groupes de Lie nilpotents.
Ils ont été pour moi l'occasion de me familiariser avec les méthodes de dynamique homogène qui ont permis depuis un peu plus de vingt ans plusieurs avancées majeures en théorie des nombres.
Celle qui nous concerne plus particulièrement ici est la résolution par Kleinbock et Margulis de la conjecture de Sprindzuk sur l'extrémalité des sous-variétés non dégénérées. 

Dans un projet avec Emmanuel Breuillard, nous avons observé que certains points de la démonstration de Kleinbock et Margulis pouvaient être mis en parallèle avec la démonstration de Schmidt du théorème du sous-espace sur l'approximation des nombres algébriques par des rationnels.
Ces démonstrations se composent de deux parties, une partie d'analyse, et une partie de géométrie.
La partie d'analyse est différente dans chacun des problèmes: pour le théorème du sous-espace, il faut étudier les points d'annulation des polynômes à plusieurs variables à coefficients entiers, tandis que pour la conjecture de Sprindzuk, il faut comprendre les petites valeurs des fonctions analytiques réelles.
Mais la partie de géométrie est essentiellement la même: il s'agit de comprendre la géométrie de l'espace des réseaux d'un espace euclidien.
Ces observations nous ont poussés à chercher une même approche pour ces deux types de résultats, soit dans la formulation d'énoncés valables dans les deux cadres, soit dans la rédaction des démonstrations.

Ensuite, grâce à Dmitry Kleinbock, je me suis intéressé à l'approximation diophantienne sur les quadriques, et notamment aux travaux récents de Kleinbock, Fishman, Merrill et Simmons sur le sujet.
On y voit que les propriétés diophantiennes dans la quadrique sont reliées au comportement des orbites diagonales dans un espace de réseaux associé au groupe orthogonal de la quadrique.
C'est cet exemple des quadriques qui m'a fait souhaiter un nouveau cadre pour l'approximation diophantienne, où le groupe linéaire $\SL_d$ serait remplacé par un groupe algébrique semi-simple général.

\bigskip

Comme il était temps pour moi d'écrire un mémoire d'habilitation à diriger des recherches, il m'a semblé approprié d'y exposer ce que j'avais compris en matière d'approximation diophantienne ces dernières années.
Un cadre convenable est celui des variétés de drapeaux, de la forme $P\bcs G$, où $G$ est un groupe algébrique semi-simple défini sur $\Q$, et $P$ un sous-groupe parabolique défini sur $\Q$.
Pour y établir les analogues des théorèmes de l'approximation diophantienne classique, on procède en deux étapes.

Tout d'abord, on établit une correspondance entre les propriétés diophantiennes des points de $X=P\bcs G$ et le comportement asymptotique des orbites diagonales dans l'espace de réseaux $G/\Gamma$, quotient de $G$ par un sous-groupe arithmétique.

Ensuite, pour pouvoir exploiter cette correspondance, certains résultats déjà connus pour l'espace des réseaux d'un espace euclidien doivent être disponibles plus généralement pour un espace $G/\Gamma$ obtenu comme quotient d'un groupe algébrique semi-simple défini sur $\Q$ par un sous-groupe arithmétique.
C'est le cas en particulier pour la non divergence de Kleinbock et Margulis, et pour les résultats obtenus avec Emmanuel Breuillard comme conséquences du théorème du sous-espace de Schmidt.
Grâce à la théorie de la réduction de Borel et Harish-Chandra, qui décrit la forme d'un domaine fondamental pour l'action de $\Gamma$ sur $G$, j'ai pu formuler et démontrer ces résultats dans ce nouveau cadre.
  
\bigskip

Ce mémoire est constitué d'une suite de chapitres liés les uns au autres.
À l'origine était prévu un chapitre pour chaque aspect de l'approximation diophantienne étudié: théorème de Khintchine, approximation des points algébriques et approximation sur des sous-variétés.
Mais je me suis éloigné de ce plan pour plusieurs raisons.
Tout d'abord, il m'a paru plus clair de dédier deux chapitres aux aspects géométriques du problème sur lesquels reposent les autres résultats: le chapitre~\ref{chap:correspondance} décrit donc la géométrie des variétés de drapeaux, et le chapitre~\ref{chap:reduction} celle des espaces de réseaux.
Ensuite, j'ai préféré présenter séparément dans les chapitres~\ref{chap:nondivergence} et \ref{chap:orbites} certains résultats généraux sur les espaces de réseaux, qui pourraient avoir d'autres applications.
Enfin, j'ai ajouté le chapitre~\ref{chap:exemples} pour décrire les résultats principaux du mémoire dans certains cas particuliers.
Ce chapitre mériterait d'ailleurs un traitement plus détaillé, puisque ce sont les exemples qui y sont donnés qui ont principalement motivé l'étude du cas général.

Si l'avenir le permet, les résultats présentés dans ce mémoire seront publiés. 
En attendant le travail de relecture et de réécriture que cela nécessitera, nous prions le lecteur de corriger avec bienveillance les erreurs ou les incohérences qu'il trouvera
dans la rédaction.

\tableofcontents

\chapter{Introduction}

Ce mémoire a pour but d'exploiter les méthodes de la théorie des groupes arithmétiques pour étudier l'approximation diophantienne dans certaines variétés, obtenues comme quotient $X=P\bcs G$ d'un groupe algébrique semi-simple $G$ défini sur $\Q$, par un sous-groupe parabolique défini sur $\Q$.
Les variétés de cette forme sont communément appelées \emph{variétés de drapeaux}.
Les exemples les plus élémentaires de telles variétés sont les espaces projectifs $\PP^n$, $n\geq 1$, les variétés grassmanniennes $\Grass(\ell,d)$, $1\leq \ell\leq d$, les quadriques projectives, et la variété des drapeaux d'un espace vectoriel.

\bigskip

Dans la suite, une variété de drapeaux $X=P\bcs G$ sera toujours munie d'une distance de Carnot-Carathéodory naturelle $d$, dont la construction est détaillée au paragraphe~\ref{sec:cc}.
Les hauteurs que nous considérerons sur $X(\Q)$ seront obtenues par plongement de $X$ dans un espace projectif.
Rappelons que si $V$ est un espace vectoriel défini sur $\Q$, le choix d'une base rationnelle $(u_i)_{1\leq i\leq d}$ de $V$ permet de définir une hauteur sur les points rationnels de $\PP(V)$: ayant fixé une norme euclidienne sur $V$ pour laquelle la base $(u_i)$ est orthonormée, si $v\in\PP(V)(\Q)$ est représenté par un vecteur primitif $\bv$ dans le réseau $\oplus_i\Z u_i$, on pose 
\[
H(v) = \norm{\bv}.
\]
Toutes les hauteurs sur $\PP(V)$ construites de cette manière sont équivalentes, i.e. comparables à une constante multiplicative près.
Ensuite, si $V_\chi$ est une représentation irréductible rationnelle de $G$ engendrée par une unique droite $[e_\chi]$ de plus haut poids $\chi$ telle que $\Stab_G[e_\chi]=P$, on obtient une hauteur $H_\chi$ sur $X$ par restriction d'une hauteur sur $\PP(V_\chi)$ en identifiant $X=P\bcs G$ à l'orbite $G[e_\chi]$ de la droite de plus haut poids dans $\PP(V_\chi)$.
L'approximation diophantienne sur $X$ est l'étude de la qualité des approximations rationnelles d'un point $x\in X(\R)$: pour un grand paramètre $T\geq 0$, on cherche à évaluer en fonction de $T$ la distance minimale à $x$ d'un point rationnel $v$ de hauteur $H_\chi(v)\leq T$.

\bigskip

\emph{Dans toute la suite $X$ désigne une variété de drapeaux, obtenue comme quotient $X=P\bcs G$ d'un groupe algébrique semi-simple $G$ défini sur $\Q$ par un sous-groupe parabolique défini sur $\Q$.
On munit $X$ de la distance de Carnot-Carathéodory usuelle, et d'une hauteur $H_\chi$ associée à une représentation rationnelle irréductible de $G$ engendrée par une unique droite rationnelle de plus haut poids~$\chi$.}

\section{Résultats principaux}

Les résultats que nous démontrons dans ce mémoire se divisent en trois catégories, suivant la manière de choisir le point $x$ dans $X(\R)$.
Les premiers résultats concernent les propriétés diophantiennes d'un point $x$ choisi aléatoirement suivant la mesure de Lebesgue sur $X(\R)$, les seconds traitent d'un point $x\in X(\QQ)$, i.e. dont les coordonnées sont des nombres algébriques, et pour les derniers, le point $x$ sera choisi aléatoirement sur une sous-variété analytique de $X(\R)$.

\subsection*{Propriétés génériques des points de $X(\R)$}

Pour commencer, on définit l'\emph{exposant diophantien} $\beta_\chi(x)$ d'un point $x\in X(\R)$ par
\[ \beta_\chi(x) =
\inf \{\beta>0\ |\ \exists c>0:\,\forall v\in X(\Q),\, d(x,v)\geq cH_\chi(v)^{-\beta}\}.
\]
Le premier résultat général que nous démontrerons est que la fonction $\beta_\chi$ est constante presque partout sur $X(\R)$.

\begin{thm}[Valeur presque sûre de l'exposant]
\label{exppsi}
Il existe une constante explicite $\beta_\chi(X)\in\Q$ strictement positive telle que pour presque tout $x\in X(\R)$,
\[
\beta_\chi(x) = \beta_\chi(X).
\]
\end{thm}

Si l'on munit $X$ de la hauteur anti-canonique, ce qui correspond à choisir $\chi$ égal à la somme des racines apparaissant dans le radical unipotent de $P$, un résultat de Franke \cite{fmt} montre que le nombre de points rationnels $v\in X(\Q)$ de hauteur au plus $T$ satisfait, pour certaines constantes $c>0$ et $b\in\N^*$,
\begin{equation}
N_{X}(T) \sim c \cdot T (\log T)^{b-1}
\hspace{1cm} (T\to+\infty)
\end{equation}
L'exposant diophantien presque sûr dans $X$ prend alors la valeur naturelle
\[
\beta_\chi(X) = \frac{1}{\dim_{cc}X},
\]
où $\dim_{cc}X$ est la dimension de Carnot-Carathéodory de $X$, unique entier tel que le nombre de recouvrement de $X(\R)$ par des boules de rayon $\delta>0$ pour la métrique de Carnot-Carathéodory satisfasse $N(X,\delta)\asymp \delta^{-\dim_{cc}X}$ lorsque $\delta$ tend vers zéro.
Mohammadi et Salehi Golsefidy \cite[Theorem~4]{msg}, ont généralisé le résultat de Franke et montré que pour toute hauteur $H_\chi$, il existe des constantes $c,u_\chi>0$ et $v_\chi\in\N^*$ pour lesquelles
\begin{equation}\label{nxt}
N_{X}(T) \sim c \cdot T^{u_\chi} (\log T)^{v_\chi-1}
\hspace{1cm} (T\to+\infty).
\end{equation}
Une application facile du lemme de Borel-Cantelli permet de montrer qu'on a toujours l'inégalité $\beta_\chi(X)\leq \frac{u_\chi}{\dim_{cc}X}$, quoique l'inégalité puisse être stricte en général.

\smallskip

Au vu du théorème~\ref{exppsi} ci-dessus, il est naturel de s'intéresser à des propriétés diophantiennes plus fines.
Par analogie avec le célèbre théorème de Khintchine \cite{khintchine}, pour une fonction $\psi:\R^+\to\R^+$, nous considérons l'inégalité
\begin{equation}\label{khini}
d(x,v) \leq H_\chi(v)^{-\beta_\chi(X)} \psi(H_\chi(v))
\end{equation}
et montrons le théorème suivant.

\begin{thm}[Théorème de Khintchine pour une variété de drapeaux]
\label{khintchinei}
Il existe des constantes explicites $a_\chi>0$ et $b_\chi\in\N^*$ pour lesquelles l'énoncé suivant est vérifié.
Soit $\psi:\R^+\to\R^+$ une fonction décroissante.
\begin{itemize}
\item Si $\int_e^{+\infty} \psi(u)^{\frac{a_\chi}{\beta_\chi(X)}} (\log\log u)^{b_\chi-1} \frac{\dd u}{u}<+\infty$, alors, pour presque tout $x\in X(\R)$, l'inégalité \eqref{khini} n'admet qu'un nombre fini de solutions $v\in X(\Q)$.
\item Si $\int_e^{+\infty} \psi(u)^{\frac{a_\chi}{\beta_\chi(X)}} (\log\log u)^{b_\chi-1} \frac{\dd u}{u}=+\infty$, alors, pour presque tout $x\in X(\R)$, l'inégalité \eqref{khini} admet une infinité de solutions $v\in X(\Q)$.
\end{itemize}
\end{thm}

Dans le cas où $X=\PP^n$ est un espace projectif, on retrouve le théorème démontré par Khintchine \cite{khintchine} en 1926.
Le résultat était déjà connu aussi lorsque $X$ est une quadrique projective, depuis les travaux remarquables de Kleinbock et Merrill \cite{kleinbockmerrill} puis de Fishman, Kleinbock, Merrill et Simmons \cite{fkms} sur le sujet.
Les constantes $a_\chi$ et $b_\chi$ se calculent facilement à l'aide d'un système de racines associé à $G$, et proviennent d'un encadrement asymptotique de la mesure de certains voisinages de l'infini dans l'espace $G/\Gamma$, où $\Gamma$ est un sous-groupe arithmétique de $G$.
On renvoie à la proposition~\ref{equi} ci-dessous pour un énoncé plus précis.
Dans le cas particulier où $X$ est munie de la hauteur anti-canonique, $a_\chi=1$ et $b_\chi$ est égal au rang rationnel de la variété $X=P\bcs G$, i.e. $b_\chi=\rang G-\rang P$.

\subsection*{Approximation des points algébriques}

Nous noterons $\QQ$ le sous-corps de $\R$ constitué des éléments algébriques sur $\Q$.
Un résultat majeur concernant l'approximation des éléments de $\QQ$ par des rationnels est le théorème de Roth \cite{roth}, qui assure que si $x\in\QQ$ est irrationnel et $\eps>0$, alors l'inégalité
\[
\Abs{x-\frac{p}{q}} \leq \frac{1}{q^{2+\eps}}, \quad p\in\Z,\ q\in\N^*
\]
n'admet qu'un nombre fini de solutions $(p,q)$.
De manière équivalente, si $\PP^1$ est muni de la distance et de la hauteur usuelles, le théorème de Roth affirme que tout élément $x\in\PP^1(\QQ)\setminus\PP^1(\Q)$ vérifie $\beta(x)=2$.
Ce théorème a d'ailleurs été généralisé par Schmidt \cite{schmidt_simultaneous}: si $x\in\PP^n(\QQ)$ n'est inclus dans aucun sous-espace projectif rationnel propre, alors $\beta(x)=1+\frac{1}{n}$.
Nous montrons que ces résultats sont encore valables dans une variété de drapeaux $X$ arbitraire: hors de certaines contraintes rationnelles, tous les points algébriques de $X$ ont même exposant diophantien, égal à l'exposant presque sûr d'un point aléatoire de $X(\R)$.

Rappelons que la variété $X=P\bcs G$ se décompose en cellules de Schubert: si $B\subset G$ désigne un $\Q$-sous-groupe parabolique minimal, $W$ le groupe de Weyl associé, et $W_P=(W\cap P)\bcs W$, alors
\[
X = \bigsqcup_{w\in W_P} P w B.
\]
Notons $X_w=\overline{PwB}$ l'adhérence de la cellule de Schubert $PwB$.
Une \emph{variété de Schubert} dans $X$ est une variété de la forme $X_wg$, pour $w\in W_P$ et $g\in G$.
Si l'élément $g$ peut être choisi dans $G(\Q)$, nous dirons que la variété de Schubert est \emph{rationnelle}.

\begin{thm}[Points algébriques extrémaux]
\label{extalgi}
Si $x\in X(\QQ)$ n'appartient à aucune sous-variété de Schubert rationnelle%
\footnote{Il suffit en fait que $x$ ne soit dans aucune sous-variété de Schubert rationnelle \emph{instable}. On renvoie au paragraphe~\ref{sec:extalg} pour plus de détails sur ce sujet.}%
, alors $\beta_\chi(x)=\beta_\chi(X)$.
\end{thm}

Plus généralement, nous obtenons une formule pour l'exposant d'un point $x\in X(\QQ)$, en termes d'une certaine variété de Schubert rationnelle le contenant.
Mais l'énoncé précis de cette formule requiert l'introduction d'autres objets associés à $X$.
Nous noterons $T$ un $\Q$-tore déployé maximal de $G$ inclus dans $B$, $A=T^0(\R)$ la composante connexe des points réels de $T$, $\ka$ l'algèbre de Lie de $A$ et $\Pi\subset\ka^*$ la base du système de racines associé à $G$ et $T$ correspondant au parabolique minimal $B$.
Ces objets sont définis plus précisément dans Borel \cite[\S11]{borel_iga} et leur construction détaillée est donnée dans Borel et Tits \cite[\S5]{boreltits}.
Insistons sur le fait que l'on s'intéresse ici aux racines de $G$ par rapport au $\Q$-tore \emph{déployé} maximal $S$; on parle parfois du système des $\Q$-racines de $G$, dont la base $\Pi$ est de cardinal égal au $\Q$-rang de $G$.

Si $\theta\subset\Pi$ est l'ensemble de racines simples associé à $P$ -- tel que les racines négatives apparaissant dans $P$ se décomposent sur $\theta$ -- on définit un unique élément $Y$ dans $\ka$ par
\[
\alpha(Y) = \left\{
\begin{array}{ll}
0 & \mbox{si}\ \alpha\in\theta\\
-1 & \mbox{si}\ \alpha\in\Pi\setminus\theta
\end{array}
\right.
\]
L'espace $\ka$ étant muni d'une norme euclidienne invariante par l'action du groupe de Weyl, nous noterons $p_{\ka^-}:\ka\to\ka^-$ la projection au plus proche voisin sur la chambre de Weyl négative
\[
\ka^-=\{u\in\ka\ |\ \forall\alpha\in\Pi,\, \alpha(u)\leq 0\}.
\]
Enfin, l'action adjointe à droite du groupe de Weyl $W$ sur $\ka$ et $\ka^*$ sera notée en exposant: pour $w\in W$, $Y^w=(\Ad w)^{-1}Y$, et $\chi^w=\chi\circ(\Ad w)$.

\begin{thm}[Exposant d'un point algébrique]
\label{expalgi}
Pour chaque élément $x\in X(\QQ)$, il existe $w\in W_P$ et $\gamma\in G(\Q)$ tels que $x\in X_w\gamma$ et
\[
\beta_\chi(x) = \frac{1}{-\bracket{\chi,Y} + \bracket{\chi^w,p_{\ka^-}(Y^w)}}.
\]
\end{thm}

Dans le théorème ci-dessus, la variété de Schubert $X_w\gamma$ est en fait entièrement déterminée par $x$.
On renvoie le lecteur au paragraphe~\ref{sec:expalg} pour le détail de sa construction.
Comme corollaire remarquable du théorème ci-dessus, on peut montrer que, dans certains cas, la valeur minimale de l'exposant $\beta_\chi(x)$, $x\in X(\QQ)$, est égale à $\beta_\chi(X)$.

\begin{cor}[Minoration de l'exposant d'un point algébrique]
Si $X=P\bcs G$, avec $P$ un sous-groupe parabolique \emph{maximal} du $\Q$-groupe semi-simple $G$, alors 
\[
\min_{x\in X(\QQ)} \beta_\chi(x) = \beta_\chi(X).
\]
\end{cor}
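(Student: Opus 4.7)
The plan is to combine Theorems~\ref{extalgi} and~\ref{expalgi}, then exploit the structural simplification that maximality of $P$ produces in the Weyl-group formula. First, the upper bound $\min_{x\in X(\QQ)} \beta_\chi(x) \leq \beta_\chi(X)$ is immediate from Theorem~\ref{extalgi}: the collection of rational Schubert subvarieties is countable and each is a proper closed subset of $X$, so one can find algebraic points avoiding all of them, and by Theorem~\ref{extalgi} any such point realizes $\beta_\chi(x) = \beta_\chi(X)$.

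For the reverse inequality, I would apply Theorem~\ref{expalgi}, which expresses the exponent of an arbitrary $x\in X(\QQ)$ as $\beta_\chi(x) = 1/\phi(w)$ with
\[
\phi(w) = -\langle\chi,Y\rangle + \langle\chi^w,\, p_{\ka^-}(Y^w)\rangle,
\]
for some $w\in W_P$. Using the $W$-invariance of the natural pairing, $\langle\chi^w,Y^w\rangle = \langle\chi,Y\rangle$, I rewrite this as
\[
\phi(w) = -\langle\chi^w,\, Y^w - p_{\ka^-}(Y^w)\rangle.
\]
Generic algebraic points lie in the open Schubert cell attached to the longest coset $w_0^P$, and by Theorem~\ref{extalgi} they realize the exponent $\beta_\chi(X)$, so $\phi(w_0^P) = 1/\beta_\chi(X)$. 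The lower bound therefore reduces to showing that $\phi$ attains its maximum on $W_P$ at $w_0^P$.

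Here the maximality of $P$ is decisive. Writing $\theta = \Pi\setminus\{\alpha_0\}$, the element $Y$ is a negative multiple of the fundamental coweight $\varpi_{\alpha_0}^\vee$, while $\chi$ is a positive multiple of the fundamental weight $\varpi_{\alpha_0}$. Identifying $\ka^*$ with $\ka$ via the $W$-invariant inner product, the vectors dual to $\chi$ and $-Y$ become parallel, so there exists $\lambda>0$ with $\tilde\chi = -\lambda Y$, and equivariantly $\tilde\chi^w = -\lambda Y^w$. Combining this with Moreau's orthogonality relation $\langle p_{\ka^-}(u),\,u - p_{\ka^-}(u)\rangle = 0$ valid for the projection onto the closed convex cone $\ka^-$, I obtain
\[
\phi(w) = \lambda\,\|Y^w - p_{\ka^-}(Y^w)\|^2,
\]
so $\phi(w)$ is proportional to the squared distance from $Y^w$ to the anti-dominant chamber $\ka^-$.

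It remains to show that on $W_P$ this distance attains its maximum at $w_0^P$. Since $Y$ lies on the wall $\{\alpha_i = 0,\ i\in\theta\}$, its stabilizer in $W$ is exactly $W_\theta$, so the orbit $\{Y^w:w\in W_P\}$ is the set of vertices of the Weyl polytope $W\cdot\varpi_{\alpha_0}^\vee$. The longest element $w_0\in W$ sends $\ka^-$ onto the opposite chamber $-\ka^-$ and in particular sends $Y$ to the orbit point farthest from $\ka^-$; the coset of $w_0$ in $W_\theta\backslash W$ is precisely $w_0^P$. This last geometric claim — essentially a monotonicity statement for the distance from a Weyl orbit of a single coweight to the anti-dominant chamber — is the main obstacle in the proof. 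I would establish it by a direct argument along the Bruhat order, showing that each simple-reflection cover $w < s_\alpha w$ in $W_P$ strictly increases $\|Y^w - p_{\ka^-}(Y^w)\|$. The collapse to a single coweight direction, forced by the maximal parabolic hypothesis, is exactly what makes this monotonicity available; in the non-maximal case $Y$ would have several independent components and $\phi$ could attain its maximum elsewhere on $W_P$.
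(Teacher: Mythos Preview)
Your setup is right and essentially the same as the paper's: the maximality of $P$ forces $\chi$ and $-Y$ to be parallel, and together with the Moreau orthogonality $\langle p_{\ka^-}(u),\,u-p_{\ka^-}(u)\rangle=0$ this is the whole point. But you take an unnecessary detour at the end and misidentify as ``the main obstacle'' something that is in fact trivial.

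Once you have $\phi(w)=\lambda\,\lVert Y^w-p_{\ka^-}(Y^w)\rVert^2=\lambda\,d(Y^w,\ka^-)^2$, there is nothing left to prove about Bruhat order or Weyl polytopes: since $0\in\ka^-$ and the norm is $W$-invariant,
\[
d(Y^w,\ka^-)\ \le\ \lVert Y^w-0\rVert\ =\ \lVert Y\rVert,
\]
so $\phi(w)\le\lambda\lVert Y\rVert^2=-\langle\chi,Y\rangle=1/\beta_\chi(X)$, which is exactly the bound you want. No monotonicity along the Bruhat order is needed.

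The paper (Corollaire~\ref{rang1}) is even more direct: instead of rewriting $\phi(w)$, it computes the complementary piece of the same Moreau decomposition,
\[
\gamma_\chi(x)=-\langle\chi^w,p_{\ka^-}(Y^w)\rangle
= n\,\langle Y^w,p_{\ka^-}(Y^w)\rangle
= n\,\lVert p_{\ka^-}(Y^w)\rVert^2\ \ge 0,
\]
and concludes immediately via $\beta_\chi(x)=\bigl(-\langle\chi,Y\rangle-\gamma_\chi(x)\bigr)^{-1}\ge\beta_\chi(X)$. Your formula and the paper's are the two halves of the Pythagorean identity $\lVert Y^w\rVert^2=\lVert p_{\ka^-}(Y^w)\rVert^2+\lVert Y^w-p_{\ka^-}(Y^w)\rVert^2$; either one finishes the proof in a single line.
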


Ce corollaire s'applique en particulier lorsque $X=\Grass(\ell,d)$ est la variété grassmannienne des sous-espaces de dimension $\ell$ dans $\R^d$, munie de la hauteur induite par le plongement de Plücker.
On obtient ainsi une réponse partielle à un problème de Schmidt \cite{schmidt_grass}:

\bigskip

\noindent\textbf{Problème ouvert:} Déterminer la quantité $\inf_{x\in \Grass(\ell,d)}\beta_\chi(x)$.

\bigskip

\noindent Plus de détails à ce sujet sont donnés au chapitre~\ref{chap:exemples}, où nous décrivons quelques exemples de variétés de drapeaux.
Nous verrons aussi dans ce chapitre que le corollaire ci-dessus est faux en général si le sous-groupe parabolique $P$ n'est pas maximal.
Il reste cependant valable dans un autre cas important: si $X=P\bcs G$ avec $G$ déployé sur $\Q$, $P$ un $\Q$-parabolique \emph{minimal}, et $H_\chi$ la hauteur anti-canonique sur $X$.

\subsection*{Approximation dans les sous-variétés}

Nous voulons maintenant décrire les propriétés diophantiennes presque sûres d'un point $x$ choisi aléatoirement sur une sous-variété analytique $M\subset X(\R)$.
Si $M$ est une variété analytique de dimension $m$, on considère une mesure $\lambda_M$ sur $M$ équivalente à la restriction à $M$ de la mesure de Hausdorff de dimension $m$.
Comme nos résultats ne dépendent de la mesure qu'à équivalence près, le choix précis de $\lambda_M$ n'aura pas d'importance dans la suite.

\bigskip

Le premier résultat que nous établissons est que l'exposant diophantien d'un point de $M$ est constant presque sûrement.
Ce fait remarquable généralise un résultat de Kleinbock \cite{kleinbock_dichotomy}, valable lorsque $X$ est un espace projectif.

\begin{thm}[Exposant d'une sous-variété analytique]
Soit $M$ une sous-variété analytique connexe de $X$.
Il existe une constante $\beta_\chi(M)$ telle que pour presque tout $x\in M$,
\(
\beta_\chi(x)=\beta_\chi(M).
\)
\end{thm}

Suivant la terminologie introduite par Sprindzuk \cite{sprindzuk}, nous dirons qu'une sous-variété analytique $M\subset X$ est \emph{extrémale} si $\beta_\chi(M)=\beta_\chi(X)$.
Dans le cas où $X=\PP^n$ est un espace projectif, une sous-variété analytique est dite \emph{non dégénérée} si elle n'est incluse dans aucun sous-espace projectif strict.
La conjecture de Sprindzuk, démontrée par Kleinbock et Margulis \cite{kleinbockmargulis} en 1998, stipulait que toute sous-variété analytique non dégénérée dans $\PP^n$ était extrémale.
Ici encore, ce résultat admet une généralisation naturelle au cadre des variétés de drapeaux.
Nous dirons qu'une sous-variété analytique $M\subset X$ est \emph{non dégénérée} si elle n'est incluse dans aucune sous-variété de Schubert propre.
Cette terminologie est bien sûr compatible avec celle déjà utilisée dans le cas ou $X$ est un espace projectif.

\begin{thm}[Extrémalité des sous-variétés non dégénérées]
Toute sous-variété analytique connexe $M$ non dégénérée%
\footnote{Ici encore il suffit de supposer que $M$ n'est incluse dans aucune sous-variété de Schubert instable, comme expliqué au paragraphe~\ref{sec:extan}.}
dans $X$ est extrémale.
\end{thm}

Notre dernier résultat mêle sous-variétés analytiques et nombres algébriques.
En prenant pour variété un singleton $M=\{x\}$, avec $x\in X(\QQ)$, on retrouvera d'ailleurs le théorème~\ref{expalgi} énoncé ci-dessus.
Nous reprenons les notations introduites juste avant le théorème~\ref{expalgi}.

\note{On pourrait améliorer un peu l'hypothèse en commençant par définir, grâce au lemme de sous-modularité peut-être, la variété de Schubert réelle $\cS(M)$ associée à $M$.
Ensuite, il suffit de supposer que $\cS(M)$ est définie sur $\QQ$.}
\note{De même, pour le théorème~\ref{expalgi}, il faudrait commencer par définir la variété de Schubert $\cS_{\Q}(x)$ associée à $x$.}

\begin{thm}[Exposant d'une sous-variété définie sur $\QQ$]
\label{expanalgi}
Soit $M$ une sous-variété analytique connexe de $X$ dont l'adhérence de Zariski est définie sur $\QQ$.
Il existe $w\in W_P$ et $\gamma\in G(\Q)$ tels que $M\subset X_w\gamma$ et pour presque tout $x\in M$,
\[
\beta_\chi(x) = \frac{1}{-\bracket{\chi,Y} + \bracket{\chi^w,p_{\ka^-}(Y^w)}}.
\]
\end{thm}

On observe que les énoncés concernant l'approximation diophantienne sur les sous-variétés sont très voisins de ceux concernant les points algébriques.
Cela s'explique par le fait que les contraintes géométriques qui apparaissent sont les mêmes dans les deux situations.
Pour les mettre en évidence, nous utilisons une correspondance entre les propriétés diophantiennes d'un point $x\in X(\R)$ et le comportement asymptotique de certaines orbites diagonales dans l'espace des réseaux d'un espace euclidien.

\section{Une correspondance}

Un \emph{réseau} $\Delta$ dans un espace euclidien $V$ est un sous-groupe discret de rang maximal.
En d'autres termes, pour une base $(v_i)_{1\leq i\leq d}$ de $V$, on peut écrire
\[
\Delta = \Z v_1 \oplus \dots \oplus \Z v_d.
\]
Le groupe linéaire $\GL(V)$ agit naturellement sur l'espace des réseaux de $V$.
Les travaux de Minkowski \cite{minkowski}
et Siegel \cite{siegel_gn} ont mis en évidence le fait que de nombreuses propriétés arithmétiques se comprennent simplement grâce à l'espace des réseaux.
Cette approche de l'arithmétique, souvent appelée \enquote{géométrie des nombres}, s'est aussi révélée particulièrement fructueuse pour l'approximation diophantienne.
Elle est en particulier à la base de la démonstration de Schmidt de son théorème du sous-espace \cite[Theorem~1F]{schmidt_da}.
Mais commençons par un exemple simple, qui illustre bien l'intérêt de l'espace des réseaux pour l'étude de l'approximation diophantienne.

\subsection*{L'espace projectif}

Nous avons défini plus haut l'exposant diophantien d'un point $x\in\PP^n(\R)$:
\[
\beta(x)=\inf\{\beta>0\ |\ \exists c>0:\,\forall v\in\PP^n(\Q),\, d(x,v)\geq cH(v)^{-\beta}\}.
\]
Si $x\in\PP^n(\R)$ s'écrit en coordonnées homogènes $x=[1:x_1:\dots:x_n]$, nous lui associons un réseau $\Delta_x$ dans $\R^d$ par la formule
\[
\Delta_x = s_x\Z^d,
\quad\mbox{où}\ 
s_x =
\begin{pmatrix}
1    &  0  &  \dots  &  0\\
-x_1 & 1   & \dots   &  0\\
\vdots & 0 & \ddots & 0 \\
-x_n & 0 & \dots & 1
\end{pmatrix}.
\]
La \emph{systole}, ou \emph{premier minimum}, d'un réseau $\Delta$ est la longueur minimale d'un vecteur non nul dans $\Delta$:
\[
\lambda_1(\Delta) = \inf\left\{ \lambda>0\ |\ \Delta\cap B(0,\lambda)\neq\{0\}\right\}.
\]
Nous considérons alors dans $\GL_d(\R)$ le sous-groupe
\[
a_t = \diag(e^{-t}, e^{t/n},\dots,e^{t/n}),
\]
et posons
\[
\gamma(x) = \limsup_{t\to+\infty} \frac{-1}{t} \log\lambda_1(a_t\Delta_x).
\]
La proposition élémentaire ci-dessous est tirée de Kleinbock et Margulis \cite[Theorem~8.5]{km_loglaws}, et généralise des résultats antérieurs de Dani \cite{dani_correspondence}.

\begin{prop}[Correspondance de Dani pour $\PP^n$]
Pour tout $x\in\PP^n(\R)$,
\[
\beta(x) = (1+\frac{1}{n})\frac{1}{1-\gamma(x)}.
\]
\end{prop}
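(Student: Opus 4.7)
The plan is to translate both sides of the desired equality into the same simultaneous Diophantine inequality, with $t$ playing the role of a free parameter that will be optimized.

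\medskip

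First I would unpack the correspondence between primitive vectors in $\Delta_x$ and rational points. Explicitly, a primitive lattice vector of $\Delta_x$ is of the form
\[
s_x(q,p_1,\dots,p_n)^T=(q,\,p_1-qx_1,\dots,p_n-qx_n)^T,
\]
with $(q,p_1,\dots,p_n)\in\Z^{n+1}$ primitive. After applying $a_t$ the first coordinate is scaled by $e^{-t}$ and the $n$ others by $e^{t/n}$. When $q>0$, the corresponding rational point $v=[q:p_1:\cdots:p_n]\in\PP^n(\Q)$ satisfies $H(v)\asymp\max(q,|p_1|,\dots,|p_n|)$; for $v$ close to $x$ this is $\asymp q$, and $d(x,v)\asymp\max_i|x_i-p_i/q|$.

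\medskip

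From there, $\lambda_1(a_t\Delta_x)\leq e^{-\gamma' t}$ is equivalent to the existence of a primitive $(q,p_1,\dots,p_n)\neq 0$ with
\[
|q|\leq e^{(1-\gamma')t}\qquad\text{and}\qquad \max_i|p_i-qx_i|\leq e^{-(1/n+\gamma')t}.
\]
Assuming $x\notin\PP^n(\Q)$, for $t$ large the second inequality forces $q\neq 0$, and we may take $q>0$. The corresponding rational point $v$ then satisfies $H(v)\asymp q$ and
\[
d(x,v)\leq \frac{C}{q}\,e^{-(1/n+\gamma')t}\leq \frac{C}{H(v)}\,e^{-(1/n+\gamma')t}.
\]
Combining this with $e^t\geq c\,H(v)^{1/(1-\gamma')}$ (which follows from $H(v)\leq Ce^{(1-\gamma')t}$) yields
\[
d(x,v)\leq C'\,H(v)^{-(1+1/n)/(1-\gamma')}.
\]

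\medskip

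For the inequality $\beta(x)\geq(1+1/n)/(1-\gamma(x))$, I would take any $\gamma'<\gamma(x)$ and, using a sequence $t_k\to+\infty$ realizing the $\limsup$ defining $\gamma(x)$, extract infinitely many rational points $v_k$ with $H(v_k)\to+\infty$ (using $x\notin\PP^n(\Q)$) that satisfy the above approximation bound. Letting $\gamma'\to\gamma(x)$ gives the desired inequality.

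\medskip

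Conversely, for $\beta<\beta(x)$ there exist infinitely many $v=[q:p_1:\cdots:p_n]\in\PP^n(\Q)$ with $d(x,v)\leq H(v)^{-\beta}$ and $H(v)\to+\infty$. For each such $v$, I would choose $t$ optimally to balance the two scales in $a_ts_x(q,p_1,\dots,p_n)^T$, namely so that $e^{-t}q\asymp e^{t/n}q\cdot H(v)^{-\beta}$, i.e.
\[
t=\frac{\beta n}{n+1}\log H(v).
\]
A direct computation then gives
\[
\lambda_1(a_t\Delta_x)\leq C\,H(v)^{1-\beta n/(n+1)}=C\,e^{-(1-(1+1/n)/\beta)\,t},
\]
hence $\gamma(x)\geq 1-(1+1/n)/\beta$. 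Letting $\beta\to\beta(x)$ yields the reverse inequality. The degenerate case $x\in\PP^n(\Q)$ is treated directly: writing $x$ with a primitive representative $(q_0,p_{01},\dots,p_{0n})$, one sees that $s_x$ sends it to $(q_0,0,\dots,0)$, so $\lambda_1(a_t\Delta_x)\leq q_0e^{-t}$ gives $\gamma(x)=1$ while $\beta(x)=+\infty$, and the formula holds with both sides infinite.

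\medskip

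The proof is essentially a bookkeeping argument: no deep ingredient is needed, only the description of $\Delta_x$ and Minkowski-type geometry of numbers. The main point requiring care is the balancing of the two scales $e^{-t}$ and $e^{t/n}$ in the $\ell^\infty$ norm, and the verification that in both directions one may optimize $t$ so that the two inequalities match exactly to produce the factor $(1+1/n)/(1-\gamma)$.
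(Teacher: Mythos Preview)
Your proof is correct and follows essentially the same approach as the paper: translate between short lattice vectors in $a_t\Delta_x$ and good rational approximations of $x$ by choosing $t$ to balance the two scales $e^{-t}$ and $e^{t/n}$. The paper does not prove this $\PP^n$ statement separately (it is cited from Kleinbock--Margulis) but proves the general flag-variety version (the proposition labelled \emph{Correspondance drapeau-réseau}) by the same balancing argument, phrased in Lie-theoretic language; your coordinate computation is exactly that argument specialized to $\PP^n$, with the simplification that here one may use $\lambda_1$ directly rather than the refined quantity $r_\chi$.
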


Le point de départ de tous les résultats démontrés dans ce mémoire est une généralisation de cette proposition, dans laquelle l'espace projectif est remplacé par une variété de drapeaux $X$ arbitraire.

\subsection*{Variétés de drapeaux}

Rappelons que $X$ peut s'écrire comme l'espace quotient $X=P\bcs G$ d'un $\Q$-groupe semi-simple $G$ par un $\Q$-sous-groupe parabolique $P$, et que $X(\R)$ est munie de la distance de Carnot-Carathéodory naturelle définie au paragraphe~\ref{sec:cc}.
Nous fixons aussi une représentation rationnelle irréductible de $G$ sur un espace vectoriel $V_\chi$ engendré, comme $G$-module, par une unique droite rationnelle de plus haut poids $e_\chi$.
Nous supposons en outre que le stabilisateur dans $G$ de la droite engendrée par $e_\chi$ est égal à $P$.
Cela permet de plonger $X$ dans l'espace projectif $\PP(V_\chi)$; la hauteur sur $X(\Q)$ obtenue par restriction de la hauteur usuelle sur $\PP(V_\chi)$ est notée $H_\chi$.

\bigskip

Pour étudier l'approximation diophantienne dans $X$, munie de la hauteur $H_\chi$, nous utiliserons l'espace des réseaux de $V_\chi$.
Ayant fixé un réseau rationnel $V_\chi(\Z)$ dans $V_\chi$, 
nous associons à chaque élément $x$ de $X(\R)$ un réseau
\[
\Delta_x=s_xV_\chi(\Z), \quad\mbox{où}\ s_x\in G\ \mbox{est tel que}\ x=Ps_x.
\]
Nous utiliserons le sous-groupe à un paramètre $(a_t)$ déjà mentionné ci-dessus:
\[
a_t = e^{tY},\quad\mbox{où}\ Y\in\ka\ \mbox{est défini par}\ 
\alpha(Y) = \left\{
\begin{array}{ll}
0 & \mbox{si}\ \alpha\in\theta\\
-1 & \mbox{si}\ \alpha\not\in\theta.
\end{array}
\right.
\]
La correspondance qui relie l'exposant diophantien $\beta_\chi(x)$ au comportement asymptotique de l'orbite $(a_t\Delta_x)_{t>0}$ met en jeu une quantité un peu plus subtile à définir que la systole d'un réseau.
Notons
\[
\tx = G\cdot e_\chi \subset V_\chi
\]
l'orbite de $e_\chi$ dans $V_\chi$ sous l'action de $G$.
L'ensemble $\tx$ est le cône dans $V_\chi$ contenant toutes les droites du plongement de $X$ dans $\PP(V_\chi)$, privé du point $0$.
L'espace $V_\chi$ se décompose en somme directe d'espaces de poids sous l'action du groupe abélien $(a_t)$, et nous notons
\[
\pi^+: V_\chi \to \R e_\chi
\]
la projection sur $e_\chi$ parallèlement à la somme de tous les autres espaces de poids.
Pour tout réseau $\Delta$ dans $V_\chi$, nous posons
\[
r_\chi(\Delta) = \inf\left\{ r>0\ |\ \exists v\in\Delta\cap B(0,r):\,
\begin{array}{c}
v \in \tx\\
\norm{\pi^+(v)} \geq \frac{\norm{v}}{2}
\end{array}
\right\}.
\]
Remarquons qu'on peut avoir $r_\chi(\Delta)=+\infty$, par exemple si $\Delta$ ne rencontre pas le cône $\tx$.
Quoiqu'il en soit, on pose
\[
\gamma_\chi(x) = \limsup_{t\to+\infty}\frac{-1}{t}\log r_\chi(a_t\Delta_x).
\]
Il n'est pas difficile de se convaincre que cette quantité ne dépend pas du choix de l'élément $s_x$ tel que $x=Ps_x$; cela est expliqué en détail au paragraphe~\ref{sec:ed}.
On peut alors relier les quantités $\beta_\chi(x)$ et $\gamma_\chi(x)$ grâce à la proposition suivante.

\begin{prop}[Correspondance drapeau-réseau]
\label{dani}
Pour tout $x\in X(\R)$,
\[
\beta_{\chi}(x) = \frac{1}{-\chi(Y)-\gamma_{\chi}(x)}.
\]
\end{prop}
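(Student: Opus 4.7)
The plan is to establish, by a purely local analysis near $x$, a two-sided asymptotic equivalence between rational points $v\in X(\Q)$ approximating $x$ and primitive vectors of $a_t\Delta_x$ satisfying the projection condition that defines $r_\chi$. First I would parametrize a neighbourhood of $x$ in $X(\R)$ by the opposite unipotent radical $\uu^-$ of $P$, so that a generic nearby point reads $v=Ps_xu$ with $u=\exp(Z)$, $Z=\sum_\alpha Z_\alpha$ decomposed along the root subspaces of $\uu^-$. Since the grading of $\uu^-$ underlying the Carnot--Carathéodory metric of Section~\ref{sec:cc} is exactly the one induced by the $\ad Y$-eigenvalues, one has
\[
d(x,v)\asymp \max_\alpha \|Z_\alpha\|^{1/\alpha(Y)}.
\]
A primitive integer representative $\bv$ of $v\in X(\Q)$ in $V_\chi$ is then proportional to $u^{-1}s_x^{-1}e_\chi$, with proportionality constant of order $H_\chi(v)$.

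Applying $a_t$ and expanding in the weight basis of $V_\chi$ yields
\[
a_ts_x\bv\;\propto\;H_\chi(v)\,e^{t\chi(Y)}\Bigl(e_\chi+\sum_\alpha e^{t\alpha(Y)}Z_\alpha\!\cdot\!e_\chi+\mathrm{O}(\|Z\|^2)\Bigr),
\]
where the sum runs over roots $\alpha$ of $\uu^-$ with $Z_\alpha\!\cdot\!e_\chi\neq 0$. From this I would deduce that the projection condition $\|\pi^+(a_ts_x\bv)\|\geq\tfrac12\|a_ts_x\bv\|$ holds, up to multiplicative constants, exactly when $\|Z_\alpha\|\leq e^{-t\alpha(Y)}$ for every such $\alpha$, hence (by the previous display) exactly when $d(x,v)\leq e^{-t}$; and under this window condition $\|a_ts_x\bv\|\asymp H_\chi(v)\,e^{t\chi(Y)}$. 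This dictionary is the crux of the argument.

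With it in hand, the proposition follows by matching exponents. For the inequality $\beta_\chi(x)\leq 1/(-\chi(Y)-\gamma_\chi(x))$: given $v\in X(\Q)$ with $d(x,v)\leq H_\chi(v)^{-\beta}$, choose $t$ so that $e^{-t}\asymp d(x,v)$; then the window condition holds and
\[
r_\chi(a_t\Delta_x)\leq\|a_ts_x\bv\|\asymp H_\chi(v)\,e^{t\chi(Y)}\leq e^{t(1/\beta+\chi(Y))},
\]
and letting $H_\chi(v)\to\infty$ along a sequence of such $v$ yields $\gamma_\chi(x)\geq-\chi(Y)-1/\beta$. Conversely, if $r_\chi(a_{t_n}\Delta_x)\leq e^{-\eta t_n}$ along a sequence $t_n\to\infty$, the witnesses are primitive integer vectors in $V_\chi(\Z)\cap\tx$ satisfying the window condition at time $t_n$, producing rational points $v_n\in X(\Q)$ with $d(x,v_n)\leq Ce^{-t_n}$ and $H_\chi(v_n)\lesssim e^{t_n(-\chi(Y)-\eta)}$; consequently $d(x,v_n)H_\chi(v_n)^\beta$ stays bounded whenever $\beta\leq 1/(-\chi(Y)-\eta)$, and letting $\eta\to\gamma_\chi(x)$ gives the reverse inequality.

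The main obstacle is the precise verification of the second step, and in particular the compatibility between the Carnot weights of $d$ and the exponents $\alpha(Y)$ governing the action of $a_t$ on $V_\chi$: both are read off the same vector $Y\in\ka$, but this coincidence is what makes the correspondence clean and must be traced back carefully to the construction of the CC distance of Section~\ref{sec:cc}. A secondary point is the well-posedness of $\gamma_\chi(x)$: replacing $s_x$ by $ps_x$ with $p\in P$ amounts to premultiplying $a_t\Delta_x$ by $a_tpa_t^{-1}$, which stays bounded in $G$ because $\alpha(Y)=0$ on the Levi of $P$ and $\alpha(Y)<0$ on its unipotent radical, and bounded left multiplications alter $r_\chi$ only by bounded factors. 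Finally, rational points far from $x$ never provide witnesses for $r_\chi$ at large $t$, so they contribute negligibly to both sides of the claimed identity.
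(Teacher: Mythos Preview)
Your overall approach is the paper's, and the forward direction is fine; but two points need fixing.

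First, the chart is on the wrong side: with $v=Ps_xu$ one gets $a_ts_x\bv\propto a_t\exp(-\Ad(s_x)Z)\,e_\chi$, and since $\Ad(s_x)$ does not preserve the root grading of $\uu^-$ for generic $s_x$, the expansion with clean factors $e^{t\alpha(Y)}$ is unjustified. The paper parametrizes as $v=Pe^{u}s_x$ with $u\in\uu^-$, so that $s_x\bv\propto e^{-u}e_\chi$ and $a_ts_x\bv\propto e^{t\chi(Y)}\exp(-(\Ad a_t)u)\,e_\chi$ with $(\Ad a_t)u\in\uu^-$; then $d(x,v)\asymp|u|$ by the Ball-box theorem and $|(\Ad a_t)u|=e^t|u|$ by the very definition of $Y$.

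Second, and this is the real gap, the backward implication of your ``exactly when'' does not follow from the first-order expansion: in that direction $(\Ad a_t)u$ is not known a priori to be small, so the $O(\|Z\|^2)$ remainder is meaningless. Writing $u=u_1+\cdots+u_r$ along the stratification of $\uu^-$, the vector $((\Ad a_t)u_2)e_\chi$ lives in the same weight spaces as $((\Ad a_t)u_1)^2e_\chi$ and could be cancelled by it. The paper resolves this by an induction on the depth: the term $((\Ad a_t)u_1)e_\chi$ \emph{is} isolated from all higher-order contributions, so the projection bound forces $\|(\Ad a_t)u_1\|\ll 1$ (using that $m_1\ni W\mapsto We_\chi$ is injective, since $\p=\Stab[e_\chi]$); one then moves the now-controlled pure-$u_1$ terms to the other side, isolates $((\Ad a_t)u_2)e_\chi$, and repeats. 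This peeling argument is the actual technical content; the compatibility of the Carnot weights with the exponents $\alpha(Y)$, which you flag as the main obstacle, is immediate from the construction of $Y$.
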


La définition de la fonction $r_\chi$, et notamment la contrainte $\norm{\pi^+(v)}\geq\frac{\norm{v}}{2}$ sur le petit vecteur $v$, rend cette proposition un peu moins simple à appliquer que son analogue pour l'espace projectif.
Elle nous sera néanmoins très utile.

\section{Espaces de réseaux}

Dans la correspondance décrite ci-dessus, les réseaux de $V_\chi$ qui interviennent sont de la forme $gV_\chi(\Z)$, avec $g\in G$.
Si on note $\Gamma$ le stabilisateur de $V_\chi(\Z)$ dans $G$, l'orbite $G\cdot V_\chi(\Z)$ s'identifie au quotient
\[
\Omega = G/\Gamma.
\]
Un tel quotient d'un groupe algébrique $G$ semi-simple défini sur $\Q$ par un sous-groupe arithmétique $\Gamma$ sera appelé un \emph{espace de réseaux}, car il s'identifie à une partie de l'espace des réseaux de l'espace euclidien $V_\chi$.
La géométrie de ces espaces est décrite avec assez de précision par la théorie de la réduction, dûe à Borel et Harish-Chandra, et très clairement exposée dans le livre de Borel \cite{borel_iga}.
Il y est notamment démontré que l'espace $\Omega$ est de volume fini pour la mesure de Haar, unique mesure de Radon invariante sous l'action de $G$, à constante multiplicative près.
Les résultats et les méthodes de \cite{borel_iga} seront d'ailleurs un outil essentiel dans tout ce mémoire.
À chacun des résultats d'approximation diophantienne dans la variété de drapeaux $X$ correspond un théorème sur l'espace de réseaux $\Omega$.
Nous décrivons maintenant ces résultats, qui, à notre avis, ont leur intérêt propre.

\subsection*{Mélange exponentiel}

Pour démontrer un théorème de Khintchine sur la variété de drapeaux $X$, nous suivons la méthode proposée par Kleinbock et Margulis \cite{km_loglaws} et utilisée aussi par Kleinbock et Merrill \cite{kleinbockmerrill} et Fishman, Kleinbock, Merrill et Simmons \cite{fkms} dans leur étude des quadriques.
Cette méthode est basée sur un théorème de décroissance exponentielle des coefficients dans la représentation $L^2(\Omega)$.
Nous admettrons ce résultat, dont la démonstration nous éloignerait trop de notre sujet d'étude, mais en donnons l'énoncé exact ici, puisqu'il sera essentiel dans la suite.
L'énoncé général ci-dessous découle d'un article récent de Einsiedler, Margulis, Mohammadi et Venkatesh \cite[\S\S 4.1 et 4.3]{emmv};
\note{Il faudrait en fait une version de l'inégalité (4.1) dans \cite{emmv} valable pour des fonctions $C^\infty$, et non seulement des vecteurs $K$-finis.
Avec les propriétés de décroissance de la fonction de Harish-Chandra (référence?), cela donnerait bien le résultat donné ici.
À compléter...}
on renvoie aussi à Kleinbock et Margulis \cite[\S3.4]{km_loglaws} pour une version antérieure.

\begin{thm}[Décroissance exponentielle des coefficients dans $L^2(\Omega)$]
\label{decay}
Soit $G$ un $\Q$-groupe semi-simple simplement connexe, $\Gamma$ un sous-groupe arithmétique, et $\Omega=G/\Gamma$.
Soit $T$ un $\Q$-tore déployé maximal de $G$, $A=T^0(\R)$ et $(a_t)$ un sous-groupe à un paramètre de $A$.
On suppose que
$\forall t>0,\ a_t=e^{tY}$, avec $Y\in\ka$ tel que pour toute projection $p_i:\g\to \g_i$ sur un facteur $\Q$-simple, $p_i(Y)\neq 0$.
Alors, il existe des constantes $\ell\in\N$ et $C,\tau>0$ telles que pour toutes fonctions $f_1,f_2\in C^\infty(\Omega)$ et tout $t>0$,
\[
\left\lvert\int_\Omega f_1(a_t\Delta)f_2(\Delta)\,m_\Omega(\dd\Delta) - m_\Omega(f_1)m_\Omega(f_2)\right\rvert
\leq C  e^{-\tau t} \norm{\Upsilon^\ell f_1}_2 \norm{\Upsilon^\ell f_2}_2,
\]
où $\Upsilon$ désigne l'opérateur différentiel $\Upsilon=1-\sum_i Y_i^2$, avec $(Y_i)$ une base orthonormée de l'algèbre de Lie $\mathfrak{k}$ d'un sous-groupe compact maximal de $G(\R)$.
\end{thm}

L'autre élément utile à la démonstration du théorème de Khintchine est un encadrement asymptotique du volume de l'ensemble $\Delta$ des éléments de $\Omega$ tels que $r_\chi(\Delta)\leq r$.
Nous vérifierons cet encadrement à l'aide de la théorie de la réduction, qui décrit un domaine fondamental pour l'action de $\Gamma$ par translation à droite sur~$G$. 

\note{L'ensemble considéré dans cette proposition n'est pas à proprement parler un voisinage de l'infini.}
\begin{prop}[Mesure d'un voisinage de l'infini]
\label{equi}
Il existe des constantes $a_\chi>0$ et $b_\chi\in\N^*$ telles que, à une constante multiplicative près dépendant de $G$ et $\Gamma$, pour tout $r<1$,
\[
m_\Omega(\{\Delta\in\Omega\ |\ r_\chi(\Delta)\leq r\}) \asymp r^{a_\chi} \abs{\log r}^{b_\chi -1}.
\]
\end{prop}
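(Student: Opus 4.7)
L'approche combine la théorie de la réduction de Borel et Harish-Chandra, qui fournit un domaine fondamental explicite pour l'action de $\Gamma$ sur $G$, et une évaluation asymptotique par la méthode de Laplace sur un cône polyédrique.

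Premièrement, j'identifierais la partie de $\Omega=G/\Gamma$ qui contient l'ensemble $\{r_\chi(\Delta)\leq r\}$ pour $r$ petit. Comme $e_\chi$ est le vecteur de plus haut poids, de stabilisateur $P$, un vecteur $v\in\Delta\cap\tx$ satisfaisant $\Norm{v}\leq r$ et $\Norm{\pi^+(v)}\geq\Norm{v}/2$ ne peut exister que si $\Delta=gV_\chi(\Z)$ avec $g$ tendant vers l'infini dans une direction conjuguée de $P$ sous $\Gamma$. Dans toute autre direction, une analyse par poids dans $V_\chi$, utilisant que $\mu(H)\geq\chi(H)$ sur la chambre $\ka^-$ pour tout poids $\mu$ de $V_\chi$, montre que tout vecteur $gw$ avec $w\in V_\chi(\Z)\cap\tx$ satisfaisant la contrainte de projection est minoré en norme par une constante strictement positive. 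La mesure asymptotique pour $r$ petit se concentre donc sur un nombre fini de composantes cuspidales, chacune contribuant au même ordre en $r$.

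Deuxièmement, sur chacune de ces composantes, je paramétrerais par un domaine de Siegel relatif $\FS_P=K\cdot A_P^-(t_0)\cdot\FS_M\cdot\omega_U$ issu de la décomposition de Langlands $P=MA_PU$, où $A_P=Z(M)^0$ est le tore central du Levi (de sorte que l'élément $Y$ de l'énoncé appartienne à $\mathrm{Lie}(A_P)$), $\FS_M$ un domaine de Siegel pour $M\cap\Gamma$ dans $M$ (de volume fini) et $\omega_U\subset U$ une partie relativement compacte. Pour $g=kamu\in\FS_P$, l'identité $Ue_\chi=e_\chi$ et le fait que $A_P$ agisse par $e^{\chi(\log a)}$ sur $e_\chi$ donnent $\Norm{ge_\chi}\asymp e^{\chi(\log a)}$, la contrainte $\Norm{\pi^+(ge_\chi)}\geq\Norm{ge_\chi}/2$ étant satisfaite pour $k$ dans un sous-ensemble de $K$ de mesure positive. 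Une comparaison des poids établit enfin que le vecteur réalisant $r_\chi(\Delta)$ est, à un facteur borné près, toujours $ge_\chi$ lui-même: tout autre $w\in V_\chi(\Z)\cap\tx$ ayant une composante non triviale sur un espace de poids $\mu\neq\chi$ fournit un vecteur $gw$ de norme minorée par $c\cdot e^{\mu(\log a)}>e^{\chi(\log a)}$ sur la chambre.

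Troisièmement, la mesure de Haar se décompose sur $\FS_P$ en $dg\asymp e^{-2\rho_U(\log a)}\,dk\,dm\,da\,du$, où $2\rho_U$ est le caractère modulaire de $P$; les intégrations sur $K$, $\FS_M$ et $\omega_U$ donnent une constante multiplicative bornée, et il reste à évaluer
\[
I(r)\;\asymp\;\int_{\{H\in\ka^-\cap\mathrm{Lie}(A_P)\,:\,\chi(H)\leq\log r\}}e^{-2\rho_U(H)}\,dH.
\]
En posant $R=-\log r$ et en changeant de variable $H=R\tilde{H}$, on se ramène à une intégrale de Laplace classique sur un cône polyédrique dont l'asymptotique, lorsque $R\to+\infty$, est de la forme $R^{b_\chi-1}e^{-a_\chi R}$: la constante $a_\chi$ est le minimum de $2\rho_U(H)$ pour $H$ dans ce cône satisfaisant $\chi(H)=-1$, et $b_\chi$ est un plus la dimension de la face minimisante. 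Ces deux quantités sont rationnelles et strictement positives, ce qui donne l'équivalent $r^{a_\chi}\abs{\log r}^{b_\chi-1}$ annoncé.

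La principale difficulté est la deuxième étape: vérifier précisément que, pour $g$ dans le domaine de Siegel relatif $\FS_P$, le vecteur court réalisant $r_\chi(gV_\chi(\Z))$ est essentiellement $ge_\chi$, à l'exclusion des autres vecteurs du réseau ayant des composantes non triviales sur des espaces de poids inférieurs. Cela nécessite de combiner la discrétude de $V_\chi(\Z)$, le caractère relativement compact de $\omega_U$ et la structure fine des domaines de Siegel développée au chapitre~\ref{chap:reduction}, afin d'obtenir un contrôle uniforme articulant la décomposition en poids et la projection $\pi^+$.
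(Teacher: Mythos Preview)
Your plan is viable but takes a genuinely different route from the paper's, and the paper's approach is notably simpler precisely where you identify your \enquote{principale difficulté}.

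\textbf{Where the paper differs.} The paper works throughout with the Siegel set $\FS=KA_\tau\omega$ attached to the \emph{minimal} parabolic $B$, not with a $P$-relative Siegel domain. The resulting integral is over the full chamber $\ka^-$, not over $\mathrm{Lie}(A_P)\cap\ka^-$; the asymptotic is then read off from the vertices $A_i=\tfrac{\log r}{\chi(Y_i)}Y_i$ of the polytope $\{h\in\ka^-:\chi(h)\le\log r\}$, giving the explicit formula $a_\chi=\min_i\rho(Y_i)/\chi(Y_i)$ with $\rho$ the sum of positive roots. More importantly, the paper never attempts to prove that the vector realising $r_\chi(\Delta)$ is $ge_\chi$. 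Instead it sandwiches $\Omega_r'$ between two sets whose measures are each easy to estimate: for the \emph{upper} bound it simply drops the projection condition, observes $\Omega_r'\subset\Omega_r$, and invokes Borel's theorem on functions of type $(P,\chi)$ (Borel, \emph{Introduction aux groupes arithmétiques}, théorème~16.9) to reduce $\min_{v\in V_\chi(\Z)\cap\tx}\norm{gv}$ to $\norm{ge_\chi}$ for $g$ in a suitable Siegel set; for the \emph{lower} bound it restricts $k$ to a small neighbourhood $V_K$ of the identity on which $\norm{\pi^+(ke_\chi)}\ge\tfrac12\norm{ke_\chi}$, so that $ge_\chi$ itself witnesses $r_\chi(\Delta)\le r$. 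No comparison with \emph{other} lattice vectors is ever needed.

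\textbf{What your route costs.} Your Step~2 is a real difficulty in the approach you chose, and your weight argument as stated does not close it: with $g=ka_Pmu\in\FS_P$ the factor $m\in\FS_M$ is unbounded, so the estimate $\norm{gw}\ge c\,e^{\mu(\log a)}$ is not uniform in $m$ without further work---the $A_P$-weight spaces are $M$-modules on which $m$ does not act isometrically. One can push this through by also unfolding $\FS_M$ and effectively recovering the minimal-parabolic picture, but that dissolves the advantage of the relative decomposition. In short, both routes lead to the same Laplace asymptotics on a cone; the paper's choice of the minimal parabolic together with the sandwich $\Omega_r'\subset\Omega_r$ is what makes the argument short, while your choice forces you to confront the very obstacle you flag at the end.
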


\subsection*{Position dans un espace de réseaux}

Si $\Omega=G/\Gamma$ est le quotient d'un $\Q$-groupe algébrique semi-simple par un sous-groupe arithmétique, nous définissons maintenant une fonction $c:\Omega\to\ka^-$ qui décrit la position d'un point dans $\Omega$, à une constante près.
C'est l'étude du comportement de cette fonction le long des orbites diagonales dans $\Omega$ qui permettra de démontrer les résultats d'approximation diophantienne présentés ci-dessus.
Dans la construction de la fonction $c$, l'idée sous-jacente est qu'un élément $g\Gamma$ dans $\Omega$ doit s'étudier simultanément dans chacune des représentations fondamentales de $G$.
Cela nous est indiqué par les méthodes de la théorie de la réduction, et en particulier \cite[\S\S 14 et 16]{borel_iga}.

\bigskip

Notons $T$ un $\Q$-tore déployé maximal dans $G$, $A=T^0(\R)$ la composante connexe des points réels de $T$, et $\ka$ l'algèbre de Lie de $A$.
Le système de racines $\Sigma$ de $G$ par rapport à $T$ s'identifie à un système de racines dans l'espace dual $\ka^*$.
Nous fixons une base de racines simples $\Pi=\{\alpha_1,\dots,\alpha_r\}$ dans $\Sigma$ et notons $\varpi_1,\dots,\varpi_r$ les poids fondamentaux associés.
Pour chaque $i$, on fixe une $\Q$-représentation $V_i$ de $G$ engendrée par une unique droite rationnelle de plus haut poids $\omega_i=b_i\varpi_i$, avec $b_i\in\N^*$ minimal, et $V_i(\Z)$ un réseau rationnel dans $V_i$ stable par l'action de $\Gamma$ et contenant un vecteur de plus haut poids $e_i$.
Suivant Borel et Tits \cite[\S12.13]{boreltits}, nous dirons que les représentations $V_i$, $i=1,\dots,r$, sont les représentations \emph{fondamentales} de $G$.
Dans l'espace vectoriel $V_i$, nous noterons $\tx_i=G\cdot e_i$ l'orbite du vecteur de plus haut poids $e_i$ sous l'action de $G$.

Les \emph{covolumes successifs} d'un élément $\bar{g}=g\Gamma$ de l'espace de réseaux $\Omega$ sont les quantités
\[
\mu_i(g) = \min\{\norm{gv}\ |\ v\in V_i(\Z)\cap\tx_i\}, \quad i=1,\dots,r.
\]
Comme les poids fondamentaux forment une base de $\ka^*$, il existe un unique élément $c_0(g)\in\ka$ tel que
\[
\forall i\in[1,r],\ \omega_i(c_0(g)) = \log\mu_i(g).
\]
Rappelons que la chambre de Weyl négative $\ka^-$ est la partie convexe fermée de $\ka$ définie par
\[
\ka^- = \{Y\in\ka\ |\ \forall \alpha\in\Pi,\, \alpha(Y)\leq 0\}.
\]
L'espace $\ka$ est muni d'une norme euclidienne invariante par l'action du groupe de Weyl, et cela permet de définir la projection $p_{\ka^-}:\ka\to\ka^-$, qui associe à $Y_0$ l'unique élément $Y\in\ka^-$ tel que $d(Y_0,\ka^-)=d(Y_0,Y)$.
\note{S'il n'y a qu'une seule pointe, et si $\Omega$ est muni d'une distance riemannienne induite par une métrique riemannienne invariante à droite sur $G$, un corollaire de la théorie de la réduction est que la distance entre deux éléments $\bar{g_1}$ et $\bar{g_2}$ dans $\Omega$ est essentiellement majorée par la norme de la différence $c(g_1)-c(g_2)$:
\[
d(\bar{g_1},\bar{g_2}) \ll \norm{c(g_1)-c(g_2)} + 1.
\]}
La fonction $c:\Omega\to\ka^-$ est alors définie par la formule
\[
c(g) = p_{\ka^-}(c_0(g)).
\]
C'est une fonction propre sur $\Omega$ que l'on peut comprendre de la façon suivante: si $g_1$ et $g_2$ sont dans la même pointe de $\Omega$, la distance de $g_1$ à $g_2$ est comparable à $\norm{c(g_1)-c(g_2)}$, à une constante additive près.

\subsection*{Le théorème du sous-espace}

Avec les notations utilisées ci-dessus, nous considérons maintenant un sous-groupe à un paramètre $(a_t)$ dans $A$.
Les démonstrations des théorèmes~\ref{extalgi} et \ref{expalgi} sont basées sur le théorème fort du sous-espace de Schmidt \cite[Theorem~3A, page 163]{schmidt_da}, grâce auquel nous déterminons un équivalent asymptotique, lorsque $t$ tend vers l'infini, de la fonction $c(a_ts)$, si $s$ appartient à $G(\QQ)$.
Pour énoncer ce résultat, il est commode d'introduire la relation d'ordre partiel sur $\ka$ donnée par
\[
Y_1\prec Y_2 \quad\Longleftrightarrow\quad \forall i\in[1,r],\ \omega_i(Y_1)\leq\omega_i(Y_2).
\]
Par exemple, si $(Y_j)_{j\in J}$ est une famille d'éléments de $\ka$, nous noterons $Y=\inf_{j\in J} Y_i$ le plus grand minorant de la famille $(Y_j)_{j\in J}$, défini par
\[
\forall i\in[1,r],\ \omega_i(Y) = \inf_{j\in J} \omega_i(Y_j).
\]
Rappelons que pour $Y\in\ka$ et $w$ dans le groupe de Weyl, on note $Y^w=(\Ad w)^{-1}Y$.

\begin{thm}[Orbites diagonales des points algébriques dans $\Omega$]
Soient $Y\in\ka^-$ un élément arbitraire, $(a_t)=(e^{tY})$ le sous-groupe à un paramètre associé, et $P$ le sous-groupe parabolique associé à $(a_t)$:
\[
P=\{g\in G\ |\ \lim_{t\to+\infty}a_tga_{-t}\ \mbox{existe}\}.
\]
Pour $w\in W_P$, nous notons $X_w=\overline{PwB}$ la variété de Bruhat standard correspondante.
Étant donné $s\in G(\QQ)$ posons
\[
c_\infty = \inf\{ p_{\ka^-}(Y^w)\ ;\ w\in W_P:\,\exists\gamma\in G(\Q):\, s \in X_w\gamma\} \in \ka^-.
\]
Alors,
\[
\lim_{t\to+\infty} \frac{1}{t}c(a_ts) = c_\infty
\]
et il existe une variété de Bruhat\footnote{Une variété de la forme $X_w\gamma$ sera dite variété de Bruhat si elle est vue comme une partie de $G$, et variété de Schubert si elle est vue comme une partie de $X=P\bcs G$.}
rationnelle $X_w\gamma$, $w\in W_P$, $\gamma\in G(\Q)$ contenant $x$ et telle que $p_{\ka^-}(Y^w)=c_\infty$.
\end{thm}

\comm{
La variété de Bruhat $X_w\gamma=\overline{PwB\gamma}$ n'est pas unique, car elle dépend du choix de l'élément $\gamma_\infty$ introduit au théorème~\ref{diagalg}.
Mais si l'on introduit le sous-groupe parabolique $Q_\infty$ associé à $c_\infty$, alors la variété $\overline{PwQ_\infty\gamma}$ contenant $x$ et vérifiant $p_{\ka^-}(Y^w)=c_\infty$ est uniquement déterminée.
}

En fait, la démonstration de ce théorème permet aussi de contrôler partiellement l'élément $\gamma_t\in G(\Q)$ qui apparaît pour $t>0$ grand dans une décomposition de Siegel $a_ts=k_tb_tn_t\gamma_t$.
Nous renvoyons au chapitre~\ref{chap:algebrique}, théorème~\ref{diagalg} pour un énoncé précis.

\subsection*{Non divergence quantitative}

Le dernier résultat dont nous aurons besoin sur l'espace de réseaux $\Omega$ est un énoncé de non divergence quantitative: étant donnée une mesure borélienne $\mu$ sur $G$ on cherche à contrôler la valeur de $c(g)$ lorsque l'élément $g$ est choisi aléatoirement suivant la mesure $\mu$.
C'est pour étudier le comportement des flots unipotents dans l'espace $\SL_d(\R)/\SL_d(\Z)$ que ce type de résultat a été introduit par Margulis \cite{margulis}.
Ensuite, les travaux de Dani \cite{dani_nd2}, puis de Kleinbock et Margulis \cite{kleinbockmargulis} ont permis d'aboutir à un énoncé quantitatif adapté à l'approximation diophantienne.
Mais les théorèmes montrés par Kleinbock et Margulis, ou plus récemment par Lindenstrauss, Margulis, Mohammadi et Shah \cite{lmms} ne concernent que le groupe $\SL_d$.
Pour les applications que nous avons en vue, il est important d'avoir un résultat qui s'applique dans un $\Q$-groupe semi-simple arbitraire, et qui en respecte la géométrie.
Ici encore, c'est la fonction $c:G\to\ka^-$ qui permet de formuler l'énoncé adéquat.

\bigskip

Nous aurons même besoin d'étendre la fonction $c$ aux parties compactes de $G$.
Si $S\subset G$ est une partie compacte, on pose donc, pour chaque $i\in[1,r]$,
\[
\mu_i(S) = \min_{v\in V_i(\Z)\cap\tx_i} \max_{s\in S} \norm{sv}.
\]
Cela permet de définir un élément $c_0(S)\in\ka$ par
\[
\forall i\in[1,r],\ \omega_i(c_0(S)) = \log\mu_i(S),
\]
puis, comme précédemment,
\[
c(S) = p_{\ka^-}(c_0(S)).
\]
Dans toute la suite, le groupe $G$ est muni d'une métrique riemannienne arbitraire.
Si $\mu$ est une mesure borélienne sur $G$, $U$ un ouvert de $G$, et $C,\alpha>0$ deux constantes, nous dirons qu'une fonction mesurable $f:U\to\R$ est \emph{$(C,\alpha)$}-régulière sur $U$ si pour toute boule $B=B(x,r)\cap U$ de $U$ centrée en $x\in\Supp\mu$, et pour tout $\eps>0$,
\[
\mu(\{g\in B\ |\ \abs{f(g)} \leq \eps \norm{f}_{B,\mu}\}) \leq C \eps^\alpha \mu(B),
\]
où l'on note
\[
\norm{f}_{B,\mu}=\sup\{\abs{f(y)}\ ;\ y\in B\cap\Supp\mu\}.
\]
Nous montrerons le théorème suivant.

\begin{thm}[Non divergence quantitative dans $G$]
\label{ndi}
Étant données deux \- constantes $C_0,\alpha_0>0$, il existe $C,\alpha>0$ tels qu'on ait la propriété suivante.

Soit $\mu$ une mesure de Radon sur $G$ et $B(x,\rho)\subset G$ une boule satisfaisant
\[
\forall i\in[1,r],\,\forall v\in\tx_i\cap V_i(\Z),\ 
g\mapsto\norm{g v}
\ \mbox{est $(C_0,\alpha_0)$-régulière sur $B(x,5\rho)$ pour $\mu$.}
\]
Alors, pour tout $\eps>0$, notant $S=\Supp\mu\cap B(x,\rho)$,
\[
\mu(\{g\in B(x,\rho)\ |\ \norm{c(g)-c(S)} \geq -\log\eps\}) \leq C\eps^\alpha\mu(B(x,\rho)).
\]
\end{thm}

Pour l'approximation diophantienne dans les sous-variétés, ce théorème jouera un rôle analogue à celui du théorème du sous-espace de Schmidt pour l'approximation diophantienne des points algébriques, et nous permettra notamment de démontrer les deux résultats ci-dessous.
Nous dirons qu'une mesure borélienne sur $G$ est \emph{localement régulière} en un point $s_0$ dans $G$ s'il existe une boule ouverte $B=B(s_0,r)$ et des constantes $C,\alpha>0$ telles que
\[
\begin{array}{l}
\forall i\in[1,r],\ \forall v\in\tx_i\cap V_i(\Z),\ \forall g\in G,\\
\qquad s\mapsto\norm{g s v}\ \mbox{est $(C,\alpha)$-régulière sur}\ B\ \mbox{pour}\ \mu.
\end{array}
\]

\begin{thm}[Orbites diagonales partant d'une mesure régulière]
\label{diagani}
Soit $(a_t)_{t>0}$ un sous-groupe à un paramètre dans $A$, et $\mu$ une mesure sur $G$ localement régulière en $s_0$.
Il existe une boule ouverte $B$ centrée en $s_0$ telle que pour $\mu$-presque tout $s\in B$, notant $S=B\cap\Supp\mu$,
\[
\lim_{t\to+\infty} \frac{1}{t}(c(a_ts) - c(a_tS)) = 0.
\]
\end{thm}

En d'autres termes, au voisinage de $s_0$, du point de vue de la fonction $c$, presque toutes les orbites suivent le même comportement asymptotique, qui ne dépend que du support de $\mu$ au voisinage de $s_0$.
Si $S$ est un ensemble algébrique irréductible de dimension $m$ dans $G$, on le munit de sa mesure de Lebesgue $\mu$, qui n'est autre que la restriction à $S$ de la mesure de Hausdorff de dimension $m$.
En tout point non singulier de $S$, la mesure $\mu$ est localement régulière, et cela permet d'appliquer le théorème ci-dessus.
Dans le cas où l'ensemble algébrique $S$ est défini sur $\QQ$, nous pourrons même montrer, à l'aide de nos résultats sur les nombres algébriques, que l'expression $\frac{1}{t}c(a_tS)$ converge lorsque $t$ tend vers l'infini, et en déduire le résultat suivant.

\begin{thm}[Orbites diagonales et ensembles algébriques]
\label{diaganalgi}
Soient $Y\in\ka^-$ un élément arbitraire, $(a_t)=(e^{tY})$ le sous-groupe à un paramètre associé, et $P$ le sous-groupe parabolique associé à $(a_t)$:
\[
P=\{g\in G\ |\ \lim_{t\to+\infty}a_tga_{-t}\ \mbox{existe}\}.
\]
Pour $w\in W_P$, nous notons $X_w=\overline{PwB}$ la variété de Bruhat standard correspondante.
Étant donné un ensemble algébrique irréductible $S$ dans $G$ défini sur $\QQ$, posons
\[
c_\infty = \inf\{ p_{\ka^-}(Y^w)\ ;\ w\in W_P:\,\exists\gamma\in G(\Q):\, S \subset X_w\gamma\} \in \ka^-.
\]
Alors, pour presque tout $s$ dans $S$,
\[
\lim_{t\to+\infty} \frac{1}{t}c(a_ts) = c_\infty.
\]
et il existe une variété de Bruhat rationnelle $X_w\gamma$, $w\in W_P$, $\gamma\in G(\Q)$ contenant $S$ et telle que $p_{\ka^-}(Y^w)=c_\infty$.
\end{thm}

Ici encore, on renvoie au théorème~\ref{diaganalg} pour un énoncé plus précis.
C'est à l'aide de ce dernier résultat que nous démontrerons le théorème~\ref{expanalgi}.

\chapter{Une correspondance}
\label{chap:correspondance}

\note{On pourrait prendre $G$ réductif, mais cela ne permet pas d'étudier plus de variétés de drapeaux $X$. D'ailleurs, si on impose même que $G$ soit sans facteur compact et simplement connexe, on obtient encore le même ensemble de variétés $X=P\bcs G$.}

Dans tout ce chapitre, nous considérerons une variété de drapeaux $X$, obtenue comme un espace quotient $X=P\bcs G$ d'un $\Q$-groupe semi-simple $G$ par un $\Q$-sous-groupe parabolique $P$.
Après avoir défini des hauteurs et des distances sur $X$, nous définirons l'exposant diophantien d'un point de $X$, que nous mettrons en relation avec le comportement asymptotique de certaines orbites dans l'espace des réseaux d'une représentation rationnelle bien choisie de $G$.

\section{Hauteurs}
\label{sec:hauteur}

Quitte à remplacer $G$ par son revêtement universel, nous pouvons supposer sans perte de généralité que le groupe $G$ est simplement connexe.
Fixons dans $G$ un $\Q$-sous-groupe parabolique minimal $B\subset P$, et $T\subset B$ un sous-tore $\Q$-déployé maximal.
Le choix de $B$ induit un ordre sur le groupe $X^*(T)$ des caractères de $T$, et donc une base $\Pi$ du système de racines associé à $G$ et $T$.
Pour la théorie générale des représentations rationnelles de $G$, on renvoie à Borel et Tits \cite[\S12]{boreltits}.

Pour définir une hauteur sur $X=P\bcs G$, nous partirons d'une $\Q$-représentation linéaire à gauche
\[
\rho:G\to\GL(V_\chi)
\]
engendrée par un unique vecteur $e_\chi\in V_\chi(\Q)$ de plus haut poids $\chi$.
On suppose en outre que si $[e_\chi]\in\PP(V)$ est la direction engendrée par $e_\chi$, alors
\[ \Stab_G[e_\chi] = P.\]
La variété $X=P\bcs G$ s'identifie naturellement à l'orbite de la droite $[e_\chi]$ dans l'espace projectif $\PP(V)$ via l'application
\note{Comme toutes les hauteurs sur $\PP(V_\chi)(\Q)$ sont équivalentes, la hauteur $H_\chi$ ne dépend pas du choix de la hauteur sur $\PP(V_\chi)(\Q)$, à équivalence près.
}
\[\begin{array}{cccc}
\iota: & X & \to & \PP(V)\\
& Pg & \mapsto & g^{-1}[e_\chi],
\end{array}\]
et comme cette identification préserve les points rationnels, cela permet de définir la hauteur $H_\chi$ sur $X$, par restriction.
\note{Plus généralement, on pourrait considérer toutes les hauteurs sur $X$ obtenues à partir d'un morphisme rationnel $X\to\PP^N$.
Mais la description de ces morphismes à l'aide des classes de diviseurs montre qu'à équivalence près, toute hauteur sur $X(\Q)$ est de la forme $H_\chi$, pour une certaine représentation irréductible $\rho$ totalement rationnelle de poids dominant $\chi$.
(Trouver une référence explicite sur le sujet, ou écrire la démonstration.)
}
Plus précisément, ayant fixé une base rationnelle $(u_i)_{1\leq i\leq d}$ de $V$ et une norme euclidienne sur $V$ pour laquelle cette base est orthonormée, on définit une hauteur sur $\PP(V)$ par la formule
\[
H(u) = \norm{\bu},
\]
où $\bu$ est un représentant primitif de $u$ dans le réseau $\oplus_{1\leq i\leq d}\Z u_i$, et la hauteur sur $X$ est alors donnée par
\[
H_\chi(v) = H(\iota_\chi(v)).
\]

Pour les variétés de drapeaux munies d'une hauteur comme ci-dessus, on dispose d'un équivalent asymptotique pour le nombre de points de hauteur bornée.
Cela a été observé en premier lieu par Schanuel \cite{schanuel} pour l'espace projectif, puis démontré par Franke \cite{fmt} pour une variété de drapeaux $X$ munie de la hauteur anti-canonique.
La version générale que nous citons ici est due à Mohammadi et Salehi Golsefidy \cite[Theorem~4]{msg}.

\begin{theorem}[Nombre de points rationnels de hauteur bornée]
\label{hborne}
Soit $X$ une variété de drapeaux définie sur $\Q$, munie d'une hauteur $H_\chi$ associée au poids dominant $\chi$.
Il existe des constantes $c, u_\chi>0$ et $v_\chi\in\N^*$ telles que la quantité
\[
N_\chi(T) = \card\{ v\in X(\Q)\ |\ H_\chi(v)\leq T\}
\]
vérifie
\[
N_\chi(T) \sim  c\cdot T^{u_\chi}(\log T)^{v_\chi-1}
\quad\mbox{lorsque $T$ tend vers}\ 
+\infty.
\]
\end{theorem}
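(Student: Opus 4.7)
Le plan consiste à traduire le problème de comptage en une question analytique sur une fonction zêta des hauteurs, puis à en extraire l'équivalent asymptotique par un théorème taubérien. On introduit donc la série de Dirichlet
\[
Z_\chi(s) = \sum_{v\in X(\Q)} H_\chi(v)^{-s}
\]
et l'on commence par vérifier qu'elle converge absolument dans un demi-plan $\{\Re s > u_\chi\}$ pour un certain $u_\chi>0$. Si l'on parvient à la prolonger méromorphiquement jusqu'à la droite $\Re s = u_\chi$, en un unique pôle en $s=u_\chi$ d'ordre $v_\chi\in\N^*$, sans autre singularité sur cette droite, un théorème taubérien de type Wiener-Ikehara-Delange fournit immédiatement
\[
N_\chi(T) \sim c\cdot T^{u_\chi}(\log T)^{v_\chi-1},
\]
la constante $c$ étant proportionnelle au résidu principal de $Z_\chi$ en $u_\chi$.

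La première étape substantielle consiste à identifier $Z_\chi(s)$ à une série d'Eisenstein sur $G$. Via l'identification $X(\Q)=P(\Q)\bcs G(\Q)$ et une écriture adélique de la hauteur $H_\chi$ comme produit de contributions locales aux places archimédienne et finies, on récrit $Z_\chi$ comme une somme sur $P(\Q)\bcs G(\Q)$ d'une fonction sur $G(\mathbb{A})$ invariante à gauche par $P(\mathbb{A})$, à savoir le quasi-caractère de paramètre $s\chi$ évalué via la décomposition d'Iwasawa. On reconnaît alors, après correction par le caractère modulaire $2\rho_P$ de $P$, l'évaluation en l'identité d'une série d'Eisenstein $E(\,\cdot\,,s\chi-2\rho_P)$ induite de $P(\mathbb{A})$ à $G(\mathbb{A})$.

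La difficulté principale se concentre alors sur le prolongement méromorphe de cette série d'Eisenstein et la localisation précise de son pôle le plus à droite. C'est ici qu'intervient la théorie spectrale des séries d'Eisenstein développée par Langlands et M\oe glin-Waldspurger. L'abscisse de convergence $u_\chi$ se caractérise comme le plus petit réel positif tel que la forme linéaire $u_\chi\chi - 2\rho_P$ atteigne le bord du cône positif engendré par les racines simples, et l'ordre $v_\chi$ du pôle s'interprète comme la codimension de la face de la chambre de Weyl atteinte par cette forme; dans le cas anti-canonique où $\chi=2\rho_P$, on retrouve ainsi $u_\chi=1$ et $v_\chi=\rang G-\rang P$, en accord avec Franke. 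La stricte positivité du résidu, et donc de $c$, se lit finalement sur la formule de Gindikin-Karpelevich pour les termes constants et sur la positivité des volumes des tores associés. Les points techniques les plus délicats, et qui distinguent le cas général (traité par Mohammadi et Salehi Golsefidy) du cas anti-canonique de Franke, sont le contrôle des termes constants des séries d'Eisenstein au bord du domaine de convergence et l'exclusion de pôles parasites sur la droite $\Re s = u_\chi$, ce qui nécessite un choix soigneux de $\chi$ dans le cône des poids et l'exploitation de la convexité du domaine de convergence.
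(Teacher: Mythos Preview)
Le mémoire ne contient pas de démonstration de ce théorème: il est simplement cité comme résultat connu, dû à Schanuel pour l'espace projectif, à Franke pour la hauteur anti-canonique, et à Mohammadi et Salehi Golsefidy \cite[Theorem~4]{msg} dans le cas général. Il n'y a donc pas d'argument du mémoire auquel comparer ta proposition.

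Cela dit, ton esquisse décrit fidèlement la stratégie effectivement employée dans ces références: fonction zêta des hauteurs, identification à une série d'Eisenstein induite de $P$, prolongement méromorphe via la théorie de Langlands, localisation du pôle principal, et théorème taubérien. L'interprétation que tu donnes de $u_\chi$ et $v_\chi$ en termes de la face du cône atteinte par $u_\chi\chi-2\rho_P$ est bien celle de \cite{msg}, et elle est d'ailleurs reprise dans la remarque suivant la proposition~\ref{asymp} du mémoire. Ton plan est donc correct comme résumé de la littérature, mais il ne s'agit pas d'une comparaison avec une preuve interne au mémoire, puisqu'il n'y en a pas.
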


Les constantes $u_\chi$ et $v_\chi$ sont facilement calculables à l'aide d'un système de racines associé à $G$, comme expliqué dans \cite{msg}.

\section{Distance de Carnot-Carathéodory}
\label{sec:cc}

Nous fixons maintenant un point réel $x$ de $X=P\bcs G$, et expliquons comment évaluer la distance à $x$ d'une approximation $v$.
La distance pour laquelle nous pourrons obtenir des résultats satisfaisants sur les exposants diophantiens est une distance de Carnot-Carathéodory, qui n'est pas riemannienne en général, sauf si le radical unipotent de $P$ est abélien.

\bigskip

Soit $T$ un sous-tore $\Q$-déployé maximal de $G$ inclus dans $P$.
L'algèbre de Lie $\g$ se décompose sous l'action de $T$ en sous-espaces de racines:
\[
\g = \kz \oplus\left(\bigoplus_{\alpha\in\Sigma} \g_\alpha\right),
\]
où
\[
\kz=\{u\in\g\ |\ \forall t\in T,\, (\Ad t)u=u\}
\]
et
\[
\g_\alpha=\{u\in\g\ |\ \forall t\in T,\, (\Ad t)u=\alpha(t)u\}.
\]
D'après \cite[\S11.7]{borel_iga}, si $\Pi$ est une base du système de racines $\Sigma$, il existe une partie $\theta\subset\Pi$ telle que l'algèbre de Lie $\p$ du sous-groupe parabolique $P$ s'écrive
\[
\p = \kz\oplus\left(\bigoplus_{\alpha\in\Sigma^+\cup\bracket{\theta}^-} \g_\alpha\right),
\]
où $\Sigma^+$ désigne l'ensemble des racines positives, et $\bracket{\theta}^-\subset\Sigma$ l'ensemble des racines négatives qui s'écrivent
\[
\alpha = -\sum_{\beta\in\theta} n_{\beta}\beta, \quad n_{\beta}\in\N.
\]
L'algèbre de Lie $\uu^-$ du sous-groupe unipotent $U^-$ opposé à $P$ se décompose en somme directe
\[
\uu^- = \bigoplus_{i\geq 1} m_i,
\]
où $m_i$ est la somme de tous les espaces de racines $\g_\alpha$, où $-\alpha$ est une racine positive contenant exactement $i$ éléments hors de $\theta$ dans sa décomposition en racines simples, avec multiplicité.
La proposition suivante peut se vérifier au cas par cas pour chaque système de racines, en utilisant les tables de Bourbaki \cite[planches~I à IX]{bourbaki_gal4-6};
la démonstration plus conceptuelle incluse ci-dessous provient de la thèse de doctorat de Sébastien Miquel, et nous a été communiquée par Yves Benoist.

\begin{proposition}[Stratification de $\uu^-$]
\label{strat}
La décomposition $\uu^-=\bigoplus_{i\geq 1} m_i$ est une stratification de $\uu^-$, i.e. pour chaque $i$,
\[
[m_1,m_i] = m_{i+1}.
\]
\end{proposition}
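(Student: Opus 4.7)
L'inclusion $[m_1, m_i] \subseteq m_{i+1}$ est immédiate. En effet, la décomposition $\uu^- = \bigoplus_{i \geq 1} m_i$ coïncide avec la décomposition en sous-espaces propres pour l'action adjointe de l'élément $Y \in \ka$ défini dans l'introduction par $\alpha(Y) = 0$ pour $\alpha \in \theta$ et $\alpha(Y) = -1$ pour $\alpha \in \Pi \setminus \theta$ : une racine négative $\alpha$ de profondeur $i$ vérifie $\alpha(Y) = i$, de sorte que $m_i$ est le sous-espace propre de $\ad Y$ associé à la valeur propre $i$. Le crochet de Lie étant compatible à cette graduation, on a bien $[m_i, m_j] \subseteq m_{i+j}$ pour tous $i, j \geq 1$.

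Pour l'inclusion réciproque $m_{i+1} \subseteq [m_1, m_i]$, une récurrence immédiate sur $i$ ramène la proposition à l'affirmation que $\uu^-$ est engendrée, en tant qu'algèbre de Lie, par sa composante $m_1$. Je propose de démontrer cette génération en étendant les scalaires à $\overline{\Q}$ : le groupe $G$ y devient déployé, donc chaque espace radiciel absolu $\widetilde{\g}_{\widetilde{\alpha}}$ est de dimension $1$, et chaque $m_i \otimes \overline{\Q}$ est la somme directe des $\widetilde{\g}_{\widetilde{\alpha}}$ sur les racines absolues négatives dont la restriction à $T$ est de profondeur $i$. Puisque la sous-algèbre engendrée par $m_1$ est définie sur $\Q$, il suffira d'établir l'engendrement dans le cas déployé, où l'on dispose de la structure radicielle complète.

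On se ramène alors à l'énoncé combinatoire suivant sur le système de racines absolu : pour toute racine négative $\widetilde{\alpha}$ de profondeur $n \geq 2$, il existe une décomposition $\widetilde{\alpha} = \widetilde{\beta} + \widetilde{\gamma}$ avec $\widetilde{\beta}$ et $\widetilde{\gamma}$ des racines négatives de profondeurs respectives $1$ et $n-1$. La dimension $1$ des espaces radiciels force alors $[\widetilde{\g}_{\widetilde{\beta}}, \widetilde{\g}_{\widetilde{\gamma}}] = \widetilde{\g}_{\widetilde{\alpha}}$, ce qui conclura. C'est cet énoncé combinatoire qui constitue le cœur de la démonstration, et j'attends que ce soit le point le plus délicat. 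L'approche naïve consistant à soustraire de $-\widetilde{\alpha}$ une racine simple de $\Pi \setminus \theta$ échoue en général dans les systèmes non simplement lacés : par exemple, dans $G_2$ avec $\theta = \{\alpha_2\}$, la racine $-(3\alpha_1 + 2\alpha_2)$ n'est pas de la forme $-\alpha_1 + \text{(racine négative)}$ puisque $2\alpha_1 + 2\alpha_2$ n'est pas une racine. Il faut donc autoriser $\widetilde{\beta}$ à être une racine de profondeur $1$ non nécessairement simple (dans l'exemple ci-dessus, $\widetilde{\beta} = -(\alpha_1 + \alpha_2)$ et $\widetilde{\gamma} = -(2\alpha_1 + \alpha_2)$ conviennent). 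Pour établir l'existence d'une telle décomposition en toute généralité, je prévois soit une analyse combinatoire des chaînes de racines positives combinée à l'action du groupe de Weyl $W_\theta$ (qui permet de supposer $-\widetilde{\alpha}$ dominante par rapport à $\theta$), soit d'invoquer le résultat classique sur les sous-algèbres paraboliques d'une algèbre de Lie semi-simple selon lequel chaque composante graduée du radical nilpotent opposé est engendrée par crochets itérés à partir de son plus bas cran.
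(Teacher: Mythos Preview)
Your overall strategy is the same as the paper's: the easy inclusion via the $\ad Y$-grading, passage to an algebraically closed field where $G$ splits and the root spaces become one-dimensional, and reduction to a combinatorial statement on roots. Your $G_2$ example correctly pinpoints the subtlety.

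However, your proof is incomplete precisely at the key combinatorial step. Of the two options you offer, the second is circular: the ``résultat classique'' that each graded piece of the nilpotent radical is generated by iterated brackets from $m_1$ is equivalent to the statement $m_{i+1} = [m_1, m_i]$ being proved (via the standard identity for the Lie subalgebra generated by a subspace). The first option (root strings and the $W_\theta$-action) is plausible but you do not carry it out, so as written the proposal does not establish the proposition.

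The paper's argument for this step runs as follows. Given $\g_\beta \subset m_i$ with $i \geq 2$, one uses Serre's lemma to write $\beta$ as an ordered sum $\alpha_1 + \cdots + \alpha_k$ where each $\alpha_j$ is (the negative of) a simple root and every partial sum is a root. One then proves $\g_\beta \subset [m_1, m_{i-1}]$ by induction on $k \geq i$. If the last term $\alpha_k$ has depth $1$ (the corresponding simple root lies in $\Pi \setminus \theta$), it splits off directly: $\g_\beta = [\g_{\alpha_k}, \g_{\beta - \alpha_k}] \subset [m_1, m_{i-1}]$. If $\alpha_k$ has depth $0$ (simple root in $\theta$), then $\beta' = \beta - \alpha_k$ has the same depth $i$ but shorter length $k-1$; by induction $\g_{\beta'} \subset [m_1, m_{i-1}]$, and the Jacobi identity together with $[\g_{\alpha_k}, m_j] \subset m_j$ gives
\[
\g_\beta = [\g_{\beta'}, \g_{\alpha_k}] \subset [[m_1, m_{i-1}], \g_{\alpha_k}] \subset [[m_1,\g_{\alpha_k}], m_{i-1}] + [m_1, [m_{i-1}, \g_{\alpha_k}]] \subset [m_1, m_{i-1}].
\]
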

\begin{proof}
Si $X\in\g_\alpha$ et $Y\in\g_\beta$, alors $[X,Y]\in\g_{\alpha+\beta}$, donc pour tout $i$, $[m_1,m_i]\subset m_{i+1}$.
Pour l'inclusion réciproque, supposons tout d'abord que le groupe est déployé sur $\Q$, de sorte que si $\alpha$, $\beta$ sont deux racines telles que $\alpha+\beta\in\Sigma$ est non nulle, alors $[\g_\alpha,\g_\beta]=\g_{\alpha+\beta}$.
\note{Si $G$ est déployé, chaque $\g_\alpha$ est de dimension 1, et donc $[\g_\alpha,\g_\beta]=\g_{\alpha+\beta}$.
Ce résultat est d'ailleurs vrai en général, référence ?}
Soit $i\geq 2$ et $\beta$ une racine contenant $i$ racines simples hors de $\theta$; on veut voir que $\g_\beta\subset[m_1,m_{i-1}]$.
\note{\cite[Chapter 5, \S9, Proposition~5, page 32]{serre_alssc}}
D'après \cite[chapitre 5, \S9, proposition~5, page 32]{serre_alssc}
on peut décomposer $\beta$ en somme de racines simples
\[ \beta = \alpha_1+\dots+\alpha_k\]
de sorte que pour chaque $i$, $\alpha_1+\dots+\alpha_i$ soit une racine.
Montrons par récurrence sur $k\geq i$ que
\[
\g_\beta \subset [m_1,m_{i-1}].
\]
Le résultat est clair pour $k=i$, car alors pour tout $j\in[1,k]$, $\alpha_j\not\in\theta$, et par suite $\g_\beta=[\g_{\alpha_1},\g_{\alpha_2+\dots+\alpha_k}]\subset[m_1,m_{i-1}]$.
Supposons donc $k>i$.
Si $\alpha_k\not\in\theta$, on a ce qu'on veut: avec $\beta'=\alpha_1+\dots+\alpha_{k-1}$, on a $\beta=\beta'+\alpha_k$ et donc $\g_\beta=[\g_{\alpha_k},\g_{\beta'}]\subset[m_1,m_{i-1}]$.
Si $\alpha_k\in\theta$, l'hypothèse de récurrence sur $k$ permet d'écrire
\[
\g_{\beta'} \subset [m_1,m_{i-1}].
\]
Par conséquent
\[
\g_\beta = [\g_{\beta'},\g_{\alpha_k}] \subset [[m_1,m_{i-1}],\g_{\alpha_k}]
\subset [[m_1,\g_{\alpha_k}],m_{i-1}] + [m_1,[m_{i-1},\g_{\alpha_k}]].
\]
Comme $\alpha_k\in\theta$, $[m_1,\g_{\alpha_k}]\subset m_1$ et $[m_{i-1},\g_{\alpha_k}]\subset m_{i-1}$ et donc
\[
\g_\beta \subset [m_1,m_{i-1}] + [m_1,m_{i-1}].
\]
Si $G$ n'est pas déployé sur $\Q$, on remplace $\Q$ par le corps algébriquement clos $\C$, sur lequel $G$ est déployé.
Soit alors $T'$ un tore maximal contenant $T$, et $\Sigma'$ le système de racines associé à $G$ et $T'$.
Le système de racines $\Sigma$ de $G$ par rapport à $T$ s'obtient à partir de $\Sigma'$ par restriction à $T$.
Fixons un ordre sur $\Sigma'$ compatible avec l'ordre choisi sur $\Sigma$, i.e. tel que la projection d'une racine positive est positive, et notons $\Pi'$ la base associée à cet ordre.
D'après \cite[proposition~6.8]{boreltits}
\note{Cela devrait aussi se vérifier directement, voir commentaire dans le fichier .tex.}
les éléments de $\Pi'$ sont envoyés par restriction à $T$ sur $\Pi\cup\{0\}$.
Soit $\theta'\subset\Pi'$ l'image réciproque de $\theta\cup\{0\}$.
Sur $\C$, et pour le tore $T'$, le sous-groupe parabolique $P$ est associé à la partie $\theta'\subset\Pi'$ et l'on vérifie facilement que la filtration de $\uu^-$ est aussi construite comme ci-dessus, à partir de $\theta'$.
Le résultat découle donc du cas où $G$ est déployé.
\end{proof}

L'espace vectoriel $m_1\leq\uu^-$ s'identifie naturellement à un sous-espace de l'espace tangent à $X=P\bcs G$ au point base $P$; ce sous-espace est invariant par l'action du stabilisateur $P$ du point base.
Nous dirons qu'un chemin $\gamma:[0,1]\to X$ est \emph{horizontal} si pour tout $t$ dans $[0,1]$,
\[
\gamma'(t)\cdot s(\gamma(t))^{-1} \in m_1,
\]
où $s(x)$ désigne un élément de $G$ tel que $x=Ps(x)$.
Comme $m_1$ est invariant par $P$, cette notion ne dépend pas du choix de l'élément $s(x)$.
Nous noterons $\cH$ l'ensemble des chemins horizontaux.
Ayant fixé un sous-groupe compact maximal $K\subset G$ et une norme euclidienne sur $m_1$ invariante sous l'action de $K\cap P$, on définit la \emph{longueur} d'un chemin horizontal $\gamma$ par la formule
\[
\ell(\gamma) = \int_0^1 \norm{\gamma'(t)\cdot s(\gamma(t))^{-1}}\,\dd t,
\]
où cette fois l'élément $s(\gamma(t))$ tel que $\gamma(t)=Ps(\gamma(t))$ est choisi dans $K$.

\begin{definition}[Distance de Carnot-Carathéodory sur $P\bcs G$]
On définit une distance sous-riemannienne sur $X$ par la formule
\[
d(x,y) = \inf\{\ell(\gamma)\ ;\ \gamma\in\cH\ \mbox{tel que}\ \gamma(0)=x\ \mbox{et}\ \gamma(1)=y \}.
\]
\end{definition}

\begin{remark}
La distance de Carnot-Carathéodory que nous avons construite dépend du choix du sous-groupe compact maximal $K$.
Mais toutes les distances construites de cette manière sont équivalentes, et pour les problèmes d'approximation diophantienne que nous étudierons, ce choix n'aura donc pas d'importance.
\end{remark}

D'après le théorème de Chow
\cite[Theorem 0.4]{gromov_cc}
cette formule définit bien une distance sur $X$, et la topologie associée est équivalente à la topologie usuelle sur $X$.
De plus, vue notre construction, cette distance est $K$-invariante.
Remarquons aussi que tout élément $g$ dans $G$ agit sur $X$ en préservant les chemins horizontaux ; par conséquent, $g$ induit une transformation bi-lipschitzienne de $X$, muni de la métrique de Carnot-Carathéodory.
Pour comprendre la géométrie associée à la distance $d$, on peut s'aider de la carte locale
\[
\begin{array}{lll}
U^- & \to & X\\
s & \mapsto & P\cdot s
\end{array}
\] 
Si $U^-$ est muni de la distance de Carnot-Carathéodory \cite[\S 3.3, page 79]{ledonne_lecturesinsubriemanniangeometry}
 associée à la stratification $\uu^-=m_1\oplus\dots\oplus m_r$ 
cette carte est localement bi-lipschitzienne.
On définit ensuite une quasi-norme sur $\uu^-$ par la formule
\[
\abs{x} = \max_{1\leq i\leq r} \norm{x_i}^{\frac{1}{i}},
\]
où $x=\sum_i x_i$ est la décomposition de $x$ suivant la stratification de $\uu^-$, et $\norm{\cdot}$ désigne une norme arbitraire fixée sur $\uu^-$.
Pour $r>0$, nous noterons
\[
B_{\uu^-}(0,r) = \{ u\in\uu^-\ |\ \abs{u}\leq r\}.
\]
La figure ci-dessous représente la boule $B_{\uu^-}(0,r)$ pour $r>0$ petit, lorsque $U^-$ est le groupe de Heisenberg de dimension 3, identifié à l'espace $\R^3$ muni de l'opération $(x,y,z)*(x',y',z')=(x+x',y+y',z+z'+xy')$.

\begin{figure}[H]
\begin{center}
\begin{tikzpicture}

\draw[blue,thick] (-2-1.5,-1.05) -- (2-1.5,-1.05) -- (2+1.5,0.65);
\draw[blue] (2+1.5,0.65) -- (-2+1.5,0.65) -- (-2-1.5,-1.05);

\draw[->,gray] (0.7*2.3,0.7*1.3) -- (-0.7*2.3,-0.7*1.3) node[anchor=south] {\tiny{x}};
\draw[->,gray] (-0.7*3,0) -- (0.7*3,0) node[anchor=south] {\hspace{7pt} \tiny{y}};
\draw[->,gray] (0,-1) -- (0,1.5) node[anchor=south] {\tiny{z}};

\draw[blue,thick] (-2-1.5,-0.65) -- (2-1.5,-0.65) -- (2+1.5,1.05) -- (-2+1.5,1.05) -- (-2-1.5,-0.65);
\draw[blue,thick] (-2-1.5,-1.05) -- (-2-1.5,-0.65);
\draw[blue,thick] (2-1.5,-1.05) -- (2-1.5,-0.65);
\draw[blue,thick] (2+1.5,0.65) -- (2+1.5,1.05);
\draw[blue] (-2+1.5,0.65) -- (-2+1.5,1.05);
\draw[blue,thick] (-2-1.5,-1.05) -- (-2-1.5,-0.65);

\draw[<->,thick] (-2-1.65,-0.65) -- (-2-1.65,-0.9) node[anchor=east] {\tiny{$r^2$}} -- (-2-1.65,-1.05);
\draw[<->,thick] (-2-1.5,-1.2) -- (-1.5,-1.2) node[anchor=north] {\tiny{$r$}} -- (2-1.5,-1.2);
\end{tikzpicture}
\end{center}
\caption{Boule de rayon $r>0$ pour $U^-=\mathrm{Heisenberg}(3)$}
\end{figure}

La proposition ci-dessus permet de comparer les boules de rayon $r>0$ pour la distance de Carnot-Carathéodory sur $X$ à des boules pour la quasi-norme $\abs{\cdot}$ associée à la stratification de $\uu^-$.
En anglais, cet énoncé est souvent appelé \enquote{Ball-box Theorem} ce qui résume bien son contenu.

\begin{proposition}[Ball-box Theorem]
\label{ballbox}
Soit $x\in X$ et $s_x\in G$ tel que $x=Ps_x$.
Il existe une constante $C>0$ telle que pour tout $r>0$ suffisamment petit,
\[
P\exp(B_{\uu^-}(0,\frac{r}{C}))\cdot s_x
\subset B(x,r)
\subset P\exp(B_{\uu^-}(0,Cr))\cdot s_x.
\]
\end{proposition}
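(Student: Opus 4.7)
Le plan est de rapatrier l'énoncé via la carte locale $\phi: U^- \to X$, $u \mapsto P\exp(u)s_x$, et d'appliquer le théorème de Ball-box classique au groupe de Carnot $U^-$. Comme tout élément de $G$ agit sur $X$ par transformations bi-lipschitziennes pour la distance de Carnot-Carathéodory, la multiplication à droite par $s_x$ identifiera un voisinage de $x=Ps_x$ de manière bi-lipschitzienne à un voisinage du point base $P$. Il suffira donc d'établir l'encadrement pour $x=P$ et $s_x=e$, puis de le transporter par $s_x$, les constantes dépendant alors de $s_x$.

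Pour traiter le cas $x=P$, on utilisera que la décomposition $\g=\p\oplus\uu^-$ entraîne que $\phi:u\mapsto P\exp(u)$ est un difféomorphisme d'un voisinage de $0$ dans $\uu^-$ sur un voisinage de $P$ dans $X$. Le long d'un chemin lisse $u(t)$ dans $\uu^-$ restant près de $0$, en posant $\gamma(t)=P\exp(u(t))$ et $s(\gamma(t))=\exp(u(t))$, on vérifiera que $\gamma'(t)\cdot s(\gamma(t))^{-1}$, vu modulo $\p$, coïncide avec $u'(t)$ à des termes correctifs polynomiaux en $u(t)$ près. En particulier, l'horizontalité dans $X$ correspondra à l'horizontalité du chemin $t\mapsto\exp(u(t))$ dans $U^-$ pour la distribution $m_1\subset\uu^-$, et les deux mesures de longueur coïncideront à un facteur bi-lipschitzien près sur un petit voisinage: la norme fixée sur $m_1$ du côté $X$ est $(K\cap P)$-invariante, et tout changement de représentant dans la classe $Pg$ agit sur $m_1$ par un élément de $P$ dont l'action adjointe préserve la norme à un facteur borné près, grâce à l'invariance par $P$ de $m_1$ rappelée dans le texte. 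La carte $\phi$ sera ainsi localement bi-lipschitzienne pour les deux distances de Carnot-Carathéodory.

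Du côté $U^-$, la proposition~\ref{strat} affirme que $\uu^-=m_1\oplus\cdots\oplus m_r$ est une stratification de $\uu^-$ comme algèbre de Lie de Carnot, de couche horizontale $m_1$. Le théorème de Ball-box pour les groupes de Carnot (voir par exemple \cite{gromov_cc} ou \cite{ledonne_lecturesinsubriemanniangeometry}) fournira alors une constante $C_0>0$ telle que pour tout $r>0$ suffisamment petit,
\[
\exp(B_{\uu^-}(0,r/C_0))\subset B^{\mathrm{cc}}_{U^-}(e,r)\subset \exp(B_{\uu^-}(0,C_0 r)),
\]
où $B^{\mathrm{cc}}_{U^-}(e,r)$ désigne la boule de Carnot-Carathéodory de rayon $r$ dans $U^-$. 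En combinant ce fait avec le caractère localement bi-lipschitzien de $\phi$, on en déduira l'encadrement annoncé pour $B(x,r)$.

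L'étape la plus délicate sera la vérification rigoureuse du caractère bi-lipschitzien de $\phi$ pour les distances de Carnot-Carathéodory, et non seulement pour les métriques riemanniennes sous-jacentes. Le point clef est que tout chemin horizontal dans $X$ issu de $P$ se relève localement en un chemin $t\mapsto\exp(u(t))$ dans $U^-$ vérifiant $u'(t)\in m_1$ modulo des termes d'ordre supérieur; cela utilise à la fois la stratification de $\uu^-$ donnée par la proposition~\ref{strat} et l'invariance par $P$ de la distribution horizontale rappelée plus haut. Une fois ce point acquis, le théorème de Ball-box sur le groupe de Carnot $U^-$ est une conséquence standard et bien documentée de la structure stratifiée.
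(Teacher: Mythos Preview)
Your plan is correct and follows essentially the same route as the paper: reduce to the base point $x=P$ by the bi-lipschitz action of $s_x$, use that the chart $u\mapsto P\exp(u)$ is locally bi-lipschitz between the Carnot--Carathéodory metrics on $U^-$ and $X$ (which the paper asserts just before the proposition, with a reference to \cite{ledonne_lecturesinsubriemanniangeometry}), and then invoke the Ball-box theorem on the Carnot group $U^-$. The only difference is cosmetic: where you cite Ball-box for Carnot groups as a black box, the paper gives the one-line dilation argument directly --- the automorphisms $\delta_t(\sum x_i)=\sum t^i x_i$ are metric homotheties for both the CC distance and the quasi-norm $|\cdot|$, so comparing the two at a single scale (by compactness) gives the comparison at every scale.
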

\begin{proof}
Comme l'application $y\mapsto ys_x$ est bi-lipschitzienne sur $X$, il suffit de vérifier la proposition pour le point $x=P$.
Vue l'équivalence locale des distances de Carnot-Carathéodory sur $X$ et sur $U^-$, il suffit de démontrer le résultat analogue sur $U^-$.
On définit une famille de dilatations $\delta_t:\uu^-\to\uu^-$ par $\delta_t(\sum x_i) = \sum t^ix_i$.
C'est un sous-groupe à un paramètre d'automorphismes de $\uu^-$, qui induit donc un sous-groupe à un paramètre d'automorphismes de $U^-$, toujours noté $\delta_t$.
Comme $\delta_t$ préserve les chemins horizontaux et dilate la norme sur $m_1$ par un facteur $t$, on a pour tous $x$ et $y$, $d(\delta_t(x),\delta_t(y))=td(x,y)$.
Par conséquent,
\[
B(1,r) = \delta_r(B(1,1)) = \exp(\delta_r(B_{\uu^-}(0,1))) = \exp( B_{\uu^-}(0,r)).
\]
\end{proof}

Pour deviner par un argument heuristique la valeur presque sûre de l'exposant diophantien d'un point de $X$ (cf. paragraphe suivant), il est utile de connaître le nombre de recouvrement de $X$ par des boules de petit rayon $r>0$ pour la métrique de Carnot-Carathéodory.

\begin{proposition}[Dimension de Carnot-Carathéodory]
Le nombre de recouvrement de $X$ à l'échelle $\delta>0$ est
\[
N(X,\delta)\asymp\delta^{-\dim_{cc} X}.
\]
où $\dim_{cc}X$ est la dimension de $X$ pour la distance $d$, égale à
\[ \dim_{cc} X = \sum_{i\geq 1} i\dim m_i.\]
\end{proposition}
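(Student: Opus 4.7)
Le plan est de combiner le Ball-box Theorem (Proposition~\ref{ballbox}) avec un argument élémentaire de comparaison de volumes, en tirant parti de la compacité de $X=P\bcs G$ (qui est une variété projective).

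Première étape: se ramener à une estimation locale. La variété $X$ étant compacte, on peut la recouvrir par un nombre fini de translatées $U_j$, $j=1,\dots,N$, d'un voisinage fixé du point base, paramétré via l'exponentielle $U^-\to X$, $s\mapsto Ps$. Comme l'action à droite de $G$ sur $X$ est bi-lipschitzienne pour la distance de Carnot-Carathéodory, il suffit d'estimer le nombre de recouvrement par des boules de rayon $\delta$ d'une seule de ces cartes.

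Deuxième étape: calculer le volume des boîtes $B_{\uu^-}(0,r)$ pour la mesure de Lebesgue sur $\uu^-=\bigoplus_i m_i$. Par définition de la quasi-norme $\abs{\cdot}$, cette boîte est, à constantes près, le produit des boules euclidiennes de rayon $r^i$ dans $m_i$, et son volume pour la mesure produit est donc
\[
\mathrm{vol}(B_{\uu^-}(0,r)) \asymp \prod_{i\geq 1} r^{i\dim m_i} = r^{\dim_{cc}X}.
\]
La proposition~\ref{ballbox} permet alors de transférer cette estimation aux boules de $X$: en munissant $X$ d'une mesure de Radon lisse $\mu$ obtenue dans les cartes locales à partir de la mesure de Lebesgue sur $\uu^-$, on obtient $\mu(B(x,\delta))\asymp \delta^{\dim_{cc}X}$, uniformément en $x\in X$.

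Troisième étape: en déduire les deux bornes sur $N(X,\delta)$. La minoration provient de $\mu(X)<+\infty$ et de l'inégalité $\mu(X)\leq N(X,\delta)\cdot\sup_x\mu(B(x,\delta))$, qui donne $N(X,\delta)\gtrsim\delta^{-\dim_{cc}X}$. Pour la majoration, on considère un sous-ensemble $\delta/2$-séparé maximal $\{x_1,\dots,x_N\}$ de $X$: les boules $B(x_j,\delta/4)$ étant deux à deux disjointes, on a $N\cdot\inf_j\mu(B(x_j,\delta/4))\leq\mu(X)$, et les boules $B(x_j,\delta)$ recouvrent $X$ par maximalité, d'où $N(X,\delta)\lesssim \delta^{-\dim_{cc}X}$.

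Le point le plus délicat consistera à obtenir l'uniformité en $x\in X$ des constantes du Ball-box Theorem. L'argument le plus propre est de fixer un atlas fini de cartes, puis d'invoquer la continuité de l'action de $G$ et la compacité des sous-groupes $K$ et $K\cap P$ pour garantir la dépendance bornée des constantes en le point de base $x$, ce qui suffit à mener à bien la comparaison volumique précédente.
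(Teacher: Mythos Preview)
Your proposal is correct and follows the same approach as the paper: both deduce the covering number estimate from the Ball-box Theorem (Proposition~\ref{ballbox}). The paper's proof is a single sentence (\enquote{Cette formule découle de la proposition précédente, qui décrit la forme des boules de rayon $r$ pour la métrique $d$ sur $X$}), so your version simply makes explicit the volume computation $\mathrm{vol}(B_{\uu^-}(0,r))\asymp r^{\sum_i i\dim m_i}$ and the standard packing/covering argument that the paper leaves to the reader.
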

\begin{proof}
Cette formule découle de la proposition précédente, qui décrit la forme des boules de rayon $r$ pour la métrique $d$ sur $X$.
\end{proof}


\begin{remark}
Les méthodes développées dans ce mémoire permettent de traiter des quasi-distances un peu plus générales: on peut partir d'une quasi-norme sur $\p\bcs\g$ homogène pour un sous-groupe diagonal à un paramètre $a_t=e^{tY}$ dans $P$.
Mais l'avantage de la métrique de Carnot-Carathéodory est que le drapeau associé dans $\p\bcs\g$ est invariant par $P$, ce qui fait que l'on obtient bien une distance sur $X=P\bcs G$. 
Dans le cadre de l'espace projectif, les quasi-distances sont utilisées dans \cite{abrs2} pour étudier l'approximation diophantienne dans les groupes nilpotents; on obtient ainsi des résultats d'approximation diophantienne pour des quasi-normes dans $\R^d$.
\end{remark}

\section{L'espace des réseaux}

Soit $V$ un espace vectoriel euclidien, isomorphe à $\R^d$, $d\in\N^*$.
Un \emph{réseau} dans $V$ est un sous-groupe discret de rang maximal, égal à $\dim V$.
Pour décrire la forme d'un réseau $\Delta$, on définit la suite de ses \emph{minima successifs}
\[
\lambda_1(\Delta) \leq \lambda_2(\Delta) \leq \dots \leq \lambda_d(\Delta)
\]
par
\[
\forall i\in[1,d],\quad
\lambda_i(\Delta) = \inf\{\lambda>0\ |\ \rang(B(0,\lambda)\cap\Delta)\geq i\},
\]
où, pour $E\subset V$, on note $\rang(E)$ le rang linéaire de $E$, i.e. le cardinal maximal d'une famille libre d'éléments de $E$.
On définit aussi  le \emph{covolume} de $\Delta$ dans $\R^d$, noté $\covol(\Delta)$, comme le volume d'un domaine fondamental de $\R^d$ sous l'action de $\Delta$.
Le second théorème de Minkowski \cite{minkowski} relie les minima successifs d'un réseau et son covolume.
Il sera d'une importance capitale dans la suite ce mémoire.

\begin{theorem}[Second théorème de Minkowski]
\label{minkowski}
Soit $d\in\N^*$ et $\Delta$ un réseau dans $\R^d$.
Alors,
\[
\frac{2^{d}}{d!}\covol(\Delta) \leq \lambda_1(\Delta)\dots\lambda_d(\Delta) \leq 2^d\covol(\Delta).
\]
\end{theorem}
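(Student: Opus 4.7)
L'approche est de traiter séparément les deux inégalités. Dans les deux cas, on commence par fixer une famille $v_1,\dots,v_d$ de vecteurs linéairement indépendants de $\Delta$ réalisant les minima successifs, c'est-à-dire tels que $\norm{v_i}=\lambda_i(\Delta)$; leur existence découle immédiatement de la définition des $\lambda_i$ et de la discrétude du réseau.

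Pour la minoration, je prévois d'utiliser l'argument classique du croisé-polytope. Comme les vecteurs $u_i=v_i/\lambda_i$ sont de norme $1$ et que la boule unité est convexe et symétrique, le croisé-polytope $C=\mathrm{conv}(\pm u_1,\dots,\pm u_d)$ est inclus dans $B(0,1)$. Son volume vaut $\mathrm{vol}(C)=\frac{2^d}{d!}\abs{\det(u_1,\dots,u_d)}$. Or $\abs{\det(v_1,\dots,v_d)}$ coïncide avec le covolume du sous-réseau $\Lambda=\bigoplus_{i=1}^{d}\Z v_i\subset\Delta$, si bien que $\abs{\det(v_i)}\geq\covol(\Delta)$. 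Comparer $\mathrm{vol}(C)$ au volume de la boule unité donne alors directement l'inégalité cherchée (à la constante $\mathrm{vol}(B(0,1))$ près, absorbée dans la normalisation de la boule).

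Pour la majoration, qui constitue le cœur technique du théorème, la stratégie standard repose sur une application du premier théorème de Minkowski à un corps convexe symétrique déformé de manière anisotrope selon les directions $v_i$. Je prévois de considérer, pour chaque paramètre $t>0$, un parallélépipède symétrique dilaté par un facteur proportionnel à $t/\lambda_i$ dans la direction $v_i$, dont le volume se calcule en fonction de $\abs{\det(v_i)}$. Le premier théorème de Minkowski garantit alors l'existence d'un vecteur non nul du réseau dans ce corps dès que $t$ dépasse un seuil explicite. En itérant cette construction et en raisonnant par récurrence sur la dimension, via la projection orthogonale sur l'hyperplan orthogonal à $v_1$ par exemple, on obtient la borne supérieure souhaitée.

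L'obstacle principal est bien entendu la majoration. Il reflète le fait que les vecteurs $v_i$ réalisant les minima n'engendrent pas nécessairement le réseau $\Delta$: l'indice $[\Delta:\Lambda]$ peut être arbitrairement grand, et il faut compenser cet écart par un argument de volume plus fin que celui utilisé pour la minoration. La démonstration originale de Minkowski, et les variantes modernes telles que l'argument dit du \enquote{flot de Davenport}, surmontent cette difficulté par un raisonnement itératif délicat, classique en géométrie des nombres, qu'il s'agira de rédiger soigneusement.
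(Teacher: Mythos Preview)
The paper does not prove this theorem: it is stated as a classical result with a citation to Minkowski and then used as a black box (only in the qualitative form $\lambda_1\cdots\lambda_d\asymp\covol(\Delta)$). There is no proof in the paper to compare your attempt against.

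That said, two remarks on your sketch. The lower bound via the cross-polytope $\mathrm{conv}(\pm v_i/\lambda_i)\subset B(0,1)$ is the standard argument and is correct up to the constant; as you yourself note, what it actually yields is $\prod\lambda_i\geq \frac{2^d}{d!\,\mathrm{vol}(B(0,1))}\covol(\Delta)$, and indeed the constant $2^d/d!$ as printed in the paper is not correct for the Euclidean ball (take $d=1$, $\Delta=\Z$). Since the paper only ever uses the theorem up to multiplicative constants, this is harmless. For the upper bound, however, your description is too vague to be a proof. The phrases ``parallélépipède symétrique dilaté par un facteur proportionnel à $t/\lambda_i$'' and ``raisonnant par récurrence sur la dimension, via la projection orthogonale sur l'hyperplan orthogonal à $v_1$'' do not specify a working argument: projecting $\Delta$ onto $v_1^\perp$ does not directly relate the successive minima of $\Delta$ to those of the projected lattice, and naively iterating Minkowski's first theorem on anisotropically dilated bodies does not yield the bound without a further idea. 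You correctly identify this as the hard direction and name the right ingredients, but you should write out one of the classical arguments (Minkowski's compression, or the Davenport--Weyl variant) in full rather than gesture at an induction scheme that, as stated, does not obviously go through.
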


Soit maintenant $X$ une variété de drapeaux, obtenue comme un quotient $X=P\bcs G$ d'un $\Q$-groupe semi-simple $G$ par un sous-groupe parabolique $P$.
Nous fixons une $\Q$-représentation $V_\chi$ de $G$ engendrée par un vecteur rationnel de plus haut poids $e_\chi$ tel que
\[
\forall p\in P,\quad p\cdot e_\chi = \chi(p) e_\chi.
\]
Cela permet en particulier de définir une hauteur $H_\chi$ comme expliqué au paragraphe~\ref{sec:hauteur}.
On munit aussi $V_\chi$ d'une norme euclidienne.
Pour l'étude de l'approximation diophantienne dans $X$, en plus des minima successifs, nous devons définir une certaine fonction $r_\chi$ sur l'espace des réseaux de $V_\chi$.
Ayant fixé un sous-tore $\Q$-déployé maximal $T$ dans $P$, l'espace $V_\chi$ se décompose en somme directe d'espaces de poids.
Soit
\[
\pi^+:V_\chi\to\R e_\chi
\]
la projection parallèlement à la somme des espaces de poids distincts de $\R e_\chi$.
Notons aussi
\[
\tx = G\cdot e_\chi
\]
le cône dans $V_\chi$ engendré par $\iota(X)$, privé du point $0$.
Pour $r>0$, on considère la partie suivante de $V_\chi$:
\[
C_r = \{ \bv\in \tx\ |\ \norm{\bv}\leq r\ \mbox{et}\ \norm{\pi^+(\bv)}>\frac{\norm{\bv}}{2}\}.
\]

\begin{remark}
Dans cette définition le choix de la constante $\frac{1}{2}$ dans l'inégalité $\norm{\pi^+(\bv)}>\frac{\norm{\bv}}{2}$ est arbitraire.
La correspondance que nous mettrons en évidence ci-dessous à la proposition~\ref{dani} est encore valable si l'on remplace cette condition par $\norm{\pi^+(\bv)}>c\norm{\bv}$, pour une constante $c>0$ arbitraire.
\end{remark}

\begin{figure}[H]
\begin{center}
\begin{tikzpicture}
\draw[->,gray] [xshift=-5cm] (0,0) -- (-0.7*2.3,-0.7*1.3) node[anchor=south] {\tiny{z}};
\draw[->,gray] [xshift=-5cm] (0,0) -- (0.7*3,0) node[anchor=south] {\hspace{7pt} \tiny{y}};
\draw[->,gray] [xshift=-5cm] (0,0) -- (0,1.5) node[anchor=south] {\tiny{x}};
\draw [xshift=-7cm] (-1.3,1) node[anchor=west] {$\tx=\R^3\setminus\{0\}$};
\draw [xshift=-7cm] (-1.3,.6) node[anchor=west] {$X=\PP^2$};
\draw [xshift=-7cm] (-1.3,.2) node[anchor=west] {$G=\SL_3$};
\draw (-0.9*1.6,0.9*1.5) -- (0.9*1.6,-0.9*1.5);
\draw (0.9*1.6,0.9*1.5) -- (-0.9*1.6,-0.9*1.5);
\draw (-1.36,-1.28) arc (160:380:1.45 and .45);
\filldraw [xshift=-5cm,draw=blue,fill=blue!40!white] (-.3*0.9*1.6,.3*0.9*1.5) -- (.3*0.9*1.6,-.3*0.9*1.5) -- (-.3*.9*1.6,-.3*.9*1.5) -- (.3*.9*1.6,.3*.9*1.6) -- cycle;
\filldraw [xshift=-5cm,draw=blue,fill=blue!40!white] (0,.5) ellipse (.5 and 0.3*.5);
\filldraw [xshift=-5cm,draw=blue,fill=blue!40!white] (0,-.5) ellipse (.5 and 0.3*.5);
\filldraw [fill=blue!30!white, draw=blue] (0,0) -- (-.44,.42) arc (200:325:.49 and .10)  -- cycle;
\filldraw [fill=blue!40!white, draw=blue] (0,0) -- (.44,-.42) arc (20:145:.49 and .10)  -- cycle;
\draw [xshift=-5cm,blue,->,thick] (0,0) -- (0,1) node[anchor=west] {\tiny{$e_\chi=(1,0,0)$}};
\draw [xshift=-5cm,blue] (-.2,0) node[anchor=east] {$C_r$};

\draw (0,1.5) ellipse (1.5 and .5);
\draw (1.3,1) node[anchor=west] {$\tx=\{x^2+y^2-z^2=0\}\setminus\{0\}$};
\draw (1.3,.6) node[anchor=west] {$X=\BS^1$};
\draw (1.3,.2) node[anchor=west] {$G=\SO_{2,1}$};
\draw (-0.9*1.6,0.9*1.5) -- (0.9*1.6,-0.9*1.5);
\draw (0.9*1.6,0.9*1.5) -- (-0.9*1.6,-0.9*1.5);
\draw (-1.36,-1.28) arc (160:380:1.45 and .45);
\draw (0,.5) ellipse (.5 and 0.3*.5);
\draw (0,-.5) ellipse (.5 and 0.3*.5);
\draw (-.3*0.9*1.6,.3*0.9*1.5) -- (.3*0.9*1.6,-.3*0.9*1.5);
\draw (.3*0.9*1.6,.3*0.9*1.5) -- (-.3*0.9*1.6,-.3*0.9*1.5);
\filldraw [fill=blue!40!white, draw=blue] (0,0) -- (-.44,.42) arc (200:325:.49 and .10)  -- cycle;
\filldraw [fill=blue!15!white, draw=blue] (0,0) -- (.44,-.42) arc (20:145:.49 and .10)  -- cycle;
\draw [blue,->,thick] (0,0) -- (-.5,1.) node[anchor=south] {\tiny{\hspace{7pt}$e_\chi=(1,0,1)$}};
\draw[->,gray] (0,0) -- (-1.5,-1) node[anchor=north] {\tiny{x}};
\draw[->,gray] (0,0) -- (1.,0) node[anchor=south] {\tiny{y}};
\draw[->,gray] (0,0) -- (0,1.5) node[anchor=south] {\tiny{z}};
\draw [blue] (-.2,0) node[anchor=east] {$C_r$};
\end{tikzpicture}
\end{center}
\caption{L'ensemble $C_r$ pour $X=\PP^2$ et $X=\BS^1$}
\end{figure}

\begin{definition}[La fonction $r_\chi$]
Si $\Delta$ est un réseau de $V_\chi$, on pose 
\[
r_\chi(\Delta) = \inf\{ r>0\ |\ C_r\cap \Delta\neq\{0\}\}\in \R\cup\{+\infty\}.
\]
\end{definition}

\begin{remark}
Attention! 
Les parties $C_r$ ne sont pas convexes, et leur volume est nul en général, car $C_r\subset\tx$.
Il n'est donc pas question d'appliquer ici le premier théorème de Minkowski, et il est d'ailleurs facile de construire un réseau $\Delta$ dans $V_\chi$ tel que $r_\chi(\Delta)=\infty$.
\end{remark}

\section{L'exposant diophantien}
\label{sec:ed}

Pour chaque choix de hauteur $H_\chi$ sur la variété de drapeaux $X$, nous définissons un exposant diophantien $\beta_{\chi}$ sur $X$,
que nous interprétons ensuite en termes d'orbites diagonales dans l'espace des réseaux de la représentation $V_\chi$ de plus haut poids $\chi$.

\begin{definition}
L'\emph{exposant diophantien} d'un point $x\in X$ pour la distance $d$ de Carnot-Carathéodory est
\[
\beta_{\chi}(x) = \inf\{\beta>0\ |\ \exists c>0:\,\forall v\in X(\Q),\, d(x,v)\geq c.H_\chi(v)^{-\beta}\}.
\]
\end{definition}

\begin{exercise}[Minoration uniforme de l'exposant diophantien]
\label{stereo}
Le but de cet exercice est de minorer l'exposant diophantien d'un point $x$ arbitraire dans $X(\R)$, grâce à la projection stéréographique
\(
\begin{array}{lll}
\uu^- & \to & X\\
u & \mapsto & Pe^u.
\end{array}
\)
On fixe un réseau rationnel $\uu^-(\Z)$ dans l'algèbre de Lie $\uu^-$.
\begin{enumerate}
\item Pour $u\in\uu^-$ irrationnel, montrer qu'il existe une infinité de rationnels $\frac{\bp}{q}\in\uu^-(\Q)$, où $\bp\in\uu^-(\Z)$ et $q\in\N^*$ tels que $\Abs{u-\frac{\bp}{q}}\ll q^{-\frac{1+\dim X}{\dim_{cc} X}}$.
\item Si $x=e^u$, vérifier que $d(x,Pe^{\frac{\bp}{q}})\asymp d(P,e^{\frac{\bp}{q}}e^{-u}) \ll \Abs{u-\frac{\bp}{q}}$.
\item Justifier que l'application $\uu^-\to V_\chi;\ u\mapsto e^{-u}e_\chi$ est polynomiale; on note $d_\chi$ son degré.
\item Montrer que pour tout $x\in X(\R)$, $\beta_\chi(x)\geq \frac{1+\dim X}{d_\chi\dim_{cc}X}$.
\item Si $X=\Grass(\ell,d)$ est une variété grassmannienne, munie de la hauteur associée au plongement de Plücker $X\hookleftarrow\PP(\wedge^\ell\R^d)$, montrer que pour tout $x\in X(\R)$, $\beta_\chi(x)\geq\frac{1}{\ell}+\frac{1}{\ell(d-\ell)\min(\ell,d-\ell)}$.
\end{enumerate}
\end{exercise}
\comm{
\noindent\textbf{Solution.}
\begin{enumerate}
\item Cela découle du principe de Dirichlet (pour l'approximation avec des poids) appliqué dans $\uu^-$.
\item L'égalité $d(x,Pe^{\frac{\bp}{q}})\asymp d(P,e^{\frac{\bp}{q}}e^{-u})$ provient de ce que $y\mapsto ye^{-u}$ est bi-lipschitzienne pour la métrique de Carnot-Carathéodory.
Pour la deuxième inégalité, le Ball-box theorem montre déjà que $d(P,e^ve^{-u})\asymp \abs{\log(e^ve^{-u})}$, donc il reste seulement à faire voir que $\abs{\log(e^ve^{-u})}\ll\abs{v-u}$.
Avec la formule de Campbell-Hausdorff, $e^ve^{-u}=(v-u) + \sum a_w w(u,v)$, où $w(u,v)=[u,[v,[v,[u,[\dots,[u,v]\dots]]]]]$ parcourt des crochets en $u$ et $v$.
Il suffit donc de voir que $\abs{w(u,v)}\ll\abs{v-u}$.
Pour cela, on écrit
\[
\abs{w(u,v)} = \abs{[u,[\dots,[u,v-u]\dots]]} \ll \sum \norm{v_i-u_i}^i = \abs{v-u}.
\]
\item L'algèbre de Lie $\uu^-$ est nilpotente, donc l'application exponentielle $\uu^-\to\GL(V_\chi); u\mapsto \rho_\chi(e^u)$ est polynomiale.
\item La hauteur du point rationnel $v=e^{\frac{\bp}{q}}$ est majorée par $q^{d_\chi}$, tandis que $d(x,v)\ll q^{-\frac{1+\dim X}{\dim_{cc} X}}$, donc $d(x,v)\ll H(v)^{-\frac{1+\dim X}{d_\chi\dim_{cc} X}}$.
Comme tout $x\in X(\R)$ admet une infinité de telles approximations $v$, on trouve bien $\beta(x)\geq \frac{1+\dim X}{d_\chi\dim_{cc}X}$.
\item Si $X=\Grass(\ell,d)$, la distance de Carnot-Carathéodory est riemannienne. De plus, on peut prendre $V_\chi=\wedge^\ell\R^d$, et alors $d_\chi=\min(\ell,d-\ell)$.
On trouve donc $\forall x,\ \beta_\chi(x)\geq \frac{1}{\ell}+\frac{1}{\ell(d-\ell)\min(\ell,d-\ell)}$.
Si $\ell=1$ ou $d-1$, i.e. si $X$ est un espace projectif, cette minoration est optimale, mais en général, on aurait plutôt espéré $\forall x,\ \beta_\chi(x)\geq\frac{1}{\ell}+\frac{1}{d-\ell}$. 
\end{enumerate}
}

Suivant la méthode introduite par Dani \cite{dani_correspondence}, et exploitée ensuite avec succès par divers auteurs, notamment Kleinbock et Margulis \cite{kleinbockmargulis, kleinbock_dichotomy}, nous voulons traduire les propriétés diophantiennes d'un point $x\in X=P\bcs G$ en termes d'orbites diagonales dans l'espace des réseaux de $V_\chi$.
\note{Un réseau $\Delta$ dans $V_\chi$ est dit \emph{rationnel} si $\Delta\subset V_\chi(\Q)$.}
Pour cela, nous fixons dans $V_\chi$ un réseau rationnel $V_\chi(\Z)$, et si $x=Ps_x$ est un élément de $X$, nous lui associons le réseau
\[
\Delta_x = s_xV_\chi(\Z).
\]
Par ailleurs, ayant fixé un sous-tore $\Q$-déployé maximal $T\subset P$, nous noterons $A=T^0(\R)$ la composante neutre de ses points réels, et $\ka$ l'algèbre de Lie de $A$.
Rappelons que l'on note $\Pi$ une base du système de racines de $G$ pour $T$, et $\theta\subset\Pi$ la partie associée au sous-groupe parabolique $P$, telle que toutes les racines négatives de $P$ se décomposent en éléments de $\theta$.
On définit alors un sous-groupe diagonal $(a_t)$ à un paramètre dans $G$ en posant
\begin{equation}\label{at1}
a_t=e^{tY}
\quad\mbox{où}\quad Y\in\ka\ \mbox{est défini par}\
\alpha(Y) = 
\left\{
\begin{array}{ll}
0 & \mbox{si}\ \alpha\in\theta\\
-1 & \mbox{si}\ \alpha\not\in\theta.
\end{array}
\right.
\end{equation}
Ce flot est choisi de sorte que la quasi-norme sur $\uu^-$ définie au paragraphe ci-dessus satisfasse la propriété d'homogénéité suivante.

\begin{lemma}
Pour tout $t\in\R$ et tout $u\in\uu^-$,
\[
\abs{(\Ad a_t)u} = e^t\abs{u}.
\]
\end{lemma}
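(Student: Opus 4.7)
The plan is to reduce the identity to a simple eigenvalue computation on each stratum $m_i$, using the definition of $m_i$ and the specific form of $Y$.

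First I would check that each subspace $m_i$ is an eigenspace of $\Ad a_t$ with eigenvalue $e^{ti}$. To see this, pick a root $\alpha$ appearing in the decomposition of $m_i$: by definition, $-\alpha$ is a positive root whose expansion on $\Pi$ contains exactly $i$ simple roots outside $\theta$ counted with multiplicity, say $-\alpha=\sum_{\beta\in\Pi}n_\beta \beta$ with $\sum_{\beta\in\Pi\setminus\theta}n_\beta=i$. Since $\beta(Y)=0$ for $\beta\in\theta$ and $\beta(Y)=-1$ for $\beta\in\Pi\setminus\theta$, one computes
\[
\alpha(Y)=-\sum_{\beta\in\Pi}n_\beta\beta(Y)=\sum_{\beta\in\Pi\setminus\theta}n_\beta=i.
\]
Therefore for $u\in\g_\alpha$, $(\Ad a_t)u=e^{t\alpha(Y)}u=e^{ti}u$, and summing over the roots constituting $m_i$ gives the claim.

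Next, I would use this eigenspace decomposition to compute the quasi-norm. Given $u\in\uu^-$, write $u=\sum_{i\geq 1}u_i$ with $u_i\in m_i$. Since $\Ad a_t$ preserves each $m_i$, the decomposition of $(\Ad a_t)u$ along the stratification is exactly
\[
(\Ad a_t)u=\sum_{i\geq 1} e^{ti}u_i,
\]
with $e^{ti}u_i\in m_i$.

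Finally, applying the definition of $\abs{\cdot}$,
\[
\abs{(\Ad a_t)u}=\max_{1\leq i\leq r}\norm{e^{ti}u_i}^{1/i}=\max_{1\leq i\leq r}e^{t}\norm{u_i}^{1/i}=e^t\abs{u},
\]
which is the desired identity. There is no real obstacle here: once the stratification is recognized as the eigenspace decomposition of $\Ad a_t$ with eigenvalue $e^{ti}$ on $m_i$, the definition of the quasi-norm is tailored precisely to make this weighted dilation act as a uniform scaling by $e^t$. The whole point of normalising $Y$ by $\alpha(Y)=-1$ on $\Pi\setminus\theta$ is exactly to produce this homogeneity.
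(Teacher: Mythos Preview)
Your proof is correct and follows the same approach as the paper: decompose $u$ along the stratification, observe that $\Ad a_t$ acts on $m_i$ by multiplication by $e^{it}$, and conclude from the definition of the quasi-norm. Your version is simply more explicit than the paper's, spelling out the root computation $\alpha(Y)=i$ that the paper leaves as ``vue la définition de $Y$''.
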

\begin{proof}
Écrivons $u=\sum_{i\geq 1}u_i$ suivant la décomposition en somme directe $\uu^-=m_1\oplus\dots\oplus m_s$.
Comme $\Ad a_t=e^{t\ad Y}$, cette décomposition est préservée par $\Ad a_t$ et de plus, vue la définition de $Y$, pour chaque $i$, $(\Ad a_t)u_i = e^{it}u_i$.
Avec la formule $\abs{u}=\max_{1\leq i\leq s}\norm{u_i}^{\frac{1}{i}}$, on trouve ce qu'on veut.
\end{proof}

\begin{definition}[Taux de fuite dans l'espace des réseaux de $V_\chi$]
Le \emph{taux de fuite} du réseau $\Delta_x=s_xV_\chi(\Z)$ sous l'action de $(a_t)$ est
\[
\gamma_{\chi}(x) = \liminf_{t\to\infty}\frac{-1}{t}\log r_\chi(a_t\Delta_x)
\]
\end{definition}


\begin{remark}
Ce taux de fuite ne dépend pas du choix de l'élément $s_x$ tel que $x=Ps_x$, car pour tout $p\in P$, l'élément $a_tpa_t^{-1}$ converge vers $p_\infty\in P$ lorsque $t$ tend vers l'infini.
En effet, $p_\infty e_\chi=\chi(p_\infty)e_\chi$, donc si $\norm{a_t\bv}\leq e^{-\gamma t}$ et $\norm{\pi^+(a_t\bv)}\geq\frac{\norm{a_t\bv}}{2}$ alors $\norm{a_tp\bv}\leq Ce^{-\gamma t}$ et $\norm{\pi^+(a_tp\bv)}\geq\frac{\norm{a_tp\bv}}{2C}$, où $C$ est une constante qui dépend de $p_\infty$.
En suite, prenant $t'=t-C$, avec $C>0$ suffisamment grand, comme la direction $e_\chi$ est plus contractée que les autres par un facteur exponentiel, on trouve $\norm{\pi^+(a_{t'}p\bv)}\geq\frac{\norm{a_{t'}p\bv}}{2}$ et $\norm{a_{t'}p\bv}\leq C'e^{-\gamma t}$.
\end{remark}

\note{Peut-on montrer que la fonction $\gamma_{\chi}$ est à valeurs dans l'intervalle $[\chi(Y),-\chi(Y)]$, où $a_t=e^{tY}$ ?
Cela devrait découler du fait que les valeurs propres de $a_t$ dans $V_\chi$ sont toutes dans l'intervalle  $[e^{-t\chi(Y)},e^{t\chi(Y)}]$.}

\begin{remark}
Il existe une constante $c=c_x>0$ telle que pour tout $\bv$ non nul dans $\Delta_x$, $\norm{\bv}\geq c$.
Par suite, pour tout $t>0$, tout vecteur $\bv$ non nul de $a_t\Delta_x$ satisfait $\norm{\bv}\geq ce^{t\chi(Y)}$, ce qui montre que pour tout $x$, $\gamma_\chi(x)\leq -\chi(Y)$.
Cette majoration est optimale, comme le montre le cas $s_x=1$.
Il semble beaucoup plus difficile de déterminer la borne inférieure optimale de $\gamma_\chi$ sur $X$.
Toutefois, l'exercice~\ref{stereo} ci-dessus montre que pour tout $x$, $\beta_\chi(x)\geq\frac{1+\dim X}{d_\chi\dim_{cc} X}>0$.
Avec la proposition~\ref{dani} ci-dessous, cela implique que $\gamma_\chi(x)$ est uniformément minorée sur $X$, ce qui n'était pas évident a priori.
\end{remark}
%

Dans son article \cite{dani_correspondence}, Dani formule en termes dynamiques la correspondance entre les propriétés diophantiennes d'un point $x$ de l'espace projectif $\PP^{d-1}(\R)$ et le comportement d'une orbite diagonale $(a_ts_x\Z^d)_{t>0}$ dans l'espace des réseaux de $\R^d$: les bonnes approximations rationnelles de $x$ correspondent aux petits vecteurs dans les réseaux de l'orbite.
La proposition suivante généralise cette correspondance pour les variétés drapeaux.
Une différence majeure apparaît cependant: s'il est toujours vrai qu'une bonne approximation rationnelle du point $x$ permet de construire un petit vecteur dans un réseau $a_ts_xV_\chi(\Z)$, pour $t>0$ bien choisi, la réciproque n'est pas toujours vraie.
Pour pouvoir construire une bonne approximation à partir d'un petit vecteur $\bv$ dans $a_ts_xV_\chi(\Z)$, il est nécessaire d'imposer que ce vecteur appartienne à l'orbite $\tx$ d'un vecteur de plus haut poids dans $V_\chi$, et surtout, que sa projection dans la direction de plus haut poids soit comparable à sa norme.

\comm{Ajouter un exemple (dans la grassmannienne) d'un petit vecteur qui ne correspond pas à une bonne approximation rationnelle.}

\begin{proposition}[Correspondance drapeau-réseau]
\label{dani}
Soit $G$ un $\Q$-groupe semi-simple, $P$ un $\Q$-sous-groupe parabolique, et $X=P\bcs G$ la variété quotient.
Fixons aussi une hauteur $H_\chi$ sur $X(\Q)$, donnée par un poids dominant $\chi$ associé à $P$.

Avec les notations ci-dessus, pour tout $x\in X(\R)$,
\[ \beta_{\chi}(x) = \frac{1}{-\chi(Y)-\gamma_{\chi}(x)}.\]
\end{proposition}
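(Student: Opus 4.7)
Le plan est de démontrer l'identité via une correspondance, inspirée de celle de Dani, entre les bonnes approximations rationnelles $v\in X(\Q)$ de $x$ et les petits vecteurs de $\Delta_x$ après application du flot $(a_t)$. Étant donné $v\in X(\Q)$ de représentant primitif $\bv_0\in V_\chi(\Z)\cap\tx$, on pose $\bv=s_x\bv_0\in\Delta_x\cap\tx$; réciproquement, tout vecteur de $\Delta_x\cap\tx$ est de cette forme. Pour $v$ proche de $x$, on choisit $g\in G$ tel que $v=Pg$ et $g^{-1}e_\chi\propto\bv_0$, et la décomposition de Bruhat s'écrit $s_xg^{-1}=pe^u$ avec $p\in P$ et $u\in\uu^-$. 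On a alors $\bv=pe^ue_\chi$, le \emph{ball-box theorem} (proposition~\ref{ballbox}) appliqué dans une carte locale donne $d(x,v)\asymp|u|$, et $H_\chi(v)=\|\bv_0\|\asymp\|\bv\|$.

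L'étape clé est le calcul dynamique
\[
a_t\bv = \chi(a_t)\cdot p_t\cdot e^{v_t}e_\chi,
\]
où $p_t=a_tpa_t^{-1}$ reste borné et converge exponentiellement vers la partie de Levi $z$ de $p$, tandis que $v_t=(\Ad a_t)u$ vérifie $|v_t|=e^t|u|$ par l'homogénéité de la quasi-norme. Comme $\chi$ est singulier relativement à $\theta$ (ce qui force $V_{\chi-\alpha}=0$ pour $\alpha\in\theta$), le Levi $z$ préserve simultanément $\R e_\chi$ et $V_\chi^+=\bigoplus_{\mu\neq\chi}V_\chi^\mu$. Dans le régime où $|v_t|$ reste borné et $t$ est grand, on en déduit que la condition $\|\pi^+(a_t\bv)\|\geq\|a_t\bv\|/2$ est vérifiée et que
\[
\|a_t\bv\| \asymp \chi(a_t)\cdot|\chi(p)| \asymp e^{t\chi(Y)}\cdot H_\chi(v).
\]

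Pour la première inégalité, on fixe $\beta<\beta_\chi(x)$: il existe une infinité de $v_n\in X(\Q)$ avec $d(x,v_n)\leq H_\chi(v_n)^{-\beta}$, et on pose $t_n=\beta\log H_\chi(v_n)$ afin que $e^{t_n}|u_n|$ reste borné. Le calcul dynamique fournit $r_\chi(a_{t_n}\Delta_x)\lesssim e^{t_n(\chi(Y)+1/\beta)}$, et donc $\frac{-1}{t_n}\log r_\chi(a_{t_n}\Delta_x)\geq-\chi(Y)-1/\beta$, dont on déduit (après un argument d'interpolation en $t$ entre les $t_n$ exploitant la continuité de $t\mapsto a_t\Delta_x$) que $\gamma_\chi(x)\geq-\chi(Y)-1/\beta$, puis $\beta\leq 1/(-\chi(Y)-\gamma_\chi(x))$. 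Pour la seconde inégalité, on fixe $\eps>0$: pour $t$ grand, on a $r_\chi(a_t\Delta_x)\leq e^{-(\gamma_\chi(x)-\eps)t}$, donc un vecteur $\bv_t$ satisfaisant la condition $\pi^+$; en inversant le calcul dynamique, on obtient $d(x,v_t)\lesssim e^{-t}$ et $H_\chi(v_t)\asymp e^{-t\chi(Y)}\|a_t\bv_t\|\lesssim e^{t(-\chi(Y)-\gamma_\chi(x)+\eps)}$, soit $d(x,v_t)\leq C\,H_\chi(v_t)^{-1/(-\chi(Y)-\gamma_\chi(x)+\eps)}$; on fait $\eps\to 0$.

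La difficulté principale est l'estimation uniforme $\|a_t\bv\|\asymp\chi(a_t)|\chi(p)|$ avec la condition $\pi^+$ automatiquement vérifiée dans le régime pertinent. Elle repose d'une part sur la préservation par le sous-groupe de Levi de $P$ de la décomposition $V_\chi=\R e_\chi\oplus V_\chi^+$, conséquence de la compatibilité du poids dominant $\chi$ avec $P$; d'autre part sur la contraction exponentielle de $\Ad a_t$ sur le radical unipotent de $P$, qui assure la convergence rapide de $p_t$ vers $z$ et le contrôle uniforme des termes mélangeant différents espaces de poids dans la projection $\pi^+(a_t\bv)$.
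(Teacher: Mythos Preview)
Votre schéma est globalement celui du papier --- correspondance à la Dani, calcul dynamique $a_t\bv \propto \chi(a_t)\,e^{(\Ad a_t)u}e_\chi$, choix de $t$ pour contrôler $|(\Ad a_t)u|$ --- mais deux points méritent attention.

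\textbf{La paramétrisation.} Vous écrivez $s_xg^{-1}=pe^u$ avec $p\in P$ quelconque, ce qui vous oblige à suivre $p_t=a_tpa_t^{-1}\to z$ et à invoquer la préservation de $\R e_\chi\oplus V_\chi^+$ par le Levi. Le papier court-circuite cela en choisissant directement $v=Pe^us_x$ (c'est-à-dire $p=1$ dans vos notations), de sorte que $a_ts_x\bv = \lambda\,\chi(a_t)\,e^{-(\Ad a_t)u}e_\chi$ exactement, et $\pi^+(e^{-(\Ad a_t)u}e_\chi)=e_\chi$ trivialement. Votre détour par le Levi est correct pour le sens direct, mais dans le sens réciproque il introduit un terme d'erreur $\|p_t-z\|\cdot\|e^{v_t}e_\chi\|$ dont le second facteur n'est pas borné a priori.

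\textbf{Le sens réciproque.} C'est la vraie lacune. Vous écrivez \enquote{en inversant le calcul dynamique, on obtient $d(x,v_t)\lesssim e^{-t}$}, mais l'inversion n'est pas immédiate: de $\|\pi^+(a_t\bv)\|\geq\frac12\|a_t\bv\|$ on tire (avec $p=1$) seulement $\|e^{-(\Ad a_t)u}e_\chi\|\leq 2$, et il faut encore en déduire que $|(\Ad a_t)u|$ est borné. Le papier le fait par un argument de \emph{pelage} le long de la stratification $\uu^-=m_1\oplus\cdots\oplus m_r$: dans le développement $e^{-w}e_\chi=e_\chi-we_\chi+\frac12w^2e_\chi-\cdots$, le terme $w_1e_\chi$ est en somme directe avec tous les autres, d'où $\|w_1\|\ll 1$; on transfère alors les monômes en $w_1$ à gauche et on isole $w_2e_\chi$, etc. Une alternative serait d'invoquer la propreté du plongement $\uu^-\to V_\chi$, $w\mapsto e^{-w}e_\chi$ (l'orbite d'un unipotent est fermée et l'application est un isomorphisme sur son image), mais vous ne le mentionnez pas. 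Sans l'un ou l'autre, le passage de \enquote{vecteur petit avec bonne projection} à \enquote{bonne approximation rationnelle} n'est pas justifié.

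Enfin, votre \enquote{argument d'interpolation en $t$} est superflu: avec la définition $\gamma_\chi(x)=\limsup_{t\to\infty}\frac{-1}{t}\log r_\chi(a_t\Delta_x)$ (celle qu'utilise effectivement le papier dans sa preuve, la formulation en $\liminf$ qui apparaît ailleurs étant une coquille), une suite $t_n\to\infty$ suffit à minorer $\gamma_\chi(x)$.
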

\begin{proof}
La hauteur $H_\chi$ sur $X(\Q)$ est donnée par la formule
\[ H_\chi(v) = \norm{\bv},\]
où $\bv\in V_\chi(\Z)$ est un représentant primitif de $v\in X(\Q)$.

Soit $\beta<\beta_{\chi}(x)$, de sorte qu'il existe un point rationnel $v\in X(\Q)$ arbitrairement proche de $x$ tel que
$d(x,v)\leq H_\chi(v)^{-\beta}$.
Écrivons
\[
x=Ps_x \quad\mbox{et}\quad v=Pe^us_x,\ \mbox{avec}\ u\in\uu^-,
\]
ce qui peut aussi s'écrire, en identifiant $X\simeq G\cdot[e_\chi]$,
\[
x=s_x^{-1}\cdot[e_\chi] \quad\mbox{et}\quad v=s_x^{-1}e^{-u}\cdot[e_\chi].
\]
Si $\bv$ est un représentant primitif de $v$ dans $V_\chi(\Z)$, le vecteur $s_x\bv$ est porté par la direction $\R e^{-u}e_\chi$, et par conséquent, pour tout $t>0$,
\begin{align*}
a_ts_x\bv & \asymp H_\chi(v) a_te^{-u}e_\chi\\
& = H_\chi(v) \chi(a_t) e^{-(\Ad a_t)u}e_\chi\\
& = H_\chi(v) \chi(a_t) [e_\chi -((\Ad a_t)u)e_\chi+\frac{1}{2}((\Ad a_t)u)^2e_\chi+\dots]
\end{align*}
\comm{Il serait plus rigoureux d'écrire
\begin{align*}
\rho(a_ts_x)\bv & = H_\chi(v) \rho(a_te^{-u})e_\chi\\
& = H_\chi(v) \chi(a_t) e^{-\rho'((\Ad a_t)u)}e_\chi\\
& = \dots
\end{align*}
où $\rho':\g\to\gl(V_\chi)$ est la différentielle de la représentation $\rho:G\to\GL(V_\chi)$.
En fait, on omet tout simplement la représentation $\rho$, ce qui revient à voir $G$ comme un sous-groupe de $\GL(V_\chi)$ et $\g$ comme une sous-algèbre de Lie de $\gl(V_\chi)$.}
Notons que dans la somme ci-dessus, seul le premier terme $e_\chi$ n'est pas dans le noyau de $\pi^+$.

D'après la proposition~\ref{ballbox}, la distance $d(x,v)$ est comparable à la quasi-norme du vecteur $u$ dans $\uu^-$:
\[
d(x,v) \asymp |u| \leq H_\chi(v)^{-\beta}.
\]
Soit alors $c>0$ tel que pour tout $w\in\uu^-$ tel que $\abs{w}\leq c$, on ait $\norm{w}\leq\frac{1}{4}$,
\comm{Ou plus exactement, $\norm{\rho'(w)}\leq 1/4$.}
et $t>0$ tel que $e^t=cH_\chi(v)^\beta$.
Avec ce choix de $t$, par homogénéité de la quasi-norme $\abs{\cdot}$ pour le flot $\Ad a_t$,
\[
\abs{(\Ad a_t)u}=e^t\abs{u}\leq ce^td(x,v) \leq c
\]
et donc 
\[
\norm{(\Ad a_t)u}\leq \frac{1}{4},
\]
\details{
Or, pour $\norm{w}\leq 1/4$, l'inégalité $\norm{e^w-I}\leq 1/2$ implique d'une part
\[ \norm{-((\Ad a_t)u)e_\chi+\frac{1}{2}((\Ad a_t)u)^2e_\chi+\dots]} = \norm{(e^{-(\Ad a_t)u}-I)e_\chi} \leq 1/2\]
d'où
\[ \norm{e_\chi}=1 \geq \frac{1}{2}\norm{e_\chi -((\Ad a_t)u)e_\chi+\frac{1}{2}((\Ad a_t)u)^2e_\chi+\dots}\]
}
puis
\[
\norm{\pi^+(a_ts_x\bv)} \geq \frac{1}{2}\norm{a_ts_x\bv}.
\]
D'autre part
\[
\norm{a_ts_x\bv} \leq 2H_\chi(v)\chi(a_t) = 2c^{-\beta}e^{t[\chi(Y)+\frac{1}{\beta}]}.
\]
Cela montre que $\gamma_{\chi}(x) \geq -\chi(Y)-\frac{1}{\beta}$, et donc
\[
\beta_{\chi}(x) \leq \frac{1}{-\chi(Y)-\gamma_{\chi}(x)}.
\]

Pour montrer l'inégalité réciproque, fixons $\gamma<\gamma_{\chi}(x)$.
On peut donc trouver $t>0$ arbitrairement grand, et $\bv\in\tx\cap V_\chi(\Z)$ tel que
\begin{equation}\label{norme1}
 \norm{a_ts_x\bv} \leq e^{-\gamma t}
\end{equation}
et
\begin{equation}\label{projection1}
 \norm{\pi^+(a_ts_x\bv)} \geq \frac{1}{2}\norm{a_ts_x\bv}.
\end{equation}
Notons $v$ l'image de $\bv$ dans $X$, et comme ci-dessus, soit $u\in\uu^-$ tel que $v=Pe^us_x$.
Grâce à l'expression de $a_ts_x\bv$ déjà utilisée précédemment, nous avons
\note{L'égalité $\pi^+(a_ts_x\bv)\asymp H_\chi(v)\chi(a_t)e_\chi$ signifie que les deux vecteurs sont colinéaires, avec une constante de proportionalité bornée.}
$\pi^+(a_ts_x\bv)\asymp H_\chi(v)\chi(a_t)e_\chi$, et donc, d'après \eqref{projection1},
\[
\frac{1}{H_\chi(v)\chi(a_t)}\norm{a_ts_x\bv} \ll 1.
\]
On utilise ensuite la stratification $\uu^-=m_1\oplus\dots\oplus m_r$, et l'on décompose $u=\sum_i u_i$ suivant cette somme directe.
Dans l'égalité
\begin{equation}\label{egdec}
e_\chi - \frac{1}{H_\chi(v)\chi(a_t)} a_ts_x\bv = \big(\sum_i((\Ad a_t)u_i)e_\chi\big) - \frac{1}{2}((\Ad a_t)u)^2e_\chi+\dots,
\end{equation}
le terme $((\Ad a_t)u_1)e_\chi$ est en somme directe avec tous les autres,
 et l'on peut donc majorer
\[ \norm{(\Ad a_t)u_1e_\chi} \ll 1.\]
Comme $P=\Stab[e_\chi]$, l'application $u\mapsto u\cdot e_\chi$ est un difféomorphisme local au voisinage de $0$, et par conséquent, 
\[ \norm{(\Ad a_t)u_1} \ll 1.\]
Dans \eqref{egdec}, on peut alors faire passer tous les termes du membre de droite de la forme $((\Ad a_t)u_1)^ie_\chi$ dans le membre de gauche, et cela n'augmente pas significativement la norme de ce dernier.
Le terme $((\Ad a_t)u_2)e_\chi$ est alors en somme directe avec tous les autres termes du membre de droite, ce qui permet de voir que $\|((\Ad a_t)u_2)e_\chi\| \ll 1$, puis
\[ \norm{(\Ad a_t)u_2} \ll 1.\]
Ainsi de proche en proche, on montre que pour chaque $i$,
$\norm{(\Ad a_t)u_i} \ll 1$,
de sorte qu'à la fin
$\norm{((\Ad a_t)u)e_\chi} \ll 1$, et donc aussi
\[ \abs{(\Ad a_t)u} \ll 1.\]
Par suite,
\[ d(x,v) \asymp \abs{u} = e^{-t} \abs{(\Ad a_t)u} \ll e^{-t}.\]
Or, l'inégalité \eqref{norme1} implique
$H_\chi(v)\chi(a_t) = H_\chi(v) e^{t\chi(Y)} \leq e^{-\gamma t}$,
d'où l'on tire $e^{t(\chi(Y)+\gamma)}\leq H_\chi(v)^{-1}$, puis\footnote{Noter que $\chi(Y)+\gamma<0$.}
\[
d(x,v) \ll H_\chi(v)^{\frac{1}{\chi(Y)+\gamma}}.
\]
Comme $\gamma$ peut être choisi arbitrairement proche de $\gamma_\chi(x)$, on obtient bien
\[
\beta_{\chi}(x) \geq \frac{1}{-\chi(Y)-\gamma_\chi(x)}.
\]
\end{proof}

Le premier résultat important impliqué par la proposition~\ref{dani} est le théorème suivant, selon lequel la fonction $\beta_\chi$ est constante presque partout sur $X$.

\begin{theorem}[Valeur presque sûre de l'exposant diophantien]
\label{exposantps}
Soit $G$ un $\Q$-groupe semi-simple, $P$ un $\Q$-sous-groupe parabolique, $X=P\bcs G$ la variété quotient, et $H_\chi$ la hauteur sur $X(\Q)$ associée au poids dominant $\chi$.
Il existe une constante $\beta_\chi(X)>0$ telle que pour presque tout $x$ dans $X(\R)$,
\[
\beta_\chi(x) = \beta_\chi(X).
\]
De plus, si $\Pi$ est une base d'un système de racines de $G$ et $\theta\subset\Pi$ est telle que $P=P_\theta$, alors
\[
\beta_\chi(X) = \frac{-1}{\chi(Y)},
\]
où $Y\in\ka$ est l'élément défini par
\[
\alpha(Y) = \left\{
\begin{array}{ll}
0 & \mathrm{si}\ \alpha\in\theta\\
-1 & \mathrm{si}\ \alpha\in\Pi\setminus\theta.
\end{array}
\right.
\]
\end{theorem}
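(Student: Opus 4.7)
The plan is to combine Proposition~\ref{dani} with a Borel--Cantelli argument for an upper bound and a Dirichlet-type principle for a lower bound on the dynamical exponent $\gamma_\chi$. By Proposition~\ref{dani}, for every $x\in X(\R)$,
\[
\beta_\chi(x)=\frac{1}{-\chi(Y)-\gamma_\chi(x)},
\]
so it suffices to prove that $\gamma_\chi(x)=0$ for almost every $x\in X(\R)$; the claimed value $\beta_\chi(X)=-1/\chi(Y)$ then follows by direct substitution.

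For the upper bound $\gamma_\chi(x)\le 0$ almost everywhere, I apply Borel--Cantelli at integer times. Fix $\eps>0$ and set $E_n=\{x\in X(\R)\ |\ r_\chi(a_n\Delta_x)\le e^{-\eps n}\}$ for each $n\ge 1$. The key estimate is $\mathrm{Leb}(E_n)\ll m_\Omega(\{r_\chi\le e^{-\eps n}\})$, obtained by a Fubini argument using the open Bruhat cell $U^-\hookrightarrow X$, $u\mapsto Pe^u$: one writes $a_ne^uV_\chi(\Z)=e^{(\Ad a_n)u}(a_nV_\chi(\Z))$, and the change of variables $u\mapsto (\Ad a_n)u$ on $\uu^-$ has a Jacobian that exactly compensates the factor by which the image of a small ball in $\uu^-$ covers a large region of $\Omega$, so the integral reduces to $m_\Omega$-measure of the target set. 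Proposition~\ref{equi} then gives $m_\Omega(\{r_\chi\le e^{-\eps n}\})\asymp e^{-\eps a_\chi n}n^{b_\chi-1}$, summable in $n$. Borel--Cantelli yields $r_\chi(a_n\Delta_x)>e^{-\eps n}$ eventually for almost every $x$; a Lipschitz bound on $t\mapsto \log r_\chi(a_t\Delta)$, coming from the homogeneity of $\Ad a_t$ on $V_\chi$, transfers this from integer to real times, and letting $\eps\to 0$ gives $\gamma_\chi(x)\le 0$ almost everywhere.

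For the lower bound $\gamma_\chi(x)\ge 0$, I establish the stronger uniform statement: there is a constant $C>0$ such that $r_\chi(a_t\Delta_x)\le C$ for every $x\in X(\R)$ and every $t\ge 0$. This is Dirichlet's theorem adapted to the flag variety setting, and it is in fact the converse of the second half of the proof of Proposition~\ref{dani}. Writing $x=Pe^u$ with $u\in\uu^-$, simultaneous Dirichlet approximation of $u$ in $\uu^-$ produces, for each $t$, a rational $\frac{\bp}{q}\in\uu^-(\Q)$ whose corresponding rational point $v\in X(\Q)$ lifts to a lattice vector $\bv\in\Delta_x$ such that $a_t\bv\in\tx$ satisfies $\|a_t\bv\|\le C$ and $\|\pi^+(a_t\bv)\|\ge \|a_t\bv\|/2$, by reversing the quantitative estimates appearing there. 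This uniform bound on $r_\chi(a_t\Delta_x)$ yields $\liminf_{t\to\infty}(-\log r_\chi(a_t\Delta_x))/t\ge 0$, hence $\gamma_\chi(x)\ge 0$ for every $x\in X(\R)$.

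Combining the two bounds, $\gamma_\chi(x)=0$ for almost every $x$, and $\beta_\chi(x)=-1/\chi(Y)$ almost everywhere. The main obstacle is the Fubini change-of-variables in the Borel--Cantelli step: the map $X\to \Omega$ induced by a section lands in a lower-dimensional slice, so to compare Lebesgue on $X$ with $m_\Omega$ on $\Omega$ one must work in the local chart $\uu^-$, exploit the expansion by $\Ad a_n$ to spread the slice across a full-dimensional region, and use the $a_n$-invariance of $m_\Omega$ to close the loop. The Dirichlet step, while conceptually classical, requires reversing the polynomial identities used in the proof of Proposition~\ref{dani} to ensure that the lattice vector produced lies in $\tx$ and satisfies the projection condition $\|\pi^+\|\ge \|\cdot\|/2$.
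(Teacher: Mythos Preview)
Your reduction to showing $\gamma_\chi(x)=0$ almost everywhere is correct, and your upper bound via Borel--Cantelli and Proposition~\ref{equi} is a legitimate alternative to the paper's argument, which instead invokes ergodicity of $(a_t)$ on $\Omega$ to get $\frac{1}{t}\log\lambda_1(a_ts_xV_\chi(\Z))\to 0$ almost everywhere and then uses the trivial inequality $\lambda_1\le r_\chi$.

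The lower bound, however, has a genuine gap. Your uniform claim $r_\chi(a_t\Delta_x)\le C$ for \emph{every} $x$ and $t$ is equivalent, via Proposition~\ref{dani}, to the Dirichlet-type statement $\beta_\chi(x)\ge\beta_\chi(X)$ for all $x\in X(\R)$. This is false in general: paragraph~\ref{sec:drap} exhibits, for a suitable height on the full flag variety in $\R^4$, an algebraic point with $\beta_\chi(x)<\beta_\chi(X)$. Even in cases where the uniform bound might hold (e.g.\ Grassmannians), it is an open problem in the paper. Your proposed proof via Dirichlet approximation of $u\in\uu^-$ cannot work: the map $u\mapsto e^{-u}e_\chi$ is polynomial of degree $d_\chi$, so the rational $\bp/q$ produces a point of height $\asymp q^{d_\chi}$, not $q$. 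Exercise~\ref{stereo} works this out and obtains only $\beta_\chi(x)\ge\frac{1+\dim X}{d_\chi\dim_{cc}X}$, strictly weaker than $\beta_\chi(X)$ whenever $d_\chi>1$, which is the generic situation.

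The paper's lower bound does not attempt any uniform statement. It feeds the almost-everywhere information $\lambda_1(a_ts_xV_\chi(\Z))=e^{o(t)}$ back into reduction theory: in a Siegel decomposition $a_ts_x=kan\gamma$ this forces $\norm{a}=e^{o(t)}$, and then one takes a rational basis $(u_i)$ of $V_\chi$ lying in $\tx\cap V_\chi(\Z)$, so that the vectors $a_ts_x\gamma^{-1}u_i$ all lie in $\tx\cap a_ts_xV_\chi(\Z)$, all have norm $e^{o(t)}$, and together span a lattice of bounded covolume. Since they span, at least one must have $\norm{\pi^+(\bv_i)}/\norm{\bv_i}\ge e^{-\eps t}$, and a small shift in $t$ upgrades this to the projection condition. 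The essential point you are missing is that the lower bound is genuinely an almost-everywhere statement whose proof \emph{reuses} the upper bound, rather than a pointwise Dirichlet principle.
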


\begin{remark}
La formule $\beta_\chi(X)=\frac{-1}{\chi(Y)}$ implique en particulier que $\beta_\chi(X)$ est rationnel.
En effet, il existe un entier $\ell\geq 1$ tel que pour tout élément $\chi$ dans le réseau des poids, on ait $\chi(Y)\in\frac{1}{\ell}\Z$.
Si le poids $\chi$ est choisi dans le réseau engendré par les racines, on a même $-\chi(Y)\in\N^*$.
\end{remark}

\begin{proof}
D'après la proposition~\ref{dani}, il suffit de faire voir que pour presque tout $x$ dans $X(\R)$, $\gamma_\chi(x)=0$.
Notons $\Gamma$ le sous-groupe arithmétique de $G$ qui stabilise le réseau $V_\chi(\Z)$.
Tout d'abord, par ergodicité du flot $(a_t)$ sur l'espace $\Omega=G/\Gamma$, pour presque tout $s\in G$,
\[
\lim_{t\to\infty} \frac{1}{t}\log\lambda_1(a_tsV_\chi(\Z))=0.
\]
\details{
En effet, si $\lambda_1(a_tsV_\chi(\Z))\leq e^{-\tau t}$, alors pour tout $t'\in [(1-\tau/2)t,t]$, $\lambda_1(a_{t'}sV_\chi(\Z))\leq e^{-\tau t/2}\leq \eps$, et par conséquent $\frac{1}{t}\abs{\{t'\in[0,t]\ |\ \lambda_1(a_{t'}sV_\chi(\Z)\leq\eps\})}\geq \tau/2$. Mais par ergodicité, le membre de gauche converge vers $m(\{\Delta\ |\ \lambda_1(\Delta)\leq\eps\})$, qui tend vers zéro lorsque $\eps$ tend vers zéro.
Noter que $g\mapsto\lambda_1(gV_\chi(\Z))$ définit bien une fonction sur $G/\Gamma$, car $V_\chi(\Z)$ est stable par $\Gamma$.
}
Or, cette limite ne dépend que de la classe $Ps$ de $s$ modulo $P$, et donc, pour presque tout $x=Ps_x$ dans $X(\R)$,
\begin{equation}\label{lambda1}
\lim_{t\to\infty} \frac{1}{t}\log\lambda_1(a_ts_xV_\chi(\Z))=0.
\end{equation}
Comme $\lambda_1(a_ts_xV_\chi(\Z))\leq r_\chi(a_ts_xV_\chi(\Z))$, cela montre déjà, pour presque tout $x$ dans $X(\R)$, $\gamma_\chi(x)\leq 0$.

Pour l'inégalité réciproque, nous utiliserons le théorème~\ref{reduction}, tiré de la théorie de la réduction des groupes arithmétiques, dont les résultats principaux sont rappelés au chapitre~\ref{chap:reduction}.
Pour $t>0$ grand, ce théorème nous donne une décomposition de Siegel de $a_ts_x$:
\[
a_ts_x = kan\gamma \quad\mbox{avec}\ k\in K,\ a\in A_\tau,\ n\in\omega, \gamma\in C\Gamma,
\]
où $C$ est une partie finie de $G(\Q)$, $\Gamma$ le stabilisateur de $V_\chi(\Z)$ dans $G$, et $A_\tau$ et $\omega$ sont définis au paragraphe~\ref{sec:redbase}.
Comme $\lambda_1(a_ts_xV_\chi(\Z))= e^{o(t)}$, on a aussi
\[
\norm{a} = e^{o(t)}.
\]
Soit $(u_1,\dots,u_k)$ une base de $V_\chi(\Q)$ constituée d'éléments de $\tx\cap V_\chi(\Z)$.
Le sous-réseau qu'elle engendre est d'indice fini dans $V_\chi(\Z)$.
Si $D\in\N^*$ est un dénominateur commun aux coefficients des éléments de $C$ dans la représentation sur $V_\chi$, la base $(Da_ts_x\gamma^{-1}u_1,\dots,Da_ts_x\gamma^{-1}u_k)$ est constituée d'éléments de norme $e^{o(t)}$ appartenant à $\tx\cap a_ts_xV_\chi(\Z)$, et engendre un réseau de covolume borné indépendamment de $t$.
Par conséquent, pour tout $\eps>0$, on peut trouver $i$ tel que l'élément $\bv_i=a_ts_x\gamma^{-1}u_i$ vérifie $\frac{\norm{\pi^{+}(\bv_i)}}{\norm{\bv_i}}\geq e^{-\eps t}$, et quitte à diminuer un peu $t$, on aura même $\frac{\norm{\pi^{+}(\bv_i)}}{\norm{\bv_i}}\geq \frac{1}{2}$.
Comme on a aussi $\norm{\bv_i}=e^{o(t)}$, avec $\eps>0$ arbitrairement petit, cela donne bien $\gamma_\chi(x)\geq 0$, ce qu'il fallait démontrer.
\end{proof}

Notons que la démonstration ci-dessus se fonde implicitement sur le lemme suivant, qui découle de la proposition~\ref{dani}, et dont nous ferons encore usage dans les chapitres~\ref{chap:algebrique} et \ref{chap:sousvariete}.

\begin{lemma}
\label{daniextremal}
Soit $G$ un $\Q$-groupe semi-simple, $P$ un $\Q$-sous-groupe parabolique, et $X=P\bcs G$ la variété quotient.
Fixons aussi une hauteur $H_\chi$ sur $X(\Q)$, donnée par un poids dominant $\chi$ associé à $P$.

Soit $x\in X(\R)$ et $s_x\in G$ tel que $x=Ps_x$.
Si $\lim_{t\to\infty}\frac{1}{t}\log\lambda_1(a_ts_xV_\chi(\Z))=0$, alors
\[
\beta_\chi(x) = \beta_\chi(X).
\]
\end{lemma}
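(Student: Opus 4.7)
La stratégie est essentiellement de reprendre la démonstration du théorème~\ref{exposantps}, en observant que la seule propriété de la mesure presque sûre qui y était utilisée est précisément l'hypothèse $\lim_{t\to\infty}\frac{1}{t}\log\lambda_1(a_ts_xV_\chi(\Z))=0$ du lemme.

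D'après la proposition~\ref{dani}, on a $\beta_\chi(x)=\frac{1}{-\chi(Y)-\gamma_\chi(x)}$, et d'après le théorème~\ref{exposantps}, $\beta_\chi(X)=\frac{-1}{\chi(Y)}$.
Il suffit donc d'établir que $\gamma_\chi(x)=0$, ce qu'on fait par deux inégalités.

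Pour la majoration $\gamma_\chi(x)\leq 0$, on remarque que pour tout réseau $\Delta$ tel que $r_\chi(\Delta)<+\infty$, tout vecteur de $\Delta\cap C_r$ est un vecteur non nul de $\Delta$ de norme au plus $r$, et donc $r_\chi(\Delta)\geq\lambda_1(\Delta)$. Par conséquent $\frac{-1}{t}\log r_\chi(a_t\Delta_x)\leq \frac{-1}{t}\log\lambda_1(a_t\Delta_x)$ et en passant à la liminf, l'hypothèse donne directement $\gamma_\chi(x)\leq 0$.

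Pour la minoration $\gamma_\chi(x)\geq 0$, on reprend l'argument basé sur la théorie de la réduction (théorème~\ref{reduction}): pour $t>0$ grand, on écrit une décomposition de Siegel $a_ts_x=kan\gamma$ avec $k\in K$, $a\in A_\tau$, $n\in\omega$ et $\gamma\in C\Gamma$. L'hypothèse $\lambda_1(a_ts_xV_\chi(\Z))=e^{o(t)}$ implique $\norm{a}=e^{o(t)}$. On fixe alors une base $(u_1,\dots,u_k)$ de $V_\chi(\Q)$ constituée d'éléments de $\tx\cap V_\chi(\Z)$, et pour un dénominateur commun $D$ des coefficients des éléments de la partie finie $C$, la famille $(Da_ts_x\gamma^{-1}u_i)_i$ est constituée d'éléments de $\tx\cap a_ts_xV_\chi(\Z)$ de norme $e^{o(t)}$, engendrant un réseau de covolume uniformément borné. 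Par un argument de covolume, au moins un de ces vecteurs $\bv_i$ satisfait $\frac{\norm{\pi^+(\bv_i)}}{\norm{\bv_i}}\geq e^{-\eps t}$, pour $\eps>0$ arbitrairement petit; quitte à diminuer un peu $t$ (la direction $\R e_\chi$ étant contractée plus fortement que les autres), on peut même imposer $\frac{\norm{\pi^+(\bv_i)}}{\norm{\bv_i}}\geq\frac{1}{2}$. Cela fournit un témoin pour $r_\chi(a_t\Delta_x)\leq e^{o(t)}$, et donc $\gamma_\chi(x)\geq 0$.

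Le point principal de l'argument est la deuxième partie, où l'on utilise la théorie de la réduction pour transformer un contrôle sur $\lambda_1$ en un contrôle sur $r_\chi$, qui est une quantité plus restrictive. La subtilité à surveiller est le passage de la contrainte $\frac{\norm{\pi^+(\bv)}}{\norm{\bv}}\geq e^{-\eps t}$ à $\frac{\norm{\pi^+(\bv)}}{\norm{\bv}}\geq\frac{1}{2}$, qui exploite le fait que $e_\chi$ est le plus haut poids pour $(a_t)$.
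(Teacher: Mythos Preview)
Your proposal is correct and follows exactly the approach of the paper: the paper does not give a separate proof of this lemma but explicitly states that it is implicit in the proof of the theorem~\ref{exposantps}, which is precisely what you reproduce---use proposition~\ref{dani} to reduce to $\gamma_\chi(x)=0$, get $\gamma_\chi(x)\le 0$ from $r_\chi\ge\lambda_1$, and get $\gamma_\chi(x)\ge 0$ via the Siegel decomposition and the basis-of-$\tx\cap V_\chi(\Z)$ argument.
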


\section{Points mal approchables}

Par analogie avec la terminologie utilisée pour $\PP^1(\R)$, nous définissons l'ensemble $\BA_X$ des points \emph{mal approchables} dans $X$ par
\[
\BA_X = \{x\in X(\R)\ |\ \exists c>0:\forall v\in X(\Q),\ d(x,v)\geq cH_\chi(v)^{-\beta_\chi}\}.
\]
Nous montrons ci-dessous que cet ensemble est non vide, et même que sa dimension de Hausdorff est maximale, ce qui généralise un résultat bien connu de Schmidt pour l'espace projectif \cite{schmidt_games}.
Notre démonstration utilise une variante des jeux de Schmidt, similaire à celle utilisée dans un article en commun avec Dmitry Kleinbock \cite{simplex} pour montrer la propriété dans le cas particulier où $X$ est une quadrique projective.
Insistons cependant sur le fait que la distance utilisée ci-dessous sur $X(\R)$ est celle associée à la métrique de Carnot-Carathéodory introduite à la partie~\ref{sec:cc}.

Commençons par expliquer la règle du jeu de Schmidt, qui dépend d'un paramètre fixe $\alpha\in]0,\frac{1}{3}[$.
Le jeu se joue entre deux joueurs Alice et Bob.
Pour commencer la partie, Alice choisit une boule $B_0=B(x_0,\rho_0)$ dans $X(\R)$.
Bob peut alors ôter à $B_0$ un voisinage de taille au plus $\rho_1=\alpha\rho_0$ d'une sous-variété linéaire stricte $L$, i.e. de l'intersection de $X(\R)$, vue comme une partie de $V_\chi(\R)$, avec un hyperplan de $V_\chi$.
Puis Alice doit choisir une boule $B_1$ de rayon $\rho_1$ dans la partie restante $B_0\setminus L^{(\alpha\rho_0)}$.
Ensuite, Bob peut supprimer de $B_1$ un voisinage de taille au plus $\rho_2=\alpha\rho_1$ d'une sous-variété linéaire, et ainsi de suite.
Comme la suite des rayons $(\rho_n)_{n\geq 0}$ converge vers $0$, l'intersection décroissante $\cap_{n\geq 0} B_i$ est réduite à un singleton:
\[
\bigcap_{n\geq 0} B_n = \{x_\infty\}.
\]
Une partie $S\subset X(\R)$ est dite \emph{$\alpha$-gagnante} si Bob peut toujours jouer de sorte que $x_\infty\in S$.
Une partie $S$ est dite \emph{gagnante} si elle est $\alpha$-gagnante pour $\alpha>0$ arbitrairement proche de zéro.
Deux propriétés importantes des ensembles gagnants sont données dans la proposition ci-dessous, qui se démontre avec des arguments très semblables à ceux utilisés par Schmidt dans son article fondateur \cite{schmidt_games}, auquel on renvoie le lecteur pour plus de détails sur le sujet.

\begin{proposition}
Soit $X$ une variété de drapeaux munie de la distance de Carnot-Carathéodory naturelle et d'une hauteur associée à un poids dominant $\chi$.
\begin{enumerate}
\item Si $(S_i)$ est une famille dénombrable de parties gagnantes dans $X(\R)$, alors $S=\bigcap_iS_i$ est aussi une partie gagnante.
\item Si $S$ est une partie gagnante de $X(\R)$, alors $S$ est dense dans $X(\R)$ et $\dim_HS=\dim X$.
\end{enumerate}
\end{proposition}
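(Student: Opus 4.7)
Le premier point relève d'une technique classique d'entrelacement due à Schmidt \cite{schmidt_games}. Étant donnée une famille dénombrable de parties gagnantes $(S_i)_{i\geq 1}$, pour tout $\alpha>0$ suffisamment petit, chaque $S_i$ est $\alpha$-gagnante; on note $\sigma_i$ une stratégie gagnante correspondante pour Bob. Je ferais jouer Bob dans le jeu pour $\bigcap_i S_i$ selon une combinaison en blocs des $\sigma_i$: on fixe une suite d'entiers $n_1<n_2<\dots$ et, pendant le $i$-ème bloc $(n_{i-1},n_i]$, Bob applique la stratégie $\sigma_i$ au sous-jeu renormalisé issu de la boule $B_{n_{i-1}}$. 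Comme $(B_n)_{n\geq 0}$ est une suite décroissante, le point limite $x_\infty$ est pour chaque $i$ un résultat valide d'un sous-jeu joué selon $\sigma_i$, et appartient donc à chaque $S_i$, puis à leur intersection.

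La densité d'une partie gagnante $S$ est immédiate: étant donné un ouvert $U\subset X(\R)$, Alice peut choisir son premier coup $B_0$ inclus dans $U$, et le point limite $x_\infty$ produit par la stratégie gagnante de Bob appartient alors à $S\cap U$.

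Pour la minoration $\dim_H S\geq \dim_{cc}X$, je construirais un sous-ensemble de Cantor dans $S$ en exploitant la multiplicité des coups disponibles pour Alice. Fixant $\alpha>0$ petit et une stratégie gagnante pour Bob, à chaque tour $n$, Alice peut placer non pas une mais \emph{plusieurs} boules disjointes de rayon $\rho_n=\alpha\rho_{n-1}$ dans la partie restante $B_{n-1}\setminus L_n^{(\alpha\rho_{n-1})}$, chacun de ces choix menant à une branche distincte de l'arbre et donc à un point limite différent de $S$. La proposition~\ref{ballbox} garantit qu'une boule de $X$ de rayon $\rho$ pour la métrique de Carnot-Carathéodory contient de l'ordre de $\alpha^{-\dim_{cc}X}$ boules disjointes de rayon $\alpha\rho$. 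Le principal obstacle, et l'étape non triviale, consiste à majorer le nombre de telles boules rencontrant le voisinage $L_n^{(\alpha\rho_{n-1})}$ de la sous-variété linéaire stricte choisie par Bob. Comme $L\cap X$ est une sous-variété algébrique propre de $X$, j'attendrais qu'une analyse locale dans la carte $\uu^-$ -- combinant la proposition~\ref{ballbox} et un contrôle du volume tubulaire d'une hypersurface algébrique pour la quasi-norme homogène sur $\uu^-$ -- fournisse une majoration en $O(\alpha^{-\dim_{cc}X+\eta})$ pour un exposant $\eta>0$ ne dépendant que de $X$. Il en résulterait que le nombre $N_\alpha$ de coups disjoints disponibles pour Alice à chaque tour satisfait $N_\alpha\asymp \alpha^{-\dim_{cc}X}$, et le principe de distribution de masse appliqué à la mesure uniforme sur l'arbre de Cantor obtenu donnerait
\[
\dim_H S \geq \frac{\log N_\alpha}{\log(\alpha^{-1})},
\]
quantité qui tend vers $\dim_{cc}X$ lorsque $\alpha\to 0$. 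Combiné à l'inégalité évidente $\dim_H S\leq\dim_{cc}X$, cela achèverait la démonstration.
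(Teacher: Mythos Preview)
Your argument for the second point is essentially the one the paper gives: build a Cantor subset of $S$ by letting Alice branch over the $\asymp\alpha^{-\dim_{cc}X}$ disjoint sub-balls that survive after Bob's deletion, and let $\alpha\to 0$. One caveat: the construction takes place in the Carnot--Carathéodory metric, so what you obtain is $\dim_{H,cc}S=\dim_{cc}X$, not $\dim_H S\geq\dim_{cc}X$ (which is false whenever the CC metric is genuinely sub-Riemannian, since then $\dim_{cc}X>\dim X\geq\dim_H S$). The paper notes that the equality $\dim_{H,cc}S=\dim_{cc}X$ is equivalent to the Riemannian statement $\dim_H S=\dim X$; you should make this translation explicit.

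Your argument for the first point, however, has a genuine gap. Playing the strategy $\sigma_i$ only during a \emph{finite} block $(n_{i-1},n_i]$ does not force $x_\infty\in S_i$. A winning strategy guarantees the outcome only if Bob follows it at \emph{every} turn of the game; once block $i$ ends and Bob switches to $\sigma_{i+1},\sigma_{i+2},\dots$, he is no longer playing $\sigma_i$, and the sequence $(B_n)_{n>n_i}$ is, from the viewpoint of the $S_i$-game, a play in which Bob has abandoned his strategy. Since the target sets are typically nowhere dense, no finite prefix of moves can trap $x_\infty$ inside $S_i$. The fix, which is what the paper does, is to interleave rather than concatenate: assign to each $S_k$ an \emph{infinite} set of turns --- the paper uses the times $2^{k-1}(2n+1)$, $n\in\N$ --- and exploit the fact that $S_k$ is $\alpha^{2^k}$-gagnante, so that the effective contraction between two consecutive turns devoted to $S_k$ matches the parameter of the sub-game. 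This way each $\sigma_k$ is played on a cofinal subsequence and the winning condition $x_\infty\in S_k$ is legitimately secured.
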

\begin{proof}
Pour le premier point, fixons $\alpha\in]0,1/3[$.
En suivant aux temps $1,3,5,\dots$ une stratégie $\alpha^2$-gagnante pour $S_1$, Bob peut s'assurer que $x_\infty\subset B_0\cap B_2\cap B_4 \dots$ appartient à $S_1$.
Aux temps $2,6,10,\dots$ une stratégie $\alpha^4$-gagnante pour $S_2$ permet aussi de s'assurer que $x_\infty\subset B_1\cap B_5\cap\dots$ soit dans $S_2$.
Et ainsi de suite, pour chaque $k\geq 1$, Bob peut choisir une stratégie $\alpha^{2^k}$ gagnante pour $S_k$, qu'il joue aux temps $2^{k-1}(2n+1)$, $n\in\N$ pour avoir toujours $x_\infty\in S_k$.
Cela donne bien $x_\infty\in S=\bigcap_iS_i$, et $S$ est donc bien un ensemble gagnant.

Pour le second point, la propriété de $S$ pour un petit paramètre $\alpha>0$ permet de construire une suite décroissante de fermés $(K_n)_{n\geq 1}$ telle qu'à chaque étape, pour une constante $c>0$ indépendante de $n$ et $\alpha$, la partie $K_n$ soit réunion d'au moins $(c\cdot\alpha^{\dim_{cc} X})^n$ boules de rayon $\alpha^n$, et telle que l'intersection $K=\bigcap_{n\geq 1}K_n$ soit incluse dans $S$.
Pour la métrique de Carnot-Carathéodory, on vérifie sans peine que $\dim_{H,cc} K \geq \frac{(\dim_{cc}X)\abs{\log\alpha}-\abs{\log c}}{\abs{\log\alpha}}$, d'où l'on tire, en faisant tendre $\alpha$ vers $0$, $\dim_{H,cc} S = \dim_{cc}X$.
Cela implique la même égalité pour la métrique riemannienne: $\dim_H S=\dim X$.
\end{proof}

\begin{remark}
Dans le premier point, la dimension de Hausdorff est celle associée à une métrique riemannienne sur $X(\R)$, et $\dim X$ désigne la dimension de $X$ au sens usuel du terme.
Mais la démonstration de la proposition montre plutôt l'égalité analogue pour la métrique de Carnot-Carathéodory $\dim_{H,cc}S=\dim_{cc}X$, qui lui est en fait équivalente.
\end{remark}

\begin{theorem}[Propriété de Schmidt des points mal approchables]
\label{bax}
Soit $X$ une variété de drapeaux munie de la distance de Carnot-Carathéodory naturelle et d'une hauteur associée à un poids dominant $\chi$.
L'ensemble $\BA_X$ est un ensemble gagnant au sens défini ci-dessus.
En particulier,
\[
\dim_H\BA_X = \dim X.
\]
\end{theorem}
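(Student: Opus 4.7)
By the Dani correspondence (Proposition~\ref{dani}) together with the identity $\beta_\chi(X)=-1/\chi(Y)$, a point $x\in X(\R)$ lies in $\BA_X$ if and only if $\inf_{t\geq 0}r_\chi(a_t\Delta_x)>0$. Writing
\[
 \BA_X^\eps = \{x\in X(\R)\ |\ \forall v\in X(\Q),\ d(x,v)\geq \eps\,H_\chi(v)^{-\beta_\chi(X)}\},
\]
one has $\BA_X=\bigcup_{\eps>0}\BA_X^\eps$. Since any set containing an $\alpha$-winning set is itself $\alpha$-winning (by using the same strategy), it suffices to exhibit some fixed $\alpha\in(0,1/3)$ and some $\eps>0$ for which $\BA_X^\eps$ is $\alpha$-winning; the conclusion $\dim_H\BA_X=\dim X$ then follows immediately from the second part of the preceding proposition.

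I would adapt the Schmidt-game strategy used for projective quadrics in \cite{simplex}. At stage $n$ of the game Alice has chosen a ball $B_n=B(x_n,\rho_n)$ with $\rho_n\asymp\alpha^n$. A rational $v\in X(\Q)$ can exclude the limit point $x_\infty\in B_n$ from $\BA_X^\eps$ only if its excluded region $B(v,\eps H_\chi(v)^{-\beta_\chi(X)})$ meets $B_n$, which localises $v$ to $d(x_n,v)\leq 2\rho_n$ with height $H_\chi(v)$ in a narrow dyadic window around $T_n=(\rho_n/\eps)^{-1/\beta_\chi(X)}$. For each such $v$ Bob intersects $X\subset\PP(V_\chi)$ with a hyperplane of $V_\chi$ through $v$ and removes the $\alpha\rho_n$-neighborhood of the resulting hyperplane section; by the Ball-box theorem (Proposition~\ref{ballbox}) this neighborhood contains a CC-ball $B(v,c\alpha\rho_n)$, hence the full excluded region of $v$ provided $\eps$ was chosen small enough in terms of $\alpha$. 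Grouping game stages into blocks handling the successive dyadic height windows, exactly as in Schmidt's original argument \cite{schmidt_games}, then forces $x_\infty\in\BA_X^\eps$.

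The step requiring real work is the uniform bound on the number of rationals to remove per stage. This rests on a Liouville-type separation estimate: two distinct rationals $v_1,v_2\in X(\Q)$, represented by primitive integer vectors in the cone $\tx\subset V_\chi(\Z)$, must satisfy $d(v_1,v_2)\gg (H_\chi(v_1)H_\chi(v_2))^{-\kappa}$ for a constant $\kappa$ depending only on the embedding $X\hookrightarrow\PP(V_\chi)$. Such an estimate follows from the fact that $\tx$ is a homogeneous algebraic cone, hence cut out by polynomial equations of bounded degree in $V_\chi$, after converting Euclidean-to-CC distance via the Ball-box theorem. Once in hand, it implies that distinct rationals at height $\asymp T_n$ inside $B(x_n,2\rho_n)$ are spread out on a scale that, for $\alpha$ chosen sufficiently small, is much larger than $\rho_n$, bounding their number uniformly by a constant handleable in a bounded number of game stages.

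The main obstacle is precisely this separation estimate and its interaction with the CC geometry: for $X=\PP^n$ it is classical, but for a general flag variety one must extract it from the algebraic structure of the $G$-orbit $\tx$ and propagate it through the (non-Riemannian, anisotropic) CC metric on $X$ when the unipotent radical of $P$ is non-abelian. Once the bound is secured, the combinatorial bookkeeping of Schmidt's game proceeds routinely, and the theorem follows.
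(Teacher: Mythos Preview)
Your separation-based counting step has a genuine gap. A Liouville estimate of the form $d(v_1,v_2)\gg (H_\chi(v_1)H_\chi(v_2))^{-\kappa}$, even with the optimal $\kappa=1$ coming from the projective embedding and the Ball--Box comparison, gives at height $\asymp T_n$ a separation scale $\gg T_n^{-2}$. But the ball radius at stage $n$ is $\rho_n\asymp T_n^{-\beta_\chi(X)}$, and for essentially every flag variety one has $\beta_\chi(X)\leq 2$ (already $\beta=1+\tfrac{1}{n}$ for $\PP^n$, $\beta=\tfrac{1}{\ell}+\tfrac{1}{d-\ell}$ for $\Grass(\ell,d)$). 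Hence $T_n^{-2}\ll \rho_n$ as $n\to\infty$, the number of relevant rationals in $B(x_n,2\rho_n)$ is \emph{not} uniformly bounded, and no choice of $\alpha$ repairs this, since $\alpha$ only rescales $\rho_n$ without altering the exponent comparison. Your block strategy therefore cannot be executed with a single hyperplane removal per game step.

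The paper bypasses the count entirely with a \emph{simplex lemma} (Lemma~\ref{simplex}): for a suitable constant $c>0$, all rationals $v\in B(x,\rho)$ with $H_\chi(v)\leq c\,\rho^{-1/\beta_\chi(X)}$ lie in a \emph{single} linear hyperplane section of $X$. The proof is a one-line Minkowski argument: choosing $t$ so that $e^{-t\chi(Y)}=\rho^{-1/\beta_\chi(X)}$, the Dani-correspondence calculation shows each representative $\mathbf{v}\in V_\chi(\Z)$ satisfies $\|a_t s_x\mathbf{v}\|<1$, and since $a_t s_x V_\chi(\Z)$ has covolume $1$, such vectors cannot span $V_\chi$. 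With this in hand, Bob simply removes the $\alpha\rho_i$-neighbourhood of \emph{one} linear subvariety $L_i$ at each stage $i$, and a two-case height-window argument gives $d(x_\infty,v)\geq \alpha^2 c^{\beta_\chi(X)}H_\chi(v)^{-\beta_\chi(X)}$ for every rational $v$. The key missing idea in your proposal is precisely this collinearity phenomenon, which replaces any counting by a single linear obstruction.
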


Pour démontrer ce théorème, nous utiliserons le lemme suivant.

\begin{lemma}[Lemme du simplexe]
\label{simplex}
Soit $X$ une variété de drapeaux munie de la distance de Carnot-Carathéodory naturelle et d'une hauteur associée à un poids dominant $\chi$.
Il existe une constante $c>0$ telle que pour toute boule $B(x,\rho)\subset X(\R)$ de rayon $\rho\in]0,1[$, l'ensemble
\[
E_{x,\rho}= B(x,\rho) \cap \{v\in X(\Q)\ |\ H(v)\leq c\rho^{-\frac{1}{\beta_\chi(X)}}\}
\]
soit inclus dans une sous-variété linéaire stricte de $X$.
\end{lemma}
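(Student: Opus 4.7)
I would rescale the ball $B(x,\rho)$ by a diagonal element $a_ts_x$ in order to turn rational approximations of height $\ll\rho^{-1/\beta_\chi(X)}$ into small vectors in a lattice of bounded covolume in $V_\chi$, and then invoke Minkowski's second theorem to force them into a proper linear subspace of $V_\chi$.

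Using the compactness of $X(\R)$, I would first choose $s_x\in G$ with $x=Ps_x$ in a fixed bounded subset of $G$, so that $\|s_x^{-1}e^{-u}e_\chi\|\asymp 1$ uniformly in $x$ and in $u\in\uu^-$ with $|u|\leq 1$. Let $(a_t)=(e^{tY})$ be the one-parameter subgroup defined in~\eqref{at1}, and set $t:=\log(1/\rho)>0$. For each $v\in E_{x,\rho}$, write $v=Pe^us_x$; the Ball-Box Theorem (proposition~\ref{ballbox}) yields $|u|\ll\rho$, and a primitive representative $\bv\in V_\chi(\Z)$ is of the form $\bv=\lambda\,s_x^{-1}e^{-u}e_\chi$ with $|\lambda|\asymp H_\chi(v)$.

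Next, mimicking the computation from the proof of the Dani correspondence (proposition~\ref{dani}),
\[
a_ts_x\bv \;=\; \lambda\,\chi(a_t)\,e^{-w}e_\chi,\qquad w:=(\Ad a_t)u,
\]
and the homogeneity of $|\cdot|$ under $(a_t)$ gives $|w|=e^t|u|\ll 1$, so $\|w\|\ll 1$ and $\|e^{-w}e_\chi\|\ll 1$. Combined with $\chi(a_t)=e^{t\chi(Y)}=\rho^{1/\beta_\chi(X)}$ (theorem~\ref{exposantps}), this yields
\[
\|a_ts_x\bv\|\;\ll\; H_\chi(v)\,\rho^{1/\beta_\chi(X)}\;\leq\; C\,c
\]
for an absolute constant $C>0$, so that taking $c$ small enough forces $\|a_ts_x\bv\|\leq c'$ for any prescribed small $c'>0$.

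Finally, the lattice $\Lambda:=a_ts_xV_\chi(\Z)$ has covolume equal to $\covol(V_\chi(\Z))$, since $G$ semisimple implies $a_ts_x\in\SL(V_\chi)$. Minkowski's second theorem (theorem~\ref{minkowski}) gives $\lambda_1(\Lambda)\cdots\lambda_N(\Lambda)\asymp\covol(\Lambda)$ with $N=\dim V_\chi$, so if $c'$ is chosen small enough in terms of $\covol(V_\chi(\Z))$, the lattice $\Lambda$ cannot contain $N$ linearly independent vectors of norm $\leq c'$. All such vectors therefore lie in some proper linear subspace $W\subsetneq V_\chi$, and pulling back, the primitive representatives of the points of $E_{x,\rho}$ lie in $(a_ts_x)^{-1}W$, so $E_{x,\rho}$ is contained in the proper linear subvariety $X(\R)\cap\PP((a_ts_x)^{-1}W)$ of $X$ (this subvariety is proper because $X$, as the orbit of the highest weight line in the irreducible representation $V_\chi$, spans $V_\chi$ linearly). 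The main delicate point is the uniform choice of $s_x$ in a bounded subset of $G$ and the uniform two-sided bound on $\|s_x^{-1}e^{-u}e_\chi\|$; both follow from compactness of $X(\R)$, and the rest is a routine combination of the Ball-Box Theorem, the Dani computation, and Minkowski's second theorem.
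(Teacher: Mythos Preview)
Your argument is correct and follows essentially the same approach as the paper's proof: rescale by $a_ts_x$ with $t=\log(1/\rho)$, use the Dani computation together with the Ball-Box estimate to bound $\|a_ts_x\bv\|$ by a constant multiple of $c$, and then use the fixed covolume of $a_ts_xV_\chi(\Z)$ to force all such vectors into a proper subspace. The paper is slightly terser---it takes $s_x\in K$ directly (which gives your uniformity for free) and simply says ``covolume $=1$'' rather than invoking Minkowski explicitly---but the two proofs are the same in substance.
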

\begin{proof}
On reprend les notations utilisées dans la démonstration de la proposition~\ref{dani}.
En particulier si $B_\rho=B(x,\rho)$, on note $s_x\in K$ un élément tel que $Ps_x=x$.
Soit $v\in X(\Q)\cap B_\rho$ tel que $H(v)\leq c\rho^{-\frac{1}{\beta_\chi(X)}}$, et $\bv$ un représentant dans $V_\chi(\Z)$.

Soit $t>0$ tel que $e^{-t\chi(Y)}=\rho^{-\frac{1}{\beta_\chi(X)}}$.
Si $c>0$ est choisi suffisamment petit, le calcul utilisé dans la première partie de la démonstration de la proposition~\ref{dani} montre que $\norm{a_ts_x\bv} < 1$.
Comme le réseau $a_ts_xV_\chi(\Z)$ est de covolume égal à 1, cela implique que tous les représentants $\bv$ des éléments $v\in E_{x,\rho}$ sont inclus dans un hyperplan de $V_\chi$, i.e. que $E_{x,\rho}$ est inclus dans une sous-variété linéaire de $X$.
\end{proof}

\begin{proof}[Démonstration du théorème~\ref{bax}]
Notons pour simplifier $\beta=\beta_\chi(X)$, et fixons un petit paramètre $\alpha>0$.
Pour commencer, Alice choisit une boule $B_0=B(x_0,\rho_0)$.
D'après le lemme~\ref{simplex}, il existe une constante $c>0$ telle que tous les points rationnels $v\in X(\Q)$ dans $2B_0$ tels que $H(v)\leq c\rho_0^{-\frac{1}{\beta}}$ soient inclus dans une sous-variété linéaire $L_0$.
Bob peut alors supprimer le voisinage $L_0^{(\alpha\rho_0)}$ de cette sous-variété.
Et de même, dans toute la suite de la partie, chaque fois qu'Alice a choisi une boule $B_i=B(x_i,\rho_i)$, où $\rho_i=\rho_0\alpha^i$, les points rationnels $v\in 2B_i$ tels que $H(v)\leq c\rho_i^{-\frac{1}{\beta}}$ sont tous dans une sous-variété linéaire $L_i$, et Bob peut supprimer le petit voisinage $L_i^{(\alpha\rho_i)}$.

Cette stratégie implique que $x_\infty \in \BA_X$.
En effet, si $v\in X(\Q)$, on peut choisir $i$ tel que
\begin{equation}\label{rhoi}
c\rho_{i-1}^{-\frac{1}{\beta}}
\leq H(v) \leq c\rho_{i}^{-\frac{1}{\beta}}.
\end{equation}
Si $v\not\in 2B_i$, alors, comme $x_\infty\in B_i$, on trouve
\[
d(x_\infty,v)
\geq \rho_i
= \alpha\rho_{i-1}
\geq \alpha c^\beta H(v)^{-\beta}
\]
tandis que si $v\in 2B_i$, l'inégalité \eqref{rhoi} implique $v\in L_i$, et comme $x_\infty\in B_{i+1}$,
\[
d(x_\infty,v)
\geq \alpha\rho_i
= \alpha^2\rho_{i-1}
\geq \alpha^2 c^\beta H(v)^{-\beta}.
\]
Posant $c_0=c^\beta\alpha^2$, on trouve
\[ \forall\, v\in X(\Q),\ d(x_\infty,v)\geq c_0H(v)^{-1},\]
et donc $x_\infty\in\BA_X$.
\end{proof}

\section{Un cas particulier: la hauteur anti-canonique}

D'après un résultat de Franke \cite{fmt}, lorsque la variété de drapeaux est munie de la hauteur anti-canonique,
le nombre $N_X(T)$ de points rationnels de hauteur anti-canonique au plus $T$ dans $X$ vérifie, pour certaines constantes $c>0$ et $b\in\N^*$,
\[
N_X(T) \sim c\cdot T (\log T)^{b-1}, \hspace{1cm} (T\to\infty).
\]
Si ces points sont bien répartis, on peut donc s'attendre à ce que la valeur presque sûre $\beta_\chi(X)$ de l'exposant diophantien soit donnée par $\frac{1}{\dim_{cc}X}$, où $\dim_{cc}X$ est la dimension de $X$ pour la distance de Carnot-Carathéodory, donnée par l'équivalent asymptotique $N(X,\delta)\asymp \delta^{-\dim_{cc}X}$, où $N(X,\delta)$ désigne le nombre de boules de rayon $\delta>0$ nécessaire pour recouvrir l'espace compact $X(\R)$.
Le théorème suivant confirme cet argument heuristique.

\begin{theorem}[Exposant diophantien pour la hauteur anti-canonique]
\label{dac}
Soit $G$ un $\Q$-groupe semi-simple, $P$ un $\Q$-sous-groupe parabolique, et $X=P\bcs G$ la variété quotient.
On munit $X$ de la hauteur anti-canonique $H_\chi$ associée à la somme des racines apparaissant dans le radical unipotent de $P$.
Alors
\[
\beta_\chi(X) = \frac{1}{\dim_{cc} X}
\]
où $\dim_{cc} X$ est la dimension de Carnot-Carathéodory de $X$.
\end{theorem}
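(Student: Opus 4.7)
Le plan est d'appliquer le théorème~\ref{exposantps} et de faire un calcul direct sur le système de racines qui montre que $-\chi(Y)$ coïncide exactement avec la somme $\sum_{i\geq 1}i\dim m_i$ donnant $\dim_{cc}X$.

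D'après le théorème~\ref{exposantps}, on a $\beta_\chi(X)=-1/\chi(Y)$, où $Y\in\ka$ est défini par $\alpha(Y)=-1$ pour $\alpha\in\Pi\setminus\theta$ et $\alpha(Y)=0$ pour $\alpha\in\theta$. Il suffit donc de vérifier l'identité
\[
-\chi(Y) = \sum_{i\geq 1} i\dim m_i.
\]

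Pour cela, je rappellerais d'abord que le radical unipotent $\uu$ de $P$ est la somme des espaces de racines $\g_\alpha$ pour $\alpha\in\Sigma^+\setminus\bracket{\theta}^+$, c'est-à-dire les racines positives dont la décomposition sur $\Pi$ contient au moins un élément de $\Pi\setminus\theta$. La hauteur anti-canonique correspond au caractère
\[
\chi = \sum_{\alpha\in\Sigma^+\setminus\bracket{\theta}^+} (\dim\g_\alpha)\,\alpha,
\]
c'est-à-dire la trace de l'action adjointe de $T$ sur $\uu$ (chaque racine étant comptée avec la multiplicité $\dim\g_\alpha$). Pour une racine positive $\alpha=\sum_{\beta\in\Pi}n_\beta^\alpha\beta$, on a $\alpha(Y)=-\sum_{\beta\in\Pi\setminus\theta}n_\beta^\alpha=:-i_\alpha$, où $i_\alpha\in\N$ est le nombre d'éléments de $\Pi\setminus\theta$ (comptés avec multiplicité) intervenant dans la décomposition de $\alpha$. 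On a donc
\[
-\chi(Y) = \sum_{\alpha\in\Sigma^+\setminus\bracket{\theta}^+} (\dim\g_\alpha)\cdot i_\alpha.
\]

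Enfin, je regrouperais les termes selon la valeur de $i_\alpha$: par définition (rappelée avant la proposition~\ref{strat}), $m_i$ est la somme directe des $\g_{-\alpha}$ pour $\alpha\in\Sigma^+\setminus\bracket{\theta}^+$ avec $i_\alpha=i$, et $\dim\g_{-\alpha}=\dim\g_\alpha$. Par conséquent
\[
-\chi(Y) = \sum_{i\geq 1} i \sum_{\alpha:\,i_\alpha=i}\dim\g_\alpha = \sum_{i\geq 1} i\dim m_i = \dim_{cc}X,
\]
ce qui donne bien $\beta_\chi(X)=1/\dim_{cc}X$. Il n'y a pas de véritable obstacle dans cette démonstration: tout est déjà contenu dans le théorème~\ref{exposantps}; le seul point à vérifier soigneusement est la cohérence des conventions sur les multiplicités dans le calcul de $\chi$ et dans la définition de la stratification $\uu^-=\bigoplus_i m_i$, qui assure que chaque racine contribue bien à $-\chi(Y)$ et à $\dim_{cc}X$ avec le même coefficient $\dim\g_\alpha$.
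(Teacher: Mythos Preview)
Your proof is correct and follows exactly the same approach as the paper: apply the formula $\beta_\chi(X)=-1/\chi(Y)$ from the théorème~\ref{exposantps}, then compute $-\chi(Y)$ by grouping the roots of the unipotent radical according to the number $i_\alpha$ of simple roots outside $\theta$ in their decomposition, which yields $\sum_{i\geq 1}i\dim m_i=\dim_{cc}X$. Your write-up is slightly more explicit about the multiplicities $\dim\g_\alpha$, but the argument is identical.
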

\begin{proof}
Il suffit de vérifier que la formule générale donnée par le théorème~\ref{exposantps} donne la valeur souhaitée dans ce cas particulier.

Les racines qui apparaissent dans le radical unipotent de $U$ sont exactement celles dont la décomposition en racines simples contient un élément hors de la partie $\theta$ associée à $P$.
Soit $\chi=\sum_{\alpha\in \Sigma_U} \alpha$ la somme de ces racines.
On décompose cette somme suivant le nombre d'éléments hors de $\theta$ contenus dans la racine $\alpha$.
Vue la définition de l'élément $Y$, et en utilisant aussi le fait que $U$ est naturellement isomorphe au sous-groupe unipotent $U^-$ opposé à $P$,
\[
\chi(Y) = -\sum_{i\geq 1} i\dim m_i = -\dim_{cc} X.
\]
\end{proof}

Cette valeur explicite de $\beta_\chi(X)$, combinée avec l'équivalent asymptotique de Franke $\log N_X(T)\sim\log T$ et un principe de transfert dû à Beresnevich et Velani~\cite{bv_transfert} permet même de calculer la dimension de Hausdorff des points de $X(\R)$ tels que $\beta_\chi(x)=\beta$.

\begin{theorem}[Théorème de Jarník pour une variété de drapeaux]
\label{jarnik}
Soit $X$ une variété de drapeaux rationnelle munie de sa métrique de Carnot-Carathéodory et de la hauteur anti-canonique $H_\chi$.
Pour tout $\beta\geq\frac{1}{\dim_{cc}X}$, l'ensemble
\[
E_\beta = \{x\in X(\R)\ |\ \beta(x)\geq\beta\}
\]
vérifie
\[
\dim_{H,cc} E_\beta = \frac{1}{\beta}.
\]
\end{theorem}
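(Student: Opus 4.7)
My plan is to prove the two matching inequalities separately, using Franke's asymptotic for the upper bound and the Beresnevich--Velani mass transference principle for the lower bound.

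For the upper bound, I first dispose of the boundary case $\beta = 1/\dim_{cc}X$, where Theorem~\ref{exposantps} gives $E_{\beta} \supseteq \{x : \beta_\chi(x) = \beta_\chi(X)\}$, a set of full measure and hence of dimension $\dim_{cc}X = 1/\beta$. For $\beta > 1/\dim_{cc}X$, I pick any $\beta' \in (1/\dim_{cc}X, \beta)$ and use the inclusion
\[
E_\beta \subseteq \limsup_{v\in X(\Q)} B\!\left(v,\,H_\chi(v)^{-\beta'}\right).
\]
By Theorem~\ref{hborne}, $N_X(T)\sim cT(\log T)^{b-1}$, so the number of rationals $v$ with $H_\chi(v)\in[2^k,2^{k+1}]$ is $O(2^k k^{b-1})$. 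The Hausdorff--Cantelli lemma, applied in the metric space $(X(\R),d)$ (which is Ahlfors regular of dimension $\dim_{cc}X$ by Proposition~\ref{ballbox}), gives $\mathcal{H}^s(E_\beta) = 0$ as soon as
\[
\sum_{k\geq 1} 2^k k^{b-1}\cdot 2^{-k\beta' s} < \infty,
\]
that is, for every $s > 1/\beta'$. Letting $\beta'\to\beta$ yields $\dim_{H,cc}E_\beta \leq 1/\beta$.

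For the lower bound, I start from the Khintchine theorem (Theorem~\ref{khintchinei}) applied with $\psi\equiv 1$, for which the relevant integral $\int_e^{+\infty}(\log\log u)^{b_\chi-1}\frac{\mathrm{d}u}{u}$ diverges; together with Theorem~\ref{dac} this gives, for almost every $x\in X(\R)$, infinitely many $v\in X(\Q)$ satisfying $d(x,v) \leq H_\chi(v)^{-1/\dim_{cc}X}$. Equivalently, the set $\limsup_v B(v,H_\chi(v)^{-1/\dim_{cc}X})$ has full $\mathcal{H}^{\dim_{cc}X}$-measure in $X(\R)$. I now invoke the Beresnevich--Velani mass transference principle \cite{bv_transfert} in the Ahlfors-regular metric space $(X(\R),d)$: taking $r_v = H_\chi(v)^{-1/\dim_{cc}X}$ and the target dimension $s = 1/\beta \in (0,\dim_{cc}X]$, the rescaled radii satisfy $r_v^{\dim_{cc}X/s} = H_\chi(v)^{-\beta}$, and the principle yields
\[
\mathcal{H}^{1/\beta}\!\left(\limsup_{v}B(v,H_\chi(v)^{-\beta})\right) = +\infty
\]
in every ball of $X(\R)$, so in particular $\dim_{H,cc}E_\beta \geq 1/\beta$.

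The main obstacle is to justify the use of the Beresnevich--Velani principle in the Carnot--Carathéodory setting, since their original statement is phrased in $\R^n$ with Euclidean balls. This is however a soft adaptation: the Ball-Box theorem (Proposition~\ref{ballbox}) ensures that $(X(\R),d)$ is Ahlfors $k$-regular with $k=\dim_{cc}X$, which is precisely the framework in which the mass transference argument carries through verbatim. A secondary nuisance is the extra factor $(\log\log H_\chi(v))^{b_\chi-1}$ appearing in the almost-sure Khintchine statement used above; but any such polylogarithmic correction is absorbed when one slightly perturbs the exponent $\beta$, and does not affect the Hausdorff dimension in the end.
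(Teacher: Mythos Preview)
Your proof is correct and follows essentially the same route as the paper: the upper bound via a Hausdorff--Cantelli argument using Franke's count $N_X(T)\ll T^{1+\eps}$, and the lower bound via the Beresnevich--Velani mass transference principle in the Ahlfors-regular space $(X(\R),d)$.

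The one difference worth noting is the input you feed into the mass transference. You invoke the full Khintchine theorem (Theorem~\ref{khintchinei}) with $\psi\equiv 1$ to get full measure at the \emph{critical} exponent $1/\dim_{cc}X$. The paper instead uses only Theorem~\ref{dac} together with Theorem~\ref{exposantps}, i.e.\ the almost-sure value $\beta_\chi(X)=1/\dim_{cc}X$, which yields full measure at every \emph{subcritical} exponent; the paper then compensates by applying the transfer with $f(x)=x^{1/(\beta+\eps)}$ and letting $\eps\to 0$. Your route is slightly cleaner (no $\eps$ to remove at the end), but it relies on a much heavier theorem: Khintchine requires exponential mixing (Theorem~\ref{decay}) and the volume asymptotics of Proposition~\ref{equi}, whereas the almost-sure exponent needs only ergodicity of the diagonal flow. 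The paper's choice also keeps the Jarn\'ik theorem logically inside Chapter~\ref{chap:correspondance}, independent of the later Chapter~\ref{chap:khintchine}.
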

\begin{proof}
Soit $\beta\geq\frac{1}{\dim_{cc}X}$.
Commençons par vérifier que si $s>\frac{1}{\beta}$, alors la mesure de Hausdorff de dimension $s$ de $E_\beta$ est nulle: $\cH^s_{cc}(E_\beta)=0$.
Pour cela, on utilise le recouvrement de $E_\beta$ par les boules $B(v,H(v)^{-\beta})$, $v\in X(\Q)$:
\[
E_\beta \subset \bigcup_{H(v)\geq 2^{n_0}} B(v,H(v)^{-\beta})
\]
et on majore
\begin{align*}
\sum_{H(v)\geq 2^{n_0}} H^(v)^{-\beta s} & \leq \sum_{n\geq n_0} 2^{-n\beta s} N_X(2^{n+1})\\
& \leq C_\eps \sum_{n\geq n_0} 2^{-n(\beta s-1-\eps)},
\end{align*}
où la deuxième inégalité provient de la majoration $N_X(T)\leq C_\eps T^{1+\eps}$, qui découle de l'équivalent asymptotique de Franke rappelé ci-dessus.
Si $\eps>0$ est choisi assez proche de zéro pour que $\beta s-1-\eps>0$, la somme converge, et en faisant tendre $n_0$ vers l'infini, on trouve donc $\cH^s_{cc}(E_\beta)=0$.
Cela montre déjà $\dim_{H,cc} E_\beta \leq \frac{1}{\beta}$.

Pour montrer l'inégalité réciproque, on utilise le principe de transfert de Beresnevich et Velani \cite[Theorem~3]{bv_transfert}.
Remarquons que la mesure de Lebesgue sur $X(\R)$ est équivalente à la mesure de Hausdorff en dimension $\dim_{cc} X$ définie pour la métrique de Carnot-Carathéodory.
Pour abréger les notations, posons $g:x\mapsto x^{\dim_{cc}X}$ et notons $\cH^g_{cc}$ la mesure de Hausdorff associée à cette fonction, pour la métrique de Carnot-Carathéodory.
Pour certaines constantes $c_1,C_2>0$, la mesure d'une boule de rayon $r\leq 1$ dans $X(\R)$ est contrôlée par
\[
c_1g(r) \leq \cH^g_{cc}(B(x,r))  \leq C_2 g(r).
\]
Suivant les notations de \cite{bv_transfert}, si $f:\R^+\to\R^+$ est une fonction croissante bijective, et $B=B(x,r)$ une boule dans $X$, on note
\[
B^f=B(x,g^{-1}f(r)) = B(x, (f(r))^{\frac{1}{\dim_{cc}X}}).
\]
En particulier, $B^g=B$.
Fixons $\beta>\frac{1}{\dim_{cc}X}$, et notons $(B_i)_{i\in\N}$ la famille de boules
\[
B(v,H(v)^{-\beta}),\quad v\in X(\Q)
\]
de sorte que
\[
E_\beta \supset \limsup_i B_i^g.
\]
Soit $\eps>0$ et $f:x\mapsto x^{\frac{1}{\beta+\eps}}$.
D'après le théorème~\ref{dac} l'ensemble $\limsup_i B_i^f$ est de mesure pleine:
\[
\forall B\subset X(\R),\quad \cH^g(B\cap \limsup_iB_i^f) = \cH^g(B).
\]
D'après \cite[Theorem~3]{bv_transfert}, cela implique
\[
\forall B\subset X(\R),\quad \cH^f(B\cap \limsup_i B_i^g) = \cH^f(B)
\]
et donc 
\[
\dim_{H,cc} E_\beta \geq \frac{1}{\beta+\eps}.
\]
\end{proof}

\begin{remark}
Lorsque $X$ n'est pas munie de la hauteur anti-canonique, on peut toujours utiliser l'équivalent de Mohammadi et Salehi Golsefidy \cite{msg}
\[
N_X(T)\sim c T^{u_\chi}(\log T)^{v_\chi}
\]
et le lemme de Borel-Cantelli pour montrer que $\beta_\chi(X)\leq\frac{u_\chi}{\dim_{cc}X}$.
Mais cette inégalité peut être stricte.
\comm{En effet, la formule $\beta_\chi(X)=\frac{1}{\chi(Y)}$ peut s'écrire
\(
\beta_\chi(X) = \frac{1}{\bracket{\chi,\sum_{i\in\theta}\varpi_i}}
\)
et $\dim_{cc}X = \bracket{\rho,\sum_{i\in\theta}\varpi_i}$, où $\rho$ est la somme des racines de l'unipotent de $P$, donc
\[
(\dim_{cc}X)\beta_\chi(X) = \frac{\bracket{\rho,\sum_{i\in\theta}\varpi_i}}{\bracket{\chi,\sum_{i\in\theta}\varpi_i}}.
\]
D'autre part, la formule donnée pour $u_\chi$ dans \cite{msg} est
\[
u_\chi = \max_{i\in\theta}\frac{\bracket{\rho,\alpha_i}}{\bracket{\chi,\alpha_i}}.
\]
Et il est facile de construire un exemple où ces deux quotients prennent des valeurs différentes.
}
\comm{Remarquons qu'une inégalité stricte $\beta_\chi(X)<\frac{u_\chi}{\dim_{cc}X}$ signifie que les points rationnels de hauteur au plus $T$ ne sont pas répartis uniformément dans $X$. On peut supposer qu'ils s'accumulent sur certaines familles de sous-variétés, sans doute des variétés de Schubert.
À titre d'analogie, on peut remarquer que si on mesure la hauteur d'un point rationnel $v=(p_1/q_1,p_2/q_2)$ dans $[0,1]^2$ par $H(v)=q_1+q_2^\tau$, il y a environ $T^{1+\frac{1}{\tau}}$ points de hauteur inférieure à $T$, et pourtant, l'exposant diophantien presque sûr n'est jamais supérieur à $2$, qui est l'exposant diophantien obtenu pour la première coordonnée. Cela s'explique par le fait que les points rationnels sont concentrés sur les droites rationnelles verticales. Faire un dessin.}

De même, la première partie de la démonstration du théorème~\ref{jarnik} ci-dessus permet de voir que
\[
\forall\beta\geq\frac{u_\chi}{\dim_{cc}X},\quad
\dim_{H,cc} E_\beta \leq \frac{u_\chi}{\beta}
\]
tandis que le principe de transfert de Beresnevich et Velani implique que
\[
\forall \beta\geq\beta_\chi(X),\quad
\dim_{H,cc} E_\beta \geq \frac{\beta_\chi(X)\dim_{cc}X}{\beta}.
\]
Mais ces deux bornes ne coïncident pas en général.

\end{remark}

\chapter{Le théorème de Khintchine}
\label{chap:khintchine}

Comme précédemment, $X$ désigne une variété de drapeaux, donnée sous la forme $X=P\bcs G$, où $G$ est un $\Q$-groupe semi-simple, et $P$ un $\Q$-sous-groupe parabolique.
La hauteur $H_\chi$ sur $X(\Q)$ est donnée par le choix d'un poids dominant $\chi$ de $G$ associé à $P$, et la distance est celle associée à la métrique de Carnot-Carathéodory usuelle sur $X$.
Cela permet de définir l'exposant diophantien $\beta_\chi(x)$ d'un point $x$ dans $X(\R)$.
Nous avons vu à la partie précédente que cet exposant diophantien est constant presque partout sur $X(\R)$: il existe une constante $\beta_\chi(X)$ telle que pour presque tout $x$ dans $X(\R)$, l'inégalité
\[
d(x,v) \leq H(v)^{-\beta},
\]
admet une infinité de solutions $v\in X(\Q)$ si $\beta<\beta_\chi(X)$, et un nombre fini de solutions si $\beta>\beta_\chi(X)$.
Pour étudier plus précisément les propriétés diophantiennes d'un point aléatoire de $X(\R)$, nous nous intéressons donc à l'inégalité
\begin{equation}\label{khineq}
d(x,v) \leq H(v)^{-\beta_\chi(X)}\psi(H(v)),
\end{equation}
où $\psi:\R^+\to\R^+$ est une fonction décroissante.
Le théorème~\ref{khintchine} ci-dessous, analogue du célèbre théorème de Khintchine \cite{khintchine}, donne une condition nécessaire et suffisante sur $\psi$ pour que \eqref{khineq} ait une infinité de solutions pour presque tout $x$ dans $X(\R)$.

\bigskip

Pour sa démonstration, nous suivons l'approche développée par Kleinbock et Margulis \cite{km_loglaws,km_loglaws_erratum} pour démontrer le théorème de Khintchine dans l'espace projectif: nous ramenons ce théorème à un énoncé sur le comportement asymptotique de certaines orbites diagonales dans l'espace de réseaux $\Omega=G/\Gamma$, et utilisons la propriété de mélange exponentiel du flot $(a_t)$ sur $\Omega$.
Dans le cas où $X$ est une sphère projective, le théorème~\ref{khintchine} est dû à Kleinbock et Merrill \cite{kleinbockmerrill}, qui ont ensuite généralisé leur résultat à une quadrique projective arbitraire, dans un travail en commun avec Fishman et Simmons \cite{fkms}.
Quelques complications interviennent dans notre situation, notamment parce que les ensembles à atteindre ne sont plus à proprement parler des voisinages de l'infini dans $\Omega$.
Nous commençons la démonstration par un paragraphe un peu calculatoire, sur le volume de certaines parties de $\Omega$, parce que cela fera apparaître les paramètres utiles à l'énoncé précis du théorème de Khintchine.

\section{Voisinages de l'infini}

Ce paragraphe a pour but la proposition~\ref{asymp} ci-dessous, qui décrit le comportement asymptotique du volume de certaines portions de  voisinages de l'infini dans $G/\Gamma$.
Dans tout le paragraphe, $G$ désigne un $\Q$-groupe semi-simple.
Étant donnée une $\Q$-représentation linéaire $V_\chi$ de $G$ engendrée par une unique droite rationnelle $\R e_\chi$ de plus haut poids $\chi$, on note
\[
P = \Stab_G \R e_\chi
\]
le stabilisateur dans $G$ de la direction engendrée par $e_\chi$, et
\[
\tx=G\cdot e_\chi
\]
l'orbite du vecteur de plus haut poids sous l'action de $G$.
On considère un réseau rationnel $V_\chi(\Z)$ dans $V_\chi$, et le stabilisateur de $V_\chi(\Z)$ dans $G$ est noté $\Gamma$; c'est un sous-groupe arithmétique de $G$.

Notons enfin $\Omega=G/\Gamma$, que nous identifierons naturellement à une partie de l'espace des réseaux de $V_\chi$, via l'application $g\Gamma\mapsto gV_\chi(\Z)$.
Rappelons que sur cet espace de réseaux, la fonction $r_\chi$ est définie par
\[
r_\chi(\Delta) = \inf\{ r>0\ |\ \exists \bv\in\Delta\cap\tx:\ \norm{\bv}\leq r\ \mathrm{et}\ \norm{\pi^+(\bv)}\geq\frac{\norm{\bv}}{2}\},
\]
où $\pi^+:V_\chi\to V_\chi$ désigne la projection sur $e_\chi$ parallèlement aux autres espaces de poids.

\begin{proposition}[Volume des voisinages de l'infini]
\label{asymp}
Pour $r>0$, on considère le voisinage de l'infini dans $\Omega$ défini par
\[
\Omega_r = \{ \Delta\in\Omega \ |\ \min_{\bv\in\Delta\cap\tx}\norm{\bv}\leq r\},
\]
et la sous-partie
\[
\Omega_r' = \{ \Delta\in\Omega \ |\ r_\chi(\Delta) \leq r\}.
\]
Il existe des constantes $C,a_\chi,b_\chi>0$ telles que pour tout $r$ suffisamment petit,
\[
\frac{1}{C}r^{a_\chi}\abs{\log r}^{b_\chi-1}
\leq m_\Omega(\Omega_r')
\leq m_\Omega(\Omega_r) \leq Cr^{a_\chi}\abs{\log r}^{b_\chi-1}.
\]
Si $(Y_i)_{1\leq i\leq r}$ désigne la base duale de la base des racines simples, et $\rho$ la somme des racines positives comptées avec multiplicité, alors
\[
a_\chi=\min_{1\leq i\leq r} \frac{\rho(Y_i)}{\chi(Y_i)}
\quad\mathrm{et}\quad
b_\chi=\card\{ i\in[1,r]\ |\ \frac{\rho(Y_i)}{\chi(Y_i)}=a_\chi\}.
\]
\end{proposition}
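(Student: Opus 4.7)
Le plan consiste à ramener les deux estimations à une intégrale sur un domaine de Siegel, puis à en extraire l'asymptotique en isolant les directions qui dominent la décroissance. Je commencerais par invoquer la théorie de la réduction de Borel--Harish-Chandra (développée au chapitre~\ref{chap:reduction}) pour écrire un quasi-domaine fondamental de $\Gamma$ dans $G$ comme union finie de translatés $\FS\cdot\gamma_0$, $\gamma_0\in C\subset G(\Q)$, d'un domaine de Siegel $\FS = K A_\tau \omega$, avec $A_\tau = \{a\in A : \alpha(\log a)\leq -\tau,\ \forall\alpha\in\Pi\}$ et $\omega$ relativement compact dans le radical unipotent d'un parabolique minimal. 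Dans les coordonnées $g=kau$ qui en résultent, la mesure de Haar s'écrit $\dd g \asymp e^{\rho(\log a)}\,\dd k\,\dd a\,\dd u$, ce qui ramène le problème à une intégrale sur la composante en $A$.

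Je traduirais ensuite les conditions définissant $\Omega_r$ et $\Omega_r'$ en bornes sur $\chi(\log a)$. L'observation centrale est que, puisque le radical unipotent $U$ du parabolique minimal fixe $e_\chi$, on a pour $g=kau\in\FS$ l'estimation $\norm{g e_\chi} = e^{\chi(\log a)}\norm{k e_\chi} \asymp e^{\chi(\log a)}$. Le contenu de la théorie de la réduction --- appliquée simultanément dans toutes les représentations fondamentales, comme il le sera fait pour la construction de $c:\Omega\to\ka^-$ au chapitre~\ref{chap:reduction} --- est que, pour $g\in\FS$, le vecteur $e_\chi$ réalise essentiellement $\min_{\bv\in V_\chi(\Z)\cap\tx}\norm{g\bv}$ à une constante multiplicative près. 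La condition $gV_\chi(\Z)\in\Omega_r$ équivaut donc, modulo $O(1)$, à $\chi(\log a)\leq\log r$. Pour la minoration de $m_\Omega(\Omega_r')$, je restreindrais en outre $k$ à l'ouvert $K_0=\{k\in K\,|\,\norm{\pi^+(ke_\chi)}\geq\tfrac{1}{2}\norm{ke_\chi}\}$, de mesure strictement positive puisqu'il contient un voisinage de l'identité : sur $K_0$, le vecteur explicite $ge_\chi$ témoigne directement de $r_\chi(gV_\chi(\Z))\leq r$ dès que $\chi(\log a)\leq\log r - C$.

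Paramétrant $\log a = -\sum_i s_i Y_i$ avec $s_i\geq\tau$, la contrainte $\chi(\log a)\leq\log r$ devient $\sum_i s_i\chi(Y_i)\geq \abs{\log r}$, et $e^{\rho(\log a)}\,\dd a$ s'écrit $e^{-\sum_i s_i\rho(Y_i)}\prod_i\dd s_i$ à constante près. Les coordonnées pour lesquelles $\chi(Y_i)=0$ ne contribuent qu'un facteur borné ; après le changement de variables $t_i = s_i\chi(Y_i)$ pour les autres, le problème se ramène à
\[
\int_{t_i\geq 0,\ \sum_i t_i\geq R}\prod_i e^{-\mu_i t_i}\,\prod_i\dd t_i, \qquad \mu_i = \rho(Y_i)/\chi(Y_i),\ R=\abs{\log r}.
\]
En séparant la contribution selon que la masse est portée par les indices de $I_{\min}=\{i\,|\,\mu_i=a_\chi\}$ ou leur complémentaire (où la décroissance exponentielle est strictement plus rapide), un calcul élémentaire de volume de simplexe fournit l'équivalent $\asymp R^{b_\chi-1}e^{-a_\chi R} = \abs{\log r}^{b_\chi-1}r^{a_\chi}$, ce qui donne les deux bornes recherchées. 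L'obstacle principal sera la minoration uniforme $\min_{\bv\in V_\chi(\Z)\cap\tx}\norm{g\bv}\gtrsim e^{\chi(\log a)}$ pour $g\in\FS$, indispensable à la majoration de $m_\Omega(\Omega_r)$ : il faut exclure qu'un translaté par $\Gamma$ d'un point entier de $\tx$ ne produise un vecteur plus court que $ge_\chi$, ce qui requiert d'appliquer la théorie de la réduction simultanément dans toutes les $\Q$-représentations fondamentales de $G$, et pas seulement dans $V_\chi$.
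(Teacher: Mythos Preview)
Your plan is correct and follows the paper's approach closely: both reduce via a Siegel domain $\FS=KA_\tau\omega$, translate the conditions defining $\Omega_r$ and $\Omega_r'$ into the inequality $\chi(\log a)\leq\log r$ on the $A$-component, and then estimate the resulting integral $\int_{\ka^-}\mathbbm{1}_{\{\chi(h)\leq\log r\}}e^{\rho(h)}\,\dd h \asymp r^{a_\chi}\abs{\log r}^{b_\chi-1}$. The lower bound for $\Omega_r'$ is obtained in the paper exactly as you propose, by restricting $k$ to the open set $V_K=\{k\in K:\norm{\pi^+(ke_\chi)}\geq\tfrac12\norm{ke_\chi}\}$.

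The one substantive difference is in how the upper bound is justified. The paper does not go through the function $c:\Omega\to\ka^-$ or the full family of fundamental representations; instead it first replaces $\Omega_r$ by the comparable set $\tOmega_r$ built from the finitely many $\Gamma$-orbits on $\tx\cap V_\chi(\Z)$ (finiteness of $P(\Q)\bcs G(\Q)/\Gamma$), and then invokes Borel's th\'eor\`eme~16.9 for the type-$(P,\chi)$ function $\Phi(g)=\norm{ge_\chi}$, which says directly that the minimum of $\Phi$ over a $\Gamma C'$-orbit is attained at a point of the Siegel set. Your alternative route --- the bound $\norm{gv}\gtrsim e^{\chi(\log a)}$ for $g=kan\in\FS$ and any nonzero $v\in V_\chi(\Z)$ --- is valid too, but it actually requires \emph{only} $V_\chi$: since $\chi$ is the highest weight, $e^{\chi(\log a)}$ is the smallest eigenvalue of $a$ on $V_\chi$ when $\log a\in\ka^-$ (up to $O(1)$ on $\ka_\tau^-$), and $ana^{-1}$ stays bounded on $\FS$. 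So your last sentence overstates the obstacle; neither the paper's argument nor yours needs simultaneous reduction in all fundamental representations for this proposition.
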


La démonstration de cette proposition est basée sur la théorie de la réduction, telle qu'elle est exposée dans Borel \cite[\S\S 15 et 16]{borel_iga}.
Nous aurons en particulier besoin de la notion d'ensemble de Siegel.
Notons $B$ un $\Q$-sous-groupe parabolique minimal de $G$ inclus dans $P$, $T$ un tore $\Q$-déployé maximal dans $B$, $A=T^0(\R)$ la composante connexe des points réels de $T$, $N$ le radical unipotent de $B$, et $M$ le $\Q$-sous-groupe anisotrope maximal du centralisateur $Z(T)^0$ de $T$ dans $G^0$.
Le lecteur est renvoyé à Borel \cite[\S11]{borel_iga} et Borel et Tits \cite{boreltits} pour les résultats fondamentaux concernant la structure des $\Q$-sous-groupes paraboliques de $G$.

Soit $\ka$ l'algèbre de Lie de $A$.
Le système de racines $\Sigma$ de $G$ par rapport à $T$ s'identifie à un système de racines dans l'espace dual $\ka^*$.
On fixe un système de racines simples $\Pi=\{\alpha_1,\dots,\alpha_r\}$ pour un ordre associé à $B$.
Pour $\tau\geq 0$, on définit un voisinage $\ka_\tau^-$ de $\ka^-$ par
\[
\ka^-_\tau = \{Y\in\ka\ |\ \forall \alpha\in\Pi,\, \alpha(Y)\leq \tau\},
\]
et
\[
A_\tau = \exp\ka^-_\tau \subset A.
\]
Un \emph{ensemble de Siegel} $\FS$ de $G$ sur $\Q$ est un ensemble de la forme
\[
\FS=KA_\tau \omega,
\]
où $K$ désigne un sous-groupe compact maximal de $G$, et $\omega$ un voisinage compact de l'identité dans les points réels de $MN$.

\begin{proof}[Démonstration de la proposition~\ref{asymp}]
Nous supposerons dans cette démonstration que la norme sur $V_\chi$ est invariante par le sous-groupe compact maximal $K$; on peut toujours se ramener à ce cas, par équivalence des normes en dimension finie.
Soit $C\subset G(\Q)$ un ensemble de représentants des classes de $P(\Q)\bcs G(\Q)/\Gamma$.
D'après Borel \cite[proposition~15.6]{borel_iga}, l'ensemble $C$ est fini.
Soit
\[
\tOmega_r=\{g\Gamma\in\Omega \ |\ \exists \gamma\in\Gamma,\, c\in C:\ \norm{g\gamma^{-1}c^{-1}e_\chi}\leq r\}.
\]
Comme $C$ est fini, il existe une constante $C_0>0$ telle que pour tout $r>0$,
\[
\Omega_{r/C_0}\subset \tOmega_r\subset \Omega_{C_0r}.
\]
De même, avec
\[
\tOmega_r'=\{g\Gamma\in\Omega \ |\ \exists \gamma\in\Gamma,\, c\in C:\ \norm{g\gamma^{-1}c^{-1}e_\chi}\leq r\ \mathrm{et}\ \frac{\norm{\pi^+(g\gamma^{-1}c^{-1}e_\chi)}}{\norm{g\gamma^{-1}c^{-1}e_\chi}}\geq\frac{1}{2}\},
\]
on a
\[
\Omega'_{r/C_0}\subset \tOmega'_r\subset \Omega'_{C_0r}.
\]
Il suffit donc de démontrer la proposition pour les ensembles $\tOmega_r$ et $\tOmega_r'$.
\comm{Les relations de comparaisons ci-dessus sont encore valables si l'on définit $\tOmega_r$ et $\tOmega_r'$ à partir d'un ensemble fini $C$ contenant strictement une famille de représentants de $P\bcs G(\Q)/\Gamma$. C'est ce que nous ferons ci-dessous en appliquant Borel \cite[théorème~16.9]{borel_iga}.}

Fixons maintenant un ensemble de Siegel $\FS$ qui satisfasse la conclusion de \cite[Théorème~16.9]{borel_iga} pour la fonction de type $(P,\chi)$ définie par $\Phi(g)=\norm{g e_\chi}$.
\comm{On vérifie sans peine que cette fonction est bien invariante par $\Gamma\cap P$ et par $L_{\theta'}$, où $L_{\theta'}$ est un sous-groupe semi-simple de $P_{\theta'}=\Stab e_\chi$.}
Soit $\tphi$ la fonction sur $G$ définie par
\[
\tphi(g) = \sum_{c\in C} \mathbbm{1}_{\{gc^{-1}\in\FS\ \mathrm{et}\ \norm{gc^{-1}e_\chi}\leq r\}}.
\]
\comm{Dans cette formule et ci-dessous, $C$ désigne l'ensemble $C'$ donné par \cite[théorème~16.9]{borel_iga}.}
La projection de $\tphi$ sur $G/\Gamma$ est donnée par
\begin{align*}
\phi(g\Gamma) & = \sum_{\gamma\in\Gamma}\sum_{c\in C} \mathbbm{1}_{\{g\gamma c^{-1}\in\FS\ \mathrm{et}\ \norm{g\gamma c^{-1}e_\chi}\leq r\}}\\
& \geq \mathbbm{1}_{\{\exists\gamma, c\ :\ g\gamma c^{-1}\in\FS\ \mathrm{et}\ \norm{g\gamma c^{-1}e_\chi}\leq r\}}\\
& = \mathbbm{1}_{\{\exists\gamma, c\ :\ \norm{g\gamma c^{-1}e_\chi}\leq r\}}\\
& = \mathbbm{1}_{\tOmega_r}(g\Gamma),
\end{align*}
où l'avant-dernière égalité découle du choix de $\FS$, suivant \cite[Théorème~16.9]{borel_iga}.
Par suite,
\begin{align*}
m_{G/\Gamma}(\tOmega_r) & \leq m_{G/\Gamma}(\phi)\\
& = m_G(\tphi)\\
& = \sum_{c\in C} m_G(\{g\ |\ gc^{-1}\in\FS\ \mathrm{et}\ \norm{gc^{-1}e_\chi}\leq r\})\\
& = \abs{C}\cdot m_G(\{g\in\FS\ |\ \norm{ge_\chi}\leq r\}.
\end{align*}
Pour évaluer le dernier terme, on utilise la définition de $\FS$ et on décompose la mesure de Haar sur $G$ suivant la décomposition $g=kman$.
D'après \cite[Proposition~8.32]{knapp_lgbi}, pour les mesures de Haar à gauche sur $G$, $K$ et $MAN$, notant $\Delta_{MAN}:MAN\to\R_+^*$ la fonction modulaire de $MAN$,
\[
\dd g = \Delta_{MAN}(man)\,\dd k\,\dd(man) = \Delta_{MAN}(m)\,\rho(a)\,\dd k\,\dd(man),
\]
où $\rho$ est le caractère de $A$ associé à la somme des racines restreintes positives, comptées avec multiplicité.
Toujours grâce à \cite[Proposition~8.32]{knapp_lgbi}, on peut encore décomposer
\[
\dd(man)=\dd m\,\dd(an) = \dd m\,\dd a\,\dd n,
\]
et donc
\begin{align*}
m_G(\{g\in\FS\ |\ \norm{ge_\chi}\leq r\})
& \asymp \int_{A^-} \mathbbm{1}_{\{\chi(a)\leq r\}}\rho(a)\,\dd a\\
& = \int_{\ka^-} \mathbbm{1}_{\{\chi(h)\leq\log r\}} e^{\rho(h)}\,\dd h.
\end{align*}
Cette dernière intégrale est l'intégrale d'une fonction exponentielle sur un polytope convexe de $\ka$, et est donc comparable au maximum de la fonction sur cet ensemble, multiplié par un facteur correspondant à la dimension de la face sur laquelle ce maximum est réalisé.
Naturellement, ce maximum est réalisé en l'un des sommets du convexe, qui sont les points $A_i=\frac{\log r}{\chi(Y_i)}Y_i$, $i=1,\dots,r$.
On trouve donc
\[
\int_{\ka^-} \mathbbm{1}_{\{\chi(h)\leq\log r\}} e^{\rho(h)}\dd h
\asymp r^{a_\chi} (\log1/r)^{b_\chi-1}
\]
où 
\[ a_\chi= \min_{1\leq i\leq r} \frac{\rho(Y_i)}{\chi(Y_i)}
\quad\mathrm{et}\quad
b_\chi= \card\{i\in[1,r] \ |\ \frac{\rho(Y_i)}{\chi(Y_i)}=a\}.
\]
Cela montre déjà une partie des inégalités souhaitées:
\[
m_\Omega(\Omega_r') \leq m_\Omega(\Omega_r) \ll r^{a_\chi}\abs{\log r}^{b_\chi-1}.
\]
Le calcul pour minorer $m_\Omega(\Omega_r')\asymp m_\Omega(\tOmega_r')$ est similaire: ayant fixé un ensemble de Siegel $\FS$, on utilise la fonction
\[
\tpsi(g) = \mathbbm{1}_{\{g\in\FS\ \mathrm{et}\ \norm{ge_\chi}\leq r\ \mathrm{et}\ \norm{\pi^+(ge_\chi)}\geq\frac{1}{2}\norm{ge_\chi}
\}}.
\]
Par la propriété de Siegel pour le domaine $\FS$, i.e. la dernière assertion de \cite[théorème~15.5]{borel_iga}, la projection de $\tpsi$ sur $G/\Gamma$ vérifie, pour une certaine constante $C_1$, 
\begin{align*}
\psi(g\Gamma) & = \sum_{\gamma\in\Gamma} \mathbbm{1}_{\{g\gamma \in\FS\ \mathrm{et}\ \norm{g\gamma e_\chi}\leq r\ \mathrm{et}\ \norm{\pi^+(g\gamma e_\chi)}\geq\frac{1}{2}\norm{g\gamma e_\chi}\}}\\
& \leq C_1\mathbbm{1}_{\{\exists\gamma\in\Gamma\ :\ \norm{g\gamma e_\chi}\leq r\ \mathrm{et}\ \norm{\pi^+(g\gamma e_\chi)}\geq\frac{1}{2}\norm{g\gamma e_\chi}\}}\\
& = C_1\mathbbm{1}_{\tOmega_r'}(g\Gamma).
\end{align*}
Par conséquent,
\begin{align*}
m_{G/\Gamma}(\tOmega_r') & \gg m_{G/\Gamma}(\psi)\\
& = m_G(\tpsi)\\
& = m_G(\{g\in\FS\ |\ \norm{ge_\chi}\leq r\ \mathrm{et}\ \norm{\pi^+(g e_\chi)}\geq\frac{1}{2}\norm{g e_\chi}\}.
\end{align*}
Comme précédemment, pour évaluer le dernier terme, on utilise la décomposition de la mesure de Haar sur $G$ suivant la décomposition $g=kman$.
Soit  $\omega\subset MN$ un voisinage compact de l'identité tel que
\[
\FS\supset K(\exp\ka^-)\omega
\]
et $V_K\subset K$ un voisinage de l'identité tel que
\[
\forall k\in V_K,\quad \norm{\pi^+(ke_\chi)} \geq \frac{1}{2}\norm{ke_\chi}=\frac{1}{2}\norm{e_\chi}.
\]
\note{car $ue_\chi=e_\chi$}
Alors, pour tout $h$ dans la chambre de Weyl $\ka^-$ et tout $u\in MN$,
\[
\norm{\pi^+(ke^hue_\chi)} = e^{\chi(h)}\norm{\pi^+(ke_\chi)}
 \geq \frac{1}{2}e^{\chi(h)}\norm{e_\chi} = \frac{1}{2}\norm{ke^hue_\chi},
\]
et l'on peut donc minorer
\begin{IEEEeqnarray*}{l}
m_G(\{g\in\FS\ |\ \norm{ge_\chi}\leq r\ \mathrm{et}\ \norm{\pi^+(g e_\chi)}\geq\frac{1}{2}\norm{g e_\chi}\})\\
\qquad \geq  \iint_{V_K\times\omega}\int_{\ka^-} \mathbbm{1}_{\{\norm{ke^hu e_\chi}\leq r\ \mathrm{et}\ \norm{\pi^+(ke^hu e_\chi)}\geq\frac{1}{2}\norm{ke^hu e_\chi}\}}e^{\rho(h)}\,\dd h\, \dd k\, \dd u \\
\qquad \geq  \iint_{V_K\times \omega}\int_{\ka^-} \mathbbm{1}_{\{\chi(h)\leq\log r\}}e^{\rho(h)}\,\dd h\, \dd k\, \dd u \\
\qquad \gg \int_{\ka^-} \mathbbm{1}_{\{\chi(h)\leq\log r\}}e^{\rho(h)}\,\dd h \\
\qquad \gg  r^{a_\chi}\abs{\log r}^{b_\chi-1}
\end{IEEEeqnarray*}
par le calcul déjà expliqué dans la première partie de la démonstration.
\end{proof}

\begin{remark}
Les coefficients $u_\chi$ et $v_\chi$ qui apparaissent dans le théorème de Mohammadi et Salehi Golsefidy \cite[Theorem~4]{msg}, énoncé comme théorème~\ref{hborne} ci-dessus, ne sont pas égales à $a_\chi$ et $b_\chi$ en général.
L'équivalent asymptotique du nombre de points rationnels de $X$ de hauteur au plus $T$ est donné par
\[
N_\chi(T) \sim c\cdot T^{u_\chi}(\log T)^{v_\chi-1}.
\]
Dans \cite{msg}, ces coefficients sont définis de la façon suivante.
On note $\kb^+=\{\phi\in\ka^*\ |\ \forall i,\ \phi(\alpha_i)\geq 0\}$ et $\rho_X$ la somme des racines associées à l'unipotent de $P$, et on pose
\[
u_\chi=\inf\{u\ |\ u\chi-\rho_X\in\kb^+\} = \max_i \frac{\bracket{\rho_X,\alpha_i}}{\bracket{\chi,\alpha_i}}
,
\]
tandis que $v_\chi$ est la codimension de la face de $\kb^+$ contenant $u_\chi\chi-\rho$.
Dans le cas particulier où $X$ est munie de la hauteur anti-canonique, $\rho_X=\chi$ et donc $u_\chi=a_\chi=1$ et $v_\chi=b_\chi=\rang G-\rang P$.
En effet, dans ce cas, $a_\chi=\min_i\frac{\rho(Y_i)}{\rho_X(Y_i)}$ est atteint pour tout $i$ correspondant à une racine simple hors de la partie $\theta$ associée à $P$.
\end{remark}

Notons $\varpi_1,\dots,\varpi_r$ les poids fondamentaux associés à la base de racines simples $\Pi=\{\alpha_1,\dots,\alpha_r\}$.
Pour chaques$i$, on fixe une $\Q$-représentation $V_i$ de $G$ engendrée par une unique droite rationnelle de plus haut poids $\omega_i=b_i\varpi_i$, avec $b_i\in\N^*$ minimal.
Suivant Borel et Tits \cite[\S12.13]{boreltits}, nous dirons que les représentations $V_i$, $i=1,\dots,r$, sont les représentations \emph{fondamentales} de $G$.
Lorsque $V_\chi=V_j$ est une représentation fondamentale de $G$, on peut déterminer quel indice réalise le minimum définissant $a_\chi$; c'est le contenu du lemme ci-dessous.

\begin{lemma}
\label{maximal}
Soit $V_\chi=V_j$ une représentation fondamentale de $G$ et $P$ le sous-groupe parabolique maximal associé, stabilisateur de la droite de plus haut poids.
Il existe $r_j\in\Q$ tel que la somme des racines du radical unipotent $U$ de $P$, comptées avec multiplicité, vérifie
\note{Si $G$ est simplement connexe, on a $\omega_j=\varpi_j$ et $r_j\in\N^*$. En règle générale, on peut borner le dénominateur de $r_j$.}
\[
\sum_{\alpha\in\Sigma_j^+}m_\alpha\alpha = r_j\omega_j,
\]
et alors, les quantités $a_\chi$ et $b_\chi$ de la proposition précédente sont données par
\[
a_\chi = r_j
\quad\mathrm{et}\quad
b_\chi = 1.
\]
\end{lemma}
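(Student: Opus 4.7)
Le plan consiste à établir d'abord la proportionnalité $\rho_U=r_j\omega_j$ par un argument d'invariance, puis à en déduire les formules pour $a_\chi$ et $b_\chi$. Puisque $P$ est maximal, l'ensemble $\theta\subset\Pi$ associé est $\Pi\setminus\{\alpha_j\}$, et l'on considère le groupe de Weyl $W_L$ du Levi $L$ de $P$, engendré par les réflexions simples $s_i$ pour $i\neq j$. L'observation clé sera que chacune de ces réflexions $s_i:\alpha\mapsto\alpha-\bracket{\alpha,\alpha_i^\vee}\alpha_i$ préserve le coefficient de $\alpha_j$ dans la décomposition d'une racine en racines simples; donc $s_i$ envoie $\Sigma_j^+$ (les racines positives contenant $\alpha_j$) sur lui-même, en préservant les multiplicités des espaces de racines. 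Par suite, la somme $\rho_U=\sum_{\alpha\in\Sigma_j^+}m_\alpha\alpha$ sera $W_L$-invariante. Comme l'espace des vecteurs $W_L$-invariants dans $\ka^*$ est la droite dirigée par le poids fondamental $\varpi_j$, on obtiendra $\rho_U=r_j\omega_j$ pour un scalaire $r_j\in\Q$, la rationalité provenant de ce que $\rho_U$ et $\omega_j$ sont deux éléments rationnels non nuls colinéaires.

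Pour la seconde partie, je décomposerai la somme $\rho=\rho_L+\rho_U$ où $\rho_L=\sum_{\alpha\in\Sigma_L^+}m_\alpha\alpha$ est la somme des racines positives du Levi comptées avec multiplicité. Puisque $\rho_L$ est combinaison linéaire des $\alpha_i$ pour $i\in\theta$, on aura $\rho_L(Y_j)=0$; tandis que pour $i\in\theta$, $\rho_L(Y_i)$ est le coefficient de $\alpha_i$ dans $\rho_L$, qui vaut au moins $m_{\alpha_i}\geq 1$ puisque $\alpha_i\in\Sigma_L^+$. En combinant ces calculs avec l'identité $\rho_U(Y_i)=r_j\omega_j(Y_i)$, on arrivera à
\[
\frac{\rho(Y_i)}{\omega_j(Y_i)} = r_j + \frac{\rho_L(Y_i)}{\omega_j(Y_i)}
\]
pour tout $i$ tel que $\omega_j(Y_i)>0$. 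Ce quotient vaudra $r_j$ pour $i=j$ et sera strictement supérieur à $r_j$ pour $i\neq j$ avec $\omega_j(Y_i)>0$. On en déduira $a_\chi=r_j$ et que le minimum est atteint uniquement en $i=j$, d'où $b_\chi=1$.

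Le point le plus délicat sera la positivité stricte $\omega_j(Y_i)=b_j\varpi_j(Y_i)>0$ pour $i\neq j$. Elle résulte du fait classique que la matrice de Cartan inverse d'un système de racines irréductible est à coefficients strictement positifs, ce qui garantit que les coefficients de $\varpi_j$ dans la base des racines simples sont strictement positifs sur chaque composante connexe du diagramme de Dynkin. Si $G$ n'est pas $\Q$-simple et que la représentation fondamentale $V_j$ est supportée par un seul facteur, les indices $i$ appartenant à un autre facteur donneront $\omega_j(Y_i)=0$ mais $\rho(Y_i)>0$; le quotient correspondant sera alors infini et ne perturbera pas le minimum, si bien qu'on peut se restreindre au facteur simple contenant $\alpha_j$.
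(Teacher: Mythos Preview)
Your argument is correct and follows essentially the same computation as the paper for $a_\chi$ and $b_\chi$: both decompose $\rho=\rho_j+\rho_j'$ (your $\rho_U+\rho_L$) and observe that $\rho_j'(Y_j)=0$ while $\rho_j'(Y_i)>0$ for $i\neq j$, so the minimum of $\rho(Y_i)/\omega_j(Y_i)$ is attained only at $i=j$. The one difference is that the paper simply cites \cite[Lemma~5]{msg} for the proportionality $\rho_j=r_j\omega_j$, whereas you give a self-contained proof via $W_L$-invariance (the reflections $s_i$, $i\neq j$, permute $\Sigma_j^+$ preserving multiplicities, so $\rho_U$ lies in the $W_L$-fixed line $\Q\varpi_j$); you also handle the non-$\Q$-simple case explicitly, which the paper leaves implicit.
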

\begin{proof}
Soit $P$ le parabolique maximal correspondant à $\chi=\omega_j$.
Le radical unipotent $U$ de $P$ correspond à l'ensemble $\Sigma_j^+$ des racines positives contenant $\alpha_j$ dans leur décomposition en racines simples.
D'après \cite[Lemma~5]{msg},
la somme $\rho_j$ de ces racines, comptées avec multiplicité, est proportionnelle à $\omega_j$:
\[
r_j\omega_j = \sum_{\alpha\in\Sigma_j^+}m_\alpha\alpha =\rho_j.
\]
Par conséquent, il suffit de comprendre pour quel $i$ la quantité $\frac{\rho(Y_i)}{\rho_j(Y_i)}$ est minimale.
Naturellement, comme $Y_j$ est orthogonal à toutes les racines qui ne contiennent pas $\alpha_j$, on trouve
\[
\frac{\rho(Y_j)}{\rho_j(Y_j)} = 1,
\]
tandis que si $i\neq j$,
\[
\frac{\rho(Y_i)}{\rho_j(Y_i)} = 1 + \frac{\rho'_j(Y_i)}{\rho_j(Y_i)} > 1,
\]
où $\rho'_j$ désigne la somme des racines positives qui ne contiennent pas $\alpha_j$.
Ainsi, $b_\chi=1$ et
\[
a_\chi = \frac{\rho(Y_j)}{\omega_j(Y_j)} = \frac{\rho_j(Y_j)}{\omega_j(Y_j)} = r_j.
\]
\end{proof}

Ce lemme et sa démonstration se généralisent de la façon suivante.

\begin{lemma}
\label{somrac}
Soit $\theta$ une partie de la base du système de racines de $G$, et $P_\theta$ le sous-groupe parabolique associé.
Si $\chi=\sum_{\alpha\in\Sigma_\theta^+} m_\alpha\alpha$, est la somme des racines du radical unipotent de $P_\theta$, comptées avec multiplicité, alors
\[
a_\chi = 1
\quad\mathrm{et}\quad
b_\chi = \rang_{\Q}G-\abs{\theta}.
\]
\end{lemma}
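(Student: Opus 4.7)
The strategy is to mimic the argument used for Lemma~\ref{maximal}, splitting $\rho$ into the part coming from the unipotent radical of $P_\theta$ (which is $\chi$) and the part coming from the Levi. More precisely, let $\Sigma_\theta^+$ denote the positive roots whose decomposition in simple roots involves at least one element of $\Pi\setminus\theta$, and let $\rho_\theta$ be the sum, counted with multiplicity, of the positive roots lying in the root subsystem generated by $\theta$. Then $\rho = \chi + \rho_\theta$, and in view of the formulas
\[
a_\chi=\min_{1\leq i\leq r}\frac{\rho(Y_i)}{\chi(Y_i)},\qquad b_\chi=\card\Bigl\{i\in[1,r]\ \Big|\ \frac{\rho(Y_i)}{\chi(Y_i)}=a_\chi\Bigr\}
\]
given by Proposition~\ref{asymp}, it will be enough to analyse the sign of $\rho_\theta(Y_i)$ for $i\in[1,r]$.

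First I would treat the indices $i\notin\theta$. Since every root appearing in $\rho_\theta$ is a sum of simple roots in $\theta$, and $Y_i$ is dual to $\alpha_i$, we get $\rho_\theta(Y_i)=0$. Moreover $\alpha_i$ itself lies in $\Sigma_\theta^+$ and has $\alpha_i$-coefficient equal to $1$, so $\chi(Y_i)\geq 1>0$. Therefore
\[
\frac{\rho(Y_i)}{\chi(Y_i)} = 1 + \frac{\rho_\theta(Y_i)}{\chi(Y_i)} = 1 \qquad (i\notin\theta).
\]

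Next I would show that for $i\in\theta$ the ratio is strictly larger than $1$. The simple root $\alpha_i$ itself is a positive root of the Levi, contributing a term $m_{\alpha_i}\cdot n_i(\alpha_i) = m_{\alpha_i}\geq 1$ to $\rho_\theta(Y_i)$; all other contributions are nonnegative since the positive roots of the Levi have nonnegative coordinates on $(Y_j)_{j\in\theta}$. Hence $\rho_\theta(Y_i)\geq 1$. When $\chi(Y_i)>0$, this gives $\rho(Y_i)/\chi(Y_i)>1$; in the degenerate case $\chi(Y_i)=0$ (which would force the contribution from $Y_i$ to the integral in Proposition~\ref{asymp} to be unbounded, so by convention this index is excluded from the minimum), the inequality is a fortiori true.

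Combining these two steps, the minimum defining $a_\chi$ equals $1$ and is attained exactly on the indices $i\notin\theta$, whose number is $\rang_\Q G-\abs{\theta}$. This gives $a_\chi=1$ and $b_\chi=\rang_\Q G-\abs{\theta}$, as claimed. The only delicate point is verifying that the ``boundary'' indices $i\in\theta$ with $\chi(Y_i)$ possibly zero (which can only occur if $G$ splits off a $\Q$-simple factor whose Dynkin diagram sits entirely inside $\theta$) do not spoil the minimum; this is handled either by restricting to the $\Q$-simple factors where $P_\theta$ is a proper parabolic, or by noting that such indices only enlarge the ratio.
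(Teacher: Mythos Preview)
Your proposal is correct and follows essentially the same approach as the paper's proof. Both arguments rest on splitting the positive roots into those in the unipotent radical of $P_\theta$ (contributing $\chi$) and those in the Levi (contributing your $\rho_\theta$), and then observing that $\rho_\theta(Y_i)$ vanishes for $i\notin\theta$ and is strictly positive for $i\in\theta$; the paper phrases this as comparing the sums over $\Sigma_i^+$ and $\Sigma_i^+\cap\Sigma_\theta^+$, which is the same thing. Your explicit treatment of the degenerate case $\chi(Y_i)=0$ is more careful than the paper, which tacitly assumes this does not occur.
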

\begin{proof}
Pour chaque $i$, on observe que
\[
\rho(Y_i) = \sum_{\alpha\in\Sigma_i^+}m_\alpha\alpha(Y_i),
\]
où $\Sigma_i^+$ désigne l'ensemble des racines positives contenant $\alpha_i$.
Comme par ailleurs $\Sigma_\theta^+$ est l'ensemble des racines positives qui contiennent un élément hors de $\theta$, on calcule
\[
\chi(Y_i) = \sum_{\alpha\in\Sigma_i^+\cap\Sigma_\theta^+}m_\alpha\alpha(Y_i)
\left\{\begin{array}{ll}
= \sum_{\alpha\in\Sigma_i^+}m_\alpha\alpha(Y_i) & \mathrm{si}\ i\not\in\theta\\
< \sum_{\alpha\in\Sigma_i^+}m_\alpha\alpha(Y_i) & \mathrm{si}\ i\in\theta,
\end{array}\right.
\]
cela montre ce qu'on veut.
\end{proof}

\section{Théorème de Khintchine et flots diagonaux}

Comme précédemment, $G$ désigne un $\Q$-groupe semi-simple, $P$ un $\Q$-sous-groupe parabolique, et $X=P\bcs G$ la variété quotient.
On fixe un poids dominant $\chi$ associé au sous-groupe parabolique $P$.
D'après la proposition~\ref{asymp}, il existe des constantes $a_\chi>0$ et $b_\chi\in\N^*$ telles que pour $r>0$ petit, le voisinage $\Omega_r$ de l'infini dans $\Omega$ défini par
\[
\Omega_r = \{ g\Gamma\ |\ \min_{v\in \tx\cap V_\chi(\Z)} \norm{gv} \leq r\}
\]
vérifie
\[
m_\Omega(\Omega_r) \asymp r^{a_\chi}\abs{\log r}^{b_\chi-1}.
\]
Avec ces notations, le théorème de Khintchine sur la variété projective $X$, munie de la hauteur $H_\chi$ associée à $\chi$ et de la distance de Carnot-Carathéodory usuelle, s'énonce comme suit.

\note{D'un point de vue physique, quelles sont les dimensions des quantités $d(x,v)$, $H(v)$ et $\psi(H(v))$?}

\begin{theorem}[Théorème de Khintchine sur $X$]
\label{khintchine}
Notons $\beta_\chi=\beta_\chi(X)$ l'exposant donné par le théorème~\ref{exposantps}.
Soit $\psi:\R^+\to\R^+$ une fonction décroissante.
Pour $x$ dans $X$, on s'intéresse à l'inégalité
\begin{equation}\label{psi}
d(x,v) \leq H_\chi(v)^{-\beta_\chi}\psi(H_\chi(v)).
\end{equation}
\begin{itemize}
\item Si $\int_e^{+\infty} \psi(u)^{\frac{a_\chi}{\beta_\chi}}(\log\log u)^{b_\chi-1}\frac{\dd u}{u}<+\infty$, alors, pour presque tout $x$ dans $X$, l'inégalité \eqref{psi} n'a qu'un nombre fini de solutions.
\item Si $\int_e^{+\infty} \psi(u)^{\frac{a_\chi}{\beta_\chi}}(\log\log u)^{b_\chi-1}\frac{\dd u}{u}=+\infty$, alors, pour presque tout $x$ dans $X$, l'inégalité \eqref{psi} admet une infinité de solutions.
\end{itemize}
\end{theorem}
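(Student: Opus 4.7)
The plan is to reduce Theorem~\ref{khintchine} to a statement about orbits of the flow $(a_t)$ on $\Omega=G/\Gamma$, following the strategy used by Kleinbock and Margulis in \cite{km_loglaws} and refined by Kleinbock-Merrill \cite{kleinbockmerrill} and Fishman-Kleinbock-Merrill-Simmons \cite{fkms} for spheres and quadrics. The three essential ingredients are the quantitative correspondence of Proposition~\ref{dani}, the volume estimate of Proposition~\ref{asymp}, and the exponential mixing of Theorem~\ref{decay} (after passing to the universal cover to meet its simple-connectedness hypothesis).

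First I would make the correspondence fully quantitative: rerunning the proof of Proposition~\ref{dani} with an extra $\psi(H_\chi(v))$ factor shows that a rational point $v \in X(\Q)$ with $H_\chi(v) \asymp T$ satisfies $d(x,v) \le \psi(H_\chi(v)) H_\chi(v)^{-\beta_\chi}$ if and only if, up to uniform constants, $a_t \Delta_x \in \Omega'_{r(T)}$ for some $t \asymp \beta_\chi \log T$ and $r(T) \asymp \psi(T)^{1/\beta_\chi}$. Discretising time as $t_n = n\delta$ for a fixed small $\delta > 0$, and writing $T_n = e^{t_n/\beta_\chi}$, $r_n = \psi(T_n)^{1/\beta_\chi}$, $A_n = \{x \in X(\R) : a_{t_n}\Delta_x \in \Omega'_{r_n}\}$, the theorem reduces to the dynamical Borel-Cantelli question of whether a.e.\ $x$ belongs to infinitely many $A_n$. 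Proposition~\ref{asymp} and the $(a_t)$-invariance of $m_\Omega$ yield
\[
m_\Omega(A_n) = m_\Omega(\Omega'_{r_n}) \asymp \psi(T_n)^{a_\chi/\beta_\chi} |\log \psi(T_n)|^{b_\chi - 1},
\]
and a comparison argument exploiting the monotonicity of $\psi$ -- examining separately the regimes $|\log\psi(T_n)| \asymp \log\log T_n$ and $|\log\psi(T_n)| \gg \log\log T_n$ (in which case the fast decay of $\psi(T_n)^{a_\chi/\beta_\chi}$ forces both sums to converge regardless of the logarithmic factor) -- shows that $\sum_n m_\Omega(A_n)$ converges if and only if the integral of the theorem does.

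The convergence half is then the first Borel-Cantelli lemma applied to $(A_n)$, after a harmless rescaling of $\psi$ by a multiplicative constant to absorb approximations whose natural time falls between two $t_n$'s. For the divergence half I would apply a quasi-independence Borel-Cantelli (Chung-Erdős): assuming $\sum_n m_\Omega(A_n) = \infty$, it suffices to establish
\[
\sum_{n < m \le N} m_\Omega(A_n \cap A_m) \ll \Bigl(\sum_{n \le N} m_\Omega(A_n)\Bigr)^2 + O(1).
\]
Writing $m_\Omega(A_n \cap A_m) = \int \phi_n(a_{t_n}\Delta) \phi_m(a_{t_m}\Delta) \, dm_\Omega(\Delta)$ with $\phi_n$ smooth approximants of $\mathbbm{1}_{\Omega'_{r_n}}$ of Sobolev norm $\|\phi_n\|_{\mathrm{Sob}} \ll r_n^{-L}$, shifting by $(a_t)$-invariance, and applying Theorem~\ref{decay} yields
\[
m_\Omega(A_n \cap A_m) \le m_\Omega(\Omega'_{r_n}) m_\Omega(\Omega'_{r_m}) + C e^{-\tau(m-n)\delta} r_n^{-L} r_m^{-L}.
\]
Choosing $\delta$ large compared with $L$ and the rate at which $r_n \to 0$ makes the error sum negligible, Chung-Erdős concludes, and the correspondence turns infinitely many hits $x \in A_n$ (with $T_n \to \infty$) into infinitely many distinct rational $\psi$-approximations of $x$.

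The main obstacle lies in constructing the smooth approximants $\phi_n$ with polynomial Sobolev norm bounds, together with the calibration of $\delta$. The set $\Omega'_r$ is not a clean cusp neighbourhood: its definition mixes the size of the shortest vector in $\tx \cap \Delta$ with the non-smooth projection condition $\|\pi^+(v)\| \ge \|v\|/2$, so the cutoffs must be designed using the Siegel-domain description developed in the proof of Proposition~\ref{asymp}, with explicit polynomial dependence of $\|\phi_n\|_{\mathrm{Sob}}$ on $r_n$. Tuning $\delta$ large enough that the exponential mixing rate beats this polynomial blow-up uniformly in $n$, especially when $\psi$ decays rapidly and $r_n \to 0$ fast, will be the technical heart of the proof.
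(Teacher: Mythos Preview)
Your overall architecture matches the paper's: the refined correspondence (Proposition~\ref{daniprecis}), the volume estimate (Proposition~\ref{asymp}), and exponential mixing (Theorem~\ref{decay}) fed into a Borel--Cantelli argument. The convergence half is essentially as you describe.

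The gap is in the divergence half, and it is not the technical nuisance you take it to be. Your plan is to build approximants with $\|\phi_n\|_{\mathrm{Sob}}\ll r_n^{-L}$ and compensate by choosing the time step $\delta$ large. But in the critical regime $\psi(u)\asymp(\log u)^{-\beta_\chi/a_\chi}$ one has $r_n\asymp(n\delta)^{-1/a_\chi}$, so $r_n^{-L}$ grows polynomially in $n$; your error sum $\sum_{n<m\le N}e^{-\tau(m-n)\delta}r_n^{-L}r_m^{-L}$ is then a positive power of $N$, while $\bigl(\sum_{n\le N}m_\Omega(A_n)\bigr)^2\asymp(\log N)^{2b_\chi}$. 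The exponential factor only damps the \emph{gap} $m-n$; nothing in your bound controls the growth in $n$ itself, and rescaling $\delta$ does not change this comparison. Chung--Erd\H os fails.

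The paper's resolution (the lemma following Lemma~\ref{bcd}) is that the mollifier can be \emph{fixed}, independent of $r$. One sets $\eta_r=P*\mathbbm{1}_{\Omega_r''}$ with $P\in C_c^\infty(G)$ supported in a fixed small ball and $\Omega_r''=\{\Delta:\exists v\in\Delta\cap\tx,\ \norm{v}\le r/2,\ \norm{\pi^+(v)}\ge\tfrac34\norm{v}\}$. A fixed-scale $G$-thickening of $\Omega_r''$ stays inside $\Omega_r'$ because the parameter $r$ lives in the $A$-direction while the smoothing is at fixed scale; no sharpening as $r\to0$ is needed. Since the derivatives fall on $P$, Young's inequality gives
\[
\norm{\Upsilon^\ell\eta_r}_2 \le \norm{\Upsilon^\ell P}_2\,\norm{\mathbbm{1}_{\Omega_r''}}_1 \ll m_\Omega(\Omega_r'') \asymp \norm{\eta_r}_1,
\]
so the Sobolev norm \emph{decreases} with $r$. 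The off-diagonal sum becomes $\sum_{t_1,t_2}e^{-\tau|t_1-t_2|}\norm{h_{t_1}}_1\norm{h_{t_2}}_1\ll\sum_t m_\Omega(h_t)$, precisely the variance bound required by Lemma~\ref{bcd}. This is the point where your sketch would have broken down, and the observation that a fixed mollifier suffices is what you are missing.
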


Rappelons qu'un point $x\in X(\R)$ est dit \emph{mal approchable} dans $X$ s'il existe une constante $c>0$ telle que pour tout $v$ dans $X(\Q)$, $d(x,v)\geq cH_\chi(v)^{-\beta_\chi}$.
Nous avons vu au chapitre~\ref{chap:correspondance} que l'ensemble $\mathrm{BA}_X$ des points mal approchables dans $X$ est de dimension de Hausdorff maximale.
Le théorème de Khintchine permet de montrer qu'il est toutefois négligeable au sens de la mesure de Lebesgue.

\begin{corollary}[Points mal approchables sur $X$]
L'ensemble $\mathrm{BA}_X$ des points mal approchables dans $X$ est de mesure de Lebesgue nulle.
\end{corollary}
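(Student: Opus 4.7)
Le plan est d'appliquer directement la partie divergente du théorème~\ref{khintchine}.
On commence par décomposer $\BA_X$ en union dénombrable: pour chaque $c>0$, posant
\[
\BA_X^{(c)} = \{x\in X(\R)\ |\ \forall v\in X(\Q),\ d(x,v)\geq cH_\chi(v)^{-\beta_\chi}\},
\]
on a $\BA_X = \bigcup_{n\geq 1}\BA_X^{(1/n)}$. Par $\sigma$-sous-additivité de la mesure de Lebesgue, il suffit alors de démontrer que chaque $\BA_X^{(c)}$ est négligeable.

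Pour $c>0$ fixé, je considérerais la fonction décroissante $\psi:\R^+\to\R^+$ définie par $\psi(u)=\tfrac{c}{2}+\tfrac{1}{u}$, qui est minorée par $c/2$. Comme $b_\chi\in\N^*$, le changement de variable $v=\log u$ montre que
\[
\int_e^{+\infty}\psi(u)^{a_\chi/\beta_\chi}(\log\log u)^{b_\chi-1}\frac{\dd u}{u}
\geq \left(\tfrac{c}{2}\right)^{a_\chi/\beta_\chi}\int_1^{+\infty}(\log v)^{b_\chi-1}\,\dd v = +\infty.
\]
La partie divergente du théorème~\ref{khintchine} assure alors que pour presque tout $x\in X(\R)$, l'inégalité $d(x,v)\leq\psi(H_\chi(v))H_\chi(v)^{-\beta_\chi}$ admet une infinité de solutions $v\in X(\Q)$. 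Comme il n'existe qu'un nombre fini de points rationnels de hauteur bornée (théorème~\ref{hborne}), au moins une de ces solutions vérifie $H_\chi(v)>2/c$; pour une telle solution, $\psi(H_\chi(v))<c$ et donc $d(x,v)<cH_\chi(v)^{-\beta_\chi}$. Un tel $x$ n'appartient pas à $\BA_X^{(c)}$, d'où la nullité de la mesure de cet ensemble, puis celle de $\BA_X$.

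Aucune difficulté substantielle n'est à prévoir: tout le contenu non trivial a été concentré dans le théorème~\ref{khintchine}, et le corollaire s'en déduit formellement par cette réduction à une union dénombrable. Le choix précis de $\psi$ n'a aucune importance pourvu que $\psi(u)<c$ pour $u$ assez grand et que l'intégrale de Khintchine diverge, ce qui est automatique dès que $\psi$ est minorée par une constante strictement positive, puisque $b_\chi\geq 1$.
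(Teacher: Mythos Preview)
Your proof is correct and follows essentially the same approach as the paper: apply the divergent case of the Khintchine theorem to a function $\psi$ bounded below by a positive constant. The paper's proof is more terse (it simply takes $\psi\equiv c$ constant and observes that the integral diverges), while you spell out explicitly the countable decomposition $\BA_X=\bigcup_n\BA_X^{(1/n)}$ and the Northcott argument, and you choose a strictly decreasing $\psi$; these are cosmetic differences, not a different strategy.
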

\begin{proof}
Pour la fonction constante définie par $\psi(u)=c>0$ pour tout $u\in\R^+$, l'intégrale est divergente.
Le résultat découle donc de la seconde partie du théorème.
\end{proof}

On peut aussi étudier les inégalités obtenues à partir de fonctions élémentaires, et montrer le résultat suivant.

\begin{corollary}
\begin{enumerate}
\item L'inégalité $d(x,v) \leq H_\chi(v)^{-\beta_\chi} (\log H_\chi(v))^{-\gamma}$ admet une infinité de solutions $v\in X(\Q)$ pour presque tout $x\in X(\R)$, si et seulement si $\gamma\leq\frac{\beta_\chi}{a_\chi}$.
\item L'inégalité $d(x,v) \leq H_\chi(v)^{-\beta_\chi} (\log H_\chi(v))^{-\frac{\beta_\chi}{a_\chi}} (\log\log H_\chi(v))^{-\delta}$ admet une infinité de solutions $v\in X(\Q)$ pour presque tout $x\in X(\R)$, si et seulement si $\delta\leq\frac{b_\chi\beta_\chi}{a_\chi}$.
\end{enumerate}
\end{corollary}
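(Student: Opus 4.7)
Le plan est d'appliquer directement le théorème~\ref{khintchine} aux deux choix explicites de $\psi$ proposés, et de ramener chaque dichotomie à une question élémentaire de convergence d'intégrale, résolue par changements de variables successifs $v=\log u$ puis $w=\log v$. Observons d'emblée que les fonctions $\psi$ considérées sont décroissantes sur un intervalle $[u_0,+\infty[$ pour $u_0$ assez grand (puisque $\beta_\chi/a_\chi>0$), et qu'une modification de $\psi$ sur un intervalle borné ne change ni la convergence de l'intégrale ni l'ensemble des $v\in X(\Q)$ pour lesquels $H_\chi(v)\geq u_0$; il suffit donc de prolonger arbitrairement $\psi$ en une fonction décroissante sur $\R^+$.

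Pour le premier point, on choisit $\psi(u)=(\log u)^{-\gamma}$ pour $u\geq e^e$. Alors $\psi(u)^{a_\chi/\beta_\chi} = (\log u)^{-\gamma a_\chi/\beta_\chi}$, et le changement de variable $v=\log u$ ramène l'intégrale du théorème~\ref{khintchine} à
\[
\int_e^{+\infty} v^{-\gamma a_\chi/\beta_\chi}(\log v)^{b_\chi-1}\,\dd v.
\]
Cette intégrale de Bertrand converge si et seulement si $\gamma a_\chi/\beta_\chi > 1$, c'est-à-dire $\gamma>\beta_\chi/a_\chi$. Le cas limite $\gamma a_\chi/\beta_\chi=1$ donne une intégrale divergente puisque $b_\chi\geq 1$. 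Donc l'intégrale diverge si et seulement si $\gamma\leq\beta_\chi/a_\chi$, et la seconde partie du théorème~\ref{khintchine} fournit exactement l'équivalence annoncée.

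Pour le second point, on pose $\psi(u)=(\log u)^{-\beta_\chi/a_\chi}(\log\log u)^{-\delta}$ pour $u\geq e^e$, de sorte que
\[
\psi(u)^{a_\chi/\beta_\chi}(\log\log u)^{b_\chi-1} = (\log u)^{-1}(\log\log u)^{b_\chi-1-\delta a_\chi/\beta_\chi}.
\]
Le changement de variable $v=\log u$ transforme l'intégrale en
\[
\int_e^{+\infty} v^{-1}(\log v)^{b_\chi-1-\delta a_\chi/\beta_\chi}\,\dd v,
\]
puis $w=\log v$ la ramène à $\int_1^{+\infty}w^{b_\chi-1-\delta a_\chi/\beta_\chi}\,\dd w$. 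Cette dernière converge si et seulement si $b_\chi-1-\delta a_\chi/\beta_\chi<-1$, soit $\delta>b_\chi\beta_\chi/a_\chi$. L'intégrale est donc divergente exactement quand $\delta\leq b_\chi\beta_\chi/a_\chi$, d'où l'équivalence voulue.

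Il n'y a pas d'obstacle sérieux ici: tout le contenu non trivial est contenu dans le théorème~\ref{khintchine}, et la seule subtilité est de s'assurer que la monotonie de $\psi$ est bien vérifiée, ce qui est immédiat puisque les exposants $\beta_\chi/a_\chi$ et $\gamma$ (resp. $\delta$) dans la plage pertinente sont positifs et que les facteurs logarithmiques itérés sont lentement variables, rendant $\psi$ décroissante au voisinage de l'infini.
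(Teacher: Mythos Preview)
Your proof is correct and follows essentially the same approach as the paper: both apply théorème~\ref{khintchine} directly and reduce the dichotomy to a Bertrand-type integrability test. Your version is in fact slightly more detailed, making the successive substitutions $v=\log u$ and $w=\log v$ explicit and taking care to note that $\psi$ can be modified on a bounded interval to ensure monotonicity, a point the paper leaves implicit.
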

\begin{proof}
D'après le théorème~\ref{khintchine}, il suffit pour la première partie d'étudier l'intégrabilité au voisinage de l'infini de la fonction $\phi$ définie par
\[
\phi(u) = \frac{(\log\log u)^{b_\chi-1}}{u (\log u)^{\frac{a_\chi\gamma}{\beta_\chi}}}.
\]
L'intégrale diverge si et seulement si $\gamma\leq\frac{\beta_\chi}{a_\chi}$.
La démonstration de la seconde assertion est analogue, une fois observé que l'intégrale
\[
\int_e^{+\infty} \frac{\dd u}{u(\log u) (\log\log u)^{\frac{a_\chi\delta}{\beta_\chi}-b_\chi+1}}
\]
diverge si et seulement si $\delta\leq\frac{b_\chi\beta_\chi}{a_\chi}$.
\end{proof}

Pour traduire le théorème~\ref{khintchine} en termes d'orbites diagonales dans l'espace des réseaux, nous aurons besoin d'une version un peu plus précise de la correspondance démontrée au chapitre~\ref{chap:correspondance}, que nous énonçons maintenant.
Rappelons que l'on définit un sous-groupe à un paramètre $(a_t)$ dans le groupe $A$ des points réels du tore $\Q$-déployé maximal $T$ en posant
\begin{equation}\label{at}
a_t=e^{tY}
\quad\mbox{où}\quad Y\in\ka\ \mbox{est défini par}\
\alpha(Y) = 
\left\{
\begin{array}{ll}
0 & \mbox{si}\ \alpha\in\theta\\
-1 & \mbox{si}\ \alpha\not\in\theta,
\end{array}
\right.
\end{equation}
où $\theta\subset\Pi$ est l'ensemble de racines simples associé au sous-groupe parabolique $P$, tel que toutes les racines négatives de $P$ se décomposent en éléments de $\theta$.
Enfin, si $\Delta$ est un réseau dans $V_\chi$, nous avons défini ci-dessus
\[
r_\chi(\Delta) = \inf\{r>0\ |\ \exists v\in\tx\cap B(0,r):\ \norm{\pi^+(v)} \geq \frac{1}{2}\norm{v}\},
\]
où $\pi^+:V_\chi\to V_\chi$ désigne la projection sur $\R e_\chi$ parallèlement aux autres espaces de poids de $a_t$.

\begin{proposition}[Correspondance drapeau-réseau]
\label{daniprecis}
Fixons $x\in X(\R)$ et choisissons $s_x\in G$ tel que $x=Ps_x$.
On note $\beta_\chi=\beta_\chi(X)$ l'exposant diophantien donné par la proposition~\ref{exposantps}.
Il existe une constante $C> 0$ telle que les énoncés suivants soient vérifiés.

Soit $\psi:\R_+\to\R_+$ une fonction décroissante, et $\Psi:\R^+\to\R^+$ la fonction définie par
\[
\Psi(u)=Cu^{-\beta_\chi}\psi(u).
\]
\begin{itemize}
\item Si l'inégalité $d(x,v)\leq H_\chi(v)^{-\beta_\chi}\psi(H_\chi(v))$ admet une infinité de solutions $v\in X(\Q)$, alors il existe $t>0$ arbitrairement grand tel que
\[
r_\chi(a_ts_xV_\chi(\Z)) \leq 2 e^{-\frac{t}{\beta_\chi}}\Psi^{-1}(e^{-t}).
\]
\item Si on a pour $t>0$ arbitrairement grand $r_\chi(a_ts_xV_\chi(\Z))\leq e^{-\frac{t}{\beta_\chi}}\Psi^{-1}(e^{-t})$, alors l'inégalité
\[
d(x,v)\leq C^2H_\chi(v)^{-\beta_\chi}\psi(H_\chi(v))
\]
admet une infinité de solutions $v\in X(\Q)$. 
\end{itemize}
\end{proposition}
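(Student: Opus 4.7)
Le plan est de raffiner quantitativement la démonstration de la proposition~\ref{dani} en conservant trace précise des constantes multiplicatives, afin de faire apparaître naturellement la fonction $\Psi$. Les deux assertions se traitent séparément, et la preuve consiste essentiellement en un suivi calculatoire des deux parties de la démonstration de la proposition~\ref{dani}.

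Pour la première assertion, je partirais d'une solution $v\in X(\Q)$ de l'inégalité $d(x,v)\leq H_\chi(v)^{-\beta_\chi}\psi(H_\chi(v))$, écrite sous la forme $v=Pe^us_x$ avec $u\in\uu^-$. D'après la proposition~\ref{ballbox}, on a $\abs{u}\asymp d(x,v)$. Le choix déterminant consiste à fixer $t$ par la relation $e^{-t}=\Psi(H_\chi(v))$, i.e. $H_\chi(v)=\Psi^{-1}(e^{-t})$. L'homogénéité $\abs{(\Ad a_t)u}=e^t\abs{u}$ combinée avec la définition de $\Psi$ donne alors $\abs{(\Ad a_t)u}\ll 1/C$, la constante implicite étant indépendante de $v$. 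En fixant $C$ assez grand, on garantit $\norm{(\Ad a_t)u}\leq \tfrac{1}{4}$, ce qui, par le calcul de la proposition~\ref{dani}, fournit simultanément la condition de projection $\norm{\pi^+(a_ts_x\bv)}\geq\tfrac{1}{2}\norm{a_ts_x\bv}$ et la majoration $\norm{a_ts_x\bv}\leq 2H_\chi(v)\chi(a_t)=2\Psi^{-1}(e^{-t})e^{-t/\beta_\chi}$, d'où le majorant annoncé sur $r_\chi$.

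Pour la seconde assertion, je reprendrais la seconde partie de la démonstration de la proposition~\ref{dani}. Étant donné $\bv\in V_\chi(\Z)\cap\tx$ satisfaisant les deux conditions, je noterais $v\in X(\Q)$ son image et écrirais $v=Pe^us_x$. La condition de projection et la borne supérieure sur la norme donnent $H_\chi(v)\chi(a_t)\asymp\norm{\pi^+(a_ts_x\bv)}\leq\norm{a_ts_x\bv}$, soit $H_\chi(v)\leq\Psi^{-1}(e^{-t})$ et par suite $\Psi(H_\chi(v))\geq e^{-t}$. L'argument de stratification donne ensuite $\abs{(\Ad a_t)u}\ll 1$, puis $d(x,v)\asymp\abs{u}\ll e^{-t}$. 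En combinant avec $e^{-t}\leq CH_\chi(v)^{-\beta_\chi}\psi(H_\chi(v))$, on conclut $d(x,v)\leq C^2H_\chi(v)^{-\beta_\chi}\psi(H_\chi(v))$, après absorption de la constante implicite.

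Le point délicat est l'obtention d'une infinité de solutions \emph{distinctes} dans la seconde direction: a priori, tous les petits vecteurs successifs pourraient se projeter sur un même $v\in X(\Q)$. Pour écarter cette possibilité -- dans le cas pertinent $x\not\in X(\Q)$ -- je fixerais un tel $v$ hypothétique et noterais $u\neq 0$ l'élément de $\uu^-$ associé. Comme $\alpha(Y)>0$ pour toute racine $\alpha$ de $\uu^-$, les $\alpha_i\not\in\theta$ vérifiant $\alpha_i(Y)=-1$, la norme $\abs{(\Ad a_t)u}=e^t\abs{u}$ tend vers l'infini avec $t$, ce qui contredit la borne uniforme $\abs{(\Ad a_t)u}\ll 1$ extraite de la condition de projection. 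L'obstacle principal de la démonstration est donc la traçabilité uniforme (en $v$) de toutes les constantes multiplicatives dans les deux directions, afin qu'une unique constante $C$ convienne.
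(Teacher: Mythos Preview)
Your proposal is correct and follows essentially the same approach as the paper: both directions are obtained by tracking constants in the proof of proposition~\ref{dani}, with the key choice $e^{-t}=\Psi(H_\chi(v))$ in the forward direction and the stratification argument together with $H_\chi(v)\leq\Psi^{-1}(e^{-t})$ in the backward direction. Your final paragraph on the distinctness of the $v$'s even addresses a point the paper's proof leaves implicit; your argument via $\abs{(\Ad a_t)u}=e^t\abs{u}\to\infty$ for fixed $u\neq 0$ is the right way to handle it.
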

\begin{proof}
La démonstration est identique à celle de la proposition~\ref{dani}.
Soit $v\in X(\Q)$ arbitrairement proche de $x$ tel que
$d(x,v)\leq H_\chi(v)^{-\beta_\chi}\psi(H_\chi(v))$.
Écrivons
\[
x=Ps_x \qquad v=Pe^us_x,\ u\in\uu^-,
\]
de sorte que
\[
d(x,v) \asymp \abs{u} \leq H_\chi(v)^{-\beta_\chi}\psi(H_\chi(v)).
\]
Soit $C>0$ tel que pour tout $w\in\uu^-$ tel que $C\abs{w}\leq 1$, on ait $\norm{w}\leq\frac{1}{4}$.
On choisit $t>0$ tel que 
\[
e^{-t}=\Psi(H_\chi(v)) = CH_\chi(v)^{-\beta_\chi}\psi(H_\chi(v)).
\]
Cela implique $C\abs{(\Ad a_t)u}=Ce^t\abs{u}\leq 1$ et donc
\[
\norm{(\Ad a_t)u} \leq \frac{1}{4}.
\]

Soit maintenant $\bv$ un représentant primitif de $v$ dans $V_\chi(\Z)$.
Le vecteur $s_x\bv$ est porté par la direction $\R e^{-u}e_\chi$, et par conséquent, pour tout $t>0$,
\begin{align*}
a_ts_x\bv & = H_\chi(v) a_te^{-u}e_\chi\\
& = H_\chi(v) e^{t\chi(Y)} [\exp(-(\Ad a_t)u)]e_\chi\\
& = H_\chi(v) e^{t\chi(Y)} [e_\chi - (\Ad a_t)u)e_\chi + ((\Ad a_t)u)^2e_\chi -\dots]
\end{align*}
L'inégalité $\norm{e^w-I}\leq 2\norm{w}$ appliquée à $w=(\Ad a_t)u$ donne d'une part
\[
\norm{\pi^+(a_ts_x\bv)} \geq \frac{1}{2}\norm{a_ts_x\bv},
\]
et d'autre part
\[
\|a_ts_x\bv\| \leq 2H_\chi(v) e^{t\chi(Y)}.
\]
Comme $H_\chi(v)=\Psi^{-1}(e^{-t})$ et $\beta_\chi=\frac{-1}{\chi(Y)}$, cela montre que 
\[
r_\chi(a_ts_x\bv) \leq 2H_\chi(v)e^{t\chi(Y)} \leq 2e^{-\frac{t}{\beta_\chi}}\Psi^{-1}(e^{-t}).
\]

Montrons l'inégalité réciproque.
Pour $t>0$ arbitrairement grand, soit $\bv\in\tx\cap V_\chi(\Z)$ tel que
\begin{equation}\label{norme}
 \norm{a_ts_x\bv} \leq e^{-\frac{t}{\beta_\chi}}\Psi^{-1}(e^{-t})
\end{equation}
et
\begin{equation}\label{projection}
 \norm{\pi^+(a_ts_x\bv)} \geq \frac{1}{2}\norm{a_ts_x\bv}.
\end{equation}
Notons $v$ l'image de $\bv$ dans $X$.
Grâce à l'expression de $a_ts_x\bv$ utilisée ci-dessus, nous avons
\[
\pi^+(a_ts_x\bv)=e^{t\chi(Y)}H_\chi(v)e_\chi
\]
et donc, d'après \eqref{projection},
\[
H_\chi(v)^{-1}e^{-t\chi(Y)}\norm{a_ts_x\bv} \leq 2.
\]
On utilise ensuite la stratification $\uu^-=m_1\oplus\dots\oplus m_k$, et l'on décompose $u=\sum_i u_i$ suivant cette somme directe.
Dans l'égalité
\begin{equation}\label{egdec}
e_\chi - e^{-t\chi(Y)}H_\chi(v)^{-1} a_ts_x\bv = \sum_i((\Ad a_t)u_i)e_\chi-\frac{1}{2}((\Ad a_t)u)^2e_\chi+\dots,
\end{equation}
le terme $((\Ad a_t)u_1)e_\chi$ est en somme directe avec tous les autres, et l'on peut donc majorer
\[
\|(\Ad a_t)u_1e_\chi\| \leq 3.
\]
Comme $P=\Stab[e_\chi]$, l'application $u\mapsto u\cdot e_\chi$ est un difféomorphisme local au voisinage de $0$, et par conséquent, 
\[ \|(\Ad a_t)u_1\| \ll 1.\]
Dans \eqref{egdec}, on peut alors faire passer tous les termes du membre de droite de la forme $((\Ad a_t)u_1)^ie_\chi$ dans le membre de gauche, et cela n'augmente pas significativement la norme de ce dernier.
Le terme $((\Ad a_t)u_2)e_\chi$ est alors en somme directe avec tous les autres termes du membre de droite, ce qui permet de voir que $\|((\Ad a_t)u_2)e_\chi\| \ll 1$, puis
\[ \|(\Ad a_t)u_2\| \ll 1.\]
Ainsi de proche en proche, on montre que pour chaque $i$,
$\|(\Ad a_t)u_i\| \ll 1$,
de sorte qu'à la fin
$\|((\Ad a_t)u)e_\chi\| \ll 1$, et donc aussi
\[
\abs{(\Ad a_t)u}\ll 1.
\]
Par suite,
\[
d(x,v) = \abs{u} = e^{-t} \abs{(\Ad a_t)u} \ll e^{-t}.
\]
Or, l'inégalité \eqref{norme} et la relation $\beta_\chi=-\frac{1}{\chi(Y)}$ impliquent
\[
e^{t\chi(Y)}H_\chi(v)= \norm{\pi^+(a_ts_xv)} \leq \norm{a_ts_xv} \leq e^{t\chi(Y)}\Psi^{-1}(e^{-t}),
\]
d'où l'on tire $e^{-t}\leq \Psi(H_\chi(v))$,
ce qui permet de conclure
\[
d(x,v) \ll \Psi(H_\chi(v)) \ll H_\chi(v)^{-\beta_\chi}\psi(H_\chi(v)).
\]
\end{proof}

\section{Somme convergente}

Nous donnons ici la démonstration de la première partie du théorème de Khintchine pour $X$.
L'énoncé correspondant concernant le comportement asymptotique des orbites diagonales dans l'espace $\Omega=G/\Gamma$ est une simple application du lemme de Borel-Cantelli.
Rappelons que l'espace $\Omega$ s'identifie à un ensemble de réseaux de l'espace euclidien $V_\chi$ associé à la représentation de plus haut poids $\chi$, via l'application $g\Gamma\mapsto gV_\chi(\Z)$.
Avec cette identification, notant aussi $\tx=G\cdot e_\chi$ l'orbite du vecteur de plus haut poids dans $V$, on pose, pour $r>0$,
\[
\Omega_r = \{ \Delta\in\Omega\ |\ \exists v\in\Delta\cap\tx:\, \norm{v}\leq r\}.
\]
Enfin, la mesure de Haar sur $\Omega=G/\Gamma$ est notée $m_\Omega$.

\begin{proposition}
\label{kcr}
Soit $(a_t)_{t\in\N}$ une suite d'éléments de $G$ et $(r_t)_{t\in\N}$ une suite de réels positifs telle que $\sum_{t\geq 1} m_\Omega(\Omega_{r_t})<\infty$.
Pour presque tout $\Delta\in\Omega$, pour tout $t\in\N$ suffisamment grand,
\[
a_t\Delta \not\in \Omega_{r_t}.
\]
Par conséquent, pour presque tout $x=Ps_x$ dans $X$, pour tout $t\in\N$ suffisamment grand,
\[
r_\chi(a_ts_xV_\chi(\Z)) \geq r_t.
\]
\end{proposition}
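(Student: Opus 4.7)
Le plan est d'appliquer le lemme de Borel–Cantelli sur $\Omega$ puis de transférer la conclusion à $X$ via un argument de Fubini.

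Pour chaque $t\in\N$, posons $E_t=a_t^{-1}\Omega_{r_t}=\{\Delta\in\Omega\ |\ a_t\Delta\in\Omega_{r_t}\}$. Comme $G$ est semi-simple, donc unimodulaire, la mesure de Haar sur $G$ se projette en une mesure $G$-invariante $m_\Omega$ sur $\Omega$, et la translation à gauche par $a_t$ préserve donc $m_\Omega$; par suite $m_\Omega(E_t)=m_\Omega(\Omega_{r_t})$. L'hypothèse de sommabilité et le lemme de Borel–Cantelli entraînent $m_\Omega(\limsup_t E_t)=0$: pour $m_\Omega$-presque tout $\Delta$, il existe $t_0$ tel que $a_t\Delta\not\in\Omega_{r_t}$ pour tout $t\geq t_0$. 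Comme $\Omega'_r\subset\Omega_r$ par définition même de $r_\chi$ (la condition $\norm{\pi^+(\bv)}\geq\frac{1}{2}\norm{\bv}$ restreint seulement l'ensemble des vecteurs admissibles), on en déduit $r_\chi(a_t\Delta)>r_t$ pour $t\geq t_0$, ce qui démontre la première assertion.

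Pour descendre à $X$, on relève cette propriété à $G$ par le quotient $G\to\Omega$: l'ensemble des $s\in G$ tels que $a_tsV_\chi(\Z)\not\in\Omega_{r_t}$ pour tout $t$ assez grand est de mesure de Haar pleine dans $G$. En désintégrant la mesure de Haar sur $G$ le long de la projection $G\to P\bcs G=X$ (localement produit d'une mesure équivalente à la mesure de Lebesgue sur $X$ et de la mesure de Haar sur $P$), le théorème de Fubini donne: pour presque tout $x\in X(\R)$, presque tout représentant $s_x$ de $x$ vérifie la conclusion. D'après la remarque qui suit la définition de $\gamma_\chi$ au paragraphe~\ref{sec:ed}, changer $s_x$ en $ps_x$ avec $p\in P$ ne modifie $r_\chi(a_ts_xV_\chi(\Z))$ qu'à une constante multiplicative près indépendante de $t$, puisque $a_tpa_t^{-1}$ converge vers un élément de $P$ agissant comme scalaire sur $e_\chi$. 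Or la proposition~\ref{asymp} implique $m_\Omega(\Omega_{Cr})\ll m_\Omega(\Omega_r)$ pour toute constante $C>0$ fixée, donc la sommabilité est préservée par une dilatation constante de $(r_t)$; la conclusion vaut par conséquent pour tout choix de $s_x$.

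L'application du lemme de Borel–Cantelli est immédiate une fois l'invariance de $m_\Omega$ observée; la seule subtilité est le passage de $\Omega$ à $X$, où l'ambiguïté dans le choix du représentant $s_x\in G$ impose de combiner un argument de Fubini avec l'invariance approximative du taux de fuite sous l'action de $P$.
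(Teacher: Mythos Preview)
Your proof is correct and follows the same approach as the paper: Borel--Cantelli for the first assertion, then the inequality $r_\chi(\Delta)\geq\min_{v\in\Delta\cap\tx}\norm{v}$ to pass from the $\Omega_{r_t}$-condition to the $r_\chi$-condition. You are actually more explicit than the paper about the Fubini descent from $G$ to $X=P\bcs G$, which the paper leaves entirely implicit.

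The one substantive difference is how the multiplicative constant arising from the change of representative $s_x\mapsto ps_x$ is absorbed. You dilate $(r_t)$ by a fixed constant $C$ and invoke Proposition~\ref{asymp}; but for $C$ to be chosen uniformly in $x$, you should add that the good representative can be taken in a fixed neighborhood of the given $s_x$ (possible since the good set has full measure in each $P$-fiber, hence is dense), so that $p$ stays in a compact set and $\sup_t\norm{a_tpa_t^{-1}}$ is uniformly bounded. The paper instead applies the first part to an auxiliary sequence $(r'_t)$ with $r_t=o(r'_t)$ and $\sum_t m_\Omega(\Omega_{r'_t})<\infty$: since $r'_t/r_t\to\infty$, any constant $C_p$ depending on the particular $x$ is eventually dominated, and no uniformity is required. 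The paper's trick is thus slightly more robust, though yours works just as well once the boundedness of $p$ is made explicit.
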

\begin{proof}
Comme chaque élément $a_t$ préserve la mesure de Haar sur $\Omega$,
\[
m_\Omega(\{\Delta\ |\ a_t\Delta\in\Omega_{r_t}\}) = m_\Omega(\Omega_{r_t}),
\]
est le terme général d'une série convergente, et le lemme de Borel-Cantelli montre donc que pour presque tout $\Delta$, pour tout $t$ suffisamment grand, $a_t\Delta\not\in\Omega_{r_t}$.
La seconde partie découle de la première, car
\[
r_\chi(a_ts_x\Gamma)\geq \min\{\norm{v}\ ;\ v\in a_ts_xV_\chi(\Z)\cap\tx\},
\]
et ce deuxième terme ne dépend pas du choix de $s_x$, à une constante multiplicative près.
En appliquant la première partie à une suite $(r'_t)$ telle que $\sum_{t\in\N}m_\Omega(\Omega_{r_t'})<+\infty$ et $r_t=o(r_t')$ on fait disparaître cette constante multiplicative.
\end{proof}

Nous aurons besoin du lemme suivant, qui permet de supposer dans la démonstration du théorème~\ref{khintchine} que la fonction $\psi$ est majorée par une fonction de la forme $t\mapsto (\log t)^{-c}$.
Le dessin ci-dessous explique le calcul effectué pour la démonstration.

\begin{figure}[H]
\begin{center}
\begin{tikzpicture}[domain=.65:6]
\draw[->] (-0.2,0) -- (5.5,0) node[below] {$x$};
\draw[->] (0,-.2) -- (0,4.2) node[above] {$y$};
\draw[color=red] plot (\x,3/\x);
\draw[color=red] (6,.7) node[right] {$y =(\log x)^{-c}$};
\draw[color=blue] plot (\x,1/\x);
\draw[color=blue] (6,-.1) node[right] {$y = (\log x)^{-2c}$};
\draw[color=green] (.65, 1.33) -- (.75,1.33) .. controls (4.5,1) .. (5,.6) .. controls (5.5,.3) .. (6,.3); 
\draw[color=green] (6,.3) node[right] {$y=\psi(x)$};
\draw[color=gray] (5,.6) -- (5,0) node[below] {$T$};
\draw[color=gray] (5,.6) -- (1.67,.6) -- (1.67,0) node[below] {$e^{\sqrt{\log T}}$};
\draw[color=gray] (1.67,.6) -- (0,.6);
\draw[color=gray] (.75,1.33) -- (.75,0) node[below] {$T_1$};

\end{tikzpicture}
\end{center}
\caption{Graphes des fonctions $\psi$, $x\mapsto(\log x)^{-c}$ et $x\mapsto (\log x)^{-2c}$.}
\end{figure}
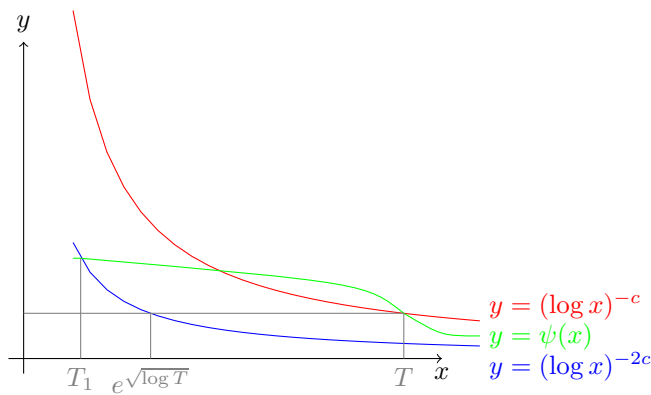

\begin{lemma}
\label{calc}
Soient $a,\beta >0$ et $b\in\N^*$.
Il existe $c>0$ tel que pour toute fonction $\psi:\R^+\to\R^+$ décroissante telle que
\[
I(\psi)=\int_e^\infty \psi(u)^{\frac{a}{\beta}}(\log\log u)^{b-1}\frac{\dd u}{u}<+\infty,
\]
pour tout $u>0$ suffisamment grand, $\psi(u)\leq (\log u)^{-c}$.
\end{lemma}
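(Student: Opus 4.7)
The plan is to proceed by contradiction, following the geometric intuition suggested by the figure: if $\psi$ decays too slowly, then $\psi$ must remain above $(\log u)^{-c}$ on a long enough interval that the contribution to $I(\psi)$ becomes infinite. I would choose $c$ to be any positive real strictly less than $\beta/a$, for instance $c = \beta/(2a)$. The assertion is then that any $\psi$ decreasing with $I(\psi)<+\infty$ satisfies $\psi(u)\leq (\log u)^{-c}$ for $u$ large enough.

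Suppose for contradiction that there exists a sequence $T_n\to+\infty$ with $\psi(T_n)>(\log T_n)^{-c}$. The key step is to bound from below the local contribution of $\psi$ to $I(\psi)$ on the interval $[\sqrt{T_n},T_n]$. Using that $\psi$ is decreasing, $\psi(u)\geq \psi(T_n)$ for $u\in[\sqrt{T_n},T_n]$, so
\[
\int_{\sqrt{T_n}}^{T_n}\psi(u)^{a/\beta}(\log\log u)^{b-1}\frac{\dd u}{u}
\geq \psi(T_n)^{a/\beta}(\log\log\sqrt{T_n})^{b-1}\int_{\sqrt{T_n}}^{T_n}\frac{\dd u}{u}.
\]
Using the hypothesis $\psi(T_n)>(\log T_n)^{-c}$ together with $\int_{\sqrt{T_n}}^{T_n}\dd u/u=\tfrac{1}{2}\log T_n$ and $\log\log\sqrt{T_n}\geq \tfrac{1}{2}\log\log T_n$ for $T_n$ large, the right-hand side is bounded below by a constant times $(\log T_n)^{1-ca/\beta}(\log\log T_n)^{b-1}$.

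Since $c<\beta/a$, this lower bound tends to $+\infty$ as $n\to\infty$. On the other hand, convergence of $I(\psi)$ implies $\int_M^{+\infty}\psi(u)^{a/\beta}(\log\log u)^{b-1}\frac{\dd u}{u}\to 0$ as $M\to+\infty$, which forces the integral on $[\sqrt{T_n},T_n]$ to tend to $0$. This contradiction establishes the lemma. There is no real obstacle: the argument is a direct calculation, and the only point of care is to verify that the exponent of $\log T_n$ obtained in the lower bound is strictly positive, which dictates the constraint $c<\beta/a$.
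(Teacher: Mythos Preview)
Your argument is correct and follows the same contradiction strategy as the paper: assume $\psi(T)>(\log T)^{-c}$ for arbitrarily large $T$, then use monotonicity of $\psi$ on an interval ending at $T$ to force a divergent contribution to $I(\psi)$. The only difference is cosmetic: the paper introduces $T_1=\sup\{u\le T:\psi(u)\le(\log u)^{-2c}\}$ and integrates on $[T_1,T]$, using $\psi(u)\ge(\log u)^{-2c}$ there to reduce to $\int_{T_1}^T\frac{\dd u}{u\log u}\ge\tfrac12\log\log T$, whereas you integrate on the fixed interval $[\sqrt{T_n},T_n]$ with the constant lower bound $\psi(u)\ge\psi(T_n)$. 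Your choice is slightly more direct and avoids the auxiliary threshold $(\log u)^{-2c}$; both yield the same constraint $c<\beta/a$.
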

\begin{proof}
Posons $c=\frac{\beta}{2a}$ et supposons que pour $T>0$ arbitrairement grand, $\psi(T)\geq (\log T)^{-c}$.
Soit
\[
T_1= \sup\{ u\in[0,T]\ |\ \psi(u)\leq (\log u)^{-2c}\}.
\]
Comme $\psi$ est décroissante,
\[
(\log T_1)^{-2c} \geq \psi(T_1) \geq \psi(T) \geq (\log T)^{-c},
\]
et donc $(\log T_1)\leq (\log T)^{1/2}$.
Par suite, 
\begin{align*}
\int_{T_1}^T \psi(u)^{\frac{a}{\beta}}(\log\log u)^{b-1}\frac{\dd u}{u}
& \geq \int_{T_1}^T \frac{\dd u}{u\log u}\\
& = \log\log T- \log\log T_1\\
& \geq \frac{1}{2}\log\log T.
\end{align*}
Cela montre que l'intégrale $I(\psi)$ est divergente, et le lemme s'en déduit, par contraposée.
\end{proof}

Nous pouvons maintenant démontrer la première partie du théorème~\ref{khintchine}.

\begin{proof}[Démonstration du théorème~\ref{khintchine}, cas convergent]
Soit $\psi:\R^+\to\R^+$ décroissante telle que
\[
\int_e^\infty \psi(u)^{\frac{a_\chi}{\beta_\chi}}(\log\log u)^{b_\chi-1}\frac{\dd u}{u} <\infty.
\]
Quitte à remplacer $\psi(u)$ par $\psi(u)+(\log u)^{-C}$, avec $C>\frac{\beta_\chi}{a_\chi}$, on peut toujours supposer que $\psi(u)\geq(\log u)^{-C}$.
D'autre part, le lemme~\ref{calc} montre que pour tout $u>0$ suffisamment grand, $\psi(u)\leq(\log u)^{-c}$.
Ainsi,
\begin{equation}\label{encadre}
\forall u\geq 0,\quad (\log u)^{-C} \leq \psi(u) \leq (\log u)^{-c}.
\end{equation}
Posant 
\[
\Psi(u)=u^{-\beta_\chi}\psi(u)
\qquad\mbox{et}\qquad
r_t=2e^{-\frac{t}{\beta_\chi}}\Psi^{-1}(e^{-t}),
\]
les propositions~\ref{daniprecis} et \ref{kcr} montrent qu'il suffit de démontrer que la somme $\sum_{t\geq 1} m_\Omega(\Omega_{r_t})$ converge, ce qui équivaut, d'après la proposition~\ref{asymp}, à
\[
\sum_{t\geq 1} r_t^{a_\chi} \abs{\log r_t}^{b_\chi-1} <+\infty.
\]
L'encadrement \eqref{encadre} ci-dessus se réécrit
\begin{equation}\label{encadre2}
u^{-\beta_\chi}(\log u)^{-C} \leq \Psi(u) \leq u^{-\beta_\chi}(\log u)^{-c},
\end{equation}
ce qui implique
\[
s^{-\frac{1}{\beta_\chi}}\abs{\log s}^{-\frac{C}{\beta_\chi}} \ll \Psi^{-1}(s)\ll s^{-\frac{1}{\beta_\chi}}\abs{\log s}^{-\frac{c}{\beta_\chi}}.
\]
\details{
En effet, si $\Phi_-(u)=u^{-\beta_\chi}(\log u)^{-C}$ et $\Phi_+(u)=u^{-\beta_\chi}(\log u)^{-c}$, on a $\Phi_-^{-1}(s)\leq \Psi^{-1}(s)\leq\Phi_+^{-1}(s)$, et il suffit donc de vérifier que \[
s^{-1/\beta}\abs{\log s}^{-\frac{C}{\beta}} \ll \Phi_-^{-1}(s) \leq \Phi_+^{-1}(s)\ll s^{-1/\beta}\abs{\log s}^{-\frac{c}{\beta}}.
\]
Comme $\Phi_-$ et $\Phi_+$ sont décroissantes, cela est équivalent au fait que pour certains $C_0,c_0>0$, 
\begin{align*}
\Phi_-(c_0s^{-1/\beta}\abs{\log s}^{-\frac{C}{\beta}}) & \geq  s  \geq \Phi_+(C_0s^{-1/\beta}\abs{\log s}^{-\frac{c}{\beta}})\\
c_0^{-\beta} s \abs{\log s}^C[ \frac{-1}{\beta}\log s -\frac{C}{\beta}\log\abs{\log s}]^{-C}
& \geq  s  \geq
C_0^{-\beta} s \abs{\log s}^c[ \frac{-1}{\beta}\log s -\frac{c}{\beta}\log\abs{\log s}]^{-c}\\
c_0^{-\beta} s [\frac{1}{\beta}-o(1)]^{-C} & \geq s \geq  C_0^{-\beta} s [\frac{1}{\beta}-o(1)]^{-c},
\end{align*}
et le résultat s'ensuit, si $C_0, c_0$ sont choisis de sorte que $c_0^{-\beta}\beta^C>1>C_0^{-\beta}\beta^c$.
}
Avec la borne supérieure de cet encadrement, on trouve
\[
r_t = 2 e^{-\frac{t}{\beta_\chi}}\Psi^{-1}(e^{-t}) \ll t^{-\frac{c}{\beta_\chi}},
\]
d'où 
\[
\abs{\log r_t} \ll \log t.
\]
Par suite, il suffit de vérifier que
\[
\sum_{t\geq 1} r_t^{a_\chi} (\log t)^{b_\chi-1} <+\infty.
\]
Pour cela, majorons
\note{On utilise la borne inférieure de \eqref{encadre2}: $\Psi^{-1}(s)\geq c_0s^{-\frac{1}{\beta_\chi}}\abs{\log s}^{-\frac{C}{\beta_\chi}}$, et la décroissance de $\psi$.}
\[
\Psi^{-1}(s) = s^{-\frac{1}{\beta_\chi}}\psi(\Psi^{-1}(s)) \leq s^{-\frac{1}{\beta_\chi}}\psi(c_0s^{-\frac{1}{\beta_\chi}}\abs{\log s}^{-\frac{C}{\beta_\chi}}),
\]
et donc
\[
r_t = 2e^{-\frac{t}{\beta_\chi}}\Psi^{-1}(e^{-t}) \leq 2\psi(c_0e^{\frac{t}{\beta_\chi}} t^{-\frac{C}{\beta_\chi}}).
\]
Cela donne
\begin{align*}
\sum_{t\geq 1} r_t^{a_\chi} (\log t)^{b_\chi-1} 
& \leq 2\sum_{t\geq 1} \psi(c_0e^{\frac{t}{\beta_\chi}}t^{-\frac{C}{\beta_\chi}})^{a_\chi} (\log t)^{b_\chi-1}\\
& \leq 2 \int_e^\infty \psi(c_0e^{\frac{t}{\beta_\chi}}t^{-\frac{C}{\beta_\chi}})^{a_\chi} (\log t)^{b_\chi-1}\dd t
\end{align*}
et avec le changement de variable $u=c_0e^{\frac{t}{\beta_\chi}}t^{-\frac{C}{\beta_\chi}}$, $\frac{\dd u}{u} = (\frac{1}{\beta_\chi} + o(1))\dd t$,
\[
\sum_{t\geq 1} r_t^{a_\chi} (\log t)^{b_\chi-1}
\ll \int_1^\infty \psi(u)^{a_\chi} (\log\log u)^{b_\chi-1} \frac{\dd u}{u} < +\infty.
\]
\end{proof}

\section{Somme divergente}

Ici encore, on se ramène à un énoncé sur les orbites diagonales dans l'espace $\Omega=G/\Gamma$.
La démonstration de la proposition suivante occupe la majeure partie de la fin de la démonstration du théorème~\ref{khintchine} ; la traduction en termes d'approximation diophantienne sur $X$ se fera facilement, par un calcul analogue à celui que nous avons donné ci-dessus dans le cas où la somme est convergente.

\begin{proposition}
\label{kdr}
Soit $(r_t)$ une suite de réels positifs telle que $\sum_{t\geq 1}m_\Omega(\Omega_{r_t})=+\infty$.
Alors, pour presque tout $\Delta\in \Omega$, il existe $t\in\N$ arbitrairement grand tel que
\[
r_\chi(a_t\Delta) \leq r_t.
\]
En particulier, pour presque tout $x=Ps_x\in X$, pour $t\in\N$ arbitrairement grand,
\[
r_\chi(a_ts_xV(\Z)) \leq r_t.
\]
\end{proposition}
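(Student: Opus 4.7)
L'idée est d'établir un lemme de Borel-Cantelli dynamique, adapté au flot $(a_t)$ sur $\Omega=G/\Gamma$, en profitant du théorème de mélange exponentiel~\ref{decay}. D'après la proposition~\ref{asymp}, on a $m_\Omega(\Omega_r)\asymp m_\Omega(\Omega_r')$ pour tout $r>0$ petit, donc l'hypothèse $\sum_t m_\Omega(\Omega_{r_t})=+\infty$ est équivalente à $\sum_t m_\Omega(\Omega_{r_t}')=+\infty$. Posant $E_t=a_t^{-1}\Omega'_{r_t}$, on a $m_\Omega(E_t)=m_\Omega(\Omega_{r_t}')$ puisque $a_t$ préserve $m_\Omega$, et il s'agit de montrer que pour presque tout $\Delta$, l'ensemble des entiers $t$ tels que $\Delta\in E_t$ est infini, autrement dit que $m_\Omega(\limsup_t E_t)=1$.

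La première étape consiste à approcher les indicateurs $\mathbbm{1}_{\Omega_{r_t}'}$ par des fonctions lisses $f_t\in C^\infty(\Omega)$ telles que $m_\Omega(f_t)\asymp m_\Omega(\Omega_{r_t}')$ et dont les normes de Sobolev $\norm{\Upsilon^\ell f_t}_2$ soient contrôlées polynomialement en $r_t^{-1}$. L'analyse géométrique du domaine de Siegel $\FS$ sous-jacente à la démonstration de la proposition~\ref{asymp} permet de construire ces approximations en prenant un paramètre de régularisation proportionnel à une puissance de $r_t$. Le théorème~\ref{decay} fournit alors, pour tous $s<t$:
\[
\int_\Omega f_s(\Delta)f_t(a_{t-s}\Delta)\,m_\Omega(\dd\Delta)
= m_\Omega(f_s)m_\Omega(f_t) + O\!\left(e^{-\tau(t-s)}\norm{\Upsilon^\ell f_s}_2\norm{\Upsilon^\ell f_t}_2\right),
\]
et donc
\[
m_\Omega(E_s\cap E_t)\leq m_\Omega(E_s)m_\Omega(E_t)+O\!\left(e^{-\tau(t-s)}r_s^{-N}r_t^{-N}\right),
\]
pour une certaine constante $N>0$. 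Quitte à extraire une sous-suite $(t_k)$ bien séparée, de sorte que $t_{k+1}-t_k$ croisse suffisamment pour écraser les termes d'erreur, on obtient une quasi-indépendance: $\sum_{k\neq\ell}m_\Omega(E_{t_k}\cap E_{t_\ell})\leq (1+o(1))(\sum_k m_\Omega(E_{t_k}))^2$, la somme restant divergente par la monotonie de $\psi$.

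La deuxième étape est alors l'inégalité de Chung-Erdős (ou Paley-Zygmund): elle donne $m_\Omega(\limsup_k E_{t_k})>0$. L'ergodicité du flot $(a_t)$ sur $\Omega$, conséquence immédiate du théorème~\ref{decay} (applicable car $G$ est $\Q$-semi-simple simplement connexe, quitte à remplacer $G$ par son revêtement universel), permet d'améliorer cette conclusion en $m_\Omega(\limsup_t E_t)=1$, l'ensemble $\limsup_t E_t$ étant essentiellement invariant sous $(a_t)$. Cela démontre la première assertion de la proposition.

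La dernière étape consiste à descendre l'énoncé de $\Omega$ à $X$. Pour presque tout $g\in G$, le réseau $\Delta=gV_\chi(\Z)$ vérifie $r_\chi(a_t\Delta)\leq r_t$ pour une infinité de $t\in\N$. Par le théorème de Fubini relatif à la fibration $G\to X=P\bcs G$, cette propriété est vraie pour presque tout $x=Ps_x$ à condition de vérifier qu'elle ne dépend, à constante multiplicative bornée près, que de $x$ et non du choix de $s_x$ dans sa fibre. Or, pour $p\in P$, l'élément $a_tpa_t^{-1}$ converge quand $t\to+\infty$ vers un élément $p_\infty\in P$, de sorte que $r_\chi(a_tps_xV_\chi(\Z))\leq C_0 r_\chi(a_ts_xV_\chi(\Z))$ pour $t$ assez grand (le raisonnement déjà employé dans la remarque après la définition de $\gamma_\chi$); on applique alors la première partie à la suite $(r_t/C_0)$, dont la divergence $\sum m_\Omega(\Omega_{r_t/C_0})=+\infty$ découle de la relation $m_\Omega(\Omega_{r_t/C_0})\asymp m_\Omega(\Omega_{r_t})$ donnée par la proposition~\ref{asymp}.

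La principale difficulté technique réside dans la construction des approximations lisses $f_t$: les ensembles $\Omega_{r_t}'$ ne sont pas de simples voisinages de l'infini dans $\Omega$, car ils incorporent la contrainte $\norm{\pi^+(v)}\geq\frac{1}{2}\norm{v}$, et leur frontière peut être complexe dans les pointes de $\Omega$. Il faut choisir le paramètre de régularisation de sorte que l'on conserve d'une part $m_\Omega(f_t)\asymp m_\Omega(\Omega_{r_t}')$, d'autre part une majoration des normes de Sobolev qui permette, en tenant compte de l'espacement des $t_k$, d'absorber les termes d'erreur dans la quasi-indépendance. Cette analyse fine, très proche de celle conduite par Kleinbock et Margulis dans \cite{km_loglaws}, utilise de façon essentielle la description explicite des ensembles $\Omega_{r_t}'$ à l'aide d'un domaine fondamental de Siegel.
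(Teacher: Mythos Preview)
Your overall strategy—mixing via Theorem~\ref{decay} plus a divergence Borel--Cantelli—is the right one, but there is a genuine gap in the regularisation step that cascades through the rest of your argument.

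You propose to smooth $\mathbbm{1}_{\Omega_{r_t}'}$ at a scale proportional to a power of $r_t$, and accept Sobolev norms growing like $r_t^{-N}$. This forces you to extract a well-spaced subsequence $(t_k)$ so that $e^{-\tau(t_{k+1}-t_k)}$ absorbs the blow-up, and then to argue that $\sum_k m_\Omega(\Omega_{r_{t_k}})$ still diverges. Here you invoke ``la monotonie de $\psi$'', but the proposition has no $\psi$: it is stated for an arbitrary sequence $(r_t)$ with $\sum_t m_\Omega(\Omega_{r_t})=+\infty$, with no regularity hypothesis whatsoever. Without such a hypothesis the subsequence extraction cannot be justified. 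Your subsequent ergodicity step is also suspect: the set $\limsup_t a_t^{-1}\Omega_{r_t}'$ is not obviously $(a_s)$-invariant, because the target sets $\Omega_{r_t}'$ vary with $t$.

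The paper avoids all of this with a simple device: one convolves $\mathbbm{1}_{\Omega_r''}$ with a \emph{fixed} bump $P\in C_c^\infty(G)$ independent of $r$, where $\Omega_r''$ is the slightly shrunk set defined with the constraint $\norm{\pi^+(v)}\geq\frac34\norm{v}$ and radius $\frac r2$. The fixed scale ensures $\eta_r=P*\mathbbm{1}_{\Omega_r''}$ still satisfies $\{\eta_r>0\}\subset\Omega_r'$, while Young's inequality gives $\norm{\Upsilon^\ell\eta_r}_2\leq\norm{\Upsilon^\ell P}_2\,\norm{\mathbbm{1}_{\Omega_r''}}_1\ll m_\Omega(\Omega_r'')=\norm{\eta_r}_1$. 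The Sobolev norm is thus \emph{bounded by the $L^1$ norm}, not blowing up. Plugging this into Theorem~\ref{decay} for $h_t(\Delta)=\eta_{r_t}(a_t\Delta)$ yields
\[
\int_\Omega\Bigl(\sum_{t=1}^N h_t-m_\Omega(h_t)\Bigr)^2\,\dd m_\Omega
\ll \sum_{t_1,t_2=1}^N e^{-\tau|t_1-t_2|}\,\norm{h_{t_1}}_1\norm{h_{t_2}}_1
\ll \sum_{t=1}^N m_\Omega(h_t),
\]
since $\norm{h_t}_1\leq 1$ and $\sum_{t_2}e^{-\tau|t_1-t_2|}$ is bounded. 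One then applies the quantitative Borel--Cantelli lemma~\ref{bcd} directly: no subsequence, no ergodicity, and no extra hypothesis on $(r_t)$.
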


Naturellement, la démonstration de cette proposition est basée sur une version du lemme de Borel-Cantelli dans le cas où la somme des probabilités des événements considérés est divergente, avec une hypothèse supplémentaire d'indépendance.
Plus précisément, nous utiliserons le lemme élémentaire ci-dessous.

\begin{lemma}[Une version du lemme de Borel-Cantelli]
\label{bcd}
Soit $(X,m)$ un espace de probabilité, et $(h_t)_{t\in\N}$ une suite de fonctions intégrables à valeurs positives sur $X$ satisfaisant:
\begin{enumerate}
\item $\sum_{t\geq 1} m(h_t) = +\infty$;
\item $\exists C\geq 0:\ \forall N\geq 1,\quad \int_X (\sum_{t=1}^Nh_t(x)-m(h_t))^2\,m(\dd x) \leq C\cdot \sum_{t=1}^Nm(h_t)$.
\end{enumerate}
Alors pour presque tout $x$, l'ensemble
\(
I(x) = \{t\in\N\ |\ h_t(x)>0\}
\)
est infini.
\end{lemma}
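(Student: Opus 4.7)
This is a standard second-moment (quasi-independence) version of the Borel--Cantelli lemma, and I would prove it by Chebyshev's inequality applied to the partial sums $S_N(x) = \sum_{t=1}^N h_t(x)$ and their expectations $E_N = \sum_{t=1}^N m(h_t)$. The idea is to show that $S_\infty(x) = +\infty$ almost surely; since the $h_t$ are nonnegative, this forces $I(x)$ to be infinite.

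The first step is a simple variance estimate. Hypothesis (2) reads $\int_X (S_N - E_N)^2\, m(\dd x) \leq C E_N$, so Chebyshev's inequality gives, for every $N$,
\[
m\bigl(\{x\in X\ |\ S_N(x) < E_N/2\}\bigr)
\leq m\bigl(\{x\in X\ |\ \abs{S_N(x)-E_N} > E_N/2\}\bigr)
\leq \frac{4C}{E_N}.
\]
By hypothesis (1), $E_N \to +\infty$, so I can extract an increasing sequence of indices $(N_k)_{k\geq 1}$ with $E_{N_k} \geq k^2$. Then $\sum_k \frac{4C}{E_{N_k}} < +\infty$, and the usual (convergent) Borel--Cantelli lemma gives that for almost every $x$, one has $S_{N_k}(x) \geq E_{N_k}/2$ for all $k$ large enough.

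Since the sequence $(S_N(x))_{N\geq 1}$ is nondecreasing in $N$ (because $h_t \geq 0$) and $E_{N_k} \to +\infty$, this yields $S_\infty(x) = \sum_{t\geq 1} h_t(x) = +\infty$ for almost every $x$. But a convergent series of nonnegative terms $h_t(x)$ would have only finitely many positive terms; so $I(x)$ must be infinite for almost every $x$, as required.

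There is no serious obstacle here: the only thing to check carefully is that hypothesis~(1) indeed ensures the existence of a subsequence $(N_k)$ along which $1/E_{N_k}$ is summable, and this is immediate from $E_N \nearrow +\infty$. The rest is a direct application of Chebyshev plus the elementary (convergent) Borel--Cantelli lemma, with no measurability or integrability subtlety since the $h_t$ are assumed nonnegative and integrable.
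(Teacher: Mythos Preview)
Your argument is correct and is essentially the same as the paper's: both bound the variance of $S_N/E_N$ by $C/E_N\to 0$ and pass to a subsequence (you via Chebyshev plus Borel--Cantelli, the paper via extracting an a.e.\ convergent subsequence from $L^2$ convergence, which amounts to the same thing). One slip to fix: the sentence ``a convergent series of nonnegative terms $h_t(x)$ would have only finitely many positive terms'' is false as written; what you mean (and what is true) is that if $I(x)$ were finite then $S_\infty(x)$ would be a finite sum of finite values, hence finite.
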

\begin{proof}
Posons
\[
\phi_N(x) = \frac{1}{\sum_{t=1}^Nm(h_t)}\cdot\sum_{t=1}^Nh_t(x).
\]
Par hypothèse,
\[
\norm{\sum_{t=1}^N h_t-m(h_t)}^2_2
 \leq C\cdot \sum_{t=1}^N m(h_t)
\]
et donc
\[
\norm{\phi_N-1}_2^2 \leq \frac{1}{\sum_{t=1}^N m(h_t)} \to_{N\to\infty} 0.
\]
Par suite, on peut extraire de $(\phi_N)$ une sous-suite $(\phi_{N_k})$ convergeant pour presque tout $x$ vers $1$.
Alors, presque sûrement,
\[
\sum_{t=1}^{N_k} h_t(x) \sim_{k\to\infty} \sum_{t=1}^{N_k}m(h_t)\to +\infty,
\]
ce qui montre en particulier que l'ensemble $I(x)$ est presque sûrement infini.
\comm{Si l'on veut éviter d'extraire une sous-suite qui converge presque partout, on peut raisonner comme suit: on extrait une sous-suite $(N_k)$ telle que $(\sum_{t=1}^{N_k}m(h_t))^{-1/2}\leq 2^{-k}$, de sorte que $m(\{x\ |\ \phi_{N_k}(x)\leq 0\}) \leq m(\{x\ |\ \abs{\phi_{N_k}(x)-1}\geq 1\}) \leq  \norm{\phi_{N_k}-1} \leq 2^{-k}$, et comme cette somme converge, le lemme de Borel-Cantelli usuel montre ce qu'on veut.}
\end{proof}

\comm{On peut en fait démontrer le résultat plus précis suivant, qui est démontré par exemple dans Philipp \cite[Theorem~3]{philipp} ou dans Sprindzuk \cite[Chapter~1, Lemma~10]{sprindzuk}.
Ce lemme est une loi des grands nombres avec une majoration du terme d'erreur, pour des variables aléatoires non identiquement distribuées, mais bornées et faiblement indépendantes.
}
\comm{
\begin{lemma}
Soit $(X,m)$ un espace de probabilité, et $(h_t)_{t\in\N}$ une suite de fonctions à valeurs réelles sur $X$ satisfaisant
\begin{enumerate}
\item $\forall t\in\N,\quad m(h_t)\leq 1$;
\item $\exists C\geq 0:\ \forall N>M\geq 1,\quad \int_X (\sum_{t=M}^Nh_t(x)-\sum_{t=M}^Nm(h_t))^2\,m(\dd x) \leq C\cdot \sum_{t=M}^Nm(h_t)$.
\end{enumerate}
Posons
\[
S_N(x) = \sum_{t\leq N}h_t(x)
\qquad\mathrm{et}\qquad
E_N = \int_X S_N(x)m(\dd x).
\]
Alors, pour tout $\eps>0$, pour $m$-presque tout $x$ dans $X$, lorsque $N$ tend vers $+\infty$,
\[
S_N(x) = E_N + O(E_N^{1/2}\log^{3/2+\eps}E_N).
\]
\end{lemma}
\begin{proof}
À faire en exercice.
\end{proof}
Avec ce lemme on peut obtenir un équivalent asymptotique du nombre $\card\{t\leq T\ |\ a_t\Delta\in\Omega_{r_t}\}$, du moins si l'on est capable de montrer que $m(\Omega_r)\sim r^a\abs{\log r}^b$ lorsque $r$ tend vers zéro.
Cela peut se faire à l'aide d'une formule de Siegel pour $G/\Gamma$.
Malheureusement, cet équivalent plus précis ne semble pas donner un analogue du théorème de Schmidt \cite[Theorem~3B]{schmidt_da}; cette question reste donc à étudier.
}

Pour démontrer la deuxième partie du théorème~\ref{khintchine}, on souhaiterait appliquer le lemme~\ref{bcd} à la famille de fonctions $h_t=\mathbbm{1}_{a_{t}^{-1}\Omega_{r_t}'}$, où
\[
\Omega_{r_t}' = \{\Delta\in\Omega\ |\ r_\chi(\Delta) \leq r_t\}.
\]
Cependant, la seconde condition du lemme n'est pas évidente à vérifier.
Pour cela, nous nous ramenons à une suite de fonctions lisses $h_t$ qui approchent $\mathbbm{1}_{a_{t}^{-1}\Omega_{r_t}'}$ et dont on contrôle les normes de Sobolev.

\begin{lemma}
Soit $\Upsilon$ l'opérateur $\Upsilon=1-\sum_i Y_i^2$, où $(Y_i)$ est une base orthonormée de l'algèbre de Lie $\mathfrak{k}$ d'un sous-groupe compact maximal de $G(\R)$.
Il existe des constantes $C,c>0$ et une famille de fonctions $\eta_r:\Omega\to[0,1]$, $r\in]0,1]$ telles que
\begin{enumerate}
\item $\eta_r(\Delta)>0 \quad\Rightarrow\quad r_\chi(\Delta) \leq r$;
\item $\int_{\Omega}\eta_r(\Delta)\, m_\Omega(\dd\Delta) \geq cr^{a_\chi}\abs{\log r}^{b_\chi-1}$;
\item $\norm{\Upsilon^\ell(\eta_r)}_2 \leq C \norm{\eta_r}_2$.
\end{enumerate}
\end{lemma}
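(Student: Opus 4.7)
Le plan consiste à lisser la construction de la démonstration de la proposition~\ref{asymp} qui fournit la minoration de $m_\Omega(\Omega_r')$, en factorisant l'objet lissé selon une décomposition de $G$ de la forme $G=HK$ (avec $K$ à droite, $H$ un sous-groupe résoluble, par exemple $H=MAN$) où la translation à droite par $K$ n'agit que sur le facteur compact. Cette factorisation découplera la dépendance en $r$ du facteur $K$, ce qui rendra la borne de Sobolev (3) uniforme en $r$.

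Concrètement, dans cette décomposition on écrira $g=b(g)k(g)$ avec $b(g)\in H$, $k(g)\in K$, la translation à droite par $k'\in K$ envoyant $(b,k)$ sur $(b,kk')$. On fixera une fonction lisse $\phi_K:K\to[0,1]$ à support dans le voisinage $V_K$ de l'identité défini dans la démonstration de la proposition~\ref{asymp} (pour lequel $\norm{\pi^+(ke_\chi)}\geq\frac{1}{2}\norm{e_\chi}$), indépendante de $r$, et une fonction lisse $\phi_r^H:H\to[0,1]$ à support dans une région cuspidale adéquate de $H$ correspondant à la partie $\omega\cdot A_r$ de la démonstration de la proposition~\ref{asymp}, avec $A_r=\{a\in A_\tau\ |\ \log r-1\leq\chi(\log a)\leq\log r\}$, choisie de sorte que $\int_H\phi_r^H$ et $\int_H(\phi_r^H)^2$ soient tous deux d'ordre $r^{a_\chi}\abs{\log r}^{b_\chi-1}$. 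On posera
\[
\tilde\eta_r(g)=\phi_r^H(b(g))\,\phi_K(k(g)),
\]
puis, $C\subset G(\Q)$ étant une famille finie de représentants de $P(\Q)\bcs G(\Q)/\Gamma$ et $N_0\in\N^*$ une constante de normalisation bornée indépendamment de $r$ par la propriété de Siegel,
\[
\eta_r(g\Gamma)=\frac{1}{N_0}\sum_{c\in C}\sum_{\gamma\in\Gamma}\tilde\eta_r(g\gamma c^{-1}).
\]

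Les propriétés (1) et (2) se déduiront directement des arguments de la démonstration de la proposition~\ref{asymp}. Pour (1), si $\eta_r(g\Gamma)>0$, une contribution non nulle $\tilde\eta_r(g\gamma c^{-1})>0$ fournira une décomposition où $g\gamma c^{-1}$ vérifie les conditions géométriques $\norm{g\gamma c^{-1}e_\chi}\leq r$ et $\norm{\pi^+(g\gamma c^{-1}e_\chi)}\geq\frac{1}{2}\norm{g\gamma c^{-1}e_\chi}$, ce qui permet de construire, comme là-bas, un vecteur $v\in gV_\chi(\Z)\cap\tx$ tel que $r_\chi(gV_\chi(\Z))\leq r$. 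Pour (2), par déroulement et la factorisation de la mesure de Haar $\dd g=\dd b\,\dd k$, on a $\int_\Omega\eta_r\asymp\int_H\phi_r^H\,\dd b\asymp r^{a_\chi}\abs{\log r}^{b_\chi-1}$, par le même calcul que dans la démonstration de la proposition~\ref{asymp}.

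L'obstacle principal sera la borne de Sobolev (3). Les champs $Y_i\in\mathfrak{k}$ étant invariants à gauche correspondent à la translation à droite par $e^{tY_i}\in K$, qui, dans la décomposition choisie, n'affecte que le facteur $k(g)$. On obtient donc
\[
\Upsilon^\ell\tilde\eta_r(g)=\phi_r^H(b(g))\,(\Upsilon^\ell\phi_K)(k(g)),
\]
et la factorisation $\dd g=\dd b\,\dd k$ de la mesure de Haar donne alors
\[
\frac{\norm{\Upsilon^\ell\tilde\eta_r}_{L^2(G)}}{\norm{\tilde\eta_r}_{L^2(G)}}=\frac{\norm{\Upsilon^\ell\phi_K}_{L^2(K)}}{\norm{\phi_K}_{L^2(K)}},
\]
quantité indépendante de $r$ puisque $\phi_K$ l'est. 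Le passage à $\Omega$ préservera cette estimation à un facteur multiplicatif borné près~: par la propriété de Siegel, les supports dans $\Omega$ des translatés $g\Gamma\mapsto\tilde\eta_r(g\gamma c^{-1})$, $(c,\gamma)\in C\times\Gamma$, ont un nombre de recouvrement uniformément borné, ce qui donne $\norm{\eta_r}_{L^2(\Omega)}^2\asymp N_0^{-2}\norm{\tilde\eta_r}_{L^2(G)}^2$ et l'estimation analogue pour $\Upsilon^\ell\eta_r$, achevant ainsi le transport de la borne de Sobolev à $\eta_r$.
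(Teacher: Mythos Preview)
Your factorisation idea for point~(3) is correct: with $K$ placed on the right in a decomposition $g=b(g)k(g)$, the left-invariant fields $Y_i\in\mathfrak{k}$ act only on the $K$-factor, so $\Upsilon^\ell\tilde\eta_r=\phi_r^H\otimes(\Upsilon^\ell\phi_K)$ and the Sobolev ratio is independent of~$r$. The gap is that this same choice destroys point~(1). With $g\gamma c^{-1}=bk$, $b\in MAN$, the candidate vector is $bk e_\chi$. Now $e_\chi$ is the \emph{most contracted} direction for $b$ (every other weight $\omega=\chi-\sum n_\alpha\alpha$ satisfies $\omega(\log a)\geq\chi(\log a)$ on $\ka^-$), so any transverse component of $ke_\chi-e_\chi$ is amplified by~$b$. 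Concretely, for $G=\SL_2$, $b=na$ with $a=\mathrm{diag}(s,s^{-1})$, $s\leq r$, and $k$ the rotation by~$\theta$, one computes $\norm{bk e_1}\asymp s^{-1}\abs{\sin\theta}$, which is $\leq r$ only if $\abs{\theta}\lesssim r^2$. Thus $\phi_K$ cannot be supported in a fixed neighbourhood $V_K$; it must shrink polynomially with~$r$, and then the Sobolev constant in~(3) blows up like a negative power of~$r$. The argument from the minoration in proposition~\ref{asymp} that you invoke uses $K$ on the \emph{left} ($g=kan$, so $ge_\chi=\chi(a)\,ke_\chi$ and the angle depends only on~$k$); this does not transplant to $K$ on the right.

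The paper avoids this tension by a convolution: one sets $\eta_r=P*\mathbbm{1}_{\Omega_r''}$ with $P\in C_c^\infty(G)$ a fixed small-support bump and $\Omega_r''$ a slightly shrunk version of $\Omega_r'$ (radius $r/2$, angle threshold $3/4$). Point~(1) holds because the perturbation by $g\in\Supp P\subset B_G(1,\eps)$ is applied \emph{after} one already has a short vector with good angle, and a bounded element of $G$ only moves such a vector slightly. Point~(3) follows from Young's inequality, $\norm{\Upsilon^\ell(P*\mathbbm{1}_{\Omega_r''})}_2\leq\norm{\Upsilon^\ell P}_2\,\norm{\mathbbm{1}_{\Omega_r''}}_1$, which achieves the same decoupling you were after (the derivatives fall on the $r$-independent factor) without any geometric incompatibility with~(1).
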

\begin{proof}
Pour $\eps>0$ fixé suffisamment petit, on fixe une fonction positive $P\in C_c^\infty(G)$ telle que $\int_G P = 1$, $P(g)=P(g^{-1})$, et $\Supp P\subset B_G(1,\eps)$.
Ensuite, pour tout $r>0$, on pose
\[
\eta_r = P * \mathbbm{1}_{\Omega_r''},
\]
où
\[
\Omega_r'' = \{ \Delta\in\Omega\ |\ \exists v\in\Delta\cap\tx:\, \norm{v}\leq\frac{r}{2}\ \mathrm{et}\ \frac{\norm{\pi^+(v)}}{\norm{v}}\geq\frac{3}{4}\}.
\]
Vérifions que ces fonctions satisfont les conditions requises.

\smallskip

\noindent 1. Si $\eta_r(\Delta)=\int_G P(g)\mathbbm{1}_{\Omega_r''}(g\Delta)\,\dd u>0$, il existe $g\in\Supp P\subset B_G(1,\eps)$ tel que $g\Delta\in \Omega_r''$.
En d'autres termes, pour un certain vecteur $v\in\Delta\cap\tx$ et $g\in B_G(1,\eps)$,
\[
\norm{\pi^+(gv)}\geq \frac{3}{4}\norm{gv}
\quad\mbox{et}\quad
\norm{gv}\leq\frac{r}{2}.
\] 
Si $\eps>0$ est suffisamment petit, alors $\norm{g-1}\leq 1/10$, $\norm{g^{-1}-1}\leq 1/5$, et par conséquent, d'une part
\[
\norm{v} \leq \norm{g^{-1}}\norm{gv} \leq r,
\]
et d'autre part
\begin{align*}
\norm{\pi^+(v)}
& \geq \norm{\pi^+(gv)} - \norm{\pi^+(gv-v)}\\
& \geq \frac{3}{4}\norm{gv}-\frac{1}{10}\norm{v}\\
& \geq \frac{3\norm{v}}{4\norm{g^{-1}}} - \frac{\norm{v}}{10} \geq \frac{\norm{v}}{2}.
\end{align*}
Cela montre bien que $\Delta\in\Omega_r'$.

\smallskip

\noindent 2. Il suffit d'appliquer la proposition~\ref{asymp} pour obtenir
\[
\int_\Omega \eta_r(\Delta)\, m_\Omega(\dd\Delta) = m_\Omega(\Omega_r'')
\geq c r^{a_\chi} \abs{\log r}^{b_\chi-1},
\]
pour un certain $c>0$.

\smallskip

\noindent 3. On contrôle $\norm{\Upsilon^\ell\eta_r}_2$ grâce à l'inégalité de Young:
\[
\norm{\Upsilon^\ell\eta_r}_2
= \norm{\Upsilon^\ell P * \mathbbm{1}_{\Omega_r''}}_2
\leq \norm{\Upsilon P}_2\norm{\mathbbm{1}_{\Omega_r''}}_1\\
\leq C m_\Omega(\Omega_r'') = C \norm{\eta_r}_1.
\]
\end{proof}

Enfin, nous aurons besoin du théorème suivant, qui découle des résultats de \cite[\S3.4]{km_loglaws} et \cite[\S4.1]{emmv}.
Malheureusement, la démonstration de ce théorème est trop longue pour être incluse dans ce mémoire ; plusieurs résultats intermédiaires importants ont permis d'y aboutir, et l'on renvoie à \cite[\S\S~4.2 et 4.3]{emmv} pour plus de détails sur ce sujet.

\begin{theorem}[Décroissance exponentielle des coefficients dans $L^2(\Omega)$]
\label{decay}
Soit $G$ un $\Q$-groupe semi-simple simplement connexe, $\Gamma$ un sous-groupe arithmétique, et $\Omega=G/\Gamma$.
Soit $T$ un $\Q$-tore déployé maximal de $G$, $A=T^0(\R)$ et $(a_t)$ un sous-groupe à un paramètre de $A$.
On suppose que
\note{Avec $G=\SL_2\times\SL_2$, $\Gamma=\SL_2(\Z)\times\SL_2(\Z)$, $a_t=e^{tY}$ et $f$ constante sur l'un des facteurs de $\Omega=G/\Gamma$, on voit qu'il est nécessaire que $p_i(Y)\neq 0$ pour chaque $\Q$-facteur simple.}
\note{Cette condition implique que $G$ est sans facteur $\Q$-simple compact. En effet, si $G_i$ est un $\Q$-facteur compact, comme $T$ est un $\Q$-tore déployé, on aura nécessairement $p_i(Y)=0$.}
$\forall t>0,\ a_t=e^{tY}$, avec $Y\in\ka$ tel que pour toute projection $p_i:\g\to \g_i$ sur un facteur $\Q$-simple, $p_i(Y)\neq 0$.
Il existe des constantes $\ell\in\N$ et $C,\tau>0$ telles que pour toutes fonctions $f_1,f_2\in C^\infty(\Omega)$ et tout $t>0$,
\note{On peut à droite prendre la norme $L^2$ ou $L^\infty$; cela n'a pas d'importance, car on peut contrôler la norme $L^\infty$ avec la norme $L^2$ pour un $\ell$ un peu plus grand.}
\[
\left\lvert\int_\Omega f_1(a_t\Delta)f_2(\Delta)\,m_\Omega(\dd\Delta) - m_\Omega(f_1)m_\Omega(f_2)\right\rvert
\leq C  e^{-\tau t} \norm{\Upsilon^\ell f_1}_2 \norm{\Upsilon^\ell f_2}_2,
\]
où $\Upsilon$ désigne l'opérateur différentiel $\Upsilon=1-\sum_i Y_i^2$, où $(Y_i)$ est une base orthonormée de l'algèbre de Lie $\mathfrak{k}$ d'un sous-groupe compact maximal de $G(\R)$.
\end{theorem}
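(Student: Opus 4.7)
Le plan est de se ramener à une estimation bien connue en théorie des représentations des groupes de Lie semi-simples, en deux étapes: un argument spectral qui fournit un \emph{trou spectral uniforme}, puis une estimation de type Sobolev--Howe qui convertit ce trou en une décroissance explicite des coefficients pour les fonctions lisses.

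Tout d'abord, on décompose orthogonalement $L^2(\Omega) = \C\cdot\mathbbm{1} \oplus L^2_0(\Omega)$, où $L^2_0(\Omega)$ est le sous-espace des fonctions d'intégrale nulle. En écrivant $f_i = m_\Omega(f_i) + f_i^0$ avec $f_i^0 \in L^2_0(\Omega)$, le membre de gauche se réduit à $\lvert\bracket{\pi(a_t)f_1^0,f_2^0}\rvert$, où $\pi$ désigne la représentation régulière gauche de $G(\R)$ sur $L^2_0(\Omega)$. Il suffit donc de majorer exponentiellement ce coefficient matriciel.

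Ensuite, j'invoquerai le théorème de trou spectral uniforme pour $L^2_0(\Omega)$, qui découle de la conjecture de Ramanujan généralisée au niveau archimédien (propriété $(\tau)$ de Lubotzky, résultats de Burger--Sarnak, Clozel, et les références rappelées dans \cite[\S4.1]{emmv}): puisque $G$ est semi-simple, simplement connexe, et défini sur $\Q$, il existe un exposant $p<\infty$ tel que, pour chaque facteur $\Q$-simple $G_i$, la restriction de $\pi$ à $G_i(\R)$ soit fortement $L^p$, i.e. ses coefficients matriciels $K$-finis appartiennent à $L^p(G_i(\R))$. De manière équivalente, pour tout vecteur $K$-fini $v,w$,
\[
\lvert\bracket{\pi(g)v,w}\rvert \leq \Xi_i(g)^{1/p} \cdot C(v,w),
\]
où $\Xi_i$ désigne la fonction sphérique de Harish-Chandra du facteur $G_i$, et $C(v,w)$ une constante dépendant de la dimension du $K$-type de $v$ et $w$.

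La troisième étape consiste à étendre cette estimation à des vecteurs lisses arbitraires, en contrôlant le coût en termes de normes de Sobolev. On décompose $f_i^0 = \sum_\sigma f_{i,\sigma}$ suivant les $K$-types ; l'opérateur $\Upsilon=1-\sum_j Y_j^2$ étant essentiellement le Laplacien sur $K$, il agit par un scalaire $\lambda_\sigma \gg (1+\dim\sigma)^2$ sur chaque composante isotypique. En regroupant les $K$-types et en sommant par Cauchy-Schwarz, on obtient, pour un entier $\ell\in\N$ assez grand, une estimation de la forme
\[
\lvert\bracket{\pi(g)f_1^0,f_2^0}\rvert \leq C \cdot \Xi(g)^{1/p} \cdot \norm{\Upsilon^\ell f_1}_2 \norm{\Upsilon^\ell f_2}_2,
\]
où $\Xi=\prod_i \Xi_i$, voir \cite[\S3.4]{km_loglaws} ou \cite[\S4.3]{emmv} pour la mise au point précise.

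Enfin, il reste à vérifier que $\Xi(a_t)$ décroît exponentiellement. Par la formule de Harish-Chandra (cf. Helgason ou Wallach), pour $Y\in\ka^-$,
\[
\Xi_i(e^{tY}) \ll (1+t)^{N_i} \cdot e^{-t\rho_i(p_{\ka_i^-}(p_i(Y)))},
\]
où $\rho_i$ est la demi-somme des racines positives de $G_i$ et $p_{\ka_i^-}$ la projection sur la chambre de Weyl négative. L'hypothèse $p_i(Y)\neq 0$ pour chaque facteur $\Q$-simple entraîne que $\rho_i(p_{\ka_i^-}(p_i(Y)))>0$, et l'on obtient la décroissance exponentielle souhaitée avec $\tau = \frac{1}{p}\min_i\rho_i(p_{\ka_i^-}(p_i(Y))) > 0$.

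Le principal obstacle est le passage des vecteurs $K$-finis (pour lesquels les résultats de trou spectral s'énoncent naturellement) aux fonctions lisses générales, avec le bon contrôle par un opérateur différentiel invariant à gauche comme $\Upsilon^\ell$: c'est la partie technique où interviennent les méthodes de Cowling--Haagerup--Howe, et qui justifie l'apparition de l'exposant $\ell$ et de la constante $C$ non explicites dans l'énoncé.
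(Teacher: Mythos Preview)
The paper does not prove this theorem: it is explicitly admitted, with the remark that the proof is too long to include, and with references to \cite[\S3.4]{km_loglaws} and \cite[\S\S4.1--4.3]{emmv}. Your sketch follows precisely the route of those references (spectral gap for $L^2_0(\Omega)$ on each $\Q$-simple factor, Cowling--Haagerup--Howe to pass from $K$-finite to smooth vectors with Sobolev control, then decay of the Harish-Chandra function), so it is in fact \emph{more} than the paper offers.

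One technical slip to fix. In your bound
\[
\Xi_i(e^{tY}) \ll (1+t)^{N_i}\, e^{-t\,\rho_i\bigl(p_{\ka_i^-}(p_i(Y))\bigr)}
\]
you then claim $\rho_i\bigl(p_{\ka_i^-}(p_i(Y))\bigr)>0$. But $\rho_i$, the half-sum of positive roots, is \emph{nonpositive} on the negative chamber $\ka_i^-$, so as written the exponent has the wrong sign. The metric projection $p_{\ka^-}$ used throughout the paper (for reduction theory) is not the object that governs $\Xi$; what enters here is the \emph{Cartan projection}: conjugate $p_i(Y)$ by the real Weyl group of $G_i(\R)$ into the closed \emph{positive} chamber $\overline{\ka_i^+}$, call the result $\kappa_i(Y)$, and then
\[
\Xi_i(e^{t\,p_i(Y)}) \ll (1+t)^{N_i}\, e^{-t\,\rho_i(\kappa_i(Y))},
\]
with $\rho_i(\kappa_i(Y))>0$ as soon as $p_i(Y)\neq 0$. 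Note also that the relevant chamber is that of a maximal $\R$-split torus of $G_i$, not of the $\Q$-split torus $T$; since $A\subset T(\R)$ is $\R$-split this causes no trouble, but it is worth making the distinction. With this correction the sketch is sound.
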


\comm{Un mot sur la démonstration. D'après Kleinbock et Margulis \cite[Theorem~3.4]{km_loglaws} il suffit de montrer que la restriction de $L^2(\Omega)$ à chaque facteur simple est isolée de la représentation triviale. À vérifier: 1) Pourquoi a-t-on besoin de la restriction à chaque facteur simple \cite{cowling, howe}? Dans \cite[\S 4.3]{emmv}, où sont cités Cowling, Haagerup et Howe \cite{chh}, cela n'est pas évoqué. 2) Lire l'article de Katok et Spazier \cite{ks} où l'on passe des vecteur $K$-finis aux vecteurs $C^\infty$. S'il n'y a pas besoin de considérer les restrictions aux facteurs simples, alors on conclut simplement avec \cite[\S 4.1]{emmv}.}

\begin{remark}
L'inégalité démontrée dans \cite{emmv} est plus générale: pour tout $g$ dans $G(\R)$, $\left\lvert \bracket{g f_1,f_2} - m_\Omega(f_1)m_\Omega(f_2)\right\rvert \leq C \norm{g}^{-\tau}\norm{\Upsilon^\ell f_1}_2 \norm{\Upsilon^\ell f_2}_2$, et la constante $\tau$ ne dépend pas de $G$.
Le résultat est même encore valable sur les adèles.
\end{remark}

Nous pouvons enfin conclure la démonstration de la proposition~\ref{kdr}.

\begin{proof}[Démonstration de la proposition~\ref{kdr}]
On considère la suite de fonctions définies par $h_t(\Delta)=\eta_{r_t}(a_t\Delta)$.
On peut supposer que le $\Q$-groupe semi-simple $G$ est simplement connexe, et que le sous-groupe parabolique $P$ ne contient aucun $\Q$-facteur de $G$.
Dans ce cas, vu la définition de l'élément $Y$, pour toute projection $p_i:\g\to\g_i$ sur un facteur simple, $p_i(Y)\neq 0$, et l'on peut appliquer le théorème~\ref{decay}:
\begin{align*}
\int_X (\sum_{t=1}^N h_t(\Delta)-m_\Omega(h_t))^2\,m_\Omega(\dd \Delta)
& = \sum_{1\leq t_1,t_2\leq N} \int_X h_{t_1}(\Delta)h_{t_2}(\Delta)\,m_\Omega(\dd \Delta) - m_\Omega(h_{t_1})m_\Omega(h_{t_2})\\
& \ll \sum_{1\leq t_1,t_2\leq N} e^{-\tau\abs{t_1-t_2}} \norm{\Upsilon^\ell\eta_{r_{t_1}}}_2\norm{\Upsilon^\ell\eta_{r_{t_2}}}_2\\
& \ll \sum_{1\leq t_1,t_2\leq N} e^{-\tau\abs{t_1-t_2}} \norm{h_{t_1}}_1\norm{h_{t_2}}_1\\
& \ll \sum_{t=1}^N m_\Omega(h_t).
\end{align*}
\note{Noter que $\norm{h_t}\leq 1$.}
Par ailleurs, 
\[
m_\Omega(h_t)=m_\Omega(\eta_{r_t})\geq c r_t^{a_\chi}\abs{\log r_t}^{b_\chi-1} \gg m_\Omega(\Omega_{r_t})
\]
et donc
\[
\sum_{t\in\N} m_\Omega(h_t)\gg \sum_{t\in\N} m_\Omega(\Omega_{r_t}) = +\infty.
\]
Ainsi, le lemme~\ref{bcd} s'applique à la suite de fonctions $(h_t)$, et montre que pour presque tout $\Delta$, l'ensemble $I(\Delta)=\{t\in\N\ |\ h_t(\Delta)>0\}$ est infini.
Cela implique que pour presque tout $\Delta\in\Omega$, il existe $t$ arbitrairement grand tel que $a_t\Delta\in\Omega_{r_t}'$.
C'est ce qu'on voulait.
\end{proof}

Nous pouvons enfin conclure cette partie avec la fin de la démonstration du théorème de Khintchine pour la variété de drapeaux $X=P\bcs G$.

\begin{proof}[Démonstration du théorème~\ref{khintchine}, cas divergent]
Soit $C>\frac{\beta_\chi}{a_\chi}$.
Quitte à remplacer $\psi$ par la fonction $\tpsi$ définie par $\tpsi(u) = \max(\psi(u), (\log u)^{-C})$, on peut supposer que
\begin{equation}\label{minor}
\forall t>0,\quad \psi(u)\geq(\log u)^{-C}.
\end{equation}
En effet,
comme $\int_e^{+\infty} (\log u)^{-\frac{Ca_\chi}{\beta_\chi}}(\log\log u)^{b_\chi-1}\frac{\dd u}{u}<+\infty$ la première partie du théorème montre que l'inégalité $d(x,v)\leq H_\chi(v)^{-\beta_\chi}(\log H_\chi(v))^{-C}$ n'a qu'un nombre fini de solutions.
Par suite, si $d(x,v)\leq H_\chi(v)^{-\beta}\tpsi(H_\chi(v))$ a une infinité de solutions, alors c'est aussi le cas pour l'inégalité $d(x,v)\leq H_\chi(v)^{-\beta}\psi(H_\chi(v))$.

Rappelons qu'étant donnée une fonction $\psi:\R^+\to\R^+$ décroissante, nous posons, pour $t>0$,
\[
\Psi(u)=u^{-\beta_\chi}\psi(u)
\quad\mbox{et}\quad
r_t = e^{-\frac{t}{\beta_\chi}} \Psi^{-1}(e^{-t}).
\]
La proposition~\ref{daniprecis} montre qu'il suffit de montrer pour presque tout $x=Ps_x$ dans $X(\R)$, pour $t>0$ arbitrairement grand,
\[
r_\chi(a_ts_xV_\chi(\Z)) \leq r_t.
\]
Cela découlera de la proposition~\ref{kdr}, si nous pouvons montrer que $\sum_{t\geq 1}m_\Omega(\Omega_{r_t})=+\infty$.

Sous l'hypothèse~\eqref{minor}, $\Psi(u)\geq u^{-\beta_\chi}(\log u)^{-C}$, donc $\Psi^{-1}(s)\gg s^{-\frac{1}{\beta_\chi}}\abs{\log s}^{-\frac{C}{\beta_\chi}}$, et
\[
r_t = e^{-\frac{t}{\beta_\chi}}\Psi^{-1}(e^{-t}) \geq t^{-\frac{C}{\beta_\chi}},
\]
et encore
\details{
\[
s = \Psi^{-1}(s)^{-\beta_\chi}\psi(\Psi^{-1}(s))
\]
\[
\Psi^{-1}(s) = s^{-\frac{1}{\beta_\chi}} \psi(\Psi^{-1}(s))^{\frac{1}{\beta_\chi}}
\]
}
\[
\Psi^{-1}(s) = s^{-\frac{1}{\beta_\chi}}\psi(\Psi^{-1}(s))^{\frac{1}{\beta_\chi}}
\gg s^{-\frac{1}{\beta_\chi}}\psi(c_0 s^{-\frac{1}{\beta_\chi}}\abs{\log s}^{-\frac{C}{\beta_\chi}})^{\frac{1}{\beta_\chi}}.
\]
Par suite,
\begin{align*}
\sum_{t\geq 1} r_t^{a_\chi} \abs{\log r_t}^{b_\chi-1}
& \gg \sum_{t\geq 1} r_t^{a_\chi} (\log t)^{b_\chi-1}\\
& = \sum_{t\geq 1} e^{-\frac{ta_\chi}{\beta_\chi}} \Psi^{-1}(e^{-t})^{a_\chi} (\log t)^{b_\chi-1}\\
& \geq \sum_{t\geq 1} \psi(c_0e^{\frac{t}{\beta_\chi}}t^{-C})^{\frac{a_\chi}{\beta_\chi}} (\log t)^{b_\chi-1}\\
& \gg \int_1^\infty \psi(c_0e^{\frac{t}{\beta_\chi}}t^{-C})^{\frac{a_\chi}{\beta_\chi}} (\log t)^{b_\chi-1} \dd t\\
& \gg \int_e^\infty \psi(u)^{\frac{a_\chi}{\beta_\chi}}(\log\log u)^{b_\chi-1} \frac{\dd u}{u},
\end{align*}
où la dernière inégalité découle du changement de variable
\[
u=c_0e^{\frac{t}{\beta_\chi}}t^{-C}, \qquad \frac{\dd u}{u} = (\frac{1}{\beta_\chi} +o(1))\dd t.
\]
Ainsi, $\sum_{t\geq 1} m_\Omega(\Omega_{r_t})=+\infty$, et le théorème est démontré.
\end{proof}

\chapter{Géométrie des espaces de réseaux}
\label{chap:reduction}

Nous rappelons dans cette partie les résultats principaux de la théorie de la réduction, due à Borel et Harish-Chandra \cite{bhc}, et exposée très clairement dans Borel \cite{borel_iga}.
Cette théorie -- que nous avons déjà utilisée dans la partie précédente pour évaluer le volume de certaines parties de $\Omega$ -- décrit quels paramètres sont nécessaires pour situer un élément $\Delta$ dans l'espace de réseaux $\Omega=G/\Gamma$.
Plus tard, nous étudierons le comportement asymptotique de ces paramètres le long de certaines orbites diagonales dans $\Omega$, pour l'appliquer à nos problèmes d'approximation diophantienne.

\section{Théorie de la réduction}
\label{sec:redbase}

Soit $G$ un $\Q$-groupe semi-simple et $\Gamma=G(\Z)$ un sous-groupe arithmétique de $G$.
Nous notons $B$ un $\Q$-sous-groupe parabolique minimal de $G$, $U$ le radical unipotent de $B$, $T$ un $\Q$-tore déployé maximal dans $B$, $M$ le $\Q$-sous-groupe anisotrope maximal du centralisateur $Z(T)^0$ de $T$ dans $G^0$, et $A=T^0(\R)$ la composante connexe des points réels de $T$.

Soit $\ka$ l'algèbre de Lie de $A$.
Le système de racines $\Sigma$ de $G$ par rapport à $T$ s'identifie à un système de racines dans l'espace dual $\ka^*$.
Fixons un système de racines simples $\Pi=\{\alpha_1,\dots,\alpha_r\}$ pour un ordre associé à $B$ et notons $\ka^-$ la chambre de Weyl opposée dans $\ka$, définie par
\[
\ka^- = \{Y\in\ka\ |\ \forall \alpha\in\Pi,\, \alpha(Y)\leq 0\}.
\]
Plus généralement, pour $\tau\geq 0$, on définit un voisinage $\ka_\tau^-$ de $\ka^-$ par
\[
\ka^-_\tau = \{Y\in\ka\ |\ \forall \alpha\in\Pi,\, \alpha(Y)\leq\tau \},
\]
et
\[
A_\tau = \exp\ka^-_\tau \subset A.
\]

\begin{figure}[H]
\begin{center}
\begin{tikzpicture}

\filldraw [blue!15!white] (-.5-3*.75, -3*.435) -- (.25,.435) -- (.25,-2) -- cycle;
\draw[blue] (-1,-1.5) node[right] {$\ka^-_\tau$};

\draw (-.5-3*.75,-3*.435) node[left] {$\{\alpha_2=\tau\}$}-- (-.5+2*.75,+2*.435);
\draw (.25,1.2) -- (.25,-2) node[right] {$\{\alpha_1=\tau\}$};
\draw[<->, color=gray] (-1,0) -- (1,0) node[right] {$\alpha_1$};
\draw[<->, color=gray] (-.5,-.87) -- (.5,0.87);
\draw[<->, color=gray] (.5,-.87) -- (-.5,.87) node[above] {$\alpha_2$};
\end{tikzpicture}
\end{center}
\caption{Dessin de $\ka^-_\tau$ pour $\SL_3$}
\end{figure}

\begin{definition}[Ensemble de Siegel]
Un \emph{ensemble de Siegel} $\FS$ de $G$ sur $\Q$ est un ensemble de la forme
\[
\FS=KA_\tau \omega,
\]
où $K$ désigne un sous-groupe compact maximal de $G$, et $\omega$ un voisinage compact de l'identité dans les points réels de $MU$.
\note{Dans la décomposition de Bruhat $PwB$ cela permet de supposer que $w\in K$, ce qui est important en particulier dans la démonstration du théorème~\ref{expalg}.}
\note{Pour voir qu'on peut toujours prendre des représentants des doubles classes $PwB$ dans $K$, il suffit de voir que $P\bcs G$ est un quotient de $K$, ce qui découle simplement du fait que $G=KMAN$. Cela ne montre pas qu'on peut voir le groupe de Weyl comme un quotient d'un sous-groupe de $K$, mais suffit peut-être pour la démonstration du théorème~\ref{expalg}, à vérifier...}
En outre, on supposera toujours que $K$ contient un ensemble de représentants du groupe de Weyl relatif $W=N(A)/A$ de $G$ sur $\Q$, où $N(A)$ est égal au normalisateur de $A$ dans $G$.
Cela est possible d'après Borel et Tits \cite[\S5]{boreltits}.
\note{Lorsque $A$ est le tore déployé sur $\R$ maximal, cela est en fait bien connu, et se montre à l'aide de l'involution de Cartan, cf. Knapp \cite[Theorem~6.57]{knapp_lgbi}. Ensuite, \cite[corollaire~5.5]{boreltits} permet de voir le groupe de Weyl sur $\Q$ comme un quotient du groupe de Weyl sur $\R$, et on a donc ce qu'on veut.}
\end{definition}

La théorie de la réduction pour les groupes arithmétiques \cite[Théorème~15.5]{borel_iga} nous assure qu'il existe un ensemble de Siegel dans $G$ qui est à peu près un domaine fondamental pour l'action de $\Gamma$ sur $G$.

\begin{theorem}[Domaine fondamental du second type]
\label{reduction}
Il existe un ensemble de Siegel $\FS$ sur $\Q$ de $G$ et une partie finie $C$ de $G(\Q)$ tels que $G=\FS C\Gamma$.
\end{theorem}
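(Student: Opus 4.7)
La stratégie est d'adapter la théorie classique de la réduction de Minkowski pour $\SL_N(\Z)$ à notre $\Q$-groupe semi-simple $G$, en nous servant de la famille des représentations fondamentales $V_1,\dots,V_r$ introduites au chapitre introductif. Le point de départ est donc de fixer une représentation fidèle $\rho : G \hookrightarrow \GL(V)$ obtenue comme somme directe (avec multiplicités convenables) de copies des $V_i$, un réseau rationnel $V(\Z)\subset V$ stable par $\Gamma$, et une base orthonormée $K$-adaptée de $V(\R)$ diagonalisant simultanément l'action de $A$. On identifiera au besoin $G$ à un sous-groupe algébrique de $\GL(V)$.

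La démonstration procède en trois temps. Tout d'abord, je montrerais que pour chaque $g\in G$ il existe $\gamma\in\Gamma$ et une partie finie $C\subset G(\Q)$ indépendante de $g$ tels que les covolumes successifs $\mu_i(g\gamma^{-1}c^{-1})$ (pour $c\in C$) soient tous essentiellement minimisés simultanément, à une constante multiplicative dépendant uniquement de $G$ près. C'est l'analogue du fait que pour un réseau de $\R^N$ on peut trouver, à $\GL_N(\Z)$ près, une base presque orthogonale qui réalise les minima successifs à un facteur borné près; sa démonstration repose sur le second théorème de Minkowski~\ref{minkowski} appliqué aux images de $V(\Z)$ dans chaque représentation fondamentale, ainsi que sur l'énoncé de finitude de $P(\Q)\bcs G(\Q)/\Gamma$ pour chaque $\Q$-sous-groupe parabolique $P$ (ce dernier étant utilisé à plusieurs reprises dans la démonstration de la proposition~\ref{asymp}). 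La partie finie $C$ du théorème provient précisément de cette étape, et est obtenue comme réunion des représentants des doubles classes $P_i(\Q)\bcs G(\Q)/\Gamma$ où $P_i$ est le parabolique maximal stabilisant la droite de plus haut poids de $V_i$.

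Dans un second temps, fixé $\gamma$ et $c$ comme ci-dessus, on décompose $g\gamma^{-1}c^{-1}$ via la décomposition d'Iwasawa $G=KMAN$ sous la forme $g\gamma^{-1}c^{-1}=kman$ avec $k\in K$, $m\in M(\R)$, $a\in A$ et $n\in U(\R)$. Le contrôle obtenu à l'étape précédente sur les covolumes $\mu_i(g\gamma^{-1}c^{-1})=\|\rho_i(a)e_i\|\cdot\|e_i\|\cdot(1+O(1))$ se traduit par une majoration $\omega_i(\log a)\leq \tau$ pour tout $i\in [1,r]$, c'est-à-dire exactement $a\in A_\tau$ pour un certain $\tau$ dépendant uniquement de $G$ et $\Gamma$. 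Pour traiter simultanément la partie unipotente et la partie anisotrope, on utilise que $\Gamma\cap MU$ est un réseau cocompact dans $(MU)(\R)$ (théorème de Borel--Harish-Chandra pour le sous-groupe anisotrope $M$, combiné à ce que $\Gamma\cap U$ est cocompact dans $U(\R)$). En multipliant $\gamma$ par un élément bien choisi de $\Gamma\cap MU$, on peut donc supposer $mn$ dans un voisinage compact fixé $\omega\subset (MU)(\R)$ de l'identité.

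L'obstacle principal est la première étape: le passage des représentations fondamentales à un contrôle simultané des covolumes successifs. Alors que pour $\SL_N$ on dispose directement de la théorie de Hermite--Minkowski, ici il faut articuler les inégalités fournies par le second théorème de Minkowski appliquées à chacune des représentations $V_i$ entre elles, ce qui exige de contrôler la géométrie relative des orbites $\tx_i\cap V_i(\Z)$ via les poids fondamentaux $\omega_i$. La preuve s'appuie crucialement sur le fait que ces poids fondamentaux forment une base de $\ka^*$, de sorte que la donnée de la famille $(\mu_i(g))_{i=1}^r$ détermine bien l'unique élément $c_0(g)\in\ka$ introduit dans l'énoncé du théorème~\ref{ndi}. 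Une fois cette étape acquise, les deux suivantes sont une conséquence assez directe des théorèmes fondamentaux de structure des groupes algébriques et arithmétiques exposés dans Borel~\cite{borel_iga}.
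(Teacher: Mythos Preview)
The paper does not prove this theorem: it is quoted as a foundational result of reduction theory and simply cited from Borel~\cite[Théorème~15.5]{borel_iga}. Everything in Chapter~\ref{chap:reduction} that follows, in particular Proposition~\ref{camoins} and Proposition~\ref{drapartiel}, is built \emph{on top of} this theorem. So there is no proof in the paper to compare your sketch against; but your sketch itself has structural problems that would prevent it from working as written.

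First, a concrete error in the second step. You write that the control on the covolumes yields $\omega_i(\log a)\leq\tau$ for all $i$, ``c'est-à-dire exactement $a\in A_\tau$''. This is false: the set $A_\tau=\exp\ka_\tau^-$ is defined by the inequalities $\alpha_i(\log a)\leq\tau$ on the \emph{simple roots}, not on the fundamental weights. The two conditions are not equivalent; for instance in $\SL_3$ one can have $\omega_1(Y)=-1$, $\omega_2(Y)=0$, hence both $\omega_i(Y)\leq 0$, yet $\alpha_2(Y)=-\omega_1(Y)+2\omega_2(Y)=1>0$. Obtaining the $\alpha_i$-inequalities (the reduction inequalities proper) is precisely the hard content of reduction theory, and nothing in your argument produces them. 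In the $\SL_N$ case they come from comparing \emph{consecutive} successive minima, not from bounding the minima individually.

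Second, there is a circularity. You invoke the finiteness of $P(\Q)\bcs G(\Q)/\Gamma$ for each maximal parabolic $P_i$ to build the finite set $C$. But in Borel's treatment this finiteness (Proposition~15.6) is a \emph{consequence} of Théorème~15.5, not an input to it. Likewise, your first step --- simultaneously ``minimizing'' the $\mu_i(g\gamma^{-1}c^{-1})$ --- is ill-posed as stated, since $\mu_i$ is $\Gamma$-invariant by construction (the lattice $V_i(\Z)$ is $\Gamma$-stable), so translating by $\gamma$ does nothing to $\mu_i$. What you presumably intend is to choose $\gamma,c$ so that $\|g\gamma^{-1}c^{-1}e_i\|\asymp\mu_i(g)$ simultaneously for all $i$; but establishing that such a choice exists, and that it forces $\log a$ into $\ka_\tau^-$, is exactly what Proposition~\ref{camoins} does \emph{using} Théorème~\ref{reduction}. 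Your plan therefore runs the logic of the chapter in reverse.
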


Dans la suite, nous fixons une partie finie $C$ de $G(\Q)$ et un ensemble de Siegel $\FS=KA_\tau\omega$ qui satisfont la conclusion du théorème ci-dessus.
Pour $g$ dans $G$, nous dirons que l'expression
\[
g = k a n c \gamma,
\quad\mbox{avec}\ k\in K,\ a\in A_\tau,\ n\in\omega,\ c\in C,\ \mbox{et}\ \gamma\in\Gamma
\]
est une \emph{décomposition de Siegel} de $g$.
Une telle décomposition n'est pas unique, mais nous verrons plus tard que certains de ses éléments sont essentiellement indépendants du choix de la décomposition.
En général, le théorème ci-dessous \cite[théorème~15.4]{borel_iga} nous assure qu'il n'existe qu'un nombre fini de décompositions de Siegel d'un élément $g$ de $G$.

\begin{theorem}[Propriété de Siegel]
\label{siegel}
Soit $\FS$ un ensemble de Siegel dans $G$.
Pour toute partie finie $F\subset G(\Q)$, l'ensemble $\Gamma\cap(\FS F)(\FS F)^{-1}$ est fini.
\end{theorem}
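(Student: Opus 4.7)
Le plan est de se ramener, via une $\Q$-représentation fidèle, à un énoncé de finitude pour des matrices entières de norme bornée. L'idée directrice est que si $g\in \Gamma\cap(\FS F)(\FS F)^{-1}$, alors $g$ agit sur un certain réseau de $V(\Z)$ par une matrice entière, tandis que la structure de $\FS=KA_\tau\omega$ contrôle la taille de cette matrice via le fait que pour un élément $a\in A_\tau$, toutes les racines simples vérifient $\alpha(a)\leq e^\tau$, et par suite tout poids qui s'écrit comme combinaison positive de racines simples est borné supérieurement sur $A_\tau$.

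Plus précisément, je commencerais par décomposer
\[
(\FS F)(\FS F)^{-1}=\bigcup_{f_1,f_2\in F}\FS f_1 f_2^{-1}\FS^{-1},
\]
ce qui est une union finie, de sorte qu'il suffit de montrer que pour tout $g\in G(\Q)$ fixé, l'intersection $\Gamma\cap\FS g\FS^{-1}$ est finie. Je fixerais ensuite une $\Q$-représentation fidèle $\rho:G\to\GL(V)$ et un réseau rationnel $V(\Z)\subset V$ tel que, quitte à remplacer $\Gamma$ par un sous-groupe d'indice fini, $\rho(\Gamma)$ stabilise $V(\Z)$ et $\rho(g)$ ait des coefficients dans $\frac{1}{D}\Z$ pour un dénominateur commun $D\in\N^*$ dépendant de $g$. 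On décompose $V$ en espaces de poids sous l'action de $T$: $V=\bigoplus_\chi V_\chi$, dans une base rationnelle convenable.

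L'étape-clé consiste à analyser les coefficients matriciels de $\rho(h)$ pour $h=s_1 g s_2^{-1}\in\Gamma$ avec $s_i=k_ia_in_i\in\FS$. Puisque $K$, $\omega$ et $g$ sont dans des ensembles bornés, les actions de $\rho(k_i)$, $\rho(n_i)$ et $\rho(g)$ sont de norme opérateur bornée indépendamment de $h$. L'action de $\rho(a_i)$ en revanche dilate les espaces de poids par les caractères $\chi(a_i)=e^{\chi(\log a_i)}$. En examinant les coefficients matriciels $\langle \rho(h)v_\chi, w_{\chi'}^*\rangle$ pour des vecteurs de poids extrémaux, on obtient que ces coefficients sont majorés, à constante multiplicative près, par des produits $\chi'(a_1)\cdot\chi(a_2)^{-1}$, quantités bornées dès que $\chi',\chi^{-1}$ sont des combinaisons positives des racines simples, grâce à la condition $\alpha(a_i)\leq e^\tau$ sur $A_\tau$. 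Quitte à jouer sur plusieurs représentations fondamentales, on récupère une majoration uniforme $\|\rho(h)\|\leq C$, où $C$ dépend de $g$, $\FS$, $F$ et $\rho$, mais pas de $h$.

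Enfin, la conclusion provient de la discrétion: puisque $D\rho(h)\in M_N(\Z)$ a norme au plus $DC$ et que $\rho$ est un plongement fermé, il n'y a qu'un nombre fini de telles matrices, donc qu'un nombre fini d'éléments $h$ possibles dans $\Gamma$. La principale difficulté technique est de vérifier rigoureusement la majoration des coefficients matriciels, ce qui nécessite de bien choisir la base et d'exploiter le fait que $U$ est unipotent (de sorte que $\rho(n_i)$ est triangulaire supérieur dans une base ordonnant les poids) combiné avec la condition $\alpha(a)\leq e^\tau$ sur les racines simples — les racines non simples étant alors automatiquement majorées par somme positive.
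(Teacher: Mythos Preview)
The paper does not give its own proof of this statement; it simply cites Borel \cite[théorème~15.4]{borel_iga}. Your overall outline --- embed via a faithful $\Q$-representation, bound the operator norm, conclude by discreteness of integral matrices --- is exactly Borel's strategy, so the skeleton is correct.

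There is, however, a genuine gap at the key step. You assert that one can bound $\norm{\rho(h)}$ uniformly for $h=s_1gs_2^{-1}\in\FS g\FS^{-1}$ from the Siegel-set structure alone, the integrality of $h$ being invoked only at the end for finiteness. This is false: the set $\FS g\FS^{-1}$ is unbounded in $G$. With $g=e$, $s_2=e$ and $s_1=a_1\in A_\tau$ arbitrary, one gets $h=a_1$, and $A_\tau$ is non-compact. Concretely, the weights $\chi$ of a representation are in general neither non-negative nor non-positive combinations of simple roots (e.g.\ the weights $\eps_i$ of the standard representation of $\SL_d$), so the products $\chi'(a_1)\chi(a_2)^{-1}$ you write are unbounded on $A_\tau\times A_\tau$; ``playing with several fundamental representations'' cannot cure this, since $\rho(A_\tau)$ is unbounded for every faithful $\rho$.

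The integrality of $\gamma$ must therefore enter \emph{before} the norm bound, not after. In Borel's argument the crucial intermediate step is to show that the $A$-components are comparable: for $\gamma\in\Gamma$ with $\gamma s_2 f_2=s_1 f_1$, each ratio $\omega_i(a_1)/\omega_i(a_2)$ is bounded above and below. This is where the lattice-preserving property of $\gamma$ is actually used, via the functions $\Phi_i(g)=\norm{g e_i}$ of type $(B,\omega_i)$ (cf.\ Borel \S\S12 and 15.2--15.3). Only once $a_1 a_2^{-1}$ is known to lie in a compact set does your final observation --- that $\rho(n_i)$ is upper-triangular in a weight-ordered basis and that $a n a^{-1}$ stays bounded for $a\in A_\tau$, $n\in\omega$ --- yield the bound on $\norm{\rho(\gamma)}$, after which finiteness follows as you say.
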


\section{Représentations et ensembles fondamentaux}
\label{sec:fonctionc}

Ayant fixé un $\Q$-tore déployé maximal $T$ dans $G$, nous choisisons une base $\Pi=\{\alpha_1,\dots,\alpha_r\}$ du système de racines $\Sigma$ associé à $T$, et notons $\varpi_1,\dots,\varpi_r$ les poids fondamentaux associés.
Pour chaque $i$, on fixe une $\Q$-représentation $V_i$ de $G$ engendrée par une unique droite rationnelle de plus haut poids $\omega_i=b_i\varpi_i$, avec $b_i\in\N^*$ minimal, et $V_i(\Z)$ un réseau rationnel dans $V_i$ stable par l'action de $\Gamma$ et contenant un vecteur de plus haut poids $e_i$.
Suivant Borel et Tits \cite[\S12.13]{boreltits}, nous dirons que les représentations $V_i$, $i=1,\dots,r$, sont les représentations \emph{fondamentales} de $G$.
Dans chaque $V_i$, on fixe aussi un réseau rationnel $V_i(\Z)$ stable par l'action de $\Gamma$ et contenant un vecteur de plus haut poids $e_i$.
Dans l'espace vectoriel $V_i$, nous noterons $\tx_i=G\cdot e_i$ l'orbite du vecteur de plus haut poids $e_i$ sous l'action de $G$.

\begin{definition}[Covolumes successifs]
Pour $g$ dans $G$, on définit les \emph{covolumes successifs} $\mu_1(g),\dots,\mu_r(g)$ de $g\Gamma$ par
\[
\mu_i(g) = \min\{ \norm{gv}\ ;\ v\in V_i(\Z)\cap\tx_i\}.
\]
\end{definition}

\begin{remark}
Les quantités $\mu_i(g)$ ne dépendent en fait que de la projection de $g$ dans $G/\Gamma$, car l'ensemble $V_i(\Z)\cap\tx_i$ est stable par l'action de $\Gamma$.
Si $\bar{g}=g\Gamma$ est un élément de $G/\Gamma$, nous écrirons indifféremment $\mu_i(\bar{g})$ ou $\mu_i(g)$ pour ses covolumes successifs.
\end{remark}

\begin{remark}
Le choix d'une autre norme sur $V_i$ change $\mu_i$ en une fonction comparable, à une constante multiplicative près.
En fait, c'est souvent à cette constante multiplicative près qu'il faut comprendre $\mu_i$.
Par exemple, comme l'ensemble $P\bcs G(\Q)/\Gamma$ est fini, il existe une partie finie $C$ dans $G(\Q)$ telle que pour tout $g$,
\[
\mu_i(g) \asymp \min\{ \norm{g \gamma c e_i}\ ;\ c\in C,\, \gamma\in\Gamma\}.
\]
\comm{En effet, si on identifie les points rationnels de $P\bcs G$ aux vecteurs primitifs de $V_i(\Z)\cap\tx_i$, on observe qu'il n'y a qu'un nombre fini d'orbites sous l'action de $\Gamma$.
Vu que cette identification se fait via l'application $g\mapsto g^{-1}[e_i]$, il serait plus naturel dans la formule ci-dessus d'écrire $\norm{g\gamma^{-1}c^{-1} e_i}$.}
\end{remark}


Les plus hauts poids $\omega_1,\dots,\omega_r$ des représentations fondamentales s'identifient naturellement à une base de $\ka^*$,
et il existe donc un unique élément $c_0(g)\in\ka$ tel que
\[
\forall i\in[1,r],\quad \omega_i(c_0(g)) = \log\mu_i(g).
\]
La théorie de la réduction permet de montrer que l'application $c_0$ que nous venons de définir est essentiellement à valeurs dans $\ka^-$.
Ci-dessous, et dans toute la suite, on munit $\ka$ d'une structure euclidienne invariante par l'action du groupe de Weyl.

\begin{proposition}[Covolumes successifs et chambre de Weyl]
\label{camoins}
Il existe $C_0\geq 0$ tel que pour tout $g$ dans $G$, $d(c_0(g),\ka^-)\leq C_0$.
\end{proposition}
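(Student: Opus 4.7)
The plan is to combine the Siegel-domain presentation of $g$ given by Theorem~\ref{reduction} with the weight-space structure of the fundamental representations $V_i$. Concretely, write $g = kan c\gamma$ with $k\in K$, $a \in A_\tau$, $n \in \omega$, $c \in C$, $\gamma \in \Gamma$. The goal is to prove the precise asymptotic
\[
\log\mu_i(g) = \omega_i(\log a) + O(1) \qquad (i=1,\dots,r),
\]
with the implicit constant depending only on $G$, $\Gamma$, $\FS$, $C$ and the chosen norms, but not on $g$. Since $\omega_1,\dots,\omega_r$ form a basis of $\ka^*$, this will give $c_0(g)=\log a + O(1)\in\ka_\tau^- + O(1)$, hence $d(c_0(g),\ka^-)\leq C_0$ with $C_0$ depending only on $\tau$ and the Euclidean structure.

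First I would handle the ``right tail'' $c\gamma$ and the compact pieces $k,n$ to reduce to a pure $A$-statement. Since $\Gamma$ preserves both $V_i(\Z)$ and $\tx_i$, replacing $v$ by $\gamma v$ in the minimum defining $\mu_i$ costs nothing. Since $C$ is finite and each $c\in G(\Q)$, the lattices $cV_i(\Z)$ are uniformly commensurable with $V_i(\Z)$, and one checks (using that $\tx_i$ is a $G$-orbit, hence stable under the scalars coming from $T$) that $\mu_i(anc\gamma)\asymp \mu_i(an)$ with a constant depending only on $C$. Choosing the norm on $V_i$ $K$-invariant, multiplication by $k$ is an isometry, so in fact $\mu_i(g)\asymp \mu_i(an)$.

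Next I would estimate $\mu_i(an)$ from both sides for $a\in A_\tau$, $n\in\omega$. For the upper bound I take the test vector $v=e_i\in V_i(\Z)\cap\tx_i$ and decompose $n = mu$ in the semidirect product $MU$: since $u$ fixes the highest-weight vector $e_i$ and since $m\in M \subset Z(T)$ commutes with $a$ and preserves the highest-weight line, one gets $ane_i = e^{\omega_i(\log a)}\,me_i$, so $\|ane_i\|\leq C\,e^{\omega_i(\log a)}$ because $m$ stays in a compact set (the projection of $\omega$ to $M$). For the lower bound I use the fact that every weight $\mu$ of $V_i$ is of the form $\omega_i - \sum_j c_j\alpha_j$ with $c_j\geq 0$; since $\alpha_j(\log a)\leq \tau$, this forces $\mu(\log a)\geq \omega_i(\log a)-C$ uniformly, hence $\|a^{-1}\|_{op}\leq C'\,e^{-\omega_i(\log a)}$, which yields $\|av\|\geq (C')^{-1}e^{\omega_i(\log a)}\|v\|$ for every $v\in V_i$. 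Combined with $\|nv'\|\geq B^{-1}\|v'\|$ (since $n\in\omega$ compact) and with $\|v'\|\geq \lambda_1(V_i(\Z))>0$ for $v'\in V_i(\Z)\setminus\{0\}$, this gives $\|anv'\|\gg e^{\omega_i(\log a)}$ uniformly.

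The main technical step, and the one I would treat most carefully, is the one that handles $n\in\omega\subset (MU)(\R)$: the point is to exploit the decomposition $n=mu$ together with $m\in Z(T)$ and $ue_i = e_i$, because a crude operator-norm bound on $\|a\|$ is useless (the operator norm of $a$ on $V_i$ can be huge, controlled by the lowest weight rather than by $\omega_i$). Once the two-sided estimate $\mu_i(an)\asymp e^{\omega_i(\log a)}$ is in hand, the chain $\mu_i(g)\asymp \mu_i(an)\asymp e^{\omega_i(\log a)}$ gives $\omega_i(c_0(g))-\omega_i(\log a)=O(1)$ for every $i$; inverting the change of basis from the $\omega_i$ yields $c_0(g)-\log a = O(1)$, and since $\log a \in\ka_\tau^-$ is at distance at most $C_\tau$ from $\ka^-$ (the polytope $\ka_\tau^-$ being just a $\tau$-shift of $\ka^-$ in the simple-root directions), we obtain $d(c_0(g),\ka^-)\leq C_0$ as required.
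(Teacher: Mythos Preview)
Your proof is correct and follows essentially the same route as the paper: write $g=kanc\gamma$ via Theorem~\ref{reduction}, reduce $\mu_i(g)$ to $\mu_i(an)$ using $\Gamma$-invariance and commensurability for the finite set $C$, then establish the two-sided estimate $\mu_i(an)\asymp e^{\omega_i(\log a)}$ to conclude $c_0(g)=\log a+O(1)\in\ka_\tau^-+O(1)$. The only cosmetic difference is that the paper dispatches the $n$-factor in one stroke via the observation that $ana^{-1}$ stays bounded when $a\in A_\tau$ and $n\in\omega$, whereas you separate the estimate into $\|a\,\cdot\|\geq (C')^{-1}e^{\omega_i(\log a)}\|\cdot\|$ and $\|n\,\cdot\|\geq B^{-1}\|\cdot\|$; both arguments are equivalent in strength.
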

\begin{proof}
Soient $\FS=KA_\tau\omega$ et $C\subset G(\Q)$ l'ensemble de Siegel et la partie finie donnés par le théorème~\ref{reduction}.
Pour tout $g$ dans $G$, il existe une décomposition de Siegel 
\[
g=kan c\gamma,\quad \mbox{avec}\ k\in K,\ n\in\omega,\ c\in C,\ \gamma\in\Gamma\ \mbox{et}\ a=e^Y,\ Y\in\ka^-_t.
\]
Alors, pour chaque $i$, $\mu_i(g)=\mu_i(anc)\asymp \mu_i(an)$, car l'ensemble $c V_i(\Z)\cap\tx_i$ est commensurable à $V_i(\Z)\cap\tx_i$: si $D$ est un dénominateur commun des coefficients des éléments de $C$ et de leurs inverses dans la représentation $V_i$, 
\[
D V_i(\Z)\cap\tx_i \subset cV_i(\Z)\cap\tx_i \subset \frac{1}{D} V_i(\Z)\cap\tx_i.
\]
Ensuite, on remarque que $ana^{-1}$ est un élément borné dans $G$ lorsque $a$ varie dans $A_\tau$ et $n$ dans $\omega$, de sorte que pour tout $v\in V_i(\Z)\setminus\{0\}$,
\[
\norm{an v} \asymp \norm{a v} \gg e^{\omega_i(Y)},
\]
car $e^{\omega_i(Y)}$ est essentiellement la plus petite valeur propre de $a$ dans $V_i$.
Cela montre que $\mu_i(g)\gg e^{\omega_i(Y)}$, i.e. $\log\mu_i(g) \geq \omega_i(Y) - O(1)$.
Comme $\norm{g\gamma^{-1}c^{-1}e_i}\asymp e^{\omega_i(Y)}$, on a en fait $\mu_i(g) \asymp e^{\omega_i(Y)}$ et donc $\log\mu_i(g) = \omega_i(Y)+ O(1)$.
En d'autres termes,
\[
d(c_0(g),Y) = O(1),
\]
et cela montre ce qu'on veut, car par définition de $A_\tau=\exp\ka^-_\tau$, l'élément $Y$ est à distance bornée de $\ka^-$.
\end{proof}

\begin{exercise}
Vérifier que pour $G=\GL_d$, la proposition ci-dessus est exactement équivalente au second théorème de Minkowski, sans l'optimalité des constantes.
\end{exercise}

Comme il est plus commode de travailler avec des éléments qui sont vraiment dans $\ka^-$, plutôt que dans un petit voisinage, nous remplaçons la fonction $c_0$ sur $G$ par une fonction $c$, qui lui est proche, mais à valeurs dans $\ka^-$.
On munit $\ka$ d'un produit scalaire invariant par l'action du groupe de Weyl.
Comme $\ka^-$ est une partie convexe de $\ka$ on dispose d'une projection $p_{\ka^-}:\ka\to\ka^-$, qui à $Y_0$ associe l'unique élément $Y$ tel que $d(Y_0,Y)=d(Y_0,\ka^-)$.

\begin{definition}
On définit la fonction $c:G\to\ka^-$ par la formule
\[
c(g) = p_{\ka^-}(c_0(g)).
\]
\end{definition}

\begin{remark}
D'après la proposition~\ref{camoins}, on a $c(g)=c_0(g)+O(1)$.
En effet, $c(g)$ est le point de $\ka^-$ le plus proche de $c_0(g)$.
\end{remark}

La fonction $c(g)$ sur $G/\Gamma$ s'interprète naturellement à partir de la théorie de la réduction, c'est le contenu de la proposition suivante, qui apparaît implicitement dans la démonstration de la proposition~\ref{camoins}.

\begin{proposition}[Composante diagonale d'une décomposition de Siegel]
Si $g=kanc\gamma$ est une décomposition de Siegel de $g$, alors la composante $a$ est bien déterminée à une constante multiplicative près.
En fait,
\[
a=e^{c(g)+O(1)}.
\]
\end{proposition}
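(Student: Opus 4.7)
The plan is to extract the statement directly from the computation already performed in the proof of Proposition~\ref{camoins}, which essentially contains all the needed ingredients. The only thing to do is to reinterpret those estimates in terms of the map $c$ and to add a short uniqueness argument.

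First I would recall from that earlier proof that if $g = kanc\gamma$ is a Siegel decomposition with $a = e^Y$, $Y\in\ka^-_\tau$, then $\mu_i(g) \asymp e^{\omega_i(Y)}$ for each fundamental representation $V_i$, hence $\log\mu_i(g) = \omega_i(Y) + O(1)$. Since by definition $\omega_i(c_0(g)) = \log\mu_i(g)$, this rewrites as $\omega_i(c_0(g) - Y) = O(1)$ for every $i$. As the fundamental weights $\omega_1,\dots,\omega_r$ form a basis of the finite-dimensional space $\ka^*$, the linear map $u\mapsto(\omega_i(u))_i$ is a bi-Lipschitz isomorphism, so $\norm{c_0(g) - Y} = O(1)$.

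Next I would pass from $c_0$ to $c = p_{\ka^-}\circ c_0$. Because $Y\in\ka^-_\tau$, we have $d(Y,\ka^-) \leq \tau = O(1)$, and the nearest-point projection $p_{\ka^-}$ onto the closed convex cone $\ka^-$ is $1$-Lipschitz. Hence
\[
\norm{c(g)-Y} \leq \norm{p_{\ka^-}(c_0(g))-p_{\ka^-}(Y)} + \norm{p_{\ka^-}(Y)-Y}
\leq \norm{c_0(g)-Y} + O(1) = O(1).
\]
Exponentiating, and observing that $\exp$ is bi-Lipschitz on any bounded subset of $\ka$, gives $a = e^Y = e^{c(g)+O(1)}$, which is the second assertion of the proposition.

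Finally, for the well-definedness of $a$: given two Siegel decompositions $g = k_1a_1n_1c_1\gamma_1 = k_2a_2n_2c_2\gamma_2$ with $a_j = e^{Y_j}$, each $Y_j$ satisfies $Y_j = c(g) + O(1)$ by the estimate just established (the bound $O(1)$ depends only on $\tau$, $\omega$, $C$, and the norm on $\ka$, not on the chosen decomposition). The triangle inequality yields $\norm{Y_1 - Y_2} = O(1)$, hence $a_1 a_2^{-1}$ lies in a fixed compact subset of $A$, which is precisely the statement that $a$ is determined up to a bounded multiplicative constant. I do not anticipate any serious obstacle, since the entire argument is a direct packaging of estimates already proved; the only subtlety worth stating explicitly is the Lipschitz property of $p_{\ka^-}$, used to compare $c(g)$ with $c_0(g)$.
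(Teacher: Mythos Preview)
Your proof is correct and follows essentially the same route as the paper: both extract the estimate $\omega_i(Y)=\log\mu_i(g)+O(1)$ from the computation in the proof of Proposition~\ref{camoins} and deduce $Y=c(g)+O(1)$. Your version is slightly more explicit in two places---you spell out that the $\omega_i$ form a basis of $\ka^*$, and you route the passage from $c_0(g)$ to $c(g)$ through the $1$-Lipschitz property of $p_{\ka^-}$---whereas the paper simply invokes the already-established fact $c(g)=c_0(g)+O(1)$; but these are presentational differences rather than a different argument.
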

\begin{proof}
Dans la représentation fondamentale $V_i$, les réseaux $V_i(\Z)$ et $\sum_{c\in C} cV_i(\Z)$ sont commensurables, donc leurs minima successifs sont comparables.
L'écriture $g=kanc\gamma$ montre donc que le premier minimum $\lambda_1(gV_i(\Z))$ est comparable à $e^{\omega_i(Y)}$, où $a=e^Y$, $Y\in\ka^-_t$, et ce premier minimum est atteint sur un point de $V_i(\Z)\cap\tx_i$.
Par conséquent, pour chaque $i$, $\omega_i(Y)=\log\lambda_1(gV_i(\Z))+O(1) = \log \mu_i(g)+O(1) = \omega_i(c(g))+O(1)$, ce qu'il fallait démontrer.
\end{proof}

\section{Drapeau partiel associé à un réseau}

Nous avons vu que dans une décomposition de Siegel $g=kan c\gamma$, l'élément $a$ ne dépend pas du choix de la décomposition, à constante multiplicative près.
Selon la position de $a$ dans $A_\tau$, la classe de l'élément $c\gamma$ modulo un certain sous-groupe parabolique $P_g$ dépendant de $g$ est aussi indépendante de la décomposition.
Nous détaillons maintenant cette construction, dont les idées nous seront utiles plus tard.

\bigskip

Rappelons que $\Pi=\{\alpha_1,\dots,\alpha_r\}$ est une base du système de racines de $G$ pour $T$, pour un ordre associé au parabolique minimal $B$.
À chaque partie $\theta\subset\Pi$ on associe un sous-groupe parabolique de la façon suivante:
on définit un sous-tore de $T$ par
\[
S_\theta = (\bigcap_{\alpha\in\theta}\ker\alpha)^0
\]
puis
\[
P_\theta = Z(S_\theta) U,
\]
où $Z(S_\theta)$ désigne le centralisateur de $S_\theta$ dans $G$, et $U$ le radical unipotent de $B$.
Les poids de $T$ dans la représentation adjointe sur l'algèbre de Lie $\mathfrak{p}_\theta$ correspondent aux racines positives et aux racines négatives dont la décomposition en racines simples est faite d'éléments de $\theta$.

\begin{proposition}[Drapeau partiel associé à $g\Gamma$]
\label{drapartiel}
Soit $\FS$ un ensemble de Siegel pour $G$ et $C$ une partie finie de $G(\Q)$ telle que $G=\FS C \Gamma$.
Il existe une constante $C_0\geq 0$ telle que l'énoncé suivant soit vérifié.
Pour $g\in G$,
soit
\[
\theta_g = \{i\in[1,r]\ |\ \alpha_i(c(g))\geq -C_0\}
\]
et $Q_g=P_{\theta_g}$ le sous-groupe parabolique associé.
Si $Q$ est un sous-groupe parabolique contenant $Q_g$, alors dans une décomposition de Siegel
\[
g = kan c\gamma,\quad\mbox{avec}\ kan\in\FS,\ c\in C,\ \mbox{et}\ \gamma\in\Gamma,
\]
la classe $D_g = Q c\gamma\in Q\bcs G$ est indépendante du choix de la décomposition de Siegel.
De plus, l'application $h\mapsto D_h$ est localement constante.
\end{proposition}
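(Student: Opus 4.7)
The strategy is to compare any two Siegel decompositions $g = s_1 c_1 \gamma_1 = s_2 c_2 \gamma_2$ and show that the connecting element $\rho := s_1^{-1}s_2 = c_1(\gamma_1\gamma_2^{-1})c_2^{-1}$ lies in $Q_g \subseteq Q$, which is exactly what is needed for $Qc_2\gamma_2 = Qc_1\gamma_1$. First, apply the Siegel property (Theorem~\ref{siegel}) to the finite set $C \subset G(\Q)$: this forces $\gamma_1\gamma_2^{-1}$ to lie in a finite subset $\Delta_0 \subset \Gamma$ depending only on $\FS$ and $C$, so that $\rho$ ranges in the finite set $R := C \Delta_0 C^{-1} \subset G(\Q)$, independently of $g$. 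The problem is thereby reduced to the geometric claim $\rho \in Q_g$.

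To prove $\rho \in Q_g = P_{\theta_g} = \bigcap_{i \notin \theta_g} \Stab_G[e_i]$, fix $i \notin \theta_g$, so $\alpha_i(c(g)) < -C_0$. The key combinatorial observation is that, because $\omega_i$ is a fundamental weight and $i \notin \theta_g$, one has $\omega_i(\alpha_j^\vee) = 0$ for every $j \in \theta_g$; hence the Levi-subrepresentation of $L_{\theta_g}$ in $V_i$ containing $e_i$ is one-dimensional. Consequently every weight $\lambda < \omega_i$ of $V_i$ satisfies $\omega_i - \lambda = \sum_j n_j \alpha_j$ with $n_{j_0} \geq 1$ for some $j_0 \notin \theta_g$, giving $(\omega_i - \lambda)(c(g)) \leq \alpha_{j_0}(c(g)) < -C_0$. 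Writing $s_j = k_j a_j n_j$ with $Y_j := \log a_j = c(g) + O(1)$ (Proposition~\ref{camoins}) and decomposing a minimizer $v_j \in c_j V_i(\Z) \cap \tx_i$ of $\|s_j w\| = \mu_i(g)$ in weight components, this gap forces all lower-weight contributions of $v_j$ to be smaller than the top one by a factor $\lesssim e^{-C_0}$; i.e., the line $\R v_j$ is at angular distance $O(e^{-C_0})$ from $\R e_i$. Since $\gamma_1\gamma_2^{-1}$ preserves $V_i(\Z)$ and $\tx_i$, $\rho v_2 \in c_1 V_i(\Z) \cap \tx_i$ and $\|s_1(\rho v_2)\| = \|s_2 v_2\| = \mu_i(g)$, so $\rho v_2$ is a minimizer for the $s_1$-decomposition and likewise satisfies $\mathrm{dist}(\R(\rho v_2), \R e_i) = O(e^{-C_0})$. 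Thus $\rho$ sends a line near $[e_i]$ to another such line; as $\rho$ varies over the finite set $R$, choosing $C_0$ large enough (depending only on $R$, and hence only on $G$, $\Gamma$, $\FS$, $C$) rules out every $\rho \in R$ that does not stabilize $[e_i]$ exactly. This yields $\rho \in \Stab_G[e_i]$ for all $i \notin \theta_g$, hence $\rho \in Q_g$.

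For the local constancy, note that each covolume $\mu_i(h)$ is continuous in $h$ (the infimum over the discrete set $V_i(\Z) \cap \tx_i$ is locally achieved on a finite subfamily), so $c_0(h)$ and $c(h) = p_{\ka^-}(c_0(h))$ are continuous. Therefore on the open dense set $U = \{h : \alpha_i(c(h)) \neq -C_0 \text{ for all } i\}$ (which contains $g$ after an arbitrarily small perturbation of $C_0$), both $\theta_h$, $Q_h$, and the inclusion $Q \supseteq Q_h$ are locally constant. In a neighborhood of $g$ in $U$, a Siegel decomposition can be chosen so that $s$ varies continuously in $\FS$ while $c\gamma$ stays in the discrete set $C\Gamma$; by Theorem~\ref{siegel} it is locally constant. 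Combined with the independence established above, this shows that $h \mapsto D_h = Qc\gamma$ is locally constant. The main obstacle is the quantitative concentration estimate in the middle paragraph: turning the weight-gap inequality into an angular bound on the minimizer and closing the resulting approximate stabilization into exact stabilization through the finiteness of $R$. This is the delicate step and implicitly calibrates the constant $C_0$.
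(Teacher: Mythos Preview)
Your argument for the main assertion—that $Qc\gamma$ does not depend on the Siegel decomposition—is correct, but it takes a different route from the paper. The paper works one maximal parabolic $P_k$ at a time ($k\notin\theta_g$) and identifies $P_kc\gamma$ \emph{intrinsically}: Proposition~\ref{racine} shows that when $\alpha_k(c(g))<-C_0$ the direction $[\gamma^{-1}c^{-1}e_k]$ is the \emph{unique} line in $V_k(\Z)$ realizing $\lambda_1(gV_k(\Z))$, since every non-collinear $v$ satisfies $\|gv\|\gg e^{-\alpha_k(c(g))}\mu_k(g)$. Independence is then automatic, as the first-minimum direction is defined without reference to any decomposition. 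Your approach instead compares two decompositions, pins the connecting element $\rho$ to a finite set $R$ via the Siegel property (Theorem~\ref{siegel}), shows by a weight-gap estimate that $\rho$ carries a line $O(e^{-C_0})$-close to $[e_i]$ to another such line, and then uses the finiteness of $R$ to force $\rho[e_i]=[e_i]$. This is valid—your weight claim reduces to the fact $n_i\geq 1$ established inside Proposition~\ref{racine}, since $i\notin\theta_g$—but the detour through Theorem~\ref{siegel} becomes unnecessary once one recognizes the first-minimum direction as intrinsic.

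Your local-constancy argument, however, has a genuine gap. The assertion that ``a Siegel decomposition can be chosen so that $s$ varies continuously in $\FS$ while $c\gamma$ stays locally constant'' is not justified: $\FS=KA_\tau\omega$ is closed but not open, so there is no reason for $h(c\gamma)^{-1}$ to remain in $\FS$ as $h$ moves, and Theorem~\ref{siegel} gives only finiteness of the set of admissible $\gamma$, not a continuous section. The paper avoids this entirely: having identified $P_kc\gamma$ with the first-minimum direction of $gV_k(\Z)$, local constancy follows at once from the strict inequality $\lambda_1(gV_k(\Z))<\lambda_2(gV_k(\Z))$, which makes that direction locally constant in $g$. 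You could salvage your approach by rerunning the comparison argument with the decompositions of $g$ and of a nearby $h$ (absorbing the small factor $hg^{-1}$ into the estimates), but this effectively rebuilds the intrinsic characterization.
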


\begin{remark}
Si $c(g)=0$, on a nécessairement $Q=G$, et la proposition ne donne aucune information supplémentaire sur $g\Gamma$.
\end{remark}

\begin{example}
Dans le cas où $G=\SL_d$, l'espace $\Omega$ s'identifie à l'espace des réseaux unimodulaires de $\R^d$.
Notons 
\[
\lambda_1(g)\leq\dots\leq \lambda_d(g)
\]
les minima successifs de $g\Z^d$, et choisissons une famille linéairement indépendante $v_1,\dots,v_d$ dans $\Z^d$ telle que
\[
\forall i\in[1,d],\quad \norm{gv_i} = \lambda_i(g).
\]
La condition $\alpha_i(c(g))\geq -C_0$ se traduit en termes des minima successifs par l'inégalité
\[
\lambda_{i+1}(g) \leq e^{C_0}\lambda_i(g).
\]
Si $i\not\in\theta_g$, on a donc $\lambda_{i+1}(g)>\lambda_i(g)$, et le sous-réseau $\Delta_i=\Z v_1\oplus\dots\oplus\Z v_i$ est uniquement défini.
Le drapeau $D_g$ s'identifie au drapeau partiel
\[
\{0\}<\Delta_{i_1} < \dots < \Delta_{i_k}<\Z^d, \quad \{i_1,\dots,i_k\}=[1,r]\setminus\theta_g.
\]
\end{example}

À cause du cas particulier donné en exemple ci-dessus, nous dirons que $D_g$ est le \emph{drapeau partiel} associé à l'élément $g\in \Omega$.
La démonstration de la proposition~\ref{drapartiel} repose sur l'observation importante suivante.

\begin{proposition}[Racine et covolume]
\label{racine}
Soit $G$ un $\Q$-groupe semi-simple et $V_k$ la représentation fondamentale de $G$ associée au poids $\omega_k$.
Soit $g$ dans $G$ et $v_0$ dans $V_k(\Z)\cap\tx_k$ tel que $\mu_k(g)=\norm{gv_0}$.
Pour tout vecteur $v\in V_k(\Z)$ linéairement indépendant de $v_0$.
\[
\norm{gv} \gg e^{-\alpha_k(c(g))}\mu_k(g).
\]
La constante implicite dans la notation de Vinogradov ne dépend pas de $g\in G$.
En outre, il existe une constante $C_0\geq 0$ telle que si $\alpha_k(c(g))\leq -C_0$, et $g=kanc\gamma$ est une décomposition de Siegel de $g$, alors $v_0$ est colinéaire à $\gamma^{-1}c^{-1}e_k$.
\end{proposition}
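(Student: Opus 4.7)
Le cœur de la démonstration sera un argument de produit extérieur dans $\wedge^2 V_k$ reposant sur l'observation de théorie des représentations suivante. Puisque $\omega_k=b_k\varpi_k$ vérifie $\bracket{\omega_k,\alpha_j^\vee}=b_k\delta_{jk}$, la seule \enquote{descente} d'un cran depuis le plus haut poids de $V_k$ se fait par $\alpha_k$~: autrement dit, $\omega_k-\alpha_j$ n'est un poids de $V_k$ que si $j=k$. Comme tout poids $\mu\neq\omega_k$ peut être remonté à $\omega_k$ par une succession d'applications des générateurs de Chevalley $E_i$ (par irréductibilité de $V_k$), la dernière étape utilisera nécessairement $E_k$, et donc $\mu=\omega_k-\sum_i n_i\alpha_i$ avec $n_k\geq 1$. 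Pour tout $Y\in\ka^-$, cela entraînera $\mu(Y)\geq(\omega_k-\alpha_k)(Y)$ pour tout poids $\mu\neq\omega_k$ de $V_k$, et par suite le poids de $\wedge^2V_k$ minimisant $\nu(Y)$ sera exactement $2\omega_k-\alpha_k$, atteint par $e_k\wedge f_k$ pour $f_k$ un vecteur de poids $\omega_k-\alpha_k$.

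Pour démontrer l'inégalité, je fixerai une décomposition de Siegel $g=kanc\gamma$ avec $k\in K$, $a=e^Y$ pour un $Y\in\ka^-_\tau$, $n\in\omega$, $c\in C$, $\gamma\in\Gamma$. La compacité de $K$ et le fait que $ana^{-1}$ reste dans une partie bornée (car $A_\tau$ normalise $U$ avec $\alpha(Y)$ borné supérieurement pour toute racine positive $\alpha$ et $Y\in\ka^-_\tau$) donneront $\norm{gw}\asymp\norm{a\cdot c\gamma w}$ pour tout $w\in V_k$. Par la proposition~\ref{camoins}, $Y=c(g)+O(1)$, donc $\mu_k(g)\asymp e^{\omega_k(Y)}$. Étant donné $v\in V_k(\Z)$ linéairement indépendant de $v_0$, le vecteur $v_0\wedge v\in(\wedge^2V_k)(\Z)$ est entier non nul~; puisque $c\in C$ varie dans une partie finie, $c\gamma(v_0\wedge v)$ appartient à un réseau commensurable à $(\wedge^2V_k)(\Z)$ et est donc de norme minorée par une constante positive indépendante de $g$. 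L'analyse de poids ci-dessus donnera alors
\[
\norm{g(v_0\wedge v)}\asymp\norm{a\cdot c\gamma(v_0\wedge v)}\gg e^{(2\omega_k-\alpha_k)(Y)},
\]
et en combinant avec $\norm{gv_0}\cdot\norm{gv}\geq\norm{gv_0\wedge gv}=\norm{g(v_0\wedge v)}$ et $\norm{gv_0}=\mu_k(g)$, on obtiendra
\[
\norm{gv}\gg\frac{e^{(2\omega_k-\alpha_k)(Y)}}{\mu_k(g)}\asymp e^{(\omega_k-\alpha_k)(Y)}\asymp e^{-\alpha_k(c(g))}\mu_k(g).
\]

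Pour la seconde assertion, j'exhiberai un minimiseur explicite et invoquerai l'inégalité déjà démontrée. Comme $\omega_k$ est un caractère non trivial de $T^0(\R)$, son image contient tous les réels strictement positifs~; on peut donc choisir un entier $d_c$ tel que $d_cc^{-1}e_k\in V_k(\Z)\cap\tx_k$. Le vecteur $\tilde v=d_c\gamma^{-1}c^{-1}e_k\in V_k(\Z)\cap\tx_k$ vérifie alors, par calcul direct utilisant que $U$ fixe $e_k$, que $M$ agit sur $\R e_k$ par un caractère borné sur $\omega$, et que $ae_k=e^{\omega_k(Y)}e_k$, l'estimation $\norm{g\tilde v}=d_c\norm{kane_k}\asymp e^{\omega_k(Y)}\asymp\mu_k(g)$~; c'est donc un minimiseur à constante multiplicative près. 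Si $v_0$ n'était pas colinéaire à $\tilde v$, l'inégalité précédente appliquée à la paire $(v_0,\tilde v)$ forcerait $\mu_k(g)\asymp\norm{g\tilde v}\gg e^{-\alpha_k(c(g))}\mu_k(g)$, d'où $\alpha_k(c(g))\geq-C_0$ pour une constante universelle $C_0$ provenant des constantes multiplicatives implicites. L'obstacle principal sera d'assurer l'uniformité en $g$ des diverses constantes multiplicatives (notamment dans la minoration de $\norm{c\gamma(v_0\wedge v)}$ et dans le calcul de $\norm{g\tilde v}$), ce qui reposera essentiellement sur la finitude de $C$ et sur les propriétés standard des ensembles de Siegel.
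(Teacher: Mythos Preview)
Your proof is correct and takes a genuinely different route from the paper's. Both approaches rest on the same representation-theoretic fact---that every weight $\mu\neq\omega_k$ of $V_k$ satisfies $\mu=\omega_k-\sum_i n_i\alpha_i$ with $n_k\geq 1$---but exploit it differently. The paper invokes Borel's th\'eor\`eme~16.9 to locate a Siegel-adapted minimiser $\gamma'c'e_k$, then expands an arbitrary $v\in V_k(\Z)$ in the translated weight basis $(\gamma'c'e_\omega)_\omega$ and reads off the bound from the nonzero components in weights $\neq\omega_k$. Your argument bypasses th\'eor\`eme~16.9 entirely: you encode the linear-independence condition directly as $v_0\wedge v\neq 0$ in $\wedge^2V_k$, observe that the smallest weight of $\wedge^2V_k$ on $\ka^-$ is $2\omega_k-\alpha_k$ (precisely because the highest weight of $V_k$ is simple and every other weight drops by at least one $\alpha_k$), and conclude via $\norm{gv_0}\cdot\norm{gv}\geq\norm{g(v_0\wedge v)}$. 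This is more elementary and arguably cleaner, since the wedge product handles the independence hypothesis uniformly without any case analysis on whether $v_0$ itself lies along the Siegel direction $\gamma^{-1}c^{-1}e_k$. The paper's approach, on the other hand, yields slightly more along the way (it identifies all the successive minima of $gV_k(\Z)$ with the eigenvalues $e^{\omega(c(g))}$), which could be useful elsewhere but is not needed for this proposition. Your treatment of the second assertion---exhibiting $\tilde v=d_c\gamma^{-1}c^{-1}e_k$ as an approximate minimiser and then applying the first part by contraposition---is essentially the same as the paper's final paragraph.
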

\begin{proof}
Dans cette démonstration, on munit l'espace des racines d'un produit scalaire invariant par l'action du groupe de Weyl.
Quitte à changer le réseau rationnel $V_k(\Z)$, on peut supposer qu'on dispose d'une base $(u_i)$ de $V_k(\Z)$ constituée de vecteurs de poids.
Le plus haut poids $\omega_k$ est associé au vecteur $e_k=u_1$, et tout autre poids de la représentation peut s'écrire
\[
\omega = \omega_k -\sum_i n_i\alpha_i,
\]
où les $n_i$ sont des entiers naturels.
De plus, comme $\omega_k$ est le plus haut poids de la représentation et $\bracket{\omega_k,\alpha_i}=0$ si $i\neq k$,
\[
\norm{\omega_k}^2 \geq  \norm{\omega}^2
= \norm{\omega_k}^2+\norm{\sum_i n_i\alpha_i}^2 -2n_k\bracket{\omega_k,\alpha_k},
\]
\note{D'après un résultat de Satake, démontré aussi dans Borel-Tits \cite[Proposition~12.16]{boreltits}, l'ensemble $I=\{i\ |\ n_i\geq 1\}$ est connexe et contient $k$.
Cela implique en particulier que $n_k\geq 1$.}
et donc $n_k\geq 1$ si $\omega\neq\omega_k$.

Par abus de notation, on note encore $\omega_k$ le caractère de $B$ associé au poids $\omega_k$.
Posons $\Phi:g\mapsto\norm{ge_k}$.
Bien sûr,
\[
\forall b\in B,\quad \Phi(gb)=\abs{\omega_k(b)}\cdot \Phi(g),
\]
i.e. $\Phi$ est une fonction de type $(B,\omega_k)$, au sens de Borel~\cite[\S14.1]{borel_iga}.
En outre,
\[
\forall \gamma\in \Gamma\cap B,\quad \abs{\omega_k(\gamma)}=1,
\]
et donc $\Phi$ est invariante à droite par $\Gamma\cap B$.
Soit enfin
\[
\theta=\{i\ |\ \bracket{\alpha_i,\omega_k}>0\}=\{k\}
\quad\mbox{et}\quad
\theta'=[1,r]\setminus\{k\}.
\]
Alors, $P_{\theta'}=\Stab\R e_k$, donc, si $L_{\theta'}$ désigne un sous-groupe semi-simple (donc sans caractère) de $P_{\theta'}$, on doit avoir
\[
\forall \ell\in L_{\theta'},\quad \Phi(g\ell)
=\norm{g\ell e_k} = \abs{\omega_k(\ell)}\cdot \norm{g e_k}= \norm{g e_k}
= \Phi(g),
\]
ce qui montre que $\Phi$ est invariante à droite par $L_{\theta'}$.
D'après Borel~\cite[théorème~16.9]{borel_iga}, il existe une partie finie $C'\subset G(\Q)$ et un ensemble de Siegel $\FS=KA_\tau\omega$ tels que pour tout $g\in G$, la fonction
\[
\begin{array}{llcr}
\Phi_g: & \Gamma C' & \to & \R^+\\ 
& u &\mapsto & \Phi(gu),
\end{array}
\]
atteigne son minimum en un point de $\Gamma C'\cap g^{-1}\FS$.
Remarquons qu'à une constante multiplicative près,
\[
\mu_k(g) \asymp \min\{ \Phi_g(u)\ ;\ u\in\Gamma C'\}.
\]
Soit donc $u=\gamma' c'\in\Gamma C'$ tel que 
\[
\norm{g\gamma'c' e_k} = \Phi_g(\gamma'c') = \min\{ \norm{g\gamma'c' e_k}\ ;\ c'\in C',\,\gamma'\in\Gamma\},
\]
et, suivant l'ensemble de Siegel choisi ci-dessus,
\[
g\gamma'c'= kan.
\]
Comme les réseaux $gV_k(\Z)$ et $g\gamma'c'V_k(\Z)$ sont commensurables, leurs minima successifs sont comparables.
Dans la base de $V_k(\Z)$ constituée de vecteurs de poids, l'élément $a=e^{c(g)+O(1)}$ agit suivant la matrice $\diag(e^{\omega(c(g))})$, où $\omega$ décrit les poids de la représentation $V_k$; à une constante multiplicative près, cela donne tous les minima successifs de $gV_k(\Z)$.
La plus petite valeur propre de $a$ est $\mu_k(g)\asymp e^{\omega_k(c(g))}$.
Si $\omega=\omega_k-\sum_in_i\alpha_i$ est un poids différent de $\omega_k$, alors d'après le calcul ci-dessus, $n_k\geq 1$, et donc
\[
e^{\omega(c(g))} \geq e^{\omega_k(c(g))-\alpha_k(c(g))} = e^{-\alpha_k(c(g))}\mu_k(g).
\]
Soit maintenant $v\in V_k(\Z)$.
Si $(e_\omega)_\omega$ désigne une base de $V_k(\Z)$ constituée de vecteurs de poids, le réseau engendré par les vecteurs $\gamma'c'e_{\omega}$
est commensurable à $V_k(\Z)$ à un facteur borné près.
Pour un certain entier $D$ indépendant de $v$, on peut donc écrire
\[
Dgv = \sum_{\omega} n_\omega g\gamma'c'e_\omega = \sum_{\omega} n_\omega kan e_\omega,
\]
avec $n_\omega\in\N$, et les termes de cette somme sont essentiellement orthogonaux.
Si $v$ n'est pas colinéaire à $v_0$, il existe $\omega\neq\omega_k$ tel que $\abs{n_\omega}\geq 1$, et cela implique
\[
\norm{gv} \gg \frac{1}{D} \norm{kan e_\omega} \gg e^{\omega(c(g))} \geq e^{-\alpha_k(c(g))}\mu_k(g).
\]
La dernière assertion découle de cette inégalité et de ce que si $g=kanc\gamma$ est une décomposition de Siegel de $g$, alors
\[
\norm{g\gamma^{-1}c^{-1}e_k} \asymp \norm{ae_k} \asymp e^{\omega_k(c(g))} \asymp \mu_k(g).
\]
\end{proof}

\begin{proof}[Démonstration de la proposition~\ref{drapartiel}]
Pour $k\in[1,r]$, soit $P_k = P_{[1,r]\setminus\{k\}}$ le $k$-ième sous-groupe parabolique maximal.
Comme
\[
Q_g = P_{\theta_g} = \bigcap_{k\not\in\theta_g} P_k,
\]
il suffit de montrer que pour chaque $k\not\in\theta_g$, l'élément $P_kc\gamma$ ne dépend pas du choix de la décomposition de Siegel.

Soit donc $k\not\in\theta_g$ fixé, et $V_k$ la représentation fondamentale associée.
Si $C_0$ est choisi assez grand, l'inégalité $\alpha_k(c(g))<-C_0$, avec la proposition~\ref{racine}, 
montre que si $g=kan c\gamma$ est une décomposition de Siegel et $u\in V_k(\Z)$, l'égalité
\[
\norm{gu} = \mu_k(g),
\]
ne peut être atteinte que si $u$ est colinéaire à $\gamma^{-1}c^{-1}e_k$.
Identifiant $P_k\bcs G$ à l'orbite de la droite $[e_k]$ de plus haut poids dans la représentation fondamentale $V_k$, grâce à l'application $g\mapsto [g^{-1}e_k]$, cela montre que $P_kc\gamma$ ne dépend pas du choix de la représentation de Siegel: c'est la direction qui réalise le premier minimum $\lambda_1(gV_k(\Z))$ du réseau $gV_k(\Z)$.

Pour chaque $k\not\in\theta_g$, la proposition~\ref{racine} montre que $\lambda_1(gV_k(\Z))<\lambda_2(gV_k(\Z))$, donc le vecteur de $V_k(\Z)$ qui réalise le premier minimum est localement constant au voisinage de $g$.
Cela montre la dernière assertion de la proposition.
\end{proof}

%
%
%

\section{Une relation d'ordre sur $\ka$}
\label{sec:ordre}

Nous utiliserons la relation d'ordre partiel sur $\ka$ donnée par
\[
Y_1\prec Y_2
\quad\Longleftrightarrow\quad
\forall i,\ \omega_i(Y_1)\leq\omega_i(Y_2).
\]
En particulier, nous nous intéresserons à l'ensemble des minorants de $c_0(g)$ dans $\ka^-$.
C'est un ensemble convexe, et la proposition suivante montre qu'il est stable par une opération de maximum.

\begin{proposition}
Si $(Y_s)_{s\in S}$ est une famille d'éléments de $\ka^-$ alors l'élément $Y\in\ka$ défini par
\[
\forall i,\ \omega_i(Y) = \sup_{s\in S} \omega_i(Y_s)
\]
est dans $\ka^-$.
En particulier, pour tout $Y_0\in\ka$ l'ensemble
\[
m_{Y_0}=\{Y\in\ka^-\ |\ Y\prec Y_0\}.
\]
admet un unique plus grand élément.
\end{proposition}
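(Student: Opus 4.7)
Le plan est d'exploiter une propriété de signe classique de la matrice de Cartan. Exprimée dans la base $(\omega_i)$ des poids fondamentaux (à scalaires positifs près), toute racine simple s'écrit $\alpha_j=\sum_i c_{ji}\omega_i$ avec $c_{jj}>0$ et $c_{ji}\leq 0$ pour $i\neq j$ : si $a_{ji}=2(\alpha_j,\alpha_i)/(\alpha_i,\alpha_i)$ désignent les coefficients de la matrice de Cartan, alors $a_{jj}=2$ et $a_{ji}\leq 0$ pour $i\neq j$, et l'on a $c_{ji}=a_{ji}/b_i$ puisque $\omega_i=b_i\varpi_i$ avec $b_i>0$.

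Pour la première assertion, je réécrirais la condition $\alpha_j(Y_s)\leq 0$ sous la forme
\[
c_{jj}\,\omega_j(Y_s) \leq \sum_{i\neq j}(-c_{ji})\,\omega_i(Y_s),
\]
où tous les coefficients $-c_{ji}$ sont positifs. En majorant chaque $\omega_i(Y_s)$ du second membre par $\omega_i(Y)=\sup_{t\in S}\omega_i(Y_t)$, le second membre devient indépendant de $s$; on peut alors passer à la borne supérieure en $s$ au premier membre pour obtenir $c_{jj}\omega_j(Y)\leq\sum_{i\neq j}(-c_{ji})\omega_i(Y)$, c'est-à-dire $\alpha_j(Y)\leq 0$. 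Ceci valant pour tout $j$, on en déduira $Y\in\ka^-$.

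Pour la seconde assertion, je commencerais par vérifier que $m_{Y_0}$ est non vide. La chambre $\ka^-$ est engendrée comme cône par les éléments $-\varpi_j^\vee$ duaux aux racines simples, et l'on calcule $\omega_i(-\varpi_j^\vee)=-b_i(A^{-1})_{ji}\leq 0$ ; donc l'élément $Y_M=-M\sum_j\varpi_j^\vee$ appartient à $\ka^-$ pour tout $M\geq 0$ et vérifie $\omega_i(Y_M)\to-\infty$ quand $M\to+\infty$. Pour $M$ assez grand, $Y_M\prec Y_0$, d'où $m_{Y_0}\neq\emptyset$. J'appliquerais alors la première partie à la famille indexée par $m_{Y_0}$ elle-même : l'élément $Y^*\in\ka$ défini par $\omega_i(Y^*)=\sup\{\omega_i(Y)\,;\,Y\in m_{Y_0}\}$ est fini (majoré par $\omega_i(Y_0)$) et appartient à $\ka^-$ d'après ce qui précède ; il vérifie $Y^*\prec Y_0$ par construction, donc $Y^*\in m_{Y_0}$ en est le plus grand élément, évidemment unique.

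L'essentiel du raisonnement est purement combinatoire : une fois repérée la bonne propriété de signe pour le changement de base $(\alpha_j)\leftrightarrow(\omega_i)$, l'argument du \emph{sup} est immédiat. Il n'y a donc pas vraiment de difficulté technique ; le seul point à ne pas omettre est la vérification préalable que $m_{Y_0}$ est non vide, pour donner un sens à la seconde assertion.
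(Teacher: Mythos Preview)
Ta démonstration est correcte et suit essentiellement la même approche que celle du mémoire : les deux reposent sur la propriété de signe de la matrice de Cartan (diagonale strictement positive, hors-diagonale négative) pour faire passer le $\sup$ sous les coefficients. Le mémoire décompose $Y_s=\sum_i t_i^{(s)}\alpha_i$ et utilise les produits scalaires $\langle\alpha_k,\alpha_i\rangle$, là où tu exprimes directement $\alpha_j=\sum_i c_{ji}\omega_i$ ; ce sont deux écritures duales du même calcul. Ton ajout explicite de la non-vacuité de $m_{Y_0}$ comble d'ailleurs un point que le mémoire laisse implicite.
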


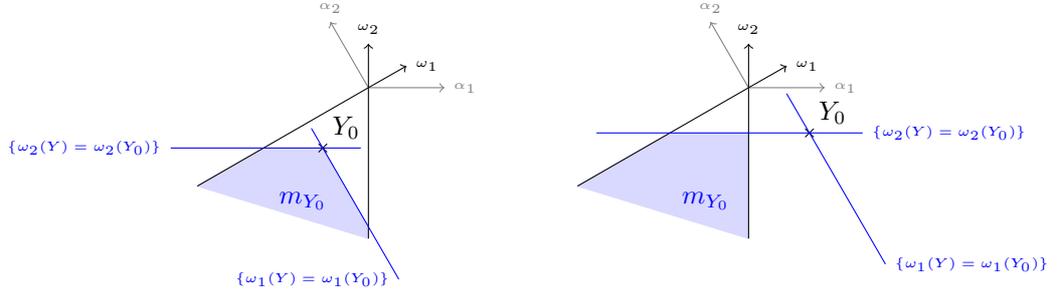
\begin{figure}[H]
\begin{center}
\begin{tikzpicture}

\filldraw [blue!15!white,xshift=-5cm] (-3*.75, -3*.435) -- (-1.37,-.8) -- (-.6,-.8) -- (0,-1.85) -- (0,-2) -- cycle;
\draw[blue,xshift=-5cm] (-1.3,-1.5) node[right] {$m_{Y_0}$};
\draw [xshift=-5cm] (-3*.75,-3*.435) -- (0,0);
\draw [xshift=-5cm] (0,0) -- (0,-2);

\draw [xshift=-5cm] (-.6,-.8) node[cross=2pt]{} node[anchor=south west]{$Y_0$};
\draw [color=blue,xshift=-5cm] (-2.6,-.8) node[left]{\tiny{$\{\omega_2(Y)=\omega_2(Y_0)\}$}}-- (-.1,-.8);
\draw [color=blue,xshift=-5cm] (-.6-.15,-.8+.15*2*.87) -- (-.6+1,-.8-2*.87) node[left]{\tiny{$\{\omega_1(Y)=\omega_1(Y_0)\}$}};

\draw[->, color=gray,xshift=-5cm] (0,0) -- (1,0) node[right] {\tiny{$\alpha_1$}};
\draw[->, color=gray,xshift=-5cm] (0,0) -- (-.5,.87) node[above] {\tiny{$\alpha_2$}};
\draw[->, xshift=-5cm] (0,0) -- (.5,.33*.87) node [right] {\tiny{$\omega_1$}};
\draw[->, xshift=-5cm] (0,0) -- (0,.67*.87) node [above] {\tiny{$\omega_2$}};

\filldraw [blue!15!white] (-3*.75, -3*.435) -- (-1.03,-.6) -- (0,-.6) -- (0,-2) -- cycle;
\draw[blue] (-1,-1.5) node[right] {$m_{Y_0}$};
\draw (-3*.75,-3*.435) -- (0,0);
\draw (0,0) -- (0,-2);

\draw (.8,-.6) node[cross=2pt]{} node[anchor=south west]{$Y_0$};
\draw [color=blue] (-2,-.6) -- (1.5,-.6) node[right]{\tiny{$\{\omega_2(Y)=\omega_2(Y_0)\}$}};
\draw [color=blue] (.8-.3,-.6+.3*2*.87) -- (.8+1,-.6-2*.87) node[right]{\tiny{$\{\omega_1(Y)=\omega_1(Y_0)\}$}};

\draw[->, color=gray] (0,0) -- (1,0) node[right] {\tiny{$\alpha_1$}};
\draw[->, color=gray] (0,0) -- (-.5,.87) node[above] {\tiny{$\alpha_2$}};
\draw[->] (0,0) -- (.5,.33*.87) node [right] {\tiny{$\omega_1$}};
\draw[->] (0,0) -- (0,.67*.87) node [above] {\tiny{$\omega_2$}};

\end{tikzpicture}
\end{center}
\caption{L'ensemble $m_{Y_0}$ pour $\SL_3$}
\end{figure}

\begin{proof}
Dans cette démonstration on identifie $\ka$ à $\ka^*$ grâce au produit scalaire usuel.
\note{Référence pour la construction est les propriétés de ce produit scalaire.}
La famille des poids fondamentaux $(\varpi_j)$ s'identifie alors à la base duale de la base $(\alpha_i)$ des racines simples:
\[
\forall i,j,\quad \bracket{\alpha_i,\varpi_j}=\delta_{ij}.
\]
Rappelons que le plus haut poids $\omega_i$ est relié à $\varpi_i$ par $\omega_i=b_i\varpi_i$, pour un certain $b_i\in\N^*$.
Si l'on décompose
\[
Y_s=\sum_i t_i^{(s)}\alpha_i,\ \mbox{avec}\ t_i^{(s)}\in\R,
\]
alors $Y=\sum_i (\sup_s t_i^{(s)})\alpha_i$ et
\[
\bracket{\alpha_k,Y} = \bracket{\alpha_k,\sum_i (\sup_s t_i^{(s)})\alpha_i}
= \sum_i (\sup_s t_i^{(s)})\bracket{\alpha_k,\alpha_i}.
\]
Il s'agit de voir que cette quantité est négative.
Cela découle de ce que pour $i\neq k$, $\bracket{\alpha_k,\alpha_i}\leq 0$, tandis que $\bracket{\alpha_k,\alpha_k}\geq 0$.
En effet, pour chaque $s$, l'inégalité $\bracket{\alpha_k,Y_s}\leq 0$ donne
\[
-\sum_{i\neq k} t_i^{(s)}\bracket{\alpha_k,\alpha_i} \geq \bracket{\alpha_k,\alpha_k} t_k^{(s)}
\]
et comme pour tout $i\neq k$, $\bracket{\alpha_k,\alpha_i}\leq 0$, cela implique
\[
-\sum_{i\neq k} (\sup_u t_i^{(u)})\bracket{\alpha_k,\alpha_i} \geq \bracket{\alpha_k,\alpha_k} t_k^{(s)}.
\]
Comme ceci vaut pour tout $s$, on trouve bien
\[
-\sum_{i\neq k} (\sup_u t_i^{(u)})\bracket{\alpha_k,\alpha_i} \geq \bracket{\alpha_k,\alpha_k} \sup_s t_k^{(s)},
\]
\comm{C'est dans cette dernière inégalité qu'intervient le fait que seul $\bracket{\alpha_k,\alpha_k}\geq 0$.
S'il y avait un autre terme $\bracket{\alpha_k,\alpha_{k'}}$, on pourrait seulement majorer $\sup_s (\bracket{\alpha_k,\alpha_k}t_k^{(s)}+\bracket{\alpha_k,\alpha_{k'}}t_{k'}^{(s)})$, qui est a priori strictement plus petit que $\bracket{\alpha_k,\alpha_k}(\sup_s t_k^{(s)}) + \bracket{\alpha_k,\alpha_{k'}}\sup_s t_{k'}^{(s)}$.}\\
c'est-à-dire $\alpha_k(Y)\leq 0$.
Soit maintenant $Y_0\in\ka$ un élément quelconque.
Comme $m_{Y_0}$ est non vide, la propriété de stabilité que nous venons de démontrer permet de définir son plus grand élément $Y$ par
\[
\forall i,\ \omega_i(Y) = \sup_{Y'\in m_{Y_0}} \omega_i(Y').
\]
\end{proof}

\begin{exercise}
Soit $G=\SL_d$.
On identifie $\ka$ aux fonctions sur $\{0,\dots,d\}$ qui s'annulent en $0$ et $d$ par l'application $Y_0\mapsto (\omega_i(Y_0))_{1\leq i\leq d-1}$.
\begin{enumerate}
\item Montrer que $\ka^-$ s'identifie à l'ensemble des fonctions convexes négatives.
\item Vérifier que la proposition ci-dessus traduit simplement le fait que la borne supérieure d'une famille de fonctions convexes négatives est encore une fonction convexe négative.
\end{enumerate}
\end{exercise}

Le dessin ci-dessous donne un exemple pour $\SL_4$. On identifie $\ka$ aux fonctions sur $\{0,\dots,4\}$ qui s'annulent en $0$ et $4$, et $\ka^-$ au sous-ensemble des fonctions convexes.
\begin{figure}[H]
\begin{center}
\begin{tikzpicture}
\draw[->] (-1,0) -- (5,0);
\draw (0,-2) -- (0,0.2);
\draw[->] (0,0.37) -- (0,1);
\foreach \x in {0,...,4}
{
\draw (\x,-0.1) -- (\x,0.1) node[anchor=south] {\tiny{\x}};
}
\draw[red,thin] (0,0) -- (1,-0.3) -- (2,0) -- (3,-1.95) -- (4,0.05);
\filldraw[red] (0,0) circle (1pt);
\filldraw[red] (1,-0.3) circle (1pt);
\filldraw[red] (2,0) circle (1pt);
\filldraw[red] (3,-2) circle (1pt);
\filldraw[red] (4,0) circle (1pt);
\draw[red] (2.7,-0.5) node {$Y_0$};
\draw[blue,thin] (0,0) -- (3,-2) -- (4,0);
\draw[blue] (0,0) node[cross=2pt]{};
\draw[blue] (3,-2) node[cross=2pt]{};
\draw[blue] (4,0) node[cross=2pt]{};
\draw[blue] (3.8,-1) node {$Y$};
\end{tikzpicture}
\caption{$Y$ est le plus grand minorant de $Y_0$ dans $\ka^-$.}
\label{hn}
\end{center}
\end{figure}
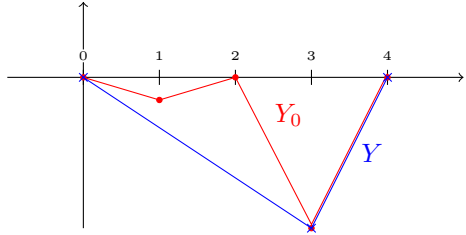

En fait, le plus grand élément de $m_{Y_0}$ s'interprète géométriquement comme une projection de $Y_0$ sur $\ka^-$.
On munit $\ka$ du produit scalaire usuel, invariant par l'action du groupe de Weyl.
Comme $\ka^-$ est une partie convexe de $\ka$ on dispose d'une projection $p_{\ka^-}:\ka\to\ka^-$, qui à $Y_0$ associe l'unique élément $Y$ tel que $d(Y_0,Y)=d(Y_0,\ka^-)$.

\begin{proposition}[Plus grand minorant et projection orthogonale]
\label{inf}
Pour tout $Y_0$ dans $\ka$, le point $p_{\ka^-}(Y_0)$ est le plus grand élément de $m_{Y_0}$.
\end{proposition}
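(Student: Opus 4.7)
The plan is to combine Moreau's decomposition of the orthogonal projection onto the closed convex cone $\ka^-$ with the positive definiteness of Cartan matrices of finite type. Since $\ka^-$ is cut out by the inequalities $\alpha_j(Y)\leq 0$, its polar cone, under the $W$-invariant identification $\ka\simeq\ka^*$, will be the convex cone generated by the vectors $H_{\alpha_j}\in\ka$ representing the simple roots. Writing $Y := p_{\ka^-}(Y_0)$, Moreau will furnish scalars $c_j\geq 0$ with
\[ Y_0-Y=\sum_{j=1}^r c_jH_{\alpha_j} \quad\text{and}\quad c_j\alpha_j(Y)=0 \text{ pour tout }j. \]
The first step is to verify $Y\in m_{Y_0}$: computing $\omega_i(Y_0-Y)=c_i\omega_i(H_{\alpha_i})$ and using that the relation $\varpi_i(\alpha_j^\vee)=\delta_{ij}$ forces $\omega_i(H_{\alpha_j})$ to vanish for $i\neq j$ and to be strictly positive for $i=j$, we obtain $\omega_i(Y)\leq\omega_i(Y_0)$ for every $i$, i.e.\ $Y\prec Y_0$.

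The preceding proposition supplies a unique greatest element $Y^*$ of $m_{Y_0}$, and $Y\in m_{Y_0}$ immediately gives $Y\prec Y^*$. For the reverse inequality, I will set $\Delta := Y^* - Y$ and partition $\{1,\dots,r\}=J_0\sqcup J_1$ according to whether $c_j=0$ or $c_j>0$. For $i\in J_0$, complementarity forces $\omega_i(Y)=\omega_i(Y_0)$, so $\omega_i(Y^*)\leq\omega_i(Y_0)$ combined with $\omega_i(\Delta)\geq 0$ will force $\omega_i(\Delta)=0$. For $j\in J_1$, complementarity gives $\alpha_j(Y)=0$, so $\alpha_j(Y^*)\leq 0$ will yield $\alpha_j(\Delta)\leq 0$.

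The final step is to expand $\Delta=\sum_k d_k\alpha_k^\vee$ in the coroot basis. Using the identity $\omega_i(\alpha_k^\vee)=b_i\delta_{ik}$, the preceding constraints translate into $d_k=0$ for $k\in J_0$, $d_k\geq 0$ for $k\in J_1$, and $(C^{J_1}d^{J_1})_j\leq 0$ for every $j\in J_1$, where $C^{J_1}$ is the principal submatrix of the Cartan matrix $C_{jk}=\alpha_j(\alpha_k^\vee)$ indexed by $J_1$. Multiplying by $d_j\geq 0$ and summing over $J_1$ yields $(d^{J_1})^\top C^{J_1}d^{J_1}\leq 0$. The decisive input here is that $C^{J_1}$ is itself a finite-type Cartan matrix, since its Dynkin diagram is a subdiagram of that of $G$; its symmetrization is therefore positive definite, which will force $d^{J_1}=0$ and hence $\Delta=0$. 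The main technical hurdle is exactly this positive-definiteness step, together with the correct bookkeeping of the KKT complementarity conditions; the rest of the argument is a routine consequence of the variational characterization of projection onto a polyhedral convex cone.
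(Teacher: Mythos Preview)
Your Moreau/KKT approach is sound and more direct than the paper's, but the final step contains a genuine error. You claim that the symmetrization of the finite-type Cartan submatrix $C^{J_1}$ is positive definite; since you multiply $(C^{J_1}d^{J_1})_j\le 0$ by $d_j$ and sum, the relevant form is $d^\top C^{J_1}d$, governed by the symmetric part $\tfrac12(C^{J_1}+(C^{J_1})^\top)$. For $G_2$ one has $C=\begin{pmatrix}2&-1\\-3&2\end{pmatrix}$, whose symmetric part $\begin{pmatrix}2&-2\\-2&2\end{pmatrix}$ is only positive \emph{semi}definite, so your inequality $d^\top C d\le 0$ does not by itself force $d=0$. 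The fix is small: either multiply by $D_jd_j$ instead of $d_j$, where $D$ is the positive diagonal symmetrizer making $DC^{J_1}$ symmetric (and then genuinely positive definite, because the root system is finite), or---cleaner---expand $\Delta$ in the simple roots rather than the coroots, so that the matrix appearing is the Gram matrix $(\langle\alpha_j,\alpha_k\rangle)_{j,k\in J_1}$, automatically symmetric positive definite since the simple roots are independent. With that correction your argument goes through.

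For comparison, the paper takes a constructive route: it builds the projection by iteratively projecting $Y_0$ onto the orthogonal complement of the currently violated simple roots, using at each step Bourbaki's lemma that the inverse of such a Gram matrix has nonnegative entries to show the sequence is $\prec$-decreasing; it then verifies that the limit is $p_{\ka^-}(Y_0)$ via the variational characterization and deduces maximality from a separately stated Lemme~\ref{pointsangulaires}. Your KKT decomposition bypasses the iteration entirely and delivers the projection together with the complementary slackness conditions in one line; your maximality argument, once repaired, is essentially a direct proof of Lemme~\ref{pointsangulaires} via positive definiteness rather than via the M-matrix property $G^{-1}\ge 0$ that the paper invokes. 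The tradeoff is that the paper isolates Lemme~\ref{pointsangulaires} as a standalone tool and reuses it later (in the proof of Théorème~\ref{diagalg}).
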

\begin{proof}
Ici encore, on identifie $\ka$ à $\ka^*$ à l'aide du produit scalaire usuel.
Soit $Y_0\in\ka$, et
\[
I_0=\{i\ |\ \bracket{\alpha_i,Y_0}\geq 0\}.
\]
Soit $Y_1$ la projection orthogonale de $Y_0$ sur la face $\bigcap_{i\in I_0}\alpha_i^\perp$, donnée par
\[
Y_1 = Y_0 - \sum_{i\in I_0} t_i^{(0)}\alpha_i,
\]
où les $t_i^{(0)}$ sont choisis de sorte que pour tout $j\in I_0$, $\bracket{\alpha_j,Y_0}=\sum_{i\in I_0} t_i^{(0)}\bracket{\alpha_j,\alpha_i}$.
Si $A=\begin{pmatrix}\bracket{\alpha_i,Y_0}\end{pmatrix}_{i\in I_0}$, ces équations s'écrivent matriciellement $A=G\cdot T^{(0)}$, où $G=(\bracket{\alpha_j,\alpha_i})_{i,j\in I_0}$.
Comme $G$ est une matrice de Gram telle que $\bracket{\alpha_j,\alpha_i}\leq 0$ si $i\neq j$, c'est une matrice inversible, et $G^{-1}$ est à coefficients positifs, d'après \cite[Chapitre~V, \S3, \no 6, Lemme~6, page~79]{bourbaki_gal4-6}.
Or $A$ est à coefficients positifs aussi, et donc le vecteur $T^{(0)}=G^{-1}A$ est à coefficients positifs.
Cela revient à dire que pour tout $i$, $\omega_i(Y_1)\leq\omega_i(Y_0)$, i.e. $Y_1\prec Y_0$.

De même, à partir de $Y_1$, on définit $I_1=\{i\ |\ \bracket{\alpha_i,Y_1}\geq 0\}$, et $Y_2$ la projection orthogonale de $Y_1$ sur $\bigcap_{i\in I_1}\alpha_i^\perp$, ...etc.
On obtient ainsi une suite de points
\[
Y_0\succ Y_1 \succ Y_2 \succ \dots
\]
Notons que pour tout $n$, $I_n\subset I_{n+1}$.
En effet, si $\bracket{\alpha_i,Y_n}\geq 0$, alors par définition, $\bracket{\alpha_i,Y_{n+1}}=0$.
En particulier, la suite $(I_n)$ est stationnaire, donc la suite $(Y_n)$ aussi.
Soit $Y$ sa limite et $I=\{i\ |\ \bracket{\alpha_i,Y}=0\}$.

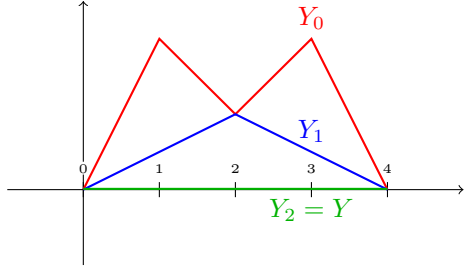
\begin{figure}[H]
\begin{center}
\begin{tikzpicture}
\draw[->] (-1,0) -- (5,0);
\draw (0,-1) -- (0,0.2);
\draw[->] (0,0.37) -- (0,2.5);
\foreach \x in {0,...,4}
{
\draw (\x,-0.1) -- (\x,0.1) node[anchor=south] {\tiny{\x}};
}
\draw[red,thick] (0,0) -- (1,2) -- (2,1) -- (3,2) node[above]{$Y_0$} -- (4,0);
\draw[blue,thick] (0,0) -- (2,1) -- (3,.5) node[above]{$Y_1$} -- (4,0);
\draw[green!70!black,thick] (0,0.01) -- (3,.01) node[below]{$Y_2=Y$} -- (4,0.01);
\end{tikzpicture}
\caption{Une suite $Y_0\succ Y_1\succ Y_2$ pour $G=\SL_4$}
\end{center}
\end{figure}

Naturellement $Y\in\ka^-$, et par construction,
\[
Y = Y_0 - \sum_{i\in I} t_i\alpha_i,
\qquad \forall i,\ t_i\geq 0.
\]
Donc $Y$ est la projection orthogonale de $Y_0$ sur $\bigcap_{i\in I}\alpha_i^\perp$.

Pour voir que $Y$ est bien égal à la projection de $Y_0$ sur $\ka^-$, on vérifie que pour tout $Y'$ dans $\ka^-$, $\bracket{Y'-Y,Y_0-Y}\leq 0$.
Écrivons $Y'-Y=\sum_{i}s_i\varpi_i$.
Pour $i\in I$, $\bracket{\alpha_i,Y}=0$ et $\bracket{\alpha_i,Y'}\leq 0$, donc $s_i\leq 0$.
Par conséquent, 
\[
\bracket{Y'-Y,Y_0-Y} = \sum_{i\in[1,r],j\in I} s_it_j\bracket{\alpha_j,\varpi_i}
= \sum_{i\in[1,r],j\in I} s_it_j\delta_{ij}
= \sum_{i\in I} s_it_i \leq 0.
\]
Donc $Y$ est la projection de $Y_0$ sur $\ka^-$.
Reste à voir que c'est bien le plus grand minorant de $Y_0$ dans $\ka^-$.

Tout d'abord, par construction, $Y$ est bien un minorant de $Y_0$.
Soit maintenant $Y'\in\ka^-$ un autre minorant de $Y_0$.
Pour tout $i\not\in I$, $\omega_i(Y)=\omega_i(Y_0)\geq \omega_i(Y')$, et le lemme ci-dessous permet d'en conclure l'inégalité souhaitée: $Y'\prec Y$.
\end{proof}

Nous concluons cette partie par le lemme technique utilisé dans la démonstration ci-dessus, et dont nous ferons encore usage au chapitre suivant.

\begin{lemma}
\label{pointsangulaires}
Soit $Y\in\ka^-$, et $I=\{i\ |\ \alpha_i(Y)=0\}$.
Si $Y'\in\ka^-$ vérifie pour chaque $i\not\in I$, $\omega_i(Y')\leq\omega_i(Y)$, alors $Y'\prec Y$.
\end{lemma}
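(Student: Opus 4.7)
The plan is to reduce the lemma to the same ingredient invoked in the proof of Proposition~\ref{inf}: the fact that the Gram matrix of a system of simple roots has an inverse with non-negative entries (Bourbaki, \cite[Chapitre~V, \S3, n°6, Lemme~6]{bourbaki_gal4-6}). Throughout, identify $\ka \cong \ka^*$ via the Weyl-invariant inner product, so that $\langle \varpi_j, \alpha_i\rangle = \delta_{ij}$ and $\omega_j = b_j \varpi_j$ with $b_j \in \mathbb{N}^*$.

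Set $Z = Y - Y'$ and decompose $Z = \sum_k t_k \alpha_k$ on the basis of simple roots. With the chosen normalization, $\omega_j(Z) = b_j t_j$, so the conclusion $Y' \prec Y$ is equivalent to the statement that all $t_j$ are non-negative. By hypothesis we already have $t_k \geq 0$ for every $k \notin I$, so it remains to prove $t_j \geq 0$ for $j \in I$.

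The extra information comes from the assumption $Y, Y' \in \ka^-$ combined with $\alpha_j(Y) = 0$ for $j \in I$: this gives $\alpha_j(Z) = -\alpha_j(Y') \geq 0$ for every $j \in I$. Expanding,
\[
\alpha_j(Z) = \sum_{k \in I} \langle \alpha_j,\alpha_k\rangle\, t_k + \sum_{k \notin I} \langle \alpha_j,\alpha_k\rangle\, t_k.
\]
The second sum is non-positive, because $\langle \alpha_j,\alpha_k\rangle \leq 0$ for $k \neq j$ (simple roots make obtuse angles) and $t_k \geq 0$ for $k \notin I$. Therefore
\[
\sum_{k \in I} \langle \alpha_j,\alpha_k\rangle\, t_k \;\geq\; -\sum_{k \notin I} \langle \alpha_j,\alpha_k\rangle\, t_k \;\geq\; 0
\]
for every $j \in I$.

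The final step is linear algebra. Let $G_I = (\langle \alpha_j,\alpha_k\rangle)_{j,k \in I}$ and let $t_I = (t_k)_{k \in I}$. The previous inequality says $G_I t_I \geq 0$ componentwise. Since $G_I$ is a Gram matrix of simple roots in an (obtuse) angle configuration, the Bourbaki lemma already used in the proof of Proposition~\ref{inf} gives $G_I^{-1} \geq 0$ componentwise. Multiplying $G_I t_I \geq 0$ on the left by the non-negative matrix $G_I^{-1}$ yields $t_I \geq 0$, which is exactly what we wanted. The only possible obstacle is notational: carefully tracking the convention $\langle \varpi_j,\alpha_i\rangle = \delta_{ij}$ so that the implication ``$t_j \geq 0$ iff $\omega_j(Z) \geq 0$'' holds on the nose; once that is pinned down, the argument is essentially a three-line application of the Perron-type lemma on Cartan Gram matrices.
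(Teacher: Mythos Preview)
Your proof is correct and follows essentially the same route as the paper: write $Y-Y'=\sum_k t_k\alpha_k$, use the hypothesis to get $t_k\ge 0$ for $k\notin I$, use $\alpha_j(Y)=0$ and $Y'\in\ka^-$ to get $\alpha_j(Y-Y')\ge 0$ for $j\in I$, isolate the $I$-block to obtain $G_I t_I\ge 0$, and conclude via the non-negativity of $G_I^{-1}$ from the Bourbaki lemma. The paper's proof is line-for-line the same argument with slightly different notation.
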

\begin{proof}
Écrivons
\[
Y-Y'=\sum_i s_i\alpha_i,
\quad\mbox{avec}\ s_i\in\R.
\]
Par hypothèse $\omega_j(Y)\geq \omega_j(Y')$ si $j\not\in I$, et donc
\[
\forall j\not\in I,\quad s_j\geq 0.
\]
D'autre part, pour $i\in I$, $\alpha_i(Y-Y')=-\alpha_i(Y')\geq 0$, et donc
\[
\forall i\in I,\quad \sum_j s_j\bracket{\alpha_i,\alpha_j}\geq 0.
\]
Par conséquent,
\[
\forall i\in I,\quad \sum_{j\in I} s_j\bracket{\alpha_i,\alpha_j}\geq -\sum_{j\not\in I}s_j\bracket{\alpha_i,\alpha_j} \geq 0.
\]
Cette inégalité peut encore s'écrire $G\cdot S\geq 0$, où $G=(\bracket{\alpha_i,\alpha_j})_{i,j\in I}$ et $S=(s_j)_{j\in I}$.
Comme $G^{-1}$ est à coefficients positifs, cela implique $S\geq 0$.
Ainsi, pour tout $i$, $s_i\geq 0$, i.e. $Y'\prec Y$.
\end{proof}

\begin{remark}
Dans le cas $G=\SL_d$, le contenu de ce dernier lemme est assez clair.
Si $Y$ est une fonction convexe sur $[0,d]$ (segment d'entiers) avec $Y(0)=Y(d)=0$, et si $Y'$ est une autre fonction convexe sur $[0,d]$, inférieure à $Y$ en tout point angulaire (i.e. extrémal du graphe), alors la fonction $Y'$ est toujours inférieure à la fonction $Y$.
\end{remark}

\chapter{Approximation des points algébriques}
\label{chap:algebrique}

\emph{Dans tout ce mémoire, contrairement à l'usage le plus répandu, on note $\QQ$ le sous-corps de $\R$ constitué des nombres algébriques sur $\Q$.}

\bigskip

Comme précédemment, $X=P\bcs G$ est une variété de drapeaux, munie de sa distance de Carnot-Carathéodory, et d'une hauteur $H_\chi$ induite par une représentation linéaire irréductible de $G$ de plus haut poids $\chi$.
Rappelons que nous avons défini l'exposant diophantien d'un point $x$ dans $X(\R)$ par
\[
\beta_\chi(x) = \inf\{ \beta\in\R\ |\ \exists c>0:\,\forall v\in X(\Q),\, d(x,v)\geq cH_\chi(v)^{-\beta}\},
\]
et qu'il existe une constante $\beta_\chi(X)$ telle que pour presque tout $x$ dans $X(\R)$, $\beta_\chi(x)=\beta_\chi(X)$.

\bigskip

Dans ce chapitre, nous étudions l'exposant diophantien $\beta_\chi(x)$ d'un point $x\in X(\QQ)$, où $\QQ$ désigne le sous-corps de $\R$ constitué des éléments algébriques sur $\Q$.
À l'aide des outils de la théorie de la réduction rappelés à la partie précédente, le théorème du sous-espace de Schmidt nous permettra de décrire le comportement asymptotique des orbites diagonales dans l'espace de réseaux $\Omega=G/\Gamma$.
Ensuite, grâce à la correspondance développée au chapitre~\ref{chap:correspondance} nous pourrons traduire ces résultats en termes d'approximation diophantienne des points de $X$.

\section{Orbites diagonales algébriques dans $G/\Gamma$}


Dans ce paragraphe, $G$ est un $\Q$-groupe semi-simple, $T\subset G$ un tore $\Q$-déployé maximal, $A=T^0(\R)$ la composante neutre des points réels de $T$, et $\ka$ son algèbre de Lie.
Soit $(a_t)_{t>0}$ un sous-groupe à un paramètre de $A$.
Écrivant $a_t=e^{tY}$ pour un certain $Y\in\ka$, on peut choisir un ordre sur $\ka$ de sorte que $Y$ appartienne à la chambre de Weyl négative $\ka^-$ dans $\ka$:
\[
\forall t\in\R,\ a_t=e^{tY},\qquad Y\in\ka^-.
\]

\begin{remark}
Dans ce paragraphe, l'élément $Y$ est quelconque dans $\ka$, et nous n'étudions que l'espace de réseaux $\Omega=G/\Gamma$.
Plus tard, nous choisirons l'élément $Y$ comme en \eqref{at1}, et la correspondance établie au chapitre~\ref{chap:correspondance} nous permettra d'obtenir les résultats l'approximation diophantienne que nous avons en vue.
\end{remark}

Soit alors $B$ le $\Q$-sous-groupe parabolique minimal correspondant à l'ensemble des racines positives pour l'ordre choisi sur $\ka$, et 
\[
P = \{g\in G\ |\ \lim_{t\to\infty} a_tga_{-t}\ \mathrm{existe}\}
\]
le sous-groupe parabolique associé à $(a_t)$.
Soit $W$ le groupe de Weyl associé à $G$ et $T$, quotient du normalisateur de $T$ par son centralisateur, et
\[
W_P = (W\cap P)\bcs W.
\]
La décomposition de Bruhat \cite[Théorème~5.15]{boreltits} de $G$ permet d'écrire
\[
G = \bigsqcup_{w\in W_P} PwB.
\]
On fixe aussi un sous-groupe compact maximal $K$ dans $G(\R)$.
Enfin, étant donné un sous-groupe arithmétique $\Gamma$ dans $G$, on fixe une partie finie $C\subset G(\Q)$ et un ensemble de Siegel $\FS=KA_u\omega$ par rapport à $K$, $B$ et $T$ tel que
\[
G = \FS C \Gamma.
\]
Rappelons qu'une décomposition de Siegel d'un élément $g\in G$ est une écriture de $g$ sous la forme $g=kan\gamma$, avec $k\in K$, $a\in A_u=\exp\ka_u^-$, $n\in\omega$ et $\gamma\in C\Gamma$.
Ce paragraphe a pour but le théorème suivant.

\begin{theorem}[Orbites diagonales des points algébriques dans $\Omega$]
\label{diagalg}
Soit $Y\in\ka^-$, et $(a_t)=(e^{tY})_{t>0}$ le sous-groupe à un paramètre associé.
Soit $s\in G(\QQ)$.
Pour tout $t>0$, soit
\[
a_t s = k_t b_t n_t \gamma_t,
\]
une décomposition de Siegel de $a_ts$.
\begin{enumerate}
\item Il existe un élément $c_\infty\in\ka^-$ tel que
\(
\lim_{t\to\infty}\frac{1}{t}\log b_t = c_\infty.
\)
\item Si $\theta_\infty=\{\alpha\in\Pi\ |\ \alpha(c_\infty)=0\}$ et $Q_\infty=P_{\theta_\infty}$ est le sous-groupe parabolique associé à $\theta_\infty$, alors $Q_\infty\gamma_t=Q_\infty\gamma_\infty$ est indépendant de $t$ au voisinage de l'infini.
\note{$w\in W_P$ est uniquement défini par la condition $s\gamma_\infty^{-1}\in PwB$ mais dépend du choix de $\gamma_\infty$ si $Q_\infty$ contient strictement $P$.}
\item Soit $w\in W_P$ tel que $s\gamma_\infty^{-1}\in PwB$.
Alors $c_\infty=p_{\ka^-}(Y^w)$, où $Y^w=(\Ad w)^{-1}Y$, et $p_{\ka^-}:\ka\to\ka^-$ désigne la projection sur le convexe $\ka^-$.
\end{enumerate}
\end{theorem}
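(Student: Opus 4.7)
\medskip

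\noindent\textbf{Plan of proof.} My plan is to reduce everything to controlling, for each fundamental representation $V_i$, the asymptotic behavior of the fundamental covolume $\mu_i(a_t s)$. Indeed, in a Siegel decomposition $a_t s = k_t b_t n_t \gamma_t$, the diagonal part satisfies $\log b_t = c(a_t s) + O(1)$, where the function $c:G\to\ka^-$ constructed in \S\ref{sec:fonctionc} is defined via $\omega_i(c_0(g)) = \log\mu_i(g)$ followed by projection onto $\ka^-$. Thus claim~(1) of the theorem reduces to proving that $\frac{1}{t}\log\mu_i(a_t s)$ has a limit $\omega_i(c_\infty) \in \R$ for every $i\in[1,r]$, and then verifying that the resulting $c_\infty$ already lies in $\ka^-$.

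\medskip

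\noindent The second step is to compute $\mu_i(a_t s)$ via Bruhat decomposition. Since $V_i(\Z)\cap\tx_i$ is a finite union of $\Gamma$-orbits of vectors of the form $c\cdot e_i$ with $c \in G(\Q)$, one can write
\[
\mu_i(a_t s) \asymp \min_{\gamma \in G(\Q)} \|a_t s \gamma^{-1} e_i\|,
\]
where $\gamma$ ranges over finitely many $\Gamma$-cosets. For each such $\gamma$, decompose $s\gamma^{-1} = p_\gamma w_\gamma b_\gamma$ with $p_\gamma\in P$, $w_\gamma\in W_P$, $b_\gamma\in B$. Since $e_i$ is a highest weight vector with weight $\omega_i$, we have $b_\gamma e_i = \chi_i(b_\gamma)\, e_i$, and since $a_t p_\gamma a_t^{-1}$ converges as $t\to+\infty$ (because $p_\gamma\in P$), an elementary calculation gives
\[
\|a_t s\gamma^{-1} e_i\| \asymp |\chi_i(b_\gamma)|\cdot e^{t\,\omega_i(Y^{w_\gamma})}.
\]
Taking the minimum over $\gamma$, grouping terms according to the Bruhat cell $PwB$, one obtains
\[
\log\mu_i(a_t s) \approx \min_{w\in W_P}\,\Bigl[\, t\,\omega_i(Y^w) + \inf_{\gamma:\, s\gamma^{-1}\in PwB} \log|\chi_i(b_\gamma)|\, \Bigr].
\]

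\medskip

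\noindent Here enters the main input, the strong subspace theorem of Schmidt. Applied to the linear forms with algebraic coefficients that, after multiplication by suitable powers of $a_t$, control $|\chi_i(b_\gamma)|$ in terms of the height of $\gamma\in G(\Q)$, it ensures that for every $\eps>0$ only finitely many $\gamma$ with $s\gamma^{-1}\in PwB$ satisfy $|\chi_i(b_\gamma)|\leq H(\gamma)^{-\eps}$. Since the relevant heights grow at most polynomially along the orbit $(a_t s)$, this shows that the infimum in the bracket above is $o(t)$, so that $\frac{1}{t}\log\mu_i(a_t s)$ converges to $\min\bigl\{\omega_i(Y^w)\ :\ w\in W_P \text{ and }\exists\gamma\in G(\Q),\, s\gamma^{-1}\in PwB\bigr\}$. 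Using the partial order on $\ka$ studied in \S\ref{sec:ordre} and Proposition~\ref{inf}, together with the fact that the set of minorants of $c_0(g)$ in $\ka^-$ has a largest element equal to $p_{\ka^-}(c_0(g))$, one deduces the existence of $c_\infty\in\ka^-$ with $\frac{1}{t}c(a_t s)\to c_\infty$, attained by some $w$ and some $\gamma_\infty\in G(\Q)$ with $s\gamma_\infty^{-1}\in PwB$, yielding (3). Finally, claim~(2) follows from Proposition~\ref{drapartiel}: for $t$ large enough, the set $\theta_{a_t s}$ stabilizes to $\theta_\infty$, the parabolic $Q_{a_t s}$ is contained in $Q_\infty$, and the flag $Q_\infty\gamma_t$ is locally constant along the smooth curve $t\mapsto a_t s$, hence constant.

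\medskip

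\noindent\textbf{Main obstacle.} The delicate point is certainly the application of Schmidt's subspace theorem. One must set up the relevant system of linear forms so that the height of $\gamma$ (measured in a chosen rational representation of $G$) interacts correctly with the weight decomposition of $V_i$ under $T$, and one must carefully quantify how $|\chi_i(b_\gamma)|$ relates to heights of $\gamma$ for $\gamma$'s lying in a fixed Bruhat cell. The algebraic hypothesis $s\in G(\QQ)$ is precisely what makes Schmidt's theorem applicable, and the bookkeeping between the various Weyl representatives $w\in W_P$, the parabolic $P$ associated to $(a_t)$, and the choice of $Y\in\ka^-$ (which need not satisfy $\alpha(Y)<0$ for all simple $\alpha$) requires careful analysis of the interplay between the Bruhat decomposition and the weight spaces.
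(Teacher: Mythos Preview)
Your overall strategy --- reduce to the asymptotics of $\mu_i(a_ts)$, use Bruhat decomposition, then invoke Schmidt --- is the right circle of ideas, but the proof has a real gap in the order of operations, and the paper's argument is organized differently precisely to avoid it.

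\medskip

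\textbf{The gap.} Your key estimate
\[
\|a_t s\gamma^{-1} e_i\| \asymp |\chi_i(b_\gamma)|\cdot e^{t\,\omega_i(Y^{w_\gamma})}
\]
hides a constant depending on $\gamma$: the factor $a_t p_\gamma a_t^{-1}$ converges as $t\to\infty$, but its limit (and hence the implied constant) depends on $p_\gamma$. Since you then minimize over \emph{infinitely many} $\gamma\in G(\Q)$, this non-uniformity is fatal. Relatedly, your claim that Schmidt's theorem controls $|\chi_i(b_\gamma)|$ in terms of $H(\gamma)$ is not substantiated: the map $\gamma\mapsto b_\gamma$ arising from Bruhat decomposition is highly nonlinear, and it is not clear how to recast $|\chi_i(b_\gamma)|\le H(\gamma)^{-\eps}$ as a product-of-linear-forms inequality to which the subspace theorem applies. (You flag this yourself as the ``main obstacle'', but the sketch does not resolve it.) Finally, the asserted formula $\lim\frac{1}{t}\log\mu_i(a_ts)=\min_w\omega_i(Y^w)$ is not quite right: different $i$ may pick out different $w$, and the correct limit for $i\in\theta_\infty$ is $\omega_i(p_{\ka^-}(Y^w))$, not $\omega_i(Y^w)$.

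\medskip

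\textbf{What the paper does instead.} The paper \emph{first} proves existence of $c_\infty$ without any Bruhat analysis: it picks a single faithful rational representation $\rho:G\to\SL_d$ and applies the strong subspace theorem, via Proposition~\ref{sstdrap}, directly to the lattice $\rho(a_ts)\Z^d$. This gives convergence of $\frac{1}{t}\log\lambda_i(\rho(a_ts)\Z^d)$ for each $i$, hence of $\frac{1}{t}\log\rho(b_t)$ (since the successive minima are essentially the eigenvalues of $\rho(b_t)$), hence of $\frac{1}{t}\log b_t$ by injectivity of $\rho$ on $A$. Part~(2) then follows from Proposition~\ref{drapartiel} exactly as you say. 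Only \emph{after} $\gamma_\infty$ is pinned down does the paper do a Bruhat decomposition, and only for the single element $s\gamma_\infty^{-1}=pwb$. Proposition~\ref{racine} then gives $\mu_k(a_ts)\asymp\|a_ts\gamma_\infty^{-1}e_k\|\asymp e^{t\omega_k(Y^w)}$ for $k\notin\theta_\infty$, and Lemma~\ref{pointsangulaires} upgrades the resulting inequalities $c_\infty\prec Y^w$ and $\omega_k(c_\infty)=\omega_k(Y^w)$ (for $k\notin\theta_\infty$) to $c_\infty=p_{\ka^-}(Y^w)$.

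\medskip

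In short: you attempt to compute the value of the limit before knowing it exists, which forces you to handle all $\gamma$ simultaneously and leads to the uniformity and Schmidt-setup problems above. The paper reverses the order --- existence first (via successive minima in one faithful representation), identification of $\gamma_\infty$ second, computation third --- and thereby only ever needs Bruhat for one $\gamma$.
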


\begin{remark}
Soit $c:G\to\ka^-$ la fonction définie au chapitre~\ref{chap:reduction}, telle que si $g=kan\gamma$ est une décomposition de Siegel, alors $a=e^{c(g)+O(1)}$.
Le premier point du théorème se réécrit $\lim_{t\to\infty}\frac{1}{t}c(a_ts)=c_\infty$.
Avec les notations de la proposition~\ref{drapartiel}, cela implique en particulier que pour tout $t>0$ assez grand, $Q_{a_ts}\subset Q_\infty$.
Le second point énonce alors que le drapeau partiel $D_{a_ts}\in Q_\infty\bcs G$ associé à $a_ts$ est constant au voisinage de l'infini.
\end{remark}

\begin{remark}
Vu l'énoncé du théorème, la formule pour $c_\infty$ ne doit pas dépendre du choix de $\gamma_\infty$.
En d'autres termes $c_\infty$ est déterminé par la variété $PwQ_\infty\gamma_\infty$ contenant $s$.
C'est d'ailleurs ce que donne la démonstration du théorème.
\end{remark}

\note{
Soit $\delta_\infty=q\gamma_\infty$ avec $q\in Q_\infty$.
Alors, $s\delta_\infty^{-1}\in Pw\sigma B$, avec $\sigma\in ??$.
Il s'agit de voir que $p_{\ka^-}(Y^w)=p_{\ka^-}(Y^{w\sigma})$.
Cela ne semble pas tout à fait évident.
}

La démonstration du théorème~\ref{diagalg} repose sur le théorème suivant, dû à Schmidt \cite[Theorem~3A]{schmidt_da}.

\begin{theorem}[Théorème du sous-espace fort]
\label{sst}
Soit $(a_t)$ un sous-groupe diagonal à un paramètre dans $\GL_d(\R)$ et $L\in\GL_d(\QQ)$.
Notons $\lambda_1(t)\leq\dots\leq\lambda_d(t)$ les minima successifs du réseau $a_tL\Z^d$, pour $t>0$, et supposons qu'il existe $\delta>0$, $i\in[1,d[$, et un ensemble non borné $\mathfrak{N}$ tels que,
\[
\forall t\in\mathfrak{N},\quad \lambda_i(t) \leq e^{-\delta t} \lambda_{i+1}(t).
\]
Alors, il existe un sous-espace rationnel $T\leq\Q^d$ de dimension $i$ et un sous-ensemble non borné $\mathfrak{N}'\subset\mathfrak{N}$ tels que pour tout $t\in\mathfrak{N}'$, les $i$ premiers minima successifs de $a_tL\Z^d$ sont réalisés en des vecteurs $v_1,\dots,v_i$ de $a_tLT$.
\end{theorem}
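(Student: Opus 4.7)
The plan is to reduce the statement to Schmidt's original subspace theorem for linear forms, by passing to the $i$-th exterior power in order to convert the gap in successive minima into a single unusually short direction.

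First, I would set $V = \wedge^i \R^d$, $N = \binom{d}{i}$, and consider the lattice $\Delta_t = (\wedge^i a_t)(\wedge^i L)\Z^N \subset V$. The operator $\wedge^i a_t$ is again diagonal, its eigenvalues being the products of $i$ distinct eigenvalues of $a_t$, and the matrix $\wedge^i L$ still has algebraic entries. Choosing $v_1(t),\dots,v_i(t) \in L\Z^d$ that realise $\lambda_1(t),\dots,\lambda_i(t)$, the decomposable vector $\xi_t := v_1(t) \wedge \cdots \wedge v_i(t)$ belongs to $\wedge^i L \Z^d$ and satisfies $\norm{(\wedge^i a_t)\xi_t} \asymp \lambda_1(t)\cdots\lambda_i(t)$. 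Applying Minkowski's second theorem (Theorem~\ref{minkowski}) to both $a_tL\Z^d$ and $\Delta_t$ shows that $\xi_t$ realises the first successive minimum of $\Delta_t$, while the second successive minimum satisfies $\mu_2(\Delta_t) \asymp \lambda_1(t)\cdots\lambda_{i-1}(t)\lambda_{i+1}(t)$. The hypothesis $\lambda_i(t) \leq e^{-\delta t}\lambda_{i+1}(t)$ therefore yields the key gap $\mu_1(\Delta_t) \leq e^{-\delta t}\mu_2(\Delta_t)$ for every $t \in \mathfrak{N}$.

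Next, I would apply the classical Schmidt subspace theorem to the $N$ linear forms given by the rows of $(\wedge^i a_t)(\wedge^i L)$: their coefficients are algebraic, they are linearly independent since $\wedge^i L$ is invertible, and the gap above combined with $\mu_1(\Delta_t) \cdots \mu_N(\Delta_t) \asymp \covol(\Delta_t)$ implies that, evaluated on the primitive representative of $\xi_t$, the product of the absolute values of these forms is bounded by $e^{-\delta' t}\norm{\xi_t}^{-\eps}$ for suitable $\delta',\eps > 0$. Schmidt's theorem then confines these primitive vectors to a finite union of proper rational subspaces of $\Q^N$, and a pigeonhole argument produces a single proper rational subspace $W \subset \Q^N$ and an unbounded subset $\mathfrak{N}_1 \subset \mathfrak{N}$ for which $\xi_t \in W$ whenever $t \in \mathfrak{N}_1$.

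The third step, which I expect to be the main obstacle, is to replace this finite union of subspaces by a single rational $i$-plane $T \subset \Q^d$. Since $\xi_t$ is decomposable, it corresponds under the Plücker embedding to a rational $i$-plane $T_t \subset \Q^d$, and the condition $\xi_t \in W$ forces $[T_t]$ to lie in the proper closed subvariety $\Grass(i,d) \cap \mathbb{P}(W)$. I would then iterate: restrict the diagonal action to $\Delta_t \cap W$, renormalise, and re-apply Schmidt's theorem to the rescaled short direction inside $W$. A noetherian induction on the dimension of the Zariski closure of $\{[T_t]\}$ terminates in finitely many steps with a single point, producing a constant rational $i$-plane $T$ and a final unbounded $\mathfrak{N}' \subset \mathfrak{N}$ along which the first $i$ successive minima of $a_tL\Z^d$ are realised by vectors of $a_tLT$, as required. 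The delicate part of this iteration is checking that at each stage the restricted diagonal flow still gives rise to linearly independent algebraic linear forms meeting the hypotheses of Schmidt's theorem, and that the renormalised gap survives the passage to $W$.
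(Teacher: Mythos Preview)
First, note that the paper does not prove this theorem: it is quoted from Schmidt's book \cite[Theorem~3A]{schmidt_da} and used as a black box in the proof of Theorem~\ref{diagalg}. So there is no argument in the paper to compare against.

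Your first two steps are essentially correct. Passing to $\wedge^i\R^d$ is standard; the comparisons $\mu_1(\Delta_t)\asymp\lambda_1\cdots\lambda_i$ and $\mu_2(\Delta_t)\asymp\lambda_1\cdots\lambda_{i-1}\lambda_{i+1}$ require Mahler's theory of compound lattices, which you invoke without naming. From $\mu_1/\mu_2 \le e^{-\delta t}$ and Minkowski one gets $\prod_j|M_j(w_t)|\ll e^{-\delta t}$ for $M_j$ the rows of $\wedge^iL$, and since $\|w_t\|\le e^{Ct}$ this yields $\prod_j|M_j(w_t)|\ll\|w_t\|^{-\eps}$ for $\eps$ small. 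Schmidt's subspace theorem then confines the primitive representatives $w_t$ to finitely many proper rational hyperplanes of $\Q^N$, and pigeonhole selects one hyperplane $W$.

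The gap is in step three, precisely where you flag it. Your phrase ``restrict the diagonal action to $\Delta_t\cap W$'' has no meaning: the rational hyperplane $W\subset\Q^N$ produced by Schmidt has no reason to be invariant under $\wedge^ia_t$, so there is no restricted diagonal flow. One can try instead to pick $m=\dim W$ of the forms $M_j|_W$ that remain linearly independent on $W$ and re-apply Schmidt there; but then the required inequality $\prod_k|M_{j_k}(w_t)|\ll\|w_t\|^{-\eps'}$ needs control of $\mu_1(\Delta_t)^m/\prod_kc_{j_k}(t)$, and for the indices $j_k$ forced by linear independence on $W$ this ratio need not decay --- the weights $c_{j_k}$ you are obliged to keep may be the small ones. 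You correctly identify this as ``the delicate part'' but do not resolve it; the Noetherian descent on the Zariski closure of $\{[T_t]\}$ in the Grassmannian is a statement of intent rather than an argument. Schmidt's own route to Theorem~3A does not proceed by a clean reduction of this kind: the passage from ``Pl\"ucker vector constrained to a hyperplane'' to ``single rational $i$-plane'' goes through his full machinery for general systems of inequalities, not by iterating the product-of-forms version inside $W$.
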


Ce théorème implique d'ailleurs la proposition suivante, qui décrit de façon plus précise le comportement de l'orbite dans l'espace des réseaux.

\begin{proposition}
\label{sstdrap}
Soit $(a_t)$ un sous-groupe diagonal à un paramètre dans $\GL_d(\R)$ et $L\in\GL_d(\QQ)$.
Pour $t>0$, notons $\lambda_1(t)\leq\dots\leq\lambda_d(t)$ les minima successifs du réseau $a_tL\Z^d$.
Alors, pour chaque $i\in[1,d]$ la limite
\[
\Lambda_i = \lim_{t\to\infty} \frac{1}{t}\log\lambda_i(t)
\]
existe.
De plus, si $\Lambda_i<\Lambda_{i+1}$, alors il existe un unique sous-espace rationnel $S_i$ de dimension $i$ tel que pour tout $t>0$ suffisamment grand, les $i$ premiers minima successifs de $a_tL\Z^d$ sont atteints dans $S_i$.
\end{proposition}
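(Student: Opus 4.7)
The plan is to combine the strong subspace theorem (Theorem~\ref{sst}) with the Mahler--Minkowski characterization of successive minima by covolumes of rational sublattices, and an asymptotic analysis of these covolumes along the diagonal orbit.

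Writing $a_t=\diag(e^{ty_1},\dots,e^{ty_d})$, the flow $a_t$ acts on $\wedge^j\R^d$ diagonally with eigenvalues $e^{ty_J}$, $y_J:=\sum_{k\in J}y_k$, indexed by subsets $J\subset[1,d]$ of size $j$. For any rational subspace $V\leq\Q^d$ of dimension $j$, pick a primitive generator $v_V\in\wedge^j\Z^d$ representing $V$ and expand $Lv_V=\sum_J c_J e_J$ with $c_J\in\QQ$. Then
\[
\covol(a_tLV(\Z))=\|a_tLv_V\|=\bigl(\sum_J c_J^2\,e^{2ty_J}\bigr)^{1/2},
\]
so that $\frac{1}{t}\log\covol(a_tLV(\Z))\to\sigma(V):=\max\{y_J:c_J\neq 0\}$. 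Combined with the standard bound $\lambda_1(\Delta)\cdots\lambda_j(\Delta)\asymp\min_V\covol(\Delta\cap V)$ applied to $\Delta=a_tL\Z^d$, this already gives
\[
\limsup_{t\to\infty}\frac{1}{t}F_j(t)\leq M_j:=\min_V\sigma(V),\qquad F_j(t):=\sum_{k=1}^j\log\lambda_k(t).
\]

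To prove existence of the limits $\Lambda_i$, I would extract an arbitrary subsequence $(t_n)\to\infty$ along which each $\frac{1}{t_n}\log\lambda_k(t_n)$ converges to a value $\Lambda_k^\infty$, and show that the profile $(\Lambda_k^\infty)_k$ is independent of $(t_n)$. At each index $i$ with $\Lambda_i^\infty<\Lambda_{i+1}^\infty$, the exponential decay of $\lambda_i/\lambda_{i+1}$ along $(t_n)$ lets Theorem~\ref{sst} produce a rational $i$-subspace $T_i$ and a further subsequence along which the first $i$ minima all lie in $a_tLT_i$; on this sub-subsequence $F_i(t)=\log\covol(a_tLT_i(\Z))+O(1)\sim t\,\sigma(T_i)$, which together with the upper bound $\limsup F_i/t\leq M_i$ forces $\sum_{k\leq i}\Lambda_k^\infty=\sigma(T_i)=M_i$. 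Since $(\Lambda_k^\infty)_k$ is non-decreasing, this matching of partial sums at every jump index, together with Minkowski's second theorem $\sum_{k=1}^d\Lambda_k^\infty=\trace(Y)$, forces the whole profile to coincide with the slopes of the lower convex hull of the points $(j,M_j)_{0\leq j\leq d}$. This profile is independent of the subsequence, so the limits $\Lambda_i=\lim_t\frac{1}{t}\log\lambda_i(t)$ exist.

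Now assume $\Lambda_i<\Lambda_{i+1}$. For $t$ large, $\lambda_i(t)<\lambda_{i+1}(t)$ strictly, so the first $i$ minima vectors span a uniquely determined subspace $W(t)=a_tLV(t)\subset\R^d$, where $V(t):=\mathrm{span}_{\Q}(u_1,\dots,u_i)\leq\Q^d$ is a rational $i$-subspace obtained by pulling back the minima to $\Z^d$ via $u_k=L^{-1}a_t^{-1}v_k$. Theorem~\ref{sst} produces an unbounded set on which $V(t)$ equals a fixed rational subspace $S_i$ with $\sigma(S_i)=M_i$. The main obstacle is to rule out oscillation of $V(t)$: if $V(t)=S_i'\neq S_i$ on some other unbounded set, a further application of Theorem~\ref{sst} would produce such an $S_i'$ also satisfying $\sigma(S_i')=M_i$. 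I would derive a contradiction by analysing the sublattice $a_tL(S_i+S_i')(\Z)$, of rank $>i$: at a transition time $t^*$ between $S_i$ and $S_i'$, it contains vectors drawn from both $a_tLS_i$ and $a_tLS_i'$ that are simultaneously of norm $\ll e^{t^*\Lambda_i}$ and linearly independent of any fixed $i$-tuple, producing $i+1$ independent short vectors; this contradicts the exponential separation $\lambda_{i+1}(t^*)\gg e^{t^*\Lambda_{i+1}}$. Thus $V(t)=S_i$ for all $t$ large, and $S_i$ is then uniquely characterized as $L^{-1}a_t^{-1}W(t)$ for any sufficiently large $t$.
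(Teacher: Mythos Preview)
Your argument for the existence of the limits $\Lambda_i$ via the lower convex hull of the points $(j,M_j)$ is correct and genuinely different from the paper's. The paper sets $\Lambda_i=\liminf\frac1t\log\lambda_i(t)$ and builds the flag $S_1<\dots<S_s$ by induction on the jump index $k$: at each step it applies Theorem~\ref{sst} along a sequence realising this $\liminf$ to produce $S_k\supset S_{k-1}$, then uses Minkowski's second theorem in the quotient $S_k/S_{k-1}$ to upgrade $\liminf$ to $\lim$ on the range $i_{k-1}<i\le i_k$. Your convex-hull picture yields all the limits at once and the pleasant identity $\sum_{k\le i}\Lambda_k=M_i$ at each vertex, though it does not by itself produce the flag.

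Your second part, however, has a genuine gap. You argue that ``at a transition time $t^*$ between $S_i$ and $S_i'$'' both subspaces simultaneously carry $i$ short independent vectors. But you have neither shown that such transition times exist nor excluded the possibility that $V(t)$ visits infinitely many distinct rational subspaces, each only on a bounded set; your two-value setup and the second invocation of Theorem~\ref{sst} do not cover this. The cleanest repair is to note that once $\lambda_i(t)<\lambda_{i+1}(t)$ (which holds for all large $t$ by the first part), the finite set $\{v\in\Z^d:\|a_tLv\|<\tfrac12(\lambda_i(t)+\lambda_{i+1}(t))\}$ is unchanged under small perturbations of $t$, so $V(t)$ is \emph{locally constant}; as $[T_0,\infty)$ is connected, $V(t)\equiv S_i$ is constant, and no further use of Theorem~\ref{sst} is needed. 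The paper takes a different route here: using the inductive flag $S_{k-1}<S_k$ together with Minkowski in $S_k$, it shows directly that $a_tL(S_k\cap\Z^d)$ contains $i_k$ independent vectors of norm $\le e^{t(\Lambda_{i_k}+\eps)}$ for \emph{every} large $t$, which combined with $\lambda_{i_k+1}(t)\ge e^{t(\Lambda_{i_k}+2\eps)}$ forces the first $i_k$ minima into $S_k$.
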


\begin{remark}
Les sous-espaces $S_i$ forment un drapeau partiel dans $\Q^d$.
\end{remark}

\begin{proof}
Pour $i=1,\dots,d$, posons
\[
\Lambda_i=\liminf_{t\to\infty} \frac{1}{t}\log\lambda_i(t).
\]
On définit les indices $i_0,i_1,\dots,i_s,i_{s+1}$ par $i_0=0$, $i_{s+1}=d$, et 
\[
\Lambda_1=\dots=\Lambda_{i_1}<\Lambda_{i_1+1}=\dots=\Lambda_{i_2}<\dots<\Lambda_{i_s+1}=\dots=\Lambda_d.
\]
Nous allons montrer par récurrence sur $k\in[0,s]$ la propriété suivante:
Il existe un drapeau partiel $0=S_0<S_1<\dots<S_k$ de sous-espaces rationnels tel que pour chaque $\ell\in[1,k]$,
\begin{itemize}
\item $\dim S_\ell = i_\ell$;
\item $\forall t>0$ assez grand, $a_tLS_\ell$ contient les $i_\ell$ premiers minima de $a_tL\Z^d$;
\item si $i_{\ell-1}<i\leq i_\ell$, alors $\lim_{t\to\infty}\frac{1}{t}\log\lambda_i(t)=\Lambda_{i_\ell} = \frac{\tau(S_\ell)-\tau(S_{\ell-1})}{i_\ell-i_{\ell-1}}$.
\end{itemize}
Dans la formule ci-dessus, $\tau(W)$ désigne le taux de dilatation du sous-espace $W$ par le flot $(a_tL)$.
Si $\bw$ représente $W$ dans une puissance extérieure de $\R^d$, dans la décomposition de $L\bw$ suivant les espaces propres de $a_t$, la plus grande valeur propre apparaissant est égale à $e^{t\tau(W)}$.
De manière équivalente,
\[
\tau(W) = \lim_{t\to\infty} \frac{1}{t}\log\norm{a_tL\bw}.
\]
Pour $k=0$, la propriété est triviale.
Supposons donc la propriété vraie pour $k-1$, avec $k\in[1,s+1]$.
Soit $(t_n)$ une suite qui tend vers l'infini telle que $\Lambda_{i_k}=\lim\frac{1}{t_n}\log\lambda_{i_k}(t_n)$.
Posant $\delta=\frac{1}{3}(\Lambda_{i_k+1}-\Lambda_{i_k})$, on a, pour $n$ suffisamment grand,
\[
\lambda_{i_k}(a_{t_n}\Delta)\leq e^{t_n(\Lambda_{i_k}+\delta)}
\leq e^{t_n(\Lambda_{i_k+1}-2\delta)}
\leq e^{-t_n\delta}\lambda_{i_k+1}(a_{t_n}\Delta).
\]
D'après le théorème~\ref{sst} ci-dessus, il existe un sous-espace rationnel $S_k$ de dimension $i_k$ tel que le long d'une sous-suite de $(t_n)$, les $i_k$ premiers minima de $a_{t_n}L\Z^d$ sont toujours atteints dans $a_{t_n}LS_k$.

Par hypothèse de récurrence, pour $\ell<k$, si $i_{\ell-1}<i\leq i_\ell$,
\[ \lim\frac{1}{t_n}\log\lambda_i(t_n)=\Lambda_{i_\ell},\]
et par définition de $(t_n)$ et des indices $i_{k-1}$ et $i_k$, 
\[ \lim\frac{1}{t_n}\log\lambda_{i_k}(t_n) = \Lambda_{i_k} =\dots= \Lambda_{i_{k-1}+1}=\lim\frac{1}{t_n}\log\lambda_{i_{k-1}+1}(t_n).\]
Comme $a_{t_n}LS_k$ contient les $i_k$ premiers minima, cela implique 
\[ \covol_{a_{t_n}LS_k}(a_{t_n}L(S_k\cap\Z^d))\leq e^{t_n\sum_{\ell=1}^k(i_\ell-i_{\ell-1})\Lambda_{i_\ell}+o(t_n)}\]
d'où l'on tire
\[ \tau(S_k)\leq \sum_{\ell=1}^k (i_\ell-i_{\ell-1})\Lambda_{i_\ell}.\]
Mais d'après l'hypothèse de récurrence, $\tau(S_{k-1}) = \sum_{\ell=1}^{k-1} (i_\ell-i_{\ell-1})\Lambda_{i_\ell}$, et donc
\begin{equation}
\label{ikmoins}
 \frac{\tau(S_k)-\tau(S_{k-1})}{i_k-i_{k-1}} \leq \Lambda_{i_k}=\dots=\Lambda_{i_{k-1}+1}.
\end{equation}
Comme l'hypothèse de récurrence implique $S_{k-1}\leq S_k$, le second théorème de Minkowski appliqué dans $a_tLS_k$, donne, pour tout $t>0$ assez grand,
\begin{align*}
\lambda_{i_{k-1}+1}(t) \dots \lambda_{i_k}(t)
& \ll \left(\frac{\covol(a_{t}(S_k\cap\Z^d))}{\covol(a_{t}(S_{k-1}\cap\Z^d))}\right)\\
& \leq \exp\big[t(\tau(S_k)-\tau(S_{k-1}))+o(t)\big].
\end{align*}
Avec \eqref{ikmoins}, cela montre qu'on doit avoir, pour chaque $i\in[i_{k-1}+1,i_k]$,
\[ \Lambda_i=\lim\frac{1}{t}\log\lambda_i(t)=\frac{\tau(S_k)-\tau(S_{k-1})}{i_k-i_{k-1}}.\]
Pour $\eps=\frac{1}{3}(\Lambda_{i_k+1}-\Lambda_{i_k})$, pour $t>0$ assez grand, le sous-réseau $a_tL(S_k\cap\Z^d)$ contient $i_k$ vecteurs linéairement indépendants de norme inférieure à $e^{t(\Lambda_{i_k}+\eps)}$, et comme $\Lambda_{i_k+1}>\Lambda_{i_k}+2\eps$, tout vecteur de $a_tL\Z^d$ hors de $a_tLS_k$ est de norme supérieure à $e^{t(\Lambda_{i_k}+2\eps)}$.
Cela montre que pour $t>0$ assez grand, les $i_k$ premiers minima de $a_tL\Z^d$ sont atteints dans $a_tLS_k$.
\end{proof}

Nous pouvons enfin démontrer le théorème~\ref{diagalg}.

\begin{proof}[Démonstration du théorème~\ref{diagalg}]
Soit $\rho:G\to\SL_d$ une représentation rationnelle fidèle de $G$.
Pour $t>0$, et $i\in[1,d]$, notons $\lambda_i(t)=\lambda_i(\rho(a_ts)\Z^d)$.
D'après le corollaire~\ref{sstdrap}, les limites
\[
\Lambda_i = \lim_{t\to\infty}\frac{1}{t}\log\lambda_i(t)
\]
sont bien définies.
Si $a_ts = k_tb_tn_t\gamma_t$ est une décomposition de Siegel de $a_ts$, alors, à une constante multiplicative près, les minima successifs de $\rho(a_ts)\Z^d$ s'obtiennent en ordonnant les valeurs propres de la matrice diagonale $\rho(b_t)$.
Cela montre que la limite $\lim_{t\to+\infty}\frac{1}{t}\log\rho(b_t)$ est bien définie, et comme $\rho:A \to (\R_+^*)^d$ est un morphisme de groupes injectif, il en découle que la limite
\[
c_\infty = \lim_{t\to\infty} \frac{1}{t}\log b_t
\]
existe.
Comme $d(\log b_t,\ka^-)=O(1)$, on a bien sûr $c_\infty\in\ka^-$.
Cela montre le premier point du théorème.
Ensuite, la proposition~\ref{drapartiel} montre que pour tout $t>0$ suffisamment grand, l'application $t\mapsto Q_\infty \gamma_t$ est localement constante.
Par connexité, elle est donc constante au voisinage de l'infini.

\smallskip

Fixons maintenant $\gamma_\infty\in G(\Q)$ tel que pour tout $t$ suffisamment grand, $Q_\infty\gamma_t = Q_\infty\gamma_\infty$.
D'après la proposition~\ref{racine}, pour chaque $k\not\in\theta_\infty$, le plus petit vecteur de $a_tsV_k(\Z)$ est atteint dans la direction $a_ts\gamma_\infty^{-1}e_k$, et
\[
\mu_k(a_ts) = \min\{\norm{g\bv}\ ;\ \bv\in V_k(\Z)\cap\tx_k\} \asymp \norm{a_ts\gamma_\infty^{-1}e_k}.
\]
Écrivons $s\gamma_\infty^{-1}=pwb$, avec $p\in P$, $w\in W_P$ et $b\in B$.
Alors, pour $t>0$ grand,
\[
\norm{a_ts\gamma_\infty^{-1}e_k} \asymp \norm{a_twbe_k} \asymp \norm{a_twe_k} \asymp e^{t\omega_k(Y^w)}.
\]
Cela montre déjà que pour $k\not\in\theta_\infty$,
\[
\omega_k(c_\infty) = \omega_k(Y^w).
\]
Le même calcul pour $k\in\theta_\infty$ montre au moins que $\omega_k(c_\infty)\leq\omega_k(Y^w)$, et donc $c_\infty\prec Y^w$ puis
\[
c_\infty \prec p_{\ka^-}(Y^w).
\]
Réciproquement, notons que pour $k\not\in\theta_\infty$, $\omega_k(c_\infty)=\omega_k(Y^w)\geq \omega_k(p_{\ka^-}(Y^w))$, car $p_{\ka^-}(Y^w)$ minore $Y^w$.
Donc le lemme~\ref{pointsangulaires}, appliqué à $Y=c_\infty$ et $Y'=p_{\ka^-}(Y^w)$ implique que $c_\infty\succ p_{\ka^-}(Y^w)$.
Cela montre bien l'égalité souhaitée:
\[
c_\infty = p_{\ka^-}(Y^w).
\]
\end{proof}

\section{Flots semi-stables et extrémalité}
\label{sec:extalg}

Nous nous intéressons ici à un cas particulier du théorème~\ref{diagalg}, pour lequel on a toujours $c_\infty=0$.
Commençons par rappeler une terminologie introduite par Pengyu Yang.

\begin{definition}[Variété de Bruhat instable]
La variété de Bruhat standard $X_w$ est définie comme l'adhérence de Zariski d'une cellule de Bruhat:
\[
X_w = \overline{PwB},\quad w\in W_P.
\]
Une \emph{variété de Bruhat} est une variété de la forme $X_wg$, pour $g\in G$ et $w\in W_P$.
Une variété de Bruhat est dite \emph{rationnelle} si elle peut s'écrire $X_w\gamma$, avec $\gamma\in G(\Q)$.
La variété $X_wg$ est \emph{instable} pour le flot $a_t=e^{tY}$ s'il existe un poids dominant $\omega$ tel que $\omega(Y^w)<0$.
\end{definition}

Rappelons que l'on note en exposant l'action adjointe à droite de $W$ sur $\ka$: $Y^w=(\Ad w)^{-1}Y$.
La notion d'instabilité ne dépend pas du choix du représentant de $w\in W_P=(W\cap P)\bcs W$, car si $p\in W\cap P$, alors $Y^p=Y$.

\begin{remark}
Si $PwB$ est instable, alors il existe un poids fondamental $\omega_i$, $i\in[1,r]$ tel que $\omega_i(Y^w)<0$.
En effet, tout poids dominant est combinaison linéaire à coefficients entiers positifs de poids fondamentaux.
\end{remark}

Le théorème~\ref{diagalg} admet le corollaire important suivant.

\begin{corollary}[Points semi-stables dans $G/\Gamma$]
\label{sstable}
Avec les notations du théorème~\ref{diagalg}, si $s\in G(\QQ)$ n'est inclus dans aucune variété de Bruhat rationnelle instable, alors $c_\infty=0$.
Par conséquent, dans toute représentation rationnelle $V$ de $G$,
\[
\lim_{t\to\infty} \frac{1}{t}\log\lambda_1(a_tsV(\Z)) = 0.
\]
\end{corollary}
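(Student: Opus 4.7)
The plan is to show $c_\infty=0$ by verifying that the constraint defining $m_{Y^w}$ becomes vacuous, and then deduce the statement about $\lambda_1$ from the Siegel decomposition. First I would apply théorème~\ref{diagalg} directly: it yields $w\in W_P$ and $\gamma_\infty\in G(\Q)$ with $s\gamma_\infty^{-1}\in PwB$ and $c_\infty=p_{\ka^-}(Y^w)$. Since $\gamma_\infty\in G(\Q)$, the Bruhat variety $X_w\gamma_\infty=\overline{PwB}\gamma_\infty$ is rational and contains $s$; by hypothesis it is not unstable, so $\omega(Y^w)\geq 0$ for every dominant weight $\omega$. Specializing to the fundamental weights gives $\omega_i(Y^w)\geq 0$ for each $i\in[1,r]$.

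Next I would combine this with the standard fact that each $\omega_i$ is non-positive on $\ka^-$, because $\varpi_i$ is a non-negative combination of the simple roots (the inverse Cartan matrix having non-negative entries, as already used in the proof of proposition~\ref{inf}). This forces every $Y\in\ka^-$ to satisfy $\omega_i(Y)\leq 0\leq\omega_i(Y^w)$, so $Y\prec Y^w$. Thus $m_{Y^w}=\ka^-$, whose largest element with respect to $\prec$ is $0$: since the $\omega_i$ form a basis of $\ka^*$, simultaneous equality $\omega_i(Y)=0$ forces $Y=0$. By proposition~\ref{inf} this yields
\[
c_\infty = p_{\ka^-}(Y^w) = 0,
\]
which is the first conclusion.

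For the second assertion, I would revisit the Siegel decomposition $a_ts=k_tb_tn_t\gamma_t$ delivered by théorème~\ref{diagalg}, in which $k_t\in K$, $n_t$ stays in a compact set $\omega$, $\gamma_t\in C\Gamma$, and $\tfrac{1}{t}\log b_t\to c_\infty=0$. Since every weight $\omega$ of the representation $V$ is a linear combination of the $\omega_i$, we get $\omega(\log b_t)=o(t)$, so both $\norm{\rho(b_t)}$ and $\norm{\rho(b_t)^{-1}}$ are $e^{o(t)}$. Choosing the norm on $V$ to be $K$-invariant (which is harmless by equivalence of norms), and using that $\rho(\gamma_t)V(\Z)$ varies among finitely many lattices commensurable to $V(\Z)$, one concludes
\[
\lambda_1(a_tsV(\Z)) \;=\; e^{o(t)}\,\lambda_1(\rho(\gamma_t)V(\Z)) \;=\; e^{o(t)},
\]
whence $\frac{1}{t}\log\lambda_1(a_tsV(\Z))\to 0$.

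No step presents a serious obstacle: the corollary really is just the observation that, once théorème~\ref{diagalg} is available, the non-unstability hypothesis cancels out the defining constraint for $p_{\ka^-}(Y^w)$. The only point worth writing out carefully is the equivalence between the condition "non unstable" and "$\omega_i(Y^w)\geq 0$ for all $i$", which relies on the fact that every dominant weight is a non-negative combination of the fundamental weights $\omega_i$.
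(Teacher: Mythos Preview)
Your proof is correct and follows essentially the same approach as the paper. The paper argues by contraposition (if $c_\infty\neq 0$ then some $\omega_i(Y^w)<0$, hence $X_w\gamma_\infty$ is unstable), which is just the reverse of your direct argument that $\omega_i(Y^w)\geq 0$ for all $i$ forces $p_{\ka^-}(Y^w)=0$; for the second assertion the paper simply invokes $\lambda_1(a_tsV(\Z))\asymp\chi(b_t)$ for the highest weight $\chi$ of $V$, whereas you unpack this via the full Siegel decomposition, but the substance is identical.
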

\begin{proof}
D'après le troisième point du théorème~\ref{diagalg}, on peut écrire $c_\infty=p_{\ka^-}(Y^w)$, où $w$ est tel que $s\in PwB\gamma$.
On raisonne alors par contraposée.
Si $c_\infty\neq 0$, il existe donc un poids fondamental $\omega_i$ tel que $\omega_i(Y^w)<0$, et la cellule de Bruhat $X_w\gamma$ est instable.
La seconde assertion du corollaire découle de la première, car dans la représentation irréductible rationnelle $V$ de $G$ proximale de plus haut poids $\chi$,
\[
\lambda_1(a_tsV(\Z)) \asymp \chi(b_t).
\]
\end{proof}

\begin{remark}
En fait, $c_\infty=0$ si et seulement si $s$ n'est inclus dans aucune variété de Schubert rationnelle instable.
Dans ce cas, le second point du théorème est trivial, car $Q_\infty=G$.
\end{remark}

Le corollaire ci-dessus permet déjà de démontrer un analogue du célèbre théorème de Roth \cite{roth} pour une variété de drapeaux générale.
Soit $X=P\bcs G$ une variété de drapeaux, obtenue comme quotient d'un $\Q$-groupe semi-simple $G$ par un sous-groupe parabolique $P$.
On munit $X$ de la distance de Carnot-Carathéodory usuelle, et d'une hauteur $H_\chi$ induite par une représentation irréductible de $G$ de poids dominant $\chi$, et on considère l'exposant diophantien $\beta_\chi$ sur $X$, défini au chapitre~\ref{chap:correspondance}.

\begin{definition}
Un point $x$ dans $X(\R)$ est dit \emph{extrémal} si $\beta_\chi(x)=\beta_\chi(X)$.
\end{definition}

Le théorème~\ref{exposantps} nous assure que, pour la mesure de Lebsegue, presque tous les points de $X(\R)$ sont extrémaux.
Nous donnons maintenant un critère suffisant pour qu'un point $x\in X(\QQ)$ soit extrémal.
Dorénavant, le sous-groupe à un paramètre $(a_t)$ est celui construit au chapitre~\ref{chap:correspondance}, donné par la formule
\begin{equation}\label{at3}
a_t=e^{tY}
\quad\mbox{où}\quad Y\in\ka\ \mbox{est défini par}\
\alpha(Y) = 
\left\{
\begin{array}{ll}
0 & \mbox{si}\ \alpha\in\theta\\
-1 & \mbox{si}\ \alpha\not\in\theta.
\end{array}
\right.
\end{equation}
Vues comme des parties de la variété de drapeaux $X$, les variétés de Bruhat seront plutôt appelées \emph{variétés de Schubert}.
Une variété de Schubert sera dite instable si la variété de Bruhat correspondante l'est pour le sous-groupe $(a_t)$ ci-dessus.

\begin{theorem}[Critère d'extrémalité pour les points algébriques]
\label{extqq}
Soit $X$ une variété de drapeaux rationnelle, munie de la distance de Carnot-Carathéodory usuelle, et d'une hauteur $H_\chi$ induite par une représentation irréductible de $G$.
Si $x\in X(\QQ)$ n'est inclus dans aucune variété de Schubert rationnelle instable, alors $x$ est extrémal.
\end{theorem}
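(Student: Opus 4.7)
The plan is to reduce the theorem directly to Corollary~\ref{sstable} and Lemma~\ref{daniextremal}, which together already capture the two halves of the argument: a geometric half (unstable Schubert varieties are the only obstruction to the first minimum being subexponential) and a dynamical half (subexponential first minimum forces extremality).

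First I would lift the rational point. Since $\QQ$ is an algebraic closure of $\Q$, Galois cohomology is trivial over $\QQ$, so the map $G(\QQ) \to (P\bcs G)(\QQ) = X(\QQ)$ is surjective; hence we may choose $s_x\in G(\QQ)$ such that $x = Ps_x$. Next I would check that the hypothesis on $x$ transfers verbatim to $s_x$: the Schubert variety $X_w\gamma$ in $X$ is by definition the image of the Bruhat variety $\overline{PwB\gamma}\subset G$ under the quotient $G\to P\bcs G$, and this image is $P$\nobreakdash-saturated on the left, so $x\in X_w\gamma$ iff $s_x\in \overline{PwB\gamma}$, independently of the lift. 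Moreover, unstability of $X_w\gamma$ (existence of a fundamental weight $\omega_i$ with $\omega_i(Y^w)<0$) is defined with respect to the very flow $(a_t)=(e^{tY})$ of \eqref{at3}, whose associated parabolic $\{g : \lim a_tga_{-t}\text{ exists}\}$ is exactly $P$, so the notion of \emph{instability} used in Corollary~\ref{sstable} coincides with the one of the statement.

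Then I would apply Corollary~\ref{sstable} to $s_x$: since $s_x$ lies in no rational unstable Bruhat variety, for every rational representation $V$ of $G$,
\[
\lim_{t\to+\infty}\frac{1}{t}\log\lambda_1(a_ts_xV(\Z)) \;=\; 0.
\]
Specializing to $V=V_\chi$ and invoking Lemma~\ref{daniextremal}, I conclude $\beta_\chi(x)=\beta_\chi(X)$, which is exactly the desired extremality.

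In this scheme there is no serious obstacle: the theorem is essentially a packaging of earlier results. The only point that requires a little care is the verification that unstability, Schubert variety, and parabolic $P$ as they appear in Corollary~\ref{sstable} (where $Y\in\ka^-$ is arbitrary and $P$ is defined from $Y$) really match the specific $Y$ of \eqref{at3} and the parabolic $P$ fixed throughout the chapter; this is a direct computation from the definitions. Once this matching is made explicit and the lift $s_x\in G(\QQ)$ is obtained, the proof is a two-line chain: $\text{hypothesis}\Rightarrow\text{Cor.~\ref{sstable}}\Rightarrow\text{Lem.~\ref{daniextremal}}\Rightarrow\beta_\chi(x)=\beta_\chi(X)$.
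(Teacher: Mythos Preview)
Your approach is exactly the paper's: lift $x$ to $s_x$, note that $s_x$ then lies in no rational unstable Bruhat variety, apply Corollary~\ref{sstable} to get $\lim_{t\to\infty}\frac{1}{t}\log\lambda_1(a_ts_xV_\chi(\Z))=0$, and conclude by Lemma~\ref{daniextremal}. The paper's proof is two lines and does not spell out the lifting or the matching of the flow; your added verifications are fine.

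One correction, though: your justification for the surjectivity of $G(\QQ)\to X(\QQ)$ is wrong. In this paper $\QQ$ denotes the \emph{real} algebraic numbers (the subfield of $\R$ algebraic over $\Q$), not an algebraic closure of $\Q$, so you cannot argue by vanishing of Galois cohomology over an algebraically closed field. The surjectivity nonetheless holds over any field $k$ over which $P$ is defined, because the $P$-torsor $G\to P\bcs G$ is Zariski-locally trivial (the big Bruhat cell $U^-\hookrightarrow P\bcs G$ provides an open section, and its $G(k)$-translates cover $X$). Alternatively, any $k$-point of $X=P\bcs G$ corresponds to a $k$-parabolic of the same type as $P$, and two such are conjugate by an element of $G(k)$ (Borel--Tits). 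Either way you get $s_x\in G(\QQ)$, which is what Corollary~\ref{sstable} requires.
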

\begin{proof}
Soit $s_x\in G$ tel que $x=Ps_x$.
Comme $x$ n'appartient à aucune variété de Schubert instable, $s_x$ n'appartient à aucune variété de Bruhat stable.
Le corollaire~\ref{sstable} ci-dessus montre donc que dans la représentation $V_\chi$ de poids dominant $\chi$,
\[
\lim_{t\to\infty} \frac{1}{t} \log\lambda_1(a_t s_x V_\chi(\Z)) = 0.
\]
D'après le lemme~\ref{daniextremal}, cela implique que $\beta_\chi(x)=\beta_\chi(X)$.
\end{proof}

\section{L'exposant diophantien d'un point algébrique}
\label{sec:expalg}

\emph{Comme ci-dessus, $X=P\bcs G$ est une variété de drapeaux, obtenue comme quotient d'un $\Q$-groupe semi-simple $G$ par un sous-groupe parabolique $P$.
On munit $X$ de la distance de Carnot-Carathéodory usuelle, et d'une hauteur $H_\chi$ induite par une représentation irréductible de $G$ de poids dominant $\chi$, et on considère l'exposant diophantien $\beta_\chi$ sur $X$, défini au chapitre~\ref{chap:correspondance}.
Ayant fixé dans $V_\chi$ un réseau rationnel $V_\chi(\Z)$, nous noterons $\Gamma=\Stab_GV_\chi(\Z)$ le sous-groupe arithmétique de $G$ associé.
Le sous-groupe à un paramètre $(a_t)$ est celui défini par la formule \eqref{at3}.}

\bigskip

En règle générale, pour un point $x\in X(\QQ)$ non nécessairement extrémal, le théorème~\ref{diagalg} donne une méthode pour le calcul de l'exposant diophantien $\beta_\chi(x)$.
C'est le contenu du théorème ci-dessous.
Rappelons que l'action adjointe à droite du groupe de Weyl est notée en exposant: $Y^w=(\Ad w)^{-1}Y$.
Et de même pour l'action co-adjointe: $\chi^w=\chi\circ\Ad w$.

%
%
%

\begin{theorem}[Exposant d'un point algébrique]
\label{expalg}
Soit $x\in X$, soit $s_x\in G$ tel que $x=Ps_x$, et soit $(a_t)$ le sous-groupe diagonal défini en \eqref{at3}.
Soit $c_\infty$, $Q_\infty$, $\gamma_\infty$ les éléments donnés par le théorème~\ref{diagalg} pour décrire l'orbite $(a_ts_x\Gamma)$ dans $G/\Gamma$, et 
\[
s_x\gamma_\infty^{-1} = pwb, \quad p\in P,\ w\in W_P,\ b\in B,
\]
une décomposition de Bruhat de $s_x\gamma_\infty^{-1}$.

Alors
\[
\gamma_\chi(x) = -\bracket{\chi^w,p_{\ka^-}(Y^w)},
\]
et par conséquent,
\[
\beta_\chi(x) = \frac{1}{-\bracket{\chi,Y} + \bracket{\chi^w,p_{\ka^-}(Y^w)}}.
\]
En particulier, l'exposant $\beta_\chi(x)$ est entièrement déterminé par la variété de Schubert $PwB\gamma_\infty\ni x$.
\end{theorem}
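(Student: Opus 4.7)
The plan is to reduce to a computation of the escape rate $\gamma_\chi(x)$ and to combine the Siegel-decomposition information from Theorem~\ref{diagalg} with a Diophantine analysis of the $e_\chi$-direction in $V_\chi$. By Proposition~\ref{dani} the identity
\[
\beta_\chi(x) = \frac{1}{-\bracket{\chi,Y} - \gamma_\chi(x)}
\]
reduces the theorem to verifying $\gamma_\chi(x) = -\bracket{\chi^w,c_\infty}$, where $c_\infty := p_{\ka^-}(Y^w)$. Applying Theorem~\ref{diagalg} to the orbit $(a_ts_x)_{t>0}$ furnishes a Siegel decomposition
\[
a_ts_x = k_tb_tn_tq_t\gamma_\infty,\qquad \tfrac{1}{t}\log b_t\to c_\infty,
\]
with $q_t\in Q_\infty\cap G(\Q)$: indeed, $\gamma_t = q_t\gamma_\infty\in G(\Q)$ since $C\Gamma\subset G(\Q)$, and $Q_\infty\gamma_t=Q_\infty\gamma_\infty$ forces $q_t\in Q_\infty$. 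The remaining task is to estimate $r_\chi(a_ts_xV_\chi(\Z))$ from above and below.

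\emph{Upper bound on $r_\chi$.} I would construct an explicit lattice vector $v_t\in V_\chi(\Z)\cap\tx$ such that $\bv_t = a_ts_xv_t$ is essentially aligned with $e_\chi$ and satisfies $\norm{\bv_t}\ll e^{t\bracket{\chi^w,c_\infty}+o(t)}$. The Bruhat decomposition $s_x = pwb\gamma_\infty$ gives
\[
s_x^{-1}e_\chi = \chi(p)^{-1}\gamma_\infty^{-1}b^{-1}w^{-1}e_\chi,
\]
so producing lattice vectors close to $\R s_x^{-1}e_\chi$ amounts to rational approximation of the direction $\gamma_\infty^{-1}w^{-1}e_\chi$. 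Taking $v_t$ to be a near-optimal integer vector (delivered by Minkowski geometry of numbers in the appropriate rational subspace) guarantees that $a_ts_xv_t$ is, to leading order, a scalar multiple of $a_te_\chi$ of the desired size. The effective rate is $\chi^w(c_\infty)$ rather than $\chi(c_\infty)$ because, under the $b_t$-action, the weight of $w^{-1}e_\chi$ is $\chi\circ\Ad(w)=\chi^w$, and the balance between the size of $v_t$ and the Diophantine approximation error produces precisely this exponent.

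\emph{Lower bound on $r_\chi$.} Conversely, any admissible $\bv = a_ts_xv\in\tx$ with $\norm{\pi^+(\bv)}\geq\tfrac12\norm{\bv}$ pulls back to a lattice vector $v\in V_\chi(\Z)$ which must be close to a multiple of $s_x^{-1}e_\chi$. Using the Siegel decomposition and the $T$-weight decomposition of $V_\chi$, the norm of $b_t$ applied to vectors close to the $w^{-1}$-twisted direction cannot be smaller than $e^{t\chi^w(c_\infty)+o(t)}$: a quantitative variant of Theorem~\ref{diagalg} applied to an auxiliary orbit constrains the decay rate. The invariance of $\beta_\chi(x)$ under the ambiguity in the pair $(w,\gamma_\infty)$ within the cell $PwB\gamma_\infty$ then follows because $w\in W_P$ is uniquely determined by the open cell of $s_x$, the weight $\chi^w$ depends only on the class of $w$ in $W_P$ (as $W\cap P$ fixes both $\chi$ and $Y$), and $c_\infty=p_{\ka^-}(Y^w)$ depends only on $w$.

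\emph{Main obstacle.} The delicate step is the quantitative Diophantine estimate in the upper bound: one must show that the best integer approximation of a scaled multiple of $\gamma_\infty^{-1}w^{-1}e_\chi$ in $V_\chi(\Z)$ balances against the $b_t$-expansion on the $w^{-1}$-twisted weight space to produce exactly the rate $\chi^w(c_\infty)$, not the a priori smaller $\chi(c_\infty)$ obtained from the first minimum of $a_ts_xV_\chi(\Z)$. This balance is, ultimately, another manifestation of Schmidt's subspace theorem --- entering the proof through Theorem~\ref{diagalg} --- and it is the same geometric principle that yields the formula $c_\infty = p_{\ka^-}(Y^w)$ in the first place.
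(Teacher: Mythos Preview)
Your reduction to computing $\gamma_\chi(x)$ via Proposition~\ref{dani} is correct, and you correctly identify that the Siegel decomposition from Theorem~\ref{diagalg} is the main input. However, the heart of the argument is missing, and the ``main obstacle'' you flag is not a technical detail but the entire content of the proof.

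The paper does not attempt to construct near-optimal lattice vectors by Minkowski-type arguments in some rational subspace. Instead it proceeds structurally. After replacing $s_x$ by $wb\gamma_\infty$ (legitimate since $p\in P$ does not affect asymptotics), one has $a_ts_x\gamma_\infty^{-1}=a_twb$, and the Siegel element $\gamma_t$ lies in $Q_\infty$. Conjugating by $w^{-1}$ gives a Siegel decomposition $a_t^w b = (w^{-1}k_t)\,a\,n\,\gamma$ inside $Q_\infty$. Now comes the key move you are missing: take the Levi decomposition $Q_\infty=L_\infty R_\infty$ and write $b=\ell u$ accordingly. The unipotent radical contributes nothing asymptotically, so the minima of $a_ts_xV_\chi(\Z)$ coincide with those of $a_t^w\ell\,V_\chi(\Z)$, with $\ell\in L_\infty$ reductive. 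The $L_\infty$-module $V_\chi$ splits into irreducibles, and each irreducible piece is \emph{semi-stable} for the flow $a_t^w$ (its weights differ only by roots in $\theta_\infty$, which vanish on $c_\infty$). In particular, the irreducible submodule $V_\chi'$ generated by $w^{-1}e_\chi$ has all its successive minima at the common rate
\[
\frac{\tau(V_\chi',a_t^w)}{\dim V_\chi'}=\bracket{p_\infty(\chi^w),Y^w}=\bracket{\chi^w,p_{\ka^-}(Y^w)},
\]
the last equality because $p_\infty(Y^w)=c_\infty=p_{\ka^-}(Y^w)$. Since $V_\chi'$ is exactly the piece containing vectors whose image under $w$ (hence under $k_t$) has nontrivial $\pi^+$-component, this single rate governs $r_\chi$ from both sides simultaneously.

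Your approach tries to produce the upper and lower bounds separately by ad hoc Diophantine constructions, but without the Levi reduction and the semi-stability of $V_\chi'$ there is no mechanism forcing the optimal lattice vector in the $e_\chi$-direction to have rate exactly $\chi^w(c_\infty)$ rather than anywhere in $[\chi(c_\infty),\chi^w(c_\infty)]$. The constraint $v\in\tx$ is also not addressed; it is automatic in the paper's argument because one works inside the $L_\infty$-orbit of $w^{-1}e_\chi$.
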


\begin{remark}
Si la variété de Bruhat rationnelle $PwB\gamma_\infty$ est semi-stable, alors pour tout $i\in[1,r]$, $\omega_i(Y^w)\geq 0$, et donc $p_{\ka^-}(Y^w)=0$.
On retrouve bien $\gamma_\chi(x)=0$ et $\beta_\chi(x)=\beta_\chi(X)$.
\end{remark}

\begin{remark}
Si $w=I$, alors $Y^w=Y\in\ka^-$.
Dans ce cas, $\gamma_\chi(x)=-\bracket{\chi,Y}$ est maximal, et l'exposant diophantien $\beta_\chi(x)$ vaut $+\infty$.
Cela n'est pas surprenant, car alors $x=P\gamma_\infty$ est un point de $X(\Q)$.
\end{remark}

\begin{proof}
Écrivons $s_x \gamma_\infty^{-1} = p w b$.
Pour tout $p\in P$, le comportement asymptotique de $s_x\Gamma$ est équivalent à celui de $p^{-1}s_x\Gamma$, et nous pouvons donc supposer sans perte de généralité que l'élément $s_x$ est de la forme
\[
s_x = w b \gamma_\infty.
\]
D'après le théorème~\ref{diagalg}, pour $t$ suffisamment grand, dans la décomposition de Siegel
\[
a_t s_x \gamma_\infty^{-1} = k a n \gamma,
\]
l'élément $\gamma$ appartient à $Q_\infty$.
Par conséquent, dans la décomposition de Siegel
\[
a_t^wb= (w^{-1}k) a n \gamma,
\]
\note{Attention ! $B$ n'est pas un sous-groupe de Borel, mais un $\Q$-parabolique minimal. Donc $B=MAN$ contient bien $an$.}
\note{Ici, on utilise le fait que $K$ contient un système de représentants du groupe de Weyl.}
on a $\gamma\in Q_\infty$, $an\in B\subset Q_\infty$, et donc $w^{-1}k\in K\cap Q_\infty$.

Soit 
\[
Q_\infty=L_\infty R_\infty,
\]
\note{Ajouter un argument sur le choix de $K$ pour assurer que $K\cap Q_\infty\subset L_\infty$.}
la décomposition de Levi standard de $Q_\infty$, où $R_\infty$ désigne le radical unipotent de $Q_\infty$ et $L_\infty$ le $\Q$-sous-groupe réductif engendré par l'ensemble de racines $\theta_\infty$ associé à $Q_\infty$.
Suivant cette décomposition, écrivons
\note{Noter que $\gamma_U\in R_\infty(\Z)$ et $\gamma_L\in L_\infty(\Z)$.}
\[
b=\ell u,\quad  n=n_Ln_U\quad\mbox{et}\quad \gamma=\gamma_L\gamma_U.
\]
Comme $\gamma_L^{-1}n_U\gamma_L\gamma_U \in R_\infty$ et
\[
a_t^w \ell u = w^{-1}k a n_L\gamma_L \gamma_L^{-1}n_U\gamma_L\gamma_U, 
\]
on doit avoir
\[
a_t^w\ell = (w^{-1}k) a n_L \gamma_L,
\]
ce qui donne une décomposition de Siegel de $a_t^w\ell$.
Cela montre que les minima successifs de $a_ts_x V_\chi(\Z)$ sont essentiellement égaux à ceux de $a_t^w\ell V_\chi(\Z)$.
De plus, les drapeaux partiels de $V_\chi(\Z)$ (à $e^{\eps t}$ près) pour $a_t^w\ell$ et $a_t^w b$ sont égaux.
En effet, le drapeau partiel de $V_\chi(\Z)$ qui réalise les minima (à $e^{\eps t}$ près) de $a_ts_x\gamma_\infty^{-1}=a_twb$ est stable par $Q_\infty$.
Ce drapeau se décrit de la façon suivante:
\[
D_0=\{ V_1 < V_1\oplus V_2 < \dots\}
\]
où
\[
V_1 = \Vect\{ e_\beta;\ \beta(c_\infty)=\beta_1\ \mbox{minimal}\},
\]
\[
V_2 = \Vect\{ e_\beta;\ \beta(c_\infty)=\beta_2>\beta_1\ \mbox{minimal}\}
\]
et ainsi de suite.
\note{Noter que $\beta(c_\infty)$ est minimal si et seulement si $\beta=\chi-\sum_{\alpha\in\theta}n_\alpha\alpha$.}
Chacun de ces espaces est stable par $L_\infty$, qui est engendré par les racines $\alpha\in\theta_\infty$, pour lesquelles $\alpha(c_\infty)=0$.
Et chaque $V_1\oplus\dots\oplus V_k$ est stable par $Q_\infty=L_\infty R_\infty$.
Et comme $\gamma_L\in Q_\infty$ et $\gamma\in Q_\infty$, on trouve bien que les filtrations pour $a_t^w\ell$ et $a_t^wb$, données par $\gamma_L^{-1}D_0=\gamma^{-1}D_0=D_0$ sont égales.

Nous sommes donc ramenés à comprendre la géométrie du réseau $a_t^w\ell V_\chi(\Z)$, et plus particulièrement la quantité $r_\chi(wa_t^w\ell)$.

\note{Justifier que $V_\chi'$ est irréductible pour $L_\infty$? Même si cela n'a pas grande importance pour la suite.}
Comme $a_t^w\ell$ est dans le $\Q$-groupe réductif $L_\infty$, les minima successifs de $a_t^w\ell V_\chi(\Z)$ sont compatibles avec la décomposition de $V_\chi$ en sous-$L_\infty$-modules irréductibles.
\note{Semi-stabilité d'un $L_\infty$-module $V$ irréductible: soit $(\phi_i)$ la famille des poids de $V$. Les minima successifs de $a_t^w\ell V(\Z)$ sont donc les $e^{\phi_i(c_\infty)}$.
Si $\phi_0$ est le plus haut poids, chaque $\phi_i$ s'écrit $\phi_i=\phi_0-\sum_{\alpha\in\theta_\infty}n_\alpha\alpha$, et comme $\alpha(c_\infty)=0$ si $\alpha\in\theta_\infty$, on trouve bien que tous les minima successifs sont égaux à $e^{\phi_0(c_\infty)}$.}
Chacun de ces sous-modules est semi-stable pour le flot $(a_t^w\ell)$, et il nous suffit de comprendre le module $V_\chi'$ engendré par $w^{-1}e_\chi$.
Si l'on note
\[
\tau(V_\chi',a_t^w) = \log\det(a_1^w|_{V_\chi'})
\]
son taux de dilatation par le flot $(a_t^w)$, on obtient donc
\[
\gamma_\chi(x) = -\frac{\tau(V_\chi',a_t^w)}{\dim V_\chi'}.
\]
Le taux de dilatation est donné par la somme (chaque poids est compté autant de fois qu'il apparaît dans $V_\chi'$)
\[
\tau(V_\chi',a_t^w) = \sum_{\omega\prec V_\chi'} \omega(Y^w).
\]
Nous identifions maintenant $\ka$ et $\ka^*$ grâce au produit scalaire usuel.
Alors, la somme $\sum_{\omega\prec V_\chi'}\omega$ est un élément de $\ka_\infty=\bigcap_{\alpha\in\theta_\infty}\alpha^\perp$, et chaque élément $\omega\prec V_\chi'$ a la même projection sur $\ka_\infty$, qui est donc égale à $p_\infty(\chi^w)$, où $p_\infty:\ka\to\ka_\infty$ est la projection orthogonale.
Cela donne déjà une expression pour $\gamma_\chi(x)$:
\[
\gamma_\chi(x) = -\bracket{p_\infty(\chi^w),Y^w} = -\bracket{\chi^w,p_\infty(Y^w)}.
\]
Mais d'après le théorème~\ref{diagalg}, $p_\infty(Y^w)=c_\infty=p_{\ka^-}(Y^w)$, et donc
\[
\gamma_\chi(x) = -\bracket{\chi^w,p_{\ka^-}(Y^w)}.
\]
La formule pour $\beta_\chi(x)$ est alors une conséquence de la proposition~\ref{dani}.
\end{proof}

Dans certains cas, le théorème~\ref{expalg} permet de montrer que l'exposant d'un point algébrique quelconque de $X$ est minoré par l'exposant générique.
Les deux corollaires suivants sont ainsi des résultats analogues au théorème de Dirichlet \cite{dirichlet}, pour le moment valables uniquement pour les points algébriques, hélas.

\begin{corollary}[Variétés de drapeaux de rang 1]
\label{rang1}
Supposons que la variété de drapeaux $X$ s'écrive $X=P\bcs G$ avec $P$ un $\Q$-sous-groupe parabolique strict \emph{maximal} de $G$.
Alors, pour tout $x\in X(\QQ)$,
\[
\beta_\chi(x) \geq \beta_\chi(X).
\]
De plus, l'égalité a lieu si et seulement si $x$ n'appartient à aucune sous-variété de Schubert rationnelle instable.
\end{corollary}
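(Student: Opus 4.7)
The plan is to reduce the inequality to an elementary fact about orthogonal projection onto a closed convex cone, by exploiting the defining feature of a maximal parabolic: the highest weight $\chi$ and the element $Y$ are anti-parallel under the identification of $\ka$ with $\ka^*$ induced by the Weyl-invariant inner product.

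First I would apply Theorem~\ref{expalg}, which gives $\beta_\chi(x) = 1/[-\chi(Y) + \chi^w(p_{\ka^-}(Y^w))]$ for some $w \in W_P$ determined by $x$, together with Theorem~\ref{exposantps}, which asserts $\beta_\chi(X) = -1/\chi(Y)$. These identities reduce the desired inequality to showing that, for every $w \in W_P$,
\[
\chi^w\!\left(p_{\ka^-}(Y^w)\right) \leq 0.
\]

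Next I would exploit maximality. Writing $P = P_\theta$ with $\theta = \Pi \setminus \{\alpha_j\}$, the condition $\Stab_G[e_\chi] = P$ forces $\chi$ to be a positive multiple of the fundamental weight $\varpi_j$, while the definition of $Y$ reads $Y = -Y_j$. Since $\varpi_j$ and $Y_j$ differ only by the positive scalar $\|\alpha_j\|^2/2$ under the inner product identification, there exists $d > 0$ such that $\chi(u) = -d\langle Y, u\rangle$ for every $u \in \ka$, and by Weyl-invariance of the inner product, $\chi^w(u) = -d\langle Y^w, u\rangle$ for all $w \in W$. Hence
\[
\chi^w\!\left(p_{\ka^-}(Y^w)\right) = -d\,\langle Y^w,\, p_{\ka^-}(Y^w)\rangle,
\]
and the variational characterization of the projection, applied to the reference point $0 \in \ka^-$, gives $\|Y^w - p_{\ka^-}(Y^w)\|^2 \leq \|Y^w\|^2$, i.e.\ $\langle Y^w, p_{\ka^-}(Y^w)\rangle \geq \tfrac{1}{2}\|p_{\ka^-}(Y^w)\|^2 \geq 0$, with equality only when $p_{\ka^-}(Y^w) = 0$. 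This establishes $\beta_\chi(x) \geq \beta_\chi(X)$.

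I would then handle the equality case: one direction is Theorem~\ref{extqq}, and conversely, equality forces $p_{\ka^-}(Y^w) = 0$. Since by Theorem~\ref{diagalg} the element $p_{\ka^-}(Y^w) = c_\infty$ is the $\prec$-infimum in $\ka^-$ of the family $p_{\ka^-}(Y^{w'})$ indexed by the rational Schubert subvarieties $X_{w'}\gamma'$ containing $x$, and each coordinate $\omega_i(p_{\ka^-}(Y^{w'}))$ is non-positive (because $\omega_i$ is dominant and $p_{\ka^-}(Y^{w'}) \in \ka^-$), the vanishing of the infimum forces each of these coordinates to vanish. The $\omega_i$ form a basis of $\ka^*$, so $p_{\ka^-}(Y^{w'}) = 0$ for every such $w'$, meaning no rational Schubert subvariety containing $x$ is unstable. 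I do not anticipate a serious obstacle beyond carefully tracking the proportionality $\chi \sim -Y$ (especially in the non-simply-laced case, where weights and coweights differ by root lengths); the argument also makes transparent why maximality is essential, since if $|\Pi \setminus \theta| \geq 2$ the two vectors cease to be proportional and the sign of $\chi^w(p_{\ka^-}(Y^w))$ is no longer forced to be non-positive.
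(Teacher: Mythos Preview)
Your proposal is correct and follows essentially the same route as the paper: both exploit that for a maximal parabolic the highest weight $\chi$ and the element $Y$ are negative scalar multiples of each other under the inner-product identification, reducing the inequality to $\langle Y^w, p_{\ka^-}(Y^w)\rangle \geq 0$, with equality iff $p_{\ka^-}(Y^w)=0$. The only cosmetic difference is that the paper invokes the exact cone-projection identity $\langle Y^w, p_{\ka^-}(Y^w)\rangle = \norm{p_{\ka^-}(Y^w)}^2$ (valid because $\ka^-$ is a cone with vertex $0$), whereas you use the weaker variational bound $\langle Y^w, p_{\ka^-}(Y^w)\rangle \geq \tfrac{1}{2}\norm{p_{\ka^-}(Y^w)}^2$; both suffice, and your treatment of the equality case via the $\prec$-infimum description of $c_\infty$ is a slightly more explicit unpacking of what the paper handles by a one-line reference to Corollary~\ref{sstable}.
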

\begin{proof}
Il existe $i\in[1,r]$ tel que l'ensemble de racines simples associé au parabolique maximal $P$ soit
\[
\theta = [1,r]\setminus\{i\}.
\]
Soit $V_\chi$ une représentation irréductible de $G$ engendrée par un vecteur de plus haut poids $e_\chi$ tel que $\Stab_G[e_\chi]=P$.
On doit avoir
\[
\chi=n\varpi_i,\quad n\geq 1.
\]
Dans la suite, nous identifions encore $\ka$ à $\ka^*$ grâce au produit scalaire usuel.
Avec cette identification,
\[
Y = -\varpi_i.
\]
Par conséquent,
\[
\gamma_\chi(x) = -\bracket{\chi^w,p_{\ka^-}(Y^w)} 
= n \bracket{Y^w,p_{\ka^-}(Y^w)} = n \norm{p_{\ka^-}(Y^w)}^2 \geq 0,
\]
et avec la proposition~\ref{dani} et la définition de $\beta_\chi(X)$ donnée au théorème~\ref{exposantps},
\[
\beta_\chi(x) \geq \beta_\chi(X).
\]
De plus, l'égalité a lieu si et seulement si $p_{\ka^-}(Y^w)=0$, i.e. $c_\infty=0$.
Cela équivaut à ce que $x$ n'appartienne à aucune sous-variété de Schubert rationnelle instable.
\end{proof}

Le même argument donne un résultat analogue si $X$ est la variété drapeau complète d'un $\Q$-groupe déployé, munie de la hauteur anti-canonique.

\begin{corollary}[Variétés de drapeaux déployée]
\label{deploy}
Soit $X=P\bcs G$, avec $G$ un $\Q$-groupe semi-simple $\Q$-déployé et $P$ un sous-groupe parabolique \emph{minimal} de $G$.
On munit $X$ de la hauteur anti-canonique $H_\chi$ associée à $\chi=\sum_{\alpha\in\Sigma^+}\alpha$, somme des racines positives.
Pour tout $x\in X(\QQ)$,
\[
\beta_\chi(x) \geq \beta_\chi(X),
\]
avec égalité si et seulement si $x$ n'appartient à aucune sous-variété de Schubert instable.
\end{corollary}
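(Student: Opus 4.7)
The plan is to follow the same strategy as in Corollaire~\ref{rang1}, applying Théorème~\ref{expalg} to reduce the question to a root-theoretic inequality and then exploiting the special form of the anti-canonical character $\chi=2\rho=\sum_{\alpha\in\Sigma^+}\alpha$. Given $x\in X(\QQ)$, let $w\in W_P=W$ (since $P=B$ is minimal) and $\gamma_\infty\in G(\Q)$ be such that $s_x\gamma_\infty^{-1}\in PwB$. By Théorème~\ref{expalg},
\[
\beta_\chi(x)=\frac{1}{-\bracket{\chi,Y}+\bracket{\chi^w,p_{\ka^-}(Y^w)}},
\]
and since $\beta_\chi(X)=-1/\chi(Y)$, the inequality $\beta_\chi(x)\geq\beta_\chi(X)$ is equivalent to the key estimate
\begin{equation}\label{keyineq}
\bracket{\chi^w,p_{\ka^-}(Y^w)}\leq 0 \quad\text{for every } w\in W,
\end{equation}
with equality precisely when $p_{\ka^-}(Y^w)=0$.

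In the simply-laced case the inequality \eqref{keyineq} falls out immediately, exactly as in rang1: under the Weyl-invariant identification $\ka^*\cong\ka$ one has $\chi=2\rho=2\sum_i\varpi_i$ and $-Y=\sum_i\varpi_i^\vee=\sum_i\varpi_i$, so $\chi$ is the positive scalar multiple $-2Y$ of $-Y$; then $\bracket{\chi^w,p_{\ka^-}(Y^w)}=-2\bracket{Y^w,p_{\ka^-}(Y^w)}\leq -2\norm{p_{\ka^-}(Y^w)}^2\leq 0$ using the elementary projection identity $\bracket{v,p_C(v)}\geq\norm{p_C(v)}^2$ valid for any closed convex cone $C$ containing $0$. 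The main obstacle is the non-simply-laced case, where $\chi$ and $-Y$ are no longer proportional in $\ka^*$ (for example, in $B_2$ one finds $\chi=3\alpha_1+4\alpha_2$ while $-Y=2\alpha_1+3\alpha_2$), so a more delicate analysis is required.

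To handle the general split case, I would use the explicit description of $p_{\ka^-}$ from the proof of la proposition~\ref{inf}: writing $p=p_{\ka^-}(Y^w)$, there exist nonnegative scalars $(t_i)_{i\in I}$, where $I=\{i:\alpha_i(p)=0\}$, such that $p=Y^w-\sum_{i\in I}t_i\tilde\alpha_i$, with $\tilde\alpha_i\in\ka$ the image of $\alpha_i$ under the inner product. Substituting into \eqref{keyineq} and using $\chi^w(Y^w)=\chi(Y)$ gives
\[
\bracket{\chi^w,p}=\chi(Y)-\sum_{i\in I}t_i\chi^w(\tilde\alpha_i),
\]
so the problem reduces to the numerical inequality $\sum_{i\in I}t_i\chi^w(\tilde\alpha_i)\geq\chi(Y)$. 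Here $\chi(Y)=-2\sum_{\alpha\in\Sigma^+}\mathrm{ht}(\alpha)$ is a fixed negative quantity, the coefficients $\chi^w(\tilde\alpha_i)=2(\rho,w\alpha_i)$ can be analyzed using the standard identity $w^{-1}\rho=\rho-\sum_{\beta\in\Phi(w^{-1})}\beta$, and the $t_i$'s are determined by the linear system $\alpha_i(Y^w)=\sum_{j\in I}t_j(\alpha_i,\alpha_j)$ ($i\in I$) whose Gram matrix has inverse with nonnegative entries (cf.\ the argument in la proposition~\ref{inf}). The hard part of the proof will be to verify that this bookkeeping always yields the desired inequality; I expect one can reorganize the sum by induction on the length of $w$ using the telescoping $\rho-s_i\rho=\alpha_i$ and the reduced expression of $w$, reducing to the simple-reflection case where the inequality is immediate. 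For the equality statement, $\bracket{\chi^w,p_{\ka^-}(Y^w)}=0$ forces $p_{\ka^-}(Y^w)=0$, equivalently $\omega_i(Y^w)\geq 0$ for each fundamental weight $\omega_i$, which means the Schubert variety $X_w\gamma_\infty$ containing $x$ is semi-stable, exactly as in rang1.
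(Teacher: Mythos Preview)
Your simply-laced argument coincides with the paper's proof. The paper asserts, in one line,
\[
\chi=\sum_{\alpha\in\Sigma^+}\alpha=2\sum_i\varpi_i=-2Y
\]
(citing Knapp, Proposition~2.69, for $\rho=\sum_i\varpi_i$) and then repeats the computation of Corollaire~\ref{rang1}, obtaining $\gamma_\chi(x)=2\norm{p_{\ka^-}(Y^w)}^2\geq 0$, with equality precisely when $p_{\ka^-}(Y^w)=0$.

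You are right that this chain of equalities is problematic outside the simply-laced case. Knapp's identity $\rho=\sum_i\varpi_i$ holds for the \emph{standard} fundamental weights (dual to the simple coroots), whereas the element $-Y$, defined by $\alpha_i(-Y)=1$ for every simple root, equals $\sum_i\varpi_i^\vee$, the sum of the fundamental \emph{coweights}; these agree only when all simple roots have the same length. In type $B_2$, with $\alpha_1=e_1-e_2$ and $\alpha_2=e_2$, one finds $2\rho=(3,1)$ but $-2Y=(4,2)$. So the paper's one-line argument does not cover the non-simply-laced split groups, and you have in fact identified a gap in the paper's own proof.

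Your proposed remedy, however, is only a plan. You rewrite $\bracket{\chi^w,p}=\chi(Y)-\sum_{i\in I}t_i\,\chi^w(\tilde\alpha_i)$ and suggest controlling the sum by an induction on $\ell(w)$ via $\rho-s_i\rho=\alpha_i$, but you do not carry this out. Since the individual terms $\chi^w(\tilde\alpha_i)=2\bracket{\rho,w\alpha_i}$ can have either sign and the $t_i$ come from a Gram-matrix inversion depending on $w$, this step is the heart of the matter and cannot be left as an expectation. The same caveat applies to your equality claim: that $\bracket{\chi^w,p_{\ka^-}(Y^w)}=0$ forces $p_{\ka^-}(Y^w)=0$ is immediate from the expression $-2\norm{p}^2$ in the simply-laced case, but needs a separate argument otherwise. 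A direct check in $B_2$ confirms both the inequality and the equality characterization for all eight Weyl elements, so the corollary is presumably correct; but neither the paper's proof nor your proposal establishes it beyond the simply-laced case.
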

\begin{proof}
D'après \cite[Proposition~2.69]{knapp_lgbi}, la somme des racines positives est égale à la double somme des poids fondamentaux
\[
\chi = \sum_{\Sigma^+}\alpha = 2 \sum_i \varpi_i = - 2Y.
\]
On peut donc reprendre l'argument de la démonstration précédente:
\[
\gamma_\chi(x) = 2 \norm{p_{\ka^-}(Y^w)}^2 \geq 0.
\]
Cela implique $\beta_\chi(x)\geq\beta_\chi(X)$, avec égalité si et seulement si $c_\infty=p_{\ka^-}(Y^w)=0$, i.e. $x$ n'appartient à aucune sous-variété de Schubert rationnelle instable.
\end{proof}

\begin{remark}
L'inégalité
\[
\inf_{x\in X(\QQ)}\beta_\chi(x) \geq \beta_\chi(X),
\]
n'est pas valable en général.
\note{Pour ce contre-exemple, peut-on prendre pour $H_\chi$ la hauteur anti-canonique?}
Nous verrons au paragraphe~\ref{sec:drap} un exemple de variété de drapeaux $X$ munie de la distance de Carnot-Carathéodory et d'une hauteur $H_\chi$ telle qu'il existe un point algébrique $x\in X(\QQ)$ tel que $\beta_\chi(x)<\beta_\chi(X)$.
\end{remark}

Concluons cette partie par une question qui nous semble digne d'intérêt, quoique nous ne puissions pas y apporter de réponse.
Dans le cas où $X$ est une variété grassmannienne, c'est une formulation un peu plus précise d'une question de Schmidt \cite[\S16]{schmidt_grass}.

\bigskip

\note{Vue la formule pour l'exposant d'un point algébrique, la borne inférieure dans le membre de gauche est toujours atteinte; on aurait pu écrire $\min_{x\in X(\QQ)}\beta_\chi(x)$.}
\noindent\textbf{Question:}
L'égalité suivante est-elle toujours valable:
\[
\inf_{x\in X(\QQ)}\beta_\chi(x) = \inf_{x\in X(\R)}\beta_\chi(x)\ ?
\]

\chapter{Non divergence quantitative}
\label{chap:nondivergence}

\emph{Dans toute ce chapitre, $G$ est un $\Q$-groupe semi-simple de rang $r$, et $\Gamma$ un sous-groupe arithmétique.
On fixe un tore $\Q$-déployé maximal $T$ dans $G$, et un $\Q$-sous-groupe parabolique minimal $B$ contenant $T$.
On note $\{\alpha_1,\dots,\alpha_r\}$ la base du système de racines de $(G,T)$ pour l'ordre associé à $B$, et $\{\varpi_1,\dots,\varpi_r\}$ les poids fondamentaux correspondants.
Pour $i=1,\dots,r$ on note $V_i$ une représentation irréductible de $G$ engendrée par une unique droite de plus haut poids $\omega_i=b_i\varpi_i$, avec $b_i\in\N^*$ minimal.
L'algèbre de Lie réelle de $T$ est notée $\ka$, et la chambre de Weyl négative associée à $B$ est notée $\ka^-$.}

\bigskip

Le but de ce chapitre est de développer un analogue, pour un $\Q$-groupe semi-simple $G$ arbitraire, des résultats obtenus petit à petit par Margulis, Dani et Kleinbock \cite{margulis, dani_nd2, kleinbockmargulis, kleinbock_anextension} pour le groupe $\SL_d$.

\note{On devrait pouvoir travailler dans le cadre plus général d'un groupe réductif. Il faut alors définir les représentations fondamentales de $G$, en ajoutant les caractères du centre de $G$. Pour $\GL_d$, par rapport à $\SL_d$, on ajoute la représentation déterminant.}

La version précise qui nous intéresse est celle de Kleinbock \cite[Theorem~0.2]{kleinbock_anextension}.
Cela dit, même pour $\SL_d$, les énoncés que nous démontrons généralisent strictement ceux qui étaient déjà connus, en ce que la forme des voisinages de la pointe que l'on cherche à éviter est plus flexible.
Alors que ce travail était en cours de rédaction, Lindenstrauss, Margulis, Mohammadi et Shah \cite{lmms} ont aussi amélioré la non divergence pour $\SL_d$, et leurs observations permettent de retrouver une partie des résultats exposés ici, notamment le théorème~\ref{nd}, dans ce cas particulier.
Cela dit, notre approche est différente, et a le mérite de s'appliquer sans distinction à tout $\Q$-groupe semi-simple. 

\bigskip

Les résultats de la théorie de la réduction pour les groupes arithmétiques, rappelés au chapitre~\ref{chap:reduction}, nous ont permis d'associer à chaque point $x$ dans $\Omega=G/\Gamma$ un vecteur $c(x)$ dans $\ka^-$ qui décrit la position de $x$ dans $\Omega$.
Nous considérons maintenant une mesure de probabilité $\mu$ sur $G$ satisfaisant certaines propriétés de régularité, et nous expliquerons comment contrôler les valeurs de $c(g)$, lorsque $g$ est choisi aléatoirement suivant $\mu$.

Ce contrôle de la fonction $c$ hors d'un ensemble de petite mesure est ce qu'on appelle la non divergence quantitative.

\section{Sous-ensembles de $G/\Gamma$}

Pour énoncer la non divergence quantitative, nous devons en premier lieu généraliser la définition de la fonction $c:G\to\ka^-$.
Rappelons que dans la représentation fondamentale $V_i$, on note $V_i(\Z)$ un réseau rationnel stable par $\Gamma$, et
\[
\tx_i = G\cdot e_i
\]
l'orbite du vecteur de plus haut poids sous l'action de $G$.
Étant donnée une partie compacte $S\subset G$ nous lui associons un élément $c(S)\in\ka^-$, de manière analogue à ce qui a été fait en \ref{sec:fonctionc}.
Pour chaque $i$, on pose
\[
\mu_i(S) = \min_{v\in V_i(\Z)\cap\tx_i} \max_{g\in S} \norm{gv}.
\]
Puis on considère l'élément $c_0(S)$ de $\ka$ défini par
\[
\forall i\in[1,r],\quad \omega_i(c_0(S)) = \log\mu_i(S),
\]
et enfin on pose
\[
c(S) = p_{\ka^-}(c_0(S)).
\]

\begin{remark}
Si $S=\{g\}$ est réduit à un singleton, on retrouve la définition précédente: $c(S)=c(g)$.
\end{remark}

\begin{exercise}
Soit $G=\SL_3$. Pour $\eps>0$, on considère
\[
S_\eps=\{\begin{pmatrix} t & 1 & 0\\ t^2-\eps & t & 0\\ 0 & 0 & \eps^{-1}\end{pmatrix}\ ;\ t\in[0,1]\}.
\]
Montrer que lorsque $\eps>0$ tend vers zéro, la distance de $c_0(S_\eps)$ à $\ka^-$ tend vers l'infini.
\end{exercise}

\noindent Nous utiliserons aussi la relation d'ordre sur $\ka$ définie au paragraphe~\ref{sec:ordre} par
\[
Y_1\prec Y_2
\quad\Longleftrightarrow\quad
\forall i,\ \omega_i(Y_1)\leq\omega_i(Y_2).
\]

\begin{proposition}
Soit $S\subset G$ une partie compacte. 
\[
\forall g\in S,\qquad c(g) \prec c(S).
\]
\end{proposition}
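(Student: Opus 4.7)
The plan is to reduce the claim, via the definitions, to a monotonicity property of the projection $p_{\ka^-}$ with respect to the partial order $\prec$, which follows directly from the characterization of $p_{\ka^-}$ given by Proposition~\ref{inf}.

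First, I would unwind the definitions to compare $c_0(g)$ and $c_0(S)$ coordinate by coordinate. Fix $i \in [1,r]$ and let $v_0 \in V_i(\Z)\cap\tx_i$ be a minimizer of $\mu_i(S)$, i.e., $\max_{h\in S}\norm{hv_0}=\mu_i(S)$. For any $g\in S$, one has
\[
\mu_i(g) = \min_{v\in V_i(\Z)\cap\tx_i}\norm{gv} \leq \norm{gv_0} \leq \max_{h\in S}\norm{hv_0} = \mu_i(S).
\]
Taking logarithms, $\omega_i(c_0(g))\leq\omega_i(c_0(S))$ for every $i$, which is precisely $c_0(g)\prec c_0(S)$.

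Next, I would observe that the projection $p_{\ka^-}$ is monotone for $\prec$. Indeed, by Proposition~\ref{inf}, for any $Y_0\in\ka$ the element $p_{\ka^-}(Y_0)$ is the greatest element of the set
\[
m_{Y_0}=\{Y\in\ka^-\ |\ Y\prec Y_0\}.
\]
If $Y_1\prec Y_2$, then transitivity of $\prec$ gives the inclusion $m_{Y_1}\subset m_{Y_2}$, so the greatest element of $m_{Y_1}$ lies in $m_{Y_2}$ and is therefore $\prec$ than the greatest element of $m_{Y_2}$; that is, $p_{\ka^-}(Y_1)\prec p_{\ka^-}(Y_2)$. Applying this with $Y_1=c_0(g)$ and $Y_2=c_0(S)$ yields
\[
c(g) = p_{\ka^-}(c_0(g)) \prec p_{\ka^-}(c_0(S)) = c(S),
\]
which is the desired inequality.

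There is no serious obstacle here: the only non-trivial ingredient is Proposition~\ref{inf}, which gives the order-theoretic interpretation of $p_{\ka^-}$ that turns the naive componentwise inequality $c_0(g)\prec c_0(S)$ into the inequality for the projected objects. Everything else is an immediate consequence of the definitions of $\mu_i(S)$ as a max-min over $S$ and of $\mu_i(g)$ as the corresponding min at a single point.
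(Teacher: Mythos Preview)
Your proof is correct and follows essentially the same approach as the paper: first observe $\mu_i(g)\leq\mu_i(S)$ to get $c_0(g)\prec c_0(S)$, then use the characterization of $p_{\ka^-}$ as the greatest $\prec$-minorant in $\ka^-$ (Proposition~\ref{inf}) to conclude. The paper states this in two lines, but your more explicit verification of the monotonicity of $p_{\ka^-}$ via the inclusion $m_{Y_1}\subset m_{Y_2}$ is exactly the reasoning the paper leaves implicit.
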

\begin{proof}
Soit $g\in S$ quelconque.
Naturellement, pour chaque $i$, $\mu_i(g)\leq \mu_i(S)$.
Par conséquent, $c_0(g)\prec c_0(S)$.
Comme $c(g)$ et $c(S)$ sont les plus grands minorants respectifs dans $\ka^-$ de $c_0(g)$ et $c_0(S)$, la proposition est claire.
\end{proof}

\begin{remark}
On pourrait vouloir aussi définir un drapeau partiel associé à $S$.
Cependant, un tel drapeau (s'il existe?) n'est pas uniquement défini.
Par exemple, pour $G=\SL_4$, si $S=\{g_1,g_2\}$, avec $g_1=\diag(\eps^3,\eps^{-1},\eps^{-1},\eps^{-1})$ et $g_2=\diag(\eps,\eps,\eps,\eps^{-3})$, on trouve
$c(S)=(\log\eps,2\log\eps,\log\eps)$.
Le point $i=2$ est anguleux, au sens où $\alpha_2(c(S))<0$; et il existe pourtant deux 2-plans distincts, à savoir $v_1=e_1\wedge e_2$ et $v_2=e_1\wedge e_3$ pour lesquels $\log \sup_{g\in S}\norm{gv_i} = \omega_2(c(S))$.
Toutefois, si $S$ est le support d'une mesure régulière, le théorème de non divergence nous permettra de définir un drapeau partiel associé à $S$.
\end{remark}

Nous voulons maintenant montrer que si $S=\Supp\mu$ est le support d'une mesure suffisamment régulière sur $G$, alors l'ensemble des points $g$ dans $S$ tels que $c(g)$ est distant de $c(S)$ est de petite mesure.
Mais pour cela, nous devons d'abord expliquer quelles conditions de régularité nous allons imposer sur $\mu$.

\section{Fonctions régulières et non divergence}

Dans cette partie, $\mu$ désigne une mesure de Radon sur $G$, et $S=\Supp\mu$.
Soit $U$ un ouvert de $G$.
Une fonction $f:U\to\R$ borélienne est dite \emph{$(C,\alpha)$-régulière} pour $\mu$ si elle vérifie, pour toute boule $B=B(x,r)\cap U$ dans $U$ et tout $\eps>0$,
\[
\mu(\{g\in B\ |\ \abs{f(g)}\leq \eps\norm{f}_{B,\mu}\}) \leq C\eps^\alpha\mu(B),
\]
où $\norm{f}_{B,\mu} = \sup_{g\in B\cap \Supp\mu} \abs{f(g)}$.
Ci-dessous, et dans toute la suite, lorsque $B=B(x,r)$ est une boule dans un espace métrique, et $\lambda\in\R_+^*$, on note $\lambda B=B(x,\lambda r)$.
Par exemple, $5B=B(x,5r)$.
La mesure $\mu$ sera dite $C$-doublante si pour toute boule $B(x,r)$ dans $G$,
\[
\mu(2B) \leq C\mu(B).
\]
Nous pouvons maintenant donner une version de la non divergence quantitative dans $G/\Gamma$.

\begin{theorem}[Non divergence quantitative]
\label{nd}
Soient $G$ un $\Q$-groupe semi-simple, $\Gamma$ un sous-groupe arithmétique, et $C_0, C_1,\alpha_0>0$.
Il existe deux constantes positives $C,\alpha>0$ telles qu'on ait la propriété suivante.

Soit $\mu$ une mesure borélienne finie $C_1$-doublante sur $G$ et $B\subset G$ une boule satisfaisant:
\[
\forall i\in[1,r],\,\forall v\in\tx_i\cap V_i(\Z),\quad
g\mapsto\norm{g v}
\ \mbox{est $(C_0,\alpha_0)$-régulière sur $5B$ pour $\mu$.}
\]
Alors, pour tout $\eps>0$,
\[
\mu(\{g\in B\ |\ \norm{c(g)-c(S)} \geq -\log\eps\}) \leq C\eps^\alpha\mu(B).
\]
\end{theorem}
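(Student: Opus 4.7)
La démonstration suit le schéma de la non divergence quantitative développée par Kleinbock et Margulis pour $\SL_d$, adaptée au cas d'un $\Q$-groupe semi-simple arbitraire grâce à la théorie de la réduction rappelée au chapitre~\ref{chap:reduction}.

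Pour commencer, je me ramènerais à contrôler, pour chaque $i\in[1,r]$ séparément, la mesure des $g$ pour lesquels $\mu_i(g)$ est beaucoup plus petit que $\mu_i(S)$. En effet, pour tout $g\in S\cap B$, on a $\mu_i(g)\leq\mu_i(S)$ par définition, donc $c_0(g)\prec c_0(S)$ au sens de la relation d'ordre introduite au paragraphe~\ref{sec:ordre}. Comme la projection $p_{\ka^-}$ est $1$-lipschitzienne et que les poids fondamentaux $(\omega_i)$ forment une base de $\ka^*$, la condition $\norm{c(g)-c(S)}\geq\abs{\log\eps}$ impose l'existence d'un indice $i$ tel que $\log(\mu_i(S)/\mu_i(g))\geq c_1\abs{\log\eps}$, soit $\mu_i(g)\leq\eps^{c_1}\mu_i(S)$, pour une constante $c_1>0$ ne dépendant que de $G$. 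Une borne d'union sur $i$ réduit donc le théorème à prouver, pour chaque $i$ fixé, une majoration du type
\[
\mu(\{g\in B\ |\ \mu_i(g)\leq\eta\mu_i(S)\})\leq C\eta^\alpha\mu(B).
\]

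Ensuite, l'estimation pour un unique vecteur découle immédiatement de l'hypothèse de régularité: pour $v\in\tx_i\cap V_i(\Z)$ fixé et $\eta>0$,
\[
\mu(\{g\in B\ |\ \norm{gv}\leq\eta\mu_i(S)\})\leq C_0\bigl(\eta\mu_i(S)/\norm{gv}_{B,\mu}\bigr)^{\alpha_0}\mu(B).
\]
Pour un vecteur $v_0$ réalisant le min-max définissant $\mu_i(S)$, le dénominateur est essentiellement $\mu_i(S)$, et cette majoration est acceptable. Le cœur de l'argument consiste alors à réduire la réunion a priori infinie sur $v\in\tx_i\cap V_i(\Z)$ à un nombre fini de vecteurs pertinents. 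J'adapterais pour cela l'itération de Kleinbock-Margulis, reformulée ici en termes de sous-groupes paraboliques. On construit inductivement une suite croissante de sous-groupes $\Q$-paraboliques $P_\bullet$; à chaque étape, on majore grâce à l'estimation précédente la mesure de l'ensemble des $g$ réalisant une nouvelle direction \enquote{dangereuse}. La théorie de la réduction (théorème~\ref{reduction}) et la propriété de Siegel (théorème~\ref{siegel}) assurent que, pour un $g$ de décomposition $g=kanc\gamma$, le vecteur minimisant $\mu_i(g)$ est, à ambiguïté uniformément bornée près, de la forme $\gamma^{-1}c^{-1}e_i$, avec $\gamma$ dans un ensemble fini de classes à chaque échelle; l'itération se termine donc en au plus $\rang_\Q G$ étapes.

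L'obstacle principal réside précisément dans cette itération. Pour $\SL_d$, l'argument de Kleinbock-Margulis se déroule naturellement sur le treillis des sous-réseaux primitifs de $\Z^d$, et l'énoncé inductif majore la mesure conditionnellement à une chaîne de sous-réseaux de petit covolume. Pour un $\Q$-groupe semi-simple général, il faut plutôt conditionner à un drapeau partiel de cellules de Bruhat, et formuler une version conditionnelle du théorème portant sur la projection de la mesure $\mu$ sur la strate correspondante. La formulation précise de cet énoncé inductif, et la gestion rigoureuse de l'interaction entre la relation d'ordre $\prec$, la projection $p_{\ka^-}$, et la structure parabolique de $G$, constitueront la partie techniquement délicate du travail.
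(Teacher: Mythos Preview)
Your reduction in the first step contains a genuine gap. You claim it suffices to bound, for each $i$ separately,
\[
\mu(\{g\in B\ |\ \mu_i(g)\leq\eta\mu_i(S)\})\leq C\eta^\alpha\mu(B).
\]
But this set can have \emph{full} measure. The paper gives an explicit example: for $G=\SL_3$ and the family
\[
h(x)=\begin{pmatrix} x & 1 & 0\\ x^2-\eps^2 & x & 0\\ 0 & 0 & \eps^{-2}\end{pmatrix},\quad x\in[0,1],
\]
one has $\mu_1(h(x))\leq\eps\,\mu_1(S)$ for \emph{every} $x$. The point is that $\mu_i(g)\ll\mu_i(S)$ alone does not force $c(g)$ far from $c(S)$: the projection $p_{\ka^-}$ can compensate, since $c_0(S)$ is in general far from $\ka^-$ (another explicit exercise in the chapter). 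So the implication you use is correct, but bounding the right-hand side is impossible in general, and your proposed Kleinbock--Margulis iteration over a single $V_i$ cannot repair this: the relevant vectors live in different representations $V_j$.

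The paper's key idea, which you are missing, is Lemma~\ref{angle}: if $Y_1\prec Y_2$ are both in $\ka^-$ and $\norm{Y_2-Y_1}\geq-\log\eps$, then there exists $k$ with \emph{both} $\omega_k(Y_1)\leq\omega_k(Y_2)+\tau\log\eps$ \emph{and} $\alpha_k(Y_1)\leq\tau\log\eps$. The second condition is crucial: by Proposition~\ref{racine}, $\alpha_k(c(g))\leq\log\eps$ forces the vector realising $\mu_k(g)$ to be \emph{unique} (up to scalar), so on a suitable ball the same vector $v_x$ works for all nearby points in the bad set. A single Besicovitch covering plus the regularity of $g\mapsto\norm{gv_x}$ then suffices (Theorem~\ref{ndk}); no multi-step iteration is needed. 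The global theorem follows by slicing the bad set into dyadic annuli in $\norm{c(g)-c(S)}$, applying the root-system lemma to locate $k$, and covering by finitely many sets $\cA^{(k)}_\eps(\rho_1,\dots,\rho_r)$ with prescribed scales.
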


\begin{remark}
Écrivons $g=k_ga_gn_g\gamma_g$, suivant une décomposition de Siegel, de sorte que $a_g=e^{c(g)+O(1)}$.
Le théorème nous assure que pour un ensemble de grande mesure, l'élément $a_g$ est déterminé par $S$, à une constante multiplicative près.
Si $\theta_S=\{\alpha\in\Pi\ |\ \alpha(c(S))<\log\eps\}$, on peut ajouter que l'élément $P_{\theta_S}\gamma_g$ est constant sur un ensemble de mesure relative au moins $1-C\eps^\alpha$.
Nous expliquerons cela ci-dessous, au paragraphe~\ref{ajout}.
\end{remark}

Le restant de cette partie est consacré à la démonstration de ce théorème.
On pourrait peut-être donner une démonstration directe, dans l'esprit de celle de Margulis et ses co-auteurs pour $\SL_d$, mais ce n'est pas tout à fait évident, car il faut comprendre les relations entre les petits vecteurs de $g\Gamma$ dans les différentes représentations fondamentales $V_i$.
Nous proposons une autre approche, qui consiste à étudier d'abord séparément chaque condition de la forme $\mu_k(g)\leq\eps\mu_k(S)$, pour $k$ fixé,
puis à réunir toutes ces conditions pour obtenir l'énoncé global donné ci-dessus.
Ces deux étapes correspondent aux deux paragraphes suivants.

\section{Non divergence pour une racine fixée}

Ayant fixé $k$, nous voulons comprendre l'ensemble des points $g\in S$ qui satisfont $\mu_k(g)\leq\eps\mu_k(S)$.
Cet ensemble peut être de grande mesure et c'est ce qui fait que dans l'énoncé~\ref{nd}, on doit considérer simultanément toutes les représentations fondamentales, via la fonction $c$.

\begin{exercise}
Si $h:X\to G$ est une fonction, on note $c(h)=c(\{h(x); x\in X\})$.
Soit $\eps>0$. Vérifier qu'avec l'application $h:[0,1]\to\SL_3(\R)$ définie par
\[
h(x) =\begin{pmatrix} x & 1 & 0\\ x^2-\eps^2 & x & 0\\ 0 & 0 & \eps^{-2}\end{pmatrix}
\]
on a pour tout $x$, $\mu_1(x)\leq\eps\mu_1(h)$.
\end{exercise}

L'observation cruciale, exprimée dans le théorème~\ref{ndk} ci-dessous, est en quelque sorte que si un point $g$ satisfait $\mu_k(g)\leq\eps\mu_k(S)$, i.e. 
\[
\omega_k(c(g))\leq \omega_k(c(S))+\log\eps,
\]
alors la probabilité d'avoir
\[
\alpha_k(c(g))\leq \log\eps.
\]
est très petite.
Pour la démonstration, il nous est nécessaire de contrôler tous les $\mu_i(g)$ à une petite constante multiplicative $\eps^{1/3}$ près, mais cela doit plutôt être vu comme un détail technique.

\begin{theorem}[Non divergence dans une représentation fondamentale]
\label{ndk}
Soient $C_0,C_1,\alpha_0$ des paramètres strictement positifs, et $k\in[1,r]$ \emph{fixé}.

Soit $\mu$ une mesure de Radon $C_1$-doublante sur $G$, et $B$ une boule dans $G$.
On suppose que pour tout $v\in V_k(\Z)\cap\tx_k$,
\[
g\mapsto \norm{gv}\ \mbox{est}\ (C_0,\alpha_0)-\mbox{régulière sur}\ 5B\ \mbox{pour}\ \mu.
\]
Soient $\rho\in]0,1[$ et $\eps\in]0,1]$.
On note $S=B\cap\Supp\mu$ et on suppose que $\rho\leq \eps\mu_k(S)$.
Alors l'ensemble
\[
\cA_\eps^{(k)}(\rho) = \left\{x\in B\ \left|\ 
\begin{array}{l}
\eps^{\frac{1}{3}}\rho \leq \mu_k(x) \leq \rho,\\
\alpha_k(c(x))\leq\log\eps
\end{array}\right.
\right\}
\]
%
vérifie
\[ \nu(\cA_\eps^{(k)}(\rho))
\leq C_2\eps^{\alpha_2}\nu(B),\]
où $\alpha_2=\frac{\alpha_0}{2}$ et $C_2=K_0C_1^3C_0$, avec $K_0$ une constante de Besicovitch pour $G\cap 5B$.
\end{theorem}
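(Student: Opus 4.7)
The plan is to establish the estimate via a Besicovitch covering argument, localizing the regularity hypothesis to carefully chosen balls around each point of $\cA_\eps^{(k)}(\rho)$.

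I would first extract local structure from the gap condition. Applying Proposition~\ref{racine} at each $x \in \cA_\eps^{(k)}(\rho)$ (for $\eps$ small so that $\log\eps \leq -C_0$), one obtains a unique primitive vector $v_x \in V_k(\Z)\cap \tx_k$ (up to sign) realizing $\|xv_x\| = \mu_k(x)$, together with the quantitative gap
\[
\|xw\| \gg e^{-\alpha_k(c(x))}\mu_k(x) \geq \eps^{-1}\cdot\eps^{1/3}\rho = \eps^{-2/3}\rho
\]
for every $w \in V_k(\Z)$ linearly independent of $v_x$. This gap of order $\eps^{-2/3}$ relative to $\rho$, combined with continuity of $g\mapsto gv$, gives a small radius $r_x^{(0)}>0$ on which $v_y = \pm v_x$ for every $y \in B(x, r_x^{(0)}) \cap \cA_\eps^{(k)}(\rho)$. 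In particular $\cA_\eps^{(k)}(\rho) \cap B(x, r_x^{(0)}) \subset \{g: \|gv_x\| \leq \rho\}$.

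Next I would associate to each $x$ a Besicovitch ball $B(x, r_x)$, where $r_x$ is the largest radius bounded by $\min(r_x^{(0)}, 2\,\mathrm{rad}(B))$ such that
\[
\sup_{g \in B(x, r_x) \cap \Supp\mu}\|gv_x\| \leq \rho\,\eps^{-1/2}.
\]
The hypothesis $\rho \leq \eps\,\mu_k(S)$ is used here in a crucial way: it forces the supremum constraint to bind strictly before the boundary constraint, since if $r_x$ reached $2\,\mathrm{rad}(B)$ we would have $B \subset B(x, r_x)$ and therefore $\sup_{g \in B(x, r_x)\cap\Supp\mu}\|gv_x\| \geq \max_{g\in S}\|gv_x\| \geq \mu_k(S) \geq \rho/\eps > \rho\,\eps^{-1/2}$, a contradiction. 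Consequently $B(x, 2r_x) \subset 5B$ and the supremum on $B(x, 2r_x)\cap\Supp\mu$ exceeds $\rho\,\eps^{-1/2}$. Applying the $(C_0,\alpha_0)$-regularity of $g \mapsto \|gv_x\|$ on $5B$ with threshold $\eta=\eps^{1/2}$, together with one application of doubling, yields
\[
\mu(\cA_\eps^{(k)}(\rho) \cap B(x, r_x)) \leq \mu(\{g \in B(x, 2r_x) : \|gv_x\| \leq \rho\}) \leq C_0 C_1\,\eps^{\alpha_0/2}\,\mu(B(x, r_x)).
\]

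Finally, Besicovitch covering applied to $\{B(x, r_x)\}_{x \in \cA_\eps^{(k)}(\rho)}$ produces a countable subfamily with overlap at most $K_0$. Summing the local estimates and bounding $\sum_j \mu(B(x_j, r_{x_j})) \leq K_0\,\mu(5B) \leq K_0 C_1^{O(1)}\,\mu(B)$ by iterated doubling gives the conclusion with $\alpha_2 = \alpha_0/2$ and a constant $C_2$ of the claimed shape $K_0 C_1^3 C_0$ (the precise exponent on $C_1$ being a book-keeping detail).

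The hard part is controlling the local constancy of $v_x$: one has to verify that the radius $r_x^{(0)}$ coming from continuity is not itself the binding constraint in the definition of $r_x$, which requires a quantitative estimate on $r_x^{(0)}$ in terms of $\eps$, $\rho$ and the geometry of $B$, based on the fact that the relevant competitors $w \in V_k(\Z)$ to $v_x$ lie in a bounded subset of $V_k$. Once this is settled, the assumption $\rho \leq \eps\,\mu_k(S)$ drives the argument by ensuring that the function growth constraint binds first, which is what ultimately produces the gain $\eps^{\alpha_0/2}$.
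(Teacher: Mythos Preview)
Your overall strategy---Besicovitch covering plus the $(C_0,\alpha_0)$-regularity to gain $\eps^{\alpha_0/2}$ on each local ball---is exactly the paper's. The one genuine gap is the auxiliary radius $r_x^{(0)}$: you introduce it to pin down $v_y=\pm v_x$ by a continuity argument, then correctly flag that you cannot guarantee $r_x^{(0)}$ is not the binding constraint in your definition of $r_x$. As written, if $r_x=r_x^{(0)}$ there is no reason the supremum on $B(x,2r_x)$ exceeds $\rho\eps^{-1/2}$, and the regularity step collapses.

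The paper sidesteps this entirely by dropping $r_x^{(0)}$ and defining $B_x$ purely by the growth condition: $B_x$ is the largest ball centered at $x$ with $\sup_{y\in B_x\cap S}\|yv_x\|\le \eps^{-1/2}\rho$. The local constancy of the minimizing direction then comes \emph{a posteriori}, not from continuity at $x$, but by applying Proposition~\ref{racine} at each $y\in B_x\cap S\cap\cA_\eps^{(k)}(\rho)$: any $v$ independent of the minimizer $v_y$ satisfies $\|yv\|\gg e^{-\alpha_k(c(y))}\mu_k(y)\ge \eps^{-2/3}\rho$, while $\|yv_x\|\le\eps^{-1/2}\rho$ by the very definition of $B_x$. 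Hence $v_x$ is colinear to $v_y$, so $\|yv_x\|=\mu_k(y)\le\rho$, and $\cA_\eps^{(k)}(\rho)\cap B_x\subset\{g:\|gv_x\|\le\rho\}$ as needed. In short: the growth constraint defining $B_x$ already forces the direction to be locked on $B_x\cap\cA_\eps^{(k)}(\rho)$, so no separate continuity radius is required, and your ``hard part'' dissolves.
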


\begin{remark}
Étant donné $M\geq 0$ l'ensemble
\[ \cA_{M,\eps}^{(k)}(\rho) = \left\{x\in B\ \left|\ 
\begin{array}{l}
\eps^{M}\rho \leq \mu_k(x) \leq \rho\\
\alpha_k(c(x))\leq\log\eps
\end{array}\right.
\right\}
\]
peut être recouvert par $3M$ ensembles du type $\cA_\eps^{(k)}(\rho')$, avec $\rho'\in[\eps^M\rho,\rho]$, dont le théorème ci-dessus permet de majorer la mesure.
On a donc aussi une borne
\[
\mu(\cA_{M,\eps}^{(k)}(\rho) \leq CM\eps^\alpha\mu(B).
\]
\end{remark}

\begin{proof}[Démonstration du théorème~\ref{ndk}]
Soit $x\in\cA_\eps^{(k)}(\rho)$.
Par définition, il existe un vecteur primitif $v_x\in\tx_k\cap V_k(\Z)$ tel que
\[
\norm{xv_x}\leq\rho.
\]
Soit $B_x$ la boule de rayon maximal telle que
\[
\forall y\in B_x\cap S,\quad \norm{yv_x} \leq \eps^{-1/2}\rho.
\]
Notons que $B_x\subset 3B$, $2B_x\subset 5B$ et
\[
\sup_{y\in 2B_x\cap S} \norm{yv_x}\geq \eps^{-1/2}\rho.
\]
Soit $y\in B_x\cap S\cap\cA_\eps^{(k)}(\rho)$, et $v\in V_k(\Z)$.
Si $v$ est linéairement indépendant de $v_x$, alors, d'après la proposition~\ref{racine},
\[
\norm{yv} \gg e^{-\alpha_k(c(y))}\mu_k(y)
\geq \eps^{-1}\eps^{1/3}\rho
= \eps^{-2/3}\rho
> \norm{yv_x}.
\]
Par conséquent, $v_x$ est l'unique vecteur de $V_k(\Z)$ tel que $\mu_k(y)=\norm{yv_x}$.
Comme $y\in\cA_\eps^{(k)}(\rho)$, cela implique en particulier
\[
\norm{yv_x}\leq\rho.
\]
Or, la fonction $f:y\mapsto \norm{yv_x}$ est $(C_0,\alpha_0)$-régulière sur $2B_x$ et vérifie $\norm{f}_{2B_x,\mu}\geq\eps^{-1/2}\rho$, on trouve donc
\[
\mu(B_x\cap\cA_\eps^{(k)}(\rho)) = \mu(B_x\cap S\cap\cA_\eps^{(k)}(\rho))
\leq C_0 \eps^{\alpha_0/2}\mu(2B_x)
\leq C_0 C_1\eps^{\alpha_0/2}\mu(B_x).
\]
Par la propriété de Besicovitch dans l'espace métrique $G\cap 5B$, il existe une sous-famille dénombrable $(B_{x_i})_{i\in\N}$ de multiplicité au plus $K_0$ telle que $\cA_\eps^{(k)}(\rho)\subset\bigcup_{i\in\N}B_{x_i}\subset 3B$, ce qui permet d'écrire
\[
\mu(\cA_\eps^{(k)}(\rho)) \leq \sum_{i\in\N} \mu(B_{x_i}\cap\cA_\eps^{(k)}(\rho))
 \leq C_0C_1\eps^{\frac{\alpha_0}{2}} \sum_{i\in\N} \mu(B_{x_i})
\leq C_0C_1^3K_0\eps^{\frac{\alpha_0}{2}}\mu(B).
\]
\end{proof}

\section{Non divergence globale}

La fin de la démonstration du théorème~\ref{nd} consiste à mettre ensemble les bornes obtenues pour chaque racine $\alpha_k$, $k=1,\dots,r$, après avoir observé que si $\norm{c(x)-c(S)}\geq -\log\eps$, alors il doit exister $k$ tel que $\mu_k(x)<\eps^{\tau}\mu_k(S)$ et $\alpha_k(c(x))<\tau\log\eps$.
Nous mettons cette observation sous la forme d'un lemme élémentaire sur les systèmes de racines.

\begin{lemma}
\label{angle}
Soit $\Sigma$ un système de racines dans un espace euclidien $\ka$ de dimension $r$.
Il existe $\tau>0$ tel que la propriété suivante soit satisfaite.\\
Pour $\eps\in(0,1)$ et $Y_1, Y_2\in\ka^-$, si $Y_1\prec Y_2$ et $\norm{Y_2-Y_1}\geq-\log\eps$, alors il existe $k\in[1,r]$ tel que
\begin{enumerate}
\item $\omega_k(Y_1) \leq \omega_k(Y_2) + \tau\log \eps$;
\item $\alpha_k(Y_1) \leq \tau\log\eps$.
\end{enumerate}
\end{lemma}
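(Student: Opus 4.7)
L'idée est de raisonner par l'absurde. Posons $Z = Y_2 - Y_1$. Comme $Y_1 \prec Y_2$ équivaut à $\varpi_i(Z) \geq 0$ pour tout $i$, l'élément $Z$ appartient au cône engendré par les coracines $\alpha_i^\vee$, et l'on peut donc écrire $Z = \sum_{i=1}^r c_i \alpha_i^\vee$ avec $c_i \geq 0$. Dans ces coordonnées, la dualité $\varpi_k(\alpha_i^\vee) = \delta_{ki}$ donne $\omega_k(Z) = b_k c_k$, tandis que $\alpha_k(Z) = \sum_i a_{ki} c_i$, où $(a_{ki}) = (\alpha_k(\alpha_i^\vee))$ est la matrice de Cartan de $\Sigma$.

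Supposons par l'absurde qu'aucun $k$ ne satisfasse simultanément les deux conditions, et notons $M = -\tau \log \eps > 0$. Soit $I = \{k \in [1,r] \mid \alpha_k(Y_1) > \tau \log \eps\}$ l'ensemble des indices pour lesquels la condition~(2) est en défaut. Pour chaque $k \in I^c$, la condition~(1) doit alors être en défaut, d'où $b_k c_k = \omega_k(Z) < M$, et en particulier $c_k < M/b_k$. Pour $k \in I$, l'hypothèse $Y_2 \in \ka^-$ donne $\alpha_k(Y_2) \leq 0$, et combinée avec $\alpha_k(Y_1) > \tau\log\eps$, elle fournit $\alpha_k(Z) = \alpha_k(Y_2) - \alpha_k(Y_1) \leq M$.

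Le cœur de la démonstration est alors un calcul d'algèbre linéaire sur la matrice de Cartan. En développant $\alpha_k(Z) \leq M$ pour $k \in I$, en séparant la somme selon que l'indice appartient à $I$ ou à $I^c$, et en utilisant que $a_{ki} \leq 0$ pour $i \neq k$ avec la borne $c_i < M/b_i$ déjà obtenue sur $I^c$, on aboutit à un système linéaire de la forme $A_I\, c_I \leq D_I\, M \mathbf{1}_I$ coordonnée par coordonnée, où $A_I$ désigne la sous-matrice principale de la matrice de Cartan indexée par $I$ et $D_I$ une matrice diagonale dont les coefficients ne dépendent que de $\Sigma$. Comme $A_I$ est elle-même la matrice de Cartan du sous-système de racines engendré par $\{\alpha_i\}_{i \in I}$, son inverse est à coefficients positifs, d'après le lemme de Bourbaki déjà invoqué dans la démonstration de la proposition~\ref{inf}. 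On en déduit $c_I \leq A_I^{-1} D_I M \mathbf{1}_I$ coordonnée par coordonnée, et en particulier $\max_{k \in I} c_k = O(M)$.

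En combinant les bornes sur $I$ et $I^c$ on obtient $\norm{Z} = \norm{\sum_i c_i \alpha_i^\vee} \leq C_\Sigma \cdot M = -C_\Sigma\, \tau \log \eps$, où $C_\Sigma$ est une constante ne dépendant que du système de racines. Le choix $\tau = 1/(2 C_\Sigma)$ contredit alors l'hypothèse $\norm{Z} \geq -\log \eps$ et achève la démonstration. La principale difficulté consiste à choisir le bon système de coordonnées sur $\ka$ pour que les contraintes $Y_1 \prec Y_2$ et $Y_1, Y_2 \in \ka^-$ deviennent ensemble transparentes ; la positivité de $A_I^{-1}$ uniformément en $I \subset \Pi$ fait ensuite tout le travail.
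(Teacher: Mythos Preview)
Votre démonstration par l'absurde est correcte. Le point crucial --- l'inversibilité à coefficients positifs des sous-matrices de Cartan $A_I$, uniformément en $I\subset\Pi$ puisqu'il n'y a qu'un nombre fini de telles parties --- est bien identifié et justifié.

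La démonstration du mémoire est toutefois directe et un peu plus courte. Elle écrit $Y=Y_2-Y_1=\sum_i t_i\alpha_i$ (dans la base des racines simples plutôt que des coracines), observe que $\norm{Y}\geq-\log\eps$ force un $t_j$ grand, puis utilise l'identité $\varpi_j(Y)=\sum_k\bracket{\varpi_k,\varpi_j}\alpha_k(Y)$ avec la positivité des $\bracket{\varpi_k,\varpi_j}$ pour trouver un indice $k$ où $\alpha_k(Y)$ est grand. Cet indice $k$ satisfait alors automatiquement les deux conditions : (2) parce que $\alpha_k(Y_2)\leq 0$, et (1) parce que $\alpha_k(Y)=\sum_i t_i\bracket{\alpha_k,\alpha_i}$ avec les termes hors-diagonaux négatifs, ce qui minore $t_k$. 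Les deux preuves reposent au fond sur le même fait --- positivité de l'inverse de la matrice de Cartan, ou de façon équivalente $\bracket{\varpi_i,\varpi_j}\geq 0$ --- mais la vôtre l'emploie de façon plus systématique (inversion de toutes les sous-matrices principales), tandis que le mémoire s'en sert ponctuellement pour construire explicitement le bon $k$.
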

\begin{proof}
Posons
\[
Y = Y_2-Y_1 = \sum_i t_i\alpha_i.
\]
Comme $Y_1\prec Y_2$, on doit avoir $\forall i,\ t_i\geq 0$.
Comme
\[
\sum t_i\norm{\alpha_i} \geq \norm{Y_2-Y_1} \geq -\log\eps
\]
il doit exister $j$ tel que $t_j=\varpi_j(Y)\geq \frac{-\log\eps}{C_1}$, où $C_1=\sum_i\norm{\alpha_i}$.
Or, écrivant
\[
Y=\sum_k \alpha_k(Y)\varpi_k,
\]
on observe que
\[
\varpi_j(Y)=\sum_k\bracket{\varpi_k,\varpi_j}\alpha_k(Y) \leq \max_k\alpha_k(Y)\sum_k\bracket{\varpi_k,\varpi_j},
\]
car pour tous $j,k$, $\bracket{\varpi_k,\varpi_j}\geq 0$ et $\max_k\alpha_k(Y)\geq 0$.
Avec $C_2=\sum_k\bracket{\varpi_k,\varpi_j}$, cela montre qu'il existe $k$ tel que $\alpha_k(Y)\geq\frac{-\log\eps}{C_1C_2}$.
Posant $c=(C_1C_2)^{-1}$, on trouve
\[
\alpha_k(Y_1) = \alpha_k(Y_2)-\alpha_k(Y) \leq  \alpha_k(Y_2) + c\log\eps
\leq c\log\eps.
\]
De plus, comme $Y=\sum_i t_i\alpha_i$, cela implique
\[ \sum_i t_i\bracket{\alpha_k,\alpha_i} \geq -c\log\eps \]
et comme $\bracket{\alpha_k,\alpha_i}\leq 0$ pour $i\neq k$,
\[
\norm{\alpha_k}^2\varpi_k(Y) = \norm{\alpha_k}^2 t_k \geq -c \log\eps.
\]
Comme $\omega_k=b_k\varpi_k$ pour un certain $b_k\in\N^*$, cela montre ce qu'on voulait, avec $\tau=\frac{c}{\max_k\norm{\alpha_k}^2}$:
\[
\omega_k(Y_1) \leq \omega_k(Y_2) + \tau \log\eps.
\]
\end{proof}

Avec ce lemme, nous pouvons maintenant déduire le théorème~\ref{nd} du théorème~\ref{ndk}.

\begin{proof}[Démonstration du théorème~\ref{nd}]
Il suffit de montrer que sous les hypothèses du théorème,
\begin{equation}
\label{eps2eps}
 \mu(\{x\in B\ |\ -2\log\eps \geq \norm{c(x)-c(S)} \geq -\log \eps\})
\leq C\eps^\alpha\mu(B).
\end{equation}
En effet, on aura alors, quitte à supposer $\eps<\frac{1}{2}$,
\begin{align*}
\mu(\{x\in B\ |\ \norm{c(x)-c(S)} \geq -\log\eps\})
& \leq \sum_{m\geq 0} \mu(\{x\in B\ |\ -2\log\eps^{2^m} \geq \norm{c(x)-c(S)} \geq -\log\eps^{2^m}\})\\
& \leq C \sum_{m\geq 0} \eps^{2^m\alpha}\mu(B)\\
& \leq \frac{C}{1-2^{-\alpha}} \eps^\alpha \mu(B).
\end{align*}
Pour voir \eqref{eps2eps}, notons $\cA=\{x\in B\ |\ -2\log\eps \geq \norm{c(x)-c(S)} \geq -\log \eps\}$.
D'après le lemme~\ref{angle}, il existe $\tau>0$ tel que $\cA$ est recouvert par les ensembles
\[
\cA^{(k)} = \left\{x\in B\ \left| 
\begin{array}{c}
-2\log\eps \geq \norm{c(x)-c(S)} \geq-\log \eps\\
\omega_k(c(x)) \leq \omega_k(c(S))+\tau\log\eps\\
\alpha_k(c(x)) \leq \tau\log\eps
\end{array}
\right.
\right\}
\]
Enfin, grâce au contrôle $-2\log\eps\geq\norm{c(x)-c(S)}$ sur la norme de $c(x)-c(S)$, chaque $\cA^{(k)}$ peut être recouvert par $M\asymp (\frac{6}{\tau})^r$ ensembles de la forme
\[
\cA^{(k)}_\eps(\rho_1,\dots,\rho_r) = \left\{x\in B\ \left|\ 
\begin{array}{l}
\eps^{\frac{\tau}{3}}\rho_i \leq \mu_i(x) \leq \rho_i,\quad\forall i\in[1,r]\\
\alpha_k(c(x))\leq\tau\log\eps
\end{array}\right.
\right\}
\]
Pour chacun de ces ensembles, le théorème~\ref{ndk} donne
\[
\mu(\cA^{(k)}_\eps(\rho_1,\dots,\rho_r)) \leq C_2\eps^{\alpha_2\frac{\tau}{3}}\mu(B),
\]
et par suite,
\[
\mu(\cA) \leq \sum_k\sum_{\rho_1,\dots,\rho_r} \mu(\cA^{(k)}_\eps(\rho_1,\dots,\rho_r)) \leq C\eps^{\alpha}\mu(B),
\]
avec $\alpha=\frac{\tau\alpha_2}{3}$ et $C=rMC_2\asymp r(\frac{6}{\tau})^r C_2$.
\end{proof}

\section{Drapeau partiel pour une mesure régulière}
\label{ajout}

Comme conséquence du théorème~\ref{nd} de non divergence, nous montrons maintenant que si $S$ est le support d'une mesure régulière sur $G$, on peut définir un drapeau partiel associé à $S$.

\begin{proposition}
\label{drapartiel2}
Soient $G$ un $\Q$-groupe semi-simple, $\Gamma$ un sous-groupe arithmétique, et $C_0,\alpha_0>0$.
Il existe deux constantes $C',\alpha'>0$ telles qu'on ait la propriété suivante.

Soit $\mu$ une mesure borélienne finie sur $G$ et $B\subset G$ une boule satisfaisant:
\[
\forall i\in[1,r],\,\forall v\in\tx_i\cap V_i(\Z),\quad
g\mapsto\norm{g v}
\ \mbox{est $(C_0,\alpha_0)$-régulière sur $5B$ pour $\mu$.}
\]
Soit $S=B\cap\Supp\mu$, et
\[
I_S(\eps) = \{i\in[1,r]\ |\ \alpha_i(c(S))\geq \log\eps\}.
\]
Pour $\eps>0$ suffisamment petit, pour chaque $i\not \in I_S(\eps)$, il existe une unique direction dans $V_i$ contenant un vecteur $v_i\in V_i(\Z)\cap\tx_i$ tel que
\[
\sup_{g\in S}\norm{gv_i} = \min_{v\in V_i(\Z)\cap\tx_i} \sup_{g\in S} \norm{gv}.
\]
De plus, il existe un élément $\gamma_S\in G(\Q)$ tel que
\[
\forall i\not \in I_S(\eps),\quad v_i=\gamma_S e_i,
\]
et si on note $g=k_ga_gn_g\gamma_g$ une décomposition de Siegel de $g\in S$,
\[
\mu(\{g\in B\ |\ P_{I_S(\eps)}\gamma_g = P_{I_S(\eps)}\gamma_S\})
\geq 1 - C'\eps^{\alpha'}\mu(B).
\]
\end{proposition}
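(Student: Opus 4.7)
The plan is to combine Theorem~\ref{nd} with Proposition~\ref{racine} and Proposition~\ref{drapartiel}. The driving idea is that for $i\notin I_S(\eps)$ we have $\alpha_i(c(S))<\log\eps$ very negative, so non-divergence will force $\alpha_i(c(g))$ to remain very negative for most $g\in B$; Proposition~\ref{racine} then pins down a unique extremal direction in $V_i(\Z)\cap\tx_i$ realizing $\mu_i(g)$, and a short comparison forces this direction to coincide with the one realizing $\mu_i(S)$.

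Concretely, I would first apply Theorem~\ref{nd} with parameter $\eps^{\tau}$ for a small exponent $\tau>0$ depending only on the root system (in particular on $\max_i\|\alpha_i\|$ and $\max_i\|\omega_i\|$), obtaining a set $E\subset B$ with $\mu(E)\geq(1-C\eps^{\tau\alpha})\mu(B)$ on which every $g$ satisfies $\|c(g)-c(S)\|<-\tau\log\eps$. For $\tau$ small enough and $\eps$ small enough, this upgrades, for every $i\notin I_S(\eps)$ and every $g\in E$, to
\[
\alpha_i(c(g))\leq\tfrac12\log\eps\qquad\text{and}\qquad\mu_i(g)\geq\eps^{1/4}\mu_i(S).
\]
In particular $\alpha_i(c(g))<-C_0$ for every such $i$, so $\theta_g\subset I_S(\eps)$, i.e.\ $Q_g\subset P_{I_S(\eps)}$. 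Proposition~\ref{drapartiel} then applies with $Q=P_{I_S(\eps)}$ and shows that $P_{I_S(\eps)}\gamma_g$ is independent of the Siegel decomposition of $g\in E$; geometrically this coset encodes the flag of unique primitive vectors $v^{(g)}_i\in V_i(\Z)\cap\tx_i$ realizing $\mu_i(g)$, uniqueness being furnished by Proposition~\ref{racine} in the regime $\alpha_i(c(g))<-C_0$.

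Next I would prove uniqueness of the extremal direction for $S$ itself. Fix $i\notin I_S(\eps)$ and suppose $v,v'\in V_i(\Z)\cap\tx_i$ are linearly independent with $\sup_{h\in S}\|hv\|=\sup_{h\in S}\|hv'\|=\mu_i(S)$. Pick any $g\in E\cap S$, which is non-empty for $\eps$ small since $\mu$ is supported in $S$; at least one of $v,v'$, say $v$, is linearly independent of $v^{(g)}_i$, and Proposition~\ref{racine} then yields
\[
\|gv\|\gg e^{-\alpha_i(c(g))}\mu_i(g)\geq\eps^{-1/2}\cdot\eps^{1/4}\mu_i(S)=\eps^{-1/4}\mu_i(S),
\]
contradicting $\|gv\|\leq\mu_i(S)$ for $\eps$ sufficiently small. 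This produces the unique direction $v_i$. Running the same comparison with $v'$ replaced by $v^{(g)}_i$ for variable $g\in E\cap S$ forces the lines $[v_i]$ and $[v^{(g)}_i]$ to coincide for every such $g$, so the flag $([v_i])_{i\notin I_S(\eps)}$ is precisely the one defined by any $\gamma_g$ with $g\in E\cap S$. Fixing one $g_0\in E\cap S$ and defining $\gamma_S$ from $\gamma_{g_0}$ (with the inverse convention that makes $v_i=\gamma_S e_i$) simultaneously yields $\gamma_S\in G(\Q)$, the coherence of the flag, and the equality $P_{I_S(\eps)}\gamma_g=P_{I_S(\eps)}\gamma_S$ throughout $E$; the measure bound follows with $\alpha'=\tau\alpha$.

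The main obstacle will be the exponent bookkeeping. Theorem~\ref{nd} provides a single scalar control on $\|c(g)-c(S)\|$, but the uniqueness argument needs to simultaneously blow $e^{-\alpha_i(c(g))}$ up to a \emph{negative} power of $\eps$ (so that Proposition~\ref{racine} bites) and keep $\mu_i(g)$ within a small \emph{positive} power of $\eps$ of $\mu_i(S)$. Choosing $\tau>0$ small enough that both conditions hold uniformly in $i$, and tracking the resulting final exponent $\alpha'$ as an explicit function of $\tau$, $\alpha$ and the geometry of the root system, is the only delicate point; once that calibration is fixed, Proposition~\ref{racine} and Proposition~\ref{drapartiel} assemble directly to yield the statement.
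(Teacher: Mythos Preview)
Your proposal is correct and follows essentially the same approach as the paper: apply Theorem~\ref{nd} to produce a large set of $g$ with $c(g)$ close to $c(S)$, select a reference point $g_1\in S$ in this good set, use Proposition~\ref{racine} at $g_1$ to establish uniqueness of the extremal direction for $S$, take $\gamma_S$ from the Siegel decomposition of $g_1$, and then rerun the same Proposition~\ref{racine} argument at every good $g\in S$ to force $P_{I_S(\eps)}\gamma_g=P_{I_S(\eps)}\gamma_S$. The only cosmetic differences are that the paper calibrates via a large constant $C_1$ (so that $\|c(g_1)-c(S)\|\leq|\log\eps|/C_1$) rather than your small exponent $\tau$, and invokes the last assertion of Proposition~\ref{racine} directly rather than passing through Proposition~\ref{drapartiel}; one minor slip is that your equality $P_{I_S(\eps)}\gamma_g=P_{I_S(\eps)}\gamma_S$ is only argued on $E\cap S$ (the comparison $\|gv_i\|\leq\mu_i(S)$ needs $g\in S$), but since $\mu$ is supported in $S$ this is immaterial for the measure bound.
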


\begin{remark}
L'élément $P_{I_S(\eps)}\gamma_S\in P_{I_S(\eps)}\bcs G$ est le drapeau partiel associé à l'ensemble $S$, pour le paramètre $\eps$.
\end{remark}
\begin{proof}
D'après le théorème~\ref{nd},
\[
\mu(\{g\in B\ |\ \norm{c(g)-c(S)}\geq -\log\eps\}) \leq C\eps^\alpha\mu(B).
\]
Soit $C_1>0$ une constante dépendant de $G$, et dont nous choisirons la valeur ci-dessous.
Pour $\eps>0$ suffisamment petit, l'inégalité ci-dessus montre en particulier qu'il existe $g_1\in S$ tel que
\[
\norm{c(g_1)-c(S)} \leq -\frac{\log\eps}{C_1}.
\]
Cela implique en particulier que pour toute racine simple $\alpha$,
\[
\abs{\alpha(c(g_1))-\alpha(c(S))} \leq -\frac{\log\eps}{2}.
\]
Par conséquent, si $i\not\in I_S(\eps)$,
\[
\alpha_i(c(g_1)) < \frac{\log\eps}{2},
\]
et avec la proposition~\ref{racine}, si $v\in V_i(\Z)$ n'est pas colinéaire à la direction $v_i$ qui réalise le premier minimum de $g_1V_i(\Z)$, on doit avoir
\[
\norm{g_1v} \gg e^{-\alpha_i(c(g_1))}\mu_i(g_1)
\geq \eps^{-\frac{1}{2}} \mu_i(g_1)
\geq \eps^{-\frac{1}{2}+\frac{1}{C_1}} \mu_i(S).
\] 
Cela implique que pour chaque $i\not\in I_S(\eps)$, le vecteur $v_i\in V_i(\Z)\cap\tx_i$ engendre l'unique direction telle que
\[
\sup_{g\in S}\norm{gv_i} = \mu_i(S). 
\]
Comme cette direction est aussi uniquement déterminée par l'égalité $\norm{g_1v_i}=\mu_i(g_1)$ et qu'on a $\alpha_i(c(g_1))<\frac{\log\eps}{2}$, une décomposition de Siegel $g_1=kan\gamma$ permet d'écrire, pour chaque $i\not\in I_S(\eps)$,
avec $\gamma_S=\gamma$,
\[
v_i = \gamma_S^{-1} e_i.
\]
Enfin, si $g\in S$ vérifie $\norm{c(g)-c(S)}\leq-\frac{\log\eps}{C_1}$ le raisonnement fait ci-dessus pour $g_1$ s'applique aussi à $g$, et montre que si $i\not\in I_S(\eps)$, alors $v_i$ est aussi l'unique vecteur de $V_i(\Z)$ tel que $\norm{gv_i}=\mu_i(g)$.
En d'autres termes, pour chaque $i\not\in I_S(\eps)$, $\gamma_g^{-1}e_i=\gamma_S^{-1}e_i$, i.e.
\[
P_{I_S(\eps)}\gamma_g = P_{I_S(\eps)}\gamma_S.
\]
Cela démontre le dernier point de la proposition, car
\[
\mu(\{g\in S\ |\ \norm{c(g)-c(S)}\geq -\frac{\log\eps}{C_1}\}) \leq C'\eps^{\alpha'}\mu(S).
\]
\end{proof}

\chapter{Flots diagonaux dans $G/\Gamma$}
\label{chap:orbites}

\emph{Dans ce chapitre, $G$ est un $\Q$-groupe semi-simple de $\Q$-rang $r$, et $\Gamma$ un sous-groupe arithmétique.
On fixe un tore $\Q$-déployé maximal $T$ dans $G$, et un $\Q$-sous-groupe parabolique minimal $B$ contenant $T$.
On note $\{\alpha_1,\dots,\alpha_r\}$ la base du système de racines de $(G,T)$ pour l'ordre associé à $B$, et $\{\varpi_1,\dots,\varpi_r\}$ les poids fondamentaux correspondants.
Pour $i=1,\dots,r$, on note $V_i$ la représentation irréductible de $G$ engendrée par une unique droite rationnelle de plus haut poids $\omega_i=b_i\varpi_i$, avec $b_i\in\N^*$ minimal.
On note $A=T^0(\R)$ la composante neutre des points réels de $T$.
L'algèbre de Lie de $A$ est notée $\ka$, et la chambre de Weyl négative associée à $B$ est notée $\ka^-$.
On considère un sous-groupe à un paramètre $(a_t)$ dans $A$, donné par $a_t=e^{tY}$, avec $Y\in\ka^-$.}

\bigskip

Ce chapitre a pour but de décrire le comportement asymptotique d'une orbite diagonale dans l'espace de réseaux $\Omega=G/\Gamma$, lorsque le point de départ est choisi aléatoirement, suivant une mesure suffisamment régulière.
On observe que ces résultats sont très similaires à ceux obtenus au chapitre~\ref{chap:algebrique} lorsque le point de départ était un réseau algébrique.
On donne d'ailleurs une généralisation du théorème~\ref{diagalg}, pour une orbite partant d'un point choisi aléatoirement sur une variété algébrique définie sur $\QQ$.

Ces résultats généraux sur l'espace de réseaux $\Omega$ serviront de base à notre étude de l'approximation diophantienne dans $X=P\bcs G$, détaillée au chapitre suivant.

\section{Mesures régulières}

Commençons par une définition qui résume les propriétés de régularité que doit satisfaire une mesure pour que s'applique le théorème de non divergence.

\begin{definition}
Nous dirons qu'une mesure borélienne sur $G$ est \emph{localement régulière} en un point $s_0$ dans $G$ s'il existe une boule ouverte $B=B(s_0,r)$ et des constantes $C,\alpha>0$ telles que
\[
\forall i\in[1,r],\ \forall v\in\tx_i\cap V_i(\Z),\ \forall g\in G,\quad s\mapsto\norm{g s v}\ \mbox{est $(C,\alpha)$-régulière sur}\ B\ \mbox{pour}\ \mu.
\]
\end{definition}

Cette définition est stable par translation par un élément de $G$: si $\mu$ est localement régulière en $s_0$, alors $g_*\mu$ est localement régulière en $gs_0$.
Grâce au théorème~\ref{nd} de non divergence quantitative, nous allons voir que si $(a_t)$ est un flot diagonal fixé dans $G$, et $\mu$ une mesure localement régulière en $s_0$, alors il existe une boule ouverte $B=B(s_0,r)$ telle que pour $s\in B$, les orbites $(a_ts\Gamma)_{t>0}$ ont presque toutes le même comportement asymptotique.
Rappelons que pour une partie compacte $S\subset G$, on définit $c_0(S)$ par
\[
\forall i\in [1,r],\quad \omega_i(c_0(S)) = \log \mu_i(S)
\]
où $\mu_i(S) =\min_{v\in\tx_i\cap V_i(\Z)} \max_{s\in S} \norm{sv}$,
et
\[
c(S) = p_{\ka^-}(c_0(S)).
\]
Avec le théorème~\ref{nd} de non divergence, ce vecteur permet de contrôler la position dans $\Omega$ de $s\Gamma$, lorsque $s$ est choisi aléatoirement dans $S$.

\begin{theorem}[Orbites diagonales partant d'une mesure régulière]
\label{diagan}
Soit $(a_t)_{t>0}$ un sous-groupe à un paramètre dans $A$, et $\mu$ une mesure sur $G$ localement régulière en un point $s_0\in G$.
Il existe une boule ouverte $B$ centrée en $s_0$ telle que pour $\mu$-presque tout $s\in B$, notant $S=B\cap\Supp\mu$,
\[
\lim_{t\to\infty} \frac{1}{t}(c(a_ts) - c(a_tS)) = 0.
\]
\end{theorem}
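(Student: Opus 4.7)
The plan is to reduce the statement to a uniform non-divergence estimate along the flow, and then apply Borel--Cantelli together with an interpolation in $t$. First, I would observe that the hypothesis of local regularity is precisely what makes Theorem~\ref{nd} applicable along the orbit: since the definition requires that for \emph{every} $g \in G$ the function $s \mapsto \norm{gsv}$ be $(C,\alpha)$-regular on $B$ for $\mu$, one may in particular substitute $g = a_t$ to conclude that $s \mapsto \norm{a_t s v}$ is $(C,\alpha)$-regular on $B$ for $\mu$, with constants independent of $t$. This is precisely the input needed to rerun the proofs of Theorems~\ref{ndk} and~\ref{nd}, replacing every occurrence of $\norm{gv}$ by $\norm{a_t s v}$ and using Proposition~\ref{racine} applied to $a_t s$ in place of $g$. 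Choosing $B = B(s_0, r)$ small enough so that $5B$ lies in the neighborhood of local regularity, this would give constants $C', \alpha' > 0$, independent of $t$, such that for every $\eps > 0$ and every $t > 0$,
\[
\mu(\{s \in B \ |\ \norm{c(a_t s) - c(a_t S)} \geq -\log\eps\}) \leq C' \eps^{\alpha'} \mu(B).
\]

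Next, I would apply this with $\eps = e^{-\delta t}$ for a fixed $\delta > 0$ and $t$ ranging over $\N$: the resulting bound $C' e^{-\alpha'\delta t}\mu(B)$ is the general term of a convergent series, so the Borel--Cantelli lemma gives, for $\mu$-almost every $s \in B$, the inequality $\norm{c(a_n s) - c(a_n S)} < \delta n$ for all sufficiently large $n \in \N$. Taking a countable intersection over $\delta = 1/k$, $k \in \N^*$, yields, for $\mu$-almost every $s$,
\[
\limsup_{n \to \infty, n \in \N} \frac{1}{n}\norm{c(a_n s) - c(a_n S)} = 0.
\]

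To pass from $n \in \N$ to $t \in \R^+$, I would use a uniform-continuity argument along the flow: writing $a_t = a_{t-n} a_n$ with $t \in [n, n+1]$, the element $a_{t-n}$ varies in a fixed compact set, so for each fundamental representation $V_i$ and every $v \in V_i(\Z) \cap \tx_i$ one has $\norm{a_t s v} \asymp \norm{a_n s v}$ with implicit constants independent of $s, n, t$. Consequently $\mu_i(a_t s) \asymp \mu_i(a_n s)$ and $\mu_i(a_t S) \asymp \mu_i(a_n S)$, hence $\norm{c(a_t s) - c(a_n s)} = O(1)$ and $\norm{c(a_t S) - c(a_n S)} = O(1)$; dividing the triangle inequality by $t$ yields the desired limit.

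The main technical point is the uniform non-divergence along the flow; this is essentially the entirety of the argument, and I expect it to follow cleanly from the fact that local regularity is phrased in a manner stable under left translation. A secondary issue is the doubling hypothesis required by Theorem~\ref{nd}, which should either be included in the hypotheses or deduced from the regularity condition (it holds automatically when $\mu$ is equivalent to the Hausdorff measure on a connected real-analytic submanifold, which is the case in the intended applications). Everything else in the proof is a standard combination of Borel--Cantelli with the uniform quantitative non-divergence estimate.
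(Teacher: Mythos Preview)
Your proposal is correct and matches the paper's proof essentially step for step: the paper phrases the key input as ``appliquer le théorème~\ref{nd} à la mesure $(a_t)_*\mu$'' with $\eps = e^{-\delta t}$, which is exactly your observation that local regularity guarantees the hypotheses of Theorem~\ref{nd} uniformly in $t$, then uses Borel--Cantelli over integer times and a Lipschitz bound $\norm{c(a_ts)-c(a_{t'}s)}+\norm{c(a_tS)-c(a_{t'}S)}\leq C\abs{t-t'}$ to pass to real $t$. Your remark about the doubling hypothesis is a fair technical point that the paper's definition of local regularity also leaves implicit.
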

\begin{proof}
Soient $\delta>0$ et $t>0$.
D'après le théorème~\ref{nd} appliqué à la mesure $(a_t)_*\mu$, avec $\eps=e^{-\delta t}$.
\[
\mu(\{s\in B\ |\ \norm{c(a_ts)-c(a_tS)}\geq \delta t\}) \leq C e^{-\alpha\delta t} \mu(B),
\]
qui est le terme général d'une série convergente.
Le lemme de Borel-Cantelli montre donc que pour tout $\delta>0$, pour presque tout $s$ dans $B$, pour tout $t\in\N$ suffisamment grand,
\[
\norm{c(a_ts)-c(a_tS)} < \delta t.
\]
Or il existe une constante $C$ telle que pour tous $t,t'\in\R$, $\norm{c(a_ts)-c(a_{t'}s)}+\norm{c(a_tS)-c(a_{t'}S)}\leq C\abs{t-t'}$, et donc, pour tout $t>0$ suffisamment grand,
\[
\norm{c(a_ts)-c(a_tS)} < \delta t + C.
\]
Comme $\delta>0$ est arbitrairement proche de $0$, cela montre bien la limite souhaitée.
\end{proof}

\begin{remark}
On peut en outre contrôler le drapeau partiel associé à $a_ts$.
Pour tout $\delta>0$, soit
\[
I_{a_tS}(e^{-\delta t})=\{i\in[1,r]\ |\ \alpha(c(a_tS))\geq -\delta t\}.
\]
Suivant la proposition~\ref{drapartiel2} on définit le drapeau partiel $P_{I_{a_tS}(e^{-\delta t})} \gamma_{a_tS}$ associé à $a_tS$.
Notons aussi $a_ts=k_{t,s}a_{t,s}n_{t,s}\gamma_{t,s}$ une décomposition de Siegel de $a_ts$.
Alors, pour presque tout $s\in S$, pour tout $t>0$ suffisamment grand,
\[
P_{I_{a_tS}(e^{-\delta t})}\gamma_{t,s} = P_{I_{a_t S}(e^{-\delta t})}\gamma_{a_tS}.
\]
\end{remark}

\section{Adhérence de Zariski et hérédité}
\label{sec:chs}

Soit $S$ une partie de $G$.
Dans chaque représentation fondamentale $\rho_i:G\to\GL(V_i)$ on considère dans $\End V_i$ le sous-espace vectoriel
\[
\cH_i(S) = \Vect \{\rho_i(s)\,;\,s\in S\}
\]
et on définit
\[
\cH(S) = \{g\in G\ |\ \forall i\in[1,r],\ \rho_i(g)\in\cH_i(S)\}.
\]

\comm{
\begin{remark}
D'un point de vue plus géométrique, l'ensemble $\cH(S)$ est construit de sorte que dans chaque plongement $P\bcs G\hookrightarrow \PP(V_i)$, les ensembles $S$ et $\cH(S)$ engendrent le même sous-espace projectif.
(Il y a en fait ici un petit problème à cause de l'inverse $s^{-1}$...)
\end{remark}
}

\note{Noter que $\cH(aS)=a\cH(S)$, car pour chaque $i$, $\cH_i(aS)=\rho_i(a)\cH_i(S)$, puisque la multiplication par $\rho_i(a)$ est linéaire sur $\End V_i$.}

\begin{proposition}
\label{heredite}
Il existe une boule $B_0$ dans $G$ telle que pour toute partie compacte $S\subset G$, il existe une constante $C$ telle que pour tout élément $a\in G$,
\[
\norm{c(aS) - c(a\cH(S)\cap B_0)} \leq C.
\]
\end{proposition}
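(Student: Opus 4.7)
Le plan est d'établir, pour chaque $i\in[1,r]$, une comparaison multiplicative à deux côtés $\mu_i(aS)\asymp\mu_i(a\cH(S)\cap B_0)$ avec des constantes dépendant de $S$ mais uniformes en $a\in G$. Comme $\omega_i(c_0(\cdot))=\log\mu_i(\cdot)$ et que les poids fondamentaux $(\omega_i)_{i\in[1,r]}$ forment une base de $\ka^*$, ce contrôle donnera une borne sur $\norm{c_0(aS)-c_0(a\cH(S)\cap B_0)}$ dans $\ka$, et le caractère $1$-Lipschitz de la projection $p_{\ka^-}$ transférera cette borne à $c(\cdot)=p_{\ka^-}(c_0(\cdot))$.

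Le cœur de l'argument est une estimation linéaire. Pour chaque $i$, on choisit $s_{i,1},\dots,s_{i,d_i}\in S$ dont les images $\rho_i(s_{i,j})$ forment une base de $\cH_i(S)$. Tout $h\in\cH(S)$ se décompose alors de manière unique suivant $\rho_i(h)=\sum_j c_j^{(i)}(h)\rho_i(s_{i,j})$, les coefficients $c_j^{(i)}(h)$ dépendant linéairement et continûment de $\rho_i(h)\in\cH_i(S)$. Lorsque $h$ est confiné dans le compact $\cH(S)\cap B_0$, l'opérateur $\rho_i(h)$ reste dans une partie bornée de $\End V_i$, donc $|c_j^{(i)}(h)|\leq M_i$ pour une constante $M_i$ ne dépendant que de $S$ et $B_0$, \emph{indépendante de $a$}. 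En multipliant à gauche par $\rho_i(a)$ on obtient $\rho_i(ah)=\sum_j c_j^{(i)}(h)\rho_i(as_{i,j})$, d'où, pour tout $v\in V_i$,
\[
\Norm{\rho_i(ah)v}\leq d_iM_i\max_{s\in S}\Norm{\rho_i(as)v}.
\]
En prenant le supremum sur $h\in\cH(S)\cap B_0$ et l'infimum sur $v\in\tx_i\cap V_i(\Z)$, on tire $\mu_i(a\cH(S)\cap B_0)\leq d_iM_i\mu_i(aS)$, uniformément en $a$. L'inégalité inverse $\mu_i(aS)\leq\mu_i(a\cH(S)\cap B_0)$ découle de l'inclusion élémentaire $aS\subset a\cH(S)\cap B_0$, que l'on assure en plaçant $S$ dans $B_0$: quitte à remplacer $S$ par $s_0^{-1}S$ pour un $s_0\in S$, ce qui transforme $c(aS)$ en $c((as_0)(s_0^{-1}S))$ grâce à $\cH(gS)=g\cH(S)$, l'écart de translation étant absorbé dans la constante $C(S)$.

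Le principal obstacle sera d'obtenir une \emph{unique} boule $B_0$ valable pour toute partie compacte $S\subset G$. Je m'attends à ce qu'on fixe $B_0$ comme une boule suffisamment grande au voisinage de $1$, et que l'on traite un $S$ arbitraire par la pré-translation évoquée ci-dessus: toute l'information géométrique relative à $S$ (position, diamètre, coefficients $M_i$, dimensions $d_i$) est ainsi renvoyée dans la constante $C$, laquelle dépend alors seulement de $S$, et non de $a$. Le point à vérifier soigneusement est que cette manœuvre préserve la structure $\cH(gS)=g\cH(S)$ et que les bornes sur les coefficients restent uniformes quand $S$ s'éloigne de $B_0$.
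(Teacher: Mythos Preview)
Your reduction is sound: the projection $p_{\ka^-}$ is $1$-Lipschitz, so it suffices to control $c_0$, hence each $\mu_i$, and your ``hard direction'' $\mu_i(a\cH(S)\cap B_0)\le d_iM_i\,\mu_i(aS)$ via a basis $\rho_i(s_{i,j})$ of $\cH_i(S)$ extracted from $S$ is correct and essentially what the paper does.

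The gap is in your ``easy direction''. The translation trick does not work: replacing $S$ by $s_0^{-1}S$ moves a point of $S$ to the identity but does nothing to the \emph{diameter} of $S$. Since $B_0$ must be chosen before $S$, a compact $S$ of diameter larger than that of $B_0$ will never satisfy $s_0^{-1}S\subset B_0$, and the inclusion $aS\subset a(\cH(S)\cap B_0)$ fails. So the monotonicity argument is unavailable in general.

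The paper avoids this asymmetry by passing through an intermediary that treats both sides on equal footing. For any compact spanning subset $K$ of the subspace $\cH_i(S)\subset\End V_i$, the functional $\varphi\mapsto\sup_{u\in K}\norm{\varphi(u)}$ is a norm on the space of linear maps $\varphi:\cH_i(S)\to V_i$, hence is equivalent (with constants depending only on $K$) to the operator norm $\sup_{u\in\cH_i(S)\cap B(0,1)}\norm{\varphi(u)}$. Applying this once with $K=\rho_i(S)$ and once with $K=\rho_i(\cH(S)\cap B_0)$, then specializing to $\varphi(u)=\rho_i(a)\,u\,v$, gives the two-sided estimate with constants depending on $S$ but not on $a,v$. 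Concretely, for the direction you are missing: once one knows that $\rho_i(\cH(S)\cap B_0)$ spans $\cH_i(S)$, pick a basis $\rho_i(h_{i,j})$ from it and expand each $\rho_i(s)$, $s\in S$, with coefficients bounded in terms of the compact set $\rho_i(S)$; this yields $\norm{\rho_i(as)v}\le C_S\max_j\norm{\rho_i(ah_{i,j})v}$.

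Note that this route replaces your inclusion hypothesis $S\subset B_0$ by the spanning hypothesis $\cH_i(\cH(S)\cap B_0)=\cH_i(S)$ for each $i$. The paper flags this as the condition $B_0$ must satisfy; it is the genuine content behind the existence of a universal ball, and your translation manoeuvre does not address it.
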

\begin{proof}
Pour $i\in[1,r]$, considérons la partie $\rho_i(S)$ dans l'espace vectoriel $\End V_i$.
À une constante multiplicative près ne dépendant que de $S$, si $\varphi:\End V_i\to V_i$ est une application linéaire, alors
\[
\sup_{u\in\rho_i(S)} \norm{\varphi(u)} \asymp \sup_{u\in\cH_i(S)\cap B(0,1)}\norm{\varphi(u)}.
\]
Cela s'applique en particulier aux applications de la forme $u\mapsto auv$, pour $a\in G$ et $v\in V_i$, et l'on obtient ainsi, pour une boule $B_0\subset G$,
\note{Détailler le choix de $B_0$: il faut que pour chaque $i$, pour tout $S\subset G$, on ait l'égalité $\cH_i(S)=\cH_i(\cH(S)\cap B_0)$.}
\[
\sup_{g\in S} \norm{a g v} \asymp \sup_{g\in\cH(S)\cap B_0} \norm{agv}.
\]
\end{proof}

Étant donnée une mesure $\mu$ localement régulière en $s_0$, on pose
\[
\cH_\mu(s_0) =\bigcap_{\eps>0} \cH(B(s_0,\eps)\cap\Supp\mu)
\]
et
\[
\cH'_\mu(s_0) = B_0 \cap \bigcap_{\eps>0} \cH(B(s_0,\eps)\cap\Supp\mu),
\]
où $B_0$ est la boule dans $G$ donnée par la proposition ci-dessus.
L'ensemble $\cH_\mu(s_0)$ est un \emph{ensemble algébrique} dans $G$: il s'obtient comme l'ensemble des zéros d'une famille de polynômes.
En outre, lorsque le point $s$ est choisi aléatoirement suivant $\mu$ au voisinage de $s_0$, le comportement asymptotique de l'orbite $(a_ts\Gamma)$ est déterminé presque sûrement par $\cH_\mu(s_0)$.

\begin{corollary}
Soit $\mu$ une mesure sur $G$ localement régulière en $s_0$.
Pour $\mu$-presque tout $s$ au voisinage de $s_0$,
\[
\lim_{t\to\infty} \frac{1}{t}[c(a_ts)-c(a_t\cH'_\mu(s_0))] = 0.
\]
\end{corollary}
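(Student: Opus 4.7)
La démonstration combine le théorème~\ref{diagan} et la proposition~\ref{heredite}, reliés par une observation de stabilisation de type noetherien. Pour chaque $i\in[1,r]$, la famille de sous-espaces $\cH_i(B(s_0,\eps)\cap\Supp\mu)\subset\End V_i$ est décroissante quand $\eps\to 0$. Comme $\End V_i$ est de dimension finie, cette famille se stabilise pour $\eps$ suffisamment petit ; en prenant le minimum sur $i\in[1,r]$, on obtient un $\eps_0>0$ tel que pour tout $\eps\in]0,\eps_0]$,
\[
\cH(B(s_0,\eps)\cap\Supp\mu)=\cH_\mu(s_0).
\]

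On appliquera ensuite le théorème~\ref{diagan} à une boule $B=B(s_0,r)$ avec $r\leq\eps_0$, choisi également assez petit pour être inclus dans la boule (et dans son voisinage $5B$) sur laquelle l'hypothèse de régularité locale fournit la $(C,\alpha)$-régularité. Posant $S=\overline{B}\cap\Supp\mu$, qui est compact et vérifie $\cH(S)=\cH_\mu(s_0)$ par la stabilisation, on aura pour $\mu$-presque tout $s\in B$,
\[
\lim_{t\to\infty}\frac{1}{t}\bigl(c(a_ts)-c(a_tS)\bigr)=0.
\]
La proposition~\ref{heredite}, appliquée à la partie compacte $S$, fournira une constante $C\geq 0$ ne dépendant que de $S$ (et non de $t$) telle que pour tout $t>0$,
\[
\norm{c(a_tS)-c(a_t\cH'_\mu(s_0))}\leq C,
\]
en utilisant l'identité $\cH(S)\cap B_0=\cH_\mu(s_0)\cap B_0=\cH'_\mu(s_0)$. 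En divisant par $t$ et en combinant les deux convergences par l'inégalité triangulaire, on conclura.

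Le point qui demande le plus d'attention est la bonne articulation entre les deux constructions. La constante $C$ de la proposition~\ref{heredite} dépend de la partie compacte $S$, il est donc crucial de fixer $S$ (c'est-à-dire le rayon $r$) une fois pour toutes \emph{avant} de faire tendre $t$ vers l'infini ; les contraintes sur $r$ (petitesse pour assurer la stabilisation et la régularité) ne portent que sur une borne supérieure, et sont donc compatibles. L'aspect noetherien, quoique élémentaire, est indispensable: il garantit que l'intersection ouverte définissant $\cH_\mu(s_0)$ est effectivement atteinte par un seul $\cH(S)$ avec $S$ de mesure strictement positive, ce sans quoi l'invocation de la proposition~\ref{heredite} ne produirait pas de constante uniforme.
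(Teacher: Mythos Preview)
Your proof is correct and follows the same approach as the paper, which simply states that the corollary follows immediately from the theorem~\ref{diagan} and the proposition~\ref{heredite}. Your explicit Noetherian stabilization argument (the intersection defining $\cH_\mu(s_0)$ is attained at some positive radius because the $\cH_i$ are linear subspaces of the finite-dimensional $\End V_i$) is a detail the paper leaves implicit but which is indeed needed to link the two ingredients, so your write-up is a faithful and more careful unpacking of the paper's one-line proof.
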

\begin{proof}
Cela découle immédiatement du théorème~\ref{diagan}, et de la proposition~\ref{heredite}.
\end{proof}

\section{Encadrement du taux de fuite}

Dans la suite, nous considérons un sous-groupe à un paramètre $(a_t)$ dans $A$, donné par
\[
a_t = e^{tY},\quad\mbox{avec}\ Y\in\ka^-,
\]
et nous notons $P$ le sous-groupe parabolique associé à $(a_t)$:
\[
P = \{g\in G\ |\ \lim_{t\to\infty} a_tga_{-t}\ \mbox{existe}\}.
\]
Nous voulons étudier le comportement asymptotique de la fonction $c$ le long des orbites de $(a_t)$ dans $\Omega$.
Notre premier résultat concerne un point choisi aléatoirement suivant une mesure localement régulière $\mu$ qui satisfait une condition géométrique naturelle.
Rappelons qu'une sous-variété de Bruhat dans $G$ est une sous-variété de la forme 
\[
X_wg = \overline{PwB}g, \quad\mbox{avec}\quad w\in W_P\ \mbox{et}\ g\in G.
\]
Étant donnée une famille $(A_i)_{i\in I}$ d'éléments de $\ka$, son plus grand minorant $A=\inf_{i\in I} A_i$ pour la relation d'ordre $\prec$ est défini par
\[
\forall k\in[1,r],\quad \omega_k(A) = \inf_{i\in I} \omega_k(A_i).
\]
Pour le sous-groupe diagonal $(a_t)=(e^{tY})$, avec $Y\in\ka$, on définit aussi le taux de contraction $\tau_{\R}(M,a_t)\in\ka^-$ de $M$ par $(a_t)$ par la formule
\[
\tau_{\R}(M,a_t) = \inf\{ p_{\ka^-}(Y^w)\ ;\ w\in W_P\ \mbox{tel que}\ \exists g\in G:\, M\subset X_wg \}.
\]
Nous aurons aussi besoin de la quantité analogue
\[
\tau_{\Q}(M,a_t) = \inf\{ p_{\ka^-}(Y^w)\ ;\ w\in W_P\ \mbox{tel que}\ \exists g\in G(\Q):\, M\subset X_wg \}.
\]
Remarquons que les ensembles ci-dessus ne sont jamais vides, car on a toujours $M\subset X_{w_0}g=G$ si $w_0$ est l'élément de longueur maximale dans le groupe de Weyl.

\begin{example}
Suivant Pengyu Yang, nous dirons qu'une variété de Bruhat $X_wg$ est \emph{instable} pour le flot $a_t=e^{tY}$ s'il existe un poids fondamental $\omega$ tel que $\omega(Y^w)<0$.
Cela revient à dire que $\tau_{\R}(X_w,a_t)\neq 0$.
En effet, s'il existe $i$ tel que $\omega_i(Y^w)<0$, alors $\omega_i(p_{\ka^-}(Y^w))\leq \omega_i(Y^w) < 0$, donc $p_{\ka^-}(Y^w)\neq 0$;
et réciproquement, si $\tau_{\R}(X_w,a_t)\neq 0$, alors il existe $i$ tel que $\omega_i(Y^w)<0$.
\end{example}

Rappelons qu'un ensemble algébrique dans $G$ est dit \emph{irréductible} s'il ne peut pas s'écrire comme réunion non triviale de deux sous-ensembles algébriques.

\begin{theorem}[Encadrement du taux de fuite]
\label{diagstab}
Soit $\mu$ une probabilité sur $G$ régulière en $s_0$, telle que $\cH_\mu(s_0)$ est irréductible.
Alors, pour presque tout $s$ au voisinage de $s_0$,
\[
\tau_{\R}(\cH'_\mu(s_0),a_t) \prec \liminf_{t\to\infty} \frac{1}{t} c(a_t s)
\prec \limsup_{t\to\infty} \frac{1}{t}c(a_ts) \prec \tau_{\Q}(\cH_\mu(s_0),a_t).
\]
\end{theorem}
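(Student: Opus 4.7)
The plan is to split the proof into the upper bound $\limsup \frac{1}{t} c(a_t s) \prec \tau_\Q$ and the lower bound $\liminf \frac{1}{t} c(a_t s) \succ \tau_\R$; the middle inequality is trivial. Both halves rest on a common reduction: Theorem~\ref{diagan} gives $\frac{1}{t}(c(a_t s) - c(a_t S)) \to 0$ for $\mu$-almost every $s$ in a small enough ball $B$ around $s_0$, with $S = B \cap \Supp \mu$, and Proposition~\ref{heredite} gives $c(a_t S) = c(a_t \cH'_\mu(s_0)) + O(1)$ uniformly in $t$. It therefore suffices to analyse the asymptotics of $c(a_t M)$ for $M$ equal to $\cH_\mu(s_0)$ (upper bound) or $\cH'_\mu(s_0)$ (lower bound).

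For the upper bound I would fix $(w,g)$ with $w \in W_P$, $g \in G(\Q)$ and $\cH_\mu(s_0) \subset X_w g$. Any $s$ in $S$ has a Bruhat decomposition $s = pwbg$ with $p \in P$, $b \in B$ (the lower-dimensional boundary cells give at worst the same bound). In each fundamental representation $V_i$, I pick $v_i \in V_i(\Z) \cap \tx_i$ proportional to $g^{-1} e_i$, which is possible because $g$ is rational; a direct computation then gives
\[
\norm{a_t s v_i} = \abs{\chi_i(b)}\, \norm{(a_t p a_t^{-1})\, a_t w e_i} \leq C\, e^{t \omega_i(Y^w)},
\]
using that $b$ scales the highest-weight line of $V_i$ by a character $\chi_i$, that $a_t p a_t^{-1}$ is bounded by the very definition of $P$, and that $a_t w e_i = w\, e^{tY^w} e_i = e^{t \omega_i(Y^w)} w e_i$. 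Hence $c_0(a_t s) \prec t Y^w + O(1)$ componentwise. Because $p_{\ka^-}$ is monotone for $\prec$, positively homogeneous (as $\ka^-$ is a cone) and $1$-Lipschitz, this gives $c(a_t s) \prec t\, p_{\ka^-}(Y^w) + O(1)$, and taking the limsup and the infimum over admissible $(w,g)$ yields the right-hand inequality.

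For the lower bound, set $M = \cH'_\mu(s_0)$. For each fundamental representation $V_i$ and each $v \in V_i(\Z) \cap \tx_i$, the span $E_v = \Vect(M v) \subset V_i$ is finite-dimensional and the weight decomposition of $a_t$ gives $\max_{s \in M} \norm{a_t s v} \asymp e^{t \lambda^{(v)}}$, with constant depending on $v$, where $\lambda^{(v)}$ is the largest weight of $Y$ occurring on $E_v$. Only finitely many subspaces $E_v$ and values $\lambda^{(v)}$ can arise (the weights of $Y$ on $V_i$ form a finite set), so $\frac{1}{t} \log \mu_i(a_t M) \to \lambda_i^* := \inf_v \lambda^{(v)}$. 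With $\Lambda^* = (\lambda_i^*)_i \in \ka$ and using again the regularity of $p_{\ka^-}$, this gives $\lim_{t\to\infty} \frac{1}{t} c(a_t M) = p_{\ka^-}(\Lambda^*)$. The final step will be a Hilbert--Mumford-type identification: using the irreducibility of $\cH_\mu(s_0)$, I would produce $w_0 \in W_P$ and $g_0 \in G$ (not necessarily rational) such that $M \subset X_{w_0} g_0$ and $\Lambda^* \succ Y^{w_0}$; monotonicity of $p_{\ka^-}$ then yields $p_{\ka^-}(\Lambda^*) \succ p_{\ka^-}(Y^{w_0})$, an element of the set whose $\prec$-infimum defines $\tau_\R(M, a_t)$, which concludes the lower bound.

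The hard part will be precisely this Hilbert--Mumford identification, which must simultaneously pick out a single real Bruhat variety $X_{w_0} g_0 \supset M$ controlling the optimal $v$ in every fundamental representation $V_i$. Irreducibility of $\cH_\mu(s_0)$ is crucial here: without it, different fundamental representations could witness different Bruhat strata of $M$, and the componentwise infimum defining $\tau_\R$ would no longer be accessible through a single $w_0$. A clean treatment should proceed by the finite Bruhat stratification of $G$ and the fact that an irreducible Zariski-closed subset of $G$ lies in a unique minimal closed Bruhat variety over $\R$, a result I expect to be available along the lines of the analysis performed for algebraic points in Theorem~\ref{diagalg}.
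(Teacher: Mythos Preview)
Your reduction via Theorem~\ref{diagan} and Proposition~\ref{heredite}, as well as your treatment of the upper bound $\limsup \prec \tau_\Q$, are correct and essentially identical to the paper's argument.

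The lower bound, however, has a genuine gap, and the approach you outline is both incomplete and harder than necessary. Two issues:

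\textbf{Uniformity.} Your claim that ``only finitely many subspaces $E_v$ and values $\lambda^{(v)}$ can arise'' is wrong: the \emph{values} $\lambda^{(v)}$ are indeed drawn from a finite set of weights, but the subspaces $E_v$ vary continuously with $v$, and more to the point the implicit constant in $\max_{s\in M}\norm{a_t s v}\asymp e^{t\lambda^{(v)}}$ depends on $v$ and is not bounded away from zero as $v$ ranges over $\tx_i\cap V_i(\Z)$. Since $\mu_i(a_tM)$ is a minimum over this infinite set, you cannot conclude $\frac{1}{t}\log\mu_i(a_tM)\to\lambda_i^*$ without a uniform lower bound. The paper handles exactly this via Proposition~\ref{stablin}: a compactness argument on the unit sphere of $\tx_i$ yields a constant $c>0$ such that $\sup_{s\in S}\norm{a_t s v}\geq c\,e^{t\omega_i(\tau)}\norm{v}$ for \emph{all} $v\in\tx_i$ simultaneously.

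\textbf{No single $w_0$ is needed.} You frame the ``hard part'' as producing one $(w_0,g_0)$ with $M\subset X_{w_0}g_0$ and $\Lambda^*\succ Y^{w_0}$, and defer this to an unspecified argument in the spirit of Theorem~\ref{diagalg}. But Theorem~\ref{diagalg} treats a single algebraic point via Schmidt's subspace theorem, and its method does not transfer to a set $M$. More importantly, the paper does not attempt such a global identification at all. The key observation is Lemma~\ref{union}: for each fundamental representation $V_i$ and each threshold $\lambda$, the set $\{g\in G : \lim\frac{1}{t}\log\norm{a_t g e_i}\leq\lambda\}$ is a finite union of Bruhat cells $PwB$ with $\omega_i(Y^w)\leq\lambda$. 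If $S=\cH_\mu(s_0)$ were contained in the corresponding union for some $\lambda<\omega_i(\tau_\R)$, irreducibility would force $S$ into a single $X_w g$ with $\omega_i(Y^w)<\omega_i(\tau_\R)$, contradicting the very definition of $\tau_\R$ as a componentwise infimum. This argument runs \emph{separately for each $i$}, with possibly different witnesses $(w,g)$, which is all that the componentwise nature of $\prec$ requires.

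In short, the missing idea is Proposition~\ref{stablin}: Bruhat-cell stratification plus compactness gives the uniform lower bound directly, without any Hilbert--Mumford-type identification of a single destabilising stratum.
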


\begin{remark}
En général, si $\cH_\mu(s_0)$ n'est pas irréductible, on peut l'écrire comme une réunion finie de composantes irréductibles: $\cH_\mu(s_0)=\cup_i F_i$.
On a alors $\mu=\sum_i \mu_i$, où $\mu_i=\mu|_{F_i}$.
Chacune des mesures $\mu_i$ est localement régulière en $s_0$, et $\cH_{\mu_i}(s_0)=F_i$ est irréductible.
On peut donc se ramèner facilement au cadre du théorème.
\end{remark}

Nous verrons au chapitre suivant que la mesure de Lebesgue sur une sous-variété analytique est localement régulière.
Admettant ce point pour l'instant, on retrouve comme cas particulier du théorème ci-dessus le résultat suivant, essentiellement dû à Pengyu Yang \cite{yang}.

\begin{corollary}[Variétés analytiques stables]
\label{semstab}
Soit $M$ une sous-variété analytique connexe de $G$ qui n'est incluse dans aucune sous-variété de Bruhat instable pour $(a_t)$.
Alors, pour presque tout $s\in M$, pour toute représentation rationnelle $V$,
\[
\lim_{t\to\infty} \frac{1}{t} \log \lambda_1(a_t s V(\Z)) = 0.
\]
\end{corollary}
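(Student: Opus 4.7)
The plan is to derive this corollary from Theorem~\ref{diagstab} by showing that the two bounding quantities $\tau_{\R}(\cH'_\mu(s_0),a_t)$ and $\tau_{\Q}(\cH_\mu(s_0),a_t)$ are both equal to $0$, and then translating the resulting asymptotic $c(a_ts) = o(t)$ into the stated asymptotic for $\lambda_1(a_tsV(\Z))$. First I would equip $M$ with its natural Lebesgue measure $\mu$, restrict to a non-singular point $s_0$ of $M$, and invoke the (admitted, and treated in the next chapter) fact that $\mu$ is localement régulière at $s_0$. For such an $s_0$, the algebraic set $\cH_\mu(s_0)$ is irreducible and captures precisely the analytic germ of $M$ at $s_0$; in particular $M\cap B(s_0,\eps)\subset\cH_\mu(s_0)$ for some $\eps>0$.

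Next, I would show $\tau_{\R}(\cH'_\mu(s_0),a_t)=0$. Any Bruhat variety $X_wg$ containing $\cH'_\mu(s_0)$ contains an analytic neighborhood of $s_0$ in $M$, hence, by analytic continuation and the connectedness of $M$, it contains all of $M$. The standing hypothesis that $M$ lies in no unstable Bruhat variety then forces $p_{\ka^-}(Y^w)=0$ for every $w$ appearing in the infimum, so this infimum equals $0$. To control $\tau_{\Q}$, observe that it lies in $\ka^-$ as an infimum (for the partial order $\prec$) of elements of $\ka^-$, and that every $Y\in \ka^-$ satisfies $Y\prec 0$ since fundamental weights are non-negative combinations of positive roots, which are themselves non-positive on $\ka^-$. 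Non-emptiness of the infimum is granted by the longest-element representative $w_0$, for which $X_{w_0}=G\supset \cH_\mu(s_0)$ and $Y^{w_0}\in\ka^+$ projects to $0$ in $\ka^-$.

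Applying Theorem~\ref{diagstab} then yields
\[
0=\tau_{\R}(\cH'_\mu(s_0),a_t) \prec \liminf_{t\to\infty}\tfrac{1}{t}c(a_ts) \prec \limsup_{t\to\infty}\tfrac{1}{t}c(a_ts) \prec \tau_{\Q}(\cH_\mu(s_0),a_t)\prec 0
\]
for $\mu$-almost every $s$ in a neighborhood of $s_0$, so $\frac{1}{t}c(a_ts)\to 0$. A covering of $M$ by countably many such neighborhoods promotes this to a $\mu$-almost sure statement on $M$.

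Finally, I would transfer this control on $c(a_ts)$ to $\lambda_1(a_tsV(\Z))$. Writing a Siegel decomposition $a_ts=k_tb_tn_t\gamma_t$ with $b_t=\exp(c(a_ts)+O(1))$, one has $\lambda_1(a_tsV(\Z))\asymp\lambda_1(b_tn_tV(\Z))$, since $k_t\in K$ preserves the norm, $n_t\in\omega$ is bounded, and $\gamma_t\in C\Gamma$ alters $V(\Z)$ by bounded commensurability. Decomposing $V$ into weight spaces for $T$, the operator $b_t$ acts with spectral bound $e^{\phi(c(a_ts))+O(1)}$ over the finite set of weights $\phi$ of $V$, so the condition $c(a_ts)=o(t)$ forces $\log\lambda_1(a_tsV(\Z))\geq -o(t)$; the matching upper bound comes from applying $\|a_tsv_0\|$ to any fixed nonzero $v_0\in V(\Z)$. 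The main obstacle is not in this deduction itself but in the two inputs: the local regularity of Lebesgue measure on $M$ (the content of the following chapter) and the verification, for Theorem~\ref{diagstab}, that $\cH_\mu(s_0)$ is irreducible and reflects the geometry of $M$ at $s_0$, which is secured by choosing $s_0$ outside the singular locus.
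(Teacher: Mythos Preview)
Your proposal is correct and follows essentially the same route as the paper: deduce $\tfrac{1}{t}c(a_ts)\to 0$ from Theorem~\ref{diagstab} using the hypothesis to force $\tau_{\R}=0$, then read off $\lambda_1(a_tsV(\Z))=e^{o(t)}$ via the Siegel decomposition. You are more explicit than the paper on two points it leaves implicit: the upper sandwich bound $\tau_{\Q}\prec 0$ (which you justify via $w_0$ and the fact that fundamental weights are non-negative in the simple roots), and the passage from local neighborhoods of $s_0$ to all of $M$ by a countable cover. One minor slip: the set $\cH'_\mu(s_0)=B_0\cap\cH_\mu(s_0)$ need not literally contain a neighborhood of $s_0$ in $M$ (since $B_0$ is a fixed ball independent of $s_0$); the clean fix is to argue via Zariski closure, observing that any Bruhat variety containing $\cH'_\mu(s_0)$ contains its Zariski closure $\cH_\mu(s_0)\supset M$.
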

\begin{proof}
Si $M$ n'est incluse dans aucune sous-variété de Bruhat instable pour $(a_t)$, alors $\tau_{\R}(M,a_t)=0$.
Le théorème~\ref{diagstab} montre donc que presque sûrement $\lim_{t\to\infty}\frac{1}{t}c(a_ts)=0$.
Ensuite, si $V$ est une représentation rationnelle, et $\chi$ le plus haut poids apparaissant dans $V$, le premier minimum de $a_tsV(\Z)$ est donné par
\[
\lambda_1(a_tsV(\Z)) \asymp e^{\chi(c(a_ts))} = e^{o(t)}.
\]
\end{proof}

Pour la démonstration du théorème~\ref{diagstab}, nous aurons besoin de la proposition suivante, très voisine d'un résultat plus général de Pengyu Yang \cite[Theorem~1.2]{yang}.
Toutefois, la démonstration dans notre cas particulier est sensiblement plus simple, car on ne s'intéresse qu'aux vecteurs dans l'orbite d'un vecteur de plus haut poids; en particulier nous n'aurons pas besoin des résultats de théorie géométrique des invariants dûs à Mumford \cite{mumford} ou Kempf \cite{kempf}.

\begin{proposition}[Stabilité linéaire]
\label{stablin}
Soit $S$ un ensemble algébrique irréductible dans $G$, et $\tau=\tau_{\R}(S,a_t)$.
Il existe $c>0$ tel que pour tout $i\in[1,r]$ et tout vecteur $v\in\tx_i$,
\[
\forall t>0,\quad \sup_{s\in S} \norm{a_t s v} \geq c e^{t\omega_i(\tau)}\norm{v}.
\]
\end{proposition}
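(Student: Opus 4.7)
Le plan consiste à combiner la décomposition de Bruhat avec la caractérisation de $\tau_{\R}(S, a_t)$ comme un infimum sur $w$. On se ramène d'abord au cas $v=e_i$ : en écrivant $v=he_i$ pour un $h\in G$ tel que $\norm{he_i}=\norm{v}$, on a $\norm{a_tsv}=\norm{a_tshe_i}$, et comme la condition $S\subset X_wg$ équivaut à $Sh\subset X_w(gh)$, il vient $\tau_{\R}(Sh, a_t)=\tau_{\R}(S, a_t)=\tau$. Le point délicat, traité à la fin, sera d'obtenir une constante uniforme en $h$ (donc en $v$).

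On utilise ensuite la décomposition de Bruhat $G=\bigsqcup_{w\in W_P}PwB$. Comme $Sh$ est irréductible, il admet une unique cellule de Bruhat générique $w^*=w^*(Sh)\in W_P$, caractérisée par le fait que $Sh\cap Pw^*B$ est un ouvert de Zariski dense dans $Sh$. En particulier $Sh\subset X_{w^*}$, donc $S\subset X_{w^*}h^{-1}$, ce qui montre que $w^*$ est un choix valide dans l'infimum définissant $\tau$. Par conséquent $\omega_i(\tau)\leq\omega_i(p_{\ka^-}(Y^{w^*}))\leq\omega_i(Y^{w^*})$, la seconde inégalité résultant de la proposition~\ref{inf}, puisque $p_{\ka^-}(Y^{w^*})\prec Y^{w^*}$.

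Le calcul du taux asymptotique se fait alors comme suit. Pour $s\in S$ dans l'ouvert dense tel que $sh=pw^*b$, avec $p\in P$, $b\in B$, comme $B$ stabilise la droite $\R e_i$, on a $be_i=\chi(b)e_i$ pour un caractère $\chi$ de $B$ vérifiant $\abs{\chi(b)}>0$. Ainsi $a_tshe_i=\chi(b)\cdot a_tpw^*e_i$. L'élément $a_tpa_{-t}$ converge, par définition de $P$, vers un élément $p_\infty$ du facteur de Levi de $P$, et $a_tw^*e_i=e^{t\omega_i(Y^{w^*})}w^*e_i$. On obtient donc $\norm{a_tshe_i}\asymp\abs{\chi(b)}\cdot e^{t\omega_i(Y^{w^*})}\cdot\norm{p_\infty w^*e_i}$ lorsque $t\to+\infty$, quantité strictement positive puisque $p_\infty$ agit de façon inversible sur $V_i$. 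Combiné avec l'inégalité $\omega_i(Y^{w^*})\geq\omega_i(\tau)$ établie plus haut, cela donne l'asymptotique voulue.

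L'étape la plus délicate sera la conversion de cette analyse asymptotique en une borne uniforme en $t$, avec une constante $c$ indépendante de $v\in\tx_i$ et de $i\in[1,r]$. Pour cela, on observe que $\sup_{s\in S}\norm{a_tsv}^2$ s'écrit comme une somme finie d'exponentielles $e^{2t\lambda(Y)}$ indexées par les poids $\lambda$ de $V_i$, à coefficients semi-algébriques et positifs en $v$. L'analyse précédente assure, pour chaque $v\in\tx_i$, que le coefficient associé au poids dominant est non nul et que le taux asymptotique dépasse $\omega_i(\tau)$ ; la semi-algébricité combinée à la compacité de l'espace projectivisé $\PP(\tx_i)$ et à la semi-continuité inférieure du supremum fournit alors une constante uniforme $c>0$, tandis que la finitude de $[1,r]$ permet de conclure.
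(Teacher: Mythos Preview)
Your approach shares the paper's core strategy—Bruhat decomposition to control growth rates, then compactness and semicontinuity for uniformity—but your uniformization step contains a false statement, and the paper's execution is more direct.

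The claim that $\sup_{s\in S}\norm{a_tsv}^2$ ``s'écrit comme une somme finie d'exponentielles $e^{2t\lambda(Y)}$'' is incorrect: a supremum of exponential sums is not an exponential sum, since different $s$ realize the supremum for different $t$. What \emph{is} true is that this supremum lies between $\max_\lambda e^{2t\lambda(Y)}c_\lambda(v)$ and $\sum_\lambda e^{2t\lambda(Y)}c_\lambda(v)$, where $c_\lambda(v)=\sup_{s\in S}\norm{P_\lambda(sv)}^2$; these bounds differ only by the number of weights of $V_i$, so the intended argument survives after this correction. Your asymptotic analysis via the generic cell $w^*(Sh)$ then shows that some $c_\lambda$ with $\lambda(Y)\geq\omega_i(\tau)$ is positive at each $v$, and compactness of $\{v\in\tx_i:\norm{v}=1\}$ plus lower semicontinuity of the (semi-algebraic) function $\sum_{\lambda(Y)\geq\omega_i(\tau)}c_\lambda$ yields the uniform bound.

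The paper bypasses both the generic-cell detour and this delicate rewriting. It introduces directly the projection $\pi^+$ onto the sum of weight spaces with eigenvalue at least $e^{t\omega_i(\tau)}$, observes via Lemme~\ref{union} that $\pi^+(sv)=0$ forces $s$ into a Bruhat cell $PwBg$ with $\omega_i(Y^w)<\omega_i(\tau)$, and uses irreducibility of $S$ together with the definition of $\tau$ to conclude that $\sup_{s\in S}\norm{\pi^+(sv)}>0$ for every $v\in\tx_i$. Lower semicontinuity on the compact unit sphere of $\tx_i$ then gives $\sup_{s\in S}\norm{\pi^+(sv)}\geq c\norm{v}$, and the inequality $\norm{a_tsv}\geq e^{t\omega_i(\tau)}\norm{\pi^+(sv)}$ finishes the proof in one line. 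This is shorter than your route and avoids the need to repair the exponential-sum claim.
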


La démonstration de cette proposition est basée sur l'observation suivante.

\begin{lemma}
\label{union}
Soit $V$ une représentation de $G$ engendrée par une unique droite rationnelle $\Q e_\chi$ de plus haut poids $\chi$. 
Pour $\lambda\in\R$, notons
\[
G(e_\chi,V^{\lambda}(a_t)) = \{g\in G\ |\ \lim_{t\to\infty} \frac{1}{t}\log\norm{a_tge_\chi} \leq \lambda\}.
\]
On peut écrire 
\[
G(e_\chi,V^\lambda(a_t)) = \bigsqcup_{w\in W^\lambda(\chi,a_t)} PwB
\]
comme une réunion de cellules de Bruhat de $G$, où
\[
W^\lambda(\chi,a_t) = \{w\in W_P\ |\ \bracket{\chi,Y^w} \leq \lambda\}.
\]
\end{lemma}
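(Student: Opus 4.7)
The plan is to use the Bruhat decomposition $G=\bigsqcup_{w\in W_P}PwB$ and prove that on each cell $PwB$ the limit
\[
\lim_{t\to+\infty}\tfrac{1}{t}\log\norm{a_tge_\chi}
\]
exists and equals $\bracket{\chi,Y^w}$. The disjoint decomposition claimed in the lemma then follows immediately by grouping the Bruhat cells according to this rate: a cell $PwB$ lies in $G(e_\chi,V^\lambda(a_t))$ if and only if $\bracket{\chi,Y^w}\leq\lambda$.

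Before carrying out the computation, I would first verify that $\bracket{\chi,Y^w}$ depends only on the coset $w\in W_P=(W\cap P)\bcs W$. Since $Y\in\ka^-$ and $P$ is the parabolic associated to $(a_t)$, the subgroup $W\cap P$ is the Weyl group of the Levi of $P$, generated by the reflections in roots $\alpha$ satisfying $\alpha(Y)=0$; each such reflection fixes $Y$ under the adjoint action, so for every $u\in W\cap P$ one has $u^{-1}Yu=Y$ and hence $Y^{uw}=w^{-1}u^{-1}Yuw=Y^w$. I would also choose representatives of the elements of $W_P$ in $N(T)\cap K$, as allowed by the choice of $K$.

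The main step is a direct calculation. Fix $g=pwb\in PwB$. Since $B$ is a minimal parabolic contained in the stabilizer $\Stab_G[e_\chi]$ (a parabolic subgroup of $G$, because $e_\chi$ is a highest weight vector for the order defined by $B$), one has $be_\chi=\chi(b)e_\chi$ for a nonzero scalar, and since $we_\chi$ is a $T$-weight vector of weight $\chi\circ(\Ad w)^{-1}$ one obtains $a_twe_\chi=e^{t\chi(Y^w)}we_\chi$. Writing $a_tpw=(a_tpa_{-t})\cdot a_tw$ and using that $p\in P$ forces $a_tpa_{-t}$ to converge to some $p_\infty\in P$ as $t\to+\infty$, this gives
\[
\norm{a_tge_\chi}=|\chi(b)|\cdot e^{t\chi(Y^w)}\cdot\norm{(a_tpa_{-t})we_\chi},
\]
whose last factor tends to the nonzero number $\norm{p_\infty we_\chi}$ (nonzero because $p_\infty\in G$ is invertible). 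Dividing $\log$ by $t$ and passing to the limit yields the desired value $\chi(Y^w)=\bracket{\chi,Y^w}$. The only two points requiring a little care, namely the well-definedness of the rate on $W_P$ and the nonvanishing of $p_\infty we_\chi$, are both handled above, so I do not foresee any real obstacle.
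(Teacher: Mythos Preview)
Your proof is correct and follows essentially the same route as the paper: both use the Bruhat decomposition $g=pwb$, reduce to the asymptotics of $a_twe_\chi$ via the convergence of $a_tpa_{-t}$ and the fact that $B$ fixes the line $\R e_\chi$, and then compute $a_twe_\chi=e^{t\chi(Y^w)}we_\chi$. Your added checks (well-definedness of $\bracket{\chi,Y^w}$ on $W_P$ and nonvanishing of $p_\infty we_\chi$) are minor refinements the paper either omits or has already noted elsewhere.
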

\begin{proof}
Soit $g$ un élément quelconque de $G$, et $g=pwb$ sa décomposition de Bruhat, avec $p\in P$, $b\in B$ et $w\in W_P$.
Le sous-groupe parabolique minimal $B$ préserve la direction $e_\chi$, et lorsque $t$ tend vers l'infini, l'élément $a_tpa_t^{-1}$ converge dans $G$.
Donc $a_tge_\chi$ et $a_twe_\chi$ ont le même comportement asymptotique.
Comme
\[
a_t w e_\chi = w w^{-1}a_tw e_\chi = w e^{tY^w} e_\chi = e^{t\bracket{\chi,Y^w}}we_\chi,
\]
on trouve bien que $g=pwb$ est dans $G(e_\chi,V^\lambda(a_t))$ si et seulement si $w\in W^\lambda(\chi,a_t)$.
\end{proof}

La proposition~\ref{stablin} découle du lemme ci-dessus, par un argument élémentaire de compacité.

\begin{proof}[Démonstration de la proposition~\ref{stablin}]
Soit $i\in[1,r]$.
Notons $\pi^+:V_i\to V_i$ la projection sur la somme des espaces propres de $a_t$ associés à des valeurs propres supérieures ou égales à $\omega_i(\tau)$.
On a l'équivalence
\[
\pi^+(sv)=0 
\quad\Leftrightarrow\quad
\lim_{t\to\infty}\frac{1}{t}\log\norm{a_tsv} < \omega_i(\tau).
\]
Soit $v=g^{-1}e_i\in\tx_i$.
Avec l'équivalence ci-dessus, le lemme~\ref{union} montre que si $\pi^+(sv)=0$, alors $s$ appartient à une cellule de Bruhat $PwBg$ avec $w\in W^\lambda(\omega_i,a_t)$, avec $\lambda<\omega_i(\tau)$, i.e. $\omega_i(Y^w)\leq \lambda <\omega_i(\tau)$.
Mais par définition de $\tau=\tau_{\R}(S,a)$, 
\[
\omega_i(\tau)\leq\inf_{PwBg\supset S}\omega_i(Y^w),
\]
donc $S$ n'est inclus dans aucune cellule de Bruhat $PwBg$, avec $g\in G$ et $w\in W^\lambda(\omega_i,a_t)$.
Par irréductibilité, $S$ n'est pas inclus dans la réunion (finie) de ces cellules, et il existe donc $s\in S$ tel que $\pi^+(sv)\neq 0$.
Comme $v\mapsto\sup_{s\in S}\norm{\pi^+(sv)}$ est semi-continue inférieurement
\note{s.c.i. : voir Brézis \cite[page 8]{brezis}}
sur le compact $\tx_i'=\{v\in\tx_i\ |\ \norm{v}=1\}$, il existe $c>0$ tel que
\[
\forall v\in\tx_i,\quad \sup_{s\in S} \norm{\pi^+(sv)} \geq c\norm{v}.
\]
Cela implique, pour tout $v\in\tx_i$,
\[
\sup_{s\in S} \norm{a_tsv}
\geq \sup_{s\in S} e^{t\omega_i(\tau)}\norm{\pi^+(sv)}
\geq ce^{t\omega_i(\tau)}\norm{v}.
\]
\end{proof}

Nous pouvons enfin démontrer le théorème~\ref{diagstab}.

\begin{proof}[Démonstration du théorème~\ref{diagstab}]
Notons
\[
\tau =\tau_{\R}(\cH_\mu(s_0),a_t).
\]
La proposition~\ref{stablin} montre que pour chaque $i\in[1,r]$, il existe $c>0$ tel que
\[
\forall t>0,\
\forall v\in V_i(\Z)\cap\tx_i,\quad
\sup_{s\in\cH_\mu(s_0)} \norm{a_tsv} \geq c e^{t\omega_i(\tau)}\norm{v} \geq ce^{t\omega_i(\tau)}.
\]
Cela implique naturellement les inégalités
\[
\liminf_{t\to\infty}\frac{1}{t}\log\left(\min_{v\in V_i(\Z)\cap\tx_i}\sup_{s\in\cH_\mu(s_0)}\norm{a_ts v}\right) \geq \omega_i(\tau), \qquad{i=1,\dots,r}
\]
ce qui se réécrit
\[
\liminf_{t\to\infty} \frac{1}{t}c(a_t\cH_\mu(s_0)) \succ \tau_{\R}(\cH_\mu(s_0),a_t).
\]
Soit $B$ la boule centrée en $s_0$ donnée par le théorème~\ref{diagan}, et $S=B\cap\Supp\mu$.
D'après la proposition~\ref{heredite}, $c(a_tS)-c(a_t\cH_\mu(s_0))=O(1)$, et donc
\[
\liminf_{t\to\infty} \frac{1}{t}c(a_tS) \succ \tau_{\R}(\cH_\mu(s_0),a_t).
\]
Le théorème~\ref{diagan} permet d'en déduire que pour presque tout $s$ au voisinage de $s_0$,
\[
\liminf_{t\to\infty} \frac{1}{t}c(a_ts) \succ \tau_{\R}(\cH_\mu(s_0),a_t).
\]

Montrons maintenant l'inégalité concernant la limite supérieure.
\note{et donc $S\subset PwB\gamma$.}
Supposons $\cH_\mu(s_0)\subset PwB\gamma$, avec $\gamma\in G(\Q)$.
Alors, pour tout $i\in[1,r]$ et tout $t>0$, 
\[
\sup_{s\in S}\norm{a_t s \gamma^{-1}e_i}\ll e^{t\omega_i(Y^w)}
\]
et par conséquent
\[
\limsup_{t\to\infty}\frac{1}{t} c(a_ts) \prec p_{\ka^-}(Y^w).
\]
Comme ceci vaut pour tout $w$ tel qu'il existe $\gamma\in G(\Q)$ vérifiant $PwB\gamma\supset S$, on trouve bien
\[
\limsup_{t\to\infty}\frac{1}{t} c(a_ts) \prec \tau_{\Q}(S,a).
\]
\end{proof}

%

\section{Variétés algébriques définies sur $\QQ$}

Dans le cas où l'ensemble algébrique $\cH_\mu(s_0)$ est défini sur $\QQ$, on peut améliorer le théorème~\ref{diagstab} et déterminer la limite $\lim_{t\to\infty}\frac{1}{t}c(a_ts)$, pour $\mu$-presque tout $s$ au voisinage de $s_0$.
C'est ce que décrit le théorème ci-dessous.

\begin{theorem}[Orbites diagonales et sous-variétés algébriques]
\label{diaganalg}
Soit $\mu$ une mesure localement régulière en $s_0\in G$ telle que $M=\cH_\mu(s_0)$ soit irréductible et définie sur $\QQ$.
Soit $(a_t)_{t>0}$ un sous-groupe diagonal à un paramètre dans $G$ et $c_M=\tau_{\Q}(M,a)$.
Pour $s\in M$ et $t>0$, on note
\[
a_t s = k_{t,s} b_{t,s} n_{t,s} \gamma_{t,s},
\]
une décomposition de Siegel de $a_ts$.
\begin{enumerate}
\item Pour presque tout $s$ au voisinage de $s_0$, $\lim_{t\to\infty}\frac{1}{t}\log b_{t,s} = c_M$.
\note{$Q_M$ contient toutes les racines de $\theta_M$.}
\item Il existe $\gamma_M\in\Gamma$ tel que pour $\theta_M=\{i\in[1,r]\ |\ \alpha_i(c_M)=0\}$ et $Q_M=P_{\theta_M}$ le sous-groupe parabolique associé à $\theta_M$, alors pour presque tout $s$ au voisinage de $s_0$, pour tout $t>0$ suffisamment grand, $Q_M\gamma_{t,s}=Q_M\gamma_M$.
\note{En général, une intersection de variétés de Bruhat n'est pas une variété de Bruhat. Mais l'intersection de variétés de Bruhat \emph{standard} est une variété de Bruhat standard.}
\item Si $X_w\supset M\gamma_M^{-1}$ est la plus petite variété de Bruhat standard contenant $M\gamma_M^{-1}$, alors $c_M=p_{\ka^-}(Y^w)$.
\end{enumerate}
\end{theorem}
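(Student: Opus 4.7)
La stratégie générale consiste à combiner les bornes statistiques de ce chapitre (théorèmes \ref{diagan} et \ref{diagstab}) avec l'analyse des points algébriques du chapitre \ref{chap:algebrique} (théorème \ref{diagalg}), en exploitant l'hypothèse que $M = \cH_\mu(s_0)$ est définie sur $\QQ$ pour renforcer l'encadrement $\tau_{\R}(M,a) \prec \liminf \prec \limsup \prec \tau_{\Q}(M,a)$ du théorème \ref{diagstab} en une limite exacte. D'après le théorème \ref{diagan}, combiné avec la proposition \ref{heredite}, pour $\mu$-presque tout $s$ dans une boule $B$ centrée en $s_0$, on a $\frac{1}{t}(c(a_t s) - c(a_t \cH'_\mu(s_0))) \to 0$, ce qui ramène la première assertion au calcul de $\lim_{t\to\infty}\frac{1}{t} c(a_t(M \cap B_0))$.

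La majoration $\limsup \frac{1}{t} c(a_t s) \prec c_M$ provient directement du théorème \ref{diagstab}. Pour la minoration correspondante, je choisirais un point algébrique Zariski-générique $\tilde s \in M(\QQ) \cap B$. De tels points sont abondants: puisque $M$ est irréductible sur $\QQ$, $M(\QQ)$ est Zariski-dense dans $M$; de plus, $M$ contient un point réel lisse au voisinage de $s_0$, et au voisinage d'un tel point le théorème des fonctions implicites, appliqué à des équations polynomiales à coefficients dans $\QQ$, force $M(\QQ)$ à être dense pour la topologie euclidienne. Le théorème \ref{diagalg} appliqué à $\tilde s$ fournit alors une limite $\lim \frac{1}{t} c(a_t \tilde s) = p_{\ka^-}(Y^{w_{\tilde s}})$, un élément $\gamma_\infty^{(\tilde s)} \in G(\Q)$, et une cellule de Bruhat $P w_{\tilde s} B \gamma_\infty^{(\tilde s)}$ contenant $\tilde s$. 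Pour $\tilde s$ Zariski-générique dans $M$, cette cellule contient un ouvert de Zariski dense de $M$, et par irréductibilité $M \subset X_{w_{\tilde s}} \gamma_\infty^{(\tilde s)}$, d'où $c_M \prec p_{\ka^-}(Y^{w_{\tilde s}})$ par définition même de $\tau_{\Q}(M,a)$. Puisque $c(a_t \tilde s) \prec c(a_t (M \cap B_0))$ pour chaque $t$ par monotonie de $c$ sous l'inclusion de compacts, cela entraîne $c_M \prec \liminf \frac{1}{t} c(a_t(M \cap B_0))$, et la première assertion s'en déduit.

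Les assertions 2 et 3 s'obtiennent comme suit. En fixant un $\tilde s$ Zariski-générique comme ci-dessus et en posant $\gamma_M = \gamma_\infty^{(\tilde s)}$, l'irréductibilité de $M$ impose une unique plus petite variété de Bruhat standard $X_{w_M}$ contenant $M\gamma_M^{-1}$ (correspondant à la cellule de Bruhat Zariski-générique); l'argument du paragraphe précédent, appliqué avec $\gamma_\infty^{(\tilde s)} = \gamma_M$, donne alors $c_M = p_{\ka^-}(Y^{w_M})$, d'où l'assertion 3. Pour l'assertion 2, on applique la proposition \ref{drapartiel2} à la mesure $(a_t)_*\mu$ pour chaque $t$ grand: cela produit un drapeau partiel $P_{I_{a_tS}} \gamma_{a_tS}$ contrôlant $\gamma_{t,s}$ modulo $Q_M$ hors d'un ensemble de mesure exponentiellement petite. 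Un argument de Borel-Cantelli, combiné à la conclusion de l'assertion 1 (qui donne $c(a_ts)=c_Mt+o(t)$ et donc $\theta_{a_ts}=\theta_M$ pour $t$ grand), force alors $Q_M \gamma_{t,s}$ à se stabiliser à une valeur commune $Q_M \gamma_M$ pour $\mu$-presque tout $s$. L'appartenance $\gamma_M \in \Gamma$ s'obtient enfin en absorbant la partie finie $C \subset G(\Q)$ de la décomposition de Siegel dans le choix du représentant, grâce à un argument de tiroirs sur $C$ et à la discrétude de $\Gamma$.

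Le principal obstacle réside dans la sélection Zariski-générique du second paragraphe: il faut s'assurer qu'un ensemble Zariski-dense de points algébriques $\tilde s \in M(\QQ)$ donne une cellule de Bruhat commune $Pw_{\tilde s}B\gamma_\infty^{(\tilde s)} \supset M$. La difficulté est que l'élément $\gamma_\infty^{(\tilde s)}$ produit par le théorème \ref{diagalg} dépend de $\tilde s$ via sa trajectoire de Siegel sous $(a_t)$, de sorte qu'a priori des points algébriques différents pourraient produire des cellules de Bruhat différentes. La rigidité nécessaire est qu'à l'intérieur de $M$ seul un nombre fini d'éléments $w \in W_P$ peuvent apparaître comme $w_{\tilde s}$, et parmi ceux-ci le Zariski-générique correspond à l'unique cellule $Pw_{M}B\gamma_M$ ouverte dans $X_{w_M}\gamma_M$; l'irréductibilité de $M$ force alors presque tous les points algébriques génériques à se situer dans cette cellule commune. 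Une difficulté technique secondaire est le passage de $\gamma_M \in G(\Q)$ à $\gamma_M \in \Gamma$, qui repose sur la finitude des représentants $C$ dans la décomposition de Siegel et un argument de tiroirs sur $t$.
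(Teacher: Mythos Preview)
Your overall strategy is exactly that of the paper: the upper bound comes from the $\limsup$ half of Theorem~\ref{diagstab}, the lower bound from applying Theorem~\ref{diagalg} to a well-chosen algebraic point of $M$, and part~2 from Proposition~\ref{drapartiel2} together with Borel--Cantelli and the local constancy of Proposition~\ref{drapartiel}. Part~3 then follows by reusing the same algebraic point. So structurally there is nothing to correct.

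The gap is precisely at the place you yourself flag as the main obstacle. Your choice of a ``Zariski-generic'' $\tilde s\in M(\QQ)$, justified by the implicit function theorem and Euclidean density, does not give what is needed. The family of rational Bruhat subvarieties $X_{w'}\gamma'$ with $\gamma'\in G(\Q)$ that do not contain $M$ is infinite, so ``generic'' has no immediate meaning here; and your claim that the Bruhat cell $Pw_{\tilde s}B\,\gamma_\infty^{(\tilde s)}$ produced by Theorem~\ref{diagalg} contains a Zariski-open of $M$ is circular, since that cell is determined a posteriori by the dynamics of $a_t\tilde s$, not by the position of $\tilde s$ inside $M$. Your proposed resolution (finiteness of $W_P$ and irreducibility of $M$) handles the finitely many $w$'s but does nothing about the infinitely many $\gamma'$'s.

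The paper closes this gap with a short algebraic lemma (Lemma~\ref{deg}): since the rational Bruhat subvarieties are all cut out by polynomials of bounded degree, Noether normalisation gives a point $s_1\in M(\QQ)$ lying in \emph{none} of the rational Bruhat subvarieties that fail to contain $M$. Then whatever rational Bruhat variety $X_{w_1}\gamma_1\ni s_1$ is produced by Theorem~\ref{diagalg}, it is forced to contain $M$, so on one hand $c_M\prec p_{\ka^-}(Y^{w_1})=\lim\frac1t c(a_ts_1)$ by the definition of $\tau_{\Q}(M,a)$, and on the other hand $\lim\frac1t c(a_ts_1)\prec c_M$ since $s_1\in M$. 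This gives $\lim\frac1t c(a_ts_1)=c_M$ directly, and the rest of your argument goes through. The same $s_1$ serves in part~3: because $s_1$ avoids every $X_{w'}\gamma_M$ with $X_{w'}\subsetneq X_w$, one has $s_1\gamma_M^{-1}\in PwB$, and Theorem~\ref{diagalg} yields $c_M=p_{\ka^-}(Y^w)$.
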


Pour la démonstration, nous appliquerons le théorème~\ref{diagalg} à un point $s_1\in M\cap G(\QQ)$ bien choisi, dont l'existence sera assurée par le lemme suivant.
Nous dirons que des ensembles algébriques $F_i$, $i\in I$ sont de degré borné s'il existe une constante $D\geq 0$ telle pour pour chaque $i$, $F_i$ est l'ensemble des zéros d'une famille de polynômes de degré au plus $D$.

\begin{lemma}\label{deg}
Soit $k$ un corps de nombres, et $V$ une variété algébrique affine irréductible définie sur $k$.
On suppose que $(F_i)_{i\in I}$ est une famille de sous-ensembles algébriques stricts définis sur $k$ et de degré borné.
Alors, il existe un point $s_1\in V(\QQ)\setminus\bigcup_{i\in I}F_i$.
\end{lemma}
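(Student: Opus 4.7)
The plan is to use Noether normalization to reduce the statement to finding a point in $\QQ^d$ avoiding a family of hypersurfaces of bounded degree defined over $k$, and then to construct such a point coordinate by coordinate using algebraic elements of sufficiently high degree.

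I would first invoke Noether's normalization lemma: since $V$ is an irreducible affine variety of some dimension $d$ defined over $k$, possibly after replacing $k$ by a finite extension $k' \subset \QQ$, there is a finite surjective morphism $\pi : V \to \mathbb{A}^d$ defined over $k'$. For each $i \in I$, the image $\pi(F_i)$ is then a proper Zariski-closed subset of $\mathbb{A}^d$, since $\pi$ is closed (being finite) and $\dim \pi(F_i) = \dim F_i < d$. Moreover, the degree of $\pi(F_i)$ is controlled by $(\deg \pi)(\deg F_i)$, so it remains uniformly bounded as $i$ varies. A standard Bezout-type bound then yields, for each $i$, a non-zero polynomial $p_i \in k'[x_1, \dots, x_d]$ of degree at most some constant $D'$ independent of $i$, vanishing on $\pi(F_i)$. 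It will then suffice to exhibit a point $x = (x_1, \dots, x_d) \in \QQ^d$ with $p_i(x) \neq 0$ for every $i$: any preimage $s_1 \in \pi^{-1}(x) \subset V(\QQ)$, non-empty by surjectivity and finiteness of $\pi$, will satisfy $s_1 \notin F_i$ for all $i$.

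The point $x$ I would construct inductively. Suppose $x_1, \dots, x_{j-1} \in \QQ$ have already been chosen so that, for every $i$, the partially substituted polynomial $p_i(x_1, \dots, x_{j-1}, y_j, \dots, y_d)$ is non-zero as an element of $k_{j-1}[y_j, \dots, y_d]$, where $k_{j-1} := k'(x_1, \dots, x_{j-1})$ is a number field. I would then pick $x_j \in \QQ$ of degree strictly larger than $D'$ over $k_{j-1}$; such an element exists since $\QQ$ contains algebraic numbers of arbitrary degree over any number field. Expanding
\[
p_i(x_1, \dots, x_{j-1}, y_j, \dots, y_d) = \sum_{\mathbf{m}} a_{i,\mathbf{m}}(y_j)\, y_{j+1}^{m_{j+1}} \cdots y_d^{m_d},
\]
where $a_{i,\mathbf{m}} \in k_{j-1}[y_j]$ has degree at most $D'$ and not all $a_{i,\mathbf{m}}$ are identically zero, the condition on $x_j$ ensures that $a_{i,\mathbf{m}}(x_j) \neq 0$ for at least one $\mathbf{m}$, because a non-zero polynomial over $k_{j-1}$ of degree $\leq D'$ cannot vanish at an element of degree $> D'$ over $k_{j-1}$. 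Hence $p_i(x_1, \dots, x_j, y_{j+1}, \dots, y_d)$ remains non-zero and the induction continues. After $d$ steps the resulting $x \in \QQ^d$ satisfies $p_i(x) \neq 0$ for every $i$.

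The main obstacle, and the only place where the bounded-degree hypothesis is used essentially, lies in the inductive step above: a single element $x_j$ works simultaneously for \emph{every} $i \in I$ because the bound $D'$ is uniform in $i$. As a pleasant byproduct, the family $I$ need not be countable for the argument to succeed -- only the uniform degree bound matters. The two supporting classical ingredients, namely Noether normalization and the Bezout-type estimate relating the degree of an algebraic set to the minimal degree of a defining polynomial, are standard; everything else is an elementary generic-point extraction in $\QQ$.
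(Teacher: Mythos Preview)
Your proof is correct and follows essentially the same route as the paper: reduce to $\mathbb{A}^d$ via Noether normalization, then build the point coordinate by coordinate by choosing each $x_j$ of degree over $k(x_1,\dots,x_{j-1})$ exceeding the uniform degree bound. The paper's version is terser—it does not spell out the degree control on $\pi(F_i)$ or the inductive step, and it asserts Noether normalization over $k$ itself rather than passing to a finite extension—but the structure and the key idea are identical.
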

\begin{proof}
Si $V=\mathbb{A}^d$ est l'espace affine tout entier, le résultat est clair: si le point $s_1=(x_1,\dots,x_d)$ à coordonnées dans $\QQ$ est choisi de sorte que pour chaque $i$, $[k(x_1,\dots,x_{i+1}):k(x_1,\dots,x_i)]>D$ alors $s_1$ ne satisfait aucune relation de degré au plus $D$ à coefficients dans $k$. 

D'après le lemme de normalisation de Noether \cite[Theorem~10, page 66]{shafarevich}, il existe toujours un morphisme fini de variétés algébriques $V\to\mathbb{A}^{\dim V}$ défini sur $k$, et le cas général découle donc du cas particulier ci-dessus.
\end{proof}

\begin{proof}[Démonstration du théorème~\ref{diaganalg}]
D'après le théorème~\ref{diagstab}, on sait déjà que pour presque tout $s\in M$,
\[
\limsup_{t\to\infty}\frac{1}{t}\log b_{t,s}\prec c_M.
\]
Comme les sous-variétés de Bruhat rationnelles sont définies sur $\Q$ et de degré borné, le lemme ci-dessus montre qu'il existe un point $s_1\in M\cap G(\QQ)$ qui n'est inclus dans aucune sous-variété de Bruhat rationnelle qui ne contient pas $M$.
D'après le théorème~\ref{diagalg},
\[
\lim_{t\to\infty}\frac{1}{t}\log b_{t,s_1}=c_M.
\]
Cela implique nécessairement
\[
\frac{1}{t}c(a_tM) \succ \frac{1}{t}c(a_ts_1) = c_M + o(1),
\]
et avec le théorème~\ref{diagan}, pour presque tout $s$ au voisinage de $s_0$,
\[
\liminf_{t\to\infty}\frac{1}{t}c(a_ts) = \liminf_{t\to\infty}\frac{1}{t}c(a_tM) \succ c_M.
\]
Cela montre le premier point du théorème.

Ensuite, on applique la proposition~\ref{drapartiel2} pour $t>0$, avec $\eps=e^{-\delta t}$, où $\delta>0$ est choisi tel que
\[
\forall i\not\in\theta_M,\qquad \delta<\alpha_i(c_M).
\]
Cela montre qu'il existe un élément $\gamma_t\in G(\Q)$ tel que dans une petite boule $B$ centrée en $s_0$,
\[
\mu(\{s\in B\ |\ Q_M\gamma_{s,t}\neq Q_M\gamma_t\}) \leq C'e^{-\delta\alpha't}\mu(B).
\]
Par le lemme de Borel-Cantelli, il s'ensuit que pour presque tout $s$ au voisinage de $s_0$, pour tout $t>0$ suffisamment grand,
\[
Q_M\gamma_{s,t}= Q_M\gamma_t.
\]
Mais d'après la proposition~\ref{drapartiel}, à $s$ fixé, l'application $t\mapsto Q_M\gamma_{s,t}$ est localement constante, et donc constante au voisinage de l'infini.
Ainsi, il existe un élément $\gamma_M\in G(\Q)$ tel que pour tout $t>0$ suffisamment grand, 
\[
Q_M\gamma_{s,t}= Q_M\gamma_M.
\]
Enfin, si $s_1\in G(\QQ)$ est le point déjà utilisé ci-dessus et $X_w\supset M\gamma_M^{-1}$, alors $s_1\in PwB$, et d'après le théorème~\ref{diagalg},
\[
c_M = \lim_{t\to\infty} \frac{1}{t} c(a_ts_1) = p_{\ka^-}(Y^w).
\]
\end{proof}

\begin{corollary}[Mesures algébriques semi-stables]
\label{semstabalg}
Soit $\mu$ une mesure localement régulière en $s_0\in G$ telle que $M=\cH_\mu(s_0)$ soit irréductible et définie sur $\QQ$.
Si $M$ n'est incluse dans aucune sous-variété de Bruhat \emph{rationnelle} instable, alors pour toute représentation rationnelle $V$, pour presque tout $s$ au voisinage de $s_0$,
\[
\lim_{t\to\infty}\frac{1}{t}\log\lambda_1(a_tsV(\Z)) = 0.
\]
\end{corollary}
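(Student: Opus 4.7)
The plan is to mimic the proof of Corollary~\ref{semstab}, but replacing the appeal to Theorem~\ref{diagstab} by the stronger Theorem~\ref{diaganalg}, which applies since $M=\cH_\mu(s_0)$ is assumed to be irreducible and defined over $\QQ$. The first step is to observe that the hypothesis on $M$ forces $\tau_{\Q}(M,a_t)=0$. Indeed, recall that
\[
\tau_{\Q}(M,a_t)=\inf\{p_{\ka^-}(Y^w)\ ;\ w\in W_P\ \mbox{tel que}\ \exists\gamma\in G(\Q):\,M\subset X_w\gamma\},
\]
and that a rational Bruhat subvariety $X_w\gamma$ is stable exactly when $p_{\ka^-}(Y^w)=0$ (by the example following the definition of instability). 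By hypothesis, every rational Bruhat subvariety containing $M$ is stable, so every term in the infimum above is the zero element of $\ka^-$, and hence $\tau_{\Q}(M,a_t)=0$.

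Second, I invoke Theorem~\ref{diaganalg} directly: for $\mu$-almost every $s$ in a neighborhood of $s_0$, writing $a_ts=k_{t,s}b_{t,s}n_{t,s}\gamma_{t,s}$ a Siegel decomposition, we have
\[
\lim_{t\to\infty}\frac{1}{t}\log b_{t,s}=c_M=\tau_{\Q}(M,a_t)=0.
\]
Equivalently, for almost every such $s$, $c(a_ts)=o(t)$ as $t\to\infty$.

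Finally, I translate this information on $c(a_ts)$ into the claimed asymptotics for the first minimum in an arbitrary rational representation $V$. Decomposing $V$ into irreducible subrepresentations and replacing the lattice $V(\Z)$ by a commensurable one if necessary, one may argue component by component; in a fixed irreducible component, all weights $\omega$ of the $A$-action satisfy $\omega(c(a_ts))=o(t)$ (because $c(a_ts)=o(t)$ lies in a finite-dimensional vector space), and the Siegel decomposition gives $\lambda_1(a_tsV(\Z))\asymp e^{\omega(c(a_ts))+O(1)}=e^{o(t)}$ for the relevant weight, exactly as in the proof of Corollary~\ref{semstab}.

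I expect the proof to be short, essentially a two-line reduction to Theorem~\ref{diaganalg} once the equality $\tau_{\Q}(M,a_t)=0$ is checked. The only subtlety is making sure that the translation from $c(a_ts)=o(t)$ to $\log\lambda_1(a_tsV(\Z))=o(t)$ is justified for a \emph{general} rational representation $V$, which requires no new idea beyond what is already used at the end of the proof of Corollary~\ref{semstab}; this is where I would be slightly more careful in the write-up than in that earlier corollary.
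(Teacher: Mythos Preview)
Your proposal is correct and takes essentially the same approach as the paper: both invoke Theorem~\ref{diaganalg} to conclude $c_M=0$ and then translate $c(a_ts)=o(t)$ into $\log\lambda_1(a_tsV(\Z))=o(t)$ via the Siegel decomposition. The only cosmetic difference is that the paper uses point~(3) of Theorem~\ref{diaganalg} to single out the specific $w$ with $X_w\supset M\gamma_M^{-1}$ and then shows $p_{\ka^-}(Y^w)=0$, whereas you argue directly that every term in the infimum defining $\tau_{\Q}(M,a_t)$ vanishes; these are two phrasings of the same computation.
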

\begin{proof}
Avec les notations du théorème~\ref{diaganalg}, soit
\[
X_w \supset M\gamma_M^{-1}
\]
la plus petite sous-variété de Bruhat standard contenant $M\gamma_M^{-1}$.
Comme $M$ n'est incluse dans aucune sous-variété de Bruhat instable, on doit avoir, pour chaque $i\in[1,r]$, $\omega_i(Y^w)\geq 0$.
Par suite, $p_{\ka^-}(Y^w)=0$, et pour presque tout $s$ au voisinage de $s_0$,
\[
\lim_{t\to\infty}\frac{1}{t}\log c(a_ts) = 0.
\]
Cela montre ce qu'on veut, car la décomposition de Siegel de $a_ts$ montre que dans une représentation rationnelle $V$ de plus haut poids $\chi$,
\[
\lambda_1(a_tsV(\Z)) \asymp e^{t \chi(c(a_ts))} = e^{o(t)}.
\]
\end{proof}

\chapter{Approximation dans les sous-variétés}
\label{chap:sousvariete}

Dans ce chapitre, nous nous donnons une sous-variété analytique $M$ dans la variété de drapeaux $X$, et étudions les propriétés diophantiennes d'un point $x$ choisi aléatoirement sur $M$.
Plus précisément, nous cherchons d'abord à déterminer sous quelles conditions la conclusion du théorème~\ref{exposantps} reste valable, et montrons un critère analogue à celui du théorème~\ref{extqq} obtenu pour les points algébriques.
Ensuite, nous étudierons le cas où la variété $M$ est algébrique et définie sur $\QQ$, où nous pouvons donner une formule pour l'exposant presque sûr d'un point choisi aléatoirement dans $M$.

\bigskip

\emph{Dans toute la suite, $X=P\bcs G$ désigne une variété de drapeaux obtenue comme quotient d'un $\Q$-groupe semi-simple $G$ par un sous-groupe parabolique $P$ défini sur $\Q$.
On munit $X$ de la métrique de Carnot-Carathéodory introduite au paragraphe~\ref{sec:cc}, et d'une hauteur $H_\chi$ provenant d'une représentation irréductible de $G$ engendrée par une unique droite de plus haut poids $\chi$.
Enfin, on suppose que $G$ est de rang rationnel $r$, et on note $V_i$, $i=1,\dots,r$ ses représentations fondamentales.}

\section{Variétés analytiques réelles}

Si $M$ est une sous-variété analytique de $\SL_d(\R)$ de dimension $m$, on note $\lambda_M$ la mesure de Lebesgue sur $M$, i.e. la mesure de Hausdorff de dimension $m$ restreinte à $M$.
Comme nous nous intéresserons seulement à des événements de mesure pleine ou nulle, seule la classe de $\lambda_M$ aura une importance pour nous, et l'on aurait aussi bien pu définir $\lambda_M$ localement comme l'image de la mesure de Lebesgue sur $\R^{\dim M}$ par un paramétrage analytique local de $M$.

La mesure $\lambda_M$ est localement régulière.
C'est ce résultat important, dû à Kleinbock et Margulis \cite{kleinbockmargulis}, qui fait l'intérêt principal du théorème~\ref{diagan}.
Nous rappelons donc ici les grandes lignes de sa démonstration.
L'argument est basé sur la proposition suivante \cite[Proposition~2.1]{kleinbock_dichotomy}.

\begin{proposition}
\label{analreg}
Soit $U$ un ouvert connexe de $\R^n$, et $\cF$ un sous-espace de dimension finie de fonctions analytiques sur $U$ à valeurs réelles.
Pour tout $x$ dans $U$, il existe des constantes $C,\alpha>0$ et un voisinage $W\ni x$ tels que toute fonction $f\in\cF$ soit $(C,\alpha)$-régulière.
\end{proposition}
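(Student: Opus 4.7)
The plan is to reduce, in three stages, to the classical polynomial case: for polynomials of degree at most $d$ in $n$ variables, the Remez-type inequality of Kleinbock and Margulis gives uniform $(C_{n,d}, \frac{1}{nd})$-regularity on every ball of $\R^n$ for Lebesgue measure, with constants depending only on $n$ and $d$. My first step would be to extract a uniform bound on the order of vanishing. For each integer $d\geq 1$, the subspace
\[
\cF_d = \{f\in\cF\ |\ f\ \text{s'annule à l'ordre}\ \geq d\ \text{en}\ x\}
\]
is a closed linear subspace of $\cF$ (cut out by the vanishing of finitely many Taylor coefficients); the sequence $(\cF_d)_{d\geq 1}$ is decreasing, and $\bigcap_d \cF_d$ consists of functions vanishing to all orders at $x$, hence identically zero on $U$ by analyticity and connectedness. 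Since $\dim\cF<\infty$, there must exist $D\in\N$ with $\cF_{D+1}=\{0\}$, which uniformly bounds the order of vanishing at $x$ of every nonzero element of $\cF$ by $D$.

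Next I would reduce each $f\in\cF$ to its Taylor polynomial $T_Df$ of degree $D$ at $x$. On a small ball $B=B(x,r)$, analyticity gives $\norm{f-T_Df}_B \leq Cr^{D+1}$, while the uniform vanishing bound, combined with compactness of the unit sphere in $\cF$ for any fixed norm, guarantees that the ratio $\norm{T_Df}_B / \norm{f}_B$ is bounded below uniformly in $f\in\cF\setminus\{0\}$. For $r$ sufficiently small, the remainder is negligible compared to $T_Df$. A standard perturbation argument then transfers the $(C_{n,D},\frac{1}{nD})$-regularity of $T_Df$ to $(C',\frac{1}{nD})$-regularity of $f$ on all balls $B\subset W$, with constants independent of $f$. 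Uniformity over $\cF$ is secured by working on the compact projectivisation $\PP(\cF)$: all the constants entering the argument depend continuously on $[f]\in\PP(\cF)$ and take values in $]0,+\infty[$, hence are bounded above and below on $\PP(\cF)$.

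The main obstacle, as I see it, lies in the second step: obtaining the lower bound $\norm{T_Df}_B \gg \norm{f}_B$ uniformly in $f$ and \emph{for balls $B$ not centered at $x$}. For balls centered at $x$, this follows immediately from the uniform vanishing bound $D$ and a compactness argument on the sphere of $\cF$; but at nearby points $y$, one must replace the Taylor expansion at $x$ by one at $y$ and invoke a semi-continuity argument showing that the order-of-vanishing bound persists on a small neighborhood of $x$, together with a Cauchy-type estimate to control the Taylor coefficients uniformly in $y\in W$. Once $W$ is chosen small enough that this holds, the three steps combine to yield the desired $(C,\alpha)$-regularity with $\alpha=\frac{1}{nD}$.
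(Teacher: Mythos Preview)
The paper gives no proof here; it writes ``Nous admettons cette proposition'' and defers to Kleinbock--Margulis \cite[Proposition~3.4]{kleinbockmargulis}. Your Stage~1, and your handling of what you call the main obstacle, are correct and match that argument: finite-dimensionality forces $\cF_{D+1}=\{0\}$ for some $D$, and injectivity of the $D$-jet map $\cF\to\mathrm{Jets}_y^D$ is an open condition in $y$ (a nonvanishing minor of the matrix of partials of a basis of $\cF$), so the vanishing-order bound $D$ holds uniformly over a neighbourhood $W$ of $x$ and over the sphere of $\cF$.

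The genuine gap is the ``standard perturbation argument''. On a ball $B=B(y,r)\subset W$ you have $|f-T_D^yf|\le Mr^{D+1}$ and $\|T_D^yf\|_B\ge m r^D$, so the relative error is at best $\kappa r$ with $\kappa=M/m$. Transferring the polynomial sublevel bound to $f$ then gives only
\[
\lambda\bigl(\{t\in B:|f(t)|\le\eps\|f\|_B\}\bigr)\ \le\ C_0\bigl(\tfrac32\eps+\kappa r\bigr)^{1/(nD)}\lambda(B).
\]
The $(C,\alpha)$-regularity must hold for every $\eps\in(0,1]$ on each fixed ball; for fixed $r$ and $\eps\to0$ the right-hand side stays bounded below by $C_0(\kappa r)^{1/(nD)}>0$. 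Shrinking $W$ only caps $r$ from above, and raising the Taylor degree $D'$ sends the exponent $1/(nD')$ to zero without removing the floor. An additive perturbation of size $\delta$ always destroys $(C,\alpha)$-goodness at scales $\eps\ll\delta$, so no transfer lemma of the form you invoke exists.

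Kleinbock--Margulis avoid this by never approximating. After Stage~1 (with compactness) one has $\max_{|\beta|\le D}|\partial^\beta f|\ge c>0$ uniformly on $W$ and over the unit sphere of $\cF$, together with uniform upper bounds on derivatives of order $\le D{+}1$. Their $C^k$-lemma then estimates the sublevel sets of $f$ \emph{directly}: restrict to lines in a direction where the $D$-th derivative is bounded below, apply the one-dimensional van der Corput sublevel bound (which already gives $(C,1/D)$-goodness, with the sup norm built in), and integrate over transverse directions by Fubini to obtain $(C,1/(nD))$-goodness on $W$. The polynomial case is a model for this $C^k$-lemma, not an approximation target.
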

\begin{proof}
Nous admettons cette proposition pour l'instant, en attendant d'inclure la démonstration donnée par Kleinbock et Margulis \cite[Proposition~3.4]{kleinbockmargulis}.
\end{proof}

\begin{corollary}
\label{regsec}
Soit $M$ une sous-variété analytique de $X$, $\lambda_M$ une mesure de Lebesgue sur $M$, $x_0\in M$ et $s:X\to G$ une section analytique locale au voisinage de $x_0$.
Il existe des constantes $C,\alpha>0$ et une boule ouverte $B=B(x_0,r)$ telles que pour tout $g\in G$, pour tout $i\in[1,r]$ et tout $v\in V_i$, l'application
\[
x \mapsto \norm{g s(x) v}
\]
est $(C,\alpha)$-régulière sur $B$ pour la mesure $\lambda_M$.
\end{corollary}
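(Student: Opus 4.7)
The plan is to deduce the corollary from Proposition~\ref{analreg} by exhibiting one fixed finite-dimensional space $\cF$ of real-analytic functions that simultaneously controls all the maps $x\mapsto\norm{g s(x) v}^2$, independently of $g$, $i$ and $v$. First I would choose a real-analytic parametrization $\varphi:U\to M$ of an open neighbourhood of $x_0$ in $M$, with $U\subset\R^{\dim M}$ open and $\varphi(u_0)=x_0$. Since $\varphi$ is a real-analytic diffeomorphism onto its image, the pushforward of Lebesgue measure on $U$ is absolutely continuous with respect to $\lambda_M$, with a smooth positive density on any compact subset, and small metric balls $B(x_0,r)\cap M$ are sandwiched between two Euclidean balls in $U$ with comparable radii. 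Hence regularity on $M$ for $\lambda_M$ on a small ball around $x_0$ will follow from regularity for Lebesgue measure on a small Euclidean ball around $u_0$, at the cost of absorbing the Jacobian of $\varphi$ and the shape distortion of balls into the constant $C$.

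The key observation is the following. For each $i\in[1,r]$, fix a basis of $V_i$ and let $\sigma^{(i)}_{jk}(u)$ denote the matrix coefficients of $u\mapsto s(\varphi(u))$ acting on $V_i$ in this basis; these are real-analytic functions on $U$. Then, for any $g\in G$ and any $v\in V_i$ with coordinates $v_k$, a direct expansion in the basis gives
\[
\norm{g\,s(\varphi(u))\,v}^2 = \sum_\ell \Bigl(\sum_{j,k} g^{(i)}_{\ell j}\,\sigma^{(i)}_{jk}(u)\,v_k\Bigr)^{\!2},
\]
which is a homogeneous quadratic polynomial in the $\sigma^{(i)}_{jk}(u)$ whose coefficients depend on $(g,v)$ but \emph{not} on $u$. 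Consequently the finite-dimensional space
\[
\cF=\Vect\bigl\{\sigma^{(i)}_{jk}\cdot\sigma^{(i)}_{j'k'}\ :\ i\in[1,r],\ 1\leq j,k,j',k'\leq\dim V_i\bigr\}
\]
of analytic functions on $U$ contains the pullback $u\mapsto\norm{g\,s(\varphi(u))\,v}^2$ for every choice of $g$, $i$ and $v$.

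It then remains to apply Proposition~\ref{analreg} to $\cF$ at the point $u_0$, obtaining constants $C_0,\alpha_0>0$ and a neighbourhood $W\ni u_0$ on which every element of $\cF$ is $(C_0,\alpha_0)$-regular for the Lebesgue measure. Transporting back through $\varphi$ and shrinking to a ball $B=B(x_0,r)$ such that $\varphi^{-1}(B\cap M)\subset W$, the function $x\mapsto\norm{g s(x) v}^2$ is $(C',\alpha_0)$-regular on $B$ for $\lambda_M$, uniformly in $g$, $i$ and $v$. The $(C',\alpha_0/2)$-regularity of $x\mapsto\norm{g s(x) v}$ itself is then immediate from the elementary identity
\[
\{x\in B\ |\ \abs{f(x)}\leq\eps\norm{f}_{B,\mu}\} = \{x\in B\ |\ \abs{f(x)}^2\leq\eps^2\norm{f^2}_{B,\mu}\}
\]
applied to $f(x)=\norm{g s(x) v}$, with the final constants $C=C'$ and $\alpha=\alpha_0/2$. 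The entire difficulty is concentrated in Proposition~\ref{analreg}; once it is granted, the only real step is the identification of the fixed finite-dimensional space $\cF$, and the rest is a careful but routine change of variables.
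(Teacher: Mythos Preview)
Your proof is correct and follows essentially the same approach as the paper: identify a single finite-dimensional space of analytic functions spanned by products of matrix coefficients of $s\circ\varphi$, observe that every $u\mapsto\norm{g\,s(\varphi(u))\,v}^2$ lies in it regardless of $g$, $i$, $v$, and apply Proposition~\ref{analreg}. You are in fact more careful than the paper on two points it leaves implicit: the transfer of regularity through the chart $\varphi$ (comparing $\lambda_M$ to Lebesgue measure and balls in $M$ to Euclidean balls), and the passage from $(C,\alpha)$-regularity of $\norm{\cdot}^2$ to $(C,\alpha/2)$-regularity of $\norm{\cdot}$.
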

\begin{proof}
Soit $U$ un ouvert de $\R^m$ et $\varphi:U\to M$ un paramétrage local de $M$ au voisinage de $x_0$ tel que $\lambda_M=\varphi_*\lambda$ soit l'image par $\varphi$ de la mesure de Lebesgue $\lambda$ sur $U$.
Supposons en outre que $\varphi(0)=x_0$.
Soit $\cF_0$ l'espace vectoriel engendré par les applications coefficients:
\[
\cF_0 = \Vect \{ u\mapsto\bracket{v,s(\varphi(u))w};\ v,w\in V_i,\ i\in[1,r] \},
\]
et
\[
\cF = \Vect\{ f_1f_2;\ f_1,f_2\in\cF_0\}.
\]
Ces espaces de fonctions analytiques sur $U$ sont de dimension finie, donc d'après la proposition~\ref{analreg}, il existe un voisinage $W$ de $0$ dans $\R^m$ et $C,\alpha>0$ tels que toute fonction $f\in\cF$ soit $(C,\alpha)$-régulière sur $W$ pour la mesure de Lebesgue.
Les applications de la forme $u\mapsto \norm{g s(\varphi(u)) v}^2$ sont des éléments de $\cF$, donc satisfont l'égalité souhaitée.
\end{proof}

Une première application de ces propriétés de régularité des variétés analytiques et des résultats du chapitre précédent est que l'exposant diophantien est constant presque sûrement sur une sous-variété analytique.

\begin{theorem}[Exposant diophantien d'une sous-variété analytique]
Soit $M$ une sous-variété analytique connexe de $X$.
Il existe une constante $\beta_\chi(M)$ telle que pour presque tout $x\in M$,
\(
\beta_\chi(x)=\beta_\chi(M).
\)
\end{theorem}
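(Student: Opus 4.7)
Par la correspondance drapeau-réseau (Proposition~\ref{dani}), l'identité $\beta_\chi(x)=1/(-\chi(Y)-\gamma_\chi(x))$ ramène l'énoncé à démontrer que la fonction $\gamma_\chi$ est $\lambda_M$-presque sûrement constante sur $M$. Le plan consiste à établir d'abord que $\gamma_\chi$ est essentiellement constante sur un petit voisinage de chaque point de $M$, puis à globaliser grâce à la connexité de $M$.

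Pour le volet local, on fixe $x_0\in M$ et l'on se donne une section analytique $\sigma:V\cap M\to G$ du quotient $G\to X=P\bcs G$ sur un voisinage $V\ni x_0$. Soit $\mu=\sigma_*\lambda_M$, mesure de Radon sur $G$ dont le support est $\sigma(V\cap M)$. D'après le corollaire~\ref{regsec}, $\mu$ est localement régulière en $s_0=\sigma(x_0)$. Le théorème~\ref{diagan}, appliqué à $\mu$ et au sous-groupe à un paramètre $(a_t)=(e^{tY})$ défini en~\eqref{at1}, fournit alors une boule $B'\subset G$ centrée en $s_0$ telle que, pour $\mu$-presque tout $s\in B'$,
\[
\lim_{t\to\infty}\frac{1}{t}\bigl(c(a_ts)-c(a_tS)\bigr)=0,\qquad S=B'\cap\Supp\mu.
\]
Ainsi les limites inférieure et supérieure de $\frac{1}{t}c(a_ts)$ sont $\mu$-presque sûrement déterminées par $S$ seul. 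Pour en déduire que $\gamma_\chi\circ\sigma^{-1}$ est essentiellement constante sur $B'$, on combine cette information avec la proposition~\ref{drapartiel2}: pour $\eps>0$ petit et $t>0$ grand, hors d'un ensemble de $\mu$-mesure exponentiellement petite, le vecteur de $V_\chi(\Z)$ réalisant le premier minimum de $a_tsV_\chi(\Z)$ est colinéaire à un vecteur rationnel fixé de la forme $\gamma_S^{-1}e_\chi$, donc appartient à $\tx$ et satisfait automatiquement la condition d'alignement $\norm{\pi^+(v)}\geq\norm{v}/2$ intervenant dans la définition de $r_\chi$. Il en résulte que $\limsup_t\frac{-1}{t}\log r_\chi(a_tsV_\chi(\Z))$ coïncide $\mu$-presque partout avec une valeur $\gamma(x_0)$ ne dépendant que de $S$.

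Pour la globalisation, l'application $x_0\mapsto\gamma(x_0)$ ainsi définie sur $M$ est localement constante: si $x_0'$ est dans la boule construite autour de $x_0$, les deux voisinages analytiques associés se recouvrent sur un ensemble de mesure positive, et la coïncidence presque sûre de $\gamma_\chi$ sur ces recouvrements force $\gamma(x_0)=\gamma(x_0')$. La connexité de $M$ entraîne alors que $\gamma$ est constante sur $M$, et l'on pose $\beta_\chi(M)=1/(-\chi(Y)-\gamma)$, d'où la constance presque sûre annoncée par la proposition~\ref{dani}.

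Le point délicat est le passage de la fonction $c$ à la fonction $r_\chi$: le théorème~\ref{diagan} ne contrôle la position asymptotique de l'orbite qu'à travers les covolumes successifs $\mu_i$ qui définissent $c$, alors que $r_\chi$ impose en outre l'appartenance au cône $\tx$ et un alignement avec la direction de plus haut poids $e_\chi$. C'est précisément pour pouvoir effectuer ce passage qu'on fait intervenir la proposition~\ref{drapartiel2}, qui identifie les vecteurs rationnels réalisant les premiers minima sous la forme $\gamma_S^{-1}e_i$: la contraction maximale du flot $(a_t)$ défini en~\eqref{at1} sur la direction $e_\chi$ garantit alors automatiquement la condition d'alignement requise, dès que $t$ est suffisamment grand et $s$ hors d'un ensemble négligeable.
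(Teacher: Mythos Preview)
Your reduction via the Dani correspondence and your use of Theorem~\ref{diagan} to obtain almost-sure control of $c(a_ts)$ by $c(a_tS)$ are both correct, as is the local-to-global step via connectedness. The gap is precisely where you yourself locate it: the passage from $c$ to $r_\chi$, and your proposed bridge via Proposition~\ref{drapartiel2} does not carry the weight you put on it.

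Your claim that the vector realizing the first minimum of $a_tsV_\chi(\Z)$ automatically satisfies the alignment condition $\norm{\pi^+(v)}\geq\frac{1}{2}\norm{v}$ is false. Writing a Siegel decomposition $a_ts=k_tb_tn_t\gamma_t$, this vector is, up to a bounded factor, $\chi(b_t)\,k_te_\chi$, and the alignment condition reads $\norm{\pi^+(k_te_\chi)}\geq\frac{1}{2}$. Nothing forces this: $k_t\in K$ may lie near a Weyl representative $w\notin W\cap P$, and then $we_\chi$ is a weight vector of weight $\chi^{w^{-1}}\neq\chi$, so $\pi^+(we_\chi)=0$. Your heuristic that the maximal contraction of $e_\chi$ by $(a_t)$ guarantees alignment is backwards: since $e_\chi$ is the \emph{most} contracted weight direction, the ratio $\norm{\pi^+(a_tv)}/\norm{a_tv}$ can only decrease along the flow, making alignment harder rather than automatic. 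In general $r_\chi(a_tsV_\chi(\Z))$ is not $\lambda_1(a_tsV_\chi(\Z))$ but a later successive minimum, and which one it is depends on data not encoded in $c(a_ts)$.

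The paper's proof proceeds by a genuinely different route. It applies non-divergence not in $G$ but in $\GL(V_\chi)$, thereby controlling the full profile $(\log\mu_1,\dots,\log\mu_{\dim V_\chi-1})$ of successive covolumes of $a_ts_xV_\chi(\Z)$ almost-surely independently of $x$. In parallel, for each jump index $i$ of the associated partial flag it controls, again uniformly in $x$ by regularity of suitable polynomial functions, a quantity $f_i$ measuring the distance of the $i$-th flag subspace to the hyperplane $E_\chi=\ker\pi^+$. The value of $r_\chi$ is then identified, up to $e^{O(\eps t)}$, with the successive minimum at the smallest index $j=j_{t,\eps}$ for which $f_j\leq 2\eps$, i.e.\ the first level at which the flag escapes $E_\chi$. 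Both ingredients are shown to be almost-surely determined by $M$ alone near $x_0$, and it is this two-parameter tracking (minima \emph{and} position relative to $E_\chi$) that yields the almost-sure constancy of $\gamma_\chi$. Your argument supplies only the first of these two ingredients.
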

\begin{proof}
Soit $x_0$ un point de $M$.
Pour $x\in M$ et $t>0$, notons
\[
c_t^x=\left(\log\mu_1(a_ts_x^{-1}V_\chi(\Z)),\dots,\log\mu_{\dim V_\chi}(a_ts_x^{-1}V_\chi(\Z))\right)
\]
la suite des logarithmes des covolumes successifs du réseau $a_ts_x^{-1}V_\chi(\Z)$.
Notons aussi $\cC$ l'ensemble des fonctions convexes sur le segment d'entiers $[0,\dim V_\chi]$ telles que $f(0)=f(V_\chi)=0$.
Le théorème~\ref{nd} appliqué dans le groupe $\GL(V_\chi)$ dans un voisinage adéquat $U$ de $x_0$ -- dont l'existence est assurée par le corollaire~\ref{regsec} -- montre qu'il existe une application
\[
\begin{array}{lll}
\R^+ & \to & \cC\\
t & \mapsto & c_t^M
\end{array}
\]
telle que pour tout $\eps>0$ et tout $t\in\R^+$,
\[
\lambda_M(\{x\in U\ |\ \norm{c_t^x-c_t^M}\geq t\eps\}) \leq e^{-t\alpha\eps}\lambda_M(U).
\]
En outre, d'après la remarque qui suit le théorème~\ref{nd}, si 
\[
I_t^M(\eps)=\{i_1(M,\eps,t),\dots,i_k(M,\eps,t)\} \subset[1,\dim V_\chi]
\]
désigne l'ensemble des points où la dérivée de $i\mapsto c_t^M(i)$ fait un saut de taille supérieure à $t\eps$, il existe un drapeau partiel 
\[
\{0\} < V_{t,\eps}^{i_1} < \dots < V_{t,\eps}^{i_k} < V_\chi(\Z)
\]
tel qu'avec probabilité supérieure à $1-Ce^{-t\alpha\eps}$, pour chaque $\ell=1,\dots,k$, les $i_\ell$ premiers minima successifs de $a_ts_x^{-1}V_\chi(\Z)$ sont atteints dans $V_{t,\eps}^{i_\ell}$.

Notons $E_\chi$ la somme des espaces de poids de $a_t$ différents de $e_\chi$.
Pour chaque $i\in I_t^M(\eps)$, il existe $f_i=f_i(t,\eps)\geq 0$ tel que
\[
\lambda_M(\{x\in U\ |\ e^{-t(f_i+\eps)} \leq d(a_ts_x^{-1}V_{t,\eps}^i,E_\chi)\leq e^{-t(f_i-\eps)}\})\geq (1- Ce^{\alpha\eps t})\lambda_M(U).
\]
\details{
En effet, si $\bv$ est un représentant de $V_{t,\eps}^i$ dans $\wedge^iV_\chi$ et $(\bu_j)_{j\in J}$ une base orthonormée de $\wedge^{i-1}E_\chi$, la distance $d(a_ts_x^{-1}V_{t,\eps}^i,E_\chi)$ est comparable au maximum $\max_{j\in J} \frac{\norm{\bu_j\wedge a_ts_x^{-1}\bv}}{\norm{a_ts_x^{-1}\bv}}$.
Or, par régularité des fonctions $x\mapsto\norm{\bu_j \wedge a_ts_x^{-1}\bv}$ et $x\mapsto\norm{a_ts_x^{-1}\bv}$, avec probabilité $1-e^{\alpha\eps t}$,
\[
\norm{\bu_j\wedge a_ts_x^{-1}\bv} \geq e^{-\eps t} \sup_{y\in U} \norm{\bu_j\wedge a_ts_y^{-1}\bv}
\]
et 
\[
\norm{a_ts_x^{-1}\bv} \geq e^{-\eps t}\sup_{y\in U} \norm{a_ts_y^{-1}\bv}.
\]
Par conséquent, si $f_i$ est choisi tel que $e^{-f_i t} = \max_j \frac{\sup_{y\in U} \norm{\bu_j\wedge a_ts_y^{-1}\bv}}{\sup_{y\in U} \norm{a_ts_y^{-1}\bv}}$, on a, avec probabilité supérieure à $1-Ce^{\alpha\eps t}$,
\[
d(a_ts_x^{-1}V_{t,\eps}^i,E_\chi) \leq \max_j \frac{\sup_{y\in U} \norm{\bu_j\wedge a_ts_y^{-1}\bv}}{\norm{a_ts_x^{-1}\bv}} \leq e^{-t(f_i-\eps)}
\]
et 
\[
d(a_ts_x^{-1}V_{t,\eps}^i,E_\chi) \geq \max_j \frac{\norm{\bu_j\wedge a_ts_x^{-1}\bv}}{\sup_y\norm{a_ts_y^{-1}\bv}} \geq e^{-t(f_i+\eps)}.
\]
}
Avec le lemme de Borel-Cantelli, ces inégalités montrent que pour presque tout $x$ au voisinage de $x_0$, pour tout $t>0$ suffisamment grand,
\[
\left\{\begin{array}{l}
\norm{c_t^x-c_t^M} \leq t\eps\\
\forall \ell=1,\dots,k,\quad c_t^x(i_\ell) = \log\norm{a_ts_x^{-1}V_{t,\eps}^{i_\ell}}\\
d(a_ts_x^{-1}V_{t,\eps}^{i_\ell},E_\chi) \in [e^{-t(f_i+\eps)},e^{-t(f_i-\eps)}]
\end{array}
\right.
\]

Notons que $f_{i_1}\geq f_{i_2} \geq\dots\geq f_{i_k}$.
Soit $j_{t,\eps}\in I_t^M(\eps)$ le plus petit indice tel que
\(
f_{j_{t,\eps}}\leq 2\eps.
\)
Montrons que pour presque tout $x$ au voisinage de $x_0$,
\[
\gamma_\chi(x) = \limsup_{t\to\infty} \frac{-1}{t}(c_t^M(j_{t,\eps})-c_t^M(j_{t,\eps}-1)) + O(\eps).
\]
Tout d'abord, l'inégalité $f_{j_{t,\eps}}\leq2\eps$ implique $d(a_ts_x^{-1}V_{t,\eps}^{j_{t,\eps}},E_\chi)\geq e^{-3t\eps}$, et comme $a_ts_x^{-1}V_{t,\eps}^{j_{t,\eps}}(\Z)$ admet une bonne%
\footnote{Comme cette base réalise les minima, elle est presque orthogonale.}
base constituée de vecteurs de norme au plus $e^{c_t^x(j_{t,\eps})-c_t^x(j_{t,\eps}-1)}=e^{c_t^M(j_{t,\eps})-c_t^M(j_{t,\eps}-1)+tO(\eps)}$, il existe un tel vecteur $v\in a_ts_x^{-1}V_{t,\eps}^{j_{t,\eps}}(\Z)$ tel que $\norm{\pi^+(v)}\geq e^{-tO(\eps)}\norm{v}$.
Cela implique
\[
r_\chi(a_{t(1-O(\eps))}s_x^{-1}V_\chi(\Z)) \leq e^{c_t^M(j_{t,\eps})-c_t^M(j_{t,\eps}-1)+O(\eps)t},
\]
puis
\[
\gamma_\chi(x) \geq \limsup_{t\to\infty} \frac{-1}{t}(c_t^M(j_{t,\eps})-c_t^M(j_{t,\eps}-1)) - O(\eps).
\]
Réciproquement, soit $V_{t,\eps}^{j_{t,\eps}'}$ le sous-espace qui précède $V_{t,\eps}^{j_{t,\eps}}$ dans le drapeau $V_{t,\eps}^{i_1}<\dots<V_{t,\eps}^{i_k}$.
Comme pour presque tout $x$, pour tout $t$ assez grand
\[
d(a_ts_x^{-1}V_{t,\eps}^{j_{t,\eps}'},E_\chi) \leq e^{-(f_{j-1}-\eps) t} 
\leq e^{-\eps t}
\]
aucun vecteur $v\in a_ts_x^{-1}V_{t,\eps}^{j_{t,\eps}'}$ ne saurait satisfaire $\norm{\pi^+(v)}\geq\frac{\norm{v}}{2}$.
Or tout vecteur $v\in a_ts_x^{-1}V_\chi(\Z)$ hors de $a_ts_x^{-1}V_{t,\eps}^{j_{t,\eps}'}$ vérifie
\[
\norm{v} \gg e^{c_t^M(j_{t,\eps}'+1)-c_t^M(j_{t,\eps}')}
\geq e^{c_t^M(j_{t,\eps})-c_t^M(j_{t,\eps}-1) + (j_{t,\eps}-j_{t,\eps}')t\eps},
\]
où la deuxième inégalité provient de ce que les sauts de la dérivée de $i\mapsto c_t^M(i)$ sont majorés par $t\eps$ sur tout l'intervalle $]j_{t,\eps}',j_{t,\eps}[$.
Par suite
\[
r_\chi(a_ts_x^{-1}V_\chi(\Z)) \geq e^{c_t^M(j_{t,\eps})-c_t^M(j_{t,\eps}-1)+tO(\eps)}
\]
puis
\[
\gamma_\chi(x) \leq \limsup_{t\to\infty}\frac{-1}{t}(c_t^M(j_{t,\eps})-c_t^M(j_{t,\eps}-1)) + O(\eps).
\]
Ainsi, il existe un voisinage $U$ de $x_0$ tel que pour tout $\eps>0$, il existe $\gamma_\eps$ tel que pour presque tout $x$ dans $U$, $\gamma_\chi(x)\in[\gamma_\eps-\eps,\gamma_\eps+\eps]$.
En faisant tendre $\eps$ vers $0$, cela montre que $\gamma_\chi(x)$ est constant presque partout au voisinage de tout point $x_0\in M$, et comme $M$ est connexe, $\gamma_\chi(x)$ est constant presque sûrement sur $M$.
Avec la proposition~\ref{dani}, on en déduit que $\beta_\chi(x)$ est constant presque sûrement sur $M$.
\end{proof}

\section{Un critère d'extrémalité}
\label{sec:extan}

La distance de Carnot-Carathéodory et la hauteur sur $X$ nous ont permis de définir au paragraphe~\ref{sec:ed} l'exposant diophantien $\beta_\chi(x)$ d'un point $x\in X(\R)$.
Nous avons vu en outre qu'il existe une constante $\beta_\chi(X)$ telle que pour presque tout $x\in X(\R)$, $\beta_\chi(x)=\beta_\chi(X)$.
Pour suivre la terminologie existant dans le cadre de l'espace projectif, nous posons la définition suivante.

\begin{definition}
Une mesure borélienne $\mu$ sur $X$ est dite \emph{extrémale} si pour $\mu$-presque tout $x$ dans $X$, $\beta_\chi(x)=\beta_\chi(X)$.
Dans le cas où $\mu$ est une mesure de Lebesgue sur une sous-variété analytique $M$, nous dirons aussi que $M$ est extrémale.
\end{definition}

Nous voulons énoncer une condition suffisante pour qu'une sous-variété analytique $M\subset X$ soit extrémale.
Pour cela, rappelons que si $\theta\subset\Pi$ est l'ensemble de racines simples associé sous-groupe parabolique $P$,
on définit alors un sous-groupe à un paramètre dans $G$ en posant
\begin{equation}\label{atf}
a_t=e^{tY}
\quad\mbox{où}\quad Y\in\ka\ \mbox{est défini par}\
\alpha(Y) = 
\left\{
\begin{array}{ll}
0 & \mbox{si}\ \alpha\in\theta\\
-1 & \mbox{si}\ \alpha\not\in\theta.
\end{array}
\right.
\end{equation}
Rappelons qu'une sous-variété de Schubert dans $X$ est une sous-variété de la forme 
\[
X_wg = \overline{PwB}g, \quad\mbox{avec}\quad w\in W_P\ \mbox{et}\ g\in G,
\]
et qu'une variété de Schubert $X_wg$ est dite instable pour le flot $a_t=e^{tY}$ s'il existe un poids dominant $\omega$ tel que $\omega(Y^w)<0$.
Les résultats de la partie précédente permettent de montrer le théorème suivant.

\begin{theorem}[Critère d'extrémalité pour les variétés analytiques]
\label{extan}
Soit $M$ une sous-variété analytique connexe de $X$.
Si $M$ n'est incluse dans aucune sous-variété de Schubert instable, alors $M$ est extrémale.
\end{theorem}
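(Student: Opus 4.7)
Le plan est de ramener l'énoncé à un cadre géométrique sur $G$ où les outils du chapitre~\ref{chap:orbites} s'appliquent directement, puis d'utiliser la correspondance drapeau-réseau pour conclure.

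Premièrement, je ramènerais le problème à une question sur une sous-variété analytique de $G$. Fixons un point $x_0\in M$ et choisissons, sur un voisinage ouvert $V\ni x_0$, une section analytique locale $s:V\to G$ du morphisme quotient $G\to X=P\bcs G$. Posons $\widetilde M=s(M\cap V)$: c'est une sous-variété analytique connexe de $G$ (quitte à restreindre $V$). L'observation clé est que, puisque la cellule de Bruhat $PwBg$ est invariante à gauche par $P$, une variété de Schubert $X_w g\subset X$ est précisément l'image de $PwBg$ par la projection $G\to X$, et l'on a l'équivalence
\[
M\cap V\subset X_w g \quad\Longleftrightarrow\quad \widetilde M\subset PwBg.
\]
Par conséquent, l'hypothèse selon laquelle $M$ n'est incluse dans aucune sous-variété de Schubert instable entraîne que $\widetilde M$ n'est incluse dans aucune sous-variété de Bruhat instable pour le flot $(a_t)$ défini en~\eqref{atf}.

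Deuxièmement, j'invoquerais le corollaire~\ref{semstab} appliqué à $\widetilde M\subset G$: pour $\lambda_{\widetilde M}$-presque tout $s\in\widetilde M$ et pour toute représentation rationnelle $V$ de $G$,
\[
\lim_{t\to+\infty}\frac{1}{t}\log\lambda_1(a_tsV(\Z))=0.
\]
En spécialisant à $V=V_\chi$ et en ramenant cette propriété vers $M\cap V$ via la section analytique $s$, on obtient que pour $\lambda_M$-presque tout $x\in M\cap V$, notant $s_x=s(x)$,
\[
\lim_{t\to+\infty}\frac{1}{t}\log\lambda_1(a_ts_xV_\chi(\Z))=0.
\]
Le lemme~\ref{daniextremal} permet alors d'en déduire directement $\beta_\chi(x)=\beta_\chi(X)$ pour $\lambda_M$-presque tout $x\in M\cap V$. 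Comme $M$ est connexe et séparable, on peut la recouvrir par une famille dénombrable de tels voisinages $V$, et l'on obtient que $M$ est extrémale.

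La difficulté principale, au moins formellement, n'est pas dans le raisonnement que je viens d'esquisser, mais se trouve déjà concentrée dans les résultats qu'il exploite: la régularité locale de $\lambda_{\widetilde M}$ (assurée par la proposition~\ref{analreg} de Kleinbock-Margulis et le corollaire~\ref{regsec}), et le théorème d'encadrement~\ref{diagstab} qui fournit $\frac{1}{t}c(a_ts)\to 0$ sous l'hypothèse de stabilité. Le seul point délicat à soigner dans la démonstration est la compatibilité entre variétés de Bruhat dans $G$ et variétés de Schubert dans $X$, laquelle se résout via l'invariance à gauche des cellules $PwBg$ par $P$, évoquée ci-dessus. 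On remarquera d'ailleurs que la note de bas de page de l'énoncé permet d'affaiblir l'hypothèse en n'excluant que les sous-variétés de Schubert \emph{instables}, ce qui est précisément ce que requiert le corollaire~\ref{semstab} appliqué au relevé $\widetilde M$.
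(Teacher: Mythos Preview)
Your proposal is correct and follows essentially the same route as the paper: lift $M$ locally to $G$ via an analytic section, check that the lift is not contained in any unstable Bruhat variety, invoke Corollary~\ref{semstab} to get $\frac{1}{t}\log\lambda_1(a_ts_xV_\chi(\Z))\to 0$, and conclude with Lemma~\ref{daniextremal}. The only cosmetic difference is that the paper phrases the intermediate step through the measure $\mu=s_*\lambda_M$ and the algebraic hull $\cH_\mu(s_0)$ before citing Corollary~\ref{semstab}, whereas you apply that corollary directly to the analytic lift $\widetilde M$; both arrive at the same conclusion for the same reason.
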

\begin{proof}
Soit $x_0\in M$ et $s:X\to G$ une section analytique locale au voisinage de $x_0$.
Notons $\lambda_M$ la mesure de Lebesgue sur $M$ et $\mu=s_*\lambda_M$ la mesure image de $\lambda_M$ par la section $s$.
D'après le corollaire~\ref{regsec}, la mesure $\mu$ est localement régulière au voisinage de $s_0=s(x_0)$.
Comme $M$ n'est incluse dans aucune sous-variété de Schubert instable, la variété $\cH_\mu(s_0)$ n'est incluse dans aucune sous-variété de Bruhat instable.
Le corollaire~\ref{semstab} s'applique donc: pour $\lambda_M$-presque tout $x$ au voisinage de $x_0$,
\[
\lim_{t\to\infty} \frac{1}{t} \lambda_1(a_ts(x)V_\chi(\Z)) = 0.
\]
Le lemme~\ref{daniextremal} permet d'en conclure que $\beta_\chi(x)=\beta_\chi(X)$.
Comme cela vaut pour $\lambda_M$-presque tout $x$ au voisinage d'un point $x_0\in M$ arbitraire, la variété $M$ est extrémale. 
\end{proof}

\begin{remark}
Une sous-variété $M\subset P\bcs G$ est dite \emph{dégénérée} si $M$ est contenue dans une variété de Schubert stricte.
\note{Une variété de Schubert $X_wg$ est stricte si et seulement si $w\neq w_0$, classe de l'élément de longueur maximale dans $W$.}
Naturellement, toute sous-variété incluse dans une variété de Schubert instable est dégénérée.
\note{Noter que la variété de Schubert maximale $X=P\bcs G$ n'est pas instable, puisque $w_0\cdot Y$ est dans la chambre de Weyl opposée à $\ka^-$, et donc $\omega(w_0\cdot Y)>0$ pour tout poids fondamental $\omega$.}
Le théorème ci-dessus implique donc que toute sous-variété analytique $M\subset X$ non dégénérée est extrémale.
Cependant, il peut exister des sous-variétés dégénérées extrémales; nous en verrons quelques exemples au chapitre~\ref{chap:exemples}.
\end{remark}

Avec une condition supplémentaire sur les coefficients qui définissent la sous-variété analytique $M$, on peut même améliorer ce critère.

\begin{theorem}[Critère d'extrémalité pour les variétés algébriques]
\label{extanalg}
Soit $M$ une sous-variété analytique connexe de $X$ dont l'adhérence de Zariski est définie sur $\QQ$.
Si $M$ n'est incluse dans aucune sous-variété de Schubert \emph{rationnelle} instable, alors $M$ est extrémale.
\end{theorem}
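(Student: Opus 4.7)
Le plan est de se ramener au corollaire~\ref{semstabalg} pour la mesure image d'une mesure de Lebesgue sur $M$ par une section locale algébrique, puis d'appliquer le lemme~\ref{daniextremal} comme dans la démonstration du théorème~\ref{extan}.

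Fixons un point $x_0\in M$. Comme la projection $G\to P\bcs G=X$ est un fibré principal de groupe $P$ sur la variété algébrique $X$, on peut choisir une section algébrique locale $s:V\to G$ définie sur $\Q$ au voisinage de $x_0$, où $V\subset X$ est un ouvert de Zariski. On pose $s_0=s(x_0)$ et on considère la mesure $\mu=s_*\lambda_M$ restreinte à une petite boule centrée en $x_0$. D'après le corollaire~\ref{regsec}, la mesure $\mu$ est localement régulière en $s_0$. De plus, comme l'adhérence de Zariski de $M$ est définie sur $\QQ$ et que $s$ est algébrique définie sur $\Q$, l'adhérence de Zariski de $s(M)$ l'est aussi. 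Il s'ensuit que $\cH_\mu(s_0)$, qui coïncide (à intersection avec une boule près) avec l'adhérence de Zariski de $s(M\cap U)$ pour $U$ un voisinage assez petit de $x_0$, est un sous-ensemble algébrique de $G$ défini sur $\QQ$.

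L'étape-clef consiste à vérifier que $\cH_\mu(s_0)$ n'est incluse dans aucune sous-variété de Bruhat \emph{rationnelle} instable. Supposons par l'absurde qu'il existe $w\in W_P$ et $\gamma\in G(\Q)$ avec $X_w\gamma$ instable, tels que $\cH_\mu(s_0)\subset X_w\gamma$. Alors pour tout $y$ assez proche de $x_0$ dans $M$, $s(y)\in X_w\gamma=\overline{PwB}\gamma$, et donc $y=Ps(y)$ appartient à la variété de Schubert rationnelle $\overline{PwB}\gamma\subset X$. Par analyticité et connexité de $M$, cette inclusion locale s'étend à $M$ tout entier, contredisant l'hypothèse du théorème. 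Ainsi $\cH_\mu(s_0)$ n'est incluse dans aucune sous-variété de Bruhat rationnelle instable.

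Il reste à conclure. Si $\cH_\mu(s_0)$ est irréductible, le corollaire~\ref{semstabalg} s'applique directement à la représentation $V_\chi$ et donne que pour $\lambda_M$-presque tout $x$ au voisinage de $x_0$,
\[
\lim_{t\to\infty}\frac{1}{t}\log\lambda_1(a_ts(x)V_\chi(\Z))=0.
\]
Sinon, on décompose $\cH_\mu(s_0)$ en composantes irréductibles $F_i$ (définies sur $\QQ$, quitte à regrouper par orbites de Galois), et on écrit $\mu=\sum_i\mu_i$, où chaque $\mu_i$ est localement régulière et $\cH_{\mu_i}(s_0)=F_i$ est irréductible et non incluse dans une variété de Bruhat rationnelle instable ; on applique le corollaire~\ref{semstabalg} à chaque $\mu_i$. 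Dans tous les cas, le lemme~\ref{daniextremal} fournit alors $\beta_\chi(x)=\beta_\chi(X)$ pour $\lambda_M$-presque tout $x$ au voisinage de $x_0$. Comme $M$ est connexe et se recouvre par de tels voisinages, on conclut que $M$ est extrémale.

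La principale difficulté réside dans le passage de l'hypothèse sur $M$ (non incluse dans aucune sous-variété de Schubert \emph{rationnelle} instable) à la propriété analogue pour l'objet algébrique $\cH_\mu(s_0)$ : c'est précisément l'hypothèse d'algébricité sur $\QQ$ de l'adhérence de Zariski de $M$ qui rend ce transfert possible, et qui fait la différence avec le théorème~\ref{extan}, dont la démonstration reposait sur le corollaire~\ref{semstab} (sans contrainte de rationalité sur les variétés de Bruhat, mais requérant aussi l'absence d'inclusion dans toute variété de Schubert instable).
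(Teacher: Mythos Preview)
Your proof is essentially the paper's own (which consists of a single sentence referring back to the proof of théorème~\ref{extan} with corollaire~\ref{semstabalg} in place of corollaire~\ref{semstab}), and you have correctly unpacked the details that one-line proof elides. One caveat: $\cH_\mu(s_0)$ is defined via linear spans in the fundamental representations (see \S\ref{sec:chs}) and need not coincide with the Zariski closure of $s(M\cap U)$; however, your conclusion that it is defined over $\QQ$ remains valid, since those linear spans $\cH_i(S)$ are themselves $\QQ$-defined once the Zariski closure of $s(M)$ is.
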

\begin{proof}
La démonstration est identique à celle du théorème~\ref{extan}, en appliquant le corollaire~\ref{semstabalg} au lieu du corollaire~\ref{semstab}.
\end{proof}

\begin{remark}
Ce critère n'est pas toujours optimal.
\note{On peut même avoir $\beta_\chi(x)<\beta_\chi(X)$.}
Pour certains choix de $X$, il existe des sous-variétés $M$ de dimension strictement positive telles que pour presque tout $x$ dans $M$, $\beta_\chi(x)=\beta_\chi(X)$.
Nous verrons toutefois plus loin quelques exemples où le critère est optimal: si $M$ est incluse dans une sous-variété de Schubert rationnelle instable, alors $M$ n'est pas extrémale.
C'est le cas par exemple lorsque $X=\PP^n$ est un espace projectif, ou lorsque $X=\Grass(\ell,d)$ est une variété grassmannienne.
\end{remark}

Plus généralement, lorsque l'adhérence de Zariski de $M$ est définie sur $\QQ$, on peut donner une formule pour l'exposant diophantien presque sûr d'un point de $M$.
C'est ce que nous détaillons au paragraphe suivant.

\section{Sous-variétés algébriques définies sur $\QQ$}

Ce paragraphe a pour but le théorème suivant, analogue du théorème~\ref{expalg}, qui permet de calculer l'exposant diophantien $\beta_\chi(x)$ pour un point $x$ arbitraire dans $X(\QQ)$.
Dans toute la suite, le groupe à un paramètre $(a_t)$ est celui défini ci-dessus en \eqref{atf}.

%
%

\begin{theorem}[Exposant diophantien d'une variété définie sur $\QQ$]
\label{expanalg}
Soit $M$ une sous-variété analytique connexe de $X$ dont l'adhérence de Zariski est définie sur $\QQ$.
Pour chaque $x\in X$, on note $s_x\in G$ un élément tel que $x=Ps_x$.
Soit $c_M$, $Q_M$, $\gamma_M$ les éléments donnés par le théorème~\ref{diaganalg} pour décrire l'orbite $(a_ts_x\Gamma)$ dans $\Omega$ lorsque $x$ est choisi aléatoirement sur $M$, et 
\[
X_w \supset M\gamma_M^{-1}, \quad w\in W_P,
\]
la plus petite variété de Schubert standard contenant $M\gamma_M^{-1}$.

Alors pour presque tout $x\in M$,
\[
\gamma_\chi(x) = -\bracket{\chi^w,p_{\ka^-}(Y^w)},
\]
et par conséquent,
\[
\beta_\chi(x) = \frac{1}{-\bracket{\chi,Y} + \bracket{\chi^w,p_{\ka^-}(Y^w)}}.
\]
\end{theorem}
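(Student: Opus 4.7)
The plan is to mirror the proof of Theorem~\ref{expalg}, substituting Theorem~\ref{diaganalg} (the almost sure analog for algebraic varieties) for Theorem~\ref{diagalg}. First, I would fix $x_0 \in M$ and a local analytic section $s$ of the projection $G \to P\bcs G$ near $x_0$, and set $\mu = s_*\lambda_M$. Corollary~\ref{regsec} ensures that $\mu$ is locally regular at $s_0 = s(x_0)$. Decomposing $M$ into irreducible components if necessary, the Zariski closure $\cH_\mu(s_0)$ is irreducible and defined over $\QQ$, and Theorem~\ref{diaganalg} then produces $c_M$, $\gamma_M \in G(\Q)$ and the smallest standard Bruhat variety $X_w$ containing $\cH_\mu(s_0)\gamma_M^{-1}$, with $c_M = p_{\ka^-}(Y^w)$. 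For almost every $x$ near $x_0$, the Siegel decomposition of $a_ts_x$ has diagonal part $e^{c_Mt+o(t)}$ and $Q_M\gamma_{t,x} = Q_M\gamma_M$ for all sufficiently large $t$.

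Next, I would observe that almost every $x \in M$ admits a Bruhat decomposition $s_x\gamma_M^{-1} = p_xwb_x$ with $p_x \in P$ and $b_x \in B$. Indeed, the cells $PvB$ with $PvB \subset X_w$ and $v \neq w$ all lie in the proper Zariski-closed boundary $X_w \setminus PwB$; by minimality of $X_w$, the irreducible analytic set $M\gamma_M^{-1}$ is not contained in this boundary, so the complement of $PwB$ has zero Lebesgue measure on $M$.

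The computation from the proof of Theorem~\ref{expalg} will then apply pointwise at each such $x$. Since $(a_tp_xa_{-t})$ converges as $t \to \infty$ (because $p_x$ lies in the parabolic $P$ associated to $(a_t)$), the asymptotics of $r_\chi(a_ts_xV_\chi(\Z))$ coincide up to a bounded shift in $t$ with those of $r_\chi(a_t^wb_x\gamma_MV_\chi(\Z))$. Writing the Levi decomposition $Q_M = L_MR_M$ and $b_x = \ell_xu_x$, the stability of $Q_M\gamma_M$ along the orbit (second clause of Theorem~\ref{diaganalg}) allows the Levi reduction from the proof of Theorem~\ref{expalg} to be carried out verbatim: the minima of $a_ts_xV_\chi(\Z)$ are controlled by the $L_M$-submodule $V_\chi' \subset V_\chi$ generated by $w^{-1}e_\chi$, which is semistable for $(a_t^w\ell_x)$. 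The weights of $V_\chi'$ all share the same projection onto $\ka_\infty = \bigcap_{\alpha\in\theta_M}\alpha^\perp$, equal to $p_\infty(\chi^w)$, and $p_\infty(Y^w) = c_M = p_{\ka^-}(Y^w)$ by Theorem~\ref{diaganalg}. This yields $\gamma_\chi(x) = -\bracket{\chi^w, p_{\ka^-}(Y^w)}$, and the expression for $\beta_\chi(x)$ follows from Proposition~\ref{dani}.

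The main obstacle will be the passage from the pointwise argument of Theorem~\ref{expalg} to an almost sure statement; in particular, one needs that the vector in $a_ts_x\gamma_M^{-1}V_\chi(\Z)$ realising the minimum defining $r_\chi$ actually lies in the semistable submodule $V_\chi'$, rather than being spoiled by a generic perturbation of the Levi part $\ell_x$. This is precisely what the almost sure stability of the partial flag $Q_M\gamma_M$ provided by Theorem~\ref{diaganalg} guarantees, together with Corollary~\ref{regsec}. Once these ingredients are in place, the Levi analysis requires no pointwise uniformity beyond the continuity of the Bruhat decomposition on the open cell $PwB$, and the remainder of the proof is a direct transcription of the corresponding step in Theorem~\ref{expalg}.
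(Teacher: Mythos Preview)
Your proposal is correct and follows exactly the approach the paper itself indicates: the paper's proof consists of a single sentence stating that the argument is identical to that of Theorem~\ref{expalg}, replacing Theorem~\ref{diagalg} by Theorem~\ref{diaganalg} and the data $c_\infty$, $\gamma_\infty$, $Q_\infty$ by $c_M$, $\gamma_M$, $Q_M$, with details left to the reader. You have in fact supplied more of those details than the paper does, including the observation that almost every $x\in M$ lies in the open cell $PwB\gamma_M$ and the explicit invocation of the Levi reduction, so your write-up could serve as the reader's completion of the paper's sketch.
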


\begin{proof}
La démonstration est presque identique à celle du théorème~\ref{expalg}, avec les changements qui s'imposent: il faut appliquer le théorème~\ref{diaganalg} au lieu du théorème~\ref{diagalg}, et remplacer les éléments $c_\infty$, $\gamma_\infty$, $P_\infty$ par $c_M$, $\gamma_M$, $P_M$, ...etc.
Les détails sont laissés au lecteur.
\end{proof}

Nous concluons ce chapitre en résumant quelques propriétés importantes de l'exposant diophantien d'un point $x$ choisi aléatoirement sur une variété algébrique définie sur $\QQ$.

\begin{corollary}
Soit $X$ une variété de drapeaux, munie de la distance de Carnot-Carathéodory usuelle et d'une hauteur $H_\chi$ associée au poids dominant $\chi$.
Soit $M\subset X$ une sous-variété analytique connexe dont l'adhérence de Zariski est définie sur $\QQ$.
\begin{enumerate}
\item L'exposant $\beta_\chi(M)$ est déterminé par l'intersection des sous-variétés de Schubert rationnelles contenant $M$. Et même, il existe une sous-variété de Schubert $X_w\gamma\supset M$ avec $\gamma\in G(\Q)$ telle que $\beta_\chi(M)=\beta_\chi(X_w\gamma)$. 
\item Pour tout $x\in M\cap X(\QQ)$ hors de toute sous-variété de Schubert rationnelle $X_w'\gamma'\not\supset M$, $\beta_\chi(x)=\beta_\chi(M)$.
\end{enumerate}
\end{corollary}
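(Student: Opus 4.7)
The plan is to derive both statements of the corollary as direct consequences of Theorem~\ref{expanalg} and the description of the minimal element $c_M$ given by Theorem~\ref{diaganalg}, combined with the analogous results for algebraic points (Theorems~\ref{expalg} and \ref{diagalg}).

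To prove (1), I would apply Theorem~\ref{expanalg} to $M$ to obtain $w \in W_P$ and $\gamma_M \in G(\Q)$ such that $M \subset X_w\gamma_M$ and
\[
\beta_\chi(M) = \frac{1}{-\bracket{\chi,Y} + \bracket{\chi^w, p_{\ka^-}(Y^w)}}.
\]
Theorem~\ref{diaganalg} identifies the quantity $p_{\ka^-}(Y^w)$ with the infimum (for the order $\prec$) of $p_{\ka^-}(Y^{w'})$ taken over all $w' \in W_P$ such that some rational $\gamma'$ satisfies $M \subset X_{w'}\gamma'$. Consequently $\beta_\chi(M)$ depends only on the family of rational Schubert subvarieties containing $M$, hence only on their intersection. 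The second sentence of (1) is then obtained by re-applying Theorem~\ref{expanalg} to (a connected component of the smooth locus of) $X_w\gamma_M$ itself: its Zariski closure is defined over $\Q$, and the smallest standard rational Schubert subvariety containing $X_w$ is $X_w$ itself, so the formula produces the same numerical value, giving $\beta_\chi(X_w\gamma_M) = \beta_\chi(M)$.

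For (2), I would fix $x \in M \cap X(\QQ)$ avoiding every rational Schubert subvariety that does not contain $M$. Any rational Schubert subvariety $X_{w'}\gamma'$ containing $x$ then automatically contains $M$, and the converse containment is trivial; hence the infimum
\[
c_\infty(x) = \inf\{p_{\ka^-}(Y^{w'}) : w' \in W_P,\ \exists \gamma' \in G(\Q),\ x \in X_{w'}\gamma'\}
\]
furnished by Theorem~\ref{diagalg} is equal to $c_M$. Theorem~\ref{expalg} then yields the formula for $\beta_\chi(x)$ with the same numerator and denominator as for $\beta_\chi(M)$, and therefore $\beta_\chi(x) = \beta_\chi(M)$.

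The only subtlety lies in the last step of (1): applying Theorem~\ref{expanalg} to $X_w\gamma_M$ itself requires a connected analytic submanifold with $\QQ$-rational Zariski closure, while a Schubert variety may be singular. This is harmless, since the singular locus is a proper algebraic subvariety of lower dimension, negligible with respect to the Lebesgue measure on $X_w\gamma_M$, and one may simply replace $X_w\gamma_M$ by a connected component of its smooth locus, whose Zariski closure is still $X_w\gamma_M$ and hence defined over $\Q$.
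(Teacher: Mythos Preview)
The paper states this corollary without proof, treating it as an immediate consequence of Theorems~\ref{expanalg}, \ref{diaganalg}, \ref{expalg} and \ref{diagalg}. Your derivation is the intended one and is essentially correct.

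One point deserves more care. In part~(2) you write that $c_\infty(x)=c_M$ and then conclude that Theorem~\ref{expalg} ``yields the formula for $\beta_\chi(x)$ with the same numerator and denominator as for $\beta_\chi(M)$''. But the formula in Theorem~\ref{expalg} involves $\bracket{\chi^w,p_{\ka^-}(Y^w)}$, which depends on the specific $w$ such that $s_x\gamma_\infty^{-1}\in PwB$, not merely on $c_\infty=p_{\ka^-}(Y^w)$. You need the extra observation that one may take $\gamma_\infty(x)=\gamma_M$ (legitimate since $c_\infty(x)=c_M$ forces $Q_\infty(x)=Q_M$), and that then $s_x\gamma_M^{-1}$ lies in the \emph{open} cell $PwB$, where $X_w$ is the smallest standard Bruhat variety containing $M\gamma_M^{-1}$. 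Indeed, if $s_x\gamma_M^{-1}\in Pw'B$ with $X_{w'}\subsetneq X_w$, then $x\in X_{w'}\gamma_M$, a rational Schubert subvariety not containing $M$, contrary to hypothesis. This is exactly the mechanism used in the proof of Theorem~\ref{diaganalg}(3) for the auxiliary point $s_1$; your $x$ plays the same role. With this remark added, the two applications of the exponent formula really do use the same $w$, and the conclusion follows.

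A similar remark applies to the second sentence of~(1): when you re-apply Theorem~\ref{expanalg} to the smooth locus of $X_w\gamma_M$, you should note that the rational Schubert subvarieties containing $X_w\gamma_M$ form a subfamily of those containing $M$, and that the procedure of Theorem~\ref{diaganalg} again singles out the same $w$ (since $X_w$ is already standard and minimal for itself). Your treatment of the singularities of $X_w\gamma_M$ is fine.
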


\chapter{Quelques exemples}
\label{chap:exemples}

Pour illustrer les théorèmes généraux démontrés dans ce mémoire, nous en décrivons maintenant quelques cas particuliers.
C'est souvent après l'étude approfondie de ces exemples importants qu'ont pu être démontrés les résultats plus abstraits sur les variétés de drapeaux générales.

\section{Espace projectif}

L'espace projectif $\PP^{d-1}$ constitue le cadre de l'approximation diophantienne classique.
Dans ce cadre, tous les résultats présentés dans ce mémoire étaient déjà connus.
Nous les rappelons toutefois brièvement, puisque notre objectif était justement de comprendre ces théorèmes de façon plus générale, à partir des groupes arithmétiques.

\bigskip

Si $x,y\in\PP^{d-1}$ sont engendrés respectivement par les vecteurs $u,v\in\R^d$ leur distance est donnée par la formule
\[
d(x,y)=\frac{\norm{u\wedge v}}{\norm{u}\norm{v}}.
\]
La hauteur sur $\PP^{d-1}(\Q)$ est la hauteur usuelle: si $v\in\PP^{d-1}(\Q)$ s'écrit en coordonnées homogènes $v=[v_1:\dots:v_d]$, où les $v_i$ sont des entiers premiers entre eux dans leur ensemble, alors
\[
H(v) = \max_{1\leq i\leq d} \abs{v_i}.
\]
Nous commençons par le célèbre théorème de Dirichlet \cite{dirichlet}, bien que ce résultat ne semble pas se généraliser aisément dans une variété drapeau arbitraire.

\begin{theorem}[Théorème de Dirichlet]
Pour tout $x\in\PP^{d-1}(\R)$,
\[
\beta(x)\geq 1+\frac{1}{d-1}.
\]
\end{theorem}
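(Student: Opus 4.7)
The plan is to reproduce the classical Dirichlet pigeonhole argument in an affine chart and then translate it back into projective coordinates. Since $\beta(x) = +\infty$ whenever $x \in \PP^{d-1}(\Q)$ (take $v = x$), I may assume $x$ is irrational; after permuting homogeneous coordinates, I write $x = [1 : x_1 : \cdots : x_{d-1}]$ with $|x_i| \leq 1$.

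For each integer $N \geq 1$, I consider the $N^{d-1}+1$ points $(\{q x_1\}, \ldots, \{q x_{d-1}\}) \in [0,1)^{d-1}$ for $q = 0, 1, \ldots, N^{d-1}$, and I partition the cube into $N^{d-1}$ sub-cubes of side $1/N$. By pigeonhole, two of these points lie in the same sub-cube, yielding integers $q \in [1, N^{d-1}]$ and $p_1, \ldots, p_{d-1}$ with $|q x_i - p_i| \leq 1/N$ for every $i$. Setting $v = [q : p_1 : \cdots : p_{d-1}] \in \PP^{d-1}(\Q)$, the bound $|p_i| \leq q|x_i| + 1$ gives $H(v) \asymp q$ for $N$ large, and expanding the formula $d(x, v) = \|u \wedge \bv\|/(\|u\|\|\bv\|)$ with $u = (1, x_1, \ldots, x_{d-1})$ and $\bv = (q, p_1, \ldots, p_{d-1})$ shows $d(x, v) \asymp \max_i |x_i - p_i/q|$. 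Combining these estimates with $q \leq N^{d-1}$ yields
\[
d(x, v) \ll \frac{1}{qN} \leq q^{-1-\frac{1}{d-1}} \ll H(v)^{-1-\frac{1}{d-1}}.
\]

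To produce infinitely many such approximations I let $N \to \infty$: a single $v$ cannot satisfy the Dirichlet inequality $|q x_i - p_i| \leq 1/N$ for arbitrarily large $N$, as that would force $x_i = p_i/q$ for each $i$, contradicting the irrationality of $x$. Hence $d(x, v) \ll H(v)^{-1 - 1/(d-1)}$ admits infinitely many rational solutions $v$, and $\beta(x) \geq 1 + \frac{1}{d-1}$.

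I do not anticipate any serious obstacle: the proof is essentially Dirichlet's original pigeonhole argument, read through the standard affine chart together with the Fubini--Study-type distance formula on $\PP^{d-1}$. The only bookkeeping point is to check that the integer $q$ indeed dominates the height, which is automatic once $|x_i| \leq 1$ and $N$ is large enough. As the paper itself points out, this result does not generalize easily to an arbitrary flag variety precisely because here the distance is Riemannian and the Dirichlet scaling matches the homogeneity of the usual projective height; in the general setting of Carnot--Carathéodory distances with weighted dilations, no such direct pigeonhole principle is readily available, and one would need either a Minkowski-type argument on the lattice $a_t \Delta_x$ or an entirely different geometric input.
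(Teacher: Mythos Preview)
Your proof is correct; it is precisely the classical Dirichlet pigeonhole argument. The paper does not supply its own proof here: in the examples chapter the result is simply recorded as the \enquote{célèbre théorème de Dirichlet}, with the remark that it does not seem to generalise easily to an arbitrary flag variety. The closest the paper comes is Exercice~\ref{stereo} in Chapter~\ref{chap:correspondance}, which sketches exactly your approach (Dirichlet's principle in the affine chart $\uu^-\simeq\R^{d-1}$, then bounding the height of the image rational point) for a general flag variety; part~5 of that exercise, specialised to $\ell=1$, recovers $\beta(x)\geq 1+\tfrac{1}{d-1}$ for $\PP^{d-1}$.

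One minor remark: you assert $H(v)\asymp q$, but strictly only $H(v)\leq q+1$ follows, since $(q,p_1,\dots,p_{d-1})$ need not be primitive. This is harmless: from $H(v)\ll q$ one still gets $d(x,v)\ll q^{-1-1/(d-1)}\ll H(v)^{-1-1/(d-1)}$, and the infinitely many distinct projective points $v$ necessarily have unbounded height.
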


Une simple application du lemme de Borel-Cantelli permet de montrer que presque tout $x$ dans $\PP^{d-1}(\R)$ vérifie l'égalité $\beta(x)=1+\frac{1}{d-1}$.
Le théorème de Khintchine \cite{khintchine} donne un critère simple sur une fonction $\psi:\R_+\to\R_+$ décroissante pour que l'inégalité
\begin{equation}\label{khinp}
d(x,v) \leq H(v)^{-1-\frac{1}{d-1}} \psi(H(v))
\end{equation}
ait une infinité de solutions lorsque $x$ est choisi suivant la mesure de Lebesgue sur $\PP^{d-1}$.

\begin{theorem}[Théorème de Khintchine]
Soit $\psi:\R_+\to\R_+$ une fonction décroissante.
\begin{itemize}
\item Si $\int_1^\infty\psi(u)^{d-1}\frac{\dd u}{u} <+\infty$, alors pour presque tout $x\in\PP^{d-1}(\R)$, l'inégalité \eqref{khinp} admet un nombre fini de solutions $v\in\PP^{d-1}(\Q)$.
\item Si $\int_1^\infty\psi(u)^{d-1}\frac{\dd u}{u} =+\infty$, alors pour presque tout $x\in\PP^{d-1}(\R)$, l'inégalité \eqref{khinp} admet une infinité de solutions $v\in\PP^{d-1}(\Q)$.
\end{itemize}
\end{theorem}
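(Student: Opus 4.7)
The plan is to realize $\PP^{d-1}$ as the flag variety $P\bcs \SL_d$, where $P$ is the maximal $\Q$-parabolic stabilizing the line $[e_1]$, and then to deduce the stated theorem as a direct specialization of Theorem~\ref{khintchine}. Two preliminary compatibilities must be checked: the distance $d(x,y) = \norm{u\wedge v}/(\norm{u}\norm{v})$ is bi-Lipschitz equivalent to the Carnot--Carathéodory distance of the excerpt (for $\PP^{d-1}$ the unipotent radical $U^-$ is abelian, so the Carnot--Carathéodory distance is in fact a Riemannian distance, and any two $\SL_d(\R)$-invariant Riemannian distances on $\PP^{d-1}(\R)$ are equivalent); and the usual height $H(v) = \max_i \abs{v_i}$ is equivalent to $H_\chi$ for the tautological representation $V_\chi = \R^d$ with highest weight $\chi = \varpi_1$ and the standard lattice $\Z^d$.

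Next I would identify the numerical data appearing in Theorem~\ref{khintchine}. The set $\theta \subset \Pi = \{\alpha_1,\dots,\alpha_{d-1}\}$ attached to $P$ is $\{\alpha_2,\dots,\alpha_{d-1}\}$, so the unique simple root outside $\theta$ is $\alpha_1$, and the element $Y\in\ka$ of \eqref{at1} is
\[
Y = \diag\bigl(-\tfrac{d-1}{d},\tfrac{1}{d},\dots,\tfrac{1}{d}\bigr).
\]
Then $\chi(Y) = \varpi_1(Y) = -\tfrac{d-1}{d}$, and Theorem~\ref{exposantps} yields
\[
\beta_\chi(\PP^{d-1}) = -\tfrac{1}{\chi(Y)} = 1+\tfrac{1}{d-1},
\]
which matches the exponent in the stated inequality \eqref{khinp}.

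Now I would compute $a_\chi$ and $b_\chi$ using Lemma~\ref{maximal}: since $P$ is a maximal parabolic and $V_\chi$ is the associated fundamental representation, $b_\chi = 1$ and $a_\chi = r_1$, where $\sum_{\alpha\in\Sigma_1^+} m_\alpha\alpha = r_1\omega_1$ is the sum (with multiplicities) of the positive roots whose decomposition contains $\alpha_1$. For $\SL_d$ these roots are $\alpha_1,\,\alpha_1+\alpha_2,\,\dots,\,\alpha_1+\cdots+\alpha_{d-1}$, each of multiplicity one, with sum $(d-1)\alpha_1 + (d-2)\alpha_2 + \cdots + \alpha_{d-1} = d\varpi_1$, so $a_\chi = d$. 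Consequently $a_\chi/\beta_\chi(\PP^{d-1}) = d\cdot\tfrac{d-1}{d} = d-1$ and $b_\chi - 1 = 0$, so the integrand of Theorem~\ref{khintchine} reduces to $\psi(u)^{d-1}\,\tfrac{\dd u}{u}$, exactly the integrand appearing in the classical statement.

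With all identifications in place, applying Theorem~\ref{khintchine} directly yields both the convergence and divergence parts of the classical Khintchine theorem. No part of this plan is a genuine obstacle, since all the real work (the mixing argument on $\Omega = \SL_d(\R)/\SL_d(\Z)$, the asymptotics of $m_\Omega(\Omega_r)$, and the Dani--type correspondence) is already absorbed into Theorem~\ref{khintchine}; the only thing to watch is that the above verification of $a_\chi$ and $b_\chi$ via Lemma~\ref{maximal} indeed applies, which requires $\chi$ to be a highest weight of a fundamental representation of $\SL_d$, and this is built in to the identification $\PP^{d-1}(\Q) \hookrightarrow \PP(V_\chi)(\Q)$ used to define $H_\chi$.
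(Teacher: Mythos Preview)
Your proposal is correct. The paper itself does not prove this statement independently: it is recalled in the examples chapter as Khintchine's classical theorem, and the paper's general Theorem~\ref{khintchine} is designed precisely to recover it. Your computation that $\beta_\chi(\PP^{d-1})=1+\tfrac{1}{d-1}$, $a_\chi=d$, $b_\chi=1$ via Lemma~\ref{maximal}, hence $a_\chi/\beta_\chi=d-1$ and no $\log\log$ factor, is exactly the specialization check the paper implicitly relies on (and which is carried out for the Grassmannian in the very next section of the examples chapter), so your approach is the intended one.
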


L'exposant diophantien d'un point $x\in\PP^{d-1}(\QQ)$ a été calculé par Schmidt \cite{schmidt_simultaneous} grâce à son théorème du sous-espace, qui généralise les résultats de Thue \cite{thue}, Siegel \cite{siegel} et Roth \cite{roth} pour $\PP^1(\QQ)$.

\begin{theorem}[Théorème de Thue-Siegel-Roth-Schmidt]
Si $x\in\PP^{d-1}(\QQ)$, alors $\beta(x)=1+\frac{1}{d_x}$, où $d_x$ est la dimension du plus petit sous-espace projectif rationnel contenant $x$.
\end{theorem}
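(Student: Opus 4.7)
La démonstration consiste à spécialiser le théorème~\ref{expalg} au cas $X=\PP^{d-1}=P\bcs\SL_d$, où $P$ est le sous-groupe parabolique maximal stabilisant la droite $[e_1]$. Pour ce parabolique, la racine simple hors de $\theta$ est $\alpha_1$, le plus haut poids de la représentation standard est $\chi=\varpi_1$, et l'élément définissant le flot diagonal vaut $Y=-\varpi_1$, ce qui redonne $-\bracket{\chi,Y}=\|\varpi_1\|^2=(d-1)/d$ et $\beta_\chi(X)=1+1/(d-1)$. Étant donné $x\in\PP^{d-1}(\QQ)$, on souhaite identifier la plus petite sous-variété de Schubert rationnelle contenant $x$, puis appliquer la formule du théorème~\ref{expalg}.

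La première étape est l'identification des sous-variétés de Schubert rationnelles de $\PP^{d-1}$ avec les sous-espaces projectifs rationnels. Le groupe de Weyl relatif $W_P=(W\cap P)\bcs W$ a $d$ éléments, représentés par les transpositions $w_k=(1,k+1)$ pour $k=0,1,\dots,d-1$. La variété de Schubert standard $X_{w_k}=\overline{Pw_kB}$ est le sous-espace projectif $\PP(\Vect(e_1,\dots,e_{k+1}))$ de dimension $k$, et tout sous-espace projectif rationnel de dimension $k$ dans $\PP^{d-1}$ s'écrit comme un translaté $X_{w_k}\gamma$ avec $\gamma\in\SL_d(\Q)$. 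Ainsi, la plus petite sous-variété de Schubert rationnelle contenant $x$ est le plus petit sous-espace projectif rationnel contenant $x$, de dimension $d_x$; l'élément de Weyl associé est $w=w_{d_x}$.

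Deuxième étape, le calcul de $p_{\ka^-}(Y^w)$: comme $Y=-\varpi_1=\frac{1}{d}(-(d-1),1,\dots,1)$ et que $w_{d_x}$ échange les positions $1$ et $d_x+1$, on a $Y^{w_{d_x}}=\frac{1}{d}(1,\dots,1,-(d-1),1,\dots,1)$ avec $-(d-1)/d$ en position $d_x+1$. En projetant sur $\ka^-=\{y_1\leq\cdots\leq y_d\}$ (identifiant $\ka$ aux matrices diagonales de trace nulle, cf. l'exercice après la proposition~\ref{inf}), on vérifie par la caractérisation de la projection orthogonale sur un cône convexe que
\[
p_{\ka^-}(Y^{w_{d_x}})=(a,\dots,a,\tfrac{1}{d},\dots,\tfrac{1}{d}),\qquad a=-\tfrac{d-d_x-1}{d(d_x+1)},
\]
avec $a$ en positions $1,\dots,d_x+1$. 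Comme dans la démonstration du corollaire~\ref{rang1}, l'identité $\chi=-Y$ et l'invariance de la forme de Killing donnent $\chi^w=-Y^w$, d'où $\gamma_\chi(x)=\|p_{\ka^-}(Y^w)\|^2=\frac{d-d_x-1}{d(d_x+1)}$.

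En substituant dans la formule du théorème~\ref{expalg}, la simplification
\[
\beta(x)=\frac{1}{-\chi(Y)-\gamma_\chi(x)}=\frac{1}{\frac{d-1}{d}-\frac{d-d_x-1}{d(d_x+1)}}=\frac{d(d_x+1)}{d\cdot d_x}=1+\frac{1}{d_x}
\]
conclut. La difficulté principale est l'identification des sous-variétés de Schubert de $\PP^{d-1}$ avec les sous-espaces projectifs rationnels et la détermination du représentant $w_{d_x}$ adéquat; une fois celle-ci établie, le reste est un calcul élémentaire. Les deux cas extrêmes sont cohérents avec l'énoncé: $d_x=d-1$ redonne la valeur générique $\beta_\chi(X)$, conformément au corollaire~\ref{rang1}, tandis que $d_x=0$ (i.e. $x\in\PP^{d-1}(\Q)$) donne formellement $\beta(x)=+\infty$, en accord avec le fait qu'un point rationnel admet lui-même comme approximation rationnelle arbitrairement bonne.
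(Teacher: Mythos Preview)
Your derivation is correct and is precisely the specialization of the paper's general theorem~\ref{expalg} to $X=\PP^{d-1}$, which is exactly the purpose of Chapter~\ref{chap:exemples}: the paper itself does not write out a proof here but simply cites Schmidt's original argument, so your computation via the general framework is the intended one. The one point you pass over quickly is why the $w$ produced by theorem~\ref{expalg} (via $\gamma_\infty$ from theorem~\ref{diagalg}) coincides with $w_{d_x}$; this follows because the $p_{\ka^-}(Y^{w_k})$ are strictly $\prec$-increasing in $k$, so the characterization $c_\infty=\inf\{p_{\ka^-}(Y^w):\exists\gamma\in G(\Q),\ x\in X_w\gamma\}$ forces $w=w_{d_x}$.
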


Le problème de l'approximation diophantienne sur les sous-variétés a été posé en premier par Mahler \cite{mahler} pour la courbe $[1:x:x^2:\dots:x^{d-1}]$ dans $\PP^{d-1}$.
Ayant résolu le problème de Mahler, Sprindzuk a conjecturé dans \cite{sprindzuk} le résultat suivant, démontré finalement par Kleinbock et Margulis \cite{kleinbockmargulis} en 1998.
Rappelons qu'une sous-variété $M\subset\PP^{d-1}(\R)$ est dite non dégénérée si elle n'est incluse dans aucun sous-espace projectif strict.

\begin{theorem}[Théorème de Kleinbock-Margulis]
Toute sous-variété analytique connexe non dégénérée dans $\PP^{d-1}(\R)$ est extrémale.
\end{theorem}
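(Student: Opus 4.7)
Le plan est d'identifier ce théorème comme un cas particulier du critère général d'extrémalité pour les sous-variétés analytiques (théorème~\ref{extan}), en vérifiant que pour la variété de drapeaux $\PP^{d-1}$, la condition de non dégénérescence classique (ne pas être incluse dans un sous-espace projectif strict) équivaut à la condition géométrique du théorème~\ref{extan} (ne pas être incluse dans une sous-variété de Schubert instable).

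D'abord, je réaliserais $\PP^{d-1}$ comme variété de drapeaux $P\bcs G$, où $G=\SL_d$ et $P$ est le $\Q$-sous-groupe parabolique maximal stabilisant la droite $[e_1]$, de sorte que la hauteur usuelle sur $\PP^{d-1}(\Q)$ coïncide avec la hauteur $H_\chi$ associée à la représentation fondamentale $V_\chi=\R^d$ de plus haut poids $\varpi_1$, et que la métrique de Carnot-Carathéodory associée soit simplement (à équivalence près) la métrique riemannienne usuelle -- car le radical unipotent de $P$ est abélien. Ensuite, j'identifierais les variétés de Schubert de $\PP^{d-1}$: la décomposition de Bruhat $G=\bigsqcup_{w\in W_P}PwB$ correspond, via le plongement $Pg\mapsto [g^{-1}e_1]$, à la décomposition en cellules selon la position de la droite relativement au drapeau standard; les adhérences $X_w\gamma$ sont précisément les sous-espaces projectifs rationnels de $\PP^{d-1}$. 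Une sous-variété $M\subset \PP^{d-1}(\R)$ est donc non dégénérée au sens classique si et seulement si elle n'est incluse dans aucune sous-variété de Schubert stricte.

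Il reste à vérifier que \emph{toute} sous-variété de Schubert stricte de $\PP^{d-1}$ est instable pour le sous-groupe à un paramètre $(a_t)$ défini en \eqref{atf}. Par le corollaire~\ref{rang1} appliqué avec le poids $\chi=\varpi_1$, on a $Y=-\varpi_1$, et pour toute classe $w\in W_P$ distincte de celle de l'identité (i.e. correspondant à une sous-variété de Schubert stricte), l'élément $Y^w=(\Ad w)^{-1}Y$ satisfait $\bracket{\varpi_1,Y^w}<0$: en effet, $W_P$ agit sur $Y=-\varpi_1$ en l'envoyant sur l'opposé d'un autre poids de la représentation standard, et parmi ceux-ci seul $-\varpi_1$ lui-même donne un produit scalaire positif avec $\varpi_1$. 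Autrement dit, tout $w\in W_P$ non trivial correspond à une cellule de Bruhat instable, ce qui montre que toute sous-variété de Schubert stricte de $\PP^{d-1}$ est instable.

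Une fois ces vérifications effectuées, il suffit d'appliquer le théorème~\ref{extan}: si $M$ est non dégénérée, alors elle n'est incluse dans aucune sous-variété de Schubert instable, donc elle est extrémale, i.e. pour presque tout $x\in M$, $\beta(x)=\beta(\PP^{d-1})=1+\frac{1}{d-1}$. Le point potentiellement le plus délicat est la vérification explicite de la correspondance entre cellules de Bruhat et position relative d'une droite par rapport au drapeau standard, ainsi que l'identification des adhérences de ces cellules aux sous-espaces projectifs; mais ce sont des calculs bien connus pour $\SL_d$, et une fois menés, le théorème découle sans effort supplémentaire des résultats généraux du mémoire.
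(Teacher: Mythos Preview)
Your overall strategy---realising $\PP^{d-1}$ as $P\bcs\SL_d$ with $P$ the stabiliser of $[e_1]$, identifying the Schubert varieties with projective subspaces, checking that all strict ones are unstable, and then invoking théorème~\ref{extan}---is exactly what the paper does (the remark at the end of the projective-space section states precisely this). So the approach matches.

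However, your instability verification contains two errors. First, the identity class in $W_P$ corresponds to the single point $\{[e_1]\}$ (since $PB=P$), not to all of $\PP^{d-1}$; the full variety corresponds to the representative of \emph{maximal} length in $W_P$. Thus ``strict Schubert variety'' means $w$ is not the longest representative, not $w\neq e$. Second, testing only the weight $\varpi_1$ does not suffice. Writing $Y=\big({-}\tfrac{d-1}{d},\tfrac1d,\ldots,\tfrac1d\big)$ in the standard diagonal coordinates on $\ka$, the vectors $Y^w$ for $w\in W_P$ are obtained by placing the negative entry in position $k\in\{1,\ldots,d\}$; when $k>1$ one finds $\omega_1(Y^w)=\tfrac1d>0$, so $\omega_1$ detects nothing (and your sign claim $\bracket{\varpi_1,-\varpi_1}>0$ is in any case false). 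The correct check is $\omega_k(Y^w)=\tfrac{k-d}{d}<0$ for every $k\leq d-1$, which shows each strict Schubert variety is unstable, while $k=d$ (the whole $\PP^{d-1}$) alone is semi-stable. With this correction, the deduction from théorème~\ref{extan} goes through.
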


Dans un travail en commun avec Emmanuel Breuillard \cite{subspace}, nous avons observé que les méthodes utilisées pour démontrer ces deux derniers théorèmes permettent de donner une formule pour l'exposant d'un point pris aléatoirement sur une sous-variété algébrique définie sur $\QQ$.

\begin{theorem}[Exposant d'une sous-variété définie sur $\QQ$]
Soit $M\subset\PP^{d-1}(\R)$ une sous-variété analytique connexe.
On suppose que le plus petit sous-espace projectif réel contenant $M$ est défini sur $\QQ$.
Alors, pour presque tout $x\in M$, $\beta(x)=1+\frac{1}{d_M}$, où $d_M$ est la dimension du plus petit sous-espace projectif rationnel contenant $M$.
En particulier, si $M$ est non dégénérée, alors $M$ est extrémale.
\end{theorem}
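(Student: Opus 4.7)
The plan is to derive this statement as a special case of Theorem~\ref{expanalg} applied to $X = \mathbb{P}^{d-1}$. First I would identify $\mathbb{P}^{d-1} = P\backslash \mathrm{SL}_d$, where $P$ is the maximal $\mathbb{Q}$-parabolic stabilizing the highest weight line $[e_1]$ in the standard representation $V_\chi = \mathbb{R}^d$ with $\chi = \varpi_1$. Since the unipotent radical of $P$ is abelian, the Carnot--Carathéodory distance is bi-Lipschitz to the usual projective distance, and $H_\chi$ matches the standard height.

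Next I would identify the relevant Schubert varieties. For $P$ maximal, the varieties $X_{w_k}\gamma$ with $\gamma \in \mathrm{SL}_d(\mathbb{Q})$ and $w_k$ running over minimal coset representatives of $W_P$ are precisely the rational projective subspaces of $\mathbb{P}^{d-1}$, indexed by their dimension $k \in \{0, 1, \ldots, d-1\}$. Theorem~\ref{expanalg} then furnishes a rational projective subspace $S \supseteq M$ (of dimension $k$ determined by minimality in that theorem's argument) such that for a.e.\ $x \in M$, $\beta(x) = 1/(-\langle\chi,Y\rangle + \langle\chi^{w_k}, p_{\mathfrak{a}^-}(Y^{w_k})\rangle)$.

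The remaining step is an explicit computation. With $Y = \mathrm{diag}(-\tfrac{d-1}{d}, \tfrac{1}{d}, \ldots, \tfrac{1}{d})$ one has $\chi(Y) = -\tfrac{d-1}{d}$. For $w_k$ the permutation swapping positions $1$ and $k+1$, the conjugate $Y^{w_k}$ places the entry $-\tfrac{d-1}{d}$ at slot $k+1$. Via the convex-function identification of $\mathfrak{a}^-$ for $\mathrm{SL}_d$ (recalled after Proposition~\ref{inf}), the projection $p_{\mathfrak{a}^-}(Y^{w_k})$ is the piecewise-linear convex minorant vanishing at $0$ and $d$ with a single vertex at $k+1$ of value $-\tfrac{d-k-1}{d}$. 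One computes $\chi^{w_k}(p_{\mathfrak{a}^-}(Y^{w_k})) = -\tfrac{d-k-1}{d(k+1)}$ and hence $-\chi(Y) + \langle\chi^{w_k}, p_{\mathfrak{a}^-}(Y^{w_k})\rangle = \tfrac{k}{k+1}$, which yields $\beta(x) = (k+1)/k$. Identifying $k = d_M$ (the dimension of the smallest rational projective subspace containing $M$) gives the desired value $1 + 1/d_M$.

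The main obstacle is the gap between the hypothesis here (smallest \emph{real} projective subspace $S_\mathbb{R}$ containing $M$ is defined over $\overline{\mathbb{Q}}$) and the hypothesis of Theorem~\ref{expanalg} (Zariski closure of $M$ itself defined over $\overline{\mathbb{Q}}$), which need not coincide when $M$ is a transcendental subvariety of $S_\mathbb{R}$. To bridge this, I would work with the algebraic hull $\mathcal{H}_\mu(s_0)$ of Section~\ref{sec:chs} rather than the Zariski closure of $M$: choosing a section $s: X \to G$ near $x_0 \in M$ and setting $\mu = s_*\lambda_M$, the image in $\mathbb{P}^{d-1}$ of $\mathcal{H}_\mu(s_0)$ is contained in $S_\mathbb{R}$, so $\mathcal{H}_\mu(s_0)$ is an irreducible algebraic subset of $\mathrm{SL}_d$ defined over $\overline{\mathbb{Q}}$. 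Theorem~\ref{diaganalg} then applies directly to control the asymptotics of $c(a_t s)$ for $\lambda_M$-a.e.\ $s$, and the proof of Theorem~\ref{expanalg} can be retraced with $\mathcal{H}_\mu(s_0)$ in place of the Zariski closure of $M$, producing the same Schubert-variety data and the same formula.
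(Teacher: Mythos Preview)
The paper does not give its own proof of this statement: it is quoted from \cite{subspace} and placed among the examples that the general machinery of Theorems~\ref{diaganalg} and~\ref{expanalg} is meant to recover for $X=\PP^{d-1}$. Your plan to specialise Theorem~\ref{expanalg} is therefore exactly the intended route, and your explicit computation is correct: with $w_k$ any representative of the coset in $W_P$ with $w_k^{-1}(1)=k+1$, one gets $-\chi(Y)+\langle\chi^{w_k},p_{\ka^-}(Y^{w_k})\rangle=k/(k+1)$, and the Schubert variety produced by Theorem~\ref{diaganalg} is indeed the smallest rational projective subspace containing $M$, so $k=d_M$.

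You are right to flag the mismatch between ``projective span over $\QQ$'' and ``Zariski closure over $\QQ$'', and your instinct to work with $\cH_\mu(s_0)$ is the good one: it is $\cH_\mu(s_0)$, not the Zariski closure of $M$, that actually enters Theorem~\ref{diaganalg}. But the step ``its image lies in $S_\R$, so $\cH_\mu(s_0)$ is defined over $\QQ$'' is a non sequitur. What is missing is the choice of a $\Q$-rational section, for instance the unipotent section $s\bigl([1:x_2:\dots:x_d]\bigr)=\begin{pmatrix}1&0\\-x&I_{d-1}\end{pmatrix}$ on the standard chart. With that choice $s(x)-I$ has rank one, so $\wedge^i s(x)=I+L_i(x)$ is \emph{affine} in $x$; hence each linear span $\cH_i(s(M))\subset\End V_i$ depends only on the affine span of $M$ in the chart, i.e.\ only on $S_\R$. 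Since $S_\R$ is defined over $\QQ$ and the section over $\Q$, the $\cH_i(s(M))$ are $\QQ$-subspaces of $\End V_i$, and one checks that the component of $\cH_\mu(s_0)$ through $s_0$ is exactly $s(S_\R\cap\{x_1\neq0\})$ --- irreducible and defined over $\QQ$. Theorem~\ref{diaganalg} then applies and the rest of your argument goes through unchanged.
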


\begin{remark}
Dans l'espace projectif, les sous-variétés de Schubert ne sont autres que les sous-espaces projectifs.
Toute sous-variété de Schubert stricte est instable pour le flot $(a_t)$, et par conséquent, si une sous-variété $M$ n'est incluse dans aucune sous-variété de Schubert instable, elle est non dégénérée.
\end{remark}

\section{Quadriques}

À notre connaissance, ce sont Kleinbock et Merrill \cite{kleinbockmerrill} qui ont obtenu les premiers résultats remarquables pour l'approximation diophantienne intrinsèque sur les quadriques, en démontrant pour une sphère de dimension arbitraire les analogues des théorèmes de Dirichlet et de Khintchine.
Dans un article avec Fishman et Simmons \cite{fkms}, ils ont ensuite généralisé leurs résultats à une quadrique arbitraire.
Pour une introduction élémentaire à ces problèmes, on renvoie à l'article \cite{simplex}.

Dans ce cadre $X$ désigne une quadrique projective non singulière, i.e. l'ensemble des droites isotropes pour une forme quadratique rationnelle $Q$ non dégénérée sur $\R^d$.
La distance et la hauteur sur $X$ sont obtenues par restriction de la distance et de la hauteur usuelles sur $\PP^{d-1}$.
On suppose en outre que $X$ contient un point rationnel; par projection stéréographique, cela implique en fait que $X(\Q)$ est dense dans $X(\R)$.

\begin{theorem}[Fishman-Kleinbock-Merrill-Simmons]
Soit $X$ une quadrique rationnelle projective non singulière contenant un point rationnel.
Pour presque tout $x\in X(\R)$, $\beta(x)=1$.
\end{theorem}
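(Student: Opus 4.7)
The plan is to realize $X$ as a flag variety and apply the general framework. Fix a rational point of $X$ coming from an isotropic vector $v_0\in \Q^d$, and set $G=\mathrm{SO}(Q)$, a semisimple $\Q$-group of $\Q$-rang $r\geq 1$ by hypothesis. By Witt's theorem, $G(\Q)$ acts transitively on the rational isotropic lines, so setting $P=\mathrm{Stab}_G([v_0])$ identifies $X=P\bcs G$ as a flag variety over $\Q$. The parabolic $P$ is maximal: it corresponds to removing from a basis $\Pi=\{\alpha_1,\dots,\alpha_r\}$ of the $\Q$-root system of $(G,T)$ the unique simple root $\alpha_1$ such that $\theta=\Pi\setminus\{\alpha_1\}$ defines $P=P_\theta$.

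Next, I would check that the embedding $X\hookrightarrow \PP^{d-1}$ coming from $Q$ coincides with the one furnished by the standard representation $V=\R^d$ of $G$, which is irreducible and generated by the highest weight line $\R v_0$ of highest weight $\chi=\varpi_1$. Consequently, the usual height $H$ on $X(\Q)$ agrees, up to equivalence, with the height $H_\chi$ of the framework. Theorem \ref{exposantps} then supplies a constant $\beta_\chi(X)\in\Q$ such that $\beta_\chi(x)=\beta_\chi(X)$ for Lebesgue-almost every $x\in X(\R)$, together with the explicit formula $\beta_\chi(X)=-1/\chi(Y)$ where $Y\in\ka$ is determined by $\alpha(Y)=0$ for $\alpha\in\theta$ and $\alpha_1(Y)=-1$. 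Writing $Y$ in the basis $(Y_1,\dots,Y_r)$ dual to $\Pi$ gives $Y=-Y_1$, hence $\chi(Y)=\varpi_1(-Y_1)=-1$ and therefore $\beta_\chi(X)=1$.

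To finish it suffices to check that the distance used in the statement of the theorem matches the Carnot--Carathéodory distance with respect to which $\beta_\chi(X)$ was computed. When the Witt index of $Q$ over $\Q$ equals $1$ (the sphere case, $G=\mathrm{SO}(n,1)$), the unipotent radical of $P$ is abelian, so the CC distance is bi-Lipschitz to the restriction of the Fubini--Study distance, and the argument is complete.

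The main obstacle is precisely the case of Witt index $\geq 2$, where the CC distance on $X$ is genuinely sub-Riemannian (the unipotent radical of $P$ is of Heisenberg type by Proposition \ref{strat}). Here one must argue that the Euclidean exponent is still $1$. The cleanest route I see bypasses the CC machinery and proceeds in two independent steps: \emph{(i)} the upper bound $\beta(x)\leq 1$ almost everywhere follows from the counting estimate of Theorem \ref{hborne}, which for the standard height on a quadric with a rational point gives $N_\chi(T)\sim c\,T^{d-2}$ (so $u_\chi=d-2$, $v_\chi=1$), combined with Borel--Cantelli; \emph{(ii)} the lower bound $\beta(x)\geq 1$ for every $x\in X(\R)\setminus\{[v_0]\}$ comes from stereographic projection away from the rational point $[v_0]$, which is a birational map defined over $\Q$, locally bi-Lipschitz for the Euclidean distance, sends rationals to rationals with comparable heights, and reduces the problem to the classical Dirichlet theorem on $\R^{d-2}$. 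The consistency of these two bounds with the framework's computation $\beta_\chi(X)=1$ is the confirmation that the Euclidean and CC exponents coincide almost everywhere on the quadric, as stated.
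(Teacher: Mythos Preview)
Your core strategy—realizing $X$ as $P\backslash G$ with $G=\mathrm{SO}(Q)$ and $P$ the stabilizer of a rational isotropic line, then invoking Theorem~\ref{exposantps}—is exactly how this result falls out of the paper's framework (the paper itself only cites the theorem, attributing it to \cite{kleinbockmerrill,fkms}, without reproving it). Your computation $\chi(Y)=-1$ is correct: with $\chi=\varpi_1=\epsilon_1$ and $Y$ determined by $\alpha_i(Y)=-\delta_{i1}$, one finds $\epsilon_j(Y)=0$ for $j\geq 2$ and $\epsilon_1(Y)=-1$, in both the $B_r$ and $D_r$ relative root systems.

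Where you go wrong is the ``main obstacle''. The unipotent radical of $P$ is \emph{always abelian} here, regardless of the Witt index. Every positive root of $B_r$ or $D_r$ containing $\alpha_1$ has $\alpha_1$-coefficient exactly $1$ (the highest root is $\epsilon_1+\epsilon_2=\alpha_1+2\alpha_2+\cdots$ in type $B_r$, and similarly in $D_r$), so the stratification of Proposition~\ref{strat} is trivial: $\uu^-=m_1$. Concretely, the unipotent radical is parametrized by $w\in v_0^\perp/\R v_0\cong\R^{d-2}$ via $u_w\colon v_0\mapsto v_0$, $f_0\mapsto f_0+w-\tfrac{1}{2}Q(w)v_0$, and one checks $u_w u_{w'}=u_{w+w'}$. (You may be thinking of the unitary case $\mathrm{SU}(n,1)$, where the radical is genuinely Heisenberg.) Consequently the Carnot--Carathéodory distance on $X$ is bi-Lipschitz equivalent to the restriction of the Fubini--Study metric in all cases, and the direct application of Theorem~\ref{exposantps} already gives $\beta(x)=1$ almost everywhere for the Euclidean distance. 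Your workaround via point counting and stereographic projection is correct but unnecessary.
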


Soit $X_0\subset\PP^3$ la quadrique définie par l'équation $x_1x_2-x_3x_4$.
La généralisation du théorème de Khintchine aux quadriques nécessite de distinguer deux cas, suivant que la quadrique $X$ est rationnellement isomorphe à $X_0$, ou non.

\begin{theorem}[Fishman-Kleinbock-Merrill-Simmons]
Soit $X$ une quadrique rationnelle projective non singulière de dimension $n$ contenant un point rationnel, et $\psi:\R^+\to\R^+$ une fonction décroissante.
Pour $x\in X(\R)$ on considère l'inégalité
\begin{equation}\label{khinq}
d(x,v) \leq H(v)^{-1} \psi(H(v)).
\end{equation}
Si $X$ n'est pas rationnellement isomorphe à $X_0$, alors:
\begin{itemize}
\item si $\int_1^\infty \psi(u)^n \frac{\dd u}{u} = +\infty$, l'inégalité \eqref{khinq} admet une infinité de solutions $v\in X(\Q)$ pour presque tout $x\in X(\R)$;
\item si $\int_1^\infty \psi(u)^n \frac{\dd u}{u} < +\infty$, l'inégalité \eqref{khinq} n'a qu'un nombre fini de solutions $v\in X(\Q)$ pour presque tout $x\in X(\R)$.
\end{itemize}
Si $X$ est rationnellement isomorphe à $X_0$, alors:
\begin{itemize}
\item si $\int_1^\infty \psi(u)^n (\log\log u) \frac{\dd u}{u} = +\infty$, l'inégalité \eqref{khinq} admet une infinité de solutions $v\in X(\Q)$ pour presque tout $x\in X(\R)$;
\item si $\int_1^\infty \psi(u)^n (\log\log u) \frac{\dd u}{u} < +\infty$, l'inégalité \eqref{khinq} n'a qu'un nombre fini de solutions $v\in X(\Q)$ pour presque tout $x\in X(\R)$.
\end{itemize}
\end{theorem}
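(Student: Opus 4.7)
The plan is to deduce the statement from the general Khintchine theorem for flag varieties (Theorem \ref{khintchine}) by recognizing the quadric $X$ as a flag variety $P\bcs G$. I would first identify $G$ with the (connected component of the) spin group $\mathrm{Spin}(Q)$ of the quadratic form defining $X$, and $P$ with the stabilizer in $G$ of an isotropic line corresponding to a rational point of $X$, whose existence is guaranteed by hypothesis. Under the embedding $X \hookrightarrow \PP^{d-1}$ coming from the standard representation of $G$ on $\R^d$, the height inherited from $\PP^{d-1}$ coincides, up to equivalence, with $H_\chi$, where $\chi$ is the highest weight of this standard representation. One also checks that the Riemannian distance on $X \subset \PP^{d-1}$ is bi-Lipschitz equivalent to the Carnot-Carathéodory distance of Section \ref{sec:cc} — for quadrics the unipotent radical of $P$ is either abelian (when $\Q$-rank one) or a Heisenberg group (when $\Q$-rank at least two), and in both situations the two metrics are comparable on small balls.

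With this identification in place, Theorem \ref{khintchine} applies and reduces the proof to computing the three invariants $\beta_\chi(X)$, $a_\chi$ and $b_\chi$. The value $\beta_\chi(X) = 1$ is exactly the content of the Dirichlet-type theorem for quadrics stated just above (so that $\psi^{a_\chi/\beta_\chi(X)} = \psi^{a_\chi}$), and the constants $a_\chi$, $b_\chi$ are read off from Proposition \ref{asymp} as
\[
a_\chi = \min_{1 \leq i \leq r} \frac{\rho(Y_i)}{\chi(Y_i)}, \qquad b_\chi = \#\{i : \rho(Y_i)/\chi(Y_i) = a_\chi\}.
\]
To match the integrand in the statement, I need to show that in every case $a_\chi = n$, and that $b_\chi = 1$ outside the exceptional case while $b_\chi = 2$ when $X \cong X_0$.

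The computation of $b_\chi$ splits along the $\Q$-structure of $G$. When $X$ is not rationally isomorphic to $X_0$, the group $G = \mathrm{Spin}(Q)$ is $\Q$-simple and $P$, as the stabilizer of a single isotropic line, is a \emph{maximal} $\Q$-parabolic (this is true whether the Witt index of $Q$ over $\Q$ equals one, in which case $P$ is also minimal, or is at least two, in which case $P$ strictly contains a minimal parabolic). Lemma \ref{maximal} then immediately gives $b_\chi = 1$ and expresses $a_\chi$ as the proportionality constant $r_1$ such that $\sum_{\alpha \in \Sigma_1^+} m_\alpha \alpha = r_1 \chi$; a direct inspection of the root systems $B_r$, $C_r$, $D_r$ arising from $\mathrm{Spin}(Q)$ yields $r_1 = n$. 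In the remaining case $X \cong X_0$, the group $G$ is isogenous over $\Q$ to $\SL_2 \times \SL_2$ (not $\Q$-simple), and $P$ corresponds to the Borel subgroup $B_1 \times B_2$, which is a \emph{minimal} $\Q$-parabolic in a $\Q$-group of rank two. Applying (a minor variant of) Lemma \ref{somrac} with $\theta = \emptyset$, or computing directly with $\rho = \alpha_1 + \alpha_2$ and $\chi = \varpi_1 + \varpi_2$, one finds that the two indices $i = 1, 2$ both realize the minimum, giving $b_\chi = 2$; the Segre factorisation of the natural height on $X_0 = \PP^1 \times \PP^1$ then gives $a_\chi/\beta_\chi(X) = 2 = n$, producing the extra $\log\log u$ factor.

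The main technical obstacle will be the case-by-case verification that the exceptional case $b_\chi = 2$ corresponds exactly to $X \cong X_0$ over $\Q$, and in particular that $\mathrm{Spin}(Q)$ fails to be $\Q$-simple only in this situation. This relies on the classification of four-variable rational quadratic forms of positive Witt index and on the accidental isomorphism reflecting $\mathfrak{so}_{2,2} \cong \mathfrak{sl}_2 \oplus \mathfrak{sl}_2$; all other quadrics containing a rational point give rise to an absolutely simple, and in particular $\Q$-simple, spin group, for which Lemma \ref{maximal} unambiguously applies.
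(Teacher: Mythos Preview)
Your approach is exactly what the paper does: the theorem is stated without a self-contained proof, and the subsequent remark explains it as a special case of Theorem~\ref{khintchine}, with the constants $a_\chi,b_\chi$ identified via Lemmas~\ref{maximal} and~\ref{somrac} according to whether $P$ is a maximal or a minimal $\Q$-parabolic.

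Two small corrections to your write-up, neither of which breaks the argument. First, the unipotent radical of $P$ in $\SO(Q)$ is always abelian, never Heisenberg: in the restricted root system (type $B_r$, $D_r$, or $A_1$) every root in $\uu^-$ has $\alpha_1$-coefficient exactly~$1$, so $\uu^-=m_1$ and the Carnot--Carathéodory metric on a quadric is genuinely Riemannian in every case. This is fortunate, because had the radical actually been Heisenberg your bi-Lipschitz claim would fail. Second, for quadric surfaces ($n=2$) the group $\mathrm{Spin}(Q)$ is never absolutely simple, since over $\overline{\Q}$ it becomes $\SL_2\times\SL_2$; what you need, and what your argument actually uses, is only $\Q$-simplicity. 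This does hold whenever $X\not\cong X_0$: an isotropic four-variable form of Witt index~$1$ necessarily has non-square discriminant, so $\mathrm{Spin}(Q)$ is a restriction of scalars of $\SL_1$ of a quaternion algebra and hence $\Q$-simple.
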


\begin{remark}
Écrivons $X=P\bcs G$, où $G=\SO_Q$ est le groupe orthogonal associé à la forme quadratique $Q$, et $P$ le sous-groupe parabolique stabilisateur d'une droite rationnelle isotrope dans la représentation standard.
Si $X\not\sim X_0$, le groupe $G$ est $\Q$-simple et le sous-groupe parabolique $P$ est maximal: $\rang_{\Q} P = \rang_{\Q} G -1$.
\note{Faire le calcul explicite du sous-groupe parabolique $P$.}
En revanche, si $X\sim X_0$, on a un isomorphisme $G\simeq\SO(2,2)\simeq\SO(2,1)\times\SO(2,1)$, et $\rang_{\Q} P = \rang_{\Q} G - 2$.
Avec les résultats généraux du chapitre~\ref{chap:khintchine}, et en particulier les lemmes~\ref{maximal} et \ref{somrac}, cette différence explique la distinction de cas dans le théorème ci-dessus.
\end{remark}

Dans un article récent \cite{quadriques}, nous avons poursuivi les travaux de Fishman, Kleinbock, Merrill et Simmons en étudiant l'approximation diophantienne des points algébriques et des quantités dépendantes sur les quadriques.
Cela nous a permis en particulier d'établir le résultat suivant.

\begin{theorem}[Exposant diophantien d'une sous-variété algébrique]
Soit $X$ une quadrique rationnelle projective non singulière contenant un point rationnel, et $M$ une sous-variété analytique de $X$.
On suppose que le plus petit sous-espace totalement isotrope réel contenant $M$ est défini sur $\QQ$.
Alors, pour presque tout $x\in M$,
\[
\beta(x) = 1 + \frac{1}{d_M},
\]
où $d_M$ est la dimension du plus petit sous-espace totalement isotrope \emph{rationnel} contenant $M$.
(S'il n'existe pas de tel sous-espace, on pose $d_M=+\infty$.)
\end{theorem}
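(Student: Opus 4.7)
The plan is to apply Theorem \ref{expanalg} and reduce the computation to the classical case of a projective space.

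First, I would identify the rational Schubert subvarieties of $X$ with rational totally isotropic subspaces. Write $X=P\bcs G$ with $G=\SO_Q$ and $P=\Stab_G[e_\chi]$ the stabilizer of a rational isotropic line. The Bruhat decomposition of $G$ modulo $P$ is indexed by $W_P$, and each standard Schubert cell $PwB$ sits inside the projectivization of a totally isotropic subspace through $[e_\chi]$. More precisely, a rational Schubert variety $X_w\gamma$ in $X$ is of the form $\PP(V)$, where $V$ is a rational totally isotropic subspace of $\R^d$. This identification is bijective between rational Schubert subvarieties and rational totally isotropic subspaces (with the projective dimension of $\PP(V)$ recording the Weyl-group datum $w$). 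The hypothesis that the smallest real totally isotropic subspace $V_\R$ containing $M$ is defined over $\QQ$ translates to saying the Zariski closure of $M$ in $X$ is defined over $\QQ$, so that Theorem \ref{expanalg} applies.

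Next, I would invoke Theorem \ref{expanalg}: there exists a smallest rational Schubert subvariety $X_w\gamma\supset M$, and for almost every $x\in M$, $\beta(x)$ is given by the Weyl-theoretic formula. Under the identification from Step 1, this smallest rational Schubert subvariety is $\PP(V)$ where $V$ is the smallest rational totally isotropic subspace containing $M$; thus $\dim \PP(V)=d_M$.

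The next step is a reduction: I claim the intrinsic diophantine problem on $M\subset\PP(V)\subset X$ coincides with the intrinsic problem on $M\subset \PP^{d_M}$. On the height side, the restriction of $H$ from $X$ to $\PP(V)$ is (up to multiplicative equivalence) a standard height on $\PP^{d_M}$. On the distance side, one must check that the Carnot--Carathéodory distance on $X$ restricted to $\PP(V)$ is Lipschitz equivalent to a Riemannian distance on $\PP^{d_M}$. This is the main point to verify: it reduces to showing that $\PP(V)$ is a horizontal submanifold of $X$, i.e.\ that its tangent space at the basepoint lies inside the first stratum $m_1\subset\uu^-$. This holds because motion inside a totally isotropic subspace corresponds to root directions of weight one with respect to the adjoint action of $(a_t)$, so horizontal paths in $\PP(V)$ compute the CC length. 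By minimality of $V$, the smallest rational projective subspace of $\PP(V)$ containing $M$ is $\PP(V)$ itself, and the hypothesis on $V_\R$ ensures the smallest real projective subspace is defined over $\QQ$. Applying the theorem for projective spaces (the Schmidt/Kleinbock--Margulis/de~Saxcé--Breuillard result recalled in the previous section) gives $\beta(x)=1+\frac{1}{d_M}$ for almost every $x\in M$.

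The main obstacle is the geometric verification that $\PP(V)$ is a horizontal submanifold and that the CC distance on $X$ restricted to $\PP(V)$ matches (up to bi-Lipschitz equivalence) the Riemannian distance on $\PP^{d_M}$. Once this is established, an alternative would be to compute $-\langle\chi,Y\rangle+\langle\chi^w,p_{\ka^-}(Y^w)\rangle$ directly using the root system of $\SO_Q$ and the fundamental weight realizing the standard representation; for the Weyl element $w$ associated with an isotropic subspace of projective dimension $d_M$, one checks that this quantity equals $\frac{d_M}{d_M+1}$, giving the claimed exponent.
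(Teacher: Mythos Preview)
The paper does not actually prove this theorem in the text; it is cited from the separate article \cite{quadriques}. Your attempt to derive it from the general machinery is natural, but there are two real gaps.

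The main one is in your reduction step. Restricting to $\PP(V)$, with $V$ the smallest rational totally isotropic subspace containing $M$, and invoking the projective-space theorem only yields the \emph{lower} bound $\beta_X(x)\ge \beta_{\PP(V)}(x)=1+\tfrac{1}{d_M}$, since $\PP(V)(\Q)\subset X(\Q)$ and you are approximating by \emph{fewer} rational points. The matching upper bound --- that rational points of $X(\Q)$ lying outside $\PP(V)$ cannot approximate $x$ to a higher exponent --- is precisely the nontrivial content of the theorem, and your reduction says nothing about it. That direction is what forces the orbit analysis (Theorem~\ref{diagstab} or~\ref{diaganalg}) and the explicit computation of $\langle\chi^w,p_{\ka^-}(Y^w)\rangle$ that you mention only as an afterthought. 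Secondly, your identification of Schubert varieties in $X$ with totally isotropic projective subspaces is wrong once the $\Q$-rank of $\SO_Q$ is at least $2$: for instance, the intersection of $X$ with a tangent hyperplane is a non-linear Schubert variety. What \emph{is} correct --- and what the paper's remark is really saying --- is that the \emph{unstable} Schubert varieties are exactly the totally isotropic projective subspaces; the non-linear cells satisfy $p_{\ka^-}(Y^w)=0$ and contribute the generic exponent. Relatedly, the hypothesis of Theorem~\ref{expanalg} (Zariski closure of $M$ defined over $\QQ$) is strictly stronger than the hypothesis here (smallest real totally isotropic subspace over $\QQ$), so you cannot invoke that theorem as stated.

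One small point in your favour: your worry about horizontality of $\PP(V)$ is moot. For a quadric, every root in $\uu^-$ has $\alpha_1$-coefficient equal to $1$, so $\uu^-=m_1$ and the Carnot--Carath\'eodory metric on $X$ is already Riemannian.
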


\begin{remark}
Le théorème ci-dessus s'applique en particulier dans les deux cas suivants:
\begin{itemize}
\item si $M=\{x\}$ est réduite à un seul point $x\in X(\QQ)$, on obtient un analogue du résultat de Thue-Siegel-Roth-Schmidt pour la quadrique $X$;
\item si $M$ est \emph{non dégénérée}, i.e. n'est incluse dans aucun sous-espace totalement isotrope, alors $d_M=+\infty$ et $M$ est extrémale.
\end{itemize}
\end{remark}

\begin{remark}
Dans une quadrique $X$, les sous-variétés de Schubert sont les sous-espaces totalement isotropes, toute sous-variété de Schubert stricte est instable.
Comme dans le cas de l'espace projectif, une sous-variété n'est incluse dans aucune sous-variété de Schubert instable si et seulement si elle est non dégénérée.
\end{remark}

\section{Grassmannienne}

Dans l'article \cite{schmidt_grass} écrit en 1967, Schmidt a proposé d'utiliser le plongement de Plücker pour définir la hauteur d'un sous-espace rationnel, et étudier l'approximation d'un sous-espace réel $x$ par des sous-espaces rationnels $v$ de diverses dimensions.
Cela s'inscrit bien dans le cadre de ce mémoire: si $G=\SL_d$ et $P$ le sous-groupe parabolique stabilisateur du sous-espace $\Vect\{e_1,\dots,e_\ell\}$ dans la représentation standard, on obtient la grassmannienne des $\ell$-plans dans un espace de dimension $d$ comme quotient
\[
X = \Grass(\ell,d) \simeq P\bcs G.
\]
La hauteur utilisée par Schmidt est alors celle associée à la représentation fondamentale $\wedge^\ell\R^d$, la distance est la distance riemannienne usuelle.
Cela permet de définir l'exposant diophantien d'un point $x\in\Grass(\ell,d)$.
Avec ces définitions, Schmidt obtient une minoration optimale de l'exposant diophantien d'un point choisi aléatoirement sur la grassmannienne: pour presque tout $x\in\Grass(\ell,d)$, $\beta(x) \geq \frac{1}{\ell}+\frac{1}{d-\ell}$.
Le théorème~\ref{exposantps} permet de préciser ce résultat en une égalité presque sûre.

\begin{theorem}
Pour presque tout $x\in\Grass(\ell,d)$, $\beta(x) = \frac{1}{\ell}+\frac{1}{d-\ell}$.
\end{theorem}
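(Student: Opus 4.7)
The plan is to apply Theorem~\ref{exposantps} directly, the whole task reducing to an explicit computation in the root system of type $A_{d-1}$. First I would write $X = \Grass(\ell,d) = P\bcs G$ with $G = \SL_d$ and $P$ the standard $\Q$-parabolic stabilizing $\Vect\{e_1,\dots,e_\ell\}$, and identify the Plücker height with the height $H_\chi$ coming from the fundamental representation $V_\chi = \wedge^\ell\R^d$, whose highest-weight line is $\R(e_1\wedge\dots\wedge e_\ell)$, with $\Stab_G[e_1\wedge\dots\wedge e_\ell]=P$. The highest weight is then $\chi = \omega_\ell = \alpha_1^* + \cdots + \alpha_\ell^*$ (in the usual diagonal coordinates on $T$).

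Next I would compute the element $Y\in\ka$ appearing in Theorem~\ref{exposantps}. For $\SL_d$, writing $\alpha_i(\diag(y_1,\dots,y_d)) = y_i-y_{i+1}$, the set $\theta\subset\Pi$ corresponding to $P$ is $\Pi\setminus\{\alpha_\ell\}$, so $Y=\diag(y_1,\dots,y_d)$ is determined by $y_i=y_{i+1}$ for $i\ne\ell$, $y_\ell-y_{\ell+1}=-1$ and $\sum y_i=0$. Solving gives
\[
Y \;=\; \diag\Bigl(\underbrace{-\tfrac{d-\ell}{d},\dots,-\tfrac{d-\ell}{d}}_{\ell\text{ times}},\;\underbrace{\tfrac{\ell}{d},\dots,\tfrac{\ell}{d}}_{d-\ell\text{ times}}\Bigr).
\]

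Then I would evaluate
\[
\chi(Y) \;=\; y_1+\dots+y_\ell \;=\; \ell\cdot\Bigl(-\tfrac{d-\ell}{d}\Bigr) \;=\; -\tfrac{\ell(d-\ell)}{d},
\]
so Theorem~\ref{exposantps} gives $\beta_\chi(X) = -1/\chi(Y) = d/(\ell(d-\ell))$. Finally, the identity $\dfrac{d}{\ell(d-\ell)} = \dfrac{(d-\ell)+\ell}{\ell(d-\ell)} = \dfrac{1}{\ell}+\dfrac{1}{d-\ell}$ yields the claimed value, which by Theorem~\ref{exposantps} equals $\beta(x)$ for Lebesgue-almost every $x\in \Grass(\ell,d)$. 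There is really no obstacle here: the general theorem does all the work, and the only thing to check is that one is using the correct fundamental weight and the correct parabolic, which is a straightforward bookkeeping exercise in type $A_{d-1}$.
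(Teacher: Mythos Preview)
Your proposal is correct and is exactly the approach the paper intends: the paper simply states that this result follows from Th\'eor\`eme~\ref{exposantps} without spelling out the computation, and you have carried out precisely the root-system calculation in type $A_{d-1}$ that makes this explicit. The identification of $P$, of $\chi=\omega_\ell$ via the Pl\"ucker embedding, of $\theta=\Pi\setminus\{\alpha_\ell\}$, and the evaluation $\chi(Y)=-\ell(d-\ell)/d$ are all correct; nothing further is needed.
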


\comm{
Pour montrer l'inégalité $\beta(x)\leq\frac{1}{\ell}+\frac{1}{d-\ell}$, il suffit en fait d'utiliser le lemme de Borel-Cantelli et le fait que le nombre de points de hauteur au plus $T$ dans $X=\Grass(\ell,d)$ est majoré par $T^d$.
}

Pour une variété grassmannienne, le théorème~\ref{khintchine}, analogue du théorème de Khintchine, s'écrit plus simplement comme suit.

\begin{theorem}
Soit $X=\Grass(\ell,d)$ la variété grassmannienne des sous-espaces de dimension $\ell$ dans un espace de dimension $d$.
Étant donnée une fonction décroissante $\psi:\R^+\to\R^+$ on considère l'inégalité
\begin{equation}\label{khing}
d(x,v) \leq H(v)^{-\frac{1}{\ell}-\frac{1}{d-\ell}} \psi(H(v)).
\end{equation}
\begin{itemize}
\item Si $\int_1^\infty \psi(u)^{\ell(d-\ell)} \frac{\dd u}{u}=+\infty$, alors \eqref{khing} admet une infinité de solutions $v\in X(\Q)$ pour presque tout $x\in X(\R)$.
\item Si $\int_1^\infty \psi(u)^{\ell(d-\ell)} \frac{\dd u}{u}<+\infty$, alors \eqref{khing} n'admet qu'un nombre fini de solutions $v\in X(\Q)$ pour presque tout $x\in X(\R)$.
\end{itemize}
\end{theorem}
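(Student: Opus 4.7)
The plan is to derive this theorem as a direct specialization of the general Khintchine theorem for flag varieties (Theorem~\ref{khintchine}). First I would write $X = P\bcs G$ with $G = \SL_d$ and $P$ the maximal $\Q$-parabolic associated to the set of simple roots $\theta = \Pi\setminus\{\alpha_\ell\}$, so that the height induced by the Plücker embedding is exactly $H_\chi$ for $\chi = \omega_\ell = \varpi_\ell$, the $\ell$-th fundamental weight. Since the unipotent radical $\uu^-$ of $P$ is abelian (isomorphic to $\mathrm{Mat}_{\ell\times(d-\ell)}$), its stratification reduces to the single stratum $\uu^- = m_1$, and the Carnot-Carathéodory distance defined in Section~\ref{sec:cc} is bi-Lipschitz equivalent to the usual Riemannian distance on $\Grass(\ell,d)$. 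Thus the statement is equivalent to Theorem~\ref{khintchine} applied to this data.

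The bulk of the argument is then the identification of the constants $\beta_\chi(X)$, $a_\chi$ and $b_\chi$. For $\beta_\chi(X)$, Theorem~\ref{exposantps} gives $\beta_\chi(X) = -1/\chi(Y)$ where $Y\in\ka$ satisfies $\alpha_i(Y) = -\delta_{i\ell}$. Using the coordinates $(t_1,\dots,t_d)$ on the diagonal with $\sum t_i = 0$, one finds $Y = -Y_\ell$ with $Y_\ell$ having $\ell$ entries equal to $(d-\ell)/d$ followed by $d-\ell$ entries equal to $-\ell/d$; hence
\[
\chi(Y) = -\varpi_\ell(Y_\ell) = -\frac{\ell(d-\ell)}{d},
\qquad
\beta_\chi(X) = \frac{d}{\ell(d-\ell)} = \frac{1}{\ell} + \frac{1}{d-\ell},
\]
which matches the exponent in the statement.

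For $a_\chi$ and $b_\chi$, since $P$ is a maximal parabolic, Lemma~\ref{maximal} applies and yields $b_\chi = 1$ and $a_\chi = r_\ell$, where $r_\ell$ is the scalar such that the sum of roots in the unipotent radical of $P$ equals $r_\ell\omega_\ell$. A direct enumeration gives $\Sigma_\ell^+ = \{e_i - e_j : i\leq \ell < j\}$, each with multiplicity one, so that
\[
\rho_\ell = (d-\ell)\sum_{i\leq\ell}e_i - \ell\sum_{j>\ell}e_j = d(e_1+\dots+e_\ell) = d\,\varpi_\ell = d\,\omega_\ell,
\]
using the relation $e_1+\dots+e_d=0$ in the $\SL_d$ setting. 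Thus $r_\ell = d$ and $a_\chi = d$.

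Substituting these values into Theorem~\ref{khintchine}, the exponent $a_\chi/\beta_\chi(X)$ simplifies to $d\cdot\ell(d-\ell)/d = \ell(d-\ell)$ and the logarithmic factor $(\log\log u)^{b_\chi-1}$ becomes $1$, so the convergence/divergence criterion reduces exactly to the integral $\int_e^\infty \psi(u)^{\ell(d-\ell)}\,\dd u/u$; replacing the lower endpoint $e$ by $1$ is harmless since the integrand is bounded on $[1,e]$. This delivers both conclusions of the theorem. No genuine obstacle arises in this proof; the only point requiring care is the equivalence of the Carnot-Carathéodory and Riemannian distances, which is immediate from the abelianity of $\uu^-$ in the Grassmannian case.
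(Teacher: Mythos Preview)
Your proposal is correct and follows exactly the approach the paper takes: specialize the general Theorem~\ref{khintchine} to $G=\SL_d$, $P$ the maximal parabolic with $\theta=\Pi\setminus\{\alpha_\ell\}$ and $\chi=\omega_\ell$, then invoke Lemma~\ref{maximal} to obtain $b_\chi=1$, $a_\chi=d$ via the identity $\rho_\ell=d\,\omega_\ell$, and compute $a_\chi/\beta_\chi=\ell(d-\ell)$. Your additional remark that $\uu^-$ is abelian, so the Carnot--Carathéodory metric reduces to the Riemannian one, is a useful clarification that the paper leaves implicit.
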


\begin{remark}
On remarque que l'exposant $\ell(d-\ell)$ qui apparaît dans la condition d'intégrabilité sur $\psi$ est égal à la dimension de $X$.
Pour calculer cet exposant, on peut utiliser le lemme~\ref{maximal}.
La somme des racines apparaissant dans le radical unipotent de $P$ est
\begin{align*}
\rho_\ell & = \sum_{\substack{1\leq i\leq \ell\\ \ell<j\leq d}} \eps_i-\eps_j\\
& = (d-\ell)(\eps_1+\dots+\eps_\ell)-\ell(\eps_{\ell+1}+\dots+\eps_d)\\
& = d(\eps_1+\dots+\eps_\ell) = d\omega_\ell.
\end{align*}
L'exposant recherché est donc
\[
\frac{a_\chi}{\beta_\chi} = \frac{d}{\frac{1}{\ell}+\frac{1}{d-\ell}} = \ell(d-\ell).
\]
\end{remark}

Toute sous-variété de Schubert $X_wg$ dans $X=\Grass(\ell,d)$ est de la forme
\[
X_wg = \{ x\in X\ |\ \forall i=1,\dots,d-1,\ \dim x\cap V_i \geq m_i\},
\]
où $\{0\}=V_0<V_1<\dots<V_d=\R^d$ est un drapeau total de $\R^d$ et $m_1\leq \dots\leq m_d$ une suite d'entiers naturels.
On peut montrer que toute sous-variété de Schubert distincte de $X=\Grass(\ell,d)$ est incluse dans un pinceau
\[
P_{W,r} = \{ x\in X\ |\ \dim W\cap x\geq r\} \qquad\mbox{avec}\ r>d-\ell-\dim W,
\]
et qu'une sous-variété de Schubert est instable si et seulement si elle est incluse dans un pinceau $P_{W,r}$ contraignant, i.e. satisfaisant
\[
\frac{r}{\dim W}>\frac{\ell}{d}.
\]
Les théorèmes généraux de ce mémoire impliquent donc les résultats suivants.

\begin{theorem}[Critère d'extrémalité dans la grassmannienne]
Soit $M$ une sous-variété analytique de $\Grass(\ell,d)$.
Si $M$ n'est incluse dans aucun pinceau contraignant, alors $M$ est extrémale.
\end{theorem}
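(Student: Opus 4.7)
The plan is to deduce this theorem as an immediate corollary of the general extremality criterion Theorem~\ref{extan}, using the description of Schubert subvarieties of the Grassmannian that was given in the paragraph preceding the statement. More precisely, it suffices to verify the following: a connected analytic submanifold $M \subset \Grass(\ell,d)$ is not contained in any unstable Schubert subvariety if and only if it is not contained in any constraining pencil $P_{W,r}$.

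First, I would recall that $X=\Grass(\ell,d)=P\bcs G$ with $G=\SL_d$ and $P$ the stabilizer of the standard $\ell$-plane, and that a Schubert subvariety has the form
\[
X_wg = \{ x\in X\ |\ \forall i=1,\dots,d-1,\ \dim x\cap V_i \geq m_i\}
\]
for a complete flag $(V_i)$ and a non-decreasing sequence $(m_i)$ with $m_i \geq \max(0, i+\ell-d)$. When the inequality is strict for some $i$, setting $W = V_i$ and $r = m_i$ exhibits $X_wg$ as a subvariety of the pencil $P_{W,r}$ with $r > d-\ell-\dim W$. This reduces the question to identifying which pencils are unstable for the one-parameter subgroup $(a_t) = (e^{tY})$ of \eqref{atf}, where $Y$ is defined by $\alpha_i(Y) = -\delta_{i\ell}$.

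The core computation is the instability criterion for a pencil. Identifying $\ka$ with the traceless diagonal subalgebra of $\mathfrak{sl}_d$ and $W$ with the symmetric group, one represents a pencil $P_{W,r}$ (up to the $P$-action) by the Schubert class of an element $w \in W_P$ such that $wB$ contains a fixed subspace of dimension $\dim W$ meeting the standard $\ell$-plane in dimension $r$. A direct calculation of $Y^w = (\Ad w)^{-1}Y$ shows that the fundamental weights $\omega_j$ evaluated at $Y^w$ correspond to the extremal behaviours of the flow on exterior powers; the minimum over $j$ of $\omega_j(Y^w)$ is negative exactly when $r/\dim W > \ell/d$. Thus the constraining pencils are precisely the unstable Schubert subvarieties, up to the $G$-action.

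The final step is then painless: if $M$ is not contained in any constraining pencil, then by the equivalence just established $M$ is not contained in any unstable Schubert subvariety, so Theorem~\ref{extan} applies and $M$ is extremal. The only real obstacle is the weight calculation identifying the instability condition as $r/\dim W > \ell/d$; the rest is purely combinatorial bookkeeping on Schubert cells. Note also that since every strict Schubert subvariety lies in some pencil (possibly non-constraining), the reduction loses no generality.
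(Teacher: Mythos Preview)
Your overall approach matches the paper's: the result is obtained there as a direct consequence of Theorem~\ref{extan}, using the characterization (asserted in the paragraph just before the statement) that a Schubert subvariety of $\Grass(\ell,d)$ is unstable if and only if it lies in a constraining pencil. The paper does not prove that characterization; you go further and sketch the weight computation behind it.

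There is, however, a slip in the logic of your reduction. To invoke Theorem~\ref{extan} you need the implication \emph{$M$ not in any constraining pencil $\Rightarrow$ $M$ not in any unstable Schubert}, i.e.\ that every unstable Schubert variety is contained in some constraining pencil. But you reduce instead to ``identifying which pencils are unstable'' and show that a pencil is unstable iff it is constraining --- this gives only the converse direction (every constraining pencil is an unstable Schubert variety). The fix is to run the weight computation for an arbitrary Schubert variety $X_wg$, not just for pencils: with $m_j=\dim(x\cap V_j)$ for generic $x\in X_wg$ and $Y=\mathrm{diag}(-\tfrac{d-\ell}{d},\dots,-\tfrac{d-\ell}{d},\tfrac{\ell}{d},\dots,\tfrac{\ell}{d})$, one finds $\omega_j(Y^w)=\tfrac{\ell j}{d}-m_j$, so $X_wg$ is unstable exactly when some $m_j/j>\ell/d$, and then $X_wg\subset P_{V_j,m_j}$ with the latter constraining. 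All the ingredients are in your sketch; only the order of the argument needs straightening. (Relatedly, the sentence ``the constraining pencils are precisely the unstable Schubert subvarieties'' is not literally true: not every unstable Schubert variety is a pencil.)
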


\note{Pour voir que ce drapeau partiel est bien défini, on peut utiliser le lemme de sous-modularité; il faudrait donner quelques détails.}
À toute partie $M\subset X$, on associe un drapeau rationnel partiel $\{0\}=V_{d_0}<V_{d_1}<\dots< V_{d_{r-1}}< V_{d_r} =\Q^d$ avec pour chaque $k$, $\dim V_{d_k}=d_k$.
Pour cela, on définit $V_{d_1}$ comme l'unique sous-espace rationnel de dimension maximale qui maximise la quantité $\min_{x\in M}\frac{\dim x\cap V}{\dim V}$, parmi tous les sous-espaces vectoriels $V\leq\Q^d$; ensuite $V_{d_2}$ est l'unique sous-espace rationnel contenant $V_{d_1}$, de dimension maximale, et qui maximise $\min_{x\in M}\frac{\dim x\cap V -\dim x\cap V_{d_1}}{\dim V- d_1}$, ...etc.

\begin{theorem}[Exposant d'une sous-variété algébrique]
\note{Il suffit que l'adhérence linéaire dans le plongement de Plücker soit définie sur $\QQ$.}
Soit $M$ une sous-variété analytique connexe de $\Grass(\ell,d)$ dont l'adhérence de Zariski est définie sur $\QQ$, et $V_0<V_{d_1}<\dots<V_{d_r}$ le drapeau partiel associé à $M$.
Pour $k=1,\dots,r$, on note
\note{$i_0=d_0=0$, $i_r=\ell$, $d_r=d$, et $c_r=0$.}
\note{$c_k$ est le taux de dilatation minimal de $V_{d_k}$ suivant un flot $(a_ts_x)$, $x\in M$; noter que $c_k\leq 0$.}
\[
i_k=\min_{x\in M} \dim x\cap V_{d_k}
\quad\mbox{et}\quad
c_k=-\frac{i_k}{\ell}+\frac{d_k-i_k}{d-\ell}.
\]
Pour presque tout $x$ dans $M$,
\[
\beta(x) = (\frac{1}{\ell}+\frac{1}{d-\ell})\frac{1}{1-\gamma_M},
\]
où
\[
\gamma_M = \sum_{k=1}^r \frac{i_k-i_{k-1}}{d_k-d_{k-1}}(\frac{i_k-i_{k-1}}{\ell}-\frac{d_k-d_{k-1}-i_k+i_{k-1}}{d-\ell})
= \frac{\ell(d-\ell)}{d}\sum_{k=1}^r \frac{(c_k - c_{k-1})^2}{d_k-d_{k-1}}.
\]
En particulier, $M$ est extrémale si, et seulement si, $M$ n'est incluse dans aucun pinceau rationnel contraignant.
\end{theorem}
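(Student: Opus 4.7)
The plan is to apply Theorem~\ref{expanalg} to $X=\Grass(\ell,d)=P\bcs\SL_d$, where $P$ is the stabilizer of $\Vect\{e_1,\dots,e_\ell\}$ and the weight is $\chi=\omega_\ell$. Following the exercise after Proposition~\ref{inf}, I identify $\ka=\{Y\in\R^d:\sum_i Y_i=0\}$ with the space of real-valued functions on $\{0,1,\dots,d\}$ vanishing at the endpoints via $c\mapsto(j\mapsto\omega_j(c))$; then $\ka^-$ is the cone of convex functions and $p_{\ka^-}$ takes the greatest convex minorant. A direct computation gives $\chi(Y)=-\ell(d-\ell)/d$, confirming $\beta_\chi(X)=1/\ell+1/(d-\ell)$.

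First I identify the data $\gamma_M\in\SL_d(\Q)$ and $w\in W_P$ produced by Theorem~\ref{expanalg}. Granting that the partial flag $V_{d_0}<V_{d_1}<\dots<V_{d_r}$ is well-defined and rational (this relies on the sub-modularity argument alluded to in the preceding paragraph of the text), I choose $\gamma_M$ so that $\gamma_M V_{d_k}=F_{d_k}:=\Vect\{e_1,\dots,e_{d_k}\}$. For a generic $x\in M\gamma_M^{-1}$, the function $j\mapsto\dim(x\cap F_j)$ is maximal subject to the constraints $\dim(x\cap F_{d_k})=i_k$, which places the pivot positions of $x$ at the end of each interval $(d_{k-1},d_k]$. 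Hence the smallest standard Schubert variety $X_w\supset M\gamma_M^{-1}$ corresponds to the $\ell$-subset
\[
J=\bigsqcup_{k=1}^{r}\{d_k-(i_k-i_{k-1})+1,\dots,d_k\}\subset\{1,\dots,d\}.
\]

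Next I compute $\bracket{\chi^w,p_{\ka^-}(Y^w)}$. Acting by $w^{-1}$ on $Y$ places $-(d-\ell)/d$ at positions in $J$ and $\ell/d$ elsewhere; the function representing $Y^w$ is thus $f(j)=j\ell/d-|J\cap[1,j]|$, with $f(d_k)=d_k\ell/d-i_k$. Restricted to any $[d_{k-1},d_k]$, $f$ is concave (increasing then decreasing), so it lies above the secant joining $(d_{k-1},f(d_{k-1}))$ and $(d_k,f(d_k))$. The main obstacle is to show that the resulting piecewise-affine interpolant $g$ of $(d_k,f(d_k))_{k=0,\dots,r}$ is itself convex, or equivalently that the sequence of slopes $(i_k-i_{k-1})/(d_k-d_{k-1})$ is non-increasing in $k$. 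This is the Harder--Narasimhan-type property ensured by the sub-modular construction of the partial flag, and once it holds the greatest convex minorant of $f$ is exactly $g$. Using $\chi^w=\sum_{j\in J}\eps_j$ together with the linearity of $g$ on each $[d_{k-1},d_k]$, a telescoping computation yields
\[
\bracket{\chi^w,g}=\sum_{k=1}^{r}\frac{i_k-i_{k-1}}{d_k-d_{k-1}}\bigl(f(d_k)-f(d_{k-1})\bigr)=\frac{\ell^2}{d}-\sum_{k=1}^{r}\frac{(i_k-i_{k-1})^2}{d_k-d_{k-1}}.
\]

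Substituting this and $-\chi(Y)=\ell(d-\ell)/d$ into the formula $\beta_\chi(x)=1/(-\chi(Y)+\bracket{\chi^w,p_{\ka^-}(Y^w)})$ of Theorem~\ref{expanalg} gives $\beta(x)=\bigl(\ell-\sum_k(i_k-i_{k-1})^2/(d_k-d_{k-1})\bigr)^{-1}$, and elementary algebra then matches both stated expressions for $(1/\ell+1/(d-\ell))/(1-\gamma_M)$; the equality of the two forms of $\gamma_M$ is a purely algebraic rearrangement using $\sum_k(i_k-i_{k-1})=\ell$ and $\sum_k(d_k-d_{k-1})=d$. The extremality criterion follows at once: $\gamma_M=0$ iff every density $(i_k-i_{k-1})/(d_k-d_{k-1})$ equals $\ell/d$, which amounts to the absence of a rational subspace $V\leq\Q^d$ with $\min_{x\in M}\dim(x\cap V)/\dim V>\ell/d$, i.e., to $M$ not being contained in any rational constraining pencil.
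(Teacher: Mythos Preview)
Your proposal is correct and follows exactly the approach sketched in the paper: both invoke Theorem~\ref{expanalg}, identify the partial flag $(V_{d_k})$ with the Schubert datum $X_w\gamma_M$, and reduce to computing $\bracket{\chi^w,p_{\ka^-}(Y^w)}$. The paper explicitly leaves that computation \enquote{au lecteur} and only spells out the algebraic identity between the two expressions for $\gamma_M$; you have supplied precisely the omitted details (the pivot set $J$, the convex-minorant description of $p_{\ka^-}(Y^w)$, and the evaluation of the pairing), so there is nothing to correct.
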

\begin{proof}
Nous donnerons une démonstration directe dans \cite{grass}.
Ici nous remarquons seulement que ce théorème est un cas particulier du théorème~\ref{expanalg}; il suffit de voir que le drapeau partiel $\{0\}<V_{d_1}<\dots<V_{d_r}$ s'identifie à la variété de Schubert $X_w\gamma_M$ contenant $M$, et de faire le calcul explicite de la quantité $\bracket{\chi^w,p_{\ka^-}(Y^w)}$.
Les détails sont laissés au lecteur.
Pour vérifier l'égalité entre les deux formules pour $\gamma_M$, il suffit d'observer que
\[
i_k = \frac{\ell}{d} d_k - \frac{\ell(d-\ell)}{d}c_k,
\]
et donc
\[
\frac{i_k-i_{k-1}}{d_k-d_{k-1}}(\frac{i_k-i_{k-1}}{\ell}-\frac{d_k-d_{k-1}-i_k+i_{k-1}}{d-\ell})
=
 \frac{\ell}{d}(1-\frac{c_k-c_{k-1}}{d_k-d_{k-1}}(d-\ell))(c_k-c_{k-1}).
\]
L'égalité souhaitée s'en déduit en sommant sur $k$, et en observant que $0=c_r=\sum_{k=1}^r c_k-c_{k-1}$.

Si $M$ n'est incluse dans aucun pinceau rationnel contraignant, alors le drapeau associé est le drapeau trivial $\{0\}=V_0 < V_d = \Q^d$, donc $\gamma_M=0$ et $\beta(x)=\frac{1}{\ell}+\frac{1}{d-\ell}$ pour presque tout $x$ dans $M$.
Réciproquement, s'il existe un pinceau rationnel contraignant contenant $M$, alors le drapeau associé à $M$ est non trivial, et la formule ci-dessus montre donc que $\gamma_M>0$.
Cela implique que $M$ n'est pas extrémale.
\end{proof}

Dans son article \cite{schmidt_grass}, après avoir obtenu plusieurs encadrements pour l'exposant diophantien d'un élément $x\in\Grass(\ell,d)$, Schmidt pose notamment le problème suivant: déterminer la valeur minimale de $\beta(x)$ lorsque $x$ varie dans $X(\R)$.
Il est possible que cette valeur soit égale à $\frac{1}{\ell}+\frac{1}{d-\ell}$, valeur presque sûre de $\beta(x)$ lorsque $x$ est choisi aléatoirement dans $X$.
Nous n'avons pas de preuve de cela pour l'instant.
Cependant, le théorème ci-dessus permet déjà de minorer convenablement l'exposant de tout point de $X(\QQ)$.
En fait, dans l'écriture $X=P\bcs G$, le sous-groupe parabolique est maximal, et le corollaire ci-dessous est donc un cas particulier du corollaire~\ref{rang1}.

\begin{corollary}
Soit $X=\Grass(\ell,d)$.
Pour tout $x\in X(\QQ)$, $\beta(x)\geq\frac{1}{\ell}+\frac{1}{d-\ell}$, avec égalité si et seulement si $x$ n'est inclus dans aucun pinceau rationnel contraignant.
\end{corollary}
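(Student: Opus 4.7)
Mon plan est d'identifier explicitement ce corollaire comme cas particulier du corollaire~\ref{rang1}, ce qui demande essentiellement trois vérifications concrètes: que le sous-groupe parabolique concerné est maximal, que $\beta_\chi(X)=\tfrac{1}{\ell}+\tfrac{1}{d-\ell}$ pour la hauteur de Plücker, et surtout que les sous-variétés de Schubert rationnelles instables contenant $x$ sont exactement les pinceaux rationnels contraignants $P_{W,r}$ (avec $r/\dim W>\ell/d$) qui contiennent $x$.

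Pour l'identification du parabolique, je pars de l'écriture $X=\Grass(\ell,d)\simeq P\bcs\SL_d$, où $P$ est le stabilisateur du $\ell$-plan engendré par $e_1,\dots,e_\ell$. Avec la base standard $\alpha_i=\eps_i-\eps_{i+1}$, $1\leq i\leq d-1$, le parabolique $P$ correspond à $\theta=[1,d-1]\setminus\{\ell\}$, et est bien maximal. Le théorème~\ref{exposantps} donne alors la valeur de $\beta_\chi(X)$: avec $Y$ tel que $\alpha_\ell(Y)=-1$ et $\alpha_i(Y)=0$ pour $i\neq\ell$, on calcule $-\chi(Y)=\omega_\ell(Y)=\dots$, et un calcul direct (analogue à celui effectué pour la hauteur anti-canonique au théorème~\ref{dac}) donne $\beta_\chi(X)=\tfrac{1}{\ell}+\tfrac{1}{d-\ell}$. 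L'inégalité $\beta_\chi(x)\geq\beta_\chi(X)$ et le cas d'égalité découlent alors directement du corollaire~\ref{rang1}: $x\in X(\QQ)$ est extrémal si et seulement s'il n'est inclus dans aucune sous-variété de Schubert rationnelle instable.

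Le point délicat sera la traduction géométrique de la condition d'instabilité en termes de pinceaux contraignants. Toute variété de Schubert $X_wg$ s'écrit $\{x\in X\mid \dim(x\cap V_i)\geq m_i\}$ pour un drapeau $(V_i)$ et une suite $(m_i)$; elle est donnée par un élément $w$ du groupe de Weyl $W_P=\FS_d/(\FS_\ell\times\FS_{d-\ell})$, qui s'identifie à un $\ell$-sous-ensemble de $[1,d]$. Le calcul de $Y^w$ pour ce $Y$ donne un vecteur dont les poids fondamentaux $\omega_k=\eps_1+\dots+\eps_k$ prennent des valeurs négatives exactement quand $w$ impose une contrainte plus stricte que la proportionnalité au rapport $\ell/d$; autrement dit, il existe un poids fondamental $\omega_k$ avec $\omega_k(Y^w)<0$ si et seulement si $X_w$ force une intersection $\dim(x\cap V)\geq r$ avec $r/\dim V>\ell/d$. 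En réduisant à l'enveloppe de Schubert minimale contenant $x$, on obtient qu'une telle condition est satisfaite si et seulement si $x$ appartient à un pinceau rationnel contraignant $P_{W,r}$.

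La principale difficulté technique sera donc ce calcul combinatoire sur le groupe de Weyl de $\SL_d$ reliant instabilité et contrainte d'intersection; le reste n'est qu'une application mécanique du corollaire~\ref{rang1} au cas particulier de la grassmannienne. On peut d'ailleurs vérifier la cohérence du résultat en constatant que, avec $M=\{x\}$, le théorème précédent sur l'exposant d'une sous-variété algébrique donne $\gamma_M=0$ précisément quand le drapeau associé à $x$ est trivial, c'est-à-dire quand $x$ n'est dans aucun pinceau rationnel contraignant.
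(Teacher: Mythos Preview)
Your proposal is correct and follows exactly the paper's approach: the paper simply notes, just before stating the corollary, that since $P$ is a maximal parabolic in $\SL_d$, the result is a special case of corollaire~\ref{rang1}, together with the identification (stated earlier in the same section) of instable Schubert varieties in $\Grass(\ell,d)$ with constraining pencils. Your write-up is more explicit about the intermediate verifications, but the structure is identical.
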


\begin{example}
Soit $X=\Grass(2,d)$ la variété des $2$-plans dans $\R^d$.
Dans ce cas,
\[
a_t=e^{tY},\quad\mbox{avec}\quad Y=\frac{1}{\frac{1}{2}+\frac{1}{d-2}}\diag(-\frac{1}{2},-\frac{1}{2},\frac{1}{d-2},\dots,\frac{1}{d-2}).
\]
On considère la sous-variété
\[
M=\{ x\in X\ |\ \dim x\cap V_1\geq 1\},\quad\mbox{où}\ V_1=\Vect(e_1,\dots,e_k).
\]
Si $k<\frac{d}{2}$, $M$ est un pinceau contraignant.
Pour presque tout $x$ dans $M$,
\note{Inclure un dessin de $c_\infty$, en identifiant $\ka^-$ à l'ensemble des fonctions convexes sur $[0,d]$ telles que $f(0)=f(d)=0$.}
\begin{align*}
c_\infty & = \lim_{t\to\infty} \frac{1}{t}c(a_ts_x)\\
& = \diag(-\frac{1}{k}\frac{d-2k}{2(d-2)},\dots,-\frac{1}{k}\frac{d-2k}{2(d-2)},\frac{1}{d-k}\frac{d-2k}{2(d-2)},\dots,\frac{1}{d-k}\frac{d-2k}{2(d-2)}).
\end{align*}
En fait, si $w$ est une matrice de permutation telle que $w^{-1}\{1,2\}=\{k,d\}$ (cela détermine un élément de $W_P$), alors
\[
M = PwB.
\]
\note{Pour cette vérification, il est commode d'identifier $\ka^-$ au fonctions convexes sur $[0,d]$ telles que $f(0)=f(d)=0$. La projection $p_{\ka^-}$ associe à une fonction son plus grand minorant convexe.}
On vérifie sans peine que $c_\infty=p_{\ka^-}(Y^w)$.
D'ailleurs, $\theta_\infty=[1,d-1]\setminus\{k\}$ et si $\pi_\infty:\ka\to\ka$ est la projection sur $\bigcap_{i\in\theta_\infty}\alpha_i^\perp$ (l'espace des fonctions dont le seul point angulaire est en $k$), alors $c_\infty=p_{\ka^-}(Y^w) = \pi_\infty(Y^w)$.

La hauteur associée au plongement de Plücker correspond au plus haut poids $\chi=\eps_1+\eps_2$, et $\chi^w = \eps_{w^{-1}(1)}+\eps_{w^{-1}(2)} = \eps_k+\eps_d$.
Par conséquent, pour presque tout $x\in M$,
\[
\gamma_\chi(x) = \bracket{\chi^w,\pi_\infty(Y^w)} = - \frac{1}{k}\frac{d-2k}{2(d-2)} + \frac{1}{d-k}\frac{d-2k}{2(d-2)} = -\frac{(d-2k)^2}{2(d-2)k(d-k)},
\]
et 
\[
\beta(x) = (\frac{1}{2}+\frac{1}{d-2})\frac{1}{1-\gamma_\chi(x)}.
\]

Dans cet exemple, le sous-groupe $Q_M$ est égal au stabilisateur de $V_1$ dans la représentation standard.
Si $V_2=\Vect(e_{k+1},\dots,e_d)$, on peut identifier le facteur de Levi de $Q_M$ à $L_M=\GL(V_1)\times\GL(V_2)$.
La représentation $V_\chi$ se décompose en irréductibles pour $L_M$ de la façon suivante:
\[
V_\chi = \wedge^2V_1 \oplus V_1\otimes V_2 \oplus \wedge^2 V_2.
\]
La représentation irréductible contenant $we_\chi=e_k\wedge e_d$ est $V_1\otimes V_2$.
\note{Quelques détails pour ce calcul sont inscrits dans le fichier .tex}
Grâce à la base $(e_i\wedge e_j)_{1\leq i\leq k; k+1\leq j\leq d}$, on calcule facilement le taux de contraction de $V_1\wedge V_2$ par $a_t^w=e^{tY^w}$:
\[
\frac{2}{d-2}(k-1)(d-k-1) 
+ (\frac{1}{d-2}-\frac{1}{2})(k-1) 
+ (-\frac{1}{2}+\frac{1}{d-2})(d-k-1) 
- 1 
= - \frac{(d-2k)^2}{2(d-2)} 
\]
Comme $\dim V_1\otimes V_2 = k(d-k)$, on trouve bien la même valeur que ci-dessus pour $\gamma_\chi(x)$.
\end{example}

\section{Variété des drapeaux dans $\R^d$}
\label{sec:drap}

\subsection*{Drapeaux dans $\R^3$.}

Ici $G=\SL_3$ et $P$ est le sous-groupe parabolique minimal constitué des matrices triangulaires supérieures.
La variété quotient $X=P\bcs G$ s'identifie à l'ensemble des drapeaux dans $\R^3$:
\[
X = \{ x=(x_1,x_2)\ ;\ x_1\in\Grass(1,3),\, x_2\in\Grass(2,3),\, x_1\leq x_2\}.
\]
Cette fois, la distance de Carnot-Carathéodory sur $X$ n'est pas riemannienne.
Géométriquement, pour se déplacer en partant d'un drapeau $x=(x_1,x_2)$, on s'autorise à déplacer infinitésimalement $x_1$ dans $x_2$, et $x_2$ contenant $x_1$; cela définit un champ de plans sur $X$, et les seuls chemins autorisés sont ceux qui sont tangents à ce champ de plans.

Le flot diagonal est donné par l'élément $Y=\diag(1,0,-1)$.
Les éléments $I$, $(1,2)$ et $(2,3)$ du groupe de Weyl $S_3$ donnent les variétés de Schubert instables.
Dire qu'une $M$ est incluse dans une cellule instable revient donc à dire que tous les éléments de $M$ ont la même droite, ou le même plan.

Les autres sous-variétés de Schubert sont stables.
Par exemple, la variété $M=\{(x_1,x_2)\ |\ x_2\ni e_1\}$ est extrémale quel que soit le choix de hauteur sur $X(\Q)$.

D'après le corollaire~\ref{deploy}, si $X$ est munie de la hauteur anti-canonique, alors
\[
\forall x\in X(\QQ),\quad \beta_\chi(x) \geq \beta_\chi(X),
\]
avec égalité si et seulement si $x$ n'est inclus dans aucune sous-variété de Schubert rationnelle instable.
\note{Refaire ce calcul, et l'inclure dans le fichier .tex.}
En fait, pour la variété $X$ des drapeaux dans $\R^3$, ce résultat est encore valable quelle que soit la hauteur $H_\chi$ sur $X$.

\subsection*{Drapeaux dans $\R^4$.}

Cette fois, $G=\SL_4$ et $P=B$ est le sous-groupe parabolique minimal constitué des matrices triangulaires supérieures.
La variété quotient $X=P\bcs G$ s'identifie à l'ensemble des drapeaux dans $\R^4$:
\[
X = \{ x=(x_1,x_2,x_3)\ ;\ x_1< x_2< x_3,\ x_i\in\Grass(i,4),\, i=1,2,3\}.
\]
C'est une variété de dimension $6$, et la distance de Carnot-Carathéodory sur $X$ est déterminée par un champ de $3$-plans: au voisinage de $(x_1,x_2,x_3)$, on s'autorise à déplacer infinitésimalement $x_1$ dans $x_2$, $x_2$ contenant $x_1$ et à l'intérieur de $x_3$, et $x_3$ contenant $x_2$.

Le flot diagonal $(a_t)$ est donné par l'élément
\[
Y=\frac{1}{2}\diag(3,1,-1,-3).
\]
En écrivant la liste des $Y^w$, pour $w\in S_4$ on vérifie que
\begin{enumerate}
\item Si $w^{-1}(4)=4$, tous les éléments de $M=BwB$ ont le même hyperplan, et $M$ est instable.
\item Si $w^{-1}(4)=3$, il existe un plan $p$ tel que pour tout $(x_1,x_2,x_3)$ dans $M=BwB$,  $p\subset x_3$.
La variété $M$ est instable sauf si $w^{-1}:(1,2,3,4)\mapsto (4,2,1,3)$.
\item Si $w^{-1}(4)=2$, il existe une droite $d$ tel que pour tout $(x_1,x_2,x_3)$ dans $M=BwB$, $d\subset x_3$.
La variété $M$ est instable sauf si $w^{-1}:(1,2,3,4)\mapsto (4,3,1,2)$ (aucune autre contrainte) ou $w^{-1}:(1,2,3,4)\mapsto (3,4,1,2)$ (on impose en outre qu'il existe un hyperplan $h$ tel que pour tout $x$ dans $M$, $x_1\subset h$.)
\item Si $w^{-1}(4)=1$, il n'y a pas de contrainte sur $x_3$, et $M=BwB$ n'est pas instable, sauf pour $w^{-1}:(1,2,3,4)\mapsto (2,3,4,1)$ et $w^{-1}:(1,2,3,4)\mapsto(3,2,4,1)$.
\end{enumerate}

Le corollaire~\ref{deploy} montre que si $X$ est munie de la hauteur anti-canonique, alors tout point $x\in X(\QQ)$ vérifie $\beta_\chi(x)\geq\beta_\chi(X)$.
Cela cependant n'est pas vrai en général, comme le montre l'exemple suivant.

\begin{example}[Des points \emph{très} mal approchables]
Pour certains choix de hauteur sur la variété $X$ des drapeaux dans $\R^4$, il peut exister un point algébrique moins bien approchable qu'un point générique.
On choisit un point $x$ tel que $\bracket{e_1,e_2,e_4}$ soit stable par $a_tx$, et générique pour le reste.
En d'autres termes, pour un élément $p$ algébrique générique de $P(\QQ)$,
\[
x =
\begin{pmatrix}
0 & 0 & 0 & 1 \\
0 & 1 & 0 & 0 \\
0 & 0 & 1 & 0 \\
1 & 0 & 0 & 0 
\end{pmatrix}
\cdot p
=
\begin{pmatrix}
* & * & * & * \\
* & * & * & * \\
0 & 0 & * & 0 \\
* & * & * & * 
\end{pmatrix}
\]
Le sous-espace $\bracket{e_1,e_2,e_4}$ est semi-stable, contracté globablement par $e^{-\frac{t}{2}}$, donc les trois premiers minima successifs de $a_tx\Z^4$ sont atteints dans $\bracket{e_1,e_2,e_4}$, de longueur approximativement $e^{-\frac{t}{6}}$.
Le dernier minimum est donc de longueur $e^{\frac{t}{2}}$.
On choisit alors le poids $\chi$ induit par la représentation de plus haut poids $(k+2)\eps_1+(k+1)\eps_2+k\eps_3$, avec $k$ suffisamment grand.
Cela correspond au diagramme de Young de longueurs $(k+2,k+1,k)$, ou en termes de poids fondamentaux: $\omega_1+\omega_2+k\omega_3$.
Le vecteur $e_\chi$ est donné par le tableau rempli avec des 1 sur la première ligne, des 2 sur la deuxième, et des 3 sur la troisième.

Les premiers minima successifs de $a_txV(\Z)$ sont atteints avec des tableaux contenant seulement $(1,2,4)$; il valent à peu près $e^{-\frac{(3k+3)t}{6}}$.
Ensuite viennent ceux qui contiennent une occurrence de $3$, puis deux occurrences de $3$, ...etc.
Mais pour avoir une projection positive sur $e_\chi$, il faut au moins $k$ occurrences de $3$, et alors la norme du vecteur est minorée par $e^{k\frac{t}{2}-(2k+3)\frac{t}{6}}=e^{(k-3)\frac{t}{6}}$.
Dès que $k>3$, on voit que $r_\chi(a_tx)$ tend vers l'infini à vitesse exponentielle, ce qui montre que $\gamma_\chi(x)<0$, et donc $\beta_\chi(x)<\beta_\chi(X)$.
\end{example}

\begin{remark}
On peut même donner un exemple pour la variété $X$ des drapeaux dans $\R^3$, si l'on autorise une quasi-distance différente de la distance de Carnot-Carthéodory.
À une petite perturbation près de $a_t$ et $\chi$, pour que $P$ soit bien le sous-groupe de Borel, l'exemple est le suivant: $a_t=\diag(e^{-2t},e^t,e^t)$, et $x$ est tel que $\bracket{e_1,e_3}$ est stable par $a_tx$.
Les deux premiers minima successifs de $a_tx\Z^3$ sont atteints dans $\bracket{e_1,e_3}$, et de longueur $e^{-\frac{t}{2}}$ ; le troisième est de longueur $e^t$.

On choisit le poids $\chi$ donné par $\eps_1+\eps_2$, ce qui correspond à la représentation $\wedge^2\R^3$.
On a alors $e_\chi=e_1\wedge e_2$.
Le premier minimum de $a_tx\wedge^2\Z^3$ est de longueur $e^{-t}$ mais sa projection sur $e_\chi$ est nulle.
Les deux suivants sont de longueur $e^{\frac{t}{2}}$, et cela montre bien que $r_\chi(a_tx)$ tend vers l'infini à vitesse exponentielle.
\end{remark}

\chapter{Conclusion}

Nous concluons ce mémoire par quelques problèmes ouverts qui nous semblent mériter d'être mentionnés.

\bigskip

\noindent\textbf{Valeur minimale de l'exposant.}
Soit $X=P\bcs G$ une variété de drapeaux munie de la distance de Carnot-Carathéodory usuelle et de la hauteur induite par un poids dominant $\chi$.
\begin{itemize}
\item Si $\chi$ est choisi de sorte que $X$ est munie de la hauteur anti-canonique, a-t-on, pour tout $x\in X(\QQ)$, $\beta(x)\geq\beta(X)$?
D'après le théorème~\ref{expalg}, cela reviendrait à dire que dans ce cas, on a toujours $\bracket{\chi^w,p_{\ka^-}(Y^w)}\leq 0$, quel que soit $w\in W_P$.
Nous avons vu aux corollaires~\ref{rang1} et \ref{deploy} que cette inégalité est valable lorsque $X$ est de rang 1 ou lorsque $G$ est déployé et $P$ minimal.
\item Déterminer $\inf_{x\in X(\R)}\beta_\chi(x)$. Y a-t-il une égalité
\[
\inf_{x\in X(\R)}\beta_\chi(x)=\inf_{x\in X(\QQ)}\beta_\chi(x)?
\]
Par exemple, pour la grassmannienne $X=\Grass(\ell,d)$, a-t-on, pour tout $x$, $\beta(x)\geq\frac{1}{\ell}+\frac{1}{d-\ell}$?
\comm{Dans le cas particulier où $\chi(Y)\geq -1$, il n'y a qu'une seule valeur propre négative dans $a_t=e^{tY}$. La condition $\norm{\pi^+(v)}\geq\frac{\norm{v}}{2}$ peut être omise dans la correspondance drapeau-réseau, et par conséquent $\inf_{x\in X(\R)} =\beta_\chi(X)$.
La condition $\chi(Y)\geq -1$ est très restrictive. Par exemple, si $X=\Grass(\ell,d)$, cela revient à dire que $\ell=1$ ou $\ell=d-1$ ou $\ell=2$ et $d=4$.
Elle est toutefois valable pour les quadriques non dégénérées, et il serait intéressant de faire une liste d'autres exemples, à l'aide de la classification des groupes semi-simples. On pourrait notamment avoir des exemples lorsque $G$ est symplectique ou exceptionnel.}
\end{itemize}

\bigskip

\noindent\textbf{Métrique riemannienne.}
Soit $X$ une variété de drapeaux rationnelle.
Lorsque $X$ est munie d'une distance riemannienne, peut-on montrer des résultats analogues à ceux établis dans ce mémoire pour la métrique de Carnot-Carathéodory?

\bigskip

\noindent\textbf{Autres variétés algébriques}
Soit $X$ une variété algébrique irréductible $X$ définie sur $\Q$, munie d'une distance et d'une hauteur, ce qui permet de définir l'exposant diophantien $\beta(x)$ d'un point $x$ dans $X(\R)$. 
L'exposant diophantien est-il constant presque sûrement sur $X(\R)$? Peut-on calculer sa valeur?
Plus généralement, en dehors des variétés de drapeaux, pour quelles variétés algébriques peut-on obtenir une théorie de l'approximation diophantienne intrinsèque satisfaisante?

\bigskip

\printbibliography

\end{document}